\newtheorem{coro}{Corollary}
\newtheorem{defi}{Definition}
\newtheorem{prop}{Proposition}
\newtheorem{theo}{Theorem}
\newtheorem{lemm}{Lemma}
\newtheorem{prob}{Problem}
\newtheorem{rema}{Remark}
\newtheorem{conv}{Convention}
\newtheorem*{coro*}{Corollary}
\newtheorem*{defi*}{Definition}
\newtheorem*{prop*}{Proposition}
\newtheorem*{conj*}{Conjecture}
\newtheorem*{theo*}{Theorem}
\newtheorem*{lemm*}{Lemma}
\newtheorem*{ques*}{Question}
\newtheorem*{exer*}{Exercise}
\newtheorem*{exem*}{Example}
\newtheorem*{prob*}{Problem}
\newtheorem*{rema*}{Remark}
\newtheorem*{conv*}{Convention}
\newtheorem*{clai*}{Claim}
\newtheorem*{affi*}{Affirmation}
\newtheorem*{claim*}{Claim}
\newtheorem*{exes*}{Exercise}
\newtheorem*{pbm*}{Problem}
\def\AA{{\mathbb A}}
\def\II{{\mathbb I}}  
 \def\NN{{\mathbb N}} 
 \def\RR{{\mathbb R}} 
\def\TT{{\mathbb T}}
 \def\ZZ{{\mathbb Z}}
  \def\cG{{\mathcal G}}  \def\cS{{\mathcal S}} 
  \def\cH{{\mathcal H}}  \def\cT{{\mathcal T}} 
  \def\cI{{\mathcal I}} \def\cO{{\mathcal O}} \def\cU{{\mathcal U}}
  \def\cJ{{\mathcal J}} \def\cP{{\mathcal P}} \def\cV{{\mathcal V}}
\def\cF{{\mathcal F}}   \def\cR{{\mathcal R}}
\title{	A Classification of Pseudo-Anosov Homeomorphisms via Geometric Markov Partitions.}
\begin{document}
	
\title[A classification of pseudo-Anosov homeomorphisms]{A classification of pseudo-Anosov homeomorphisms via geometric Markov partitions.}
\author{Inti Cruz Diaz}
\keywords{pseudo-Anosov homeomorphism, geometric Markov partition, geometric type, topological conjugacy, symbolic dynamics}
\email{incruzd@gmail.com}
%\subjclass{37D20, 37D05, 37C10, 57K30, 57R30}

 \begin{abstract}
We continue with the ideas of Bonatti-Langevin-Jeandenans towards a constructive and algorithmic classification of pseudo-Anosov homeomorphisms (possibly with spines), up to topological conjugacy. We begin by indicating how to assign to every pseudo-Anosov homeomorphism an abstract geometric type through a Markov partition whose rectangles have been endowed with a vertical direction; these are known as geometric Markov partitions. Such assignment is not unique, as it depends on the specific geometric Markov partition, and not every abstract geometric type is realized by a pseudo-Anosov homeomorphism. This poses interesting problems related to the algorithmic computability of the classification. The article contains three main results: $i)$ the geometric type is a total invariant of conjugacy, $(ii)$ a pictorial and combinatorial criterion for determining when a given geometric type is realizable by a pseudo-Anosov homeomorphism is provided, and $(iii)$ we describe an algorithmic procedure for determining when two geometric types correspond to topologically conjugate pseudo-Anosov homeomorphisms.
\end{abstract}

 \maketitle

\tableofcontents

\section{What do we mean by a classification?}

%%%%%%%% La clasificacion de homeomorfismos de superficies
\subsection{First conventions and definitions.} 
A \emph{surface} is a smooth manifold of dimension $2$. Let $S$ be a closed and orientable surface. $\text{Hom}(S)$ denotes the set of homeomorphisms from $S$ to itself, and $\text{Hom}+(S)$ those that preserve the orientation. Two homeomorphisms $f, g \in \text{Hom}(S)$ are \emph{topologically conjugated} if there exists $h \in \text{Hom}(S)$ that conjugates them such that $g = h^{-1} \circ f \circ h$. Topological conjugacy is an equivalence relation on $\text{Hom}(S)$. We denote $\text{Hom}(S)/\sim{\textbf{Top}}$ as the set of conjugacy classes. For every $f \in \text{Hom}(S)$, the conjugacy class $[f]$ is the set of homeomorphisms of $S$ that are topologically conjugated to $f$. From the viewpoint of topological dynamical systems, two topologically conjugated homeomorphisms share the same topological dynamics, such as phenomena like the existence and density of periodic points, positive topological entropy, transitivity, and topological mixing. It is therefore an interesting yet too ambitious problem to assign to each conjugacy class $[f]$ a certain abstract object that allows us to capture all these topological properties while also deciding whether two homeomorphisms are conjugated or not. In this paper, we take a step forward in the classification of a particular class of surface homeomorphisms up to topological conjugacy, namely, \emph{pseudo-Anosov homeomorphisms}. But before introducing them and discussing the problems we must address, we would like to clarify the type of classification we have in mind with a simple example.

\subsection{Two examples of a perfect classification.}

The classification of finite-dimensional linear vector spaces up linear isomorphism trough its dimension is the archetype of classification that are looking for. It  begins with a vector space $V$, and after constructing a base for $V$, we can associate to $V$ the cardinality of that base. It turns out that all bases of $V$ have the same cardinality, and $\text{Dim}(V) \in \mathbb{N}$ is an abstract object that classifies $V$ as a vector space. That is to say, if two spaces $V$ and $V'$ have the same dimension, there exists a linear isomorphism that maps one onto the other; moreover, the isomorphism is well-defined by sending a base of $V$ to a basis of $V'$. It also has the advantage that given a number $n$, there always exists a vector space of that dimension, and we can even provide a canonical model, the Euclidean space $\mathbb{R}^n$. Another interesting example of classification is that of closed and orientable surfaces through their genus. Given a surface in the wild, if one can obtain a triangulation of $S$ we can determine its genus and draw a picture of it, as two closed and orientable surfaces have the same genus if and only if they are homeomorphic. Finally, there are surfaces of every genus $n$, and they are either a sphere or obtained by the connected sum of $n$  $2$-dimensional tori $\TT^n$. Let us synthesize the important and common aspects of these classifications.

\begin{itemize}
	\item  \textbf{Assignation of a combinatorial object}: In both cases, this is done trough another intermediate mathematical object that permit some explicit computations:  either a basis $A$ for $V$ or a triangulation $\Delta$ of $S$. Then, to the pair $(V,A)$ we assign the cardinality of $A$  and for $(S,\Delta)$, can applied the formula of the  Euler characteristic of $\Delta$. In both cases if we know the intermediate object we can compute the combinatorial object, that in this situation it is a natural number $\NN$.
	
	\item \textbf{The combinatorial object is a total invariant}:
	The association shall to distinguish between vector spaces that are not isomorphic(as linear vector spaces) and surfaces that are not homeomorphic. It is a well-known fact that two finite dimensional vector spaces are isomorphic if and only if their dimensions are the same. Similarly, in the classification of closed surfaces, it is stated that two closed and orientable surfaces are homeomorphic if their genera are the same.
	
	\item \textbf{Realization of the invariants}: Whenever I take an $n\in \mathbb{N}$, I can find a vector space or a surface of genus $n$? Yes, all the natural numbers are realized as the dimension and genus of a vector space and a surface, respectively.
	
	\item \textbf{Equivalence of the invariants}: Can a vector space have bases with different cardinalities, or can a surface have two different genus if we compute the Euler characteristic of different triangulations? In this case, the answer is no. The bases of a vector space all have the same cardinality, therefore, its dimension is uniquely defined. Similarly, the Euler characteristic of any triangulation of the same surface coincides, so its genus is also unique.
\end{itemize}

Our intention is to emulate this type of classification for the conjugacy classes of surface homeomorphisms. However, we must clarify that the number of equivalence classes is uncountable and  there is no hope of obtaining a discrete invariant for all of them as they cannot be parameterized by $\mathbb{N}$. On the other hand, there is a class of surface homeomorphisms that has received much attention since the time of the famous Handel-Thurston classification for orientation-preserving surface homeomorphisms: the so-called pseudo-Anosov homeomorphisms. Let us introduce these pseudo-Anosov homeomorphisms and their geometric Markov partitions, which will be the equivalent of a basis or a triangulation when we pose the roots of our classification. Through them, we will be able to associate a pseudo-Anosov homeomorphism with a combinatorial object in a countable set; it will never be simply a natural number, but rather an abstract geometric type. We will gradually introduce these concepts to formulate the problems we will address in this document with precision.

%%%%%%Homeomorfismos pA

\subsection{Surface homeomorphisms up Isotopy.}

 An orientation-preserving homeomorphism in $S$ is \emph{pseudo-Anosov } (or \emph{generalized pseudo-Anosov} as defined in \cite[Exposition 13.1]{fathi2021thurston}) if it preserves a pair of \emph{transverse measured foliations}, $(\mathcal{F}^s, \mu^s)$ and $(\mathcal{F}^u, \mu^u)$, the stable and unstable foliation of $f$ respectively. Each singularity must be a  $k$-pronged and we are allowing \emph{spines} ($1$-prongs). Simultaneously, $f$ shall expands the $\mu^s$-measure of any compact \emph{unstable arc} $J \subset F^u \in \mathcal{F}^u$, by a \emph{stretch factor} $\lambda > 1$, i.e., $\mu^s(f(J)) = \Lambda \mu^s(J)$, and similarly,  if $I \subset F^s$ is a compact stable interval contained in a  leaf $F^s \in \mathcal{F}^s$, $\mu^s(f(I)) = \Lambda^{-1} \mu^s(I)$ and we said $f$ contracts along the stable foliation. In some sense, pseudo-Anosov homeomorphisms exhibit \emph{uniformly hyperbolic} dynamics outside their singularities and we must to exploit this analogy.  Nowadays there are more general constructions of surface maps that generalize one or another property of  pseudo-Anosov homeomorphisms (\cite{Hall2004unimodal}, \cite{Andre2005extensions}, \cite{boyland2023unimodal}). Therefore, we prefer to simply call them pseudo-Anosov reserve the term "generalized" for these more modern constructions. We usually just said that $f$ is \textbf{p-A} homeomorphism to abbreviate our speech.

According to the Nielsen-Thurston classification theorem \cite{thurston1988geometry}, any surface homeomorphism is \emph{isotopic} to one that is either \emph{periodic}, pseudo-Anosov, or \emph{reducible}. In the reducible case, the surface can be divided into finite pieces along a family of (not null-homotopic) closed curves, and in each piece, the homeomorphism is isotopic to one of the first two cases. The mapping class group of the surface $S$ is denoted by $\mathcal{MCG}(S)$ and consists of the isotopy classes of $\text{Hom}_+(S)$. Since Thurston's foundational paper \cite{thurston1988geometry}, the study of $\mathcal{MCG}(S)$ has been the focus of a lot of research (a comprehensive introduction can be found in \cite{farb2011primer}).

 Among these classes of isotopy, the pseudo-Anosov classes are particularly interesting, and it is natural to try to classify them. There is some hope, as two pseudo-Anosov homeomorphisms are topologically conjugated if they are isotopic  \cite[Theorem 12.5]{fathi2021thurston} and the number of elements in the mapping class is countable so it is the number of conjugacy classes that contain a pseudo-Anosov homeomorphism. 

%%%%%%%%%%%%%%%%%% Revision de clasificaciones

In \cite{bestvina1995train}, Bestvina and Handel provide an algorithmic proof of the Nielsen-Thurston classification theorem and subsequently an algorithmic classification of orientation-preserving homeomorphisms up to isotopy. The combinatorial objects used to achieve such classification are \emph{train tracks} and the incidence matrix associated with them. Similarly, using a variation of train tracks and the intimate relation between the mapping class group of the $n$-punctured disk and the braid groups, Los has provided a finite algorithm for determining whether an isotopy class (of the punctured disk) contains a pseudo-Anosov element or not \cite{los1993pseudo}. Roughly speaking, a train track is the isotopy class of a smoothly embedded graph in the surface that satisfies certain switch conditions, and the action of the homeomorphism on the graph can be read from a certain \emph{incidence matrix}. This classification, however, does not necessarily provide an answer to the problem of determining if two pseudo-Anosov homeomorphisms are topologically conjugated. A step forward in that direction with a similar approach using cell decomposition was taken by Mosher in \cite{mosher1983pseudo} and \cite{mosher1986classification} to provide a classification of certain pseudo-Anosov isotopy classes defined over a closed surface $S$ with a set $P$ of marked points $\Sigma:(S,P)$ up to conjugacy by elements in $\mathcal{MCG}(S,P)$. He has considered such isotopy classes that have a \textbf{p-A} that fix each of its singularities and each of the separatrices of its singularities. Since two isotopic pseudo-Anosov diffeomorphisms are conjugated \cite[Theorem 12.5]{fathi2021thurston}, Mosher's classification is, in particular, a partial classification of pseudo-Anosov diffeomorphisms defined over $S$ by orientation-preserving homeomorphisms that fix the set of marked points $P$. To achieve such classification, Mosher associated to each pseudo-Anosov isotopy class a family of integer numbers that he called \emph{pseudo-Anosov invariants}. These invariants permit the identification of whether or not two pseudo-Anosov isotopy classes are conjugated, up certain powers.

We could say that, in general terms, train tracks and their incidence matrices are suitable tools for studying homeomorphisms up isotopy, and it's undeniable that Thurston's school, which emphasizes these objects, has provided many deep results that now greatly aid our understanding of surface homeomorphism up isotopic deformations. However, in Thurston's original works, there's an emphasis on dynamics, and in that sense, topological conjugation between homeomorphisms is the natural equivalence relation, rather than isotopy. In this work, we will use completely different ideas, stemming from the hyperbolic theory of dynamical systems, rooted in Anosov's work on the geodesic flow for surfaces of negative curvature and Smale's Horseshoe. Since those days, much water has flowed under the bridge, and we would like to revisit a classification of $C^1$ structurally stable diffeomorphisms on surfaces that may not be widely known but is the root, inspiration, and foundation of this present article.

%%%%%%%%%% Resultado Bonatti-Lngeving-Jeandenanas y Beguin para Smale diffeos
\subsection{Smale surface diffeomorphisms up topological conjugacy.}

A diffeomorphism $\phi:S \to S$ is $C^1$ \emph{structurally stable} if there exists an open set $U$ in the $C^1$ topology of Diff$(S)$ such that any other $C^1$ diffeomorphism $\phi'\in U$ is topologically conjugate to $f$ trough an homeomorphism isotopic to the identity. We'll call \emph{Smale diffeomorphism} to any $C^1$-structurally stable surface diffeomorphism as they were named in \cite{bonatti1998diffeomorphismes}. In such book  Bonatti and Langevin begin a systematic study of the invariant neighborhoods of their \emph{saddle-type saturated sets}, i.e., non-trivial \emph{uniformly hyperbolic sets} without attractors nor repellers their intention is provide a classification up topological conjugacy not just in the hyperbolic set but in a whole neighborhood. After the works of Béguin (\cite{beguin1999champs}, \cite{beguin2002classification}, \cite{beguin2004smale}), they completed their classification where the main tools were Markov partitions and the classifying object and abstract geometric type.  geometric Markov partitions will play the role of train tracks, and their corresponding geometric types, the role of the incidence matrix in the  Bestvina-Handel classification \cite{bestvina1995train}  classification of the mapping classes, and the geometric type can be thought of as a generalization of the incidence matrix of a Markov partition with enriched information related of the order and orientations of the sub-rectangle in the partition. We'll highlight the ideas and results of the Bonatti-Langevin, Béguin classification for Smale surface diffeomorphisms that are necessary to pose the classification problems as formulated by them in \cite{bonatti1998diffeomorphismes}, but adapted to the case of pseudo-Anosov homeomorphisms.

%%% El tipo geometrico empaquetamiento de la informacion
%%% Tipos geometricos abstractos

Let $\phi:S\to S$ a Smale surface diffeomorphism and  assume it is not Anosov and let $K\subset S$ a non-trivial saddle-type \emph{basic piece} of $f$. A  Markov partition of $(f,K)$ is finite family of disjoint rectangles rectangles $\textbf{R}=(R_i)_{i\in I}$ which cover $K$ and such that their images satisfy the following Markov-type property: If $R_1, R_2 \in \textbf{R}$, then $f(R_i) \cap R_j$ is a sub-vertical rectangle of $R_j$, and $f^{-1}(R_j) \cap R_i$ is a sub-horizontal rectangle of $R_j$.  We call these particular sub-rectangles, respectively, the \emph{vertical} and \emph{horizontal sub-rectangles} of the Markov partition $\textbf{R}$. Is a well know result that $(f,K)$  has a Markov partition $\textbf{R}$ (\cite{BowenAnosovbook}). The stable and unstable lamination of $K$ are denote by $W^s(\phi, K)$ and $W^s(\phi, K)$, the family of intervals obtained as the connected components of $W^s(\phi, K)\cap R$ and $W^s(\phi, K) \cap R$.   Let's call \emph{vertical} to the unstable direction and \emph{horizontal} the stable direction. Then, since the surface is oriented the rectangle $R$ have an orientation and a orientation of the vertical  gives an unique orientation of the horizontals coherent with the orientation of $R$. A Markov partition whose rectangles have been endowed with a vertical direction and a labeling is called  \emph{geometric Markov partition} of $\phi$ and we denote them as  $(\phi,K, \cR={R_i}_{i=1}^n)$ and lets fix this information in the rest of this subsection. 

The geometric type of the geometric Markov partition $(\phi,K, \mathcal{R}=\{R_i\}_{i=1}^n)$ was introduced for the first time in \cite[Definition 5.2.1]{bonatti1998diffeomorphismes}, it is denoted by $T(\phi,K, \mathcal{R}=\{R_i\}_{i=1}^n) := (n,\{h_i,v_i\}_{i=1}^n,\Phi_T, \epsilon_T)$, and essentially comprises: the number $n$ of rectangles in $\mathcal{R}$, the numbers $h_i$ and $v_i$ of horizontal and vertical sub-rectangles of the Markov partition $\mathcal{R}$ contained in $R_i$ (See \ref{Defi: Markov partition} for the  \textbf{p-A} case), respectively. After labeling them with respect to the horizontal and vertical direction of $R_i$ (See Definition \ref{Defi: Label and geometrization of subrectangles}), $\Phi_T$ is a bijection between the set of all ordered horizontal sub-rectangles of $\mathcal{R}$ and its vertical sub-rectangles, capturing the order in which $f$ sends the horizontal into its vertical sub-rectangles. On the other hand, the function $\epsilon_T$ associates to every sub-rectangle a number: $1$ if $f$ preserves the vertical direction or $-1$ if it changes it. An abstract geometric type \ref{Defi: Abstract geometric types} is a quadruple $(n,\{h_i,v_i\}_{i=1}^n,\Phi_T, \epsilon_T)$ where the only restriction is $\sum_{i=1}^n h_1 = \sum_{i=1}^n v_i$, then $\Phi_T$ is a bijection between finite sets, and $\epsilon_T$ is a function from a finite set to $\{-1,1\}$. The set of abstract geometric types is denoted by $\mathcal{G}\mathcal{T}$, and we will say that $T$ is realized as a Smale surface diffeomorphism if there exists a Smale diffeomorphism $\phi:S \to S$ with a non-trivial saddle-type basic piece $K$ that has a geometric Markov partition $\mathcal{R}$ whose associated geometric type is $T$. Finally we say that two geometric types are equivalent if they are realized by basic pieces of Smale diffeomorphism that have invariant neighborhoods where they are topologically conjugated. In \cite[Chapter 0.2]{bonatti1998diffeomorphismes}, Bonatti and Langevin stablish three types of questions related to the classification, we has  translate them  as follow:

\begin{enumerate}
	\item Can we associate a finite combinatorial presentation with a Smale diffeomorphism, such that, if two diffeomorphisms have the same presentations, they are topologically conjugate? At this stage, if the presentations are different, we cannot compare the diffeomorphisms.
	\item Is there a finite algorithm to decide when two different presentations correspond to two conjugate diffeomorphisms?
	\item Which combinatorial presentations correspond to a Smale diffeomorphism (realizability)?
\end{enumerate}

They solved $(1)$ in \cite[Theorem 5.2.2]{bonatti1998diffeomorphismes} by proving that if two basic pieces have geometric Markov partitions of the same geometric type, then they have an invariant neighborhood where they are topologically conjugated (in fact, it is an if and only if condition). The problem $3)$ was approached in the following way: for a geometric type $T$, we can glue a family of $n$ affine rectangles along their horizontal and vertical sub-rectangles to obtain a surface with boundary $\mathcal{R}_T$, called the \emph{realizer} of $T$. There is a natural way to define the $n$-power of the geometric type $T$ so that it is coherent with the $n$-th power of $f$ and construct the surface $\mathcal{R}_{T^n}$. The genus of $T$ is defined as the limit of $gen(\mathcal{R}_{T^n})$, and it results in a non-decreasing sequence; therefore, the genus of $T$, $gen(T)$, is finite or infinite. Additionally, they formulated three conditions that are read on the surface $\mathcal{R}_{T^n}$ and allow us to decide if the genus is finite. They showed that if a geometric type is realizable, it has finite genus. Later, Beguin showed that this is a sufficient condition, completing part 3 of the problem. Finally, Beguin developed an algorithm that allows determining when two geometric types are equivalent, providing a solution to $2)$. 

In \cite[Chapeter 8]{bonatti1998diffeomorphismes} Bonatti and Jeandennas begin to adapt these ideas to the classification of pseudo-Anosov homeomorphisms, this article is in certain manner the continuation of such ideas. In the next subsection  we will outline the classification problem for \textbf{p-A} homeomorphism  state the results contained in this article, which are the roots for a constructive classification of pseudo-Anosov homeomorphisms using Markov partitions and geometric types.

\subsection{A constructive classification of pseudo-Anosov homeomorphisms.}%%% Exposicion de resultados

Let to fix  a closed and orientable surface $S$, in the rest of this article the topological conjugacy it is always by orientation preserving homeomorphisms. The set of pseudo-Anosov homeomorphisms of the surface $S$ will be denoted  $ \text{Hom}_{\textbf{p-A}}(S) \subset \text{Hom}_+(S)$ and the set of such conjugacy classes of $\text{Hom}_+(S)$ that contain a pseudo-Anosov homeomorphism is denoted $[ \text{Hom}_{\textbf{p-A}}(S)]_{\text{Top}_+}$.  In proposition \ref{Prop: pA closed up conjugation}, we must prove that every homeomorphism topologically conjugated to a \textbf{p-A} is  pseudo-Anosov. Therefore, we can  denote the elements of $[ \text{Hom}_{\textbf{p-A}}(S)]_{\text{Top}_+}$ as $[f]$, whenever $f\in  \text{Hom}_{\textbf{p-A}}(S)$.	

	\begin{prob}\label{Prob: Clasification}
		The \emph{classification problem} for  pseudo-Anosov homeomorphisms is divided into three fundamental components.
		\begin{enumerate}
	\item \textbf{Finite presentation:} This aspect aims to provide a concise and combinatorial description of the conjugacy classes that contain a generalized pseudo-Anosov homeomorphism. It can be further divided into the following sub-problems:
	\begin{itemize}
		\item[I)] \textbf{Combinatorial representation:} Associate to every pseudo-Anosov homeomorphism $f$ an abstract geometric type. Even if the association is not unique, it must satisfy that: A geometric type is linked to a pair of \textbf{p-A} homeomorphisms if and only if they are topologically conjugated.
		
		\item[II)] \textbf{Combinatorial models:} Given geometric type $T$ associated with a pseudo-Anosov homeomorphism $f$, we must reconstruct a pseudo-Anosov homeomorphism that is conjugate to $f$ using the information provided by $T$.
	\end{itemize} 
	
	\item \textbf{Realization:} The subset of abstract geometric types that are associated with a pseudo-Anosov homeomorphism is the pseudo-Anosov class and will be denoted by $\mathcal{GT}(\mathrm{pA})$. Generally, $\mathcal{GT}(\mathrm{pA})$ is a proper subset of $\mathcal{GT}$ and the second problem is to provide a constructive algorithm method for determining whether or not an abstract geometric type belongs to the pseudo-Anosov class. Ideally, we would like to set a time bound for the algorithm to provide an answer.
	
	\item \textbf{Equivalence of representation:} If a generalized pseudo-Anosov homeomorphism has associated multiple combinatorial representations, we ask for a method to determine whether two geometric types in the pseudo-Anosov class correspond to conjugate pseudo-Anosov homeomorphisms or not. Similar to the previous problem, we would like to provide an algorithm that can efficiently determine, within finite and bounded time, whether two combinatorial representations represent the same conjugacy class.
	
	A more ambitious task is to identify a finite and distinguished family, denoted $\Theta[f]$, of the set of combinatorial objects associated with the pseudo-Anosov homeomorphism $f$. This family must satisfy three essential criteria: 
	\begin{itemize}
		\item $\Theta(f)$ must be computable from any other combinatorial object associated with $f$.
		\item $\Theta(f)$ must allow the computation of any other invariant $T(f)$ associated with $f$.
		\item $\Theta(f)$ must be, by contention, the minimum set of combinatorial information that satisfies the previous items.
	\end{itemize}
\end{enumerate}
\end{prob}

In \cite{IntiThesis} several parts of this program have been completed. In this article, however, we will focus on the more theoretical  point of view of this classification more than the constructive part. That is, during this manuscript, we will assume the existence of Markov partitions with certain combinatorial properties, and in another pepper preparation (\cite{IntiII}), we will emphasize the constructions and algorithms that are necessary to obtain such partitions. Below, we will show how we approach the problem and the results that we present here.

\subsubsection{A combinatorial object associated to a \textbf{p-A} homeomorphism}
We must to start by associate to every \textbf{p-A} $ f \in \text{Hom}_{\textbf{p-A}}(S)$ a certain combinatorial object in a the   countable set t $\mathcal{G}\mathcal{T}$ of abstract geometric types \ref{Defi: Abstract geometric types}.  Roughly speaking, an \emph{abstract geometric type} $T$ is an ordered quadruple, $T = (n, {h_i,v_i}{i=1}^n, \rho_T, \epsilon_T)$, where $n, h_i, v_i \in \mathbb{N}+$ satisfy $\sum{i=1}^{n} h_i = \sum_{i=1}^n v_i$. These numbers determine a pair of formal sets: $\mathcal{H}(T) = {(i,j) : 1 \leq i \leq n \text{ and } 1 \leq j \leq h_i}$ and $\mathcal{V}(T) = {(k,l) : 1 \leq k \leq n \text{ and } 1 \leq l \leq v_k}$ and  $\rho_T: \mathcal{H}(T) \rightarrow \mathcal{V}(T)$ is a bijection, and $\epsilon_T: \mathcal{H}(T) \rightarrow {-1,1}$ is any function over these sets.  The association will be done via Geometric Markov partitions and its geometric type.

A Markov partition (\ref{Defi: Markov partition}) for a pseudo-Anosov homeomorphism $f$ is a family of immersed rectangles (\ref{Defi: Rectangle}) $\cR = \{R_i\}_{i=1}^n$, with disjoint interiors $\overset{o}{R_i} \cap \overset{o}{R_j} = \emptyset$, whose union is $S$, and such that the closure of every non-empty connected component of $f(\overset{o}{R_i}) \cap \overset{o}{R_i}$ is a vertical sub-rectangle of $R_j$, and similarly, the closure of every connected component of $\overset{o}{R_i} \cap f^{-1}(R_j)$ is a horizontal sub-rectangle of $R_k$ (\ref{Defi: Vertical/horizontal subrec}). These horizontal and vertical sub-rectangles are the horizontal and vertical sub-rectangles of the Markov partition $(f,\cR)$. A geometric Markov partition of $f$ is a Markov partition for which we have chosen a labeling of their rectangles, each of them having a vertical and a horizontal direction whose transverse orientation is coherent with the orientation of the surface (\ref{Defi: Geometric rectangle}).

Any pseudo-Anosov homeomorphism has a Markov partition (see \cite[Proposition]{fathi2021thurston} for the case of non-spines and \cite{IntiII} for a synthetic construction in the generalized case). Therefore, every \textbf{p-A} homeomorphism $f$ has a geometric Markov partition, and we use the notation $(f,\cR)$ to indicate that $\cR$ is a geometric Markov partition of $f$. In Subsection \ref{Subsec: Geo type de particion geometrica}, we describe how to associate a unique geometric type to a geometric Markov partition $(f,\cR)$; we denote such geometric type $T(f,\cR)$ and call it the geometric type of $(f,\cR)$ (see \ref{Defi: geometric type of a Markov partition}). An abstract geometric type $T$ belongs to the \emph{pseudo-Anosov class} ( of geometric types) $\mathcal{G}\mathcal{T}(\textbf{p-A})$ if there exists a surface $S$ and a pseudo-Anosov homeomorphism $f: S \to S$ with a geometric Markov partition $\cR$ such that $T(f,\cR) = T$. We say that a geometric type $T$ in the pseudo-Anosov class is realized by $(f,\cR)$ if $T(f,\cR) = T$. Two geometric types in the pseudo-Anosov class are equivalent if they are realized by topologically conjugated pseudo-Anosov homeomorphisms. With this vocabulary, we are ready to state our results

\subsubsection{Total invariant of conjugacy}

The first result of this article is that this geometric type is a total invariant under conjugation. In the following statement, we do not demand in principle that $S_f$ and $S_g$ be homeomorphism surfaces as it is a trivial consequence.

\begin{theo*}[\ref{Theo: conjugated iff  markov partition of same type}]
	Let $f:S_f\rightarrow S_f$ and $g:S_g \rightarrow S_g$ be two  pseudo-Anosov homeomorphisms. Then, $f$ and $g$ have a geometric Markov partition of the same geometric type if and only if there exists an orientation preserving homeomorphism between the surfaces $h:S_f\rightarrow S_g$ that conjugates them, i.e., $g=h\circ f\circ h^{-1}$.
\end{theo*}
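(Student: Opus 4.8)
The plan is to prove the two directions separately, with the forward direction (same geometric type $\Rightarrow$ conjugate) being the substantive one. For the easy direction, suppose $h\colon S_f\to S_g$ is an orientation-preserving homeomorphism with $g=h\circ f\circ h^{-1}$. If $\cR=\{R_i\}_{i=1}^n$ is a geometric Markov partition of $f$, then I would set $R_i'=h(R_i)$, transport the vertical/horizontal directions and the labeling through $h$, and check that $\cR'=\{R_i'\}$ is a geometric Markov partition of $g$. Because $h$ is a homeomorphism it carries rectangles to immersed rectangles, disjoint interiors to disjoint interiors, and the union $S_f$ to $S_g$; the Markov condition for $g$ follows by conjugating the Markov condition for $f$ via $g^k\circ h = h\circ f^k$; and because $h$ preserves orientation, the transverse orientations stay coherent with the orientation of $S_g$ and each sign $\epsilon_T$ is unchanged (this is exactly where orientation-preservation is used). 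Hence $T(g,\cR')=T(f,\cR)$.

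For the forward direction, assume $f$ has a geometric Markov partition $\cR$ and $g$ has a geometric Markov partition $\cS$ with $T(f,\cR)=T(g,\cS)=T$. The strategy is the classical ``code and decode'' argument familiar from Markov partitions and from the Bonatti--Langevin--Jeandenans setting: build the conjugacy first on the invariant set carrying the symbolic dynamics, then extend it continuously to all of $S_f$. Concretely, I would (i) associate to $(f,\cR)$ the subshift of finite type $\Sigma_T$ determined purely by the geometric type $T$ — the admissibility of a transition $i\to j$ is encoded by $\rho_T$ and the $h_i,v_j$, so $\Sigma_T$ depends only on $T$; (ii) show the natural projection $\pi_f\colon\Sigma_T\to S_f$ (sending a bi-infinite admissible word to the unique point whose orbit follows the prescribed rectangles) is well defined, surjective, continuous, $\sigma$-equivariant, and injective off the orbit of the stable/unstable boundary of $\cR$; and likewise for $\pi_g$. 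Then $h:=\pi_g\circ\pi_f^{-1}$ is a well-defined, $\sigma$-conjugacy-induced bijection on the dense set of points with unambiguous coding, and it conjugates $f$ and $g$ there. The key point that must be checked is that the identifications forced by ``two codes name the same point'' are exactly the same on the $f$ side and the $g$ side — but these identifications are governed by how the boundaries of rectangles map into boundaries of rectangles, and that combinatorial boundary data (which vertical/horizontal edge of $R_j$ is hit, with which orientation) is precisely what $T$, together with $\rho_T$ and $\epsilon_T$, records. So $h$ descends to a well-defined map on the quotients, i.e. on $S_f$ and $S_g$.

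It then remains to upgrade $h$ from a bijection on a dense invariant set to a homeomorphism $S_f\to S_g$, and to see that it preserves orientation. For continuity and the homeomorphism property I would use that $\pi_f$ and $\pi_g$ are closed maps with uniformly small fibers (each fiber is a single point or the finitely many codes of a boundary point), so $h$ extends continuously, with continuous inverse $\pi_f\circ\pi_g^{-1}$, to all of $S_f$; the extension still satisfies $g\circ h = h\circ f$ by density. Orientation-preservation of $h$ follows from the fact that inside each rectangle $R_i$ the map $h$ carries the oriented vertical foliation of $R_i$ to the oriented vertical foliation of $h(R_i)$ and likewise horizontally (this is built into the definition of $\pi_f,\pi_g$ from a \emph{geometric} Markov partition), and the horizontal orientation is chosen coherently with the surface orientation, so the local degree is $+1$ everywhere. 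Finally, a continuous surjection between closed surfaces that is injective on a dense set and locally orientation-preserving is an orientation-preserving homeomorphism, and $h$ conjugates $f$ to $g$ as required.

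The main obstacle I expect is step (ii)–(iii): showing that the projections $\pi_f$, $\pi_g$ are genuinely well defined and that the pasting/identification pattern of boundary codes is determined by $T$ alone. In the pseudo-Anosov setting this is delicate near the singularities (including the $1$-pronged spines), where several rectangles meet and a boundary point can carry many codes; one must verify that the cyclic order in which rectangles are glued around a singularity, and the $\pm1$ twisting recorded by $\epsilon_T$, reproduce the same local picture on both sides. Everything else — equivariance, density of the good set, the closed-map argument, the orientation bookkeeping — is routine once this combinatorial-to-topological dictionary is in place.
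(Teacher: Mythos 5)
Your route is the same as the paper's (build a symbolic model from $T$, project to both surfaces, set $h=\pi_g\circ\pi_f^{-1}$, then handle orientation), and your easy direction is exactly the paper's induced-partition argument. But as a proof there are two concrete gaps. First, the projection $\pi_f$ is not even well defined on the subshift you describe unless the incidence matrix $A(T)$ is binary: when $f(\overset{o}{R_i})\cap\overset{o}{R_j}$ has several connected components, a bi-infinite string of rectangle labels does not pin down a single point, so the ``code and decode'' dictionary fails at the first step. The paper addresses this explicitly by first replacing $\cR_f$ and $\cR_g$ by their horizontal refinements, which have binary incidence matrix and the same geometric type, and which moreover arrange $h_i,v_i>2$ --- a hypothesis that is actually used later in the orientation argument.

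Second, and this is the heart of the matter which you correctly flag as ``the main obstacle'' but do not carry out: the claim that the identifications ``two codes name the same point'' are the same for $(f,\cR_f)$ and $(g,\cR_g)$ because they are ``recorded by $T$'' is precisely Proposition \ref{Prop: The relation determines projections}, and proving it is the bulk of the paper's Section \ref{Sec: Total Invariant}. One must define a relation $\sim_T$ purely from $T$ (via the $s$- and $u$-boundary labels, the generating functions $\Gamma(T)$ and $\Upsilon(T)$, the resulting boundary codes, and the decomposition $\Sigma_A=\Sigma_{\cI nt(T)}\cup\Sigma_{\cS(T)}\cup\Sigma_{\cU(T)}$ with the relations $\sim_s$, $\sim_u$, $\sim_I$), and then show, through the sector-code description of the fibers of $\pi_f$, that $\underline{w}\sim_T\underline{v}$ if and only if $\pi_f(\underline{w})=\pi_f(\underline{v})$, including the case analysis at corner points, periodic boundary points and spines. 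Without this, your $h$ is only defined on the totally interior points, and your extension step is not routine: a dynamics-preserving bijection between dense invariant subsets need not extend to a homeomorphism, and ``closed maps with small fibers'' does not by itself give the extension. What makes the argument close in the paper is that the fibers of $\pi_f$ and $\pi_g$ coincide as subsets of $\Sigma_A$, so both surfaces are canonically the same quotient $\Sigma_T=\Sigma_A/\sim_T$ and $h=[\pi_g]\circ[\pi_f]^{-1}$ is a homeomorphism with no limiting argument; the orientation claim then still requires the lemma $h(H^i_j)=\underline{H}^i_j$ and the refinement hypothesis before concluding local degree $+1$ and coherence across adjacent rectangles.
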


Let $f: S \to S$ be a \textbf{p-A} homeomorphism with a geometric Markov partition $\cR$, and let $T = (f,\cR)$. In order to prove Theorem~\ref{Theo: conjugated iff markov partition of same type}, we are going to use $T$ to construct another dynamical system $(\Sigma_T, \sigma_T)$, and then prove that $f$ is topologically conjugate to $(\Sigma_T, \sigma_T)$ through an explicit orientation-preserving homeomorphism that depends solely on the fact that $f$ has a geometric Markov partition $\cR$ of geometric type $T$. In this manner, the maps $f: S_f \rightarrow S_f$ and $g: S_g \rightarrow S_g$ in Theorem \ref{Theo: conjugated iff markov partition of same type} will be topologically conjugated to the same $(\Sigma_T, \sigma_T)$, and therefore they themselves are topologically conjugated. We call $(\Sigma_T, \sigma_T)$ a \emph{symbolic model} (\ref{Defi: symbolic model}) of $f$. Let us briefly explain the symbolic nature of such models. The geometric type $T$ determines the \emph{incidence matrix} $A = A(T)$ of the Markov partition $(f, \mathcal{R})$ ( \ref{Defi: Incidence matrix markov partition})When the matrix $A$ has coefficients in ${0,1}$ it is called \emph{binary}, and we can take $\sigma: \Sigma_{A(T)} \to \Sigma_{A(T)}$ as the \emph{sub-shift of finite type} induced by $T$. Furthermore, there exists a semi-conjugacy $\pi_{(f,\mathcal{R})}: \Sigma_{A(T)} \rightarrow S_f$ that just depends on the homeomorphism $f$ and the Markov partition $\mathcal{R}$, which associates to each code $\underline{w} = (w_z){z \in \mathbb{Z}} \in \Sigma{A(T)}$ a unique point $x$ on the surface $S$ such that $f^z(x) \in R_{w_z}$, i.e., $x := \pi_f(\underline{w})$ follows the "itinerary" given by $\underline{w}$.  Two codes $\underline{v}, \underline{w} \in \Sigma_{A(T)}$ are $\sim_{(f,\mathcal{R})}$-related if $\pi_{(f,\mathcal{R})}(\underline{v}) = \pi_{(f,\mathcal{R})}(\underline{w})$, this relation determines the quotient space $\Sigma_{(f,\mathcal{R})}: := \Sigma_A/\sim_{(f,\mathcal{R})}$, and it is not difficult to see that $\Sigma_{(f,\mathcal{R})}$ is homeomorphic to $S_f$.  The combinatorial model $(\Sigma_T, \sigma_T)$ is obtained by taking the quotient of $(\Sigma_{A(T)})$ by an equivalence relation $\sim_{T}$ determined by $T$ in such a manner that each equivalence class $[\underline{w}]_{\sim_T}$ is equal to a equivalence class $[\underline{w}]_{\sim_{(f,\mathcal{R})}}$.  In Section \ref{Sec: Total Invariant}, we construct this equivalence relation, and Proposition \ref{Prop: The relation determines projections} below establishes its key properties.

\begin{prop*}[\ref{Prop: The relation determines projections}]
	Let $T$ be a geometric type with an incidence matrix $A:=A(T)$ having coefficients in $\{0,1\}$, and let $(\Sigma_A,\sigma)$ be the associated sub-shift of finite type. Consider a generalized pseudo-Anosov homeomorphism $f:S\rightarrow S$ with a geometric Markov partition $\cR$ of geometric type $T$. Let $\pi_f:\Sigma_A \rightarrow S$ be the projection induced by the pair $(f,\cR)$. Then there exists an equivalence relation $\sim_T$ on $\Sigma_A$, algorithmically defined in terms of $T$, such that for any pair of codes $\underline{w},\underline{v}\in \Sigma_A$, they are $\sim_T$-related if and only if their projections coincide, $\pi_f(\underline{w}) = \pi_f(\underline{v})$.
\end{prop*}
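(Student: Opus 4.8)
The plan is to build the equivalence relation $\sim_T$ purely from combinatorial data and then verify that it matches the geometric coincidence relation $\sim_{(f,\cR)}$. First I would analyze exactly how two distinct codes $\underline{w},\underline{v}\in\Sigma_A$ can project to the same point $x=\pi_f(\underline{w})=\pi_f(\underline{v})$. Since $\cR$ is a Markov partition with disjoint interiors covering $S$, a point has more than one itinerary precisely when its entire forward or backward orbit lands in the boundary of the rectangles; that is, $x$ lies on the stable or unstable leaves through the vertical/horizontal boundaries. The crucial structural fact is that the boundary $\partial^u\cR=\bigcup_i\partial^u R_i$ (the union of unstable sides) is forward-invariant-ish: $f(\partial^u\cR)\subset\partial^u\cR$ up to the finite set of singularities and separatrix endpoints; symmetrically for $\partial^s\cR$ under $f^{-1}$. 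So the ambiguity in coding is governed by finitely many ``boundary orbits'', and because $\cR$ is finite these orbits are eventually periodic. I would classify the possible coincidences into a short list: (i) a point on a common vertical boundary segment shared by two rectangles $R_i,R_j$ — giving a local rule ``$w_0=i$ may be replaced by $j$'' whenever the tail configuration is consistent; (ii) the analogous horizontal-boundary rule; (iii) coincidences forced at singular points and at the tips of spines, where up to $2k$ (resp.\ $1$) codes collapse. The geometric type $T$ records, via $\rho_T$ and $\epsilon_T$ together with the labelling, exactly which sub-rectangles sit on which sides of which rectangles and in what cyclic order around each singularity, so each of these rules can be read off from $T$ alone.

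Next I would define $\sim_T$ as the smallest equivalence relation on $\Sigma_A$ generated by these finitely many local replacement rules — each rule being: ``if the code $\underline{w}$ has a window $w_{[m,n]}$ of a prescribed admissible form (dictated by $T$), then $\underline{w}$ is related to the code obtained by the prescribed substitution on that window, with the rest of $\underline{w}$ unchanged.'' Because the rules involve only bounded windows and finitely many patterns, membership in $\sim_T$ is decidable, which gives the ``algorithmically defined in terms of $T$'' clause. I would then prove the two implications separately. For ``$\sim_T$-related $\Rightarrow$ equal projections'': it suffices to check that each generating replacement rule preserves $\pi_f$, which is a local verification — a code in the window form of rule (i) codes a point on the shared vertical side of $R_i$ and $R_j$, and the substituted code also codes that same point because $\pi_f$ is defined by $\bigcap_z f^{-z}(R_{w_z})$ and the two rectangles meet exactly along that side; one uses the Markov property and the definition of $T$ to see the windows are genuinely admissible. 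Since $\pi_f$ is constant on generators it is constant on the generated equivalence relation.

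For the converse, ``equal projections $\Rightarrow$ $\sim_T$-related,'' I would argue as follows. Suppose $\pi_f(\underline{w})=\pi_f(\underline{v})=x$. If $x$ lies in the interior of some $R_i$ and its whole orbit avoids $\partial\cR$ then the itinerary is unique and $\underline{w}=\underline{v}$. Otherwise, let $n_0$ be minimal (in absolute value, say) with $f^{n_0}(x)\in\partial\cR$. Using invariance of $\partial^u\cR$ under $f$ and of $\partial^s\cR$ under $f^{-1}$, one shows $f^n(x)\in\partial\cR$ for all $n$ on one side of $n_0$, and that the ``branches'' $R_{w_n}$ versus $R_{v_n}$ at each such $n$ differ exactly by one of the local rules; one then walks along the orbit applying finitely many rules — using eventual periodicity of boundary orbits to handle the infinite tail in finitely many steps — to transform $\underline{w}$ into $\underline{v}$. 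The main obstacle, and where I would spend the most care, is this converse at the singular points and spines: there the coincidence is not a simple two-rectangle swap but a cyclic collapse of several codes around a prong, and one must check that the finite set of substitution rules encoded in $T$ (reading the cyclic order of separatrices from $\rho_T,\epsilon_T$) really does generate the full collapse and not merely a subgroupoid of it — in other words, that no ``exotic'' identification among codes is missed and none is spuriously added. Establishing that the rules are both sound and complete at singularities, uniformly in $T$, is the technical heart of the proof.
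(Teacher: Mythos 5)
There is a genuine gap, and it lies exactly where your construction starts: the identifications that $\sim_T$ must encode are \emph{not} generated by bounded-window substitutions. If $x=\pi_f(\underline{w})=\pi_f(\underline{v})$ lies on a stable boundary segment shared by two rectangles, then $f^n(x)\in\partial^s\cR$ for \emph{all} $n\geq 0$ (the stable boundary is $f$-invariant; note also that you have the invariances backwards: $\partial^s\cR$ is $f$-invariant and $\partial^u\cR$ is $f^{-1}$-invariant, not the other way around). Consequently $\underline{w}$ and $\underline{v}$ track the two sides of that stable leaf forever and differ in infinitely many coordinates of their positive tails; symmetrically, codes of a point on a shared unstable boundary differ along the whole negative tail. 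A rule of the form ``substitute a prescribed pattern on a finite window $w_{[m,n]}$ and leave the rest unchanged'' changes only finitely many symbols, and any finite chain of such rules still changes only finitely many symbols, so the smallest equivalence relation generated by your rules is strictly smaller than the fiber relation of $\pi_f$: your completeness direction fails, and ``eventual periodicity of boundary orbits'' cannot rescue it, since no finite composition of bounded substitutions reaches a code differing on an infinite tail. The soundness direction as stated is also unsupported for the same reason: a local window condition such as ``$w_0=i$ may be replaced by $j$'' does not certify that the coded point lies on the common side of $R_i$ and $R_j$; that requires a condition on an entire semi-infinite tail.

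For comparison, the paper's construction addresses precisely this point. From $T$ alone it builds the $s$- and $u$-generating functions $\Gamma(T)$ and $\Upsilon(T)$, which produce the $2n$ eventually periodic $s$-boundary positive codes and $2n$ $u$-boundary negative codes; these are the only admissible tails that project into $\partial^s\cR$, resp.\ $\partial^u\cR$. The relation $\sim_s$ then declares two codes related when they agree on the past up to the (unique) exit time $k$, their $k$-th symbols coincide, their futures from $k+1$ on are the two boundary codes attached to \emph{consecutive} horizontal sub-rectangles $(i,j)$ and $(i,j+1)$, with matching $\epsilon_T$ data; $\sim_u$ is defined symmetrically, and $\sim_T$ is generated by finite alternating $\sim_s$/$\sim_u$ chains (with equality on totally interior codes). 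The algorithmic content comes from the fact that the infinitely long tails involved are finitely specified by $\Gamma(T)$ and $\Upsilon(T)$, not from locality of the rewriting. The completeness step is then the sector-code analysis: every code over $x$ is a sector code, and adjacent sectors around $x$ (including the singular and spine cases, and the corner cases governed by the two exit times $k(s)$, $k(u)$) are linked by $\sim_s$ or $\sim_u$, so all fibers collapse under finite chains. Your instinct that the singularities are where soundness/completeness must be checked is right, but without replacing the bounded-window rules by tail-replacement rules of this kind the proposed relation cannot be correct even at a regular two-rectangle boundary point.
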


\begin{rema}
	If two generalized pseudo-Anosov homeomorphisms $f: S_f \to S_f$ and $g: S_g \to S_g$ have Markov partitions $\cR_f$ and $\cR_g$ of the same geometric type $T$, they will share the same incidence matrix and thus have associated identical sub-shifts of finite type. This implies the existence of projections $\pi_f: \Sigma_A \to S_f$ and $\pi_g: \Sigma_A \to S_g$ that semi-conjugate the shift with the respective homeomorphisms. However, it is not clear how we can establish a direct conjugation between $f$ and $g$ based solely on these projections.
\end{rema}

We must use \ref{Prop: The relation determines projections} to obtain Proposition \ref{Prop:  cociente T}, and then Theorem \ref{Theo: conjugated iff  markov partition of same type}  as a direct consequence.

\begin{prop*}[\ref{Prop:  cociente T}]
	Consider a geometric type $T$ in the pseudo-Anosov class, whose incidence matrix $A:=A(T)$ is binary. Let $f:S \rightarrow S$ be a generalized pseudo-Anosov homeomorphism with a geometric Markov partition $\cR$ of geometric type $T$.  Also, let $(\Sigma_A,\sigma)$ be the sub-shift of finite type associated with $A$ and denote by $\pi_f:\Sigma_A \rightarrow S$ the projection induced by the pair $(f,\cR)$. Then, the following holds:
	
	\begin{itemize}
		\item The quotient space $\Sigma_T = \Sigma_A/\sim_T$ is equal to $\Sigma_f := \Sigma_A/\sim_f$. Thus, $\Sigma_T$ is a closed and orientable surface.
		\item  The sub-shift of finite type $\sigma$ induces a homeomorphism $\sigma_T: \Sigma_T \rightarrow \Sigma_T$ through the equivalence relation $\sim_T$. This homeomorphism is a generalized pseudo-Anosov and topologically conjugate to $f: S \rightarrow S$ via the quotient homeomorphism $[\pi_f]: \Sigma_T = \Sigma_f \rightarrow S$
	\end{itemize}	
\end{prop*}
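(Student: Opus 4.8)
The plan is to deduce Proposition \ref{Prop: cociente T} from Proposition \ref{Prop: The relation determines projections} by a careful bookkeeping of quotients. First I would recall the standard facts about the projection $\pi_f:\Sigma_A\to S$ associated to a geometric Markov partition: it is continuous, surjective, finite-to-one, and semi-conjugates the shift $\sigma$ to $f$, i.e. $f\circ\pi_f=\pi_f\circ\sigma$. This is classical symbolic dynamics for pseudo-Anosov maps (and is set up earlier in the paper); it also makes $\Sigma_f=\Sigma_A/\sim_f$ homeomorphic to $S$ through the induced map $[\pi_f]$, where $\sim_f$ is the fiber relation $\underline w\sim_f\underline v\iff\pi_f(\underline w)=\pi_f(\underline v)$. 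The point of Proposition \ref{Prop: The relation determines projections} is precisely that the combinatorially defined $\sim_T$ coincides with this fiber relation $\sim_f$ as relations on $\Sigma_A$. Hence, tautologically, the quotient spaces $\Sigma_T=\Sigma_A/\sim_T$ and $\Sigma_f=\Sigma_A/\sim_f$ are the same topological space, and since $S$ is a closed orientable surface, so is $\Sigma_T$. This establishes the first bullet.

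For the second bullet I would argue as follows. Because $\sigma$ preserves the fibers of $\pi_f$ — if $\pi_f(\underline w)=\pi_f(\underline v)$ then $\pi_f(\sigma\underline w)=f(\pi_f(\underline w))=f(\pi_f(\underline v))=\pi_f(\sigma\underline v)$ — the relation $\sim_f$, and therefore $\sim_T$, is $\sigma$-invariant. Consequently $\sigma$ descends to a well-defined map $\sigma_T:\Sigma_T\to\Sigma_T$ on the quotient. The identity $f\circ\pi_f=\pi_f\circ\sigma$ then says exactly that the homeomorphism $[\pi_f]:\Sigma_T=\Sigma_f\to S$ satisfies $[\pi_f]\circ\sigma_T=f\circ[\pi_f]$, i.e. $\sigma_T$ is topologically conjugate to $f$ via $[\pi_f]$. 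Since $[\pi_f]$ is a homeomorphism and $f$ is a homeomorphism, $\sigma_T$ is automatically a homeomorphism; and being topologically conjugate to a generalized pseudo-Anosov homeomorphism, $\sigma_T$ is itself generalized pseudo-Anosov — here I would cite the earlier Proposition \ref{Prop: pA closed up conjugation} stating that the class of generalized pseudo-Anosov homeomorphisms is closed under topological conjugacy.

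The only genuinely non-formal ingredient is the claim that $[\pi_f]$ is a homeomorphism of $\Sigma_f$ onto $S$ — equivalently that $\pi_f$ is a closed (hence quotient) map realizing $S$ as $\Sigma_A/\sim_f$. This follows from compactness: $\Sigma_A$ is compact, $S$ is Hausdorff, $\pi_f$ is a continuous surjection, so $\pi_f$ is closed and therefore a quotient map, which identifies $S$ with $\Sigma_A/\sim_f$ canonically. I expect this to be the main point requiring care, but it is standard and may already be recorded in the paper (the excerpt remarks that "it is not difficult to see that $\Sigma_{(f,\cR)}$ is homeomorphic to $S_f$"), so in the write-up I would either invoke that statement or give the one-line compactness argument. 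Everything else is diagram-chasing: once $\sim_T=\sim_f$ is known, the quotient dynamical system $(\Sigma_T,\sigma_T)$ is literally the quotient of $(\Sigma_A,\sigma)$ by the fibers of $\pi_f$, and $(f,S)$ is by construction that same quotient.
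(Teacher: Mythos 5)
Your proposal is correct and follows essentially the same route as the paper: identify $\sim_T$ with $\sim_f$ via Proposition \ref{Prop: The relation determines projections}, invoke the closed-map/compactness argument (recorded in the paper as Proposition \ref{Prop: quotien by f}) to see that $[\pi_f]$ is a homeomorphism conjugating the quotient shift to $f$, and conclude that $\sigma_T$ is generalized pseudo-Anosov by Proposition \ref{Prop: pA closed up conjugation}. No gaps to report.
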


\subsubsection{The realization of the pseudo-Anosov class}

Let $T = (S, {h_i, v_i}_{i=1}^n, \phi, \epsilon) \in \mathcal{G}\mathcal{T}$ be an abstract geometric type. $T$ is in the \emph{pseudo-Anosov class} if there exists a closed surface $S$ and a pseudo-Anosov homeomorphism $f: S \to S$ with a geometric Markov partition $\mathcal{R}$ whose geometric type is $T$, i.e., $T = T(f, \mathcal{R})$. How can you determine if such a pseudo-Anosov homeomorphism exists just by looking at $T$? This is the realization problem for \emph{p-A}, but it was previously considered for surface diffeomorphisms by Bonatti and Jenadanans \cite{bonatti1998diffeomorphismes} and solved by Béguin in \cite{beguin2002classification} for Smale diffeomorphisms. If $T$ is the geometric type of a geometric Markov partition of saddle-type basic pieces of a Smale diffeomorphism $\phi: S \to S$, it is necessary that the realizer of $T$, $\mathcal{R}T$ (\ref{Defi: m realizer}, be embedded in the surface $S$. Such a surface is constructed by gluing a family of rectangles through vertical and horizontal sub-rectangles following the rule dictated by the geometric type $T$. In this manner, the genus of $T$ is the maximum of the genus of the realizers ${gen(\mathcal{R}{T^n})}$. It was proved by Bonatti-Langevin and Béguin that a geometric type is realized by a basic piece of a Smale diffeomorphism of surfaces if and only if $T$ has finite genus. Our initial task is to establish a connection between Markov partitions for pseudo-Anosov homeomorphisms and those defined by saddle-type basic pieces on surfaces. In Proposition \ref{Prop: pseudo-Anosov iff basic piece non-impace} the non-impasse condition is quite technical and can be look at \ref{Defi: Impasse geo}.

\begin{prop*}[ \ref{Prop: pseudo-Anosov iff basic piece non-impace}]
	Let $T$ be an abstract geometric type. The following conditions are equivalent.
	\begin{itemize}
		\item[i)] The geometric type $T$ is realized as a mixing basic piece of a surface Smale diffeomorphism without impasse.
		\item[ii)] The geometric type $T$ is in the pseudo-Anosov class.
		\item[iii)] The geometric type $T$ satisfies the following properties:
		\begin{enumerate}
			\item  The incidence matrix $A(T)$ is mixing
			\item The genus of $T$ is finite
			\item $T$ does not have an impasse.
		\end{enumerate}
	\end{itemize}
\end{prop*}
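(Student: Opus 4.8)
\textbf{Proof plan for Proposition~\ref{Prop: pseudo-Anosov iff basic piece non-impace}.}
The plan is to prove the cycle of implications $(i)\Rightarrow(ii)\Rightarrow(iii)\Rightarrow(i)$, treating the three conditions in (iii) as the bookkeeping that records exactly what goes wrong when one passes between a Smale basic piece and a pseudo-Anosov homeomorphism. The conceptual picture throughout is that a mixing saddle-type basic piece $K$ of a Smale diffeomorphism lives in an invariant neighborhood $N(K)$ which is a surface with boundary, and that collapsing the stable and unstable boundary leaves (the ``$s$'' and ``$u$'' arcs of $\partial N(K)$) yields a closed surface on which the dynamics descends to a pseudo-Anosov map --- provided no collapsing is obstructed. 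The obstruction is precisely an \emph{impasse}: a pair of boundary leaves that bound a rectangle on which the dynamics forces a one-prong-free disk to be collapsed to something non-surface-like, or forces incompatible identifications. So the three conditions in (iii) are the conditions that $N(K)$ (i) is dynamically complete enough that every point has a bi-infinite itinerary (mixing of $A(T)$), (ii) actually embeds in a closed surface at all (finite genus), and (iii) can be collapsed without pathology (no impasse).

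For $(i)\Rightarrow(ii)$: starting from a mixing basic piece $K\subset S'$ of a Smale diffeomorphism $\phi$ without impasse, with geometric Markov partition $\mathcal{R}$ of type $T$, I would perform the boundary-collapse construction. First take the invariant neighborhood $N(K)$ and the induced Markov partition there; then collapse each connected stable boundary segment and each connected unstable boundary segment of the rectangles to a point along the leaf direction. The no-impasse hypothesis is exactly what guarantees this quotient is a (closed, because $S'$ was part of a closed surface or $N(K)$ is compact with the boundary identified) surface $S$ rather than a non-manifold; mixing guarantees the resulting foliations are minimal enough to have no closed leaves and that every pronged singularity is $k$-pronged with $k\ge 1$; and the hyperbolic structure on $K$ descends to transverse measured foliations with a global stretch factor $\lambda=\lambda(A(T))$, the Perron eigenvalue of the incidence matrix. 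One checks the quotient map carries $\mathcal{R}$ to a geometric Markov partition of the resulting $f$ with the same geometric type $T$ (collapsing does not change the combinatorics of how horizontal sub-rectangles map to vertical ones, nor the orientations $\epsilon_T$), so $T\in\mathcal{GT}(\textbf{p-A})$.

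For $(ii)\Rightarrow(iii)$: given $f:S\to S$ pseudo-Anosov with a geometric Markov partition of type $T$, the incidence matrix $A(T)$ is mixing because a pseudo-Anosov map is topologically mixing on $S$ and a Markov partition transports this to primitivity of $A(T)$ (some power has all-positive entries). Finite genus follows because the realizers $\mathcal{R}_{T^n}$ admit, by reversing the collapse, an embedding into $S$ (blow up each singularity and each boundary leaf of the natural ``spine'' of the partition), so $gen(\mathcal{R}_{T^n})\le gen(S)$ for all $n$, hence the non-decreasing sequence stabilizes. The no-impasse condition holds because an impasse in $T$ would, via the realizer, produce a configuration in $S$ that is incompatible with $S$ being a surface or with the foliations being genuine measured foliations --- I would argue the contrapositive, showing an impasse forces a boundary identification that cannot occur inside a closed surface carrying a pseudo-Anosov structure. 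Then $(iii)\Rightarrow(i)$ is essentially the Bonatti--Langevin--Béguin realization theorem (mixing $+$ finite genus gives realization as a mixing basic piece of a Smale diffeomorphism) together with the observation that among such realizations one can be chosen without impasse exactly when $T$ has no impasse, since the impasse of $T$ is a combinatorial invariant read off the realizers.

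The main obstacle I expect is the $(i)\Rightarrow(ii)$ collapse construction carried out with full rigor: one must verify that the quotient of the invariant neighborhood by the ``collapse boundary leaves'' relation is genuinely a topological surface near every image of a corner/singularity, that the measured foliations glue to give transverse \emph{measured} foliations (the transverse measures must match across the identifications, which is where the Markov property and the uniform factor $\lambda$ are used), and that the singularity data is correct --- in particular that spines ($1$-prongs) are produced exactly where the combinatorics of $T$ dictates and nowhere else. Controlling precisely which points become singular, and of what valence, as a function of $T$ alone, is the delicate part; the no-impasse hypothesis is the hinge on which this turns, so a careful statement of \ref{Defi: Impasse geo} in terms of the local collapse picture is where most of the work lies.
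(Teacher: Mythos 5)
Your overall architecture (the cycle $(i)\Rightarrow(ii)\Rightarrow(iii)\Rightarrow(i)$, with $(iii)\Rightarrow(i)$ resting on the Bonatti--Langevin--B\'eguin realization theorem and mixing of $A(T)$ supplying the absence of double boundaries) matches the paper, but two of the three implications have genuine gaps as you have sketched them. For $(i)\Rightarrow(ii)$ you propose to carry out the boundary-collapse of the invariant neighborhood yourself and verify that the quotient is a closed surface carrying transverse measured foliations with the correct singularity data. That verification is exactly the content of the Bonatti--Jeandenans theorem (Theorem~\ref{Theo: Basic piece projects to pseudo-Anosov}, i.e.\ Theorem~8.3.1 of their book), which the paper cites rather than reproves; you correctly identify it as ``where most of the work lies'' but you do not supply it, and reproving it is a project on the scale of a chapter, not a step. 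Moreover, even granting the semi-conjugacy $\pi:\Delta(K)\to S'$, the claim you dispatch in one sentence --- that $\pi$ carries the geometric Markov partition of $K$ to a geometric Markov partition of the pseudo-Anosov map \emph{of the same geometric type} --- is where the paper spends most of Section~\ref{Sec: type basic piece then pA}: one must show $\pi(R_i)$ is a rectangle (extending the parametrization to the boundary), that interiors stay disjoint, that $S'$ is orientable and the induced orientations are coherent along glued boundaries, and that horizontal/vertical sub-rectangles correspond bijectively (Proposition~\ref{Prop: type of basic piece is type of pseudo-Anosov} and its lemmas). None of this is automatic, because $\pi$ collapses arcs and minimal rectangles that meet $\partial\mathcal{R}$.

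In $(ii)\Rightarrow(iii)$ the two non-classical claims are also left unargued. Your finite-genus route (embed the realizers $\mathcal{R}_{T^n}$ into $S$ ``by reversing the collapse'') presupposes a DA-type blow-up of a $C^0$ pseudo-Anosov map with spines on the same surface; making that construction rigorous is essentially equivalent in difficulty to what it is meant to prove, and the paper avoids it by instead verifying that a type in the pseudo-Anosov class has none of the type-$(1)$, $(2)$, $(3)$ obstructions and then invoking Theorem~\ref{Theo: finite type iff non-obtruction}; those verifications use the comparison of induced, gluing and dynamical orientations on boundary curves and the fact that the stable foliation has no closed leaves. Likewise, your no-impasse step is only the assertion that an impasse would be ``incompatible'' with a pseudo-Anosov structure; the actual mechanism (Lemma~\ref{Lemm: gamma periodic points} and Corollary~\ref{Coro: T pA clas no com impasse}) is that a ribbon joining two horizontal boundaries inside one rectangle forces the collapsed boundary to contain a spine, hence the $s$-arc of the would-be impasse contains a periodic point, a contradiction. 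Finally, in $(iii)\Rightarrow(i)$ the phrase ``one can be chosen without impasse'' is misleading: one does not choose a good realization; rather, by Theorem~\ref{Theo: Geometric and combinatoric are equivalent}, \emph{any} basic piece realizing a type with no combinatorial impasse has no topological impasse. So the skeleton is right, but the load-bearing steps are missing or only named.
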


In Proposition \ref{Prop: mixing+genus+impase is algorithm}, we discuss a  procedure to determine if $T$ has finite genus and impasse and we can find a certain bound on the number of iterations of the geometric type $T$ in order to corroborate these properties. This is obtained by reformulating our pictorial criterion to finite genus given as is given in \ref{Defi:  Type 3 obtruction geo}, \ref{Defi: Type 1 obstruction top} and \ref{Defi: Type 2 obtruction top} and can be read in the realizer of $T$ into  pure combinatorial terms using the permutations and indices of the geometric type, this is done in \ref{Sub-sec: Comb Impasse genus}. In this manner, we obtain the following result that provide a answer to the realization problem in \ref{Prob: Clasification}.

\begin{theo*}[\ref{Theo: caracterization is algoritmic}]
	There exists a finite algorithm that can determine whether a given geometric type $T$ belongs to the pseudo-Anosov class. Such algorithm requires calculating at most $6n$ iterations of the geometric type $T$, where $n$ is the first parameter of $T$.	
\end{theo*}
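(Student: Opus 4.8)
The plan is to reduce Theorem~\ref{Theo: caracterization is algoritmic} to the characterization in Proposition~\ref{Prop: pseudo-Anosov iff basic piece non-impace}, so that verifying membership in the pseudo-Anosov class amounts to checking the three conditions $(1)$ mixing of $A(T)$, $(2)$ finiteness of $\text{gen}(T)$, and $(3)$ absence of an impasse. Each of these must then be shown to be decidable with the stated iteration bound. The first condition is immediate: $A(T)$ is an $n\times n$ nonnegative integer matrix, and mixing (primitivity) can be decided by examining whether $A(T)^{n^2-2n+2}$ has all positive entries, which costs no iterations of $T$ beyond forming a fixed power of a fixed matrix. So the real content is conditions $(2)$ and $(3)$, and the main work is already isolated in Subsection~\ref{Sub-sec: Comb Impasse genus}, where the pictorial obstructions of Definitions~\ref{Defi:  Type 3 obtruction geo}, \ref{Defi: Type 1 obstruction top}, \ref{Defi: Type 2 obtruction top} are rephrased purely in terms of the permutation $\rho_T$, the signs $\epsilon_T$, and the indices $\{h_i,v_i\}$.

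First I would recall the combinatorial reformulation: the genus of $T$ is $\lim_n \text{gen}(\mathcal{R}_{T^n})$, a nondecreasing sequence, and $\text{gen}(T)=\infty$ precisely when some realizer $\mathcal{R}_{T^n}$ exhibits one of the geometric/topological obstructions; dually, an impasse is detected on some $\mathcal{R}_{T^n}$ as well. The key quantitative lemma I would establish (or cite from \ref{Sub-sec: Comb Impasse genus}) is a \emph{stabilization statement}: if no obstruction of any of the three types appears in $\mathcal{R}_{T^k}$ for $k$ up to a specific bound depending linearly on $n$, then no obstruction appears for any $k$, and similarly for the impasse. The intuition is that an obstruction is a closed combinatorial configuration — a cyclically-linked chain of boundary segments of the glued rectangles — and such a chain, if present at all, is already visible once the word length tracked by $T^k$ exceeds the number of boundary segments available at stage one, which is controlled by $\sum_i(h_i+v_i)$ and hence by a constant times $n$. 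Counting the vertical and horizontal boundary leaves of the $n$ rectangles and how they are permuted by $\rho_{T^k}$, one sees that any genuinely new identification pattern must recur within $O(n)$ steps; tracing the three obstruction types through this pigeonhole argument yields the explicit constant $6n$.

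Concretely, the steps in order are: (i) invoke Proposition~\ref{Prop: pseudo-Anosov iff basic piece non-impace} to replace "in the pseudo-Anosov class" by the conjunction of the three combinatorial conditions; (ii) dispatch mixing of $A(T)$ by the primitivity test on a fixed power; (iii) using \ref{Sub-sec: Comb Impasse genus}, express the Type~1, Type~2, and Type~3 obstructions and the impasse as finitely-checkable predicates on $(T^k, A(T^k))$; (iv) prove the stabilization lemma bounding the relevant $k$ by $6n$, by the boundary-segment counting argument; (v) assemble these into an algorithm that computes $T, T^2, \ldots, T^{6n}$, tests each for the obstruction predicates, and returns "realizable" iff none is found and $A(T)$ is mixing; (vi) verify termination and the iteration count. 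The main obstacle I anticipate is step~(iv): getting the constant to be exactly $6n$ rather than merely $O(n)$ requires a careful accounting of how the three obstruction types interact — in particular, a Type~3 (geometric) obstruction may only surface after a Type~1 or Type~2 pattern has "propagated," so the bound is not simply the number of boundary segments but a sum of the worst-case propagation lengths for each obstruction class, and one must check that these add up to at most $6n$ and not more. If the sharp constant proves delicate, the fallback is to prove the theorem with an explicit but larger linear bound and remark that optimization to $6n$ follows from the finer analysis in \ref{Sub-sec: Comb Impasse genus}.
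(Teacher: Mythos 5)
Your overall reduction (steps (i)--(iii), (v)--(vi)) matches the paper's: Theorem \ref{Theo: caracterization is algoritmic} is obtained there as a direct consequence of Proposition \ref{Prop: pseudo-Anosov iff basic piece non-impace} combined with Proposition \ref{Prop: mixing+genus+impase is algorithm}. The genuine gap is your step (iv), which is where the entire quantitative content of the statement lives, and your pigeonhole sketch does not establish it. First, the quantity you count, $\sum_i(h_i+v_i)$, is not bounded by a constant times $n$ (the $h_i,v_i$ are unbounded); the relevant count is the $4n$ boundary sides of the $n$ rectangles. Second, the type-$(3)$ obstruction is not a closed configuration "visible at stage one": by Definition \ref{Defi: Type 3 combinatoric} and Lemma \ref{Lemm: Equiv com and top type 3} it involves periodic stable sides and their embryonic separatrices, so its combinatorial detection requires passing to a power $T^m$ with $m$ a common multiple of the periods of the stable boundary components (these periods can themselves be of order $2n$) and larger than the generations of the two ribbons; a generic "any new identification recurs within $O(n)$ steps" argument does not yield $6n$, and the sharp bound is a nontrivial theorem rather than a short counting lemma. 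Third, the impasse is not "dual" to the genus obstructions and needs a separate argument: in the paper this is Theorem \ref{Theo: Geometric and combinatoric are equivalent}, whose proof (detection of an impasse already in $T^{2n+1}$) is itself several pages long.

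Note also that the paper does not reprove the stabilization bound: it quotes it. The $6n$ bound for detecting infinite genus is Theorem \ref{Theo: finite type iff non-obtruction} (Bonatti--Langevin, Th\'eor\`eme 7.4.8), the impasse bound $2n+1\le 6n$ is Theorem \ref{Theo: Geometric and combinatoric are equivalent}, and mixing is checked within the same budget via Proposition \ref{Prop: bound positive incidence matriz } (an $n\times n$ nonnegative matrix is mixing if and only if $A^n$ is positive, and $A(T^n)=A(T)^n$), so no separate Wielandt-type exponent is needed. Assembling these three inputs is exactly Proposition \ref{Prop: mixing+genus+impase is algorithm}, from which the theorem follows immediately. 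So the fix is to replace your step (iv) by citations to those results; your fallback of settling for a larger linear constant would prove a strictly weaker statement than the one asserted, since the explicit $6n$ is part of the theorem.
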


\subsubsection{Equivalent geometric types}v

In order to determine if two geometric types in the pseudo-Anosov class represent pseudo-Anosov homeomorphisms that are topologically conjugated, we must rely on the \emph{Béguin's algorithm}. This algorithm, originally developed in \cite{beguin2004smale}, provides a solution for determining whether two geometric types represent conjugate basic pieces. Its essence can be summarized as follows:

\begin{theo*}[The Béguin's Algorithm]
	Let $f$ be a Smale surface diffeomorphism and $K$ be a saddle-type basic piece with a geometric Markov partition $\mathcal{R}$ of geometric type $T=(n,\{(h_i,v_i)\}_{i=1}^n,\Phi_T)$. The steps of the algorithm are as follows:
	\begin{enumerate}
		\item Begin by defining the \emph{primitive geometric types of order} $n$ of $f$, denoted as $\cT(f,n)$, for all $n$ greater than a certain constant $n(f)\geq 0$.
		\item Prove the existence of an upper bound $O(T)$ for $n(f)$ in terms of the number $n$ in $T$, i.e. $n(f)\leq O(T)$.
		\item For every $n>n(f)$, there exist an algorithm to compute all the elements of $\cT(f,n)$ in terms of $T$.
		\item Let $g$ be another Smale surface diffeomorphism and $K'$ be a saddle-type basic piece of $g$ with a geometric Markov partition of geometric type $T'$. Choose $n$ to be greater than or equal to the maximum of $O(T)$ and $O(T')$. After applying Step 3 of the algorithm, we obtain two finite lists of geometric types, $\cT(f,n)$ and $\cT(g,n)$. These lists will be equal if and only if $f$ is topologically conjugate to $g$ in some invariant neighborhoods of $K'$ and $K'$.
	\end{enumerate}
\end{theo*}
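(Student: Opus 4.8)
This statement recollects a construction of Béguin \cite{beguin2004smale}, so the proof consists either in citing that reference or in reconstructing its four steps; I outline the latter. The backbone is the conjugacy-invariance theorem: two saddle-type basic pieces are topologically conjugate on invariant neighborhoods if and only if they admit geometric Markov partitions of the same geometric type (\cite[Theorem 5.2.2]{bonatti1998diffeomorphismes}; for pseudo-Anosov homeomorphisms this is the total-invariance theorem stated above). The only obstruction to turning this into a decision procedure is that a fixed basic piece $K$ carries infinitely many geometric Markov partitions, hence infinitely many geometric types. The plan is therefore to extract, from a single geometric type $T=T(f,\cR)$, a canonical finite list $\cT(f,n)$ that is (a) manifestly a conjugacy invariant of $(f,K)$, (b) finite, and (c) algorithmically computable from $T$ once $n$ is large enough.

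First I would set up a purely combinatorial calculus of refinements and amalgamations. Any two Markov partitions of $(f,K)$ admit a common refinement, obtained by cutting rectangles along the stable and unstable boundary segments of the other partition and iterating finitely often; conversely there is a (non-unique) amalgamation move. The key point, already present in \cite{bonatti1998diffeomorphismes}, is that the effect of a single refinement on the geometric type is dictated by the geometric type itself: the positions in which the iterates $f^{k}(\partial^{s}R_i)$ cross the rectangles are read off from the permutation $\Phi_T$ together with the signs $\epsilon_T$, so the passage from $T(f,\cR)$ to $T(f,\cR')$, for $\cR'$ a refinement of $\cR$, is a formal operation on abstract geometric types. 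Hence, starting from $T$ alone, one can enumerate the geometric types of all refinements of $\cR$ and, running amalgamations, of all partitions lying between two such refinements.

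Next I would define $\cT(f,n)$ as the set of geometric types of the \emph{primitive} Markov partitions of $(f,K)$ of order $\le n$, where ``order $\le n$'' is the complexity bound of \cite{beguin2004smale} (controlling roughly how many boundary iterates a partition resolves) and ``primitive'' is the minimality condition that selects finitely many representatives among amalgamation-equivalent partitions. Two properties then have to be checked. \emph{Finiteness} of $\cT(f,n)$: a partition meeting the order bound has a bounded number of rectangles and bounded incidence, permutation and sign data, so only finitely many abstract geometric types occur. \emph{Invariance}: a conjugacy $h$ between $(f,K)$ and $(g,K')$ on invariant neighborhoods carries geometric Markov partitions to geometric Markov partitions, preserving all combinatorial structure, order and primitivity, so $\cT(f,n)=\cT(g,n)$ for every $n$ --- which is one direction of Step 4. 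For the converse, once $n$ is large both lists are non-empty, any common element is a geometric type realized by both basic pieces, and the conjugacy-invariance theorem yields the conjugacy.

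The quantitative core --- Steps 2 and 3, namely the existence and explicit value of the bound $O(T)$, and of the threshold $n(f)\le O(T)$ beyond which the enumeration of Step 3 is guaranteed to exhaust $\cT(f,n)$ --- is where I expect the real work. One must prove that after a number of refinement steps controlled by invariants visibly computable from $T$ (the number $n$ of rectangles, the total number $\sum h_i=\sum v_i$ of sub-rectangles, and the number and valence of the boundary and singular orbits, all encoded in $T$), every primitive partition of order $\le n$ has already been produced from $\cR$, independently of which starting partition was used; this stabilization is precisely Béguin's analysis. Adapting it to the pseudo-Anosov setting requires only Proposition~\ref{Prop: pseudo-Anosov iff basic piece non-impace}, which identifies pseudo-Anosov homeomorphisms with mixing Smale basic pieces without impasse: under that identification ``conjugacy on invariant neighborhoods'' upgrades to conjugacy of the ambient pseudo-Anosov maps, and the Smale statement transfers verbatim. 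I would organize Steps 1--3 as definitions together with a termination/stabilization lemma, and Step 4 as the short deduction from the invariance property of $\cT$ and \cite[Theorem 5.2.2]{bonatti1998diffeomorphismes}.
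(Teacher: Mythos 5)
The paper does not prove this statement at all: it is Béguin's own result, quoted from \cite{beguin2004smale} (see Theorem \ref{Theo: Beguin algorithm}), so your first option --- simply citing that reference --- is exactly the paper's treatment. Your four-step reconstruction (primitive partitions, refinement calculus, finiteness, invariance, stabilization bound) is a plausible outline of Béguin's strategy, but it goes beyond anything the paper does and is not what the paper relies on.
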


Let us explain how we address our problem by shifting the analysis to the formal \textbf{DA}$(T)$ (Definition \ref{Defi: Formal DA}). Consider a geometric type $T$ in the pseudo-Anosov class. According to Proposition \ref{Prop: pseudo-Anosov iff basic piece non-impace}, $T$ has finite genus. Interestingly, this condition is necessary and sufficient for $T$ to be realized as a basic piece of a Smale surface diffeomorphism. The \emph{formal derived from Anosov} of $T$ refers to this realization and is represented by the triplet:

\begin{equation}\label{Equa: DA of T}
	\textbf{DA}(T):=(\Delta(T),K(T),\Phi_T),
\end{equation}

The components are as follows: $\Delta(T)$ is a compact surface with boundary of finite genus, $\phi_T$ is a Smale diffeomorphism defined on $\Delta(T)$; and $K(T)$, the unique nontrivial saddle-type basic piece of $(\Delta(T),\Phi_T)$ with a Markov partition of geometric type $T$.  We call $\Delta(T)$ the \emph{domain} of the basic piece $K(T)$. It is a profound result of Bonatti-Langevin \cite[Propositions 3.2.2 and 3.2.5, Theorem 5.2.2]{bonatti1998diffeomorphismes} that the \emph{DA}$(T)$ is unique up to conjugacy and serves as a bridge between the realm of pseudo-Anosov homeomorphisms and the basic pieces of Smale surface diffeomorphisms. Let $f$ a generalized pseudo-Anosov homeomorphism and $\cR$ a geometric Markov partition, a periodic point of $f$ that is in the stable (unstable) boundary of $\cR$ is a $s$-boundary point ($u$-boundary point). Let $f$ and $g$ be two generalized pseudo-Anosov homeomorphisms, and let $\cR_f$ and $\cR_g$ be geometric Markov partitions with geometric types $T_f$ and $T_g$, respectively. Let  $p(f)$ be the maximum period of  periodic points of $f$ on the boundary of $\cR_f$, and $p(g)$ be the maximum period of periodic points of $g$ on the boundary of $\cR_g$. Take $p = \max\{p(f), p(g)\}$.  We are going to construct  geometric Markov partitions $\cR_{\cR_f,\cR_g}$ of $f$ and $\cR_{\cR_g,\cR_f}$ of $g$ such that: 

\begin{itemize}
	\item The set of periodic boundary points of $\cR_{\cR_f,\cR_g}$ ( $\cR_{\cR_g,\cR_g}$ ) coincides with the set of periodic points of $f$ ( $g$ ) whose period is less or equal than $p$.
	\item Every periodic boundary point in $\cR_{\cR_f,\cR_g}$ is a \emph{corner point} i.e. is $u$ and $s$-boundary.
\end{itemize}

These refined partitions are referred to as the \emph{compatible refinements} of $\cR_f$ and $\cR_g$, and their respective geometric types are denoted $T_{(\cR_f,\cR_g)}$ and $T_{(\cR_g,\cR_f)}$. Two geometric types, $T_1$ and $T_2$, which can be realized as basic pieces, are considered to be \emph{strongly equivalent} if there exists a Smale diffeomorphism $f$ of a compact surface and a nontrivial saddle-type basic piece $K$ of $f$, such that $K$ has a geometric Markov partition of geometric type $T_1$ as well as a geometric Markov partition of geometric type $T_2$.  We have proven the following corollary , which establishes a connection between the formal \textbf{DA} of the geometric types of the compatible refinements and the underlying pseudo-Anosov homeomorphisms.

\begin{coro*}\ref{Coro: equivalence pA and DA}
	Let $f$ and $g$ be generalized pseudo-Anosov homeomorphisms with geometric Markov partitions $\cR_f$ and $\cR_g$ of geometric types $T_f$ and $T_g$, respectively. Let $\cR_{f,g}$ be the joint refinement of $\cR_f$ with respect to $\cR_g$, and let $\cR_{g,f}$ be the joint refinement of $\cR_g$ with respect to $\cR_f$, whose geometric types are $T_{f,g}$ and $T_{g,f}$, respectively.  Under these hypotheses: $f$ and $g$ are topologically conjugated through an orientation preserving homeomorphism if and only if $T_{f,g}$ and $T_{g,f}$ are strongly equivalent.
\end{coro*}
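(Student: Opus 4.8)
The plan is to reduce the statement about pseudo-Anosov homeomorphisms to the already-established theory of Smale basic pieces by passing through the formal \textbf{DA}$(T)$ construction, using the compatible refinements as the crucial intermediary. First I would observe that, by Theorem \ref{Theo: conjugated iff  markov partition of same type}, the problem of whether $f$ and $g$ are conjugated is insensitive to the choice of Markov partition, so I may freely replace $\cR_f$ by its compatible refinement $\cR_{(\cR_f,\cR_g)}$ (of geometric type $T_{f,g}$) and $\cR_g$ by $\cR_{(\cR_g,\cR_f)}$ (of type $T_{g,f}$). These refinements are designed so that all periodic boundary points of period at most $p=\max\{p(f),p(g)\}$ become corner points common to both combinatorial pictures; this is exactly the normalization Bonatti--Langevin--B\'eguin require to compare basic pieces. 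Since $T$ in the pseudo-Anosov class has finite genus (Proposition \ref{Prop: pseudo-Anosov iff basic piece non-impace}), the formal \textbf{DA} of $T_{f,g}$ and of $T_{g,f}$ are well-defined Smale diffeomorphisms $\phi_{T_{f,g}}:\Delta(T_{f,g})\to\Delta(T_{f,g})$ and $\phi_{T_{g,f}}:\Delta(T_{g,f})\to\Delta(T_{g,f})$ carrying nontrivial saddle-type basic pieces $K(T_{f,g})$, $K(T_{g,f})$.

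For the forward implication, assume $h:S_f\to S_g$ is an orientation-preserving conjugacy. Then $h(\cR_{(\cR_f,\cR_g)})$ is a geometric Markov partition of $g$; I would argue that, because the compatible refinements are built canonically from the periodic-boundary data and that data is carried bijectively by $h$, the partition $h(\cR_{(\cR_f,\cR_g)})$ is (after the allowed combinatorial normalizations) the same as $\cR_{(\cR_g,\cR_f)}$, or at least that $g$ simultaneously admits geometric Markov partitions of type $T_{f,g}$ and of type $T_{g,f}$. Translating through the uniqueness of \textbf{DA}$(\cdot)$ up to conjugacy, this says that a single Smale diffeomorphism (namely \textbf{DA} of either type, conjugated via $h$) has a basic piece with geometric Markov partitions of both types $T_{f,g}$ and $T_{g,f}$ — which is precisely strong equivalence.

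For the converse, suppose $T_{f,g}$ and $T_{g,f}$ are strongly equivalent, witnessed by a Smale diffeomorphism $F$ with a basic piece $K$ admitting geometric Markov partitions of both types. By uniqueness of the formal \textbf{DA}, an invariant neighborhood of $K$ in $F$ is conjugate both to an invariant neighborhood of $K(T_{f,g})$ in $\phi_{T_{f,g}}$ and to one of $K(T_{g,f})$ in $\phi_{T_{g,f}}$; chaining these gives a conjugacy between invariant neighborhoods of $K(T_{f,g})$ and $K(T_{g,f})$. I then need to promote this to a conjugacy between the pseudo-Anosov maps $f$ and $g$ themselves: the point is that $f$ is recovered from \textbf{DA}$(T_{f,g})$ by collapsing the stable boundary components of $\Delta(T_{f,g})$ back onto the singular leaves (this is the Bonatti--Langevin bridge between basic pieces and pseudo-Anosov maps, and is also implicit in Proposition \ref{Prop:  cociente T} via the symbolic model), and since the boundary collapsing is itself dictated combinatorially by the geometric type and its periodic corner structure — which agrees for $T_{f,g}$ and $T_{g,f}$ after the compatible refinement — the conjugacy descends to the quotients $S_f$ and $S_g$. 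Keeping everything orientation-preserving is routine since all partitions and \textbf{DA}-constructions are taken orientation-compatibly.

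The main obstacle I anticipate is the forward implication, specifically verifying that a topological conjugacy $h$ between $f$ and $g$ actually forces $g$ to carry a partition of type $T_{f,g}$ \emph{as well as} one of type $T_{g,f}$, i.e., that the ``compatible refinement'' operation is sufficiently canonical. The subtlety is that the compatible refinement of $\cR_f$ depends on $\cR_g$ (through the period $p$ and the set of periodic boundary points to be promoted to corners), so one must check that $h$ matches up the two refinement processes: it carries periodic points to periodic points preserving period, carries $s$- and $u$-boundary status appropriately, and respects the transverse orientations used to geometrize the sub-rectangles. Making this matching precise — and handling the combinatorial ambiguity in how a geometric type is read off (relabeling of rectangles, the finitely many admissible normalizations that still count as ``strongly equivalent'') — is where the real work lies; once that is in place, the passage to \textbf{DA} and back is a clean application of the cited Bonatti--Langevin--B\'eguin results together with Theorem \ref{Theo: conjugated iff  markov partition of same type} and Proposition \ref{Prop: pseudo-Anosov iff basic piece non-impace}.
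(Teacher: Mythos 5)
The forward implication of your proposal has a genuine gap. Strong equivalence demands a Markov partition of the prescribed types on a \emph{basic piece of a Smale diffeomorphism}, but what your argument actually produces is that the pseudo-Anosov $g$ admits geometric Markov partitions of both types $T_{f,g}$ and $T_{g,f}$ (which is indeed easy: $h(\cR_{f,g})$ has type $T_{f,g}$ by Theorem \ref{Theo: conjugated iff  markov partition of same type}). The jump you make next — ``translating through the uniqueness of \textbf{DA}$(\cdot)$ up to conjugacy, a single Smale diffeomorphism has a basic piece with partitions of both types'' — does not follow from uniqueness of the domain. Uniqueness (Theorem \ref{Theo: Presentation in a domain}) compares two basic pieces that already carry partitions of the same geometric type; it does not lift a Markov partition living on the collapsed pseudo-Anosov surface back to a Markov partition of the basic piece $K(T_{g,f})$ inside $\Delta(T_{g,f})$. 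The semi-conjugacy $\pi$ of Theorem \ref{Theo: Basic piece projects to pseudo-Anosov} collapses arcs, minimal rectangles and cycles, so the preimage of a rectangle of $h(\cR_{f,g})$ is not a rectangle, and extracting from it the correct lifted rectangle — with horizontal and vertical boundary in boundary separatrices, pairwise disjoint, covering $K(T_{g,f})$, with invariant stable/unstable boundary and the same geometric type — is exactly the content of Proposition \ref{Prop: preimage is Markov Tfg type}, items (3) and (4). That construction is the real work of the paper's proof, and it leans on the corner property of the joint refinement (every periodic boundary point is a corner), on the absence of impasses, and on the structure of the $\sim_R$-classes; none of this is supplied by your appeal to canonicity of the refinement. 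In fact you locate the ``main obstacle'' in the wrong place: the matching of periodic-boundary data under $h$ is the easy part (items (1)--(2) of that proposition), while the lift to the \textbf{DA} is the step your proposal treats as automatic.

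Your converse is closer to being complete but takes a different route from the paper and leaves its key step sketched: you conjugate invariant neighborhoods of $K(T_{f,g})$ and $K(T_{g,f})$ and then try to descend that conjugacy through the two collapsing maps, which requires showing it sends $\sim_R$-classes to $\sim_R$-classes and that the induced map is orientation-preserving. The paper avoids this entirely: from strong equivalence it takes a partition of type $T_{g,f}$ on $K(T_{f,g})$, pushes it down by $\pi$ using Proposition \ref{Prop: type of basic piece is type of pseudo-Anosov} to get a partition of $f$ of type $T_{g,f}$, and then concludes $f\cong g$ by Theorem \ref{Theo: conjugated iff  markov partition of same type}. Reworking your converse along those lines would both shorten it and remove the unverified descent step.
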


In the proof we use the formal \textbf{DA} in the next manner: If $T_{(\cR_f,\cR_g)}$ and $T_{(\cR_g,\cR_f)}$ are strongly equivalent, then \textbf{DA}$(T_{(\cR_f,\cR_g)})$ and \textbf{DA}$(T_{(\cR_g,\cR_f)})$ are conjugate through a homeomorphism $h$. This homeomorphism induces a conjugation between $f$ and $g$.  Proposition \ref{Prop: preimage is Markov Tfg type} establish  that if $f$ and $g$ are topologically conjugate, then \textbf{DA}$(T_{(\cR_f,\cR_g)})$ has a Markov partition of geometric type $T_{(\cR_g,\cR_f)}$. Hence they  are strongly equivalent.
Finally we use the Béguin's algorithm to determine if $T_{(\cR_f,\cR_g)}$ and $T_{(\cR_g,\cR_f)}$ are strongly equivalent. This provides a solution for the fist part of Item $III$ in Problem \ref{Prob: Clasification} through the following theorem.

\begin{theo*}[\ref{Theo: algorithm conjugacy class}]
	Let $T_f$ and $T_g$ be two geometric types within the pseudo-Anosov class. Assume that $f: S \rightarrow S_f$ and $g: S_g \rightarrow S_g$ are two generalized pseudo-Anosov homeomorphisms with geometric Markov partitions $\cR_f$ and $\cR_g$, having geometric types $T_f$ and $T_g$ respectively.  We can compute the geometric types $T_{f,g}$ and $T_{g,f}$ of their joint refinements through the algorithmic process described in Chapter \ref{Chap: Computations}, and the homeomorphisms $f$ and $g$ are topologically conjugated by an orientation-preserving homeomorphism if and only if the algorithm developed by Béguin determines that $T_{f,g}$ and $T_{g,f}$ are strongly equivalent.
\end{theo*}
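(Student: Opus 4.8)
The strategy is to assemble this theorem from three components already prepared in the excerpt: (a) the algorithmic construction of the compatible refinements and their geometric types $T_{f,g}$ and $T_{g,f}$; (b) Corollary \ref{Coro: equivalence pA and DA}, which reduces topological conjugacy of $f$ and $g$ to the \emph{strong equivalence} of $T_{f,g}$ and $T_{g,f}$; and (c) Béguin's algorithm, which decides strong equivalence of two geometric types realizable as basic pieces of Smale diffeomorphisms. The proof is therefore essentially a verification that the hypotheses needed to chain these three pieces together are met, together with a bookkeeping argument that every step is effective and terminates.

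First I would record that, since $T_f$ and $T_g$ lie in the pseudo-Anosov class, Proposition \ref{Prop: pseudo-Anosov iff basic piece non-impace} guarantees that both have finite genus, no impasse, and mixing incidence matrix; hence both are realizable as mixing saddle-type basic pieces of Smale surface diffeomorphisms, so the formal \textbf{DA}$(T_f)$ and \textbf{DA}$(T_g)$ of \eqref{Equa: DA of T} are defined. Next I would invoke the construction of the compatible refinements: given geometric Markov partitions $\cR_f$ of $f$ and $\cR_g$ of $g$, one forms $\cR_{f,g}$ and $\cR_{g,f}$ so that their periodic boundary points are exactly the periodic points of $f$ (resp. $g$) of period at most $p = \max\{p(f),p(g)\}$, and every such point is a corner point. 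The point I must emphasize is that this construction is \emph{algorithmic} — the reference to Chapter \ref{Chap: Computations} supplies the finite procedure that reads off $T_{f,g}$ and $T_{g,f}$ from $T_f$, $T_g$ alone — so in particular $T_{f,g}$ and $T_{g,f}$ are computable inputs for Béguin's algorithm. One also checks that $T_{f,g}$ and $T_{g,f}$ are themselves in the pseudo-Anosov class (they are geometric types of Markov partitions of the same pseudo-Anosov homeomorphisms $f$, $g$), hence realizable as basic pieces, which is exactly the hypothesis Béguin's algorithm requires.

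With these pieces in place the logical core is short. By Corollary \ref{Coro: equivalence pA and DA}, $f$ and $g$ are topologically conjugate by an orientation-preserving homeomorphism if and only if $T_{f,g}$ and $T_{g,f}$ are strongly equivalent. By Béguin's algorithm (as summarized in the excerpt), there is a finite procedure — compute the bounds $O(T_{f,g})$ and $O(T_{g,f})$, choose $n$ at least their maximum, generate the finite lists $\cT(\cdot,n)$ of primitive geometric types of order $n$, and compare them — that halts and outputs whether $T_{f,g}$ and $T_{g,f}$ are strongly equivalent. Concatenating the effective computation of $T_{f,g}, T_{g,f}$ with Béguin's decision procedure yields a single finite algorithm whose output is "conjugate" precisely when $f$ and $g$ are topologically conjugate, which is the assertion of the theorem.

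The main obstacle is not in this final assembly but in the inputs it rests on: one must be confident that the compatible-refinement construction in Chapter \ref{Chap: Computations} genuinely terminates and produces a geometric type in the pseudo-Anosov class (so that Béguin's algorithm is applicable to it), and that Corollary \ref{Coro: equivalence pA and DA} has been established with exactly the orientation-preserving hypothesis used here. I would also take care with one subtlety: Béguin's algorithm decides strong equivalence of geometric types as basic pieces of \emph{Smale} diffeomorphisms, whereas Corollary \ref{Coro: equivalence pA and DA} is phrased for the pseudo-Anosov maps themselves; the bridge is the uniqueness up to conjugacy of the formal \textbf{DA}$(T)$ (Bonatti–Langevin), which must be cited explicitly so that "strongly equivalent as basic pieces" and "the conclusion of Corollary \ref{Coro: equivalence pA and DA}" are literally the same statement. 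Once that identification is made, no further work is needed.
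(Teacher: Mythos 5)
Your proposal is correct and follows essentially the same route as the paper: the paper's proof is exactly the two-step chain of Corollary \ref{Coro: equivalence pA and DA} (conjugacy of $f$ and $g$ is equivalent to strong equivalence of $T_{f,g}$ and $T_{g,f}$) combined with Béguin's decision procedure for strong equivalence, with the computability of the joint refinements supplied by the cited algorithmic construction. Your additional checks — that $T_{f,g}$ and $T_{g,f}$ are realizable as basic pieces so Béguin's algorithm applies, and the role of uniqueness of the formal \textbf{DA} — are sensible hypotheses-verification that the paper leaves implicit, not a different argument.
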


\subsection*{Acknowledgements:}
The results of this article are part of my PhD thesis, under the supervision of Christian Bonatti. I want to express my gratitude to him for proposing this problem and guiding me towards its solution. During my studies abroad, I was funded by a CONACyT scholarship, thank you for trusting me. Finally, I want to thank Professor Lara Bossinger, who through her program PAPIIT "Álgebras de conglomerado, geometría tropical y degeneraciones tóricas" has supported the writing this manuscript during my research stay at IMATE-UNAM Oaxaca.
 %%%% Aqui explicamos como solucionamos y en que parte del texto se encuentran, cada una de las partes del problema de clasificacion.

\section{Preliminaries.}\label{Sec: Preliminares}

In this paper, $S$ is a closed and oriented \emph{topological surface} and $\text{Hom}_+(S)$ is the set of orientation-preserving homeomorphisms of $S$. We are going to introduce our \emph{MVP}'s: pseudo-Anosov homeomorphisms and abstract geometric types.

\subsection{The conjugacy class of a pseudo-Anosov homeomorphism.}

We would like to define a singular foliation on any topological surface; therefore, we must avoid any reference to a certain differentiable structure on $S$, as, for example, it is done in \cite{farb2011primer} by demanding the foliated charts to be smooth. In \cite{hiraide1987expansive}, Hiraide has introduced a definition of singular foliation that makes sense on any topological surface and discusses the notion of transversality for curves and foliations in this setting; we must adopt his topological point of view. However, since it is not our intention to reproduce such an article, we refer to it for any doubts about the foliated chart around the $p(x)$-pronged-saddle that is mentioned in \ref{Defi: Singular foliation}, and adopt the following definition of singular foliation as given by Farb and Margalit in \cite{farb2011primer} but taking $C^0$ charts.

\begin{defi}\label{Defi: Singular foliation}
A decomposition $\mathcal{F}$ of $S$ by $1$-dim curves called \emph{leaves} and a finite number of \emph{singular points}, $\textbf{Sing}(\mathcal{F})$, is a \emph{singular foliation} if every \emph{leaf} $L \in \mathcal{F}$ is path connected and for every $x \in S$ there exists $p(x) \in \mathbb{N}^+$ and a $C^0$-\emph{foliated chart} around $x$, $\phi_x: U_x \rightarrow \mathbb{R}^2$, such that $\phi_x(x)=0$ and
\begin{enumerate}
	\item For every leaf $L \in \mathcal{F}$, $\phi_x$ sends every non-empty connected component of $U_x \cap L$ into a level set of a $p(x)$-pronged-saddle singularity with $p(x) \in \mathbb{N}^+$ (See~\ref{Fig: k-saddle}).
	\item There is subset of the singular points where $p(x) \neq 2$.
\end{enumerate}
\end{defi}

If $p(x) \geq 2$, we call $x$ a $p(x)$-prong. If $p(x) = 1$, it is called a \emph{spine}, and if $p(x) = 2$, it is a regular point.

\begin{figure}[ht]
	\centering
	\includegraphics[width=0.4\textwidth]{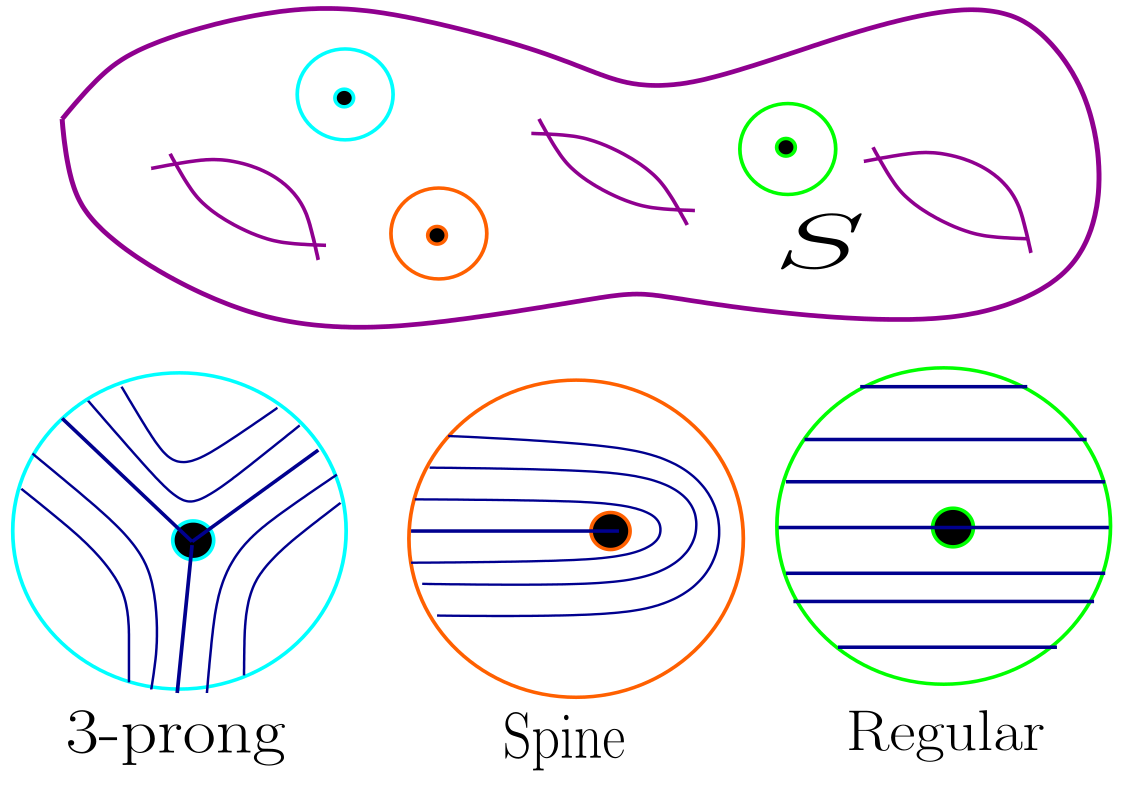}
	\caption{ Saddles  and regular points.}
	\label{Fig: k-saddle}
\end{figure}

An \emph{arc} $\gamma: I \to S$ is a topological embedding from the unit interval $[0,1]$ into $S$; usually, we just write $\gamma$ to indicate the image of such an arc. Two arcs $\alpha$ and $\beta$ are transverse at $x \in \alpha \cap \beta$ if there is a chart around $x$, $\phi_x: U_x \to \mathbb{R}^2$, where the images of $\alpha$ and $\beta$ are $\phi_x(\alpha)$ and $\phi_x(\beta)$, respectively, and if $\pi_1(x,y) = (x)$ and $\pi_2(x,y) = (y)$ are the projections in their respective components, then $\pi_1 \circ \pi_1 \circ \alpha$ and $\pi_1 \circ \pi_2 \circ \beta$ are injective.  The notions of \emph{transverse arcs}, \emph{arcs transverse to the foliation}, and \emph{leaf isotopic arcs} in the $C^0$ framework were properly discussed  by Hiraide in \cite{hiraide1987expansive}. We refer the reader to such articles for more details about our conventions, but they don't differ so much from the classical point of view presented in \cite{fathi2021thurston} and \cite{farb2011primer}.

\begin{defi}\label{Defi: Transverse measure}
A \emph{transverse measure} $\mu$ of the singular foliation $\mathcal{F}$ is a non-atomic \emph{Borel measure}, finite and positive defined over any (non-trivial) compact arc that is transversal to every leaf of $\mathcal{F}$ and for which two \emph{leaf-isotopic} arcs have the same measure.
\end{defi}

\begin{defi}\label{Defi: Measured foliation}
A \emph{measured foliation} of $S$ is the structure $(\mathcal{F}, \mu)$, given by a singular foliation $\mathcal{F}$ of $S$ and a transverse measure $\mu$ of $\mathcal{F}$.
\end{defi}

Let $h:S\rightarrow S'$ be a homeomorphism, and let $(\mathcal{F},\mu)$ be a measured foliation of $S$. Take $h(\mathcal{F})$ as the decomposition of $S'$ given by the image of leaves and singularities of $\mathcal{F}$ under $h$. We claim $\mathcal{F}$ is a singular foliation of $S'$. In effect, take a point $y \in S'$ and compose a foliated chart $\phi_{h^{-1}(y)}:S\to \mathbb{R}^2$ of $\mathcal{F}$ around $h^{-1}(y)$ (as in Definition \ref{Defi: Singular foliation}) with $h^{-1}$ to obtain a $C^0$ foliated chart $(\phi_{h^{-1}(y)} \circ h^{-1})$ of $h(\mathcal{F})$ around $y$.  If $J$ is a non-trivial arc (topologically) transverse to $h(\mathcal{F})$, then $h^{-1}(J)$ is a non-trivial arc (topologically) transverse to $\mathcal{F}^{s,u}$ since $h$ is a homeomorphism. The preimage of every Borel set contained in $J$ is a Borel set in $h^{-1}(J)$, so $h_*(\mu)(J):=\mu(h^{-1}(J))$ is meaningful and we pose the following definition.

\begin{defi}\label{Defi: Induced measured foliation}
Let $h:S\rightarrow S'$ be a homeomorphism, and let $(\mathcal{F},\mu)$ be a measured foliation of $S$. Then $h_*(\mathcal{F},\mu):=(h(\mathcal{F}), h_*(\mu))$ is the measured foliation of $S'$ induced by $h$.
\end{defi}

\begin{defi}\label{Defi: pseudo-Anosov}
	A homeomorphism $f \in \text{Hom}_+(S)$ is \emph{pseudo-Anosov} (abbr.  \textbf{p-A}  ) if it preserves two transverse measured foliations, $(\mathcal{F}^s, \mu^s)$ and $(\mathcal{F}^u, \mu^u)$, the \emph{stable} and \emph{unstable foliations} of $f$ respectively, with the same set and types of singularities, and there exists a \emph{stretch factor} $\lambda > 1$ such that:
	$$
	f_*(\mu^u)=\lambda\mu^u \text{ and } 	f_*(\mu^s)=\lambda^{-1}\mu^s.
	$$
	The singularities of $f$ are given by  $\textbf{Sing}(f):=\textbf{Sing}(\cF^s)=\textbf{Sing}(\cF^u)$.
\end{defi}

A \emph{stable interval} is any arc contained in a single leaf of $\mathcal{F}^s$, and similarly, an \emph{unstable interval} is contained in $\mathcal{F}^u$. Definition \ref{Defi: pseudo-Anosov} states that $f$ uniformly expands the $\mu^u$-measure of any stable arc $I$ by a factor $\lambda$; $f_*\mu^u(I) = \mu^u(f^{-1}(I))$, or, equivalently, it is uniformly contracted by $\lambda^{-1}$.  The main facts about pseudo-Anosov homeomorphisms that we need to reference are contained in Proposition \ref{Prop: pseudo-Anosov properties.}. In the case of surfaces with boundaries, the result is proved by Farb and Margalit in \cite{farb2011primer}. However, as mentioned in \cite[Exposition 13]{fathi2021thurston}, given a pseudo-Anosov homeomorphism with spines (they call them generalized pseudo-Anosov), we can perform a \emph{blow-up} on the spine to obtain a pseudo-Anosov homeomorphism (without spines) on a surface with boundaries. Therefore, such results can be applied to the invariant foliations of pseudo-Anosov homeomorphisms that may contain spines.

\begin{prop}\label{Prop: pseudo-Anosov properties.}
	Let $f: S \rightarrow S$ be a  \textbf{p-A}  homeomorphism with invariant foliations $(\cF^s, \mu^s)$ and $(\cF^u, \mu^u)$. They have the following properties:
	\begin{enumerate}
		\item   None of the foliations contains a closed leaf, and none of the leaves contains two singularities (\cite[Lemma  14.11]{farb2011primer})	
		\item They are \emph{minimal}, i.e., any leaf of the foliations is dense in $S$  \cite[Corollary 14.15]{farb2011primer}).
	\end{enumerate}
\end{prop}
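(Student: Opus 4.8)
The plan is to reduce the statement to the case of a pseudo-Anosov homeomorphism \emph{without} spines on a surface with boundary, where items $(1)$ and $(2)$ are precisely \cite[Lemma 14.11 and Corollary 14.15]{farb2011primer}. If $\textbf{Sing}(f)$ contains no $1$-prong there is nothing to do beyond citing \cite{farb2011primer}, so assume from now on that $f$ has at least one spine.

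First I would invoke the blow-up construction recalled in \cite[Exposition 13]{fathi2021thurston}: blowing up every spine $p$ of $f$ replaces $p$ by a boundary circle $\partial_p$ and yields a compact surface with boundary $\hat S$, a pseudo-Anosov homeomorphism $\hat f:\hat S\to\hat S$ without spines, invariant measured foliations $(\hat{\cF}^{s},\hat{\mu}^{s})$ and $(\hat{\cF}^{u},\hat{\mu}^{u})$ which are the blow-ups of $(\cF^s,\mu^s)$ and $(\cF^u,\mu^u)$, and a continuous surjection $q:\hat S\to S$ that collapses each $\partial_p$ to $p$ and restricts to a homeomorphism from $\hat S\setminus\partial\hat S$ onto $S$ minus the spines. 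By \cite[Lemma 14.11 and Corollary 14.15]{farb2011primer}, conclusions $(1)$ and $(2)$ already hold for $\hat f$ and for $\hat{\cF}^{s},\hat{\cF}^{u}$ on $\hat S$, so it only remains to carry them down through $q$.

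The transfer rests on a single remark about how $q$ acts on leaves: since no leaf of $\cF^s$ or $\cF^u$ contains a singular point (Definition \ref{Defi: Singular foliation}), $q$ maps each interior leaf of $\hat{\cF}^{u}$ homeomorphically onto a leaf of $\cF^u$, it collapses the boundary leaves of $\hat S$ to spines, and conversely every leaf of $\cF^u$ is obtained in the first way; and symmetrically for $\cF^s$. Granting this, both items descend at once. A closed leaf of $\cF^u$ is disjoint from $\textbf{Sing}(f)$, so its $q$-preimage is a closed leaf of $\hat{\cF}^{u}$, contradicting \cite[Lemma 14.11]{farb2011primer}; a leaf of $\cF^u$ joining two singularities lifts under $q^{-1}$ to an interior leaf of $\hat{\cF}^{u}$ joining two singularities of $\hat f$, again excluded by \cite[Lemma 14.11]{farb2011primer}; and an interior leaf of $\hat{\cF}^{u}$ is dense in $\hat S$ by \cite[Corollary 14.15]{farb2011primer}, so its $q$-image is dense in $S$, and this image runs over all leaves of $\cF^u$, whence $\cF^u$ is minimal. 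The arguments for $\cF^s$ are verbatim the same.

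The step that really needs care, and where I would spend the effort, is making the blow-up of the second paragraph rigorous in the purely topological ($C^0$) framework used here: one must check that the blown-up decomposition of $\hat S$ is again a singular foliation in the sense of Definition \ref{Defi: Singular foliation}, that the transverse measures extend across the surgered collars so that $\hat f$ is genuinely pseudo-Anosov, and that $q$ acts on leaves exactly as asserted near the spines; everything after that is bookkeeping. I do not expect the dynamical content to cause trouble: once closed leaves and saddle connections have been ruled out, minimality could equally be derived from the decomposition of a measured foliation on a surface into minimal and periodic components (the periodic ones being absent for lack of closed leaves), but since \cite{farb2011primer} already records the statement it is enough to quote it.
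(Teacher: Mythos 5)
Your proposal is correct and follows essentially the same route as the paper, which gives no separate proof: it simply cites \cite[Lemma 14.11, Corollary 14.15]{farb2011primer} and, in the paragraph preceding the proposition, invokes exactly the blow-up of spines from \cite[Exposition 13]{fathi2021thurston} to reduce the generalized case to the spine-free case on a surface with boundary. Your additional care about the collapsing map $q$ and the $C^0$ framework is a reasonable elaboration of what the paper leaves implicit, not a different argument.
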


Since we focus on orientable surfaces and orientation-preserving maps, we only consider the topological conjugacy of orientation-preserving homeomorphisms.

\begin{defi}\label{Defi: Top conjugacy}
	Two orientation-preserving surface homeomorphisms $f:S\rightarrow S$ and $g:S'\rightarrow S'$ are \emph{topologically conjugated} if there exists an orientation-preserving homeomorphism $h:S\rightarrow S'$ such that $g = h\circ f\circ h^{-1}$.
\end{defi}

The reason for making our definitions with minimal regularity is the following proposition, that implies  the conjugacy class of a \textbf{p-A} consists only of \textbf{p-A}   homeomorphisms. This contrasts with  pseudo-Anosov isotopy classes, which contain elements that are not pseudo-Anosov in the sense of Definition \ref{Defi: pseudo-Anosov}.

\begin{prop}\label{Prop: pA closed up conjugation}
	Let $f: S \rightarrow S$ be a pseudo-Anosov homeomorphism whose invariant foliations are $(\mathcal{F}^{s,u},\mu^{u,s})$ and let $h: S \rightarrow S'$ be any orientation preserving homeomorphism.  Then $g := h \circ f \circ h^{-1}$ is also a pseudo-Anosov homeomorphism over $S'$ with the same dilation factor as $f$ and whose invariant measured foliations are given by $h_*(\mathcal{F}^{s,u},\mu^{u,s})$.
\end{prop}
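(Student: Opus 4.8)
The plan is to verify directly that the pushed-forward data $h_*(\mathcal{F}^{s},\mu^{s})$ and $h_*(\mathcal{F}^{u},\mu^{u})$ satisfy every clause of Definition~\ref{Defi: pseudo-Anosov} with respect to $g$, using functoriality of the pushforward throughout. Most of the structural work is already contained in the paragraph preceding Definition~\ref{Defi: Induced measured foliation}: composing a foliated chart $\phi_{h^{-1}(y)}$ of $\mathcal{F}^{s,u}$ with $h^{-1}$ produces a $C^0$ foliated chart of $h(\mathcal{F}^{s,u})$ around $y$, so $h(\mathcal{F}^{s,u})$ is again a singular foliation and the local model --- in particular the prong number --- at each point is preserved. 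Hence $\textbf{Sing}(h(\mathcal{F}^{s,u})) = h(\textbf{Sing}(\mathcal{F}^{s,u}))$, and since $\textbf{Sing}(\mathcal{F}^{s}) = \textbf{Sing}(\mathcal{F}^{u})$ with matching prong types for $f$, the same holds for $h(\mathcal{F}^{s})$ and $h(\mathcal{F}^{u})$.

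Next I would check that $h_*\mu^{s}$ and $h_*\mu^{u}$ are transverse measures in the sense of Definition~\ref{Defi: Transverse measure}. If $J\subset S'$ is a compact arc transverse to $h(\mathcal{F}^{s,u})$, then $h^{-1}(J)$ is a compact arc transverse to $\mathcal{F}^{s,u}$, transversality of an arc to a foliation being a local condition stable under homeomorphisms in Hiraide's framework \cite{hiraide1987expansive}; so $h_*\mu^{s,u}(J):=\mu^{s,u}(h^{-1}(J))$ is finite and positive. Pulling back Borel subsets of $J$ along the homeomorphism $h$ shows $h_*\mu^{s,u}$ is a non-atomic Borel measure on $J$, and since $h$ carries leaf-isotopies of $h(\mathcal{F}^{s,u})$ to leaf-isotopies of $\mathcal{F}^{s,u}$, leaf-isotopic arcs of $S'$ receive equal $h_*\mu^{s,u}$-measure. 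Thus $(h(\mathcal{F}^{s,u}),h_*\mu^{s,u})$ are measured foliations, equal to $h_*(\mathcal{F}^{s,u},\mu^{u,s})$ in the notation of Definition~\ref{Defi: Induced measured foliation}; and reading transversality in a common foliated chart and transporting it by $h^{-1}$ shows the pair $h(\mathcal{F}^{s}),\,h(\mathcal{F}^{u})$ is transverse.

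It remains to see that $g=h\circ f\circ h^{-1}$ preserves this structure with stretch factor $\lambda$. Since $f$ preserves $\mathcal{F}^{s,u}$ and $g\circ h = h\circ f$, we get $g(h(\mathcal{F}^{s,u})) = (h\circ f)(\mathcal{F}^{s,u}) = h(\mathcal{F}^{s,u})$, so $g$ preserves both pushed-forward foliations; and by functoriality of the pushforward,
$$
g_*(h_*\mu^{u}) = (g\circ h)_*\mu^{u} = (h\circ f)_*\mu^{u} = h_*(f_*\mu^{u}) = h_*(\lambda\mu^{u}) = \lambda\,h_*\mu^{u},
$$
and symmetrically $g_*(h_*\mu^{s}) = \lambda^{-1}\,h_*\mu^{s}$. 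Finally $g\in\text{Hom}_+(S')$ because $f$ and $h$ are orientation preserving. Together these verify Definition~\ref{Defi: pseudo-Anosov} for $g$ with the claimed invariant foliations and the same dilation factor $\lambda$.

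There is no deep obstacle here; the only point demanding care is confirming that the $C^0$ notions underpinning Definition~\ref{Defi: pseudo-Anosov} --- transversality of arcs and of foliations, leaf-isotopy, and non-atomic transverse Borel measures --- are each functorial under an arbitrary homeomorphism, which is exactly what Hiraide's topological setup \cite{hiraide1987expansive} supplies. Each such verification is routine, and I would cite that reference for the statements about transversality and leaf-isotopy rather than reprove them.
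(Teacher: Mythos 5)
Your proposal is correct and follows essentially the same route as the paper: invariance of $h(\mathcal{F}^{s,u})$ from the conjugacy, transversality read off in the composed charts $\phi_{h^{-1}(y)}\circ h^{-1}$, the pushforward computation giving the same stretch factor $\lambda$, and orientation preservation of $g$ as a composition. Your write-up is merely more explicit than the paper's about the measured-foliation axioms (non-atomicity, leaf-isotopy invariance, matching singularity types), which the paper delegates to the discussion preceding Definition~\ref{Defi: Induced measured foliation}.
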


\begin{proof}
The conjugacy between $f$ and $g$ implies that $h(\mathcal{F}^{s,u})$ is invariant under the action of $g$. By definition, $\mathcal{F}^{s}$ is topologically transverse to $\mathcal{F}^{u}$, meaning that around any point $x \in S$, there exists a $C^0$-chart $\phi_{x}:U_x \subset S \to \mathbb{R}^2$ where the foliations $\mathcal{F}^{s}$ and $\mathcal{F}^{s}$ are send to the vertical and horizontal foliations of $\mathbb{R}^2$. If $y \in S'$, the map $\phi_{h^{-1}(y)} \circ h^{-1}$, restricted to a certain open set $V_y$, is a $C^0$ chart where $h(\mathcal{F}^{s})$ and $h(\mathcal{F}^{s})$ are trivial foliations of $\mathbb{R}^2$. Therefore, $h(\mathcal{F}^{s})$ is transverse to $h(\mathcal{F}^{u})$ away from the singularities.
  
 Let $\lambda$ be the stretch factor of $f$. The following computation in the stable case is the same as in the unstable case, and they imply that $g$ has the same dilation factor as $f$.
 
	\begin{eqnarray*}
		g_*h_*(\mu^s)(J)=h_*(\mu^s)(g^{-1}(J))\\	
		\mu^s(h^{-1} \circ ¨h \circ f^{-1} \circ h^{-1}(J))=\\
		\mu^s(f^{-1}(h^{-1}(J)))=\lambda^{-1}\mu^s(h^{-1}(J))=\lambda^{-1}h_*\mu^s(J).
	\end{eqnarray*}
Since $g$ is the composition of orientation-preserving maps, it must be an orientation-preserving homeomorphism itself. This ends  our proof.

\end{proof}

\subsection{Abstract geometric types.}

The objects that will be associated with a pseudo-Anosov homeomorphism and a geometric Markov partition are abstract  types, which we will introduce below.

\begin{defi}\label{Defi: Abstract geometric types}
	An \emph{abstract geometric type} $T$ is an ordered quadruple:
	$$
	T = (n, \{h_i,v_i\}_{i=1}^n, \rho_T, \epsilon_T),
	$$
	where the parameters $n, h_i, v_i \in \mathbb{N}_+$ are positive integers, satisfying $\alpha(T):=\sum_{i=1}^{n} h_i = \sum_{i=1}^n v_i$. These parameters define the sets of \emph{horizontal} and \emph{vertical labels} of $T$ as follows:
	$$
	\mathcal{H}(T) = \{(i,j) : 1 \leq i \leq n \text{ and } 1 \leq j \leq h_i\}
	$$
	and
	$$
	\mathcal{V}(T) = \{(k,l) : 1 \leq k \leq n \text{ and } 1 \leq l \leq v_k\},
	$$
	respectively. Additionally, $\rho_T: \mathcal{H}(T) \rightarrow \mathcal{V}(T)$ is a bijection and $\epsilon_T: \mathcal{H}(T) \rightarrow \{-1,1\}$ is any function. The \emph{set of abstract geometric types} is denoted by $\mathcal{GT}$.
\end{defi}
 Since $\rho_T$ is bijective, we can define the \emph{inverse of an abstract geometric type} $T = (n, \{h_i,v_i\}_{i=1}^n, \rho_T, \epsilon_T)$ as the abstract geometric type:
$$
T^{-1} := (n, \{v_i,h_i\}_{i=1}^n,\rho_{T^{-1}}, \epsilon_{T^{-1}}),
$$
where $n$, $v_i$, and $h_i$ are the same parameters as those of $T$. The horizontal and vertical labels are interchanged: $\mathcal{H}(T^{-1})=\mathcal{V}(T)$ and $\mathcal{V}(T^{-1})=\mathcal{H}(T)$. The maps are given by $\rho_{T^{-1}} := \rho_T^{-1}: \mathcal{V}(T) \to \mathcal{H}(T)$ and $\epsilon_{T^{-1}}:= \epsilon_T\circ \rho_T^{-1}: \mathcal{V}(T) \to \{-1,+1\}$. Sometimes, we may use the notation $\Phi_T$ to refer to $(\rho_T, \epsilon_T): \mathcal{H}(T) \to \mathcal{V}(T) \times \{-1,+1\}$ as a whole structure.

\subsection{Geometric Markov partitions.}

A classic \emph{Markov partition} of $f$ is a decomposition of the surface $S$ into pieces called \emph{rectangles} that behave well under iterations of $f$ and have a notion of upper and lower boundaries. Let's describe such pieces.

\begin{defi}\label{Defi: Rectangle}
	A compact subset $R$ of $S$ is a rectangle adapted to $f$ if there exists a continuous function $\rho:\II^2 \rightarrow S$ whose image is $R$  and satisfying the following conditions:
	\begin{itemize}
		\item $\rho:\overset{o}{\II^2} \to S$ is a homeomorphism onto its image that we call \emph{interior of the rectangle } and is denoted by $\overset{o}{R}:=\rho(\overset{o}{\II^2})$.
		\item For every $t\in [0,1]$, $I_t:=\rho([0,1]\times \{t\})$ is contained in a unique leaf of $\mathcal{F}^s$, and $\rho\vert_{[0,1]\times \{t\}}$ is a homeomorphism onto its image. The \emph{horizontal foliation} is the decomposition of $R$ given by these \emph{stable intervals}: $\cI(R):=\{I_t\}_{t\in[0,1]}$.
		\item For every $t\in [0,1]$, $J_t:=\rho(\{t\} \times [0,1])$ is contained in a unique leaf of $\mathcal{F}^u$, and $\rho\vert_{\{t\}\times [0,1]}$ is a homeomorphism onto its image. The \emph{vertical foliation}  is the decomposition of $R$ given by these \emph{vertical intervals}: $\cJ(R):=\{J_t\}_{t\in[0,1]}$
	\end{itemize}
	A function like $\rho$ is called a \emph{parametrization} of $R$.
\end{defi}

If the parametrization of $R$ is not necessary for our discussion, we refer to $R$ as the rectangle, and we usually omit the dependence on $f$ if it's clear. The parametrization of $R$ is not unique, and it could preserve or reverse the orientation of $\overset{o}{\II^2}$ over its image contained in $S$ with the fixed orientation. However, any parametrization must still send horizontal and vertical intervals of $\II^2$ to stable and unstable intervals  contained in a single leaf of $\cF^{s,u}$. This leads to the notion of \emph{equivalent parametrizations}.

\begin{defi}\label{Defi: equivalent parametrizations}
	Let $\rho_1$ and $\rho_2$ be two parametrizations of the rectangle $R$, and let
	\begin{equation}\label{Equa: Cambio cordenadas}
	\rho_2^{-1} \circ \rho_1 := (\varphi_s, \varphi_u) : (0,1)\times (0,1) \rightarrow (0,1)\times (0,1).
	\end{equation}
	
	The parametrizations are equivalent if $\varphi_{s,u}: (0,1) \to (0,1)$ is an increasing homeomorphism.
\end{defi}

In other words, the change of parametrization preserves the orientation in both the stable and unstable foliations simultaneously. Take two equivalent parametrizations $\rho_1$ and $\rho_2$ of $R$, and suppose that $I_t := \rho_1([0,1]\times \{t\})$ and $I_s := \rho_2([0,1]\times \{s\})$ intersect within their interiors, then they must coincide and both parametrizations generate the same horizontal and vertical foliations of $R$, providing their definition only depends on the invariant foliations of $f$. With this in mind, we can define the \emph{interior of a horizontal leaf} $I_t$ as $\overset{o}{I_t} = \rho(\{t\} \times (0,1))$, and the \emph{interior of a vertical leaf} by $\overset{o}{J_t} = \rho((0,1)\times \{t\})$ for any parametrization $\rho$ of $R$.

The following lemma is immediate,  if the parametrization doesn't preserve the orientation, we can simply compose such parametrization with a reflection, for example $(x,y)\to (1-x,y)$.

\begin{lemm}\label{Lemm: Orientation preserving parametrization}
	Every rectangle $R$ has a parametrization $\rho: \II^2 \to S$ such that $\rho\vert_{\overset{o}{\II^2}}$ is an orientation-preserving homeomorphism.
\end{lemm}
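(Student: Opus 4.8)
\textbf{Proof proposal for Lemma \ref{Lemm: Orientation preserving parametrization}.}

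The plan is to start from an arbitrary parametrization and, if necessary, post-compose it with a reflection of $\II^2$ to correct the orientation. Let $\rho_0:\II^2\to S$ be any parametrization of $R$ as in Definition \ref{Defi: Rectangle}; its restriction $\rho_0\vert_{\overset{o}{\II^2}}$ is a homeomorphism onto $\overset{o}{R}$, hence it is either orientation-preserving or orientation-reversing with respect to the fixed orientation of $S$ (this dichotomy is exactly the content of invariance of domain / the fact that a homeomorphism between connected oriented surfaces-with-the-induced-orientation is globally of one type). If $\rho_0$ is already orientation-preserving, we are done by taking $\rho=\rho_0$.

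If instead $\rho_0\vert_{\overset{o}{\II^2}}$ reverses orientation, set $r:\II^2\to\II^2$, $r(x,y)=(1-x,y)$, and define $\rho:=\rho_0\circ r$. First I would check that $\rho$ is again a parametrization of $R$ in the sense of Definition \ref{Defi: Rectangle}: its image is still $R$; $r$ restricts to a homeomorphism of $\overset{o}{\II^2}$, so $\rho\vert_{\overset{o}{\II^2}}$ is a homeomorphism onto $\overset{o}{R}$; and $r$ sends each horizontal segment $[0,1]\times\{t\}$ to $[0,1]\times\{t\}$ and each vertical segment $\{t\}\times[0,1]$ to $\{1-t\}\times[0,1]$, so the images $\rho([0,1]\times\{t\})$ and $\rho(\{t\}\times[0,1])$ are exactly the stable and unstable intervals produced by $\rho_0$, merely re-indexed and, in the stable direction, traversed in the opposite sense; in particular each is still contained in a single leaf of $\cF^s$, resp. $\cF^u$, and $\rho$ restricted to each such segment is a homeomorphism onto its image. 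Finally, since $r$ is an orientation-reversing homeomorphism of $\overset{o}{\II^2}$ and $\rho_0\vert_{\overset{o}{\II^2}}$ was orientation-reversing, the composition $\rho\vert_{\overset{o}{\II^2}}=\rho_0\vert_{\overset{o}{\II^2}}\circ r$ is orientation-preserving, as required.

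There is essentially no serious obstacle here; the only point that deserves a word of care is the implicit claim that "orientation-preserving vs. orientation-reversing" is a well-defined, global property of $\rho_0\vert_{\overset{o}{\II^2}}$ — i.e. that a homeomorphism between the connected oriented $2$-manifolds $\overset{o}{\II^2}$ and $\overset{o}{R}$ cannot be locally orientation-preserving at one point and locally orientation-reversing at another. This is a standard consequence of invariance of domain together with the locally constant nature of the local degree of a homeomorphism, and it is exactly the fact the paper invokes when it says "if the parametrization doesn't preserve the orientation, we can simply compose such parametrization with a reflection." With that in hand the lemma follows immediately, and no computation beyond the bookkeeping above is needed.
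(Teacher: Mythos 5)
Your proof is correct and follows exactly the route the paper takes: compose an arbitrary parametrization with the reflection $(x,y)\mapsto(1-x,y)$ when it reverses orientation, checking that the conditions of Definition \ref{Defi: Rectangle} are preserved. The extra remark that orientation-preserving versus orientation-reversing is a globally well-defined dichotomy for a homeomorphism of connected oriented surfaces is a welcome clarification of the point the paper leaves implicit.
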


Let $R$ be a rectangle, and let $\rho:\II^2 \to R$ be a parametrization that preserves the orientation when restricted to $\overset{o}{\II^2}$. Choose a vertical direction for $\mathcal{J}$ in $\II^2$ and orient $\mathcal{I}$ in the unique direction that is coherent with the orientation of $\II^2$ and that chosen for $\mathcal{J}$. The parametrization $\rho$ takes the  direction of $\mathcal{J}$ and induces a vertical direction in $\mathcal{J}(R) \cap \overset{o}{R}$. This procedure of \emph{choosing a vertical direction} for $R$ is completed by taking the orientation of $\mathcal{I}(R) \cap \overset{o}{R}$ induced by $\rho$.

\begin{defi}\label{Defi: Geometric rectangle}
	A \emph{geometric rectangle} $R$ is a rectangle for which we have chosen a vertical direction.
\end{defi}

It is not difficult to see that different orientation-preserving transformations give rise to the same geometric rectangle, as the homeomorphisms $\varphi_{s,u}$ given in Definition \ref{Defi: equivalent parametrizations} preserve the direction in the vertical and horizontal directions. This justifies the following definition.

\begin{defi}\label{Defi: L-R sides}
	Let $R$ be a geometric rectangle adapted to $f$, and let $\rho:\II^2\to R$ be any orientation-preserving parametrization of $R$. Consider the following subsets of $R$:
	\begin{itemize}
		\item Its \textbf{left and right} sides: $\partial^u_{-1}R:=\rho(\{0\}\times [0,1])$ and $\partial^u_{1}R=\rho(\{1\}\times [0,1])$, respectively. Each of them is called $s$-boundary component of $R$. 
		
		\item Its \textbf{lower and upper} sides: $\partial^s_{-1}R=\rho([0,1] \times \{0\})$ and $\partial^s_{+1}R=\rho([0,1] \times \{1\})$, respectively. Each of them is called $u$-boundary component of $R$. 
		
		\item The \textbf{horizontal (or stable) boundary} $\partial^s R=\partial^s_{-1}R  \cup \partial^s_{+1}R$ and the \emph{vertical (or unstable) boundary of the rectangle} $\partial^u R=\partial^u_{-1}R\cup \partial^u_{+1}R$.
		
		\item The \textbf{boundary} $\partial R=\partial^s R \cup \partial^uR$.
		
		\item The \textbf{corners} of $R$ that are defined by pairs: if $t,s\in \{0,1\}$, the respective corner labeled $C_{s,t}=\rho(s,t)$.
		
		\item For all $x\in \overset{o}{R}$, $I_x$ denoted the horizontal leaf of $\mathcal{I}(R)$ that passes through $x$, and $J_x$ to the vertical leaf of $\mathcal{J}(R)$ that passes through $x$.
		
	\end{itemize}
	
\end{defi}

\begin{defi}\label{Defi: Vertical/horizontal subrec}
	Let $R$ be a rectangle. A rectangle $H \subset R$ is a \emph{horizontal sub-rectangle} of $R$ if for all $x \in \overset{o}{H}$, the horizontal leaf of $\mathcal{I}(H)$ passing through $x$ and the horizontal leaf of $\mathcal{I}(R)$ passing through $x$ coincide. A rectangle $V$ is a \emph{vertical sub-rectangle} of $R$ if for all $x \in \overset{o}{V}$, the vertical leaf of $\mathcal{J}(V)$ passing through $x$ and the vertical leaf of $\mathcal{J}(R)$ passing through $x$ coincide.
\end{defi}

\begin{defi}\label{Defi: Markov partition}
	Let $f: S \rightarrow S$ be a pseudo-Anosov homeomorphism. A \emph{Markov partition} of $f$ is a family of rectangles $\mathcal{R} = \{ R_i \}_{i=1}^n$ with the following properties:
	\begin{itemize}
		\item[i)] The surface is the union of the rectangles: $S = \cup_{i=1}^n R_i$.
		
		\item[ii)] They have disjoint interior, i.e. for all $i \neq j$,  $\overset{o}{R_i} \cap \overset{o}{R_j} = \emptyset$.
		
		\item[iii)] For every $i, j \in \{1, \cdots, n\}$, the closure of each non-empty connected component of $\overset{o}{R_i} \cap f^{-1}(\overset{o}{R_j})$ is a horizontal sub-rectangle or $R_i$. 
		
		\item[iv)] For every $i, j \in \{1, \cdots, n\}$, the closure of each non-empty connected component of  $f(\overset{o}{R_i}) \cap \overset{o}{R_j}$ is a vertical sub-rectangle of $R_j$.
	\end{itemize}
	The family of horizontal and vertical sub-rectangles that appears in Items $iii)$ and $iv)$ are the \emph{horizontal} and \emph{vertical sub-rectangles} of the Markov partition $(f,\cR)$ respectively.
\end{defi}

We must introduce some other interesting subsets inside a rectangle.

\begin{defi}\label{Defi: boundary points}
	Let $\mathcal{R} = {R_i}_{i=1}^n$ be a geometric Markov partition of $f$, and let $p$ be a periodic point of $f$.
	\begin{enumerate}
		\item $p$ is an $s$-\emph{boundary periodic point} of $\mathcal{R}$ if $p \in \partial^s \mathcal{R}$, a $u$-\emph{boundary periodic point} if $p \in \partial^u \mathcal{R}$, and a \emph{boundary periodic point} if $p \in \partial \mathcal{R}$. These sets are denoted as $\text{Per}^{s,u,b}(f,\mathcal{R})$ accordingly.
		\item $p$ is an \emph{interior periodic point} if $p \in \overset{o}{\mathcal{R}}$, and this set is denoted as $\text{Per}^I(f,\mathcal{R})$.
		\item $p$ is a \emph{corner periodic point} if there exists $i \in {1, \ldots, n}$ such that $p$ is a corner point of $R_i$. This set is denoted as $\text{Per}^C(f,\mathcal{R})$.
	\end{enumerate}
\end{defi}

The following lemma will be used many times in our future explanations.

\begin{lemm}\label{Lemm: Boundary of Markov partition is periodic}
	Both the upper and lower boundaries of each rectangle in the Markov partition $\cR$ lay on the stable leaf of some periodic point of $f$. Similarly, the left and right boundaries are contained in the unstable leaf of some periodic point.
\end{lemm}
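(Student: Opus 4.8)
The plan is to exploit the Markov property together with the fact that the $u$-boundary (resp. $s$-boundary) of the whole partition is forward-invariant (resp. backward-invariant) up to the finitely many rectangle sides, and then invoke a pigeonhole argument on the finite set of boundary leaves. First I would argue that the set $\partial^u\cR := \bigcup_i \partial^u R_i$ is forward-invariant in the following weak sense: since each $f(\overset{o}{R_i})\cap \overset{o}{R_j}$ is a \emph{vertical} sub-rectangle of $R_j$, its vertical boundary is built from pieces of vertical leaves of $R_j$, and a point in $\partial^u_{\pm 1}R_i$ either lands again in some $\partial^u_{\pm 1}R_j$ or lands in the interior of an unstable leaf of some $R_j$ while still sitting on the image of the unstable leaf $\partial^u_{\pm1}R_i\subset \cF^u$. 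Dually, $f^{-1}$ maps $\partial^s\cR$ into $\partial^s\cR$ in the same weak sense using Item $iii)$ of Definition \ref{Defi: Markov partition}. So the finite collection of unstable leaf-segments $\{\partial^u_{\varepsilon}R_i\}$ is permuted-with-possible-absorption by $f$, and likewise $\{\partial^s_{\varepsilon}R_i\}$ by $f^{-1}$.

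Next I would pass from leaf-segments to full leaves: let $\ell$ be the (unique, by Proposition \ref{Prop: pseudo-Anosov properties.}(1)) unstable leaf of $\cF^u$ containing $\partial^u_{\varepsilon}R_i$. Because the $\partial^u R_j$'s all lie on finitely many unstable leaves $\ell_1,\dots,\ell_m$, and $f$ permutes this finite set of leaves — $f(\ell_k)$ is again one of the $\ell$'s, since $f$ carries unstable leaves to unstable leaves and carries the vertical boundary of the partition into itself up to absorption — there is an integer $N\ge 1$ with $f^N(\ell_k)=\ell_k$ for each $k$. Then $f^N$ restricts to a homeomorphism of the leaf $\ell_k\cong\RR$ (or $\cong\RR$ minus nothing; leaves are non-compact by Proposition \ref{Prop: pseudo-Anosov properties.}(1)) that, along the transverse contracting foliation, is a $\lambda^{-N}$-contraction; in particular $f^N$ expands $\mu^u$ along $\ell_k$ uniformly by $\lambda^N$, so as a map of $\RR$ it is conjugate to $x\mapsto \lambda^N x$ and has a unique fixed point $p_k\in\ell_k$. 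That $p_k$ is then a periodic point of $f$ of period dividing $N$, and its unstable leaf is exactly $\ell_k$, which contains $\partial^u_{\pm1}R_i$; this proves the second assertion. The first assertion (upper and lower boundaries lie on stable leaves of periodic points) is the mirror statement obtained by replacing $f$ with $f^{-1}$, $\cF^u$ with $\cF^s$, $\lambda$ with $\lambda^{-1}$, and $\partial^u$ with $\partial^s$, using Item $iii)$ in place of Item $iv)$.

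The main obstacle I anticipate is making precise the "permutation with possible absorption" of the finite family of boundary leaf-segments and upgrading it to an honest permutation of the finite family of \emph{complete} leaves $\{\ell_k\}$: one must check that $f$ cannot send a boundary unstable leaf into a non-boundary unstable leaf, which relies on the fact that $\partial^s_{\pm 1}R_j$, being stable sides, are finite in number and $f$ preserves the product structure near them, so the image $f(\partial^u_\varepsilon R_i)$ stays on the $\cF^u$-leaf through boundary points of the partition; equivalently, the endpoints of $\partial^u_\varepsilon R_i$ are corners, corners map into $\partial^s\cR\cap\partial^u\cR$, and the unstable leaf through a corner is a union of unstable boundary segments. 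Once this finiteness-and-invariance bookkeeping is set up carefully, the existence and uniqueness of the fixed point on each invariant leaf is a routine consequence of uniform expansion, and the periodicity of the endpoints/corners follows from the same pigeonhole applied to the finite corner set.
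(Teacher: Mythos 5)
Your overall strategy (finitely many boundary components, invariance under iteration, pigeonhole, then a periodic leaf contains a periodic point by uniform expansion/contraction) is the same idea the paper uses, but your execution has the invariances pointing the wrong way, and the step you yourself flag as the crux is justified by claims that are false. The Markov property gives \emph{strong} invariances in the opposite directions from the ones you chose: Item $iv)$ of Definition \ref{Defi: Markov partition} gives $f(\partial^s\cR)\subset\partial^s\cR$ exactly (the image of a stable side of $R_i$ is the stable side of a vertical sub-rectangle of some $R_j$, hence lies in $\partial^s R_j$), and Item $iii)$ gives $f^{-1}(\partial^u\cR)\subset\partial^u\cR$ exactly. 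With these there is no ``absorption'' to track: forward iterates of a point of $\partial^s\cR$ stay in the finitely many stable boundary components, pigeonhole produces $n_1<n_2$ with $f^{n_1}(x)$ and $f^{n_2}(x)$ on the same component, so that leaf is periodic and carries a periodic point; the unstable case is the mirror statement under $f^{-1}$. This is precisely the paper's proof. By instead iterating $f$ on $\partial^u\cR$ (where only your ``weak'' invariance holds) you are forced into the leaf-level permutation argument.

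The patch you propose for that permutation step does not work as stated: it is \emph{not} true that corners map into $\partial^s\cR\cap\partial^u\cR$ (the forward image of a corner lies in $\partial^s\cR$, but in general in the interior of a stable side, not on the unstable boundary), and ``the unstable leaf through a corner is a union of unstable boundary segments'' is incompatible with minimality (Proposition \ref{Prop: pseudo-Anosov properties.}, every leaf is dense). The permutation claim itself is true, but for a different reason: the finite set of unstable leaves containing some $\partial^u_{\varepsilon}R_i$ is mapped into itself by $f^{-1}$ (by the strong invariance above) and $f^{-1}$ is injective on leaves, so it permutes this finite set, hence so does $f$. If you substitute that argument (or simply switch to the correct direction of iteration from the start), the rest of your proof goes through; two minor points to fix are that in the paper's conventions the transverse measure expanded along unstable arcs is $\mu^s$ (not $\mu^u$), and that a boundary leaf may be a separatrix of a singularity, in which case the periodic point is the singularity itself (singularities are finite in number and permuted by $f$).
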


\begin{proof}
	Let $x$ be a point on the stable boundary of $R_i$. For all $n\geq 0$, $f^{n}(x)$ remains on the stable boundary of some rectangle because the image of the stable boundary of $R_i$ coincides with the stable boundary of some vertical rectangle in the Markov partition, and such vertical rectangle have stable boundary in $\partial^s \cR$. However, there are only a finite number of stable boundaries in $\cR$. Hence, there exist $n_1, n_2\in \mathbb{N}$ such that $f^{n_1}(x)$ and $f^{n_2}(x)$ are on the same stable boundary component. This implies that this leaf is periodic and corresponds to the stable leaf of some periodic point. Therefore, $x$ is one of these leaves and is located on the stable leaf of a periodic point.
	A similar reasoning applies to the case of vertical boundaries.
\end{proof}

\subsection{The sectors of a point.}

There is a natural local stratification of the surface $S$ given by the transverse foliations of a pseudo-Anosov homeomorphism $f$. We are going to exploit it. The following result corresponds to \cite[Lemme 8.1.4]{bonatti1998diffeomorphismes}

\begin{theo}\label{Theo: Regular neighborhood}
	Let $f: S \rightarrow S$ be a generalized pseudo-Anosov homeomorphism, and let $p \in S$ be a singularity with $k \geq 1$ separatrices. Then there exists $\epsilon_0 > 0$ such that for all $0 < \epsilon < \epsilon_0$, there is a neighborhood $D$ of $p$ with the following properties:
	\begin{itemize}
		\item The boundary of $D$ consists of $k$ segments of unstable leaves alternated with $k$ segments of stable leaves.
		\item For every (stable or unstable) separatrice $\delta$ of $p$, the connected component of $\delta \cap D$ which contains $p$ has a measure (stable or unstable) equal to $\epsilon$.
	\end{itemize}
	
	We say that $D$ is a \emph{regular neighborhood} of $p$ with side length $\epsilon$. If necessary, to make it clear, we write $D(p,\epsilon)$.
\end{theo}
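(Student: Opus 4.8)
The plan is to reduce the statement to a local model near the singularity $p$, where the transverse measured foliations $(\cF^s,\mu^s)$ and $(\cF^u,\mu^u)$ look like the standard $k$-pronged saddle in the foliated chart guaranteed by Definition \ref{Defi: Singular foliation}. First I would fix a foliated chart $\phi_p : U_p \to \RR^2$ around $p$ in which the leaves of $\cF^s$ and $\cF^u$ become the level sets of the two transverse $k$-pronged singular foliations of $\RR^2$; the $k$ stable separatrices $\sigma^s_1,\dots,\sigma^s_k$ and the $k$ unstable separatrices $\sigma^u_1,\dots,\sigma^u_k$ emanate from $p$ and alternate cyclically around $p$, dividing a punctured neighborhood of $p$ into $2k$ (or, grouping by pairs of consecutive separatrices, $k$) local \emph{sectors}, each bounded by one stable and one unstable separatrix. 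Inside each sector the restricted foliations form an honest pair of transverse nonsingular foliations, so the pair $(\mu^s,\mu^u)$ gives local coordinates: a point near $p$ in that sector is determined by the pair $(\mu^s\text{-distance to }\sigma^u_i,\ \mu^u\text{-distance to }\sigma^s_i)$, measured along the leaves through the point.

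The construction of $D(p,\epsilon)$ then goes sector by sector. In the $i$-th sector I would take the set of points whose $\mu^s$-coordinate and $\mu^u$-coordinate are both $\le \epsilon$; this is a ``square'' in the local coordinates, bounded on two sides by an arc of the unstable separatrix $\sigma^u_i$ and an arc of the stable separatrix $\sigma^s_i$ (each of measure exactly $\epsilon$), and on the remaining two sides by a compact arc of an unstable leaf and a compact arc of a stable leaf at measure-distance $\epsilon$. Gluing these $k$ squares around $p$ along the shared separatrix arcs yields a neighborhood $D$ whose boundary is exactly $k$ unstable-leaf segments alternating with $k$ stable-leaf segments, and by construction the portion of each separatrix $\delta$ inside $D$ containing $p$ has the prescribed measure $\epsilon$. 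The threshold $\epsilon_0$ is chosen small enough that all these leaf segments and separatrix arcs stay inside the chart domain $U_p$, that the resulting squares meet only along the intended separatrix arcs (no spurious intersections coming from the global, minimal foliations returning near $p$), and that $D$ is an embedded disk; finiteness of $\textbf{Sing}(f)$ and properness of the chart make this possible.

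The main obstacle I expect is the passage from the purely topological foliated chart of Definition \ref{Defi: Singular foliation} to genuine \emph{measured} coordinates: the chart $\phi_p$ is only $C^0$, so the transverse measures $\mu^s,\mu^u$ need not correspond to Lebesgue measure in the chart, and one must argue that ``the leaf through $x$ at $\mu^s$-distance $\le \epsilon$ from $\sigma^u_i$'' is well defined, depends continuously on $x$, and sweeps out a closed topological square as $\epsilon$ varies. This is handled using that $\mu^s$ is a non-atomic, finite, positive Borel measure on transverse arcs that is invariant under leaf-isotopy (Definitions \ref{Defi: Transverse measure} and \ref{Defi: Measured foliation}): along any unstable leaf segment the function ``$\mu^s$-length from a basepoint'' is a continuous, strictly increasing surjection onto an interval, hence a homeomorphism, which is exactly what is needed to build the coordinates and to see that the level set $\{\mu^s = \epsilon\}$ is a single stable leaf arc rather than something degenerate. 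Everything else — the alternation of boundary arcs, the measure-$\epsilon$ property of separatrices, and the uniform choice of $\epsilon_0$ — is then a bookkeeping consequence of the local picture. I would also remark that this is precisely \cite[Lemme 8.1.4]{bonatti1998diffeomorphismes}, whose argument transfers verbatim once the foliations are put in the above local normal form, so the proof can be kept brief.
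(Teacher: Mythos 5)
Your construction is sound, and it is worth noting that the paper offers no proof of this statement at all: it is imported wholesale as \cite[Lemme 8.1.4]{bonatti1998diffeomorphismes}, precisely the reference you identify at the end, so your sketch simply supplies the standard local-model argument that the paper leaves to the literature, and your handling of the only delicate point (the $C^0$ chart versus genuine measured coordinates, resolved by non-atomicity and positivity of $\mu^s,\mu^u$ on transverse arcs) is the right one. One bookkeeping slip: a $k$-pronged singularity has $2k$ sectors, so you glue $2k$ measured squares, not $k$; the boundary nevertheless consists of $k$ stable and $k$ unstable segments because the two outer leaf arcs in the sectors adjacent to a given separatrix both pass through the point of that separatrix at measure-distance $\epsilon$ from $p$, hence lie on a single leaf and merge into one boundary segment crossing that prong. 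With that adjustment the alternation and the measure-$\epsilon$ property come out exactly as stated.
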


\begin{figure}[h]
	\centering
	\includegraphics[width=0.2\textwidth]{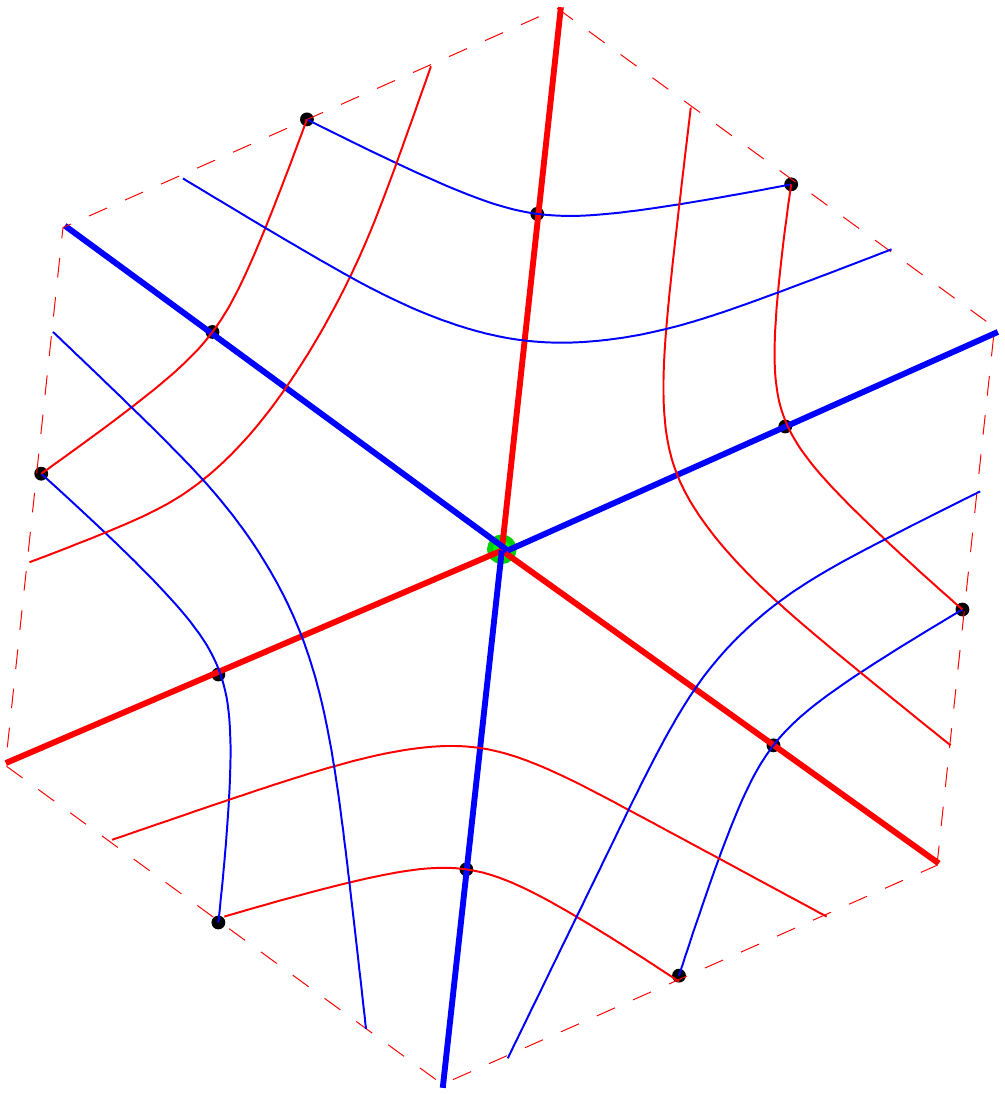}
	\caption{Regular neighborhood of a $3$-prong}
	\label{Fig: Regular neighborhood}
\end{figure}

Let $D(p,\epsilon)$ be a regular neighborhood of $p$ with side length $0<\epsilon\leq\epsilon_0$. We assume that $p$ has $k$ separatrices. Using the orientation of $S$, we label the stable and unstable separatrices of $p$ cyclically counterclockwise as $\{\delta_i^s\}_{i=1}^k(\epsilon)$ and $\{\delta_i^u\}_{i=1}^k(\epsilon)$.

We define $\delta_i^s(\epsilon)$ as the connected component of $\delta_i^s\cap D(p,\epsilon)$ containing $p$, and $\delta_i^u(\epsilon)$ as the connected component of $\delta_i^u\cap D(p,\epsilon)$ containing $p$. We assume that $\delta_i^u(\epsilon)$ is located between $\delta_i^s(\epsilon)$ and $\delta_{i+1}^s(\epsilon)$, where $i$ is taken modulo $k$.

The connected components of $\text{Int}(D(p,\epsilon_0)) \setminus (\cup_{i=i}^k \delta_{i=1}^s(\epsilon_0) \cup \delta_{i=1}^u(\epsilon_0))$ are labeled with a cyclic order and denoted as $\{E(\epsilon_0)_j(p)\}_{j=1}^{2k}$, where the boundary of $E(\epsilon_0)_1(p)$ consists of $\delta^s_1(\epsilon_0)$ and $\delta^u_1(\epsilon_0)$. These conventions lead to the following definitions.

\begin{defi}\label{Defi:converge in a sector }
	Let $\{x_n\}$ be a sequence convergent to $p$. We say $\{x_n\}$ \emph{converges to $p$ in the sector} $j$ if and only if there exists $N \in \mathbb{N}$ such that for every $n > N$, $x_n \in E(\epsilon_0)_j(p)$. The set of sequences convergent to $p$ in the sector $j$ is denoted by $E(p)_j$.
\end{defi}

The set of sequences that converge to $p$ in a sector is $\cup_{j=1}^{2k} E(p)_j$. We are going to define an equivalence relation on this set.

\begin{defi}\label{Defi: Sector equiv}
	Let $\{x_n\}$ and $\{y_n\}$ be sequences that converge to $p$ in a sector. They are in the same sector of $p$, and we denote ${x_n} \sim_q {y_n}$ if and only if $\{x_n\}$ and $\{y_n\}$ belong to the same set $E(p)_j$.
\end{defi}

\begin{rema}\label{Rema: caracterisation sim-p}
	The previous definition is equivalent to the existence of $j \in \{1, \ldots, 2k\}$ and $N \in \mathbb{N}$ such that for all $n \geq N$, $x_n, y_n \in E(\epsilon_0)_j(p)$. We are using this characterization to prove the following lemma.
\end{rema}

\begin{lemm}\label{Lemm: Equiv relation}
	In the set of sequences that converge to $p$ in a sector, $\sim_q$ is an equivalence relation. Moreover, every equivalence class consists of the set $E(p)_j$ for certain $j \in \{1, \ldots, 2k\}$.
\end{lemm}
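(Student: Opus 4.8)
The statement asserts that $\sim_q$ is an equivalence relation on the set $\bigcup_{j=1}^{2k} E(p)_j$ of sequences converging to $p$ within a sector, and that each equivalence class is exactly one of the sets $E(p)_j$. The plan is to work entirely with the characterization recorded in Remark~\ref{Rema: caracterisation sim-p}: two such sequences $\{x_n\}$ and $\{y_n\}$ satisfy $\{x_n\}\sim_q\{y_n\}$ iff there exist an index $j\in\{1,\dots,2k\}$ and $N\in\mathbb N$ with $x_n,y_n\in E(\epsilon_0)_j(p)$ for all $n\ge N$. First I would observe that this is really just saying both sequences eventually lie in the \emph{same} sector region $E(\epsilon_0)_j(p)$; since the $2k$ sets $E(\epsilon_0)_j(p)$ are pairwise disjoint (they are distinct connected components of $\mathrm{Int}(D(p,\epsilon_0))$ minus the separatrix segments), a sequence converging to $p$ in a sector determines the index $j$ \emph{uniquely} — there is a well-defined map sending $\{x_n\}\in\bigcup_j E(p)_j$ to the unique $j$ with $\{x_n\}\in E(p)_j$.

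Granting that well-definedness, the three axioms are immediate. \textbf{Reflexivity}: if $\{x_n\}$ converges to $p$ in a sector, then by Definition~\ref{Defi:converge in a sector } there is $N$ with $x_n\in E(\epsilon_0)_{j}(p)$ for all $n>N$ for the appropriate $j$, so $\{x_n\}\sim_q\{x_n\}$ with the same $j$ and $N$. \textbf{Symmetry} is trivial from the symmetric form of the condition in Remark~\ref{Rema: caracterisation sim-p}. \textbf{Transitivity}: if $\{x_n\}\sim_q\{y_n\}$ via $(j,N_1)$ and $\{y_n\}\sim_q\{z_n\}$ via $(j',N_2)$, then for $n\ge\max\{N_1,N_2\}$ we have $y_n\in E(\epsilon_0)_j(p)\cap E(\epsilon_0)_{j'}(p)$; by disjointness of the sector regions this forces $j=j'$, whence $x_n,z_n\in E(\epsilon_0)_j(p)$ for all $n\ge\max\{N_1,N_2\}$, i.e.\ $\{x_n\}\sim_q\{z_n\}$. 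For the "moreover" part: fix $j$ and note that $E(p)_j$ is by Definition~\ref{Defi:converge in a sector } precisely the set of sequences that are eventually in $E(\epsilon_0)_j(p)$; two sequences in $E(p)_j$ are $\sim_q$-related (take the common index $j$ and the larger of the two thresholds), and conversely any sequence $\sim_q$-related to an element of $E(p)_j$ is eventually in $E(\epsilon_0)_j(p)$ hence lies in $E(p)_j$. Thus the equivalence class of any $\{x_n\}$ is exactly $E(p)_{j(\{x_n\})}$, and the classes are the $2k$ sets $E(p)_1,\dots,E(p)_{2k}$.

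The only real content, and the step I expect to require the most care, is the disjointness / unique-determination claim: that a sequence converging to $p$ cannot be eventually in two different $E(\epsilon_0)_j(p)$. This is where one must invoke that the $E(\epsilon_0)_j(p)$ are genuinely the distinct connected components of $\mathrm{Int}(D(p,\epsilon_0))\setminus(\bigcup_i \delta_i^s(\epsilon_0)\cup\delta_i^u(\epsilon_0))$ from the construction preceding Definition~\ref{Defi:converge in a sector }, so that they form a partition of that open set and a single point can lie in at most one of them; consequently if a sequence is eventually in $E(\epsilon_0)_j(p)$ and also eventually in $E(\epsilon_0)_{j'}(p)$, then for large $n$ the point $x_n$ lies in both, forcing $j=j'$. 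Everything else is bookkeeping with the $\max$ of finitely many indices $N$. I would present the argument in the order: (0) disjointness and the well-defined index map; (1) reflexivity; (2) symmetry; (3) transitivity; (4) identification of classes with the $E(p)_j$.
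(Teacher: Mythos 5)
Your proposal is correct and follows essentially the same route as the paper: the paper's proof also reduces everything to transitivity, taking $n>\max\{N_1,N_2\}$ so that $y_n\in E(\epsilon_0)_j(p)\cap E(\epsilon_0)_{j'}(p)$ and invoking disjointness of the sector regions to force $j=j'$. Your additional spelling-out of the well-defined index map, reflexivity, symmetry, and the identification of classes with the $E(p)_j$ is just the bookkeeping the paper leaves implicit.
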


\begin{proof}
	The less evident property is transitivity. If $\{x_n\}\sim_q \{x_n\}$ and $\{y_n\}\sim_q \{z_n\}$, suppose $x_n,y_n \in  E(\epsilon_0)_j(p)$ for $n>N_1$ and $y_n,z_n \in  E(\epsilon_0)_{j'}(p)$ for $n>N_1$. For every $n>N:=\max{N_1,N_2}$, $y_n\in  E(\epsilon_0)_j(p)\cap  E(\epsilon_0)_{j'}(p)$This is only possible if and only if $j = j'$.
\end{proof}

\begin{defi}\label{Defi: Sector}
	The equivalent class of  successions convergent to $p$ in the sector $j$  is the \emph{sector}  $e(p)_j$ of $p$.
\end{defi}

This notion is important in view of the next proposition, which establishes that generalized pseudo-Anosov homeomorphisms have a well-defined action on the sectors of a point.

\begin{prop}\label{Prop: image secto is a sector}
	Let $f$ be a generalized pseudo-Anosov homeomorphism and $p$ any point in the underlying surface with $k$ different separatrices. If  $\{x_n\}\in e(p)_j$, there is a unique $i\in \{1,\cdots, 2k\}$ such that $\{f(x_n)\}\in e(f(p))_i$. This can be summarized by saying that the image of the sector  $e(p)_j$ is the sector  $e(f(p))_{i}$.
\end{prop}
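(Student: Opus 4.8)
The statement asserts that a generalized pseudo-Anosov homeomorphism $f$ permutes the sectors of points, i.e. that $f$ descends to a well-defined map on the quotient $\cup_j E(p)_j / \sim_q$. The natural strategy is to reduce everything to the local picture provided by Theorem~\ref{Theo: Regular neighborhood}. First I would fix a singularity (or regular point) $p$ with $k$ separatrices, choose $\epsilon_0 > 0$ so that $D(p,\epsilon_0)$ and $D(f(p), \epsilon_0')$ are regular neighborhoods for appropriate side lengths, and record that $f$ maps stable separatrices of $p$ to stable separatrices of $f(p)$ and unstable to unstable, since $f$ preserves each foliation. Because $f$ is a homeomorphism of $S$, it maps a punctured neighborhood of $p$ to a punctured neighborhood of $f(p)$; the key point is that the separatrix data is respected, so $f$ sends the closure of each local sector $E(\epsilon_0)_j(p)$ into the closure of a single local sector of $f(p)$.

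The main steps would be: (1) Show that $f$ sends each stable (resp. unstable) separatrix $\delta_i^s$ of $p$ to a stable (resp. unstable) separatrix of $f(p)$ — this is immediate from $f_*(\cF^{s,u},\mu^{s,u}) = (\cF^{s,u}, \lambda^{\mp 1}\mu^{s,u})$, since $f$ carries leaves to leaves and preserves the singularity type. (2) Deduce that $f$ maps the cyclically-ordered collection of separatrices around $p$ to the cyclically-ordered collection around $f(p)$, preserving the cyclic (counterclockwise) order because $f$ is orientation-preserving. Hence, shrinking $\epsilon_0$ if necessary, $f$ restricted to a small enough punctured disk around $p$ takes the complement of the separatrices onto the complement of the separatrices around $f(p)$, and being a homeomorphism it sends connected components to connected components: each $E(\epsilon_0)_j(p)$ is carried onto exactly one $E(\epsilon_0')_i(f(p))$, and the assignment $j \mapsto i$ is a bijection (a rotation of $\{1,\dots,2k\}$). (3) Translate this into the statement about sequences: if $\{x_n\} \in e(p)_j$, then there is $N$ with $x_n \in E(\epsilon_0)_j(p)$ for $n > N$; applying $f$ and continuity ($f(x_n) \to f(p)$), the tail $\{f(x_n)\}$ lies in the image of $E(\epsilon_0)_j(p)$, hence eventually in a single local sector $E(\epsilon_0')_i(f(p))$, so $\{f(x_n)\} \in e(f(p))_i$ with $i$ depending only on $j$ (not on the particular sequence). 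Uniqueness of $i$ follows from Lemma~\ref{Lemm: Equiv relation}, since the sectors $e(f(p))_i$ are disjoint equivalence classes.

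There is one subtlety to handle carefully: the definition of $E(\epsilon_0)_j(p)$ uses the \emph{fixed} side length $\epsilon_0$, whereas $f$ will not preserve $\epsilon_0$ (it scales the transverse measures by $\lambda^{\pm 1}$). The clean way around this is to observe that the sector structure is independent of the choice of side length: for $0 < \epsilon' < \epsilon \le \epsilon_0$ the components of $D(p,\epsilon')$ minus the local separatrices are nested inside those of $D(p,\epsilon)$, so the equivalence classes $e(p)_j$ do not change if we replace $\epsilon_0$ by any smaller $\epsilon$. Thus I would first prove this stability lemma (or cite that it is implicit in the construction after Theorem~\ref{Theo: Regular neighborhood}), then choose $\epsilon$ small enough that $f(D(p,\epsilon))$ is contained in $D(f(p),\epsilon_0)$ and apply the component-counting argument there. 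I expect this bookkeeping — matching up the local sector labels across the measure-scaling of $f$ and confirming the cyclic order is preserved — to be the only real obstacle; the rest is a direct consequence of $f$ being an orientation-preserving homeomorphism that respects both foliations.
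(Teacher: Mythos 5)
Your proposal is correct and follows essentially the same route as the paper: shrink the regular neighborhood so that $f(D(p,\epsilon))\subset D(f(p),\epsilon_0)$, note that the two separatrices bounding $E(\epsilon)_j(p)$ are carried to contiguous separatrices of $f(p)$ and hence determine a unique sector $E(\epsilon_0)_i(f(p))$, and then push the tail of the sequence through. Your extra remarks (cyclic order preservation and the stability of the sector classes under shrinking the side length) only make explicit bookkeeping that the paper's proof uses implicitly.
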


\begin{proof}
	Note that $p$ is a $k$-prong a regular points or a spine if and only if $f(p)$ is a $k$-prong, a regular point or a spine, therefore $p$ and $f(p)$ have same number of sectors. Let $0<\epsilon<\epsilon_0$ be such that $f(D(p,\epsilon)) \subset D(f(p),\epsilon_0)$, such $\epsilon$ exist because $f$ is continuous.

	Let $\delta^s(p)_j$ and $\delta^u(p)_j$ be the separatrices of $p$ which bound the set $E(\epsilon)_{j}(p) \subset S$, then $f(\delta^s(p)_j)$ and $f(\delta^u(p)_j)$ are in two contiguous separatrices of $f(p)$, which determine a unique set $E(\epsilon_0)_{i}(f(p))$ for some $i\in \{1,\cdots,2k\}$.

	Let $N\in \NN$ such that, for all $n>N$, $x_n\in  E(\epsilon)_j(p)$ This implies that, for every $n>N$, $f(x_n)\in E(\epsilon_0)_{i}(f(p)$ and the succession $\{f(x_n)\}$ is in the sector $e(f(p))_{i}$.
\end{proof}

\begin{lemm}\label{Lemm: sector contined unique rectangle}
	Let $f: S \rightarrow S$ be a generalized pseudo-Anosov homeomorphism with a Markov partition $\cR$. Let $x \in S$ and let $e$ be a sector of $x$. Then, there exists a unique rectangle in the Markov partition that contains the sector $e$.
\end{lemm}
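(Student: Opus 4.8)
The plan is to argue by a combination of the Markov partition covering property and the sector structure of $x$. First I would recall that $\mathcal{R}$ covers $S$, so $x$ lies in at least one rectangle; the point is to see that the full sector $e$ — which is a germ of the surface near $x$ lying between a consecutive stable and unstable separatrix pair — is ``seen'' by exactly one rectangle. Concretely, fix a representative sequence $\{x_n\}\in e$ with $x_n\to x$ and $x_n\in E(\epsilon_0)_j(x)$ for all large $n$. Since the rectangles are finite in number and cover $S$, after passing to a subsequence all these $x_n$ lie in a single rectangle $R=R_i$, and since $R$ is closed, $x\in R$. So at least one rectangle contains $e$ (meaning: contains a tail of some, hence every, representative of $e$).

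The heart of the matter is uniqueness. Suppose $e$ is contained in two rectangles $R_i$ and $R_j$ with $i\neq j$. By Definition~\ref{Defi: Markov partition}(ii) their interiors are disjoint, so the sector $e$, being two-dimensional near $x$, cannot be carried by the interior of both; the obstruction is that $e$ might be realized through the boundaries $\partial R_i$ and $\partial R_j$. Here I would use the local product structure: near $x$ the surface looks like a $p(x)$-pronged neighborhood foliated by the stable and unstable foliations, and a rectangle $R_i$ containing $x$ meets this neighborhood in a union of sectors, each sector being bounded by one stable and one unstable separatrix of $x$ (the boundary pieces $\partial^s R_i$, $\partial^u R_i$ lie on leaves through periodic points by Lemma~\ref{Lemm: Boundary of Markov partition is periodic}, and locally at $x$ these are pieces of the separatrices of $x$ themselves when $x$ is a boundary point, or $x$ is interior to $R_i$ and the rectangle contains a full neighborhood). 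The key claim is that within a single sector $E(\epsilon_0)_j(x)$, the rectangles whose interiors meet that sector tile it, and the interiors being disjoint forces exactly one of them to contain the sector's germ at $x$: if $R_i$ and $R_j$ both contained $e$, then $\overset{o}{R_i}\cap\overset{o}{R_j}$ would contain points $x_n$ of a tail of a representative of $e$, contradicting disjointness — unless that tail escapes into $\partial R_i\cup\partial R_j$, which cannot happen for all $n$ since the boundary is a finite union of arcs transverse to one of the foliations while the sector is genuinely two-dimensional (one can always perturb $x_n$ slightly inside the same sector to land in $\overset{o}{R_i}\cap\overset{o}{R_j}$, because both rectangles, containing the germ $e$, contain a relatively open subset of $E(\epsilon_0)_j(x)$ accumulating at $x$).

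So the concrete steps are: (1) represent $e$ by a sequence and use compactness/finiteness plus closedness of rectangles to get existence of a containing rectangle; (2) show that ``$R_i$ contains the sector $e$'' is equivalent to ``$R_i\cap E(\epsilon_0)_j(x)$ contains a neighborhood of $x$ inside that sector'', using the product structure of rectangles and the fact that their stable/unstable boundaries lie along leaves (Lemma~\ref{Lemm: Boundary of Markov partition is periodic}); (3) deduce that two distinct such rectangles would have overlapping interiors inside the sector, contradicting Definition~\ref{Defi: Markov partition}(ii).

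The main obstacle I anticipate is step (2): making precise, in the merely $C^0$ topological-foliation setting, the statement that a rectangle containing a point $x$ intersects a regular neighborhood of $x$ in a union of ``full'' sectors, and that the germ of a rectangle at $x$ in a given sector, if nonempty, is all of that sector's germ. This requires carefully using the definition of rectangle (the parametrization $\rho:\mathbb{I}^2\to R$ and its horizontal/vertical leaf structure) together with the foliated charts around singular points from Definition~\ref{Defi: Singular foliation}, to rule out pathologies like a rectangle ``pinching'' a sector. Once that local picture is established, the uniqueness is immediate from disjoint interiors.
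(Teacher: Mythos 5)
Your overall architecture (existence by covering plus closedness, uniqueness from disjoint interiors) is reasonable, but as written the argument has a genuine gap exactly at the point you yourself flag as "the main obstacle": step (2), the claim that the germ of a rectangle at $x$ inside a given sector, if nonempty, fills the whole sector germ (no "pinching"). That claim is the real content of the lemma. Without it, your step (1) only produces a rectangle containing a tail of \emph{one} representative sequence, so the parenthetical "hence every representative" is unjustified at that stage and existence in the sense actually used later (the sector, not just one sequence, lies in the rectangle) is not established; and your uniqueness step rests on the assertion that a rectangle containing $e$ contains a relatively open subset of $E(\epsilon_0)_j(x)$ accumulating at $x$, which is again step (2) (or at least needs the tail-of-every-representative reading of "contains $e$" together with a short diagonal argument you do not give). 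Announcing that the crux "requires carefully using the definition of rectangle" is not a proof of it.

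For comparison, the paper closes precisely this hole using the Markov structure rather than a general local analysis: the stable local separatrix $I$ and the unstable local separatrix $J$ bounding the sector are contained in the stable, resp.\ unstable, boundary of a horizontal sub-rectangle $H\subset R$ and a vertical sub-rectangle $V\subset R'$ chosen on the sector side; then $\overset{o}{Q}:=\overset{o}{H}\cap\overset{o}{V}$ is a nonempty open rectangle inside the sector region with $x$ as a corner, so disjointness of interiors forces $R=R'$, and a tail of the representative sequence lies in $\overset{o}{Q}\subset R$, which yields existence and uniqueness simultaneously. If you prefer your softer route, the missing step can be supplied directly: $\partial\cR$ is a finite union of compact arcs, each contained in a single stable or unstable leaf (Definition~\ref{Defi: Rectangle}, Lemma~\ref{Lemm: Boundary of Markov partition is periodic}); if boundary points of $\cR$ accumulated at $x$ from inside the open sector, infinitely many would lie on one such arc, that arc would contain $x$ and hence, near $x$, lie in the local separatrices of $x$, which are excluded from $E(\epsilon)_j(x)$; so for small $\epsilon$ the open sector region is disjoint from $\partial\cR$, hence contained in the disjoint union of the open sets $\overset{o}{R_i}$, and by connectedness in exactly one of them. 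One of these arguments (or an equivalent) is what your plan still owes before the uniqueness-from-disjoint-interiors conclusion can be drawn.
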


\begin{proof}
	Let $\{x_n\}$ be a sequence that converges to $x$ within the sector $e$. Consider a canonical neighborhood $U$ of size $\epsilon > 0$ around $x$, and let $E$ be the unique connected component of $U$ minus the local stable and unstable manifolds of $x$ that contains the sequence $\{x_n\}$.
	
	By choosing $\epsilon$ small enough, we can assume that the local stable separatrix $I$ of $x$ that bound $E$ is contained in at most two rectangles of the Markov partition, and similarly, the local unstable separatrix $J$ of $x$ is contained in at most two rectangles. By take the right side of the local separatrices, we can take:  rectangles $R$ and $R'$ in the Markov partition, a horizontal sub-rectangle $H$ of $R$ that contains $I$ in its upper or lower boundary, and a vertical sub-rectangle $V$ of $R'$ whose upper or left boundary contains $J$. These rectangles can be chosen small enough such that the intersection of their interiors is a rectangle contained within $E$, denoted as $\overset{o}{Q} := \overset{o}{H} \cap \overset{o}{V} \subset E$. This implies that $R = R'$ since the intersection of the interiors of rectangles in the Markov partition is empty.
	
	Furthermore, by considering a sub-sequence of $\{x_n\}$, we don't change its equivalent sector class. Therefore, $\{x_n\} \subset \overset{o}{Q} \subset R$. This completes our proof.
\end{proof}

 \section{Survey on Bonatti-Langevin theory for Smale diffeomorphism.}\label{Sec: Bonatti-Langevin theory}

Geometric types were introduced by Christian Bonatti and Rémi Langevin to study the dynamics of surface Smale diffeomorphisms in the neighborhood of a saddle-type saturated set without double boundaries. In a series of papers \cite{beguin2002classification}, \cite{beguin2004smale}, and his Thesis \cite{beguin1999champs}, François Béguin gives a \emph{Characterization of realizable geometric types} and develops an algorithm to determine when \emph{two geometric types represent the same saturated set}.

At some point, we will transfer our problems of \emph{realization} and \emph{decidability} from homeomorphisms to the context of basic saddle-type pieces for Smale's surface diffeomorphisms. Therefore, we will provide a review of the concepts and results that we will use, emphasizing the simplified version of the Bonatti-Langevin theory that applies to  basic pieces that don't have double boundaries. This obey that in the future we are going to deal with iteration of the diffeomorphism restricted to mixing basic piece.

The \emph{Smale Spectral Decomposition Theorem} asserts that the non-wandering set of an Axiom A diffeomorphism $f$ can be decomposed into a finite number of basic pieces. Each of these basic sets $K$ can further be decomposed into a finite disjoint union of invariant sets $\{K_i\}$, that under the iteration of $f^n$ for some power $n$, are mixing. As a consequence, these $K_i$ become basic sets of $f^n$.

This phenomenon introduces complexity, as now a basic set $K$ might only be saturated for $f^n$, and not for the original $f$. However, this complication doesn't arise if $K$ is mixing, allowing us to work within the same theoretical framework as before. We can still consider powers of $f$, their Markov partitions, and the geometric types of these partitions in a coherent manner.

\subsection{ The domain of a basic piece.}\label{Sub-sec: Domain basic piece}

Once again, let $f: S \rightarrow S$ be a Smale surface diffeomorphism, and let $K$ be a saddle-type basic piece of $f$. The objective of this subsection is to define an invariant neighborhood of $K$ that has $K$ as its maximal invariant set. Such a surface has finite genus, is minimal in a certain sense, and is uniquely determined up to topological conjugacy. This distinguished neighborhood is called the \emph{domain of $K$} and is denoted by $\Delta(K)$. Later on, we will define the \emph{formal derived from pseudo-Anosov} of a geometric type in terms of this domain. More details about this construction can be found in \cite[Chapter 3]{bonatti1998diffeomorphismes}.

If our domain is intended to be an invariant neighborhood of $K$, it needs to contain the stable and unstable manifolds of $K$. Moreover, for it to have finite topology, it needs to contain all polygons bounded by arcs of stable and unstable manifolds. This is the motivation for the following definition.

\begin{defi}\label{Defi: restricted domain}
The \emph{restricted domain} of $K$ is the set $\delta(K)$, consisting of the union of the stable and unstable manifolds of $K$ together with all disks bounded by a polygon formed by arcs of the stable and unstable manifolds of $K$.
\end{defi}
 
 A basic piece $\Lambda$ is always an isolated set, which means that there exists an open neighborhood $U$ of $\Lambda$ such that $\Lambda = \cap_{n=-\infty}^{\infty} f^n(U)$. This $U$ is an invariant neighborhood of $K$. This property can be deduced from the fact that every Smale diffeomorphism admits a \emph{filtration adapted} to $f$ (see \cite[Chapter 2]{shub2013global}). Such a filtration produces an invariant neighborhood $U$ of $f$ with the following properties:
  
\begin{itemize}
\item $U$ is $f$ invariant.
\item $K \subset U$, and for all compact subsets of $U$, their maximal invariant sets are contained in $K$. In other words, $K$ is the maximal invariant set of $U$.
\item $U$ admits a compactification that is a surface of finite type $S'$. Such a surface carries a Smale diffeomorphism, and the complement of $U$ in $S'$ consists of a finite number of periodic attractor or repeller points.
\item The invariant manifolds $W^s(K)$ as $W^u(K)$ are closed sub-sets of $U$.
\item We could suppose every connected component of $U$ intersect $K$.
\item Finally, $\delta(K)\subset U$.
\end{itemize}

Such $U$ is not canonical in the sense that it depends on the filtration. To obtain the domain $\Delta(K)$, Bonatti-Langevin describe the complement of $\delta(K)$ in $U$. The domain of $K$ results from extracting a neighborhood of $\delta(K)$ in $U$. This procedure is discussed in \cite[Proposition 3.1.2]{bonatti1998diffeomorphismes}.

It turns out that $U \setminus \delta(K)$ has a finite number of connected components, all of which are homeomorphic to $\mathbb{R}^2$ and periodic under the action of $f$. The specific construction of $\Delta(K)$ is described in \cite[Subsection 3.2]{bonatti1998diffeomorphismes}, but it essentially involves removing some "strips" from the complement of $U \setminus \delta(K)$. This results in a compact surface, $\Delta(K)$, from which we have removed a finite number of periodic points. We will consider the class of surfaces homeomorphic to $\Delta(K)$ with some additional properties restricted to $K$ as the domain of $K$.

\begin{defi}\label{Defi: Domain of K}
We call a neighborhood $O$ of $K$ a \emph{domain} of $K$ if it is homeomorphic to $\Delta(K)$ under a homeomorphism that coincides with the identity over $K$ and conjugates the restrictions of $f$ to $O$ and $\Delta(K)$.
\end{defi}

The uniqueness, or the canonical nature, of $\Delta(K)$ is summarized in the following proposition, corresponding to \cite[Proposition 3.2.2]{bonatti1998diffeomorphismes}.

\begin{prop}[uniqueness of domain]\label{Prop: unicity of domain}
For every neighborhood $D$ of $K$ that is $f$-invariant and contains $\delta(K)$, there exists an embedding from $\Delta(K)$ to $D$ that is the identity over the restricted domain and commutes with $f$ restricted to $\delta(K)$.
\end{prop}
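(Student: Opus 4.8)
The plan is to reduce the statement to the structural description of $U \setminus \delta(K)$ already recalled in the text, and then build the embedding component by component. First I would fix, once and for all, the model neighborhood $\Delta(K)$, which by construction is obtained from a filtration-adapted neighborhood $U_0$ by removing the "strips" in the complement of $\delta(K)$ inside $U_0$; I keep the explicit identification of $\Delta(K)$ with $\delta(K)$ together with the pieces of $U_0 \setminus \delta(K)$ that survive the strip-removal. Now let $D$ be any $f$-invariant neighborhood of $K$ with $\delta(K) \subset D$. On $\delta(K)$ itself there is nothing to do: the embedding is forced to be the identity there, and since $\delta(K)$ is $f$-invariant this is automatically equivariant. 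The real content is to extend this identity map over the part of $\Delta(K)$ lying outside $\delta(K)$ into $D$, compatibly with $f$.

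The key step is the analysis of $D \setminus \delta(K)$. I would argue, exactly as for $U_0$, that $D \setminus \delta(K)$ has finitely many connected components, each homeomorphic to $\RR^2$ (because $D$ is a surface, $\delta(K)$ is a closed union of complete stable/unstable leaves together with the polygons they bound, and a component of the complement is simply connected with no recurrence), and each is permuted by $f$ with some finite period. The surviving pieces of $\Delta(K) \setminus \delta(K)$ are half-open strips $[0,1)\times\RR$ (or quotients thereof near periodic attractors/repellers), each attached to $\delta(K)$ along a boundary leaf. For each such strip I send its attaching boundary leaf to the corresponding leaf of $\delta(K)\subset D$ by the identity (this is already pinned down), and then extend across the width of the strip into the component of $D\setminus\delta(K)$ on that side. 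Extending one strip in one period-orbit is a matter of choosing a collar; extending equivariantly is done by choosing the collar on one strip of each $f$-orbit of strips and transporting it by the iterates of $f$, so that the map intertwines $f$ on $\Delta(K)$ with $f$ on $D$ by construction.

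The main obstacle I expect is verifying that this strip-by-strip extension can be made (i) injective globally and (ii) an embedding onto a neighborhood, rather than merely a continuous equivariant map. Injectivity requires that distinct strips land in distinct components of $D\setminus\delta(K)$, or in disjoint sub-collars of the same component; this uses the combinatorial correspondence between the strips of $\Delta(K)$ and the ends of the components of $U_0\setminus\delta(K)$, and the fact that an $f$-invariant $D$ containing $\delta(K)$ must contain a full collar of each leaf of $\delta(K)$ on the appropriate side (otherwise iterating $f$ would push points off $D$, contradicting invariance). I would also need to check the map is proper near the periodic attractor/repeller points that were deleted, so that the image is open; this is where the finiteness of the genus of $\Delta(K)$ and the finite-type hypothesis on the filtration neighborhood are used. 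Once injectivity, openness and equivariance are in hand, invariance of domain gives that the map is an embedding, and the restriction to $\delta(K)$ is the identity as required, completing the proof. A cleaner alternative, if one is willing to quote more from \cite{bonatti1998diffeomorphismes}, is simply to invoke \cite[Proposition 3.1.2]{bonatti1998diffeomorphismes} for the structure of $D\setminus\delta(K)$ and then cite the explicit construction of \cite[Subsection 3.2]{bonatti1998diffeomorphismes} applied to $D$ in place of $U_0$, observing that the construction is natural in the ambient neighborhood.
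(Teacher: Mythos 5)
The paper does not actually prove this proposition: it is quoted from Bonatti--Langevin, cited as \cite[Proposition 3.2.2]{bonatti1998diffeomorphismes}, so there is no internal argument to compare yours against. Your closing ``cleaner alternative'' --- invoke \cite[Proposition 3.1.2]{bonatti1998diffeomorphismes} for the structure of the complement of $\delta(K)$ and the construction of \cite[Subsection 3.2]{bonatti1998diffeomorphismes} --- is in effect exactly what the paper does, and your strip-by-strip equivariant extension is a reasonable sketch of the strategy of that reference.

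As a standalone proof, however, the sketch has a concrete gap. You assert that $D\setminus\delta(K)$ has finitely many connected components, each homeomorphic to $\RR^2$ and periodic under $f$, ``exactly as for $U_0$''; but that analysis in \cite{bonatti1998diffeomorphismes} is carried out for the filtration-adapted neighborhood, which has finite topology and compactifies by finitely many attracting/repelling periodic points. Here $D$ is an arbitrary $f$-invariant neighborhood of $K$ containing $\delta(K)$, with no finite-type hypothesis, so that structure cannot simply be transferred. What the construction really needs is weaker: first, that $D$ is a neighborhood of all of $\delta(K)$ (this does follow from invariance --- a point of $W^s(K)$ has forward iterates entering an open neighborhood of $K$ inside $D$, so pulling back by $f^{-n}$ shows $D$ contains a neighborhood of that point, and similarly along $W^u(K)$ and for the bounded polygons); second, that each strip of $\Delta(K)\setminus\delta(K)$ can be embedded into a one-sided collar of its attaching leaf inside $D$, the collar being chosen on one representative per $f$-orbit of strips and transported by $f$. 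You flag precisely the remaining work --- global injectivity and properness near the deleted periodic attractors/repellers, which is what makes the image open and the map an embedding rather than merely an injective equivariant map --- as something you ``would need to check''; that verification is the technical heart of Bonatti--Langevin's proof and is absent here. So the proposal identifies the right strategy but is not a complete argument; for the purposes of this paper, citing \cite[Proposition 3.2.2]{bonatti1998diffeomorphismes}, as the text does, is the appropriate course.
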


\begin{defi}\label{Defi: versatile neighboorhood}
A versatile neighborhood of $K$ refers to any $D$-neighborhood of $K$ that is $f$-invariant, has finite topology (i.e., a compact surface minus a finite number of points), and finite genus. Furthermore, it satisfies the minimality property described in Proposition \ref{Prop: unicity of domain}, which states that for every $f$-invariant $D'$-neighborhood of $K$ with finite topology, there exists an embedding from $D$ to $D'$ that commutes with $f$ and is the identity over $\delta(K)$.
\end{defi}

The importance of the domain, apart from its minimality, lies in the fact that $\Delta(K)$ is unique in a specific sense. The main challenge is that the domain is neither closed nor open. In fact, it is through the use of the boundary components of the domains that we are able to connect different domains of basic pieces together and reconstruct the dynamics of the entire diffeomorphism.  Bonatti-Langevin introduce the concept of a \emph{closed module} for $f$ (refer to \cite[Definition 3.2.4]{bonatti1998diffeomorphismes}). While we won't delve into the details of this definition, the significance of it is captured in the following corollary (\cite[Corollary 3.27]{bonatti1998diffeomorphismes}).

\begin{coro}[Uniqueness of the Domain]\label{Coro: Uniqueness of domain}
Any $f$-invariant neighborhood of $K$ with finite topology that is versatile, closed modulo $f$, and such that every connected component of it intersects $K$, is conjugate to $\Delta(K)$ by a homeomorphism that coincides with the identity on $\delta(K)$.
\end{coro}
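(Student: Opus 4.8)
The plan is to reduce Corollary \ref{Coro: Uniqueness of domain} to Proposition \ref{Prop: unicity of domain} (uniqueness of domain) by producing two embeddings that are mutually inverse. Let $D$ be an $f$-invariant neighborhood of $K$ with finite topology that is versatile, closed modulo $f$, and all of whose connected components intersect $K$. First I would note that $D$ contains $\delta(K)$: indeed, $D$ is $f$-invariant and must contain $W^s(K)\cup W^u(K)$ because these are the stable and unstable manifolds of the maximal invariant set, and by the finite-topology hypothesis it cannot omit any of the finitely many polygons bounded by arcs of these manifolds — otherwise such a polygon would contribute an extra topological end or handle incompatible with finiteness, or $D$ would fail to be a neighborhood of the corners of that polygon. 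So $\delta(K)\subset D$, and Proposition \ref{Prop: unicity of domain} applies to give an embedding $j\colon \Delta(K)\hookrightarrow D$ that is the identity on $\delta(K)$ and commutes with $f$.

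**Producing the reverse embedding.**

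The heart of the argument is to obtain an embedding in the other direction, $j'\colon D\hookrightarrow \Delta(K)$, also the identity on $\delta(K)$ and $f$-equivariant. This is exactly what the versatility of $D$ buys us: by Definition \ref{Defi: versatile neighboorhood}, for every $f$-invariant finite-topology neighborhood $D'$ of $K$ there is an embedding $D\hookrightarrow D'$ commuting with $f$ and equal to the identity on $\delta(K)$. Taking $D'=\Delta(K)$ — which is itself $f$-invariant and of finite topology by its construction in \cite[Subsection 3.2]{bonatti1998diffeomorphismes} — yields the desired $j'\colon D\hookrightarrow \Delta(K)$. (Here one uses that $\Delta(K)$ is a legitimate candidate for the role of $D'$; this is where the hypothesis that every connected component of $D$ meets $K$ matters, to rule out stray components of $D$ that have nowhere to go in $\Delta(K)$.)

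**Closing the loop with closedness modulo $f$.**

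Now compose: $j'\circ j\colon \Delta(K)\to \Delta(K)$ is an $f$-equivariant embedding that restricts to the identity on $\delta(K)$, and symmetrically $j\circ j'\colon D\to D$ is an $f$-equivariant self-embedding fixing $\delta(K)$ pointwise. I would then invoke the rigidity provided by the \emph{closed modulo $f$} condition: an $f$-equivariant self-embedding of a versatile, closed-modulo-$f$ neighborhood that is the identity on $\delta(K)$ must be surjective, hence a homeomorphism. The point is that the image $j'(j(\Delta(K)))$ is again an $f$-invariant neighborhood containing $\delta(K)$; if it were proper, the complement would be a nonempty $f$-invariant "collar'' region disjoint from $K$ and from $\delta(K)$, which is incompatible with being closed modulo $f$ (this is precisely the mechanism by which Bonatti–Langevin rule out extraneous ends, \cite[Definition 3.2.4, Corollary 3.2.7]{bonatti1998diffeomorphismes}). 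Therefore $j'\circ j$ and $j\circ j'$ are both homeomorphisms, so $j$ is a bijection onto $D$ with continuous inverse $j'$, i.e. a homeomorphism $\Delta(K)\to D$ that is the identity on $\delta(K)$ (in particular on $K$) and conjugates the two restrictions of $f$. This is exactly the conclusion, and by Definition \ref{Defi: Domain of K} it says $D$ is a domain of $K$.

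**Main obstacle.**

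The step I expect to be delicate is the last one: turning "two mutually opposed $f$-equivariant embeddings fixing $\delta(K)$'' into "both are homeomorphisms.'' Embeddings of non-compact surfaces need not be surjective even when they compose to something close to the identity, so one genuinely needs the closed-modulo-$f$ hypothesis to kill the possibility of a proper invariant sub-neighborhood; making that precise requires unpacking the Bonatti–Langevin notion of closed module and the structure of $\Delta(K)\setminus\delta(K)$ as a finite union of $f$-periodic planes, and checking that an invariant embedded copy of $\Delta(K)$ inside itself that fixes $\delta(K)$ cannot shrink any of those planar strips. The verification that $\delta(K)\subset D$ is conceptually easy but also uses the finite-topology hypothesis in an essential way and should be stated carefully.
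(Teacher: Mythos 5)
The paper does not prove this statement at all: it is quoted verbatim from Bonatti--Langevin (their Corollaire 3.2.7), and the surrounding text explicitly declines to define the key notion ``closed modulo $f$'' (\cite[Definition 3.2.4]{bonatti1998diffeomorphismes}). So there is no in-paper argument to compare yours against; what can be judged is whether your proposal stands on its own. Your overall strategy -- build $j\colon\Delta(K)\hookrightarrow D$ from Proposition \ref{Prop: unicity of domain}, build $j'\colon D\hookrightarrow\Delta(K)$ from the versatility of $D$ applied to $D'=\Delta(K)$, then upgrade the two compositions to homeomorphisms -- is the natural reduction and is consistent with the Bonatti--Langevin framework, and your bookkeeping at the end (surjectivity of both compositions gives continuity of $j^{-1}=(j'\circ j)^{-1}\circ j'$) is fine.

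The genuine gap is exactly the step you flag as delicate, and it is not a cosmetic one: the assertion that an $f$-equivariant self-embedding of a versatile, closed-modulo-$f$ neighborhood which is the identity on $\delta(K)$ must be surjective is precisely the content of the corollary, and it is the only place where the hypothesis ``closed modulo $f$'' enters. Since that hypothesis is never unpacked (neither in the paper nor in your proposal), your claim that a proper $f$-invariant image ``would be incompatible with being closed modulo $f$'' is an appeal to the conclusion rather than a proof; one would have to use the actual definition together with the structure of $\Delta(K)\setminus\delta(K)$ as finitely many $f$-periodic planes to show the embedded copy cannot omit a sub-strip of one of those planes. A secondary, fixable weakness: your argument that $\delta(K)\subset D$ ``by finite topology'' does not work as stated, because deleting a closed disk from the interior of a polygon bounded by stable/unstable arcs still leaves a surface of finite topology (it only adds a boundary component/end); the containment should instead be extracted either from the versatility hypothesis itself (an embedding equal to the identity over $\delta(K)$ presupposes $\delta(K)\subset D$) or from the invariance argument in \cite[Proposition 3.1.2]{bonatti1998diffeomorphismes}, together with the hypothesis that every component of $D$ meets $K$.
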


This corollary allows us to provide a representation of the Smale diffeomorphism on a basic piece in terms of its geometric type. This result is presented in  \cite[Theorem 5.2.2]{bonatti1998diffeomorphismes}, and we reproduce it below:

\begin{theo}\label{Theo: Presentation in a domain}
Let $f$ and $g$ be two Smale diffeomorphisms on compact surfaces, and let $K$ and $L$ be two basic pieces for $f$ and $g$ respectively. We denote their domains as $(\Delta(K), f)$ and $(\Delta(L), g)$. If $K$ and $L$ have Markov partitions with the same geometric types, then there exists a homeomorphism from $\Delta(K)$ to $\Delta(L)$ that conjugates $f$ and $g$.
\end{theo}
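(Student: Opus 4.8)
The plan is to prove Theorem \ref{Theo: Presentation in a domain} by combining the minimality/uniqueness of the domain (Corollary \ref{Coro: Uniqueness of domain}) with a combinatorial ``model'' construction built directly from the common geometric type. First I would recall that a geometric type $T$ determines an abstract gluing datum: take $n$ copies of the square $\II^2$, and glue horizontal sub-rectangles of the $i$-th square to vertical sub-rectangles of the $j$-th square exactly as prescribed by the bijection $\rho_T$ and the sign function $\epsilon_T$; iterating this gluing along finite words admissible for the incidence matrix $A(T)$ produces, in the usual way of Markov-partition symbolic dynamics, a compact hyperbolic set $K_T$ carrying a shift-like homeomorphism $f_T$ together with a Markov partition $\mathcal{R}_T$ whose geometric type is exactly $T$. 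The point is that $K_T$, $f_T$ and $\mathcal{R}_T$ depend only on $T$, not on $f$ or $g$. Then the theorem reduces to showing that the basic piece $(K,f)$ with its Markov partition of type $T$ is topologically conjugate (near $K$) to this model $(K_T,f_T)$, and symmetrically for $(L,g)$; transitivity of conjugacy then gives the desired homeomorphism between domains.

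The main step is the conjugacy $(K,f)\simeq(K_T,f_T)$. Here I would use the standard coding machinery: the Markov partition $\mathcal{R}$ of $(f,K)$ gives a projection $\pi_{(f,\mathcal{R})}\colon \Sigma_{A(T)}\to K$ that semiconjugates the subshift with $f|_K$, and likewise $\pi_{(f_T,\mathcal{R}_T)}\colon \Sigma_{A(T)}\to K_T$. Because both Markov partitions have the \emph{same} geometric type $T$ (not merely the same incidence matrix), the identifications forced on $\Sigma_{A(T)}$ by overlaps of rectangles — i.e., which codes represent the same point — are governed purely by $T$: two codes project to the same point on one side iff they do on the other. This is essentially the content of the kind of argument behind Proposition \ref{Prop: The relation determines projections} transplanted to the Smale setting. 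Hence $\ker\pi_{(f,\mathcal{R})}=\ker\pi_{(f_T,\mathcal{R}_T)}$ as equivalence relations on $\Sigma_{A(T)}$, so the map $\pi_{(f_T,\mathcal{R}_T)}\circ\pi_{(f,\mathcal{R})}^{-1}$ is a well-defined homeomorphism $K\to K_T$ conjugating $f|_K$ to $f_T|_{K_T}$. The orientation data encoded by $\epsilon_T$ ensures that this conjugacy can be taken to respect the ambient orientation on a neighborhood, which matters for the next step.

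Next I would upgrade the conjugacy on the basic pieces to a conjugacy on domains. The model $(K_T,f_T)$ can be realized inside a concrete surface: glue the $n$ affine rectangles along their sub-rectangles as above to obtain a surface-with-boundary $\mathcal{R}_T$, thicken appropriately, and observe that (because $T$ is the geometric type of an actual Smale basic piece $K$, hence has finite genus and the realizability properties from the Bonatti--Langevin theory) this carries a Smale diffeomorphism with a distinguished invariant neighborhood which is versatile, closed modulo $f_T$, and has every component meeting $K_T$. By Corollary \ref{Coro: Uniqueness of domain}, that neighborhood is conjugate to $\Delta(K_T)$. Now both $\Delta(K)$ and $\Delta(K_T)$ are versatile, closed-modulo-$f$ invariant neighborhoods of conjugate basic pieces; transporting the $f$-structure on $\Delta(K)$ across the conjugacy $K\to K_T$ turns it into such a neighborhood of $K_T$, whence Corollary \ref{Coro: Uniqueness of domain} (with the identity on $\delta(K_T)$) identifies it with $\Delta(K_T)$. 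Running the same argument for $(L,g)$ and composing yields a homeomorphism $\Delta(K)\to\Delta(L)$ conjugating $f$ and $g$.

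The hard part will be the second paragraph: showing that the \emph{same geometric type} forces the \emph{same identification relation} on the symbolic space, so that the two projections have literally the same fibers. Having the same incidence matrix alone is not enough — one must check that the geometric type records exactly how sub-rectangles sit inside rectangles (their cyclic order along the stable/unstable boundaries) and how $f$ permutes them with orientation, and that this is precisely the data determining when two itineraries are forced to coincide (boundary overlaps, corner points, etc.). Care is also needed with spines/$1$-prongs and with making the conjugacy orientation-preserving on a neighborhood rather than merely on $K$; the function $\epsilon_T$ is what rules out an orientation-reversing ambiguity. Once this ``geometric type $\Rightarrow$ coding relation'' fact is in hand, everything else is an application of the uniqueness-of-domain theorem already available from \cite{bonatti1998diffeomorphismes}.
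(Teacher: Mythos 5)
There is a genuine gap, and it sits exactly where you declare the work finished rather than where you say the hard part is. For a saddle-type basic piece of a Smale diffeomorphism the rectangles of a Markov partition are \emph{pairwise disjoint} (this is how the paper defines them), and $K$ is zero-dimensional, so the coding map identifies $f|_K$ with the associated subshift essentially without any nontrivial fiber identifications; a conjugacy $(K,f|_K)\simeq(K_T,f_T|_{K_T})$ therefore already follows from the incidence matrix alone, and your second paragraph (the analogue of Proposition \ref{Prop: The relation determines projections}, which in the paper concerns pseudo-Anosov partitions whose rectangles overlap along boundaries) is solving the easy part of the problem. The actual content of the theorem is the extension of this conjugacy from the Cantor set $K$ to the two-dimensional domain $\Delta(K)$, and your third paragraph does not deliver it: the phrase ``transporting the $f$-structure on $\Delta(K)$ across the conjugacy $K\to K_T$'' has no meaning, because that conjugacy is defined only on $K$ and there is no canonical way to push a surface neighborhood forward through a map defined on a zero-dimensional set — indeed, producing such an extension is precisely what is to be proved. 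Corollary \ref{Coro: Uniqueness of domain} (and Proposition \ref{Prop: unicity of domain}) cannot substitute for it: it compares invariant neighborhoods of one \emph{fixed} basic piece of one \emph{fixed} diffeomorphism, via homeomorphisms required to be the identity on the restricted domain $\delta(K)$; it says nothing about neighborhoods of two different basic pieces unless a conjugacy on a neighborhood (or at least on $\delta(K)$, which already contains the one-dimensional invariant manifolds and the disks they bound) is in hand.

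What is missing is the step in which the geometric type — and not merely the incidence matrix — controls the \emph{embedded, two-dimensional} picture: one must show that the unions $\bigcup_{j\le m}f^j(\cR)$ are determined up to HV-homeomorphism by $T$ (this is the role of Propositions \ref{Prop: unique m-realizer} and \ref{Prop: realizer and Markov partition}), so that the conjugacy can first be extended rectangle by rectangle, respecting the order and orientation data recorded by $\rho_T$ and $\epsilon_T$, and then to the invariant laminations, the arcs and cycles of boundary periodic points, and finally to $\Delta(K)$ using the structure of $\Delta(K)\setminus\delta(K)$; only at that last stage does the uniqueness/minimality of the domain enter legitimately. Note also that the paper itself does not prove this statement: it is quoted verbatim from \cite[Theorem 5.2.2]{bonatti1998diffeomorphismes}, where the proof follows the route just described. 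Your outline would need that intermediate ``realizer'' argument spelled out; as written, the appeal to Corollary \ref{Coro: Uniqueness of domain} is circular.
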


It is clear that if $f$ and $g$ are conjugate restricted to $\Delta(K)$ and $\Delta(L)$, and if $K$ admits a Markov partition of geometric type $T$, then its image under conjugation is a Markov partition of $L$ with geometric type $T$. Therefore, Theorem  \ref{Theo: Presentation in a domain} provides a necessary and sufficient condition for the conjugacy between $f$ and $g$ in terms of their geometric types and domains.

Two basic pieces $K$ and $L$ have Markov partitions of the same geometric type if and only if they have the same domain (up to conjugation), as stated in Theorem \ref{Theo: Presentation in a domain}. This allows us to provide the definition of the formal derived-from Anosov of a geometric type in the pseudo-Anosov case (see Definition \ref{Defi: Formal DA}).
\subsection{ The genus of a geometric type.}\label{Sub-sec: The genus}

The pseudo-Anosov class of geometric type refers to the geometric type of Markov partitions of a generalized pseudo-Anosov homeomorphism. There exists a completely analogous notion for the geometric type of saddle-type basic pieces, which we introduce below.

\begin{defi}\label{Defi: realizable as Basic piece}
Let $T$ be a geometric type. $T$ is \emph{realizable as a (surface) basic piece} if: There exists a Smale surface diffeomorphism $f$ with a \emph{non-trivial} saddle-type basic piece $K$ and a Markov partition $\mathcal{R}$ of $K$ with geometric type $T$. In this case, we say that $K$ realizes the geometric type $T$.
\end{defi}

If we start with a geometric type $T$, we would like to know whether or not it is realizable as a basic piece of surfaces. Christian Bonatti, Rémi Langevin, and Emmanuelle Jeandenans have formulated in \cite[Chapter 7]{bonatti1998diffeomorphismes} a necessary condition for $T$ to be realizable as a basic piece of surface: the genus of $T$ must be finite. Subsequently, François Béguin has shown in \cite{beguin2004smale} that such a condition is sufficient. In this subsection, we define the genus of a geometric type and formulate a criterion that allows us to decide whether the genus is finite.

Let $T=(n,\{h_i,v_i\}_{i=1}^,\Phi)$ be a geometric type. The strategy for constructing a diffeomorphism on a compact surface (of finite type) with a basis piece realizing $T$ begins by considering a family of disjoint, non-degenerate rectangles $\cR=\{R_i\}_{i=1}^n$ together with a function $\phi$ defined piecewise on a family of horizontal sub-rectangles of $\mathcal{R}$ whose image is a family of vertical sub-rectangles of $\mathcal{R}$ satisfying the combinatorics of $\Phi$.

Any saddle-type basis piece has no double $s,u$-boundary and does not admit degenerate rectangles, i.e., rectangles reduced to a point or an interval. If we want to have hope that such a family of rectangles has, as its maximal invariant set, a non-trivial saddle-type basic piece, there is a combinatorial phenomenon dictated by $\Phi$ that we have to avoid.

Suppose there exists a subfamily of rectangles $\{R_{i(k)}\}_{k=1}^m$ of $\cR$, such that they all have a single horizontal sub-rectangle, and the function $\phi$ defined on the horizontal sub-rectangles of the family satisfies the following conditions: $\phi(R_{i(k)}\subset R_{i(k+1)}$ for $k \in \{1,\cdots, m-1\}$ and $\phi(R_{i(m)})\subset R_{i(1)}$. This condition implies that $\phi(R_{i(1)})\subset R_{i(1)}$ is a horizontal sub-rectangle of $R_{i(1)}$. Therefore, in order to have a uniform expansion (as we desire) in the unstable direction, it is necessary that $R_{i(k)}$ reduces to a stable interval. However, this is incompatible with the requirement that the desired basic piece should not have double $s$-boundaries or double $u$-boundaries. This property can be read from the geometric type $T$, which we describe in the following definition

	\begin{defi}\label{Defi: double boundary}
	A geometric type $T$ has a \emph{double $s$-boundary} if there exists a cycle of indices between $1$ and $n$:
	$$
	i_i\rightarrow i_2 \rightarrow\cdots \rightarrow i_k \rightarrow i_{k+1}=i_i.
	$$
	Such that, for all $t\in \{2,\cdots,k\}$, $h_{i_t}=1$ and $\phi_T(i_{t},1)=(i_{t+1},l)$.\\
	Using the inverse of the geometric type is possible to define \emph{double-$u$-boundary} as a double $s$-boundary of $T^{-1}$.
\end{defi}

  It was shown in \cite[Proposition 7.2.2]{bonatti1998diffeomorphismes} that a double boundary of geometric type $T$ corresponds to double boundaries on a hyperbolic set that has a Markov partition of geometric type $T$, so the analogy we present is compatible with the combinatorial formulation. Don't have double boundaries is the fist obstruction of $T$ to be realized as a saddle-type basic piece.

\begin{comment}
There is a first  obstruction of $T$ which is formulate in therms of the incidence matrix of $T$, $A(T)$.

\begin{defi}\label{Defi: Transitive matriz}
Let $A$ a square non-negative matrix. $A$ is transitive if there exist $n\in \NN$ such that all the coefficients of $A^n$ are positive, i.e. $a_{ij}^{(n)}>0$. 
\end{defi}

\begin{lemm}\label{Lemm: T realizable implies A non-boundaries}
If $A(T)$ is transitive, $T$ have no-double $s,u$-boundaries.
\end{lemm}
\begin{proof}
Suppose $T$ have a double $s$-boundary and $A$ is a matrix of $n\times n$. Let $\{e_i\}_{i=1}^n$ the canonical base of $\RR^n$. Let $\{i_j\}_{j=1}^k$, is clear the $A(e_{i_j})=e_{i_{j+1}}$ this implies that for all $m\in \NN$ $A^m(e_{i_j})=e_{i_{j+m}}$ where we take $j+m$ modulus $k$. This implies $A^m$ is not positive, a contradiction with the fact $A$ been transitive.\\
A similar analysis prove the non-existence of $u$-boundaries.
\end{proof}
\end{comment}

Once we have introduced our first obstruction for $T$, in order to construct a surface with boundaries and corners on which there exists an (affine) diffeomorphism realizing $T$, the next step is to formalize what we mean by a diffeomorphism with a combinatorics induced by $T$.
	
	\begin{defi}[Concretization]\label{Defi: Concretization}
			Let $T=(n,\{h_i,v_i\}{i=1}^n, \Phi_T)$ be a geometric type without double boundary. A \emph{concretization} of $T$ is a pair $(\cR=\{R_i\}_{i=1}^n,\phi)$ formed by a family of $n$ oriented, non-degenerate, and disjoint rectangles $\cR$  and a function $\phi$ defined over a subset of $\cup \cR:=\cup_{i=1}^n R_i$ with the following characteristics:
		
		\begin{enumerate}
			\item Every rectangle is endowed with the trivial vertical and horizontal foliations. We give an orientation to the vertical leaves of $R_i$ and let the horizontal leaves be oriented in such a way that a pair of horizontal and vertical leaves gives the orientation chosen for $R_i$.
			
			\item For every $1\leq i \leq n$ and $1\leq j \leq h_i$, there is a horizontal sub-rectangle $H^i_j$ of $R_i$. The sub-rectangles are disjoint and their order is compatible with the vertical orientation in $R_i$. We demand that the lower (upper) boundary of $H^i_1$ ($H^i_{h_i}$) coincides with the lower (upper) boundary of $R_i$.
					
			\item For every $1\leq k \leq n$ and $1\leq l \leq v_k$, we have a vertical sub-rectangle $V^k_l$ of $R_k$. The sub-rectangles are disjoint and the order is compatible with the horizontal orientation in $R_k$. We demand that the left (right) boundary of $V^k_1$ ($V^k_{v_k}$) coincides with the left (right) boundary of $R_k$.
			
			\item The function $\phi:\cup_{i,j}H^i_j \rightarrow \cup_{k,l}V^k_l$ is  orientation-preserving diffeomorphism restricted to every $H^i_j$ that preserves the horizontal and vertical foliations.
			
			\item If $\Phi_T(i,j)=(k,l,\epsilon)$, then $\phi(H^i_j)=V^k_l$.

			\item If $\epsilon(i,j)=1$, $f$ preserves the orientation of the vertical direction restricted to $H^i_j$, and $f$ reverses it in the case $\epsilon(i,j)=-1$.
		\end{enumerate}
	\end{defi}
	
	We have obtained a graphical representation of $T$, but another problem arises: the function $\phi$ is not necessarily hyperbolic. To ensure hyperbolicity, we require the existence of an integer $m\in\mathbb{N}$ such that, for all points in its maximal invariant set, the derivative of $\phi^m$ expands vectors in the vertical direction and contracts horizontal vectors uniformly. This leads us to the following definition.
	
	\begin{defi}[realization]\label{Defi: realization}
Let $T$ be a geometric type without double boundaries. A concretization of $T$ is considered a realization if the maximal invariant set of $\phi$ is hyperbolic.
	\end{defi}

An easy way to define $\phi$ is as an affine transformation over every horizontal sub-rectangle. Such concretizations are called \emph{affine concretizations}. It turns out that every affine concretization is a realization, and every geometric type without a double boundary admits an affine concretization. This implies the result of \cite[Proposition 7.2.8]{bonatti1998diffeomorphismes}, which we reproduce below.
	
	\begin{prop}\label{Prop: Types transitive have a realization}
	Every geometric type without a double boundary admits a realization.
	\end{prop}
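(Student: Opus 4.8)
The plan is to prove both halves of the statement: first, that every geometric type without double boundaries admits an \emph{affine} concretization, and second, that any affine concretization is automatically a realization (i.e. its maximal invariant set is hyperbolic). Taken together these give the proposition.

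For the construction of an affine concretization, I would first pick a large integer $N$ (say $N = \max_i\{h_i, v_i\} + 1$, though any sufficiently large choice works) and take each $R_i$ to be a copy of the unit square $\mathbb{I}^2$, with the standard vertical and horizontal foliations, oriented in the standard way; place them disjointly in the plane. Inside $R_i$ I would lay out $h_i$ pairwise disjoint horizontal sub-rectangles $H^i_1, \dots, H^i_{h_i}$, each of height $1/N$ (or, more carefully, heights chosen so they fill $R_i$ from bottom to top with $H^i_1$ touching the lower boundary and $H^i_{h_i}$ the upper boundary, as Definition \ref{Defi: Concretization}(2) demands), and similarly $v_k$ disjoint vertical sub-rectangles $V^k_1, \dots, V^k_{v_k}$ of width $1/N$ filling $R_k$ left to right. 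Then, using the bijection $\rho_T$ and the sign function $\epsilon_T$, define $\phi$ on $H^i_j$ to be the unique affine map sending $H^i_j$ onto $V^k_l$ (where $(k,l) = \rho_T(i,j)$), acting affinely in each coordinate, preserving the horizontal foliation and the chosen horizontal orientation, and either preserving or reversing the vertical orientation according to whether $\epsilon_T(i,j) = +1$ or $-1$. The condition $\alpha(T) = \sum h_i = \sum v_i$ is exactly what guarantees $\phi$ is a bijection from $\bigcup_{i,j} H^i_j$ onto $\bigcup_{k,l} V^k_l$, and all the bookkeeping conditions (1)--(6) of Definition \ref{Defi: Concretization} are immediate from this construction. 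Here one uses that $T$ has no double boundary only implicitly — it is not actually needed for the concretization to exist, but it is what rules out forced degeneracies later; in any case the statement to be proved already assumes it.

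For hyperbolicity: on $H^i_j$ the map $\phi$ is affine with derivative a diagonal matrix $\mathrm{diag}(a_{ij}, \pm b_{ij})$, where $a_{ij} < 1$ is the horizontal contraction ratio (width of $V^k_l$ over width of $H^i_j$, i.e. roughly $1/N < 1$) and $|b_{ij}| = 1/(\text{height of } H^i_j) > 1$ is the vertical expansion ratio (height of $V^k_l = 1$ over height of $H^i_j = 1/N$). Thus along any orbit staying in $\bigcup H^i_j$ — which is precisely the condition for a point to be in the maximal invariant set — the derivative of $\phi^m$ is a product of such diagonal matrices, so it contracts the horizontal direction by at least $(\max_{ij} a_{ij})^m \to 0$ and expands the vertical direction by at least $(\min_{ij}|b_{ij}|)^m \to \infty$, with uniform constants since there are only finitely many rectangles. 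This gives a uniformly hyperbolic splitting into the (everywhere-defined) horizontal and vertical line fields, so the maximal invariant set is hyperbolic; invoking Definition \ref{Defi: realization}, the concretization is a realization.

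The main obstacle, and the only place requiring genuine care, is the geometric layout in step one: one must verify that the $H^i_j$ and $V^k_l$ can be chosen simultaneously so that (a) the $H^i_j$ are disjoint and ordered compatibly with the vertical orientation with $H^i_1, H^i_{h_i}$ flush against the lower/upper boundary, (b) the analogous statement for $V^k_l$, and (c) each affine map $\phi|_{H^i_j} : H^i_j \to V^k_l$ genuinely contracts horizontally and expands vertically — the last point forces the vertical sub-rectangles to be ``tall'' (full height) and ``thin'', and the horizontal ones to be ``short'' and ``wide'', which is automatic once every $R_i$ is a square of fixed size and the counts $h_i, v_i$ are bounded, but would fail if the rectangles were allowed wildly different aspect ratios. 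Normalizing all $R_i$ to the unit square sidesteps this entirely. Everything else is routine verification of the six conditions in Definition \ref{Defi: Concretization} and the finitely-many-rectangles argument for uniform hyperbolicity.
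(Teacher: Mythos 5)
Your overall route (build an affine concretization, then check hyperbolicity of the maximal invariant set) is the intended one — the paper itself only cites \cite[Proposition 7.2.8]{bonatti1998diffeomorphismes} for this — but your hyperbolicity argument has a genuine gap, and it sits exactly at the point where the hypothesis ``no double boundary'' must be used. Item (2) of Definition \ref{Defi: Concretization} forces the lower boundary of $H^i_1$ and the upper boundary of $H^i_{h_i}$ to coincide with the lower and upper boundaries of $R_i$. Hence if some $h_i=1$, the unique horizontal sub-rectangle is all of $R_i$: your layout ``each of height $1/N$'' is then impossible (a single sub-rectangle of height $1/N<1$ cannot be flush with both the bottom and the top), and, more importantly, the affine map on that rectangle has vertical expansion factor exactly $1$ (it sends the full height of $R_i$ onto the full height of some $R_k$); dually, if some $v_k=1$ the horizontal factor on the corresponding step is $1$. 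So the claimed per-step uniform bounds $\max_{i,j} a_{ij}<1<\min_{i,j}|b_{ij}|$ are simply false in general, and types with some $h_i=1$ or $v_k=1$ are allowed by the hypothesis (e.g.\ $h_1=1$ with $\rho_T(1,1)$ landing in a rectangle having $h_2=2$ creates no cycle, hence no double boundary). Your remark that the no-double-boundary assumption ``is not actually needed'' is the symptom of the gap: without it the conclusion is false (an affine concretization of a type with a double $s$-boundary has a cycle of full-height rectangles along which the vertical derivative of every power is $1$).

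The missing step is the following. Whenever $h_i=1$ the image rectangle is forced ($\xi_T(i,1)$ is determined), so a forward orbit can pass through rectangles with $h=1$ only along a forced path in a finite graph; absence of a double $s$-boundary means this graph has no cycle, hence any point of the maximal invariant set enters a rectangle with $h_i\geq 2$ within at most $n$ iterates, and there the vertical factor is at least some $\lambda>1$ (e.g.\ $\lambda=2$ with your $1/N$ heights). Consequently some fixed power $\phi^m$, with $m$ of the order of $n$, expands vertical vectors uniformly on the maximal invariant set; the symmetric argument applied to $T^{-1}$ (no double $u$-boundary) gives uniform horizontal contraction for the same power. This is exactly the form of hyperbolicity demanded by Definition \ref{Defi: realization} (``there exists $m$ such that $\phi^m$ expands/contracts uniformly''), and it is why the statement is about a power of $\phi$ rather than about $\phi$ itself. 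A smaller point: your sub-rectangles must be pairwise disjoint (with gaps — the stripes of Definition \ref{Defi: Vetical-horizontal stripes}), so the parenthetical ``heights chosen so they fill $R_i$'' should be dropped; only $H^i_1$ and $H^i_{h_i}$ touch the horizontal boundary.
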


Starting from a realization $(\cR=\{R_i\}_{i=1}^n,\phi)$ of $T$, and for each positive natural number $m \in \mathbb{N}+$, we will construct the "minimal" surface with boundaries and corners that supports $m-1$ iterations of $\phi$. This construction involves gluing horizontal and vertical sub-rectangles of the realization according to the rules determined by the diffeomorphism $\phi$. Let us introduce a general definition that will be helpful in our discussions.

\begin{defi}\label{Defi: Vetical-horizontal stripes}
For every $i\in \{1,\cdots,h_i-1\}$, the \emph{horizontal stripe} $\tilde{H^i_j}$ is the vertical sub-rectangle of $R_i$ that is bounded by $H^i_j$ and $H^i_{j+1}$. Similarly, we define the \emph{vertical stripe} $\tilde{V^k_l}$ with $l\in \{1,\cdots,v_k-1\}$.
\end{defi}

\begin{figure}[h]
	\centering
	\includegraphics[width=0.5\textwidth]{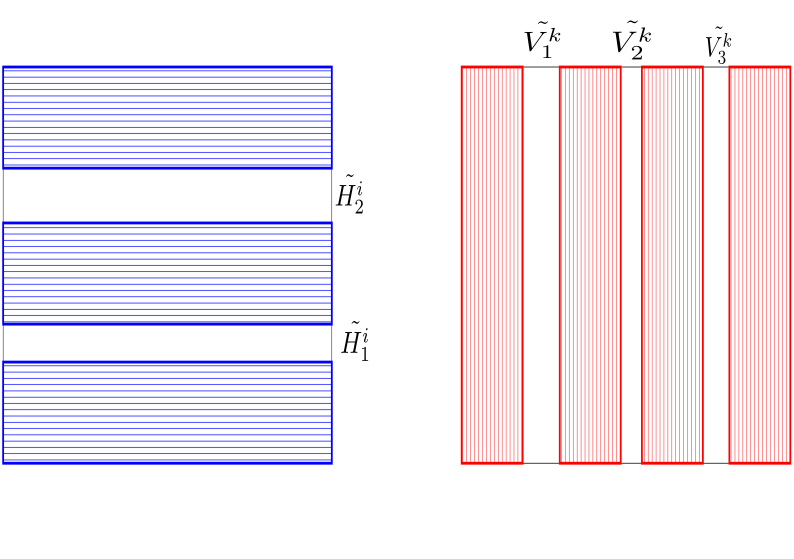}
	\caption{Horizontal and vertical stripes}
	\label{Fig: Stripes}
\end{figure}

We refer to the compact surface $S$ with boundary and vertices as an \emph{HV-surface} if it has an atlas whose charts are open sets contained in the upper half-plane, the first quadrant of the origin in $\RR^2$, or the union of the three quadrants of the origin in the real plane. Furthermore, it is required that the changes of charts are elements of Hom$(\RR)\times$Hom$(\RR)$. This is the type of surface where we will define and iterate $\phi$. It is not immediate that we can extend the dynamics to a finite-type compact surface, but this is the next problem we are going to address.

	\begin{defi}\label{Defi: m realizer }
	Let $T$ be a geometric type without double boundaries, and let $(\{R_i\}_{i=1}^n,\phi)$ be a realization of $T$. We denote the union of the rectangles as $\cR=\cup_{i=1}^n R_i$, and for each $m \in \NN$, we consider the disjoint union of $m$ copies of $\cR$ given by:
		$$
	\cup_{i=0}^m \cR \times \{i\}.
		$$
We define $\cR_m$ as the quotient space of this set by the equivalence relation: for all  $ 1\leq i \leq m-1$
		$$
		 (x,i)\sim(\phi^{-1}(x),i+1),
		$$
for each point where $\phi^{-1}$ is defined, and we leave $(x,i)$ unidentified otherwise.		
 The space $\cR_m$ is the $m$-th \emph{realizer} of $T$ relative to the realization $(\{R_i\}_{i=1}^n,\phi)$. 
	\end{defi}

The space $\cR_m$ is an oriented HV-surface and is endowed with two transverse foliations: the vertical and horizontal foliations (refer to \cite[Section 7.1]{bonatti1998diffeomorphismes} and \cite[Lemma 7.3.3]{bonatti1998diffeomorphismes} for more details). An HV-homeomorphism is a homeomorphism between HV-surfaces that conjugates the vertical and horizontal foliations.
	
For all $m\in \NN$  and every $i\leq m$, there exist an embedding:  
$$
\Psi :\cR\times \{i\} \rightarrow \cR_{m'},
$$

defined as $\Psi(x,i)=[x,i]_{m'}$, which turns out to be an HV-homeomorphism. We denote $R_i$ as $R_i:=\Psi(R_i\times \{0\})$. Let's consider two natural numbers $m' > m$ and $\Psi: \cR \times {i} \rightarrow \cR_{m'}$ as described above. Let's take
 $$
 \phi_m: \cup_{i=0}^{m-1} \Psi(\cR\times \{i\}) \subset \cR_{m'} \rightarrow \cup_{i=1}^m \Psi(\cR\times \{i\}) \subset \cR_{m'},
 $$
given by $\phi_m(\pi(x,i))=\Psi(x,i+1)$.  Provided $\phi(x)$ is defined, $\phi_m(\pi(x_i))=[(x,i+1)]_{m'}=[\phi(x),i]_m'=\Psi(\phi(x,i))$. In this way, the space $\cR_m$ is considered immersed in $\cR_{m'}$ and we have endowed $\cR_m$ with some 'dynamics' that is possible to iterate at most $m$ times.

	\begin{rema*}
 When it makes sense, $\phi_m([x,i])=[\phi(x),i]$, and for all $0 \leq k \leq m$, the image by $\Psi$ of $R_i \times {k}$ is equal to $\phi^k_m(R_i\times {0})$.
	\end{rema*}
	
The following proposition essentially states that there is a unique $m$-realizer up to HV-conjugation. It is proved in \cite[Proposition 7.3.5]{bonatti1998diffeomorphismes}.
 
	\begin{prop}\label{Prop: unique m-realizer}
		Let $T$ be a geometric type,$(\{R_i\},\phi)$ and $(\{R_i'\},\phi')$ be two realizations of $T$, and $\cR_m$ and $\cR_m'$ be their respective $m$-realizers. Then there exists an HV-homeomorphism $\theta:\cR_m \rightarrow \cR_m'$ which conjugates $\phi_m$ to $\phi_m'$. Moreover, the image of $R_i$ under $\theta$ is $R_i'$, and $\theta$ preserves the orientation of the two foliations on the rectangles.	
	\end{prop}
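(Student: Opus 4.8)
The plan is to build the HV-homeomorphism $\theta:\cR_m\to\cR_m'$ rectangle by rectangle, copy by copy, and then check that the identifications made in the two quotient constructions are respected so that the map descends to the quotients. First I would invoke Proposition \ref{Prop: unique m-realizer} in the base case $m=0$ (or rather the underlying statement for a single realization): since $(\{R_i\},\phi)$ and $(\{R_i'\},\phi')$ are realizations of the \emph{same} geometric type $T$, each $R_i$ and $R_i'$ carries the trivial vertical/horizontal foliations, and the data of $T$ prescribes exactly which horizontal sub-rectangles $H^i_j$ map to which vertical sub-rectangles $V^k_l$ and with which orientation $\epsilon(i,j)$. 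Thus for each $i$ I can choose an orientation-preserving HV-homeomorphism $\theta_i^0:R_i\to R_i'$ sending $H^i_j$ to $H^{\prime i}_j$ and $V^i_l$ to $V^{\prime i}_l$ for all $j,l$; this is possible because an HV-homeomorphism between two abstract rectangles is free to be any product of increasing homeomorphisms of the two factors, and the sub-rectangle data is combinatorially identical on both sides. Moreover I can arrange that $\theta_i^0$ intertwines $\phi$ and $\phi'$ \emph{on the horizontal sub-rectangles}, i.e. $\theta^0\circ\phi=\phi'\circ\theta^0$ wherever $\phi$ is defined, by first fixing $\theta_i^0$ on the domain rectangles $R_i$ and then \emph{defining} it on the image side through this conjugacy relation; consistency of this definition is exactly the statement that $\phi$ and $\phi'$ realize the same $\Phi_T$.

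Next I would propagate $\theta^0$ through the $m$ copies. Recall $\cR_m$ is the quotient of $\bigsqcup_{i=0}^m \cR\times\{i\}$ by $(x,i)\sim(\phi^{-1}(x),i+1)$. Define $\tilde\theta:\bigsqcup_{i=0}^m\cR\times\{i\}\to\bigsqcup_{i=0}^m\cR'\times\{i\}$ by $\tilde\theta(x,i)=(\theta^0(x),i)$ on each copy. To see this descends to a well-defined map $\theta:\cR_m\to\cR_m'$, I must check it carries the equivalence relation on the source to the one on the target: if $(x,i)$ is glued to $(\phi^{-1}(x),i+1)$, then I need $(\theta^0(x),i)$ glued to $(\phi^{\prime-1}(\theta^0(x)),i+1)$, which is precisely the relation $\theta^0\circ\phi^{-1}=\phi^{\prime-1}\circ\theta^0$ — the conjugacy arranged in the first paragraph. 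Since $\theta^0$ is an HV-homeomorphism on each rectangle and the HV-chart structure on $\cR_m$, $\cR_m'$ is built out of the rectangle charts together with the gluing charts along sub-rectangles (see \cite[Section 7.1, Lemma 7.3.3]{bonatti1998diffeomorphismes}), $\theta$ is an HV-homeomorphism; it is a bijection with HV-inverse built the same way from $(\theta^0)^{-1}$. By construction $\theta([x,i])=[\theta^0(x),i]$, so $\theta$ sends $R_i=\Psi(R_i\times\{0\})$ to $R_i'$, preserves the orientation of both foliations on the rectangles, and satisfies $\theta\circ\phi_m=\phi_m'\circ\theta$ wherever $\phi_m$ is defined, since $\phi_m([x,i])=[\phi(x),i]$ and $\phi_m'([\theta^0(x),i])=[\phi'(\theta^0(x)),i]=[\theta^0(\phi(x)),i]$.

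The main obstacle I expect is the \emph{well-definedness at the gluing loci}: the equivalence relation in Definition \ref{Defi: m realizer } identifies a point $(x,i)$ with $(\phi^{-1}(x),i+1)$ only where $\phi^{-1}$ is defined (i.e. on the vertical sub-rectangles $V^k_l$ in copy $i$, matched with horizontal sub-rectangles $H^i_j$ in copy $i+1$), and one must be careful that $\theta^0$ matches these sub-rectangles \emph{as subsets with their boundary and corner structure}, not merely up to homeomorphism of interiors — in particular that the prescribed coincidences "$\partial H^i_1$ lies in $\partial R_i$", etc., from Definition \ref{Defi: Concretization}, together with the orientation signs $\epsilon(i,j)$, are respected so that the glued HV-charts on the two sides genuinely correspond. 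This is where one uses that both realizations encode the \emph{same} $\epsilon_T$: an orientation-reversing $\phi$ on $H^i_j$ forces $\theta^0$ to be compatible with that reversal on both source and target, and this is automatic once $\theta^0$ is chosen orientation-preserving on rectangles and conjugacy-defined on images. Beyond this bookkeeping the argument is a routine induction on $m$ (or a direct description as above), so I would present the combinatorial matching of sub-rectangles carefully and treat the chart-by-chart verification that $\theta$ is an HV-homeomorphism as a short formal check.
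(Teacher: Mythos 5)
There is a genuine gap, and it sits exactly at the step you flag as routine. (Note also that the paper itself offers no proof of this statement: it is quoted from \cite[Proposition 7.3.5]{bonatti1998diffeomorphismes}, so your argument has to stand on its own.) Your reduction is sound in outline: \emph{if} there exists an HV-, orientation- and label-preserving homeomorphism $\theta^0:\sqcup R_i\to\sqcup R_i'$ with $\theta^0\circ\phi=\phi'\circ\theta^0$ wherever $\phi$ is defined, then the level-by-level map $(x,i)\mapsto(\theta^0(x),i)$ descends to the quotients, is HV, conjugates $\phi_m$ to $\phi_m'$, and sends $R_i$ to $R_i'$. The problem is your construction of $\theta^0$. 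You propose to ``first fix $\theta^0_i$ on the domain rectangles $R_i$ and then define it on the image side through the conjugacy relation,'' but the image side is not a separate object: each $V^k_l$ is a subset of the same rectangle $R_k$ on which $\theta^0_k$ has already been fixed. So the conjugacy relation $\theta^0|_{V^k_l}=\phi'\circ\theta^0|_{H^i_j}\circ\phi^{-1}$ does not \emph{define} anything; it is a nontrivial constraint coupling the restriction of $\theta^0_k$ to $V^k_l$ with the restriction of $\theta^0_i$ to $H^i_j$, and a generic choice of product homeomorphisms matching the sub-rectangles will violate it. Having the same $\Phi_T$ and $\epsilon_T$ on both sides gives combinatorial consistency (which sub-rectangle goes where, with which orientation), not the pointwise identity required for a conjugacy. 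Nor can you dodge this by working only with the finitely many levels of $\cR_m$: on the copy of $\cR$ at level $0$, $\phi_m$ acts on the horizontal sub-rectangles exactly as $\phi$ does, so conjugating $\phi_m$ to $\phi'_m$ already forces the one-step conjugacy you were trying to get for free.

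To close the gap you must actually construct $\theta^0$. Since $\phi$ and $\phi'$ preserve the two foliations, in product coordinates on each rectangle the conjugacy equation decouples into two one-dimensional functional equations: increasing homeomorphisms $u_i$ of the horizontal factors must conjugate the graph-directed system of contractions $[0,1]\to$ (horizontal extent of $V^k_l$) induced by $\phi$ to the corresponding system for $\phi'$, and similarly $v_i$ for the vertical factors with the inverse branches $[0,1]\to$ (vertical extent of $H^i_j$). Such conjugacies exist because the two systems have identical transition and orientation data $(\rho_T,\epsilon_T)$ and are uniformly contracting (hyperbolicity of the realizations): one defines $u_i$, $v_i$ on the limit Cantor sets by the itinerary coding, which is order-preserving precisely because the $\epsilon_T$ data agree, and then interpolates monotonically on the complementary gap intervals, which correspond bijectively (they are coded by the same finite words, i.e.\ by the stripes $\tilde H^i_j$, $\tilde V^k_l$ and their iterates). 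Setting $\theta^0_i=u_i\times v_i$ (with the sign conventions dictated by $\epsilon_T$) gives the required one-step conjugacy, after which your descent argument goes through verbatim. As written, the proposal treats this, the real content of the proposition, as a bookkeeping remark, so it is incomplete.
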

	
Suppose there exists a basic piece $K$ of a Smale surface diffeomorphism $f$ with a geometric Markov partition $\cR$ of geometric type $T$. We would like to compare the surface generated by the iterations $f^n(\cR)$ ($n\geq 0$) of the Markov partition and the $m$-realizer of the geometric type $T$. The \cite[Proposition 7.3.9]{bonatti1998diffeomorphismes} that we cite below provides insight into this problem.

	\begin{prop}\label{Prop: realizer and Markov partition}
	Let $T$ be the geometric type of a geometric Markov partition $\cR:=\{R_i\}$ of a basic piece $K$ of a Smale surface diffeomorphism $f$. If $\cR_m$ is the $m$-realizer of $T$ relative to any realization of $T$, then $\cup_{j=0}^m(\cup_{i}f^j(R_i))$  is an HV-surface, HV-homeomorphic to $\cR_m$.
	\end{prop}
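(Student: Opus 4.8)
The plan is to build the HV-homeomorphism between $\cU_m := \cup_{j=0}^m\bigl(\cup_i f^j(R_i)\bigr)$ and the $m$-realizer $\cR_m$ by exhibiting $\cU_m$ itself as (a model of) the quotient construction that defines $\cR_m$. First I would record that, by Proposition~\ref{Prop: unique m-realizer}, it suffices to produce one realization $(\{R_i'\},\phi')$ of $T$ whose $m$-realizer is HV-homeomorphic to $\cU_m$; the candidate is the ``geometric'' realization read off directly from the Markov partition, namely $R_i':=R_i$ with the trivial foliations inherited from $\cF^{s,u}$ restricted to $R_i$ (which are genuinely product foliations on each rectangle by Definition~\ref{Defi: Rectangle}), and $\phi':=f$ restricted to the union of the horizontal sub-rectangles of $(f,\cR)$. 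One must check this pair is a realization in the sense of Definition~\ref{Defi: realization}: the Markov property (items iii, iv of Definition~\ref{Defi: Markov partition}) says exactly that $f$ carries horizontal sub-rectangles to vertical sub-rectangles, with the combinatorics, order and $\epsilon$-signs encoded by $T=T(f,\cR)$, and hyperbolicity of the maximal invariant set is automatic because that set is $K$, a hyperbolic basic piece. So $\phi'$ satisfies conditions (1)--(6) of Definition~\ref{Defi: Concretization} and its maximal invariant set is hyperbolic.

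Next I would construct the map $\Theta:\cR_m\to\cU_m$ explicitly on the disjoint union $\cup_{i=0}^m\cR\times\{i\}$ before passing to the quotient: send $(x,i)\mapsto f^i(x)$ for $x\in\cup R_j$. This is well defined on each copy, continuous, and its image is $\cU_m$. The point is that it respects the identification $(x,i)\sim(\phi'^{-1}(x),i+1)=(f^{-1}(x),i+1)$: indeed $f^i(x)=f^{i+1}(f^{-1}(x))$ wherever $f^{-1}(x)$ is defined (i.e. $x$ lies in a vertical sub-rectangle, equivalently $f^{-1}(x)$ lies in a horizontal sub-rectangle), so $\Theta$ descends to a continuous surjection $\cR_m\to\cU_m$. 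For injectivity one uses that the only overlaps in $\cU_m$ come from $f^j(R_a)\cap f^j(R_b)$ with $a\neq b$ (which meet only along boundary curves, since $\overset{o}{R_a}\cap\overset{o}{R_b}=\emptyset$) and from $f^j(R_a)\cap f^{j'}(R_b)$ ($j<j'$), where a point of the latter intersection, pulled back by $f^{-j}$, lands in $R_a$ and in $f^{j'-j}(R_b)$, i.e. in a vertical sub-rectangle; this is precisely the situation the equivalence relation in Definition~\ref{Defi: m realizer } was designed to glue, so the two preimages in $\cup\cR\times\{i\}$ are $\sim$-equivalent. Finally, $\Theta$ is an HV-homeomorphism: on the interior of each $R_i\times\{i\}$ it is the homeomorphism $f^i$ restricted to $\overset{o}{R_i}$, which carries the product foliations on $R_i$ to the stable and unstable foliations inside $f^i(R_i)$, i.e. to the vertical/horizontal foliations of $\cU_m$; compactness of $\cR_m$ and continuity of $\Theta^{-1}$ (checked chart by chart against the HV-atlas of $\cR_m$) finish the argument, and orientation is preserved because $f$ preserves the orientation of $S$.

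The main obstacle I expect is the careful bookkeeping at the boundary: verifying that $\cU_m$ is genuinely an HV-surface (that the charts around corner points and boundary arcs assemble correctly, and that no unwanted self-identifications or non-manifold points are created when successive iterates $f^j(\cR)$ are stacked) and that $\Theta$ is a homeomorphism \emph{up to and including} the boundary and corners, not just on interiors. This is where Lemma~\ref{Lemm: Boundary of Markov partition is periodic} and the local sector analysis (Lemma~\ref{Lemm: sector contined unique rectangle}) do real work: they guarantee that the stable/unstable boundary arcs of the $f^j(R_i)$ match up along whole leaves and that around each point only finitely many rectangles (in the right cyclic arrangement) accumulate, so the HV-chart structure transports correctly under $\Theta$. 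Modulo this boundary analysis, the statement follows from Proposition~\ref{Prop: unique m-realizer} applied to the geometric realization just constructed.
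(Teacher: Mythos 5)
Note first that the paper itself does not prove this statement: it is quoted as \cite[Proposition 7.3.9]{bonatti1998diffeomorphismes}, so there is no in-paper argument to compare yours with; I can only assess your proposal on its own terms. Your reduction via Proposition~\ref{Prop: unique m-realizer} to the ``tautological'' realization $(\{R_i\}, f|_{\cup H^i_j})$, and the map $\Theta([x,i])=f^i(x)$, which is well defined on the quotient because $f^i(x)=f^{i+1}(f^{-1}(x))$, are sensible and match what one expects the cited proof to do in spirit. The problem is injectivity (equivalently, that $\cU_m:=\cup_{j=0}^m f^j(\cup_i R_i)$ carries no identifications or extra topology beyond those built into $\cR_m$). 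The equivalence relation of Definition~\ref{Defi: m realizer } glues only one generation at a time, and only through points lying in the image of $\phi$, i.e.\ in vertical sub-rectangles. So to conclude that $f^{j}(x)=f^{j'}(y)$ with $x,y\in\cup_i R_i$ and $j<j'$ forces $[x,j]=[y,j']$ in $\cR_m$, you need that \emph{every} intermediate backward iterate $f^{-t}(x)$, $1\le t\le j'-j$, again lies in the union of the rectangles (indeed in a vertical sub-rectangle), so that the one-step identifications can be chained. This is not a formal consequence of the Markov property, which only controls $f(R_i)\cap R_j$: a priori the part of $f(R_b)$ lying \emph{outside} $\cup_i R_i$ could return under a further iterate and cross some $R_a$, producing a point of $\cU_m$ with two $\sim$-inequivalent preimages in $\cR_m$ (and extra handles in $\cU_m$ that $\cR_m$ does not have). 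Ruling this excursion-and-return phenomenon out -- using the specific properties of Markov partitions adapted to a basic piece (isolating/filtrating neighborhoods, absence of double boundaries, etc.) -- is precisely the nontrivial content of the cited proposition, and your one-line assertion that the overlap ``is precisely the situation the equivalence relation was designed to glue'' skips it entirely.

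Two secondary points. First, your parenthetical that distinct rectangles of the same generation ``meet only along boundary curves, since $\overset{o}{R_a}\cap \overset{o}{R_b}=\emptyset$'' is imported from the pseudo-Anosov setting; for a Markov partition of a saddle-type basic piece the rectangles are pairwise \emph{disjoint}, and this matters: if they really did share boundary arcs, $\Theta$ would already fail to be injective, because $\cR_m$ never glues two copies of the same generation. Second, the whole HV-structure of $\cU_m$ along boundaries and corners, together with continuity and invertibility of $\Theta$ there, is exactly what you defer (``modulo this boundary analysis''), and the lemmas you invoke for it (Lemma~\ref{Lemm: Boundary of Markov partition is periodic}, Lemma~\ref{Lemm: sector contined unique rectangle}) are proved in this paper only for pseudo-Anosov homeomorphisms whose rectangles tile the surface, not for disjoint rectangles of a Smale basic piece, so they cannot be cited as-is. (You should also verify that $(\{R_i\},f)$ really satisfies items (2)--(3) of Definition~\ref{Defi: Concretization}, i.e.\ that the extremal horizontal/vertical sub-rectangles touch the corresponding boundaries of $R_i$; this is true but needs the standard fact that $\partial^{s,u}R_i\subset W^{s,u}(K)$.) As it stands, the proposal is an outline whose two hard steps -- multi-generation overlaps and the boundary HV-analysis -- are asserted rather than proved.
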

	
The dynamics of $f$ restricted to the Markov partition $\cR$ induces a realization of $T$. Proposition \ref{Prop: unique m-realizer} implies that $f^m$ restricted to $\cup_{j=0}^m(\cup_{i}f^j(R_i))$ is conjugate to $\phi_m$. Each realizer $\cR_m$ has finite genus, and we define $g_m := \text{gen}(\cR_m)$ as the genus of $\cR_m$. Since $\cR_m$ is embedded in $\cR_{m+1}$, the sequence $\{g_m\}_{m\in \NN}$ is non-decreasing. Therefore, it is either stationary or tends to infinity. This observation allows us to define the genus of $T$.
	
	\begin{defi}\label{Defi: Genus of T}

Let $T$ be a geometric type without double boundaries. The genus of $T$ is the upper bound of the sequence $\{g_m\}_{m\in \NN}$ and is denoted by $\text{gen}(T)$.
	\end{defi}

	If the geometric type $T$ is realized as a basic piece, then there exists a surface of finite genus containing the realizer $m$ of $T$ for all $m in \NN$. It follows that the genus of $T$ is finite, as shown in \cite[Corollary 7.3.10]{bonatti1998diffeomorphismes}. Later, François Béguin proved in \cite[Corollary 4.4]{beguin1999champs} that this is a sufficient condition. Thus, we obtain the following theorem:
	
	\begin{theo}\label{Theo: finite genus iff realizable}

Let $T$ be a geometric type. $T$ is realizable as a saddle type basic piece of a surface Smale diffeomorphism if and only if $T$ has no double boundaries and gen$(T)$ is finite.
	\end{theo}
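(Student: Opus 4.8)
The plan is to assemble this as a consequence of the machinery already surveyed in this section, with the two implications coming from different sources. For the forward direction, suppose $T$ is realized as a saddle-type basic piece $K$ of a Smale surface diffeomorphism $f$ on a compact surface $S$. First I would recall from Proposition~\ref{Prop: realizer and Markov partition} that for every $m$, the HV-surface $\bigcup_{j=0}^m\bigl(\bigcup_i f^j(R_i)\bigr)$ is HV-homeomorphic to the $m$-realizer $\cR_m$. Since each of these subsurfaces is contained in the fixed compact surface $S$ of finite genus $\mathrm{gen}(S)$, we get $g_m = \mathrm{gen}(\cR_m) \le \mathrm{gen}(S)$ for all $m$, so the non-decreasing sequence $\{g_m\}$ is bounded and hence $\mathrm{gen}(T) = \sup_m g_m < \infty$. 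That $T$ has no double boundaries follows from the remark after Definition~\ref{Defi: double boundary}: a double $s$- or $u$-boundary of $T$ would force a corresponding double boundary on $K$ (via \cite[Proposition 7.2.2]{bonatti1998diffeomorphismes}), contradicting the fact that a saddle-type basic piece admits no degenerate rectangles. This direction is precisely \cite[Corollary 7.3.10]{bonatti1998diffeomorphismes}, so I would cite it for the details.

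For the converse, assume $T$ has no double boundaries and $\mathrm{gen}(T)$ is finite. By Proposition~\ref{Prop: Types transitive have a realization}, $T$ admits an (affine) realization $(\cR=\{R_i\}_{i=1}^n,\phi)$, and by Definition~\ref{Defi: realization} the maximal invariant set of $\phi$ is hyperbolic. The task is to glue the realizers $\{\cR_m\}$ into a single compact surface without boundary on which $\phi$ extends to a genuine Smale diffeomorphism having a basic piece with a Markov partition of type $T$. The key point is that, because $\mathrm{gen}(T)$ is finite, the sequence $\{g_m\}$ stabilizes, so for $m$ large enough the HV-surface $\cR_m$ absorbs all the topology that the gluing process can produce; one then caps off the finitely many boundary components of $\cR_m$ by attaching disks carrying attracting or repelling periodic dynamics (reversing the ``filtration'' picture from Subsection~\ref{Sub-sec: Domain basic piece}), obtaining a closed surface and a Smale diffeomorphism whose only nontrivial basic piece realizes $T$. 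This is exactly the content of \cite[Corollary 4.4]{beguin1999champs}, which I would invoke rather than reprove.

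The main obstacle, and the only substantive work if one were to give a self-contained argument, is the converse direction: controlling how the boundary of $\cR_m$ behaves as $m\to\infty$ and showing that finite genus prevents the ``strips'' removed in the Bonatti--Langevin construction from accumulating in a way that would obstruct compactification. Concretely, one must verify that the complement of $\bigcup_m \cR_m$ inside any would-be ambient compact surface has only finitely many connected components, each of finite type, so that the domain $\Delta(K)$ of Subsection~\ref{Sub-sec: Domain basic piece} is genuinely a compact surface minus finitely many points; the finiteness of the genus is what rules out the pathological alternative. Since Béguin's theorem handles precisely this, my proof would be short: state the two implications, give the one-paragraph bounded-genus argument for necessity citing \cite[Corollary 7.3.10]{bonatti1998diffeomorphismes}, and cite \cite[Corollary 4.4]{beguin1999champs} for sufficiency.
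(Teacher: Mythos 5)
Your proposal matches the paper's treatment: the paper likewise obtains necessity by noting that the $m$-realizers embed (via Proposition~\ref{Prop: realizer and Markov partition}) in the ambient finite-genus surface, citing \cite[Corollary 7.3.10]{bonatti1998diffeomorphismes}, handles the double-boundary condition through \cite[Proposition 7.2.2]{bonatti1998diffeomorphismes} as discussed around Definition~\ref{Defi: double boundary}, and delegates sufficiency entirely to \cite[Corollary 4.4]{beguin1999champs}. Your extra sketch of how Béguin's compactification might proceed is not needed (and not verified), but since you invoke the citation rather than reprove it, the argument stands as the paper's does.
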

	
This is our main tool for determining whether or not a geometric type is realizable. But how can we check if $T$ does not have double boundaries and has finite genus? The next few sections are devoted to formulating a mechanism to verify this.

\subsection{ Topological formulation of finite genus and impasse.}\label{Sub-sec: Top impasse genus}

In this subsection, we assume that $T$ is a geometric type without double boundaries, and $\cR_m$ is the $m$-realizer of $T$ with respect to a realization $(\{R_i \}_{i=1}^n,\phi)$. Our goal is to describe three topological obstructions for the geometric type $T$ to have finite genus. These obstructions are formulated in terms of the $m$-realizer and certain subsets called \emph{ribbons}.

\begin{defi}\label{Defi: k Ribbon}
 For each $k$ in $\{1,\ldots,m\}$, we define a \emph{ribbon} of $k$-generation of $\cR_m$ as the closure of any connected component of $\cR_k \setminus \cR_{k-1}$. Thus, a ribbon of $\cR_m$ can be any ribbon from the $k$-generation for $0 < k \leq m$.
\end{defi}

\begin{comment}

\begin{rema}\label{Rema: Caracterization of ribbon}
Is proved in \cite[Lemma 7.1.6]{bonatti1998diffeomorphismes} that the $k$-generation ribbons are disjoint rectangles whose horizontal boundaries are include in the horizontal boundary of $\cR_m$, denoted $\partial^h\cR_m$ and in fact (\cite[Lemma 7.1.5]{bonatti1998diffeomorphismes} ), they are contain in $\partial^h \cR_0$. \\
Note that: a ribbon of $k$-generation is characterize by be contain in $\pi(\cR_0 \times \{k\})$ but it is not in the image by $\phi_m$ of  $\pi(\cR_0\times  \{k-1\})$, then  the ribbon correspond to a c.c. of $\pi^k_m(\cR_0\setminus \pi^{-1}_m(\cR_0))$, i.e. the ribbon is the image by $\pi^k_m$ of some horizontal stripe $\tilde{H^i_j}$ of $\cR_0$.
\end{rema}

\cite[Lemma 7.4.4]{bonatti1998diffeomorphismes} ensure something more: for all $m>0$ the closure of every c.c. of $\cR_m\setminus \cup_{i=0}^n (R_i\times \{0\})$ correspond to a ribbon of $k$-generation for $k\leq m$. This permit to give the following equivalent definition

\begin{defi}\label{Defi: Ribbon}
We cal ribbon of $\cR_m$ to any c.c. of  $\cR_m\setminus \cup_{i=0}^n (R_i\times \{0\})$ .
\end{defi}
\end{comment}

The following proposition summarizes some of the properties of the ribbons that are relevant to our discussion. A detailed exposition of these properties can be found in \cite[Chapter 7.4]{bonatti1998diffeomorphismes}.

\begin{prop}\label{Prop: properties ribbons}
Let $T$ be a geometric type without double boundaries, and let $(\{R_i\}_{i=1}^n, \phi)$ be a realization of $T$. Consider $0 < k \leq m$. The following propositions hold: 
\begin{itemize}
\item Every ribbon of $k$-generation is a rectangle whose boundary is contained in the horizontal boundary of $\cR$: $\partial^h\cR_0 = \cup_{i=1}^n \partial^h(\cup \pi(R_i \times {0}))$. Moreover, if $B$ is the complement of the interior of the vertical stripes and their positive iterations by $\pi$ contained in the horizontal boundary of  $\partial^h\cR_0$, then for all $m > 0$, the stripes of $\cR_m$ have horizontal boundaries in $B$.

\item A $k$-generation ribbon is the image of the horizontal stripes in $\cR_0$ under $\phi^k_m$, or equivalently, the image of the horizontal stripes $\tilde{H^i_j}\times \{k\}$ under $\pi$.

\item For all $m > 0$, the closure of each connected component of $\cR_m \setminus \cR_0$ is a $k$-generation ribbon for $0 < k \leq m$.
\end{itemize}
\end{prop}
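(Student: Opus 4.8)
The plan is to read all three assertions off the explicit construction of $\cR_m$ in Definition~\ref{Defi: m realizer } together with the combinatorics of a fixed realization $(\{R_i\}_{i=1}^n,\phi)$ of $T$; the underlying verifications are exactly those of \cite[Chapter~7.4]{bonatti1998diffeomorphismes}, so I only lay out the skeleton. The first step is to identify, for each $0<k\le m$, the ``new material'' $\cR_k\setminus\cR_{k-1}$ under the canonical embedding $\cR_{k-1}\hookrightarrow\cR_k$. By Definition~\ref{Defi: m realizer }, passing from $\cR_{k-1}$ to $\cR_k$ amounts to adjoining the copy $\cR\times\{k\}$ and gluing it to level $k-1$ precisely along the set where $\phi^{-1}$ is defined --- the vertical sub-rectangles $\cup_{k,l}V^k_l$ at level $k-1$, equivalently the horizontal sub-rectangles $\cup_{i,j}H^i_j$ at level $k$. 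Since identifications only ever occur between consecutive levels, a point of level $k$ is \emph{not} identified with a point of $\cR_{k-1}$ exactly when it lies outside $\cup_{i,j}H^i_j$, i.e. inside one of the horizontal stripes $\tilde{H^i_j}$. Hence $\cR_k\setminus\cR_{k-1}=\pi\bigl(\bigcup_{i,j}\tilde{H^i_j}\times\{k\}\bigr)$, up to the horizontal boundary of the stripes. Because $\pi$ embeds each single level and the $\tilde{H^i_j}$ are pairwise disjoint closed rectangles inside the disjoint union $\cup_iR_i$, the closures of the connected components of $\cR_k\setminus\cR_{k-1}$ are exactly the rectangles $\overline{\pi(\tilde{H^i_j}\times\{k\})}$, one for each $(i,j)$ with $1\le i\le n$ and $1\le j\le h_i-1$; by the remark following Definition~\ref{Defi: m realizer } these coincide with $\phi^k_m(\tilde{H^i_j}\times\{0\})$. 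This yields the second bullet and the ``rectangle'' half of the first.

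The second step locates the boundary of a ribbon. A component of the horizontal boundary of $\pi(\tilde{H^i_j}\times\{k\})$ is an arc of the form $\partial^h H^i_j\times\{k\}$ (or $\partial^h H^i_{j+1}\times\{k\}$). Under the gluing $H^i_j\times\{k\}\leftrightarrow\phi(H^i_j)\times\{k-1\}=V^{k'}_{l'}\times\{k-1\}$, where $\rho_T(i,j)=(k',l')$, that arc is identified with $\partial^h V^{k'}_{l'}\times\{k-1\}$. A vertical sub-rectangle spans the full unstable extent of its ambient rectangle, so $\partial^h V^{k'}_{l'}\subset\partial^h R_{k'}\subset\partial^h\cR_0$; moreover $\partial^h V^{k'}_{l'}$ is one of the blocks into which $\partial^h R_{k'}$ is cut by the horizontal boundaries of $V^{k'}_1,\dots,V^{k'}_{v_{k'}}$ and of the vertical stripes $\tilde{V^{k'}_1},\dots,\tilde{V^{k'}_{v_{k'}-1}}$, hence it avoids the interiors of the horizontal boundaries of the $\tilde{V^{k'}_l}$. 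Pushing this description through every level $\pi(\,\cdot\times\{j\})$, $0\le j\le m$, I conclude that the horizontal boundary of an arbitrary ribbon of $\cR_m$ lies in $\partial^h\cR_0$, and in fact in the set $B$ obtained from $\partial^h\cR_0$ by deleting the interiors of the vertical stripes together with their positive $\pi$-iterates --- this is the ``moreover'' clause.

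For the third bullet I would write $\cR_m\setminus\cR_0=\bigsqcup_{k=1}^m(\cR_k\setminus\cR_{k-1})$ as a disjoint union of sets (generation-$k$ material lies in $\cR_k$, hence inside $\cR_{k'-1}$ whenever $k\le k'-1$). By the first two steps the closure of each connected component of $\cR_k\setminus\cR_{k-1}$ is a $k$-generation ribbon, and the only part of such a ribbon that touches the rest of $\cR_m$ is its horizontal boundary, which --- by the gluing analysis --- is entirely absorbed into $\cR_0$, while its two vertical sides are free boundary of $\cR_m$. Consequently ribbons of any generations are never joined within $\cR_m\setminus\cR_0$, so the connected components of $\cR_m\setminus\cR_0$ are precisely the relative interiors of the individual ribbons and their closures are the $k$-generation ribbons with $0<k\le m$. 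The step I expect to be the main obstacle is exactly this last bookkeeping: verifying that the quotient topology behaves as the picture suggests --- distinct horizontal stripes stay disjoint after the identification, the horizontal boundary of a ribbon is genuinely swallowed by $\cR_0$ while its vertical sides remain on $\partial\cR_m$, and no two ribbons of different generations get glued together. None of this is conceptually deep, but it requires a precise tracking of which sub-rectangles are identified with which, which is the technical heart of the arguments in \cite[Chapter~7.4]{bonatti1998diffeomorphismes}.
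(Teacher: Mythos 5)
Your proposal is correct and takes the same route the paper relies on: the paper gives no proof of this proposition, deferring entirely to \cite[Chapter 7.4]{bonatti1998diffeomorphismes}, and your reconstruction — identifying the $k$-generation ribbons with the $\pi$-images of the horizontal stripes $\tilde{H^i_j}\times\{k\}$ (equivalently $\phi_m^k(\tilde{H^i_j}\times\{0\})$), pushing their horizontal boundaries down level by level into $\partial^h\cR_0$, and decomposing $\cR_m\setminus\cR_0$ by generation — is exactly the argument of that reference. The one imprecision is calling the vertical sides of a ribbon ``free'' boundary of $\cR_m$: for $k<m$ they do get identified with arcs at higher levels (they lie in the extreme vertical sub-rectangles $V^i_1$, $V^i_{v_i}$ of $R_i$), but these identifications attach no new two-dimensional material on the other side, so the sides still lie on the boundary of the HV-surface $\cR_m$ and your connectivity argument for the third bullet goes through unchanged.
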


From this result, we can deduce that there is a correspondence between $k$-ribbons and consecutive horizontal sub-rectangles of $\phi^k$. This correspondence is established as follows. Let $0 < k \leq m$ and $r$ be a $k$-generation ribbon of $\cR_m$. Suppose $r = \phi^k_m(\tilde{H^i_j} \times {0})$. Let $H^{i,(k)}{j}$ and $H^{i,(k)}{j+1}$ be two consecutive horizontal sub-rectangles of the rectangle $R_i = \pi(R_i \times {0})$ determined by the map $\phi^k$, such that $\tilde{H^i_j}$ is between them. Since $\phi^k$ is well-defined for such sub-rectangles, we have:

$$
\phi^k_m(H^{i,(k)}_{j}\times \{0\} )=\phi^k(H^{i,(k)}_{j})\times \{0\}=V^{z,(k)}_{l} \times  \{0\}
$$
and 
$$
\phi^k_m(H^{i,(k)}_{j=1}\times \{0\} )=\phi^k(H^{i,(k)}_{j})\times \{0\}=V^{z',(k)}_{l'}\times \{0\}.
$$
Where $V^{z,(k)}_{l}$ and $V^{z',(k)}_{l'}$ are horizontal sub-rectangles of the Markov partition $\cR_0=\{R_i=\pi(R_i\times \{0\})\}$  for the map $\phi^k$. i.e $(\cR_0,\phi^k)$.

Now we are ready to provide the topological criteria, in terms of the realization $\cR_m$, for $T$ to have finite genus. In the following definitions, as usual, we consider a geometric type  without double boundaries:
$$
T=(n,\{(h_i,v_i)\}_{i=1}^n, \Phi:=(\rho,\epsilon)),
$$
 and $(\{R_i\}_{i=1}^n, \phi)$  is a realization of $T$.
 
We reiterate that when we refer to the Markov partition on $\cR_m$, we mean the family of rectangles $\cR_0=\{\pi(R_i\times \{0\})\}$ under the action of $\phi_m$. However, formally speaking, it is not a Markov partition in the strict sense of the definition, as there is currently no complete diffeomorphism for which it satisfies all the properties of a Markov partition according to our definition.

\begin{defi}\label{Defi: Type 1 obstruction top}
Let $R$ be a rectangle in the Markov partition $\cR_0$, $A$ be a horizontal boundary component of $R$, and let $r$ be a ribbon with both horizontal boundaries in $A$.
The Markov partition in $\cR_m$ has a topological obstruction of type-$(1)$ if there exists another ribbon $r'$, distinct from $r$, with a single horizontal boundary contained in $A$ and lying between the horizontal boundaries of $r$. (See Figure \ref{Fig: obstruction 1}) for a visual representation.)

\begin{figure}[hh]
	\centering
	\includegraphics[width=0.3\textwidth]{}
	\caption{ $(1)$-type obstruction.}
	\label{Fig: obstruction 1}
\end{figure}
\end{defi}

\begin{defi}\label{Defi: Type 2 obtruction top}
Let $A$ and $A'$ be two distinct horizontal boundary components of the Markov partition $\cR_0$. Consider a ribbon $r$ with one horizontal boundary contained in $A$ and the other boundary in $A'$.

The ribbon $r$ has a horizontal orientation when viewed as a horizontal sub-rectangle of a certain rectangle in the Markov partition, $\pi(R_i\times \{k\})$ (with $k\leq m$). This orientation may not correspond to the orientations of $A$ or $A'$ directly. However, we can assign orientations to $A$ and $A'$ that are compatible with the orientation of the horizontal boundary of $r$.
 
The Markov partition has a topological obstruction of type-$(2)$ in $\cR_m$ if there exists another ribbon $r'$ with one horizontal boundary $\alpha$ contained in $A$ and the other horizontal boundary $\alpha'$ contained in $A'$. With the orientations previously fixed on $A$ and $A'$, the order of $\alpha$ and $r\cap A$ in $A$ is the reverse of the order of $\alpha'$ and $r\cap A'$ in $A'$. (See Figure \ref{Fig: obstruction 2} for a graphical representation.)

\begin{figure}[hh]
	\centering
	\includegraphics[width=0.4\textwidth]{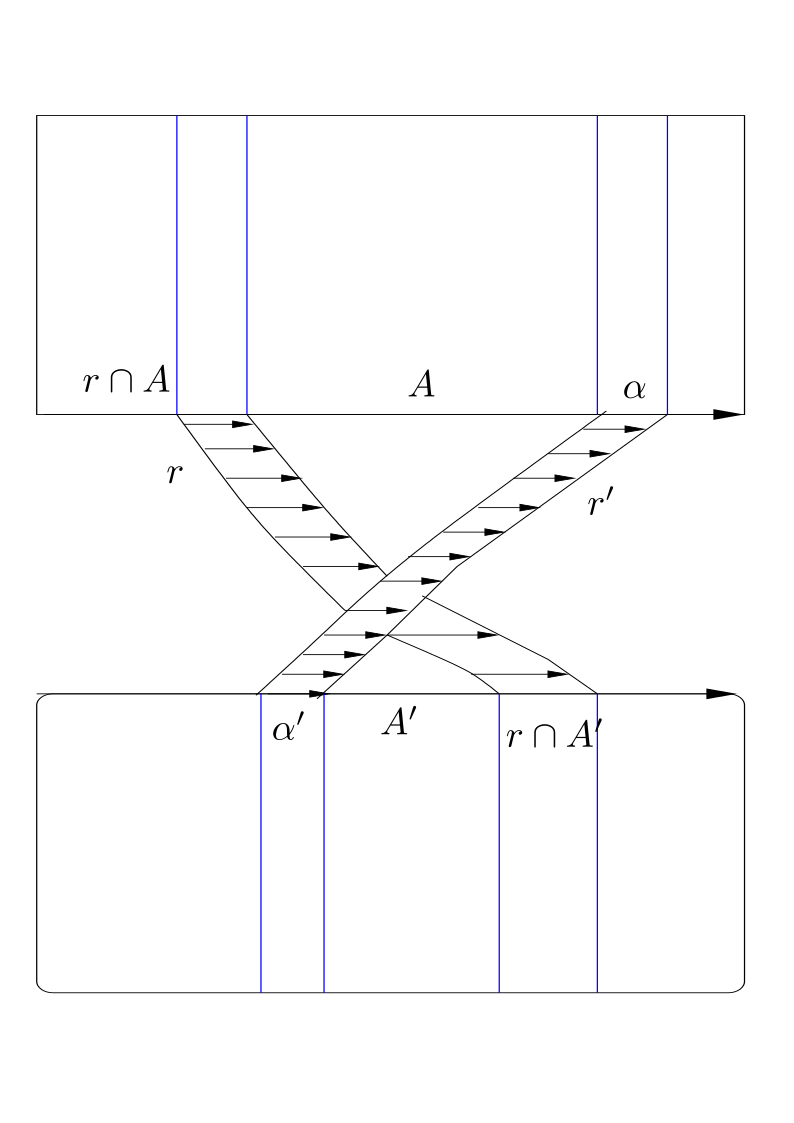}
	\caption{ $(2)$-type obstruction.}
	\label{Fig: obstruction 2}
\end{figure}
\end{defi}

\begin{defi}\label{Defi: p-peridic embrinary}
A horizontal boundary $A$ of a rectangle $R$ in the Markov partition is said to be $p$-\emph{periodic} if $\phi^p(A)\subset A$. If $A$ is $p$-periodic, we refer to any connected component of $A\setminus \pi^{kp}(A)$, with $k\in \NN_{>0}$, as an \emph{embryonic separatrix} of $A$.
\end{defi}

\begin{defi}\label{Defi:  Type 3 obtruction geo}
Let $S_1$, $S_2$, and $S_3$ be three different embryonic separatrices (i.e., they are on periodic sides but not in the image of the iterations of that side), and let $r$ be a ribbon with one horizontal boundary on $S_1$ and the other on $S_2$. We say that the Markov partition has a type-$(3)$ obstruction in $\cR_m$ if there exists another ribbon $r'$ with one side in $S_1$ and the other in $S_3$ (see Figure \ref{Fig: obstruction 3}).

\begin{figure}[hh]
	\centering
	\includegraphics[width=0.5\textwidth]{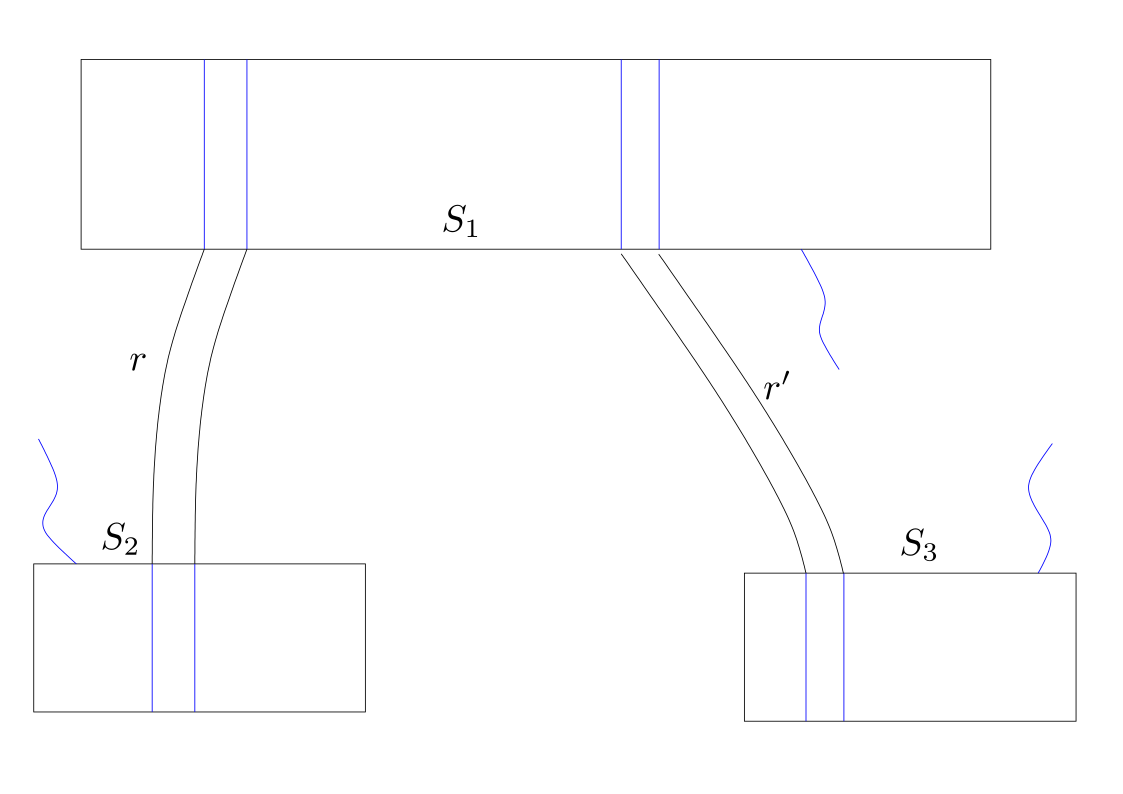}
	\caption{ $(3)$-type obstruction.}
	\label{Fig: obstruction 3}
\end{figure}

\end{defi}

 The following theorem is key to our future analysis, as it allows us to determine whether the genus of $T$ is finite by verifying the obstructions on a bounded number of realizers. This result corresponds to \cite[Theorem 7.4.8]{bonatti1998diffeomorphismes}. Here, we reformulate it in the context of basic pieces, as we will be using it.

\begin{theo}\label{Theo: finite type iff non-obtruction}
Let $T$ be a geometric type without double boundaries, let $(\{R_i\}_{i=1}^n,\phi)$  be a realization of $T$, and let $\cR_{6n}$ be its $6n$-realizer, where $n$ is the number of rectangles in the realization of $T$. The following statements are equivalent:

\begin{itemize}
	\item[i)] The Markov partition does not have obstructions of types $(1)$, $(2)$, and $(3)$ in $\cR_{6n}$.
	\item[ii)] The genus of $T$ is finite, i.e., $\text{gen}(T) < \infty$.
\end{itemize}
\end{theo}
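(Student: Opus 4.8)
The plan is to derive the theorem from \cite[Theorem 7.4.8]{bonatti1998diffeomorphismes}, checking that passing from saddle-type saturated sets without double boundary to mixing basic pieces changes nothing in the construction, and to record the skeleton of the Bonatti--Langevin argument behind it. The backbone is Euler-characteristic bookkeeping on the HV-surfaces $\cR_m$. Since $\cR_m$ is obtained from the disjoint rectangles $\cR_0$ by attaching the ribbons of generations $1,\dots,m$ one at a time, and each ribbon is a band glued along two arcs of $\partial^h\cR_0$ (Proposition \ref{Prop: properties ribbons}), attaching one ribbon lowers $\chi$ by $1$ and changes the number of boundary components of the current surface by $\pm 1$; together with the relation $\chi=2-2g-b$ on each connected component, this forces the genus to increase by one exactly when the new band joins two arcs of the \emph{same} boundary component in a linked, non-planar fashion, and to stay fixed otherwise. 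I would then observe that the three configurations of Definitions \ref{Defi: Type 1 obstruction top}, \ref{Defi: Type 2 obtruction top} and \ref{Defi:  Type 3 obtruction geo} are precisely the combinatorial certificates — read on $\partial^h\cR_0$ and on the periodic boundary components — for such a linked attachment, and that they are monotone in $m$ (a pair of ribbons witnessing an obstruction in $\cR_m$ still witnesses it in every $\cR_{m'}$, $m'\ge m$, since $\cR_m\subset\cR_{m'}$).

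With this dictionary I would prove the two implications. For the contrapositive of $(ii)\Rightarrow(i)$ — in the sharper form ``an obstruction at any level forces $\text{gen}(T)=\infty$'' — I would argue that if $r,r'$ is a linked pair of ribbons witnessing one of the three obstructions, then $\phi_{m'}(r),\phi_{m'}(r')$ witness the same obstruction one generation later, because $\phi$ is injective on the relevant region and carries horizontal stripes onto vertical stripes according to $\Phi_T$; iterating produces an infinite tower of pairwise distinct linked pairs, and one checks that each successive one contributes an independent handle to the $\cR_{m'}$ in which it is completed, so $g_{m'}\to\infty$. For $(i)\Rightarrow(ii)$ I would first promote ``no obstruction in $\cR_{6n}$'' to ``no obstruction in any $\cR_m$'': $\partial^h\cR_0$ has at most $2n$ components, each eventually $\phi$-periodic, and the cyclic data along a component needed to see a type-$(1)$ or type-$(2)$ obstruction, as well as the embryonic separatrices needed for a type-$(3)$ obstruction, are exposed after a bounded number of iterations of that component; tracing the worst case lets one pull any obstruction back to a level $\le 6n$. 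Once no obstruction ever occurs, every ribbon is glued either across two distinct boundary components or across one component in the unlinked way, so the genus never increases, $\{g_m\}$ is stationary, and $\text{gen}(T)<\infty$.

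The hard part is twofold. First, the bookkeeping lemma identifying genus increments with linked ribbon attachments must be set up with care, and in particular one needs the linked patterns to be intrinsic to $\partial^h\cR_0$ rather than artifacts of the chosen realization — this is where Proposition \ref{Prop: unique m-realizer} (uniqueness of the $m$-realizer up to HV-homeomorphism) enters. Second, and more delicate, is pinning down the constant: one must verify that each of the three obstruction types, if it ever appears, already appears within $6n$ generations, the worst case being roughly $2n$ boundary components together with up to three iterations needed to display the linking — the factor contributed by the periods of the boundary components enters precisely through the type-$(3)$, embryonic-separatrix case — which is what yields $6=2\cdot3$. Granting \cite[Theorem 7.4.8]{bonatti1998diffeomorphismes}, the only genuinely new point is the remark that a mixing basic piece is a saddle-type saturated set without double boundary and that $\cR_m$, the ribbons, and the obstructions are defined purely from $T$, so the statement transfers verbatim.
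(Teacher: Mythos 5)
The paper does not actually prove this statement: it is taken verbatim from \cite[Theorem 7.4.8]{bonatti1998diffeomorphismes} and merely restated in the language the paper needs, so the operative part of your proposal --- invoke that theorem and observe that $\cR_m$, the ribbons and the three obstructions are defined purely from $T$ and a realization (Proposition \ref{Prop: unique m-realizer} making the choice of realization irrelevant), hence the statement transfers unchanged --- is exactly what the paper does, and is all that is required here.

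Two points in your supplementary sketch of the Bonatti--Langevin argument are off, though. First, your bookkeeping lemma is wrong as stated: for an orientable band attachment, a ribbon whose two feet lie on the \emph{same} boundary circle always splits that circle into two and leaves the genus unchanged ($\chi'=\chi-1$, $b'=b+1$, so $g'=g$), whereas a ribbon whose feet lie on two \emph{distinct} boundary circles of a connected surface is precisely the move that raises the genus by one ($\chi'=\chi-1$, $b'=b-1$, so $g'=g+1$). Genus creation is therefore detected by pairs of ribbons with interleaved feet --- the first ribbon splits a circle, after which the second ribbon joins two distinct circles --- not by a single ``linked'' attachment to one component; the type-$(1)$, $(2)$, $(3)$ configurations encode exactly such interleaving patterns, so your dictionary survives but your per-ribbon increment criterion does not. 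Second, your account of the constant $6n$ (``$6=2\cdot 3$'', worst case $2n$ boundary components times three iterations) is a guess: in \cite{bonatti1998diffeomorphismes} the bound comes from an explicit estimate on the periods of the periodic boundary leaves and on the generation at which embryonic separatrices and returning ribbons must appear, and your proposal never carries out the estimate that promotes ``no obstruction in $\cR_{6n}$'' to ``no obstruction in any $\cR_m$''. Neither issue affects the paper, which uses the theorem as a black box, but as an independent proof your sketch would need both repairs.
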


Let's formulate another topological condition on the $6n$-realizer of geometric type $T$. This condition will be important in the problem of realization in the context of pseudo-Anosov homeomorphisms. We begin by assuming that $K$ is a basic piece of a Smale diffeomorphism of surfaces and that this piece has a Markov partition $\cR$ of geometric type $T$.

\begin{defi}\label{Defi: arc}
An $s,u$-arc is an interval contained in the unstable or stable foliation (respectively) of $K$ whose interior does not intersect $K$ but whose ends do.
\end{defi}

\begin{defi}\label{Defi: Impasse geo}
Let $f:S\rightarrow S$ be a surface Smale diffeomorphism, and let $K$ be a saddle-type basic piece of $f$. We call  \emph{topological impasse}, any open disk $\overset{o}{D}\subset S$ that is disjoint from $K$ and whose boundary consists of the union of a $u$-arc with an $s$-arc.
\end{defi}

\begin{figure}[h]
	\centering
	\includegraphics[width=0.6\textwidth]{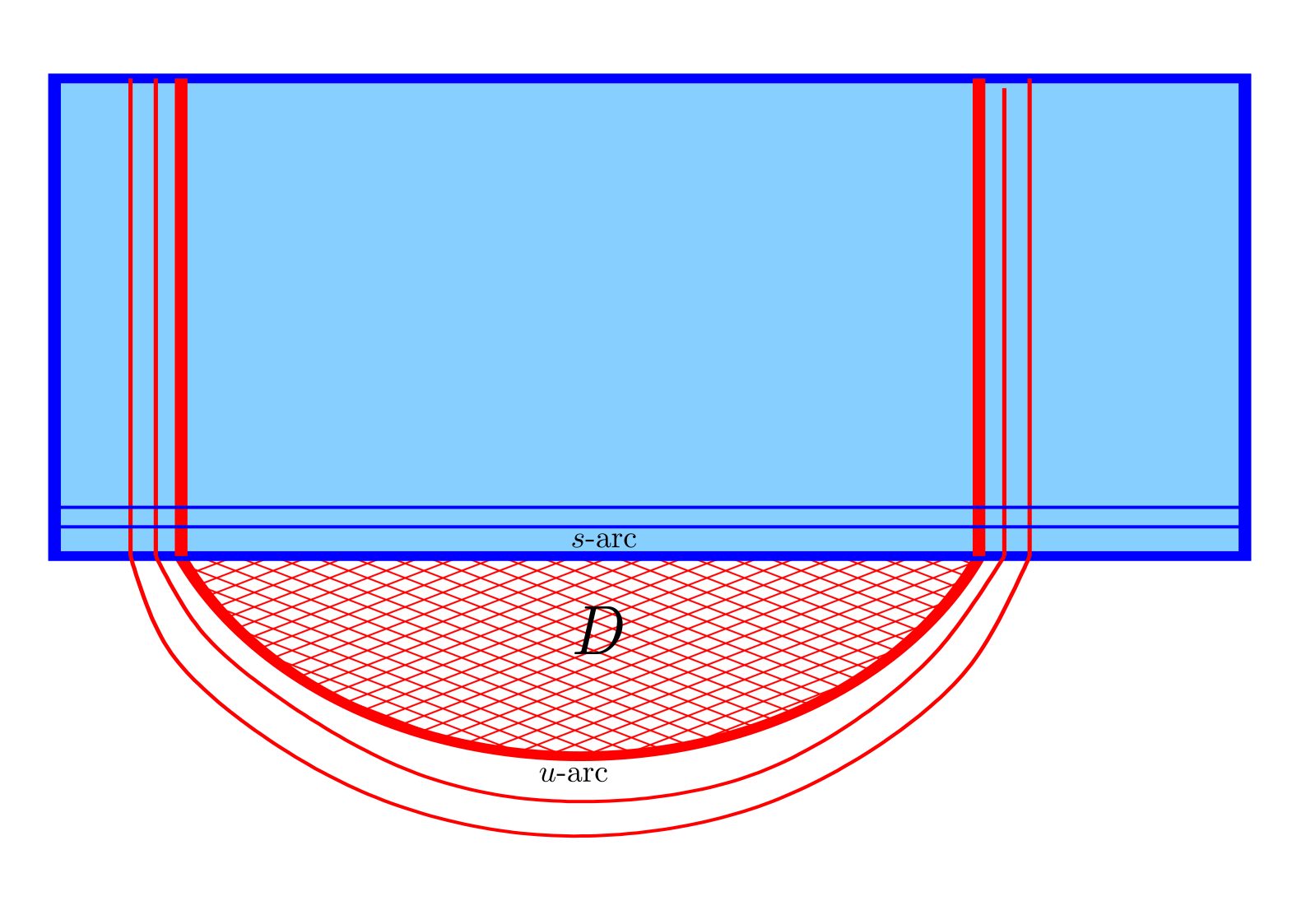}
	\caption{ Topological Impasse}
	\label{Fig: Impasse}
\end{figure}

Two $u$-arcs $\beta$ and $\beta'$ are equivalent if there is a rectangle that has $\beta$ and $\beta'$ as unstable boundaries. This relation defines an equivalence relation between $u$-arcs, and the equivalence classes are rectangles. An $u$-arc on the boundary of an equivalence class is called an extremal $u$-arc. Similarly, an equivalence relation is defined between $s$-arcs, and the concept of $s$-extremal arc is introduced (see \cite[Definition 2.2.6]{bonatti1998diffeomorphismes} for a precise definition). In \cite[Corollary 2.4.8]{bonatti1998diffeomorphismes}, it is proven that the boundary arcs of an impasse are extremal arcs.

\begin{rema}\label{Rema: extremal arc}

The important property of the arcs bounding an impasse is that the $u$-arc is saturated by leaves of the unstable lamination on the side opposite to the impasse, just as the $s$-arc is saturated by leaves of the stable lamination on the opposite side of the impasse.

\end{rema}

\subsection{ Combinatorial formulation of finite genus and impasse.}\label{Sub-sec: Comb Impasse genus}

The obstructions described above provide a valuable tool for analyzing our Markov partition from a visual perspective. However, in order to obtain an algorithmic characterization of the pseudo-Anosov class of geometric types, we need to translate these three topological obstructions and the impasse condition into a combinatorial formulation that allows us to determine in finite time whether or not these conditions are satisfied.

The key insight lies in Proposition \ref{Prop: properties ribbons}, where we observed that every ribbon of $k$-generation corresponds to the image of a horizontal strip between two consecutive horizontal sub-rectangles of the Markov partition for the map $\phi^k$ under the projection $\pi$. In this section, we will consider the geometric type.

$$
T=(n,\{(h_i,v_i)\}_{i=1}^n,\Phi_T:=(\rho_T,\epsilon_T )).
$$

The trick in our upcoming definitions is that whenever we want to talk about a "ribbon" with a certain property, we need to refer to two consecutive horizontal rectangles and analyze their images under $\phi^k$. This way, a "ribbon" is formed, and we can express the obstructions and the impasse property in terms of the images of these consecutive horizontal rectangles.

\subsubsection{Combinatorial obstructions for finite genus.}
\begin{defi}\label{Defi: Type 1 combinatoric}
	A geometric type $T$ satisfies the \emph{combinatorial condition} of type-$(1)$ if and only if there are:
	
	\begin{itemize}
		\item Three (not necessarily distinct) numbers, $k,i_0,i_1\in \{1,\cdots,n\}$.
		\item Three different  numbers, $l_0,l_1,l_2\in \{1,\cdots,v_{k}\}$ with $l_0<l_1<l_2$.
		\item A pair  $(k',l')\in\cV(T)$, such that $(k',l')\notin \{(k,l):l_0\leq l \leq l_2\}$.
		\item And four pairs, $(i_0,j_0),(i_0,j_0+1),(i_1,j_1)(i_1,j_1+1)\in \cH(T)$
		\end{itemize}
	Such that they satisfy one of the following relations:
\begin{enumerate}
\item If  $\rho_T(i_0,j_0)=(k,l_0)$ and $\rho_T(i_0,j_0+1)=(k,l_2)$, we have two available possibilities:
\begin{itemize}
	\item[i)] $\epsilon_T(i_0,j_0)=1$ and $\epsilon_T(i_0,j_0+1)=-1$.  In such a case:
	 $$
	 \rho_T(i_1,j_1)=(k,l_1), \, \epsilon_T(i_1,j_1)=1 \text{ and } \rho_T(i_1,j_1+1)=(k',l'),
	 $$
	  or well,
	  $$
	  \rho_T(i_1,j_1+1)=(k,l_1), \,  \epsilon_T(i_1,j_1+1)=-1,  \text{ and } \rho_T(i_1,j_1)=(k',l').
	  $$
	
	\item[ii)] $\epsilon_T(i_0,j_0)=-1$ and $\epsilon_T(i_0,j_0+1)=1$. In such a case  
	$$
	\rho_T(i_1,j_1)=(k,l_1), \,  \epsilon_T(i_1,j_1)=-1 \text{ and  }\rho_T(i_1,j_1+1)=(k',l')
	$$
	or well
	$$
	\rho_T(i_1,j_1+1)=(k,l_1), \,  \epsilon_T(i_1,j_1)=1 \text{ and  } \rho_T(i_1,j_1)=(k',l').
	$$.

\end{itemize}
\item  In the symmetric case when, $\rho_T(i_0,j_0)=(k,l_2)$ and $\rho_T(i_0,j_0+1)=(k,l_0)$ there are two options:
\begin{itemize}
	\item[i)] $\epsilon_T(i_0,j_0)=1$ and $\epsilon_T(i_0,j_0+1)=-1$. In such a case: 
	$$
	\rho_T(i_1,j_1)=(k,l_1), \, \epsilon_T(i_1,j_1)=1  \text{ and }  \rho_T(i_1,j_1+1)=(k',l')
	$$
	or well
	$$
	\rho_T(i_1,j_1+1)=(k,l_1), \, \epsilon_T(i_1,j_1+1)=-1  \text{ and }  \rho_T(i_1,j_1)=(k',l')
	$$

	\item[ii)]  $\epsilon_T(i_0,j_0)=-1$ and $\epsilon_T(i_0,j_0+1)=1$. In such a case:  
	$$
	\rho_T(i_1,j_1)=(k,l_1),\, \epsilon_T(i_1,j_1)=-1 \text{ and  } \rho_T(i_1,j_1+1)=(k',l')
	$$
	or well
	$$
\rho_T(i_1,j_1+1)=(k,l_1),\, \epsilon_T(i_1,j_1)=1 \text{ and  } 	\rho_T(i_1,j_1)=(k',l').
	$$
\end{itemize}
\end{enumerate}
	A geometric type has the \emph{combinatorial obstruction} of type-$(1)$ if there exists $m \in \mathbb{N}_{>0}$ such that $T^m$ satisfies the combinatorial condition of type-$(1)$.
\end{defi}

The way we have defined the combinatorial condition of type $(1)$ is by identifying the $1$-generation ribbons of the Markov partition $\cR_0$ in $\cR_1$ that give rise to the topological obstruction of type-$(1)$ in $\cR_1$ as the image of a horizontal stripe contained between two consecutive sub-rectangles. This understanding of ribbons corresponds to two consecutive elements $(i,j)$ and $(i,j+1)$ in $\cH$ and their images under $\rho_T$ in $\cV$, which represent the ribbons. In the following lemma we are going to follow the Figure \ref{Fig: Type one proof} to explain or proof.

\begin{figure}[h]
	\centering
	\includegraphics[width=0.83\textwidth]{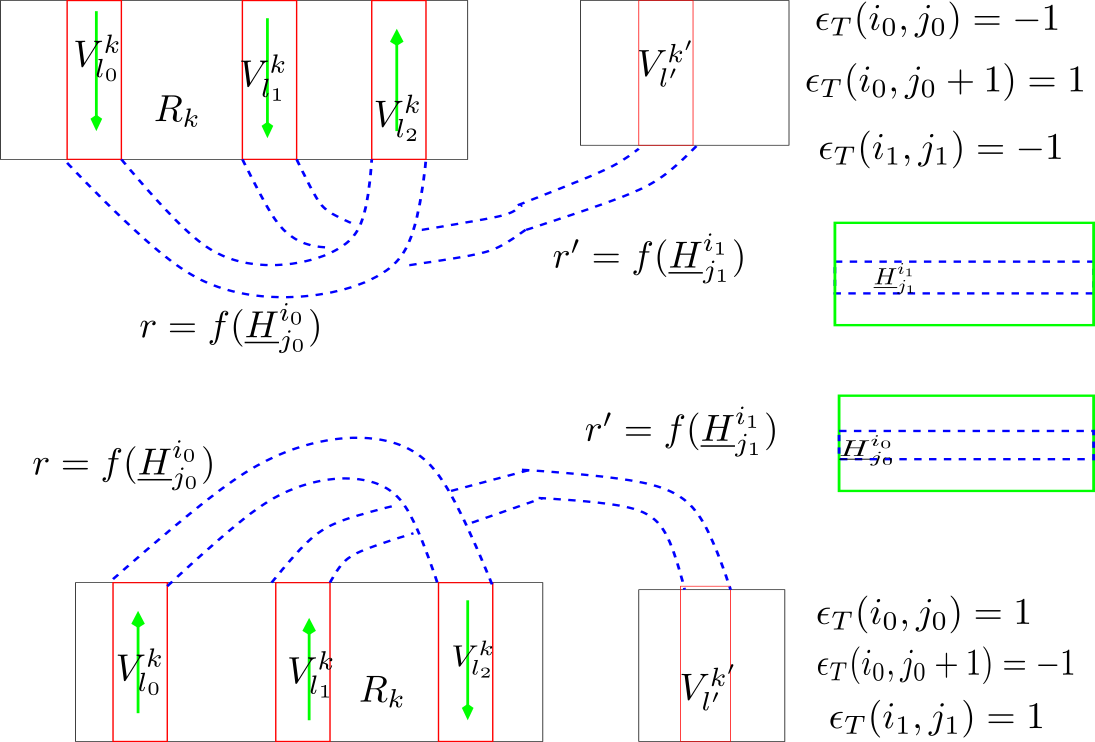}
	\caption{Type $(1)$ combinatorial condition}
	\label{Fig: Type one proof}
\end{figure}

\begin{lemm}\label{Lemm: Equiv com and top type 1}
A geometric type $T$ has the combinatorial condition of type-$(1)$ if and only if the Markov partition in $\cR_1$ exhibits the topological obstruction of type-$(1)$.
\end{lemm}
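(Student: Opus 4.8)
The plan is to set up a precise dictionary between the combinatorial data of $T$ (respectively $T^m$) and the geometry of the $1$-realizer $\cR_1$ (respectively the $m$-realizer seen via the $\phi^m$-Markov partition), and then check that the two sides of the stated equivalence are literal transcriptions of each other under this dictionary. First I would recall, from Proposition \ref{Prop: properties ribbons} and the discussion that follows it, that a $1$-generation ribbon of $\cR_1$ is exactly the image under $\pi$ of a horizontal stripe $\tilde{H^i_j}\times\{1\}$, i.e.\ it sits between the two consecutive horizontal sub-rectangles $H^i_j$ and $H^i_{j+1}$ of $R_i$. Applying $\phi$, this ribbon $r$ has its two horizontal boundaries on $\phi(H^i_j)=V^{k}_{l}$ and $\phi(H^i_{j+1})=V^{k'}_{l'}$, where $(k,l)=\rho_T(i,j)$ and $(k',l')=\rho_T(i,j+1)$; whether both boundaries land on the \emph{same} horizontal boundary component $A$ of a rectangle $R_k$ is governed precisely by whether $k=k'$ together with the sign data $\epsilon_T(i,j),\epsilon_T(i,j+1)$ — this is the reason the definition splits into the cases $\rho_T(i_0,j_0)=(k,l_0)$, $\rho_T(i_0,j_0+1)=(k,l_2)$ with prescribed signs, versus the ``symmetric'' case with $l_0$ and $l_2$ interchanged. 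I would spell out, with reference to Figure \ref{Fig: Type one proof}, why the sign pattern $(\epsilon=1,\epsilon=-1)$ (resp.\ $(-1,1)$) is exactly what forces the two vertical sub-rectangles $V^k_{l_0}$ and $V^k_{l_2}$ to attach their ribbon ends to the \emph{same} side of $R_k$ rather than to opposite sides.

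Next I would do the same translation for the ``obstructing'' ribbon $r'$ of Definition \ref{Defi: Type 1 obstruction top}: $r'$ has a \emph{single} horizontal boundary in $A$ lying strictly between the two boundaries of $r$. In combinatorial terms $r'$ again comes from a consecutive pair $(i_1,j_1),(i_1,j_1+1)$ in $\cH(T)$, one of whose $\rho_T$-images is $(k,l_1)$ with $l_0<l_1<l_2$ (so that its end lies between the ends of $r$ inside the horizontal boundary $A$, using that the vertical sub-rectangles $V^k_1,\dots,V^k_{v_k}$ are linearly ordered across $A$), and the \emph{other} $\rho_T$-image is some $(k',l')$ with $(k',l')\notin\{(k,l):l_0\le l\le l_2\}$, which encodes that the second boundary of $r'$ is \emph{not} in $A$ between the boundaries of $r$. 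The alternative ``or well'' clauses in each subcase of Definition \ref{Defi: Type 1 combinatoric} correspond to the two ways $r'$ can be oriented (whether it is $(i_1,j_1)$ or $(i_1,j_1+1)$ whose image is the ``inside'' end $(k,l_1)$), and the accompanying $\epsilon_T$ prescriptions are dictated by the requirement that the ribbon's horizontal boundary actually points into $A$ on the correct side, matching the orientation conventions fixed in the Concretization (Definition \ref{Defi: Concretization}) and the way $\pi$ glues stripes.

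Once the dictionary is in place the proof is a bookkeeping argument in both directions. For ($\Leftarrow$): given a type-$(1)$ topological obstruction in $\cR_1$, pick the ribbons $r,r'$, read off the rectangle $R_k$ and boundary component $A$ containing the two boundaries of $r$, let $l_0<l_2$ be the positions in $A$ of those boundaries and $l_1$ the position of $r'\cap A$; trace $r,r'$ back through $\pi$ and $\phi$ to their defining consecutive pairs in $\cH(T)$, and verify that the resulting indices and signs fall into exactly one of the enumerated cases. For ($\Rightarrow$): given the combinatorial data, build the corresponding ribbons in $\cR_1$ using the realization $(\{R_i\},\phi)$, and check directly from Definition \ref{Defi: Type 1 obstruction top} that they realize the topological obstruction — using that $\cR_1$ is an HV-surface (Proposition \ref{Prop: properties ribbons}) so the ordering of ribbon ends along a horizontal boundary component is unambiguous and compatible with the order of the $V^k_l$'s. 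The main obstacle I anticipate is purely organizational rather than conceptual: keeping the orientation conventions straight, i.e.\ verifying that each of the four (sign $\times$ ``or well'') branches of Definition \ref{Defi: Type 1 combinatoric} is genuinely forced and that together they exhaust the possibilities — this is where a careful appeal to Figure \ref{Fig: Type one proof} and to the orientation rules in Definitions \ref{Defi: equivalent parametrizations}, \ref{Defi: Geometric rectangle} and \ref{Defi: Concretization} does the real work. I expect no difficulty from the $T^m$ versus $T$ distinction: the statement of the lemma is about $T$ and $\cR_1$, and the ``combinatorial obstruction'' (with the $T^m$) follows afterward simply by applying the lemma to $T^m$ and noting, via Proposition \ref{Prop: realizer and Markov partition}, that the $\phi^m$-Markov partition on $\cR_m$ plays the role of $\cR_0$ under $\phi^m$.
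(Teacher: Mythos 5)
Your proposal is correct and follows essentially the same route as the paper: identify each first-generation ribbon of $\cR_1$ with the image of the stripe between a consecutive pair $(i,j),(i,j+1)\in\cH(T)$, read off from $\rho_T$ which vertical sub-rectangles its ends attach to and from the opposite-sign pattern of $\epsilon_T$ that both ends of $r$ lie on the same horizontal boundary component (with $(k',l')\notin\{(k,l):l_0\le l\le l_2\}$ encoding that the second end of $r'$ escapes that segment), and then run the dictionary in both directions. The paper does exactly this, treating one sign case in detail and declaring the others symmetric, so your plan matches it including the case/orientation bookkeeping you flag as the only real work.
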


\begin{proof}
The terms $(k,l_0)$, $(k,l_1)$, and $(k,l_2)$ represent three different vertical sub-rectangles of $R_k$ ordered by the horizontal orientation of $R_k$. The term $(k',l')$ represents a horizontal sub-rectangle of $\cR$ that is not located between the rectangles $V^k_{l_0}$ and $V^k_{l_2}$. Let's illustrate the meaning of the condition in the first case, as the rest are symmetric.

The conditions $\rho(i_0,j_0)=(k,l_0)$ and $\rho(i_0,j_0+1)=(k,l_2)$ are equivalent to the fact that the image under $f$ of the horizontal strip $\underline{H}_{j_0}$ bounded by $H^{i_0}_{j_0}$ and $H^{i_0}_{j_0+1}$ is a ribbon $r=f(\underline{H}_{j_0})$ of the first generation that connects the horizontal boundaries of $V^k_{l_0}$ and $V^{k}_{l_2}$. In this case, in Item $(i)$, the conditions $\epsilon_T(i_0,j_0)=1$ and $\epsilon_T(i_0,j_0+1)=-1$ means that the ribbon $r$ connects the upper boundary of $V^k_{l_0}$ with the upper boundary of $V^{k}_{l_2}$.

The conditions $\rho(i_1,j_1)=(k,l_1)$ and $\rho(i_1,j_1+1)=(k',l')$ indicate that the image of the strip $\underline{H}_{j_1}$ bounded by $H^{i_1}_{j_1}$ and  $H^{i_1}_{j_1+1}$ is a ribbon $r'$ connecting a horizontal boundary of $V^k_{l_1}$ with a horizontal boundary of $V^{k'}_{l'}$. Moreover, since $\epsilon_T(i_1,j_1)=1$, it means that $r'$ connects the upper boundary of $V^k_{l_1}$ with the stable boundary $\alpha$ of $V^{k'}_{l'}$. But the condition $(k',l')\notin \{(k,l): l_0\leq l\leq l_2\}$ ensures that the stable boundary $\alpha$ cannot be situated between the upper boundaries of $V^k_{l_0}$ and $V^k_{l_2}$, as we have specified from the beginning. Therefore the Markov partition $\cR_1$ have the topological obstruction of type $(1)$.

Is not difficult to see that if $\cR_1$ have the topological obstruction of type $(1)$, then the ribbons $r$ and $r'$ given by the definition \ref{Defi: Type 1 obstruction top}, are determined by be the image of a stripe comprised between consecutive sub-rectangles $H^{i_0}_{j_0}$ and $H^{i_0}_{j_0}$ for $r$ and $H^{i_1}_{j_1}$ and $H^{i_1}_{j_1+1}$ for $r'$. The rest of the conditions are totally determined.
\end{proof}

This coding method of representing a ribbon as two consecutive indices $(i,j)$ and $(i,j+1)$ in $\cH$ will be utilized frequently throughout this subsection.

\begin{defi}\label{Defi: Type 2 combinatoric}
	The geometric type $T$ has the type-$(2)$ \emph{combinatorial condition} if and only if there are:
	\begin{itemize}
\item Four different pairs $(k_1,l^1),(k_1,l^2),(k_2,l_1),(k_2,l_2)\in \cV(T)$ with $l_1<l_2$ and $l^1<l^2$.
\item Pairs $(i_1,j_1),(i_1,j_1+1),(i_2,j_2),(i_2,j_2+1)\in \cH(T)$ 
	\end{itemize}
Such that:
	$$
	\rho_T(i_1,j_1)=(k_1,l^1) \text{ and } \rho_T(i_1,j_1+1)=(k_2,l_2).
	$$
	
Additionally, depending on the signs of $\epsilon_T(i_1,j_1)$ and $\epsilon_T(i_1,j_1+1)$ we have the next obstructions:
	
	\begin{enumerate}
\item If $\epsilon_T(i_1,j_1)=\epsilon_T(i_1,j_1+1)$, then we have two options:
\begin{itemize}
\item[i)] If $\rho_T(i_2,j_2)=(k_1,l^2)$ and $\rho_T(i_2,j_2+1)=(k_2,l_1)$ then: $$\epsilon_T(i_1,j_1)=\epsilon_T(i_2,j_1+1)=\epsilon_T(i_2,j_2)=\epsilon_T(i_2,j_2+1)$$.

\item[ii)] If  $\rho_T(i_2,j_2)= (k_2,l_1)$ and $\rho_T(i_2,j_2+1)=(k_1,l^2)$. Then
$$-\epsilon_T(i_1,j_1)=-\epsilon_T(i_2,j_1+1)=\epsilon_T(i_2,j_2)=\epsilon_T(i_2,j_2+1)$$.
\end{itemize}

\item If $\epsilon_T(i_1,j_1)=-\epsilon_T(i_1,j_1+1)$, then we have two options:

\begin{itemize}
\item[i)] If $\rho_T(i_2,j_2)=(k_1,l^2)$ and $\rho_T(i_2,j_2+1)=(k_2,l_1)$ then:
$$
\epsilon_T(i_2,j_2)=\epsilon_T(i_1,j_1) \text{ and  } \epsilon_T(i_2,j_2+1)=\epsilon_T(i_1,j_1+1).
$$
\item[ii)] If $\rho_T(i_2,j_2)=(k_2,l_1)$ and $\rho_T(i_2,j_2+1)=(k_1,l^2)$ then:
$$
\epsilon_T(i_2,j_2)=-\epsilon_T(i_1,j_1+1) \text{ and  } \epsilon_T(i_2,j_2+1)=-\epsilon_T(i_1,j_1).
$$

\end{itemize}

\end{enumerate}

\begin{comment}

$l'_1\in \{1,\cdots,l_1-1,l_1+1, v_{k+1}\}$ and $l'_1\{1,\cdots,l_1-1,l_1+1v_{k+1}\}$ such that,  $l'_1<l_1$ if and only if  $l'_2>l_2$ and there are two options
$$
\Phi(i_2,j_2)=(k_1,l'_1,\epsilon(i_2,j_2)) \text{ and } \Phi(i_2,j_2+1)=(k_2,l'_2,\epsilon(i_2,j_2+2)) 
$$
or well
$$
\Phi(i_2,j_2)=(k_2,l'_2\epsilon(i_2,j_2)) \text{ and } \Phi(i_2,j_2+1)=(k_1,l'_1,\epsilon(i_2,j_2+2)).
$$

\item Suppose $\epsilon(i_1,j_1)=-\epsilon(i_2,j_2+1)$. Then there are $l'_1\{1,\cdots,l_1-1,l_1+1v_{k+1}\}$ and $l'_1\{1,\cdots,l_1-1,l_1+1v_{k+1}\}$ such that, $l'_1<l_1$ if and only if $l'_2<l_2$ and in this situation two possibilities are admissible:
$$
\Phi(i_2,j_2)=(k_1,l'_1,\epsilon(i_2,j_2)) \text{ and } \Phi(i_2,j_2+1)=(k_2,l'_2,\epsilon(i_2,j_2+2)) 
$$
or well
$$
\Phi(i_2,j_2)=(k_2,l'_2\epsilon(i_2,j_2)) \text{ and } \Phi(i_2,j_2+1)=(k_1,l'_1,\epsilon(i_2,j_2+2)).
$$
\end{itemize}
\end{comment}

A geometric type $T$ has the \emph{combinatorial obstruction} of type-$(2)$ if there exists $m\in \NN$ such that $T^m$ satisfies the combinatorial condition of type-$(2)$.
\end{defi}

\begin{figure}[h]
	\centering
	\includegraphics[width=0.9\textwidth]{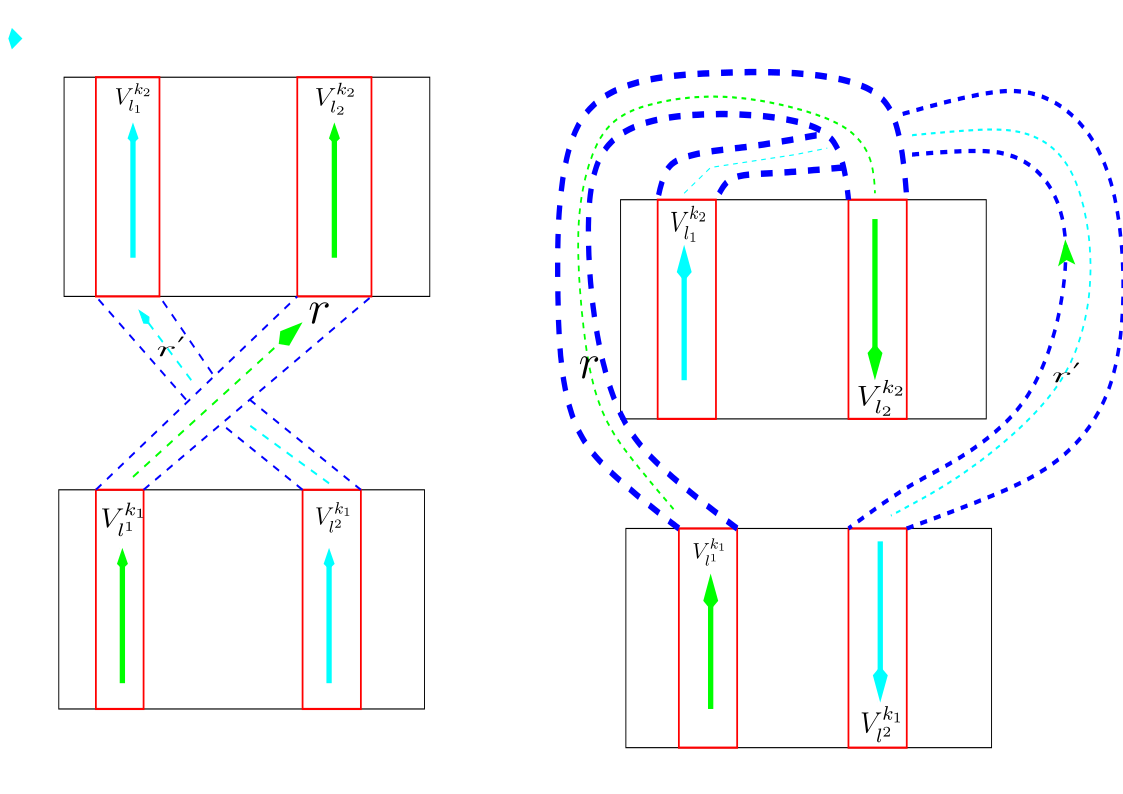}
	\caption{Type $(2)$ combinatorial condition}
	\label{Fig: Type two proof}
\end{figure}

 The Figure \ref{Fig: Type two proof} give some help to understand the proof.
 
\begin{lemm}\label{Lemm: Equiv com and top type 2} 
A geometric type $T$ satisfies the combinatorial condition of type-$(2)$ if and only if the Markov partition $\cR_0$ has the topological obstruction of type-$(2)$ on $\cR_1$.
\end{lemm}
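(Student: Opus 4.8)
The plan is to follow the template of the proof of Lemma~\ref{Lemm: Equiv com and top type 1} and translate Definition~\ref{Defi: Type 2 combinatoric} term by term into the geometric picture of $\cR_1$ described in Definition~\ref{Defi: Type 2 obtruction top}, using the dictionary furnished by Proposition~\ref{Prop: properties ribbons}. Recall the ingredients of that dictionary: a $1$-generation ribbon of $\cR_1$ is exactly $\phi_1$ applied to a stripe $\tilde{H^i_j}$ squeezed between two consecutive horizontal sub-rectangles $H^i_j, H^i_{j+1}$ of some $R_i$; under the identification $H^i_j \times \{1\} \equiv V^{k}_{l} \times \{0\}$ valid when $\rho_T(i,j) = (k,l)$, the two horizontal boundaries of the ribbon lie on horizontal boundaries of $V^{k_1}_{l^1}$ and $V^{k_2}_{l_2}$, where $\rho_T(i,j) = (k_1,l^1)$ and $\rho_T(i,j+1) = (k_2,l_2)$; and, since $\phi$ is orientation preserving on each $H^i_j$, the sign $\epsilon_T(i,j)$ determines both whether this boundary of the ribbon lands on the upper or the lower horizontal boundary of $V^{k_1}_{l^1}$ and whether the horizontal orientation of the ribbon agrees with that of $V^{k_1}_{l^1}$, and symmetrically for $\epsilon_T(i,j+1)$ and $V^{k_2}_{l_2}$. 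Figure~\ref{Fig: Type two proof} is the guide throughout.

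For the implication ``combinatorial condition $\Rightarrow$ topological obstruction'', I would start from the data of Definition~\ref{Defi: Type 2 combinatoric}: the four distinct pairs $(k_1,l^1),(k_1,l^2),(k_2,l_1),(k_2,l_2) \in \cV(T)$ with $l^1 < l^2$ and $l_1 < l_2$, and the two pairs of consecutive labels $(i_1,j_1),(i_1,j_1+1)$ and $(i_2,j_2),(i_2,j_2+1)$. By the dictionary, $(i_1,j_1),(i_1,j_1+1)$ produce a $1$-ribbon $r$ with one horizontal boundary on $V^{k_1}_{l^1}$ and the other on $V^{k_2}_{l_2}$, while $(i_2,j_2),(i_2,j_2+1)$ produce a ribbon $r'$ with boundaries on $V^{k_1}_{l^2}$ and $V^{k_2}_{l_1}$. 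The point is that the sign relations of Definition~\ref{Defi: Type 2 combinatoric} are precisely what guarantees (a) that the $R_{k_1}$-boundary of $r$ and the $R_{k_1}$-boundary of $r'$ lie on a \emph{common} horizontal boundary component $A$ of $R_{k_1}$ (that is, on the same ``upper vs.\ lower'' side), and likewise that their $R_{k_2}$-boundaries lie on a common component $A'$ of $R_{k_2}$; and (b) once $A$ and $A'$ are oriented compatibly with the horizontal orientation of $r$, the fact that $l^1 < l^2$ on the $R_{k_1}$ side while $l_1 < l_2$ on the $R_{k_2}$ side forces $r \cap A$ and $\alpha := r' \cap A$ to appear in $A$ in the order reverse to that of $r \cap A'$ and $\alpha' := r' \cap A'$ in $A'$, which is exactly the type-$(2)$ obstruction. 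In this scheme, case~(1) of the definition (equal signs on $(i_1,j_1)$ and $(i_1,j_1+1)$) corresponds to the ``untwisted'' ribbon $r$ and case~(2) to the ``twisted'' one, while the sub-options (i)/(ii) inside each case are the two admissible orientations of $r'$; I would carry out case~(1)(i) in detail exactly along the lines of Lemma~\ref{Lemm: Equiv com and top type 1} and obtain the remaining three by the evident symmetries (reflecting the vertical direction of $R_{i_2}$ and/or of $R_{k_1}$ and $R_{k_2}$).

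For the converse, suppose $\cR_0$ has a type-$(2)$ obstruction in $\cR_1$. Since $\cR_1$ contains only ribbons of generation $0$ and $1$ (Proposition~\ref{Prop: properties ribbons} with $m = 1$), the two ribbons $r, r'$ are both of $1$-generation, hence each is $\phi_1$ of a stripe between two consecutive horizontal sub-rectangles, producing pairs $(i_1,j_1),(i_1,j_1+1)$ and $(i_2,j_2),(i_2,j_2+1)$ in $\cH(T)$. The horizontal boundary components $A \neq A'$ are horizontal boundary components of two rectangles of $\cR_0$, and reading $\rho_T$ off the four ribbon-endpoints supplies the four pairs $(k_1,l^1),(k_2,l_2),(k_1,l^2),(k_2,l_1) \in \cV(T)$; these are pairwise distinct because $r \neq r'$ and two $1$-ribbons with a boundary on the same horizontal boundary of the same vertical sub-rectangle would have to coincide along an entire stripe, hence would be the same ribbon. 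Reading the ``upper/lower'' alternative (i.e.\ $\epsilon_T$) at each of the four endpoints, together with the prescribed reversal of the two orders and the compatible orientation of $A$ and $A'$ induced by $r$, reproduces the relations of one of the four sub-cases of Definition~\ref{Defi: Type 2 combinatoric}; relabelling $l^1 \leftrightarrow l^2$ or $l_1 \leftrightarrow l_2$ if necessary lets us assume $l^1 < l^2$ and $l_1 < l_2$, completing the equivalence.

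The main obstacle I anticipate is the sign and orientation bookkeeping. Unlike type~(1), where a single ribbon joins two horizontal boundaries of the \emph{same} vertical sub-rectangle, here two ribbons each straddle two \emph{different} vertical sub-rectangles lying in two \emph{different} rectangles $R_{k_1}, R_{k_2}$, so one must track simultaneously: which of the two horizontal boundary components of $R_{k_1}$ (resp.\ $R_{k_2}$) each ribbon-endpoint falls on; how the compatible orientation of $A$ (resp.\ $A'$) is induced from the horizontal orientation of $r$; and whether the resulting orders of the endpoint-pairs are reversed. The content of the lemma is that all of this is governed \emph{exactly} by the equal-versus-opposite-sign dichotomy of $\epsilon_T(i_1,j_1), \epsilon_T(i_1,j_1+1)$ together with the listed relations among the four $\epsilon_T$ values. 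Organising the argument so that the configuration of $r$ is fixed first -- which already pins down $A$, $A'$, and their compatible orientations -- and the constraints on $r'$ are then merely read off, is what keeps the proof from expanding into sixteen cases; each remaining sub-case is a routine reflection of case~(1)(i).
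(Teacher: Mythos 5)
Your proposal follows essentially the same route as the paper: both directions are handled by the dictionary identifying a $1$-generation ribbon with the image of the stripe between two consecutive horizontal sub-rectangles, translating $\rho_T$ into which vertical sub-rectangles of $R_{k_1}$, $R_{k_2}$ the ribbons attach to and $\epsilon_T$ into the upper/lower alternative and orientation, then fixing the configuration of $r$ first and checking the sub-cases for $r'$, with the converse obtained by reading $\rho_T$ and $\epsilon_T$ off the two ribbons. The only additions beyond the paper's argument are minor bookkeeping remarks (distinctness of the four labels, relabelling to get $l^1<l^2$, $l_1<l_2$), which are consistent with the paper's treatment.
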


\begin{proof}
The pairs $(k_1,l^1),(k_1,l^2),(k_2,l_1),(k_2,l_2)$ represent the respective vertical sub-rectangles of $\cR$ and the condition $\rho_T(i_1,j_1)=(k_1,l^1) \text{ and } \rho_T(i_1,j^1+1)=(k_2,l_2)$ implies the existence of a rubber $r=f(\underline{H}^{i_1}_{j_1})$ joining a pair of vertical boundaries of the rectangles $V^{k_2}_{l_2}$ and $V^{k_1}_{l^1}$ that correspond to the image  of the upper boundary of $H^{i_1}_{j_1}$ inside $V^{k_1}_{l^1}$ and the image of the inferior boundary of $H^{i_1}_{j_1+1}$ inside $V^{k_2}_{l^2}$.

In the situation when $\epsilon_T(i_1,j_1)=\epsilon_T(i_2,j_1+1)$  we can assume that both are positive (or negative, but the reasoning is the same). 

The condition: $\rho_T(i_2,j_2)=(k_1,l^2)$ and $\rho_T(i_2,j_2+1)=(k_2,l_1)$ implies that the rubber $r'=f(\underline{H}^{i_2}_{j_2})$ joins the stable boundary of $V^{k_2}_{l_2}$ that is the image by $f$ of the inferior boundary of $H^{i_2}_{j_2+1}$ with the stable boundary $V^{k_1}_{l^1}$ that is the image  by $f$ of the upper boundary of $H^{i_2}_{j_2}$. 

Like $\epsilon_T(i_1,j_1)=\epsilon_T(i_2,j_1+1)$ this implies that $r$ joins the upper boundary of $V^{k_1}_{l^1}$ with the inferior boundary of $V^{k_2}_{l_2}$, even more like $$\epsilon_T(i_1,j_1)=\epsilon_T(i_2,j_1+1)=\epsilon_T(i_2,j_2)=\epsilon_T(i_2,j_2+1)=1$$
the $r'$ joins the upper boundary of $V^{k_1}_{l^2}$ (that is the image of the upper boundary of $H^{i_2}_{j_2}$) with the inferior boundary of $V^{k_2}_{l_1}$.
Like $l_1<l_2$ and $l^1<l^2$ the fist combinatorial condition implies that $\cR$ have a topological obstruction of type $(2)$.

Now if  $\rho_T(i_2,j_2)= (k_2,l_1)$ and $\rho_T(i_2,j_2+1)=(k_1,l^2)$, and 
$$\epsilon_T(i_2,j_2)=\epsilon_T(i_2,j_2+1)=-1$$,
The rubber rubber $r'$ have stable boundary in the upper boundary of $V^{k_1}_{l_2}$ as this correspond to the image by $f$ of the inferior boundary of $H^{i_2}_{j_2+1}$ with the  inferior boundary of $V^{k_2}_{l_1}$ that correspond to the image by $f$ of the upper boundary of $H^{i_2}_{j_2}$. Therefore we have a type $(2)$ obstruction.

Consider now that $\epsilon_T(i_1,j_1)=-\epsilon_T(i_1,j_1+1)$. Too fix ideas  $\epsilon_T(i_1,j_1)=1$ and $\epsilon_T(i_1,j_1+1)=-1$. This means that $r$ joins the upper boundary of $V^{k_1}_{l^1}$ (that is the image by $f$ of the upper boundary of $H^{i_0}_{j_0}$)
with the upper boundary of $V^{k_2}_{l_2}$ (that correspond to the image by $f$ of the inferior boundary of $H^{i_1}_{j_2+1}$ ).

 The condition  $\rho_T(i_2,j_2)=(k_1,l^2)$ and $\rho_T(i_2,j_2+1)=(k_2,l_1)$ of the point $(2)$ in the definition together with:
 $$
\epsilon_T(i_2,j_2)=1 \text{ and  } \epsilon_T(i_2,j_2+1)=-1.
$$
implies that $r'$ joints the superior boundary of $V^{k_1}_{l^2}$ that is the image by $f$ of the upper boundary of $H^{i_1}_{j_1}$, with the upper boundary of $V^{k_2}_{l_2}$ that is the image by $f$ of the inferior boundary of $H^{i_1}_{j_1+1}$. Similarly we have the obstruction of type $(2)$ in the Markov partition.

Finally if  $\epsilon_T(i_1,j_1)=1$ and $\epsilon_T(i_1,j_1+1)=-1$ but now $\rho_T(i_2,j_2)=(k_2,l_1), \rho_T(i_2,j_2+1)=(k_1,l^2)$ and 
$$\epsilon_T(i_2,j_2)=1 \text{ and  } \epsilon_T(i_2,j_2+1)=-1.$$

we can deduce that $r'$ joints the upper boundary of $V^{k_1}_{l^2}$ that is the image by $f$ of the inferior boundary of $H^{i_2}_{j_2+1}$ and the upper boundary of $V^{k_2}_{l_1}$ that is the image by $f$ of the upper boundary of $H^{i_2}_{j_2}$. Once again we get a type two obstruction.

If the Markov partition $\cR$ have the type $(2)$ topological obstruction in $\cR_1$the ribbons $r$ and $r'$ determine the horizontal sub-rectangles determined by $(k_1,l^1),(k_1,l^2),(k_2,l_1),(k_2,l_2)$  and we can fix the condition 	$\rho_T(i_1,j_1)=(k_1,l^1)$ and $\rho_T(i_1,j^1+1)=(k_2,l_2)$ to indicate that $r$ joints such rectangles. The condition $(1)$ reflex the case when the two rectangles $R_{k_1}$ and $R_{k_2}$ have cohered orientation and the other case when they have inverse orientation. In any case, we need to remember that $r$ joint the image of the upper boundary of $H^{i_0}_{j_0}$ with the image of the inferior boundary of $H^{i_0}_{j_0}$. The four conditions enunciated are all the possible case when the ribbon $r'$ have stable boundaries in the same stables boundaries components of $\cR$ than $r$.

\end{proof}

\begin{figure}[h]
	\centering
	\includegraphics[width=0.83\textwidth]{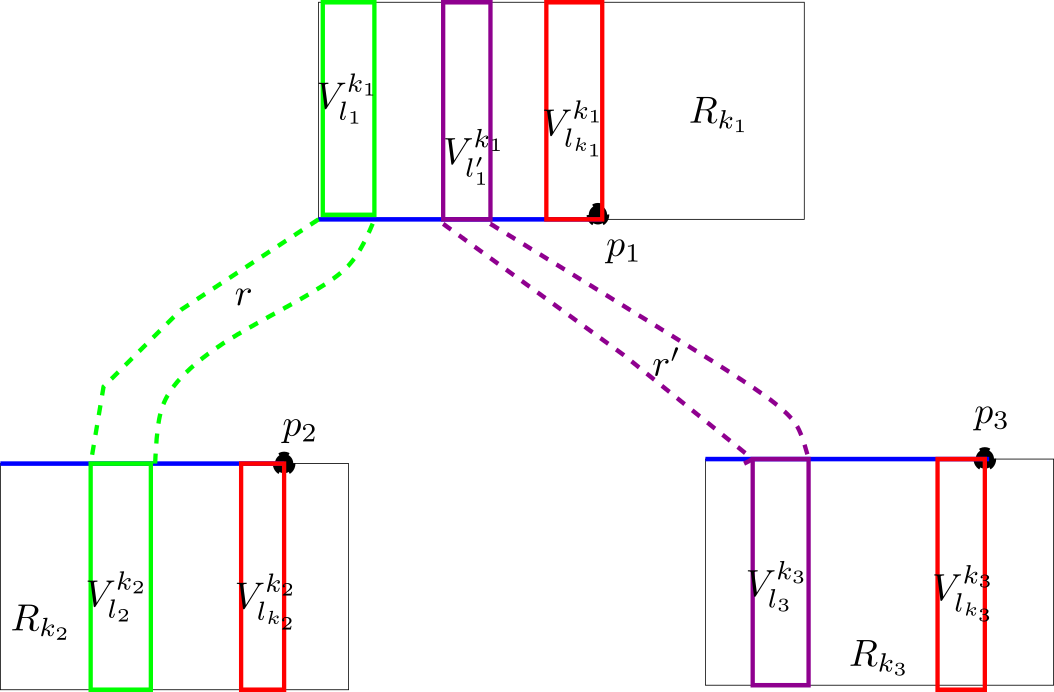}
	\caption{Type $(3)$ combinatorial condition}
	\label{Fig: Type three proof}
\end{figure}

\begin{defi}\label{Defi: Type 3 combinatoric}
A geometric type $T$ satisfies the \emph{combinatorial condition} of type-$(3)$ if there exist indices  described as follows:
\begin{itemize}
\item[i)]  There are indexes $(k_1,j_1)\neq (k_2,j_2)$, and $(k_3,j_3) \in \cH(T)$  were $j_{\sigma}=1$ or $j_{\sigma}=h_{\sigma}$ for $\sigma=k_1,k_2,k_3$, that satisfy:
\begin{eqnarray}\label{Equ: periodic boundary}
\Phi(\sigma,j_{\sigma})=(\sigma,l_{\sigma},\epsilon(\sigma,j_{\sigma}) =1),
\end{eqnarray}

where $l_{\sigma}\in \{1,\cdots,v_{\sigma}\}$. We don't exclude the possibility that $(k_3,j_3)$ is equal to $(k_1,j_1)$ or $(k_2,j_2)$.

\item[ii)] A couple of numbers $l_{1},l_{1}'\in \{1,\cdots, v_{k_1}\}$  such that, either 
$$
l_1<l_1'<l_{k_1} \text{ or } l_{k_1}<l_1<l_1'.
$$

\item[iii)] For  the pair $(k_2,j_2)$, there is a number $l_{2}\in \{1,\cdots, v_{k_2}\}$  with $l_{2}\neq l_{k_2}$.

\item[iv)] There are three possible situations for the pair $(k_3,j_3)$ depending if it is  equal or not to some other pair in item $i)$:

\begin{itemize}
\item[1)] If  $(k_1,j_1)\neq(k_3,j_3) \neq (k_2,j_2) $, there is a number $l_{2} \in \{1,\cdots, v_{k_2}\}$ different form $l_{k_2}$ and a number $l_3\in \{1,\cdots, v_{k_3}\}$ different from $l_{k_3}$.

\item[2)] In case that $(k_2,j_2) = (k_3,j_3)$. There is a numbers $l_3\in \{1,\cdots, v_{k_2}=v_{k_3}\}$, such that: $l_2<l_{k_2}=l_{k_3}<l_3$ or $l_3<l_{k_3}=l_{k_2}<l_2$. 

\item[3)] In case that $(k_1,j_1) = (k_3,j_3)$. There is a number $l_3\in \{1,\cdots, v_{k_1}=v_{k_3}\}$, such that: $l_1<l_1'<l_{k_1}=l_{k_3}<l_3$ or $l_3<l_{k_3}=l_{k_1}<l_1<l_1'$ depending on the situation of item $ii)$. 
\end{itemize}
 
\item[v)] There is a pair of pairs $(i_r,j_r),(i_{r'},j_{r'})\in \cH(T)$ such that $(i_r,j_r+1),(i_{r'},j_{r'}+1)\in \cH(T)$
\end{itemize} 
All of them need to satisfy the next equations:

\begin{eqnarray}
\rho(i_r,j_r)=(k_1,l_1) \text{ and } \rho(i_r,j_r+1)=(k_2,l_2), \text{ or }\\
\rho(i_2,j_2+1)=(k_1,l_1) \text{ and } \rho(i_r,j_r)=(k_2,l_2).
\end{eqnarray}
and at the same time:
\begin{eqnarray}
\rho(i_{r'},j_{r'})=(k_1,l_1') \text{ and } \rho(i_{r'},j_{r'}+1)=(k_3,l_3), \text{ or well }\\
\rho(i_{r'},j_{r'}+1)=(k_1,l_1') \text{ and } \rho(i_{r'},j_{r'})=(k_3,l_3).
\end{eqnarray}
A geometric type $T$ has the \emph{combinatorial obstruction} of type-$(3)$ if there exists an $m\in \NN$ such that $T^m$ satisfies the combinatorial condition of type-$(3)$.
\end{defi}

The distinction between the last combinatorial and topological conditions and obstructions lies in the fact that, for the type $3$ combinatorial condition, we need to consider some power of the geometric type to determine the periodic stable boundaries and their embrionary separatrices. This is why we formulated Lemma \ref{Lemm: Equiv com and top type 3} in terms of combinatorial and topological obstructions, rather than the combinatorial condition of type $3$ itself. In the future (see Lemma \ref{Lemm: T pA class then no condition 3}), we will prove that no geometric type in the pseudo-Anosov class has the combinatorial condition of type $3$. Therefore, its iterations won't have such a condition either. This will be sufficient to establish that any iteration of such a geometric type will don't have the combinatorial condition of type $3$, and the corresponding combinatorial and topological obstructions will not hold for $T$.

\begin{lemm}\label{Lemm: Equiv com and top type 3}
A geometric type have $T$ has the combinatorial obstruction of type $(3)$  if and only if the Markov partition $\cR$ has the topological obstruction of type-$(3)$.
\end{lemm}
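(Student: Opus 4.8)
The plan is to extend the combinatorial--topological dictionary already exploited in the proofs of Lemmas~\ref{Lemm: Equiv com and top type 1} and~\ref{Lemm: Equiv com and top type 2} so that it also accounts for the two new ingredients of Definition~\ref{Defi:  Type 3 obtruction geo}: periodic stable boundary components and their embryonic separatrices. The essential difference with types $(1)$ and $(2)$ is that the periodicity of a boundary component is not visible in $\cR_1$; it becomes visible only once one passes to a power $T^p$ whose exponent is a multiple of that period. This is exactly why the statement is phrased in terms of the combinatorial \emph{obstruction} (there is $m$ with $T^m$ satisfying the combinatorial condition of type~$(3)$) on one side and the topological \emph{obstruction} in some realizer $\cR_m$ on the other. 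Concretely, I would fix a realization $(\{R_i\}_{i=1}^n,\phi)$ of $T$, work inside the realizers $\cR_m$, and prove the following: there is an exponent $p$ for which $T^p$ satisfies the combinatorial condition of type~$(3)$ if and only if there is some $m$ for which the Markov partition $\cR_0$ exhibits the topological obstruction of type~$(3)$ in $\cR_m$.

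The first ingredient is the \emph{periodic-boundary dictionary}. A stable boundary component $A$ of a rectangle $R_\sigma$ satisfies $\phi^p(A)\subset A$ precisely when the extreme horizontal sub-rectangle of $R_\sigma$ adjacent to $A$ --- i.e.\ the index $(\sigma,j_\sigma)\in\cH(T^p)$ with $j_\sigma=1$ or $j_\sigma=h_\sigma$ according as $A$ is the lower or the upper side --- is carried by $\Phi_{T^p}$ to the extreme vertical sub-rectangle $V^\sigma_{l_\sigma}$ of the \emph{same} rectangle $R_\sigma$, with sign $\epsilon_{T^p}(\sigma,j_\sigma)=1$; this is exactly condition~\eqref{Equ: periodic boundary} read in $T^p$, the sign $+1$ being what forces $\phi^p$ to preserve, rather than exchange, the two horizontal sides of $R_\sigma$. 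The second ingredient is the \emph{embryonic-separatrix dictionary}: for such a $p$-periodic $A$, the embryonic separatrices of $A$ (Definition~\ref{Defi: p-peridic embrinary}) are the connected components of $A\setminus\pi^{kp}(A)$, $k>0$; read in $T^p$, the components of $A\setminus\phi^p(A)$ correspond to the vertical labels $l\in\{1,\dots,v_\sigma\}$ with $l\neq l_\sigma$ lying on a fixed side of $l_\sigma$, and the higher iterates $\phi^{kp}$ only refine these intervals. Hence, after raising the power, ``$S$ and $S'$ are distinct embryonic separatrices of $A$'' is detected by the relevant vertical labels lying on a prescribed side of $l_\sigma$ and being separated from one another, which is precisely what the inequalities of items~$ii)$ and~$iv)$ encode: that two ribbon-ends landing on a given periodic side fall into a common embryonic separatrix, or into two distinct ones, as the configuration requires.

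With these dictionaries I would then rerun the bookkeeping of the proofs of Lemmas~\ref{Lemm: Equiv com and top type 1} and~\ref{Lemm: Equiv com and top type 2}, now applied to $T^p$ and following Figure~\ref{Fig: Type three proof}. A pair of consecutive indices $(i_r,j_r),(i_r,j_r+1)\in\cH(T^p)$ encodes a ribbon $r$ of $\cR_m$ --- the image under $\pi$ of the horizontal stripe $\tilde{H^{i_r}_{j_r}}$, in the sense of Proposition~\ref{Prop: properties ribbons} --- and the values of $\rho_{T^p}$ and $\epsilon_{T^p}$ on those two indices say in which vertical sub-rectangles, and on which of their two horizontal boundaries, the two ends of $r$ lie. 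The four displayed equations at the end of Definition~\ref{Defi: Type 3 combinatoric} then say precisely that $r$ has one end on the embryonic separatrix $S_1\subset A_{k_1}$ and the other on $S_2\subset A_{k_2}$, while $r'$ has one end on $S_1$ --- the same boundary component $A_{k_1}$, and by item~$ii)$ the same component of $A_{k_1}\setminus\pi^{kp}(A_{k_1})$ --- and the other on $S_3\subset A_{k_3}$. The three alternatives of item~$iv)$ are the three cases of Definition~\ref{Defi:  Type 3 obtruction geo} according to which of the periodic sides carrying $S_1,S_2,S_3$ coincide, and item~$i)$, through~\eqref{Equ: periodic boundary}, guarantees that all three underlying boundary components are genuinely periodic. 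From this one reads off that, for $m$ large enough in terms of $p$, the combinatorial condition holding for $T^p$ forces the type-$(3)$ configuration in $\cR_m$, and conversely any type-$(3)$ configuration in $\cR_m$ arises from consecutive-index data of some $T^p$ with $p$ a common multiple of the periods involved; quantifying existentially over $p$ and $m$ then gives the asserted equivalence of obstructions.

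The step I expect to be the main obstacle is the embryonic-separatrix accounting: making rigorous that the connected components of $A\setminus\pi^{kp}(A)$ are correctly enumerated and ordered by the vertical-label data of $T^p$, and that the \emph{distinctness} of $S_1$, $S_2$, $S_3$ corresponds exactly to the rather intricate system of inequalities spread over items~$ii)$--$iv)$, together with all the sub-cases depending on whether these separatrices sit on the same periodic side or on different ones. A secondary point, needed for the ``only if'' direction, is to bound --- in terms of the finitely many sub-rectangles of $T^p$ --- the generation $m$ required in order to see both ribbons and the three embryonic separatrices simultaneously inside $\cR_m$, so that the topological obstruction really does materialize in some realizer.
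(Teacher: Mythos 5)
Your plan is essentially the paper's own proof: the same dictionary (item $i)$ with $\epsilon=+1$ into the same rectangle $\Leftrightarrow$ a periodic stable side, the inequalities of items $ii)$--$iv)$ $\Leftrightarrow$ landing on the same or on distinct embryonic separatrices via position relative to $l_\sigma$, consecutive indices in item $v)$ $\Leftrightarrow$ the two ribbons), and the same converse step of passing to a power $m$ that is a common multiple of the boundary periods and exceeds the ribbon generations so the data of $(\cR,\phi^m)$ yields the combinatorial condition for $T^m$. The case analysis you defer as the "main obstacle" is exactly the bookkeeping the paper carries out, so no new idea is missing.
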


\begin{proof}

Assume that $T'$ has the combinatorial obstruction of type $(3)$, meaning there exists an $m \geq 1$ such that for $T := T'^{m}$, and the combinatorial condition of type $(3)$ is satisfied for $T$. We will proceed with our analysis using $T$.

The condition in Item $(i)$ regarding the pairs of indices $(k_1, j_1) \neq (k_2, j_2)$ and  $(k_3, j_3) \in \cH(T)$ implies that the rectangle $R_{\sigma}$ has a fixed stable boundary and a periodic point on it. Let's denote the stable boundaries as $A_{k_1}$, $A_{k_2}$, and $A_{k_3}$. Item $(i)$ indicates that $A_{k_1} \neq A_{k_2}$, but it is possible that $A_{k_3}$ could be equal to one of the other two.

To clarify, let's consider a specific case to fix the ideas. Let's assume that $A_{k_1}$ is the lower boundary of $R_{k_1}$ and $A_{k_2}$ is the upper boundary of $R_{k_2}$. If $A_{k_3}$ is different from the other two stable boundaries, we'll assume that $A_{k_3}$ is the upper boundary of $R_{k_3}$. Note that the other cases are either symmetric or can be entirely determined based on these conventions.

The periodic point $p_1$ that lies on the stable boundary $A_{k_1}$ is contained in the lower boundary of the vertical sub-rectangle $V^{k_1}_{l_{k_1}}$. Then, the conditions $l_1<l_1'<l_{k_1}$ or $l_{k_1}<l_1<l_1'$ imply that the lower boundary of the rectangles $V^{k_1}_{l_1}$ and $V^{k_2}_{l_1'}$ lies on the same embryonic separatrice of $A_{k_1}$. Let's denote this embrionary separatrice as $S_1$. Similarly, the upper boundary of $V^{k_2}_{l_2}$ lies on an embrionary separatrice $S_2$ within $A_{k_2}$. Depending on the situation described in item $iv)$, we can have the following cases:

\begin{itemize}
\item[i)] The periodic point $p_3$ is different from $p_1$ and $p_2$, and the horizontal sub-rectangle $V^{k_3}_{l_3}$ has its upper boundary in an embrionary separatrice $S_{3}$.

\item[ii)] In this case, $p_3=p_2$, but the condition $l_2<l_{k_2}=l_{k_3}<l_3$ or $l_3<l_{k_3}=l_{k_2}<l_2$ implies that the upper boundary of $V^{k_3}_{l_3}$ is in an embrionary  separatrice $S_3$, distinct from $S_2$.

\item[iii)]  In this case, $p_1=p_2$, and the condition $l_1<l_1'<l_{k_1}=l_{k_3}<l_3$ or $l_3<l_{k_3}=l_{k_1}<l_1<l_1'$ implies that the \emph{inferior boundary} of $V^{k_3}_{l_3}$ is in an embryonic separatrice $S_3$ different from $S_1$.
\end{itemize}

The conclusion is that $S_1\neq S_2\neq S_3$. Finally, item $(4)$ along with the conditions about the indexes $(i_r,j_r)$ and $(i_{r'},j_{r'})$ implies that there is a ribbon $r$ from $S_1$ to $S_2$ and another ribbon $r'$ from $S_1$ to $S_3$. Therefore, we have the type $(3)$ topological obstruction in $\cR_0$.

In the converse direction. Imagine that $\cR$ has the topological obstruction of type $(3)$. This means there are three or two different periodic stable boundaries of the Markov partition $A_{k_1}\neq A_{k_2}$ and $A_{k_3}$ with $A_{\sigma}\subset R_{\sigma}$ that contain different embrionary separatrices $S_1,S_2$ and $S_3$. Additionally, there is a ribbon $\underline{r}$ of generation $k$ that joins $S_1$ with $S_2$, and another ribbon $\underline{r}'$ of generation $k'$ that joins $S_1$ with $S_2$.

By taking a certain power of $T' = T^m$ with $m$ a multiple of the period of every stable boundary and greater than $k'$ and $k$, we consider the realization $(\cR,\phi^m)$ of $T$. We can assume that $A_{\sigma}\subset R_{\sigma}$ is a fixed stable boundary for $\phi^m$.
 
We are now in the setting of the combinatorial condition of type $(3)$ for $T$. The fact that the stable boundaries are fixed implies the existence of the indexes $(k_1,j_1)\neq (k_2,j_2)$, and $(k_3,j_3) \in \cH(T')$  in item $(1)$, and clearly, the fixed points are contained in $V^{\sigma}_{l_{\sigma}}$.

The ribbons $\underline{r}$ and $\underline{r}'$, contain ribbons of generation $m$, denoted as $r'$ and $r_2$, respectively. These ribbons joins certain vertical sub-rectangles of the realization of $T$,  $(\cR,\phi^m)$, $V^{k_1}_{l_1}\subset S_1$ with $V^{k_2}_{l_2}\subset S_2$, and $V^{k_1}_{l_1'}\subset S_1$ with $V^{k_3}_{l_3}\subset S_3$. These vertical sub-rectangles produce the indices in items $ii)$, $iii)$, and $iv)$.  The items $2)$ and $3)$ represent the situations where $S_1$ and $S_3$ (or $S_3$ and $S_2$) are in the same stable boundary of a rectangle.

Finally, the ribbons $r$ and $r'$ are determined by two consecutive horizontal sub-rectangles of $(\cR,\phi^m)$. This corresponds to the situation described in item $v)$, and they satisfy the rest of the properties as outlined in the argument.

\end{proof}

\subsubsection{The impasse property.}

Finally we proceed to formulate the impasse condition in terms of the geometric type.

\begin{defi}\label{Defi: Impasse combinatoric}
 Let $T$ be an abstract geometric type of finite genus. Then $T$ has the \emph{impasse property} if there exist $(i,j),(i,j+1)\in \cH(T)$ such that one of the following conditions holds:
 
\begin{eqnarray}
(\rho,\epsilon)(i,j)=(k,l,\epsilon(i,j)) \text{ and } (\rho,\epsilon)(i,j+1)=(k,l+1,-\epsilon(i,j)), \text{ or }\\
(\rho,\epsilon)(i,j)=(k,l+1,\epsilon(i,j)) \text{ and } (\rho,\epsilon)(i,j+1)=(k,l,-\epsilon(i,j)).
\end{eqnarray}
A geometric type $T$ has a \emph{combinatorial impasse} if there exists $m \in \mathbb{N}$ such that $T^m$ has the impasse property.
\end{defi}

Unlike the first three obstructions, where a ribbon is determined by two horizontal sub-rectangles and their respective indices $(i,j),(i,j+1)\in \mathcal{H}$, a topological impasse is defined in terms of a disjoint disk of $K$ and two arcs. Proving the equivalence between the topological and combinatorial formulations just given is a more subtle task, and we address it in the final part of this subsection.

\begin{theo}\label{Theo: Geometric and combinatoric are equivalent}
Let $T=(n,\{(h_i,v_i)\}_{i=1}^n,\Phi)$  be a geometric type of finite genus. Let $f: S \rightarrow S$ be a surface Smale diffeomorphism, and let $K$ be a saddle-type basic piece of $f$ that has a Markov partition of geometric type $T$. The following conditions are equivalent:

\begin{itemize}
\item[i)] The basic piece $K$ has a topological impasse.
\item[ii)] The geometric type $T^{2n+1}$ has the impasse property.
\item[iii)] The geometric type $T$ has a combinatorial impasse.
\end{itemize}
\end{theo}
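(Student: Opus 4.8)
The plan is to establish the cycle of implications $iii) \Rightarrow ii)$ trivially by definition, then prove $i) \Leftrightarrow ii)$ with an explicit control on the number of iterations, and finally $ii) \Rightarrow iii)$ which is immediate. The content is therefore concentrated in showing that a topological impasse on $K$ forces the impasse property on $T^{2n+1}$, and conversely that the impasse property on any power of $T$ produces a genuine topological impasse for $K$.

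\textbf{From a topological impasse to the combinatorial condition.} Suppose $K$ has a topological impasse bounding an open disk $\overset{o}{D}$, with $\partial D = \alpha^u \cup \alpha^s$ the union of a $u$-arc and an $s$-arc. By Remark~\ref{Rema: extremal arc}, both arcs are extremal, so $\alpha^u$ lies in the unstable boundary of some iterate $f^k(R_i)$ of a rectangle, and $\alpha^s$ lies in the stable boundary of some iterate $f^{-k'}(R_j)$. The idea is to realize the disk $D$, after applying a suitable iterate of $f$, inside a single rectangle of $\cR$: since $D$ is disjoint from $K$ and $\cR$ covers $S$, and since the iterates of the Markov partition refine $S$ along the laminations, some bounded power — here is where the bound $2n+1$ must be extracted — carries $D$ into the situation where its $u$-arc appears as the common unstable boundary of two vertical sub-rectangles $V^k_l$ and $V^k_{l+1}$ of the \emph{same} rectangle $R_k$ that are adjacent (nothing of $K$ between them, precisely because $D$ is an impasse), while these two sub-rectangles are the images under $\phi$ (for the realization of $T^{2n+1}$) of two consecutive horizontal sub-rectangles $H^i_j, H^i_{j+1}$. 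The fact that $\alpha^u$ is the full common boundary and that $D$ closes up into a disk forces the two images to be glued along that boundary with \emph{opposite} vertical orientations; this is exactly the sign condition $\epsilon(i,j+1) = -\epsilon(i,j)$ together with $\rho(i,j) = (k,l)$, $\rho(i,j+1) = (k,l+1)$ (or the symmetric version). To get the numeric bound $2n+1$, I would argue that an impasse involves at most $n$ rectangles on the "stable side" and $n$ on the "unstable side" of the arcs — each stable boundary component of $\cR$ is periodic (Lemma~\ref{Lemm: Boundary of Markov partition is periodic}) and there are at most $n$ of them up to the relevant iteration — so taking the product $T^{2n+1}$ (an odd power strictly exceeding $2n$) simultaneously makes the stable boundary containing $\alpha^s$ and the unstable boundary containing $\alpha^u$ fixed, while keeping the mixing/combinatorial structure intact.

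\textbf{From the combinatorial condition back to an impasse.} Conversely, assume $T^m$ has the impasse property for some $m$, witnessed by $(i,j),(i,j+1) \in \cH(T^m)$ with $(\rho,\epsilon)(i,j) = (k,l,\epsilon)$ and $(\rho,\epsilon)(i,j+1) = (k,l+1,-\epsilon)$. Passing to the realization $(\cR, \phi^m)$ of $T^m$ inside the domain $\Delta(K)$ (Theorem~\ref{Theo: finite genus iff realizable} guarantees $T$ is realizable as a basic piece since it has finite genus), the horizontal stripe $\tilde H^i_j$ between $H^i_j$ and $H^i_{j+1}$ has image under $\phi^m$ a ribbon whose two horizontal boundaries lie in the common unstable boundary arc of $V^k_l$ and $V^k_{l+1}$ — the opposite signs mean the ribbon closes back onto the \emph{same} separatrix of a periodic point. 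This ribbon, together with the stable arc joining its two endpoints inside $R_k$, bounds a disk; since $\tilde H^i_j$ meets $K$ only along $H^i_j \cup H^i_{j+1}$ and the two images are adjacent ($l$ and $l+1$ consecutive), the interior of this disk is disjoint from $K$, and its boundary is a $u$-arc (a subarc of the unstable boundary, hence extremal) union an $s$-arc. One must still check the interior of the $s$-arc misses $K$: this follows because there is no $(1)$-type obstruction (the absence of which is implied by finite genus via Theorem~\ref{Theo: finite type iff non-obtruction}), so no further ribbon can intrude between the two horizontal boundaries, and a limiting argument — as in the commented-out proof of "Impasse iff ribbon in same separatrice" — shows the nested ribbons terminate, yielding a genuine topological impasse for $K$.

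\textbf{Main obstacle.} The delicate point is the explicit bound $2n+1$ in $ii)$: the abstract equivalence "impasse $\Leftrightarrow$ some power of $T$ has the impasse property" ($i) \Leftrightarrow iii)$) is comparatively soft, but pinning the power to $2n+1$ requires a careful accounting of how many iterations are needed to (a) make both the stable boundary of $\alpha^s$ and the unstable boundary of $\alpha^u$ fixed under the appropriate iterate, and (b) ensure the arcs $\alpha^s, \alpha^u$ actually appear as boundary arcs of sub-rectangles of the iterated partition rather than straddling several of them. I expect to combine the periodicity of boundary components (Lemma~\ref{Lemm: Boundary of Markov partition is periodic}), which bounds periods by the number $n$ of rectangles on each side, with the observation that an impasse arc is extremal and hence already a boundary arc after a single "blow-up" iteration, so that $2n$ iterations synchronize both periodic sides and one extra odd iteration settles orientation bookkeeping. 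This mirrors the role of $6n$ in Theorem~\ref{Theo: finite type iff non-obtruction}, and I would present the counting as a short lemma preceding the proof proper.
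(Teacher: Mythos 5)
Your overall architecture (i)$\Rightarrow$(ii)$\Rightarrow$(iii)$\Rightarrow$(i) matches the paper, but two of your key steps have genuine gaps, one in each nontrivial direction.

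For (iii)$\Rightarrow$(i), you assert that the ribbon together with the stable arc ``bounds a disk'' and move on. That is precisely the hard point: $\gamma=\alpha\cup\beta$ is a simple closed curve made of an $s$-arc and a $u$-arc on a surface of positive genus, and nothing you have said forces it to be null-homotopic. The paper's proof spends most of its effort here: if $\gamma$ were essential, suitable iterates $f^{mk}(\gamma)$ would be pairwise disjoint, and a separate annulus argument (Lemma~\ref{Lemma: gamma not homotopic imagen}, using density of non-free separatrices of $K$) shows $\gamma$ is not homotopic to $f^{2mk}(\gamma)$; one then gets infinitely many disjoint, pairwise non-isotopic simple closed curves, contradicting the finite genus of $S$. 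Your appeal to the absence of the type-$(1)$ obstruction and a ``limiting argument'' on nested ribbons neither replaces this null-homotopy step nor cleanly establishes that the interior of the $s$-arc (and of the disk) misses $K$; the paper does the latter with a short separatrix-density argument (Corollary after Lemma~\ref{Lemma: gamma not homotopic imagen}), not with obstruction $(1)$.

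For (i)$\Rightarrow$(ii), your extraction of the bound $2n+1$ does not work as stated. The claim that taking an odd power ``strictly exceeding $2n$ simultaneously makes the stable and unstable boundaries fixed'' is false: an odd power does not fix a periodic boundary component of arbitrary period, and no fixedness is needed. The paper's mechanism is a normalization of the impasse (Lemma~\ref{Lemm: well suit impasse}): replace $D$ by an iterate so that the $u$-arc $\beta$ lies in $\overset{o}{\partial^u\cR}$ and $f(\beta)$ has just left the unstable boundary; then the endpoints $f(k_1),f(k_2)$ must lie in $\partial^s\cR$, and the pigeonhole principle applied to the $2n$ stable boundary components shows $f^{2n+1}(\alpha)\subset\overset{o}{\partial^s\cR}$ on a single periodic stable side. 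Only after this normalization does $\beta$ join two consecutive horizontal sub-rectangles of $(\cR,f^{2n+1})$ with images two consecutive vertical sub-rectangles of opposite vertical orientation, which is the impasse property of $T^{2n+1}$. Your sketch captures the intended end state but is missing this normalization, which is where both the structure ($\beta$ as the gap between consecutive sub-rectangles) and the number $2n+1$ ($1$ iteration to pass from the unstable to the stable boundary, plus $2n$ from the pigeonhole) actually come from.
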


\begin{proof}
 \textbf{ i) implies ii):} Let  $\cR=\{R_i\}_{i=1}^n$ be a Markov partition of $K$ of geometric type $T$, and let $\overset{o}{D}$ be a topological impasse with $\alpha$ as the $s$-arc and $\beta$ as the $u$-arc, whose union forms the boundary of $D$.

 \begin{lemm}\label{Lemm: impasse disjoin int Markov partition}
The impasse is disjoint from the interior of the Markov partition, i.e.
$$
\overset{o}{D}\cap \cup_{i=1}^n \overset{o}{R_i}=\emptyset.
$$
Moreover, the intersection $\{k_1,k_2\}=\alpha\cap \beta$ is not contained in the interior of the Markov partition.
 \end{lemm}
 
\begin{proof}
Suppose $\overset{o}{D}\cap \overset{o}{R_i}\neq \emptyset$. Let $\{k_1,k_2\}:=\alpha\cap \beta$ be the endpoints of the $s,u$-arcs  on the boundary of $D$. Let us take a point $x$ in $\overset{o}{D}\cap \overset{o}{R_i}$, since $x$ is not a hyperbolic point and have a open neighborhood $U\subset \overset{o}{R_i}$ disjoint of $K$, there must exist a sub-rectangle  $Q\subset \overset{o}{R_i}$  containing $x$ in its interior, whose interior is disjoint from $K$, whose horizontal boundary consists of two disjoint intervals $I,I'$ of $W^s(K)$ and whose vertical boundary consists of two disjoint intervals $J, J'$´ of $W^u(K)$ (See Figure \ref{Fig: Rec Q} ). 
\begin{figure}[h]
	\centering
	\includegraphics[width=0.6\textwidth]{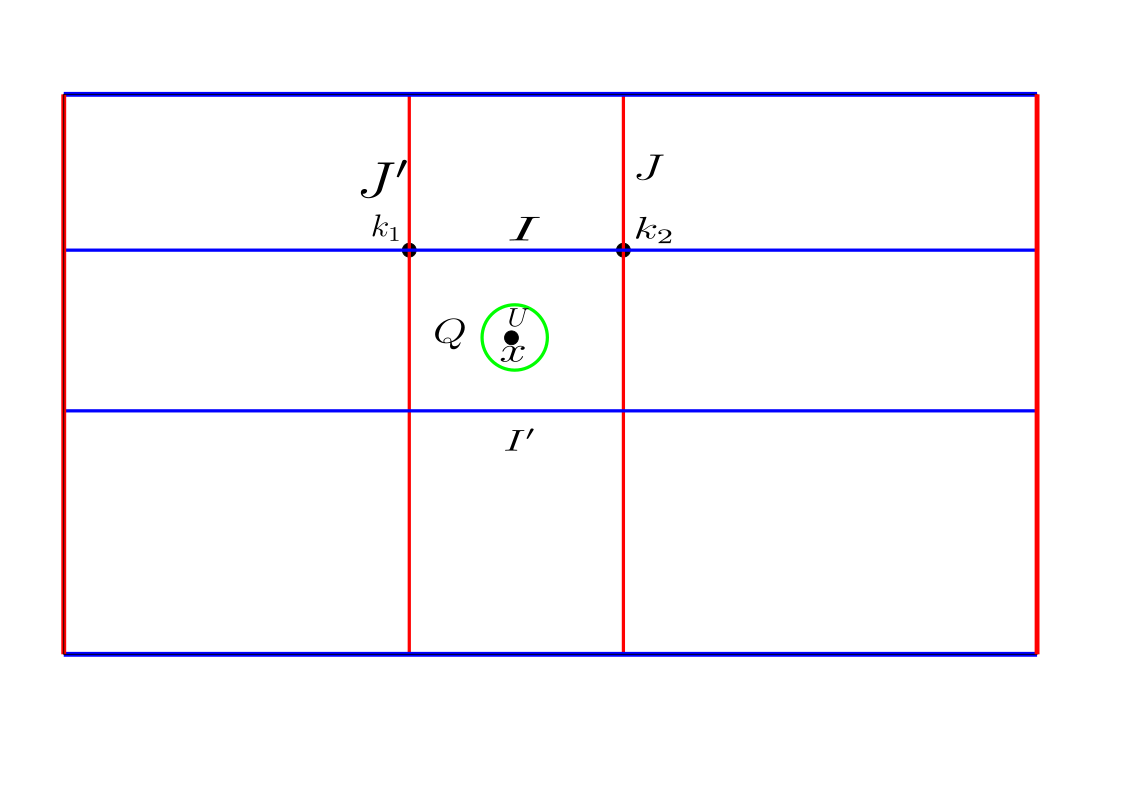}
	\caption{The rectangle $Q$}
	\label{Fig: Rec Q}
\end{figure}

Even more, as $\overset{o}{D}$ is disjoint from $K$ and  $k_1\in K$  is in the closure of $\overset{o}{D}$, the point $k_1$ is a corner point of the rectangle $Q$ and hence $k_1$ is in $\overset{o}{R_i}$.

By similar arguments, we can conclude that $k_2$ is in $\overset{o}{R_i}$. However, this leads to a contradiction, since there would be a horizontal segment $I$ of $R_i$ containing $\alpha$ and a vertical segment $J$ of $R_i$ containing $\beta$ which intersect at two points in the interior of $R_i$ (namely $k_1$ and $k_2$). This is not possible because $R_i$ is an embedded rectangle. Therefore, $\overset{o}{D}$ is disjoint from the interior of $R_i$.

It is evident that if $k_1$ or $k_2$ is in the interior of $R_i$, then $\overset{o}{D}\cap \overset{o}{R_i}\neq \emptyset$, and we appeal to the previous argument to reach a contradiction.

\end{proof}

If $\overset{o}{D}$ is an impasse, then $f^z(\overset{o}{D})$ is also an impasse for all integers $z \in \ZZ$. In the following lemma, we will construct a "well-suited" impasse that will make it easier for us to prove the first implication.

\begin{lemm}\label{Lemm: well suit impasse}
If $K$ has an impasse, then there exists another impasse $D'$ of $K$ such that the $s$-arc $\alpha'$ and the $u$-arc $\beta'$ on the boundary of $D'$ satisfy the following conditions:

\begin{itemize}
\item[i)]  $\beta'$ is in the interior of the unstable boundary of the Markov partition,  $\beta' \subset \overset{o}{\partial^u  \cR}$,
\item[ii)] $f(\beta')$ is not a subset of the unstable boundary of the Markov partition,  $\partial^u\overset{o}{\cR}= \overset{o}{\partial^u  \cR}$.
\item[iii)] $f^{2n+1}(\alpha)\subset \overset{o}{\partial^s \cR}$, where $n$ is the number of rectangles in the Markov partition.
\end{itemize}
\end{lemm}
 
 \begin{proof}
Let $D$ be an impasse of $K$ with $\alpha$ and $\beta$ as its boundary arcs. Let $\{k_1,k_2\}:=\alpha \cap \beta \subset K$ be the extreme points of these arcs. If $k_1$ is a periodic point, then there exists a positive iteration of $k_2$ that intersects $\alpha$ in its interior. However, this is not possible since the interior of $\alpha$ is disjoint from $K$. Therefore, we deduce that $k_1$ and $k_2$ are on the same stable separatrice of a periodic $s$-boundary point, and they are not periodic points themselves. Similarly, $k_1$ and $k_2$ are on the same unstable separatrice of a $u$-boundary point, but they are not periodic.

Since the negative orbit of $k_1$ converges to a periodic point $p_1$ on the stable boundary of the Markov partition, the negative orbit of $k_1$ approaches $p_1$ through points contained in the boundary of the Markov partition. Therefore, there exists $m \in \NN$ such that $f^{-m}(k_1)\in \overset{o}{\partial^u \cR}$.  We claim that if $f^{-m}(k_1)\in \overset{o}{\partial^u_{\epsilon}R_i}$, then $f^{-m}(k_2)$ is in the interior of the same unstable boundary component as $f^{-m}(k_1)$, i.e., $f^{-m}(k_2)\in \overset{o}{\partial^u_{\epsilon}R_i}$.

Indeed, since $f^{-m}(k_1)$ and $f^{-m}(k_2)$ are the boundary points of the $u$-arc $f^{-m}(\beta)$, there are no points of $K$ between them. The point $f^{-m}(k_1)$ lies in the interior of $\partial^u_{\epsilon} R_i$, which means that $f^{-m}(\beta)$ intersects the interior of this stable boundary component. This implies that $f^{-m}(k_2)$ is either an extreme point of $\partial^u_{\epsilon} R_i$ or lies in its interior, i.e., $f^{-m}(k_2) \in \overset{o}{\partial^u_{\epsilon}R_i}$.

If $f^{-m}(k_2)$ were an extreme point of $\partial^u_{\epsilon} R_i$, such an extreme point would not be surrounded by elements of $K$ on either side. This is because on one side we have the $u$-arc $f^{-m}(\beta)$, and on the other side it is on the unstable boundary of a rectangle in the Markov partition. However, this is not possible since $K$ has no double $s$-boundary points. Therefore, we conclude that $f^{-m}(k_2) \in \overset{o}{\partial^u_{\epsilon}R_i}$, as we claimed.

Now let us consider the impasse $D' = f^{-m}(D)$ instead of $D$. We will still use $\alpha$ and $\beta$ to denote the $s$-arc and $u$-arc on the boundary of $D$, but we assume that $\beta \subset \overset{o}{\partial^u \cR}$. Due to the uniform expansion along the unstable leaves, we know that there exists $m \in \NN$ such that $f^m(\beta) \subset \overset{o}{R_i}$, but $f^{m+1}(\beta)$  is no longer contain in  the unstable boundary of the Markov partition. Let us define $D' = f^{m}(D)$ with boundary arcs $\alpha'$ and $\beta'$. This impasse satisfies items $i)$ and $ii)$ of our lemma, since $\beta'$ is contained in the interior of $\overset{o}{\partial^u \cR}$ and $f(\beta')$ is not a subset of $\partial^u \overset{o}{\cR} = \overset{o}{\partial^u \cR}$.

In view of Lemma \ref{Lemm: impasse disjoin int Markov partition}, the points $f(k_1)$ and $f(k_2)$ are not in the interior of the Markov partition. We conclude that $f(k_1)$ and $f(k_2)$ belong to $\partial^s \cR$ because both are on the boundary of $\cR$ but not inside the unstable boundary. By the invariance of the stable boundary of $\cR$ under positive iterations of $f$ and the Pigeonhole principle applied to the $2n$ components of stable boundaries of the Markov partition, we know that $f^{2n}(f(k_1))$ and $f^{2n}(f(k_2))$ lie on periodic stable boundaries of the Markov partition. Moreover, $f^{2n}(f(k_1))$ and $f^{2n}(f(k_2))$ lie on the same stable separatrice, and on each separatrice, there is a single stable boundary component of $\cR$ that is periodic (even though they are all pre-periodic). This implies that $f^{2n}(f(k_1))$ and $f^{2n}(f(k_2))$ are in $\overset{o}{\partial^s_{\epsilon}R_j}$ for a single stable boundary component of the rectangle $R_j$. This proves item $iii)$ of our lemma.

\end{proof}

With the simplification of  Lemma \ref{Lemm: well suit impasse} we deduce that $\beta\subset \partial^u R_i$ and $f^{2n+1}(\alpha)\subset \partial^s R_k$. 

Let us consider the partition $\cR$ viewed as a Markov partition of $f^{2n+1}$. We claim that $\beta$ is an arc joining two consecutive sub-rectangles of $(\cR,f^{2n+1})$, denoted as $H^i_j$ and $H^{i}_{j+1}$. In effect, $\beta$ is a $u$-arc joining two consecutive rectangles or is properly contained in a single sub-rectangle $H$ of $(\cR,f^{2n+1})$. This is because the stable boundaries of $H$ are not isolated from $K$ within $H$, as would be the case if the ends of $\beta$ were on the stable boundary of $H$. Therefore, the only possibility is that $\beta$ is properly contained in a rectangle $H$. In this case, $f^{2n+1}(H)$ is a vertical sub-rectangle of $R_i$ containing $f^{2n+1}(\beta)$ as a proper interval. However, the endpoints of $f^{2n+1}(\alpha)$, which coincide with the endpoints of $f^{2n+1}(\beta)$, are not in the interior of the stable boundary of $R_k$, which contradicts the hypothesis. Thus, we conclude that $\beta$ joins two consecutive sub-rectangles.

Let $H^i_{j}$ and $H^i_{j+1}$ be the consecutive rectangles joined by $\beta$. Suppose that $f^{2n+1}(H^i_j)=V^k_{l}$ with the change of vertical orientation encoded by $\epsilon_{T^{2n+1}}(i,j)$ and $f^{2n+1}(H^i_{j+1})=V^k_{l'}$ with the change in the vertical orientation encoded by $\epsilon_{T^{2n+1}}(i,j+1)$. Between $V^k_{l}$ and $V^k_{l'}$ there are no points of $K$ because the $s$-arc $f^{2n+1}(\alpha)$ joins them in the horizontal direction, this implies that $l'\in \{l+1,l-1\}$. The vertical orientation of $\beta$ is the same as $H^i_j$ and $H^i_{j+1}$, furthermore $f^{2n+1}(\beta)$ joins the horizontal sides of the rectangles $V^k_{l}$ and $V^k_{l'}$ which are on the same stable boundary component of $R_k$,  this implies that the vertical orientation of $f(H^i_j)$ is the inverse of the orientation on $f(H^i_{j+1})$ within $R_k$ (to visualize this it suffices to follow the segment $f^{2n+1}(\beta)$ with a fixed orientation), thus $\epsilon_{T^{2n+1}}(i,j)=-\epsilon_{2n+1}(i,j')$.

The geometric type of the Markov partition $\cR$ for the map $f^{2n+1}$ is $T^{2n+1}$. Our construction implies that $T^{2n+1}$ has the impasse property. With this, we conclude the first implication of the theorem..

 \textbf{ ii) implies iii):} The geometric type $T^{2n+1}$ has the impasse property, which implies that $T$ has a combinatorial impasse.

\textbf{ iii) implies i):} Suppose $T^m$ has the impasse property for $m=2n+1$. Let $H^i_j$ and $H^i_{j+1}$ be consecutive horizontal sub-rectangles in the Markov partition $\cR$ for the map $f^{m}$, as given by the impasse property. This condition implies that if $f^{m}(H^i_j)=V^k_{l}$ and $f^{m}(H^i_{j+1})=V^k_{l'}$, then $l$ and $l'$ are consecutive indexes and have inverse vertical orientations.

The stable segments of  the stable boundary of $R_k$ between $V^k_{l}$ and $V^k_{l'}$ do not contain elements of the maximal invariant set of $f^m\vert_{\cR }$, since they are consecutive sub-rectangles of the partition $\cR$ seen as a Markov partition of $f^m$. In fact they do not contain elements of $K$ since $K=f^m(K)$ and therefore those segments are $s$-arcs of $f$. Furthermore, the image by $f^{m}$ of the two arcs $u$ on the boundary  of $R_i$ that join $H^i_j$ and $H^i_{j+1}$ are $u$-arcs of $f$. Between the two pairs of arcs there is an $s$-arc $\alpha$ and a $u$-arc $\beta$ such that they intersect each other only at their ends. We claim that $\alpha$ and $\beta$ bound a disk $D$ whose  interior is disjoint from $K$.

Suppose that $\gamma:=\alpha \cup \beta$ does not bound a disk. This means that $\gamma$ is not homotopically trivial, and neither are its iterations $\{f^m(\gamma)\}_{m=1}^{\infty}$. In fact, since the stable segments of such curves lie in the stable manifold of a periodic point, there exists a $k\in \NN$ such that $\{f^{mk}(\gamma)\}_{m=1}^{\infty}$ is a set of disjoint curves. We will argue that two of these curves are not homotopic, leading to a contradiction with the fact that the surface $S$ where the basic piece is contained has finite genus.

\begin{lemm}\label{Lemma: gamma not homotopic imagen}
The curve $\gamma$ is not homotopic to any other curve $f^{2mk}(\gamma)$ for $k\in \mathbb{N}$.
\end{lemm}

\begin{figure}[h]
	\centering
	\includegraphics[width=0.4\textwidth]{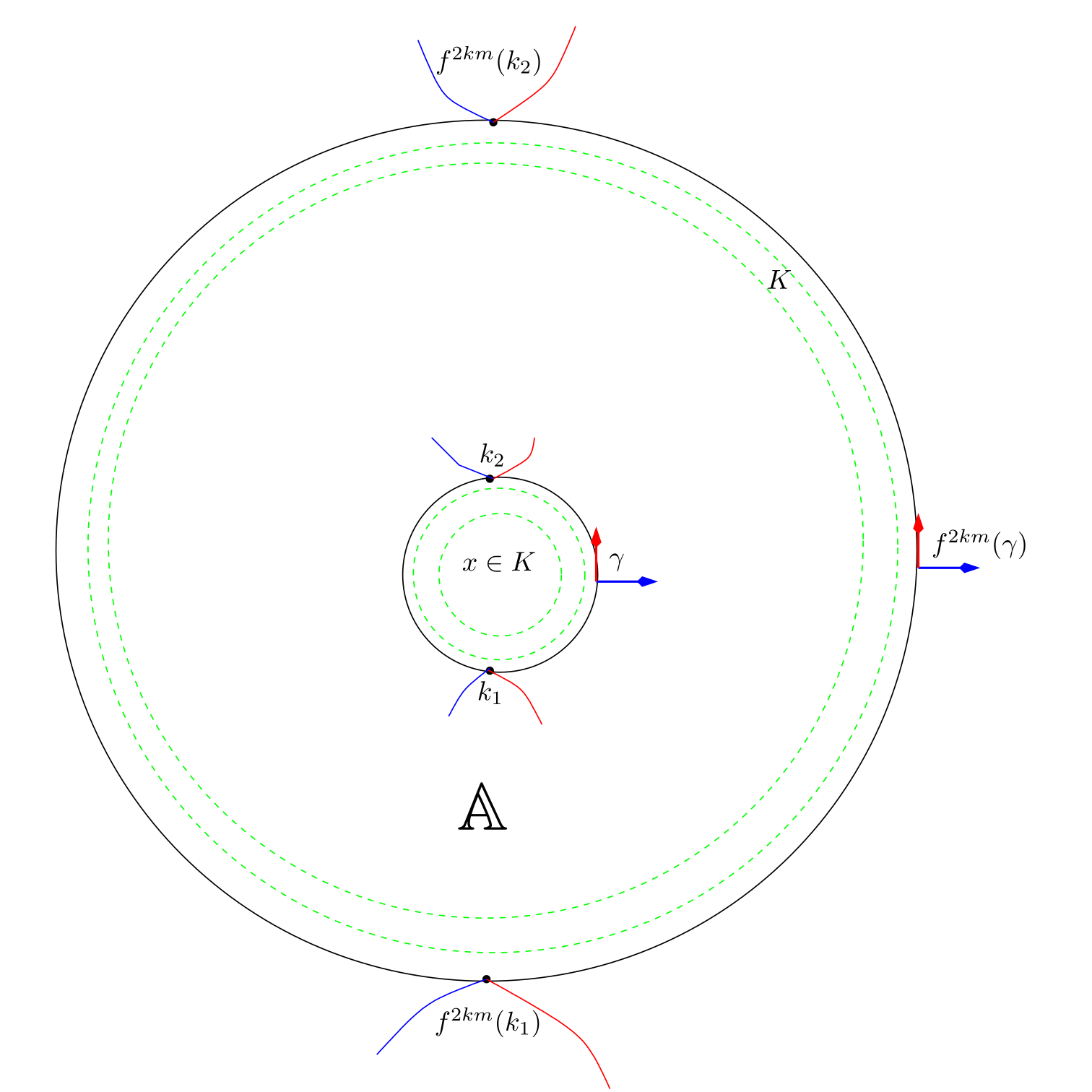}
	\caption{ The Annulus}
	\label{Fig: The annulus}
\end{figure}

\begin{proof}
Suppose $\gamma$ and $f^{m}(\gamma)$ are homotopic, which means they bound a topological annulus $\AA$ in $S$. 

Then we can take small enough sub-annulus $\AA'$ of $\AA$, which has $\gamma$ as one of its boundary components and  is send to another annulus $f^m(\AA')$ that have $f^m(\gamma)$  as a boundary component. 

 We have the following situations:
\begin{itemize}
\item[i)]  $f^m(\overset{o}{\AA'}) \cap \overset{o}{R}\neq \emptyset$ or
\item[ii)] $f^m(\overset{o}{\AA'}) \cap \overset{o}{R} = \emptyset$
\end{itemize}

Lets to understand the fist situation. Take $x\in \gamma$ and a small interval $J$ that have $x$ as and end point and pointing toward the interior of $\AA$, take a interval $I\subset \gamma$ that have $x$ as and end point and give to $I$ an orientation such that $J$ and $I$ generate a frame that is coherent with the orientation of $S$. While $f^m$ preserve the orientation of $S$ it could change the orientation of $I$ and $J$ at the same time, in such a manner that $f^m(J)$ is and interval pointing towards the interior of $\AA$, this is the mechanism behind the situation in item $i)$. Anyways by taking the annulus $\AA$ bounded by $\gamma$ and $f^{2m}(\gamma)$, we can take $\AA'\subset \AA$ small enough such that:
$$f^{2m}(\overset{o}{\AA'}) \cap \overset{o}{R} = \emptyset$$
Therefore, we can assume that $\gamma$ and $f^{2km}(\gamma)$ satisfy the property enunciated in item $ii)$.

Suppose that $\gamma$ is saturated by the hyperbolic set $K$ in the interior direction of $\AA$, and therefore, it is isolated from $K$ in the exterior direction of $\AA$. Then $f^{2km}(\gamma)$ is isolated from $K$ in the interior direction of $\AA$ and saturated by $K$ in the exterior direction of $\AA$. In particular, there exists $x\in K \cap (S \setminus \AA)$.
 
Since ${k_1, k_2}$ and ${f^m(k_1), f^{2km}(k_2)}$ are not periodic points, they do not have free separatrices. Moreover, $k_1$ and $k_2$ lie on the same separatrice. Without loss of generality, let's assume that $k_1$ has a stable separatrice $I$ that does not connect it to a periodic point. The separatrice $I$ cannot remain entirely inside $\AA$ because there exists a point $x \in K$ outside of $\AA$. Non-free separatrices of points $y \in K$ that do not connect $y$ to a periodic point are dense in $K$, so $I$ needs to approach $x$ arbitrarily closely. The boundary of $\AA$ is the union of four arcs, which means that the separatrice $I$ cannot intersect them in their interiors. Therefore, the possible scenarios are as follows:

 \begin{itemize}
\item The intersection point is $k_2$, which is impossible since it would imply that $I$ is inside a closed leaf of $W^s(K)$,
\item The other point is $f^{2km}(k_1)$ or $f^m(k_2)$. This is not possible because in that case, $I$ would need to be contained in the separatrice of $f^{2km}(k_1)$ (or $f^{2km}(k_2)$), which is not the same as the $s$-arc in $f^{2km}(\gamma)$. But this separatrice points towards the exterior of $\AA$
 \end{itemize}
 
So, $I$ couldn't intersect either $f^{2k¨m}(\gamma)$ or $\gamma$. We conclude that $I$ is a subset of $\AA$, which is a contradiction. This ends our proof.
 
\end{proof}

Then, if $\gamma$ is not null-homotopic, we could find an infinite amount of disjoint and non-isotopic curves (See \cite[Lemma 3.2]{juvan1996systems}), which is a contradiction with the finite genus of the surface $S$. It follows that $\gamma$ bounds a disk.

\begin{coro}\label{key}
	
	The disk bounded by $\gamma$ doesn't intersect $K$.
\end{coro}

\begin{proof}
If there exists $x \in K \cap \overset{o}{D}$, then $x$ has at least one non-free stable separatrix $I$. Such a separatrix is not confined within $D$ because there exist points in $K$ that are not in $D$ (they are those on the other side of the $s$-arc $\alpha$ on the boundary of $D)$. Additionally, $I$ is dense in $K$. Therefore, $I$ must intersect the $u$-arc $\beta$ on the boundary of $D$. However, the separatrice $I$ does not intersect the extreme points $\{k_1,k_2\}=\alpha \cap \beta$ because no local separatrices of them intersect the interior of $D$. This leads to a contradiction. Hence, $K \cap \overset{o}{D} = \emptyset$.
\end{proof}

This implies that $D$ is an impasse for $f$.

\end{proof}

\subsection{The genus and the impasse are detected in finite time.}\label{Sub-sec: algoritm finite genus}

\begin{prop}\label{Prop: mixing+genus+impase is algorithm}
	Given any abstract geometric type $T=(n,\{(h_i,v_i)\}_{i=1}^n,\Phi)$, there exists a finite algorithm that can determine whether $T$ satisfies the following properties:
	\begin{enumerate}
		\item The incidence matrix of $T$, $A(T)$ is mixing. 
		\item The genus of $T$ is finite.
		\item  $T$ exhibits an impasse.
	\end{enumerate}
	Furthermore, the number of iterations of $T$ required by the algorithm to determine these properties is upper bounded by $6n$.
\end{prop}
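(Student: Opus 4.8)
The plan is to run the algorithm in three successive stages, one for each item, since each property has already been given a purely combinatorial characterization earlier in this section and the later stages are only meaningful once the earlier ones succeed. In Stage~1 we decide whether $A(T)$ is mixing: this is a property of the directed graph $G_T$ whose adjacency matrix is $A(T)$, namely strong connectivity together with aperiodicity (the gcd of the lengths of the directed cycles of $G_T$ equals $1$), and both are decidable by finite graph algorithms that require no iteration of $T$ at all (alternatively, by Wielandt's classical bound one may inspect $A(T)^{k}$ for $1\le k\le (n-1)^2+1$). Moreover, if $A(T)$ is mixing then $T$ has no double $s$- or $u$-boundary in the sense of Definition~\ref{Defi: double boundary}: a double $s$-boundary yields a cycle of indices $i_1\to\cdots\to i_p\to i_1$ along which the corresponding rows of $A(T)$ each carry a single nonzero entry and $A(T)$ permutes the associated basis vectors cyclically, so no power of $A(T)$ is strictly positive. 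Hence, whenever Stage~1 succeeds, the standing hypothesis ``no double boundary'' of Theorems~\ref{Theo: finite type iff non-obtruction}, \ref{Theo: finite genus iff realizable} and \ref{Theo: Geometric and combinatoric are equivalent} is satisfied and the genus of $T$ is defined.

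Stage~2 decides whether $\mathrm{gen}(T)<\infty$. By Theorem~\ref{Theo: finite type iff non-obtruction} this holds if and only if the Markov partition $\cR_0$, viewed inside the $6n$-realizer $\cR_{6n}$, carries no topological obstruction of type $(1)$, $(2)$ or $(3)$. By Proposition~\ref{Prop: properties ribbons} every ribbon of $\cR_{6n}$ has a generation $k$ with $1\le k\le 6n$, and a $k$-generation ribbon is exactly the image under $\phi^{k}$ of the horizontal stripe between two consecutive horizontal sub-rectangles of $(\cR_0,\phi^{k})$; thus a ribbon is encoded by a pair of consecutive labels $(i,j),(i,j+1)\in\cH(T^{k})$ together with their images under $\rho_{T^{k}}$ and the signs $\epsilon_{T^{k}}$. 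Consequently the presence of a type-$(1)$ (resp.\ type-$(2)$, type-$(3)$) topological obstruction in $\cR_{6n}$ is, by Lemmas~\ref{Lemm: Equiv com and top type 1}, \ref{Lemm: Equiv com and top type 2} and \ref{Lemm: Equiv com and top type 3}, equivalent to $T^{k}$ satisfying the combinatorial condition of the corresponding type for some $k\le 6n$; for type $(3)$ the power needed to see the relevant periodic stable boundary components as fixed is at most the number $2n$ of boundary components of $\cR_0$, hence $\le 6n$, and is already absorbed in the bound supplied by Theorem~\ref{Theo: finite type iff non-obtruction}. The algorithm therefore computes $T,T^{2},\dots,T^{6n}$ by iterated composition of geometric types and, for each $k$, runs through the finitely many index tuples appearing in Definitions~\ref{Defi: Type 1 combinatoric}, \ref{Defi: Type 2 combinatoric} and \ref{Defi: Type 3 combinatoric} (their number is polynomial in $n$ and $\alpha(T^{k})$); it declares $\mathrm{gen}(T)<\infty$ precisely when none of these combinatorial conditions holds. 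This stage terminates and uses $6n$ iterations of $T$.

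Stage~3 decides whether $T$ exhibits a (topological) impasse. If Stages~1 and~2 succeeded, then by Theorem~\ref{Theo: finite genus iff realizable} the type $T$ is realized by a saddle-type basic piece $K$ of a Smale surface diffeomorphism, so Theorem~\ref{Theo: Geometric and combinatoric are equivalent} applies and shows that $K$ has a topological impasse exactly when $T^{2n+1}$ has the impasse property, which is in turn the explicit finite condition of Definition~\ref{Defi: Impasse combinatoric} on $\rho_{T^{2n+1}}$ and $\epsilon_{T^{2n+1}}$. Since $2n+1\le 6n$, the type $T^{2n+1}$ has already been produced in Stage~2, so Stage~3 consumes no further iterations. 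Altogether the procedure is finite, delivers the three answers in the order $(1)$, then $(2)$ assuming $(1)$, then $(3)$ assuming $(1)$ and $(2)$, and never iterates $T$ more than $6n$ times, which is the assertion.

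The main obstacle is essentially absent at this level, since all the mathematics is imported: the effectiveness of the algorithm is a consequence of the two hard a~priori bounds, Theorem~\ref{Theo: finite type iff non-obtruction} (finite genus is detected in the $6n$-realizer) and Theorem~\ref{Theo: Geometric and combinatoric are equivalent} (an impasse is detected in $T^{2n+1}$), both proved earlier. What needs care in the write-up is the bookkeeping of Stage~2: one must verify that the combinatorial conditions of Definitions~\ref{Defi: Type 1 combinatoric}--\ref{Defi: Type 3 combinatoric}, evaluated on the powers $T^{k}$ for $k\le 6n$, faithfully enumerate every topological obstruction that can occur in $\cR_{6n}$ --- in particular obstructions whose two ribbons have different generations, as well as the power-taking built into the type-$(3)$ condition --- so that ``no combinatorial obstruction up to level $6n$'' is genuinely equivalent to ``no topological obstruction in $\cR_{6n}$''.
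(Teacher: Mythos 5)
Your proposal is correct and, for the two substantive items, follows the same route as the paper: finite genus is decided by checking the combinatorial conditions of types $(1)$, $(2)$, $(3)$ on the powers $T^k$, $k\le 6n$, which by Lemmas~\ref{Lemm: Equiv com and top type 1}--\ref{Lemm: Equiv com and top type 3} and Theorem~\ref{Theo: finite type iff non-obtruction} exhausts all topological obstructions visible in $\cR_{6n}$, and the impasse is decided on $T^{2n+1}$ via Theorem~\ref{Theo: Geometric and combinatoric are equivalent}, with $2n+1\le 6n$ absorbing the bound. Where you genuinely diverge is the mixing test: the paper iterates $T$ up to $n$ times and claims that a non-negative $n\times n$ matrix is mixing if and only if $A^n$ is strictly positive, whereas you test primitivity directly on the directed graph of $A(T)$ (strong connectivity plus aperiodicity, or Wielandt's bound $(n-1)^2+1$ on matrix powers), using no iterations of the geometric type at all. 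Your variant is in fact the safer one: the paper's equivalence is false as stated (Wielandt's primitive matrix has exponent $(n-1)^2+1>n$ for $n\ge 3$, so a mixing matrix need not have $A^n>0$, and the paper's proof only produces paths of length at most $n$, not exactly $n$), while graph primitivity is decidable with no power bound issue and leaves the stated $6n$ bound on iterations of $T$ intact. You also make explicit a point the paper leaves implicit, namely that Theorem~\ref{Theo: Geometric and combinatoric are equivalent} presupposes a realization of $T$ as a basic piece, so the impasse test should only be invoked after the mixing and finite-genus checks (via Theorem~\ref{Theo: finite genus iff realizable}) guarantee realizability; this sequencing is a genuine improvement in rigor, though it does mean your algorithm, as written, answers item $(3)$ conditionally rather than for an arbitrary abstract type, which is consistent with how the proposition is used in Theorem~\ref{Theo: caracterization is algoritmic}.
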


\begin{proof}
	
	We have formulated specific conditions on the combinatorics of $T$ to determine whether $T^n$ has an impasse or one of the three obstructions to have finite genus. Furthermore, according to Theorem \ref{Theo: finite type iff non-obtruction}, determining whether or not $T$ has finite genus requires computing at most $T^{6n}$. To determine whether $T$ has an impasse, Theorem \ref{Theo: Geometric and combinatoric are equivalent} states that it is necessary to compute at most $T^{2n+1}$. Since $n\geq1$, it is clear that $2n+1<6n$, which provides the upper bound we claim in the proposition. Proposition \ref{Prop: algoritm iterations type} asserts that the computation of $T^{6n}$ is algorithmic, thus confirming the existence of an algorithm to determine the finite genus and the presence of an impasse for $T$.

	We have left the mixing criterion for last. The following lemma is the basis for the upper bound on the number of iterations to verify the mixing of the incidence matrix.
	
	\begin{prop}\label{Prop: bound positive incidence matriz }
	A matrix $A$  of size $n \times n$ with non-negative coefficients  is  mixing if and only if  every coefficient in $A^n$ is positive .
	\end{prop}
	
	\begin{proof}	
		If $A^n$ has only positive coefficients, since $A$ don't have negative coefficients, the matrix  $A^{n+1}$ has only positive coefficients and $A$ is mixing.
			
		Consider the directed graph $\mathcal{G}$ with $n$ vertices corresponding to the matrix $A$. In this graph, there is an edge pointing from vertex $v_i$ to vertex $v_j$ if and only if $a_{ij} > 0$ in the matrix $A$. Since $A$ is mixing, it means that there is always a path from any vertex $v_i$ to any other vertex $v_j$ in the graph $\mathcal{G}$.
		
		To prove our Lemma, it is sufficient to demonstrate that there exists a path of length less than or equal to $n$ between any two vertices in the graph.

		Consider a path $\gamma = [v_{i(1)}, \cdots, v_{i(m)}]$ from $v_i = v_{i(1)}$ to $v_j = v_{i(m)}$ in the graph $\mathcal{G}$. Let $\gamma$ have $m$ vertices and be of minimum length among all paths connecting these vertices.
		
		If any two vertices of the path $\gamma$ are equal, it implies the existence of a cycle within $\gamma$. We can eliminate this cycle and obtain a new path $\gamma'$ from $v_i$ to $v_j$ with a shorter length than $\gamma$. This contradicts the assumption that $\gamma$ has the minimum length. Hence, all vertices in $\gamma$ must be distinct.
		
		Therefore, the length of $\gamma$ is necessarily less than or equal to $n$, since there are only $n$ distinct vertices in the graph.
	\end{proof}

	If $T$ is in the pseudo-Anosov class, we have $A^n(T) = A(T^n)$. According to the previous Lemma, if $A(T)$ is a non-negative matrix, then $A(T^n)$ is positive definite. This implies that $A(T^n)$ is mixing and does not have double boundaries. Therefore, for a given geometric type $T$ with $A(T)$ being a non-negative matrix, the positivity of $A(T^n)$ is equivalent to its mixing property, which in turn implies the absence of double boundaries.
	
	\begin{coro}\label{Coro: algoritmic mixing}
		Given a geometric type $T$, if $A(T^n)$ is not positive definite, then $T$ is not in the pseudo-Anosov class.
	\end{coro}
	
	Clearly, we only need to iterate $T$ at most $n$ times, and the upper bound given in the proposition remains valid. 
	This concludes our proof.
\end{proof}

\section{ A total invariant of conjugacy.}\label{Sec: Total Invariant}

\subsection{The geometric type of a geometric Markov partition.}\label{Subsec: Geo type de particion geometrica}
 Let $f: S \rightarrow S$ be a \textbf{p-A} homeomorphism, and let $\cR=\{R_i\}_{i=1}^n$ be a geometric Markov partition of $f$. We are going to associate an abstract geometric type to $(f,\cR)$. For such purpose, let's recall that the horizontal and vertical sub-rectangles of $(f,\cR)$ were introduced at the end of Definition \ref{Defi: Markov partition}. Now we shall label them using the vertical and horizontal directions in each rectangle of $(f,\cR)$ and assign them a geometrization. 

\begin{conv}\label{Defi: Label and geometrization of subrectangles}
Let $(f,\mathcal{R})$ be a geometric Markov partition. If $h_i\geq 1$ is the number of horizontal sub-rectangles of $(f,\mathcal{R})$ contained in $R_i$, the horizontal sub-rectangles of $(f,\mathcal{R})$ contained in $R_i$ are labeled from the \emph{bottom to the top} as $\{H^i_j\}_{j=1}^{h_i}$ with respect to the vertical direction of $R_i$. Similarly, if $v_k\geq 1$ is the number of vertical sub-rectangles of $(f,\mathcal{R})$ contained in $R_k$, they are labeled from \emph{left to right} as $\{V_l^k\}_{l=1}^{v_k}$ with respect to the horizontal direction of $R_k$.

In each horizontal sub-rectangle $H^i_j\subset R_i$ or $V^k_l\subset R_k$, their  the horizontal and vertical foliation have the same orientation than  the rectangles $R_i$ and $R_k$ where they are contained.
\end{conv}

\begin{defi}\label{Defi: geometric type of a Markov partition}
Let $\mathcal{R}=\{R_i\}_{i=1}^n$ be a geometric Markov partition for $f$. The geometric type of $(f,\mathcal{R})$ is given by:
\begin{equation}
T(f,\mathcal{R}):=(n,\{(h_i,v_i)\}_{i=1}^n, \rho,\epsilon),
\end{equation}
where $n$ is the number of rectangles in the family $\mathcal{R}$, and $h_i$ and $v_i$ are the numbers of horizontal and vertical sub-rectangles of $(f,\mathcal{R})$ contained in $R_i$.  In this manner the sets of horizontal and vertical labels of $(f,\mathcal{R})$  are respectively denoted as $\mathcal{H}(f,\mathcal{R})$ and $\mathcal{V}(f,\mathcal{R})$. Finally, if $f(H^i_j)=V^k_l$, the map $\rho:\mathcal{H}(f,\mathcal{R}) \rightarrow \mathcal{V}(f,\mathcal{R})$ is given by $\rho(i,j)=(k,l)$, and $\epsilon(i,j)=1$ if $f$ sends the vertical direction of $H^i_j$ to the vertical direction of $V^k_l$, and $\epsilon(i,j)=-1$ otherwise.
\end{defi}

\begin{defi}\label{Defi: pseudo-Anosov class}
An abstract geometric type $T$ is realized by a \textbf{p-A} $f$ if $f$ has a geometric Markov partition $\mathcal{R}$ such that $T = T(f,\mathcal{R})$. We can also say that $(f,\mathcal{R})$ realizes $T$. The \emph{pseudo-Anosov class} of abstract geometric types $\mathcal{G}\mathcal{T}(\text{pA}) \subset \mathcal{G}\mathcal{T}$ is the set of all those abstract geometric types that are realized by a pseudo-Anosov homeomorphism.
\end{defi}

 Theorem \ref{Theo: conjugated iff  markov partition of same type} is the main result of the present article as the rest of the classification problem \ref{Prob: Clasification} have its foundation on it. 

\begin{theo}\label{Theo: conjugated iff  markov partition of same type}
Let $f:S_f\rightarrow S_f$ and $g:S_g \rightarrow S_g$ be two  pseudo-Anosov homeomorphisms. Then, $f$ and $g$ have a geometric Markov partition of the same geometric type if and only if there exists an orientation preserving homeomorphism between the surfaces $h:S_f\rightarrow S_g$ that conjugates them, i.e., $g=h\circ f\circ h^{-1}$.
\end{theo}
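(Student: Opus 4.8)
The plan is to prove the two implications separately, with the forward direction (same geometric type $\Rightarrow$ conjugate) being the substantial one. One direction is essentially trivial: if $h:S_f\to S_g$ is an orientation-preserving homeomorphism with $g=h\circ f\circ h^{-1}$, then by Proposition~\ref{Prop: pA closed up conjugation} the image $h(\cR_f)$ of a geometric Markov partition of $f$ is again a family of rectangles adapted to $g$ (the invariant foliations of $g$ are $h_*$ of those of $f$, and $h$ carries stable/unstable intervals to stable/unstable intervals). Since $h$ preserves orientation, it carries the chosen vertical directions of the rectangles of $\cR_f$ to coherent vertical directions on $h(\cR_f)$, and the Markov property together with the labelling is transported verbatim. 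Reading off the geometric type from Definition~\ref{Defi: geometric type of a Markov partition}, one gets $T(g,h(\cR_f))=T(f,\cR_f)$. So $f$ and $g$ have geometric Markov partitions of the same geometric type.

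For the forward direction, let $T$ be a geometric type realized by both $(f,\cR_f)$ and $(g,\cR_g)$. The strategy, as announced in the introduction, is to build a single symbolic model $(\Sigma_T,\sigma_T)$ from $T$ alone and show both $f$ and $g$ are conjugate to it by an orientation-preserving homeomorphism. First I would reduce to the case where the incidence matrix $A(T)$ is binary: if $A(T)$ has a coefficient $\geq 2$, pass to the Markov partition obtained by subdividing rectangles along the connected components of $f^{-1}(\overset{o}{R_j})\cap\overset{o}{R_i}$ (equivalently, work with a refined geometric type $T'$ whose incidence matrix is binary and which is still realized by both homeomorphisms after the parallel refinement on the $g$-side). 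Granting the binary case, invoke Proposition~\ref{Prop: The relation determines projections}: $T$ determines an equivalence relation $\sim_T$ on the subshift of finite type $\Sigma_{A(T)}$, defined purely combinatorially, such that for the projection $\pi_f:\Sigma_{A(T)}\to S_f$ induced by $(f,\cR_f)$ one has $\underline w\sim_T\underline v$ iff $\pi_f(\underline w)=\pi_f(\underline v)$; and the same statement holds for $\pi_g:\Sigma_{A(T)}\to S_g$ with the \emph{same} relation $\sim_T$ (this is the content of the Remark following that proposition, made precise: the relation depends only on $T$, not on the chosen realization). Then Proposition~\ref{Prop:  cociente T} gives that $\Sigma_T:=\Sigma_{A(T)}/\!\sim_T$ carries a generalized pseudo-Anosov homeomorphism $\sigma_T$ and that the induced map $[\pi_f]:\Sigma_T\to S_f$ is an orientation-preserving topological conjugacy between $\sigma_T$ and $f$; likewise $[\pi_g]:\Sigma_T\to S_g$ conjugates $\sigma_T$ and $g$. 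Composing, $h:=[\pi_g]\circ[\pi_f]^{-1}:S_f\to S_g$ is an orientation-preserving homeomorphism with $g=h\circ f\circ h^{-1}$, which is what we want.

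The main obstacle is to make rigorous the assertion that $\sim_T$ is genuinely the \emph{same} relation for $f$ and $g$ — that is, that the combinatorial definition of $\sim_T$ in Proposition~\ref{Prop: The relation determines projections} really uses only the data of $T$ (the numbers $h_i,v_i$, the permutation $\rho_T$ and the signs $\epsilon_T$) and not any auxiliary feature of the ambient surface or the particular Markov partition. Concretely, the semiconjugacy $\pi_f$ identifies two codes exactly when the corresponding nested sequences of sub-rectangles pin down the same point, and points get identified either because they lie on a shared stable or unstable boundary leaf or because they sit at a singularity where several sectors meet; each such identification must be recognizable from $T$ via the boundary-labelling combinatorics and the sector-counting of Section~2.4 (Lemmas~\ref{Lemm: Boundary of Markov partition is periodic} and~\ref{Lemm: sector contined unique rectangle}). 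Verifying that this bookkeeping is realization-independent, and that the quotient really is a closed orientable surface on which $\sigma_T$ is pseudo-Anosov with the expected foliations and stretch factor, is exactly what Propositions~\ref{Prop: The relation determines projections} and~\ref{Prop:  cociente T} are for; granting them, the theorem follows as above. A secondary point requiring care is orientation: one must check that the homeomorphisms $[\pi_f]$ and $[\pi_g]$ are orientation-preserving, which follows from the fact that the rectangles of a geometric Markov partition carry coherent transverse orientations and the projections are built to respect them.
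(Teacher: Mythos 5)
Your proposal takes essentially the same route as the paper: reduce to a binary incidence matrix by passing to the horizontal refinement, invoke Proposition~\ref{Prop: The relation determines projections} and Proposition~\ref{Prop:  cociente T} to obtain the symbolic model $(\Sigma_T,\sigma_T)$ conjugate to both $f$ and $g$ via the quotient projections, and set $h=[\pi_g]\circ[\pi_f]^{-1}$, with the converse direction handled by pushing the partition forward through the conjugacy. The only point you treat more lightly than the paper is the orientation-preservation of $h$, which the paper establishes by showing $h$ carries horizontal sub-rectangles to horizontal sub-rectangles and arguing coherence of the induced orientations rectangle by rectangle (this is also why it refines until $h_i,v_i>2$), but this is a matter of detail, not of approach.
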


This theorem will be a consequence of the following technical proposition. It's not strange that the major part of this section is devoted to its proof.

\begin{prop}\label{Prop: The relation determines projections}
Let $T\in \mathcal{G}\mathcal{T}(\text{pA})$ whose incidence matrix $A(T)$ is binary.  Let $(\Sigma_{A(T)}, \sigma_{A(T)})$ be the sub-shift associated with $T$ .  Then $T$ determines an equivalence relation $\sim_T$ on $\Sigma_{A(T)}$ with the following property: If $(f, \mathcal{R})$ realizes $T$ and $\pi_{(f,\mathcal{R})}:\Sigma_{A(T)} \to S$ is the induced projection (\ref{Defi: projection pi}), then any pair of codes $\underline{w}, \underline{v} \in \Sigma_{A(T)}$ are $\sim_T$-related if and only if $\pi_{(f,\mathcal{R})}(\underline{w}) = \pi_{(f,\mathcal{R})}(\underline{v})$.
\end{prop}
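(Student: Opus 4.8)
Write $\pi:=\pi_{(f,\mathcal R)}$, so that $\pi(\underline w)$ is the unique point $x$ with $f^z(x)\in R_{w_z}$ for every $z\in\ZZ$. Because the rectangles of $\mathcal R$ have pairwise disjoint interiors, a code is determined by its image as soon as the orbit of that image stays away from the boundary skeleton $\partial\mathcal R$; hence two distinct codes can share an image only over points whose orbit meets $\partial\mathcal R$, and these are exactly the points lying on a stable boundary, on an unstable boundary, at a corner, or at a singularity of $f$ (singular points sit at corners of rectangles, since no prong is interior to a rectangle). The approach is: (a) describe, using only the data $n,\{h_i,v_i\},\rho_T,\epsilon_T$, a finite family $\mathcal A$ of \emph{admissible pairs} of codes; (b) let $\sim_T$ be the equivalence relation it generates, which is decidable because every admissible pair involves only eventually periodic codes; (c) prove that for every realization $(f,\mathcal R)$ of $T$ the relation $\sim_T$ coincides with equality of $\pi$-images.

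\textbf{Detecting boundary and corner identifications from $T$.} The key structural input is Lemma~\ref{Lemm: Boundary of Markov partition is periodic}: every stable (resp. unstable) boundary component of $\mathcal R$ is carried by the stable (resp. unstable) leaf of a periodic point of $f$, so a point on $\partial^s\mathcal R$ has an eventually periodic forward itinerary and a point on $\partial^u\mathcal R$ an eventually periodic backward itinerary, and these periodic tails, together with $\rho_T$ and $\epsilon_T$, record which vertical (resp. horizontal) sub-rectangle — and with which orientation — the successive iterates land in. I would therefore place into $\mathcal A$: the \emph{stable pairs} $(\underline w,\underline v)$ with $w_z=v_z$ for $z<z_0$ and, from $z_0$ on, $\underline w$ and $\underline v$ following the two sides of one common stable boundary component with forward tails synchronized through $(\rho_T,\epsilon_T)$; the mirror \emph{unstable pairs}, using $(\rho_{T^{-1}},\epsilon_{T^{-1}})$ on backward tails; and the \emph{corner pairs}, obtained by composing one move of each kind at a point that is simultaneously a stable and an unstable boundary point. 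For each of these it is immediate on any realization that $\pi(\underline w)=\pi(\underline v)$, since the two points are pinned to the same leaf at positions forced to agree by the synchronization.

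\textbf{Singularities.} The delicate family consists of the pairs realizing the wrapping around a $k$-prong. Around such a periodic point $p$ there are $2k$ cyclically ordered sectors; each sector lies in a unique rectangle of $\mathcal R$ by Lemma~\ref{Lemm: sector contined unique rectangle}, $f$ permutes the sectors by Proposition~\ref{Prop: image secto is a sector}, and the codes over $p$ are precisely the codes that read these sectors along the orbit. The admissible pairs must identify the codes of sectors consecutive around $p$, so as to glue $k$ half-planes (a cyclic chain of quarter-planes) into the local picture at $p$; the point is that $k$ and the cyclic order of the sectors can be read off from the periodic code of $p$ and the adjacency relations of $T$ — essentially by counting how many sub-rectangle corners turn around the periodic orbit of $p$ inside $\mathcal R$ — so this family too is intrinsic to $T$. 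Granting this, one direction of (c) is the move-by-move check above extended to singular pairs; for the converse, if $\pi(\underline w)=\pi(\underline v)=x$ with $\underline w\ne\underline v$ then $x$ lies on $\partial\mathcal R$ or is a singularity, the fiber $\pi^{-1}(x)$ is finite and parametrized by the local rectangle/sector pattern at $x$, and any two of its codes are joined by a finite chain of these moves; hence $\underline w\sim_T\underline v$.

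\textbf{Main obstacle.} The real work is in the assertion just used: proving that the prong number and the cyclic sector order at every boundary-periodic point are determined by $T$ independently of the realization, and, throughout, carrying the $\epsilon_T$-signs consistently along orbits where the vertical orientation can be reversed. This is exactly what makes $\sim_T$ well defined ``in terms of $T$'' rather than in terms of $(f,\mathcal R)$, and it is the heart of the proof. Once it is established, Proposition~\ref{Prop:  cociente T} and Theorem~\ref{Theo: conjugated iff  markov partition of same type} follow at once, since two realizations of $T$ are then both topologically conjugate to $(\Sigma_{A(T)}/\!\sim_T,\sigma_T)$.
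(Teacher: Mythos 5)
There is a genuine gap, and it is the one you yourself flag: you make the identification of codes at singularities into a separate family of ``singular pairs'' whose definition requires reading the prong number and the cyclic order of sectors off the geometric type, and you leave that assertion unproved while calling it the heart of the argument. In fact that step is both unproved in your proposal and unnecessary. The paper never encodes singularities separately: it defines only two combinatorial moves, a stable identification $\sim_s$ (via the $s$-boundary labels, the generating function built from $\theta_T,\xi_T,\epsilon_T$, and an explicit adjacency-plus-sign condition on consecutive horizontal sub-rectangles $H^i_j,H^i_{j+1}$) and its mirror $\sim_u$, and then takes $\sim_T$ to be the relation generated by finite chains alternating $\sim_s$ and $\sim_u$. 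Since any two adjacent sectors at any point --- singular or not --- share either a stable or an unstable separatrix, walking around a $k$-prong is automatically a finite alternating chain of these moves, so the prong number never has to be determined from $T$; well-definedness ``in terms of $T$'' comes solely from the combinatorial definitions of $\sim_s$ and $\sim_u$, which your ``forward tails synchronized through $(\rho_T,\epsilon_T)$'' leaves too vague (the precise sign conditions are what make $\sim_s$ transitive and are needed to prove that the two identified codes land on the lower boundary of $H^i_{j+1}$ and the upper boundary of $H^i_j$, hence on the same stable segment).

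The second gap is in your converse direction: you assert that any two codes in a fiber $\pi^{-1}(x)$ ``are joined by a finite chain of these moves,'' but this is exactly where the substantive work lies. The paper first proves that every code over $x$ is a sector code, that the codes projecting to $\partial^s\mathcal{R}$ (resp.\ $\partial^u\mathcal{R}$) are precisely those whose positive (resp.\ negative) tails are the combinatorially generated boundary codes, and then runs a case analysis on how the orbit of $x$ meets the boundary (interior points; stable lamination only; unstable only; both, with the three timing cases $k(u)<k(s)$, $k(u)=k(s)$, $k(s)<k(u)$ where the orbit passes through corner points; and finally the periodic/singular case) to exhibit the explicit alternating chain between sector codes. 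Without this analysis --- or the characterization of boundary codes that it rests on --- the claimed equivalence between $\sim_T$ and equality of projections is not established.
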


%%Secciones

  \subsection{The sub-shift of finite type of a geometric type.}\label{Subsec: finite type shift Markov partition}

\subsubsection{The incidence matrix the sub-shift of a Markov partition.}
According to \cite[Exposition 10]{fathi2021thurston}, if $\cR$ is a Markov partition of a \textbf{p-A}  the incidence matrix of $(f,\cR)$, $A(f,\cR)$ $a_{ij}$ are $1$ if $f(\overset{o}{R_i}) \cap \overset{o}{R_k} \neq \emptyset$ and $0$ otherwise, without taking into account the number of intersections, information that is crucial for us, therefore we prefer adopt the following definition.

\begin{defi}\label{Defi: Incidence matrix markov partition}
Let $f$ be a \textbf{p-A} homeomorphism and let $\mathcal{R}$ be a Markov partition of $f$. The incidence matrix of $(f, \mathcal{R})$, denoted $A(f,\mathcal{R}) = (a_{i,k})$, has coefficient $a_{ik}$ equal to the number of horizontal sub-rectangles of $R_i$ that $f$ sends to vertical sub-rectangles of $R_k$.
\end{defi}

\begin{defi}\label{Defi: Incidence matrix of a type}
Let $T=\{n,\{(h_i,v_i)\}_{i=1}^n, \rho_T,\epsilon_T\}$ be an abstract geometric type. The  \emph{incidence matrix}  of $T$ is denoted by $A(T)$ and defined as the matrix $A(T) = (a_{ik})$ with coefficients given by:
$
a_{i,k}=\#\{j \in \{1,\cdots,h_i\}: \rho_T=(k,l)\}.
$
\end{defi}

The following lemma is almost and observation.
\begin{lemm}
Let $(f,\cR)$ be  geometric Markov partition then $A(f,\cR):=A(T(f,\cR))$.
\end{lemm}

A square matrix $A=(a_{ij})$ of size $n\times n$ is \emph{non-negative} if for all $1\leq i,j\leq n$, $a_{i,j}\in \mathbb{N}$, and it is \emph{positive defined} if for all $1\leq i,j\leq n$, $a_{i,j}\in \mathbb{N}_+$. If $n \in \mathbb{N}$, the notation for the coefficients of its $n$-th power is $A^n = (a_{i,j}^{(n)})$.

\begin{defi}\label{Defi: Mixing and binary}
Let $A$ be a non-negative matrix, which will be called \emph{mixing} if there exists an $N\in \mathbb{N}_+$ such that $A^N$ is positive defined. A matrix $A$ whose coefficients are either $0$ or $1$ is called \emph{binary}.
\end{defi}

\begin{rema}\label{Rema: Convencion sobre matriz binaria}
	
	Many arguments and definitions that we need to introduce to prove Theorem \ref{Theo: conjugated iff markov partition of same type} would be cumbersome or complicated if $A(T(f,\mathcal{R}))$ has coefficients greater than one. In general, $f(\overset{o}{R_i}) \cap \overset{o}{R_k}$ has more than one connected component, so $a_{i,k} > 1$, and the incidence matrix is not binary.   Anyway,  given a geometric Markov partition $(f,\mathcal{R})$, the set of horizontal sub-rectangles of $\mathcal{R}$, denoted $\mathcal{H}(\mathcal{R})$, is by itself a Markov partition of $f$. In \cite[Definition 3.3]{IntiThesis} (to appear in \cite{IntiII}), the family of horizontal sub-rectangles is endowed with the lexicographic order in their indexes $(i,j)$, and all the sub-rectangles have the same vertical orientation as the rectangle of $\mathcal{R}$ in which they are contained. The geometric Markov partition obtained in this manner is denoted $(f,\mathcal{H}(\mathcal{R}))$ and called \emph{the horizontal refinement} of $(f,\mathcal{R})$. Its geometric type is computed in \cite[Proposition 3.2.]{IntiThesis} (to appear in \cite{IntiII}), which establishes that $A(T,H(\mathcal{R}))$ is binary and justifies such a hypothesis in the rest of this section.
\end{rema}

The set of \emph{bi-infinite} sequences in $n$-digits is denoted $\Sigma = \prod_{\mathbb{Z}} \{1, \cdots, n\}$, and its elements are called \emph{codes}, denoted $\underline{w} = (w_z)_{z \in \mathbb{Z}}$. We endow $\Sigma$ with the topology induced by the metric: $d_{\Sigma}(\underline{x},\underline{y})= \sum_{z\in\mathbb{Z}} \frac{\delta(x_z,y_z)}{2^{|z|}}$, where $\delta(x_z, y_z) = 0$ if $x_z = y_z$ and $\delta(x_z, y_z) = 1$ otherwise. Such topology coincides with product topology and Tychonoff's theorem implies that $\Sigma$ is a compact space. The \emph{shift map} over $\Sigma$ is the homeomorphism $\sigma: \Sigma \rightarrow \Sigma$ given by $(\sigma(\underline{w}))_{z}= w_{z+1}$, for all $z\in \mathbb{Z}$ and any $\underline{w}\in \Sigma$. The \emph{full-shift} (in $n$ symbols) is the dynamical system $(\Sigma,\sigma)$. If $A \in \mathcal{M}{n \times n}({0,1})$ is a binary and mixing matrix, the set $\Sigma_A := {\underline{w} = (w_z){z \in \mathbb{Z}} \in \Sigma : \forall z \in \mathbb{Z}, (a_{w_z,w_{z+1}}) = 1 }$ is a compact and $\sigma$-invariant set (See \cite[Chapter 1]{KitchensSymDym}), and the following definition is meaningful.

\begin{defi}\label{Defi: Sub-shift of finite type}
Let $A \in \mathcal{M}_{n \times n}(\{0,1\})$ be  a binary and mixing matrix. The \emph{sub-shift of finite type} associated with $A$ is the dynamical system $(\Sigma_A, \sigma_A)$, where $\sigma_A := \sigma|_{\Sigma_{A}}$. 
\end{defi}

According to \cite[Lemma 10.21]{fathi2021thurston}, the incidence matrix of any pseudo-Anosov homeomorphism is mixing and we can pose the following definition, observe that $\mathcal{G}\mathcal{T}(\textbf{pA})^{SP}$ is not empty by \cite[Proposition 3.2.]{IntiThesis}.

\begin{defi}\label{Defi: Simbolically prensentable and induced shift}
The set of geometric types in the pseudo-Anosov class whose incidence matrices are binary will be denoted by $\mathcal{G}\mathcal{T}(\textbf{pA})^{sp} \subset \mathcal{G}\mathcal{T}(\textbf{pA})$, and they are called \emph{symbolically presentable} geometric types. Let $T \in \mathcal{G}\mathcal{T}(\textbf{pA})^{sp}$, the \emph{sub-shift induced} by $T$ is given by $(\Sigma_{A(T)}, \sigma_{A(T)})$.
\end{defi}

The sub-shift $(\Sigma_A, \sigma)$ and $f:S\to S$ are related through the \emph{projection} $\pi_{(f,\mathcal{R})}:\Sigma_{A(T)}\to S$ introduced below, such map takes a code $\underline{w} = (x_z)_{z \in \mathbb{Z}} \in \Sigma_{A}$ and sends it to a point $x \in S$ whose orbit follows the itinerary imposed by $\underline{w}$; for all $z \in \mathbb{Z}$, $f^z(x) \in R_{w_z}$. Clearly such projection strongly depends on Markov partition $(f, \mathcal{R})$.
 
\begin{defi}\label{Defi: projection pi}
	Let $T$ be a geometric type in the pseudo-Anosov class, with an incidence matrix $A := A(T)$ that is binary, meaning it has coefficients in ${0,1}$. Let $(f,\cR)$ be a realization of $T$. The \emph{projection} with respect to $(f,\cR)$, denoted as $\pi_f: \Sigma_A \rightarrow S$, is defined by the relation:
		\begin{equation}
		\pi_f(\underline{w})=\bigcap_{n\in \NN} \overline{\cap_{z=-n}^{n} f^{-z}(\overset{o}{R_{w_z}})}.
		\end{equation}
	for every $\underline{w}=(w_z)_{z\in \ZZ} \in \Sigma_A$.	
\end{defi}
 
 \subsubsection{The projection $\pi_{(f,\mathcal{R})}$.}
 
The proof of Proposition \ref{Prop:proyecion semiconjugacion} is essentially the same as given in \cite[Lemma 10.16]{fathi2021thurston}, where $\pi_{(f,\mathcal{R})}$ has a slightly different definition, for them, $\phi_f(\underline{w}) = \bigcap_{z\in \mathbb{Z}} f^{-z}(R_{w_z})$, but in order to prove that $\pi_{(f,\mathcal{R})}$ is well-defined, the authors require the rectangles in $\mathcal{R}$ to be embedded, which is not necessary for  our statement. The intersections of the closures $\bigcap_{z=-n}^{n} f^{-z}(\overset{o}{R_{w_z}})$ are rectangles whose diameters tend to $0$, so for large enough $n$, such intersection is an embedded rectangle, then, the reader can follow the proof in \cite[Lemma 10.16]{fathi2021thurston} to be sure that $\pi_{(f,\mathcal{R})}$ is well-defined, continuous, and a semi-conjugation; therefore, we don't elaborate more on this part. However, the \emph{finite-to-one property} is fundamental for our further study and must introduce a characterization of the codes in the fiber $\pi_{(f,\mathcal{R})}^{-1}(x)$, which we call \emph{sector-codes} of $x$ while establish that $\pi_{(f,\mathcal{R})}^{-1}(x)$ is finite.

\begin{prop}\label{Prop:proyecion semiconjugacion}
If $\cR$ is the Markov partition of a pseudo-Anosov homeomorphism $f:S\rightarrow S$, with a binary incidence matrix $A:=A(\cR)$, then the projection $\pi_f:\Sigma_A \rightarrow S$ is a continuous, surjective, and finite-to-one map. Furthermore, $\pi_f$ semi-conjugates $f$ with $\sigma$, i.e., $f\circ\pi_f=\pi_f \circ \sigma_A$.
\end{prop}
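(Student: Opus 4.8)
The plan is to establish the four properties of $\pi_f$ in order of increasing difficulty: well-definedness, continuity, the semi-conjugacy identity, surjectivity, and finally the finite-to-one property, which is the real obstacle.

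First I would verify that $\pi_f(\underline{w})$ is a single point for every admissible code. For each $n$, the set $Q_n := \overline{\cap_{z=-n}^{n} f^{-z}(\overset{o}{R_{w_z}})}$ is, by the Markov property (Definition~\ref{Defi: Markov partition}), a nonempty rectangle: intersecting with $f^{-z}(\overset{o}{R_{w_z}})$ for positive $z$ cuts down the horizontal extent, and for negative $z$ cuts down the vertical extent, so $Q_{n+1}\subset Q_n$ is a nested sequence of compact rectangles. The uniform contraction of the $\mu^s$-measure along stable leaves under forward iteration and of $\mu^u$ along unstable leaves under backward iteration (Definition~\ref{Defi: pseudo-Anosov}, stretch factor $\lambda>1$) forces $\mathrm{diam}(Q_n)\to 0$; hence $\bigcap_n Q_n$ is a single point. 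As remarked in the excerpt, for large $n$ the rectangle $Q_n$ is genuinely embedded, so the argument of \cite[Lemma 10.16]{fathi2021thurston} applies verbatim and gives continuity of $\pi_f$ together with the intertwining relation $f\circ \pi_f = \pi_f\circ\sigma_A$ (which is immediate from the defining formula after re-indexing). Surjectivity then follows because $\pi_f(\Sigma_A)$ is compact, $f$-invariant, and contains a dense set of points: every point whose full orbit never meets $\partial\mathcal R$ has a unique itinerary, and such points are dense since $\partial\mathcal R$ is a finite union of stable and unstable arcs, which are nowhere dense by minimality of the foliations (Proposition~\ref{Prop: pseudo-Anosov properties.}).

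The hard part is the finite-to-one property. The point is that $\pi_f$ can fail to be injective only over points lying on the boundary grid $\bigcup_{z\in\mathbb Z} f^{-z}(\partial\mathcal R)$, and for such a point $x$ the ambiguity in the itinerary is dictated entirely by which \emph{sector} of $x$ (in the sense of Definition~\ref{Defi: Sector}) a nearby trajectory approaches through. Concretely, if $\underline w\in\pi_f^{-1}(x)$, then for each $z$ the point $f^z(x)$ lies in $R_{w_z}$, and by Lemma~\ref{Lemm: sector contined unique rectangle} each sector $e$ of $f^z(x)$ is contained in a \emph{unique} rectangle of $\mathcal R$; conversely a code in the fiber is obtained by choosing, coherently under $f$, a sector of $x$ and recording which rectangle contains the $z$-th image of that sector. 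By Proposition~\ref{Prop: image secto is a sector} the map $f$ permutes sectors, so a choice of sector at $x$ determines the whole symbolic "sector-code", and the fiber $\pi_f^{-1}(x)$ injects into the set of sectors of $x$. Since $x$ has only $2k$ sectors where $k$ is its number of separatrices (finitely many, and equal to $2$ for all but the finitely many singular points), the fiber is finite — in fact of cardinality at most $2k\le 2\max_{p\in\mathbf{Sing}(f)}k(p)$ uniformly. I would organize this as: (i) show any two codes in the same fiber that determine the same sector at every coordinate must be equal, using that $Q_n$ shrinks to a point and that once a sector is fixed the admissible rectangle at each level is forced; (ii) show the assignment "$\underline w\mapsto$ sector of $x$ traced by $Q_n$" is well-defined and injective on $\pi_f^{-1}(x)$; (iii) conclude finiteness from the bound on the number of sectors. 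This sector-code characterization is exactly what will be reused to build the equivalence relation $\sim_T$ in Proposition~\ref{Prop: The relation determines projections}, so I would state it as an explicit sub-lemma rather than burying it in the proof.
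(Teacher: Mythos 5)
Your proposal is correct, and its core — the finite-to-one property — follows the same route as the paper: the paper introduces the sector codes of a point (Definition \ref{Defi: Sector codes}), shows via Lemma \ref{Lemm: sector contined unique rectangle} and Proposition \ref{Prop: image secto is a sector} that they are admissible, and then proves that every code in a fiber \emph{is} a sector code (Lemma \ref{Lemm: every code is sector code }), so that $\pi_f^{-1}(x)=\{\underline{e_j(x)}\}_{j=1}^{2k}$; your steps (i)--(iii) are exactly this argument, and your instinct to isolate the fiber characterization as a sub-lemma matches how the paper actually uses it later for $\sim_T$. Where you genuinely diverge is surjectivity: the paper gets it by exhibiting, for \emph{every} $x\in S$, an explicit preimage — a sector code of $x$ — so that surjectivity and the fiber description come out of one construction; you instead argue that $\pi_f(\Sigma_A)$ is compact and contains the dense set of points whose full orbit avoids $\partial\cR$. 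Your argument works (note that nowhere-density of $\bigcup_{z}f^{-z}(\partial\cR)$ needs no minimality — arcs in a surface are automatically nowhere dense — only a Baire/countable-union remark), but it is slightly less economical, since you must still run the sector analysis afterwards to control the fibers, whereas the paper's single construction buys both conclusions at once. One small directional slip in your well-definedness paragraph: the forward itinerary ($z>0$) produces nested \emph{horizontal} sub-rectangles, hence shrinks the unstable (vertical) extent, while the backward itinerary shrinks the stable (horizontal) extent; the conclusion $\mathrm{diam}(Q_n)\to 0$ is unaffected.
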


\begin{lemm}\label{Lemm: projection is surjective}
The projection $\pi_f:\rightarrow S$ is surjective.
\end{lemm}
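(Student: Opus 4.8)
The plan is to show that every point of $S$ admits a symbolic itinerary relative to the Markov partition $\cR$, exploiting the fact that $\pi_f$ is continuous and the image is both closed and dense. First I would note that $\Sigma_A$ is compact (Tychonoff) and $\pi_f$ is continuous, so $\pi_f(\Sigma_A)$ is a compact, hence closed, subset of $S$. It therefore suffices to prove that $\pi_f(\Sigma_A)$ is dense in $S$; since $S$ is connected this would force $\pi_f(\Sigma_A) = S$.

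To obtain density, I would work with the interiors of the rectangles. The union $\bigcup_{i=1}^n \overset{o}{R_i}$ is dense in $S$ (its complement is the boundary $\partial\cR$, a finite union of stable/unstable intervals, which is nowhere dense). So it is enough to produce, for each point $x \in \bigcup_i \overset{o}{R_i}$, a code $\underline{w}$ with $\pi_f(\underline{w})=x$, or at least points of the image arbitrarily close to $x$. The cleanest route is: given $x$, choose for each $z \in \ZZ$ an index $w_z$ so that $f^z(x)$ lies in $R_{w_z}$; when $f^z(x)$ is in the interior of a unique rectangle this choice is forced, and when $f^z(x)$ lands on $\partial\cR$ we use the Markov property (items iii, iv of Definition~\ref{Defi: Markov partition}) to check that any admissible choice of adjacent symbols is consistent, i.e. $a_{w_z, w_{z+1}} \ge 1$, which since $A$ is binary means $a_{w_z,w_{z+1}}=1$, so $\underline{w}\in\Sigma_A$. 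Then by the nested-intersection description of $\pi_f$ in Definition~\ref{Defi: projection pi}, and the fact (used already in the discussion preceding Proposition~\ref{Prop:proyecion semiconjugacion}) that the diameters of the rectangles $\overline{\cap_{z=-n}^{n} f^{-z}(\overset{o}{R_{w_z}})}$ shrink to $0$ because $f$ expands the unstable and contracts the stable direction uniformly by $\lambda>1$, the intersection is the single point $x$, so $\pi_f(\underline{w})=x$.

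The step I expect to be the main obstacle is verifying that the symbol sequence $\underline{w}$ built from the orbit of $x$ is genuinely admissible, i.e. lies in $\Sigma_A$, in the borderline situation where some iterate $f^z(x)$ sits on the common boundary of several rectangles. Here one must argue that the Markov condition guarantees $f(\overset{o}{R_{w_z}})\cap\overset{o}{R_{w_{z+1}}}\neq\emptyset$ for the chosen pair, which in turn gives $a_{w_z,w_{z+1}}\geq 1$; the key geometric input is that the image under $f$ of the stable/unstable boundary of a rectangle again lies in the stable/unstable boundary of the partition (Lemma~\ref{Lemm: Boundary of Markov partition is periodic}), so an orbit that touches $\partial\cR$ stays coherent with the partition's combinatorics. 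Equivalently, one may sidestep the boundary issue entirely: pick a sequence $x_k \to x$ with each $x_k$ in the interior of all the rectangles containing its whole orbit's relevant initial segment — or simply invoke that the set of points whose full forward-and-backward orbit avoids $\partial\cR$ is dense (since $\partial\cR$ is a countable union of nowhere-dense sets and $f$ is a homeomorphism) — code those points unambiguously, and conclude density of the image, then closedness finishes the proof.

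I would write it in the density-plus-closedness form, since that keeps the boundary bookkeeping to a minimum and reuses only facts already available in the text: compactness of $\Sigma_A$, continuity of $\pi_f$, the shrinking-diameter property of the cylinder rectangles, and the nowhere-density of $\partial\cR$.
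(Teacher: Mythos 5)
Your argument is correct, but it is not the route the paper takes. The paper proves surjectivity constructively: for \emph{every} point $x\in S$ it builds an explicit preimage, namely a sector code of $x$ (each sector of $f^z(x)$ lies in a unique rectangle by Lemma~\ref{Lemm: sector contined unique rectangle}, the image of a sector is a sector by Proposition~\ref{Prop: image secto is a sector}, and Lemma~\ref{Lemm: sector code is admisible} shows the resulting sequence is admissible), so that $x$ lies in the nested intersection defining $\pi_f$ of that code. This does more than surjectivity: the same sector codes are immediately reused to characterize the whole fiber $\pi_f^{-1}(x)$ (Lemma~\ref{Lemm: every code is sector code }, Corollary~\ref{Coro: Caracterisation fibers}), which is what the finite-to-one property and later the relation $\sim_T$ rest on. Your density-plus-closedness argument is softer: it needs continuity of $\pi_f$ (available, since the paper delegates well-definedness, continuity and the semi-conjugation to \cite[Lemma 10.16]{fathi2021thurston} before this lemma), compactness of $\Sigma_A$, the shrinking-diameter property of the cylinder rectangles, and a Baire-category argument that the points whose full orbit avoids $\partial\cR$ are dense; coding those generic points is unambiguous, so the image is dense, and closed, hence all of $S$ (connectedness of $S$ is not needed here — a closed dense subset of any space is the whole space). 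What you buy is the avoidance of all boundary bookkeeping; what you lose is exactly the information the paper wants next, since your proof produces no codes over points of $\partial\cR$ and gives no handle on the fibers. Also note that your first, "direct" variant (choosing symbols along an orbit that touches $\partial\cR$ and checking admissibility pairwise) is essentially the sector-code argument in disguise, and the clean way to make the choice coherent at boundary points is precisely to fix a sector and follow it, as the paper does.
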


For all $x\in S$, we will construct an element of $\Sigma_A$ that projects to $x$. The \emph{sector codes} of $x$ that we define below will do the job. It was shown in Lemma \ref{Lemm: sector contined unique rectangle} that each sector is contained in a unique rectangle of the Markov partition, and Proposition  \ref{Prop: image secto is a sector} shows that the image of a sector is a sector. This allows  the following definition.

\begin{defi}\label{Defi: Sector codes}
Let $\cR=\{R_i\}_{i=1}^n$ be a Markov partition whose incidence matrix is binary. Let $x \in S$ be a point with sectors $\{e_1(x),\cdots, e_{2k}(x)\}$ (where $k$ is the number of stable or unstable separatrices in $x$).
The \emph{sector code} of $e_j(x)$ is the sequence $\underline{e_j(x)}=(e(x,j)_z)_{z\in \ZZ} \in \Sigma$, given by the rule: $e(x,j)_z:=i$, where $i\in \{1,\dots,n\}$ is the index of the unique rectangle in $\cR$ such that the sector $f^z(e_j(x))$ is contained in the rectangle $R_i$.
\end{defi}

Any $\underline{w}\in\Sigma$ that is equal to the sector code of $e_j(x)$ (for some $j$ ) is called a sector code of $x$. The space $\Sigma$ of bi-infinite sequences is larger than $\Sigma_A$. We need to show that every sector code is, in fact, an \emph{admissible code}, i.e., that $\underline{e_j(x)}\in \Sigma_A$.

\begin{lemm}\label{Lemm: sector code is admisible}
For every $x \in S$, every sector code $\underline{e}:=\underline{e_j(x)}$ is an element of $\Sigma_A$.
\end{lemm}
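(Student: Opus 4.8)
The plan is to show that for every $z\in\ZZ$, the pair $(e(x,j)_z, e(x,j)_{z+1})$ is an admissible transition for $A$, i.e. $a_{e(x,j)_z,\,e(x,j)_{z+1}}=1$. Since $f^z(e_j(x))$ is again a sector of the point $f^z(x)$ (by Proposition \ref{Prop: image secto is a sector}), and applying $f$ to a sector of a point gives a sector of its image, it suffices to prove the case $z=0$ and $z+1=1$: if $e$ is a sector of $x$ contained in the rectangle $R_i$ (so $e(x,j)_0=i$) and $f(e)$ is the sector of $f(x)$ contained in $R_k$ (so $e(x,j)_1=k$), then $a_{ik}=1$, equivalently $f(\overset{o}{R_i})\cap\overset{o}{R_k}\neq\emptyset$ (recall $A$ is binary, so the only question is whether the coefficient is $0$ or $1$).

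First I would invoke Lemma \ref{Lemm: sector contined unique rectangle} to fix, for a sequence $\{x_n\}$ converging to $x$ in the sector $e$, a tail with $x_n\in\overset{o}{R_i}$ for all large $n$; more precisely, as in the proof of that lemma, I can choose a small rectangle $\overset{o}{Q}\subset \overset{o}{R_i}$ lying in the chosen sector $E$ of $x$ and with $\{x_n\}\subset\overset{o}{Q}$ eventually. Then I would consider $f(\overset{o}{Q})\subset f(\overset{o}{R_i})$; the points $f(x_n)$ converge to $f(x)$ and, by Proposition \ref{Prop: image secto is a sector}, the sequence $\{f(x_n)\}$ lies in the sector $f(e)$ of $f(x)$, which by hypothesis sits inside $R_k$. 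Hence for large $n$ we have $f(x_n)\in f(\overset{o}{R_i})$ and $f(x_n)$ eventually inside $\overset{o}{R_k}$ (this last point uses that $f(x_n)$ converges to $f(x)$ through the sector $f(e)$, so it is eventually in the component of a canonical neighbourhood of $f(x)$ that is contained in $\overset{o}{R_k}$, exactly as extracted in Lemma \ref{Lemm: sector contined unique rectangle}). Therefore $f(\overset{o}{R_i})\cap\overset{o}{R_k}\supseteq\{f(x_n): n\ \text{large}\}\neq\emptyset$, which forces $a_{ik}\geq 1$, and since $A$ is binary, $a_{ik}=1$.

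The main obstacle, and the point deserving the most care, is the transfer of the "eventually inside the open rectangle" property across $f$: knowing $x_n\in\overset{o}{R_i}$ and $f(x_n)\to f(x)$ is not by itself enough to conclude $f(x_n)\in\overset{o}{R_k}$, since $f(x)$ may lie on the boundary of several rectangles. The resolution is to argue at the level of sectors rather than points: $\{f(x_n)\}$ represents the sector $f(e_j(x))$ of $f(x)$, this sector is contained in a unique rectangle $R_k$ of the Markov partition by Lemma \ref{Lemm: sector contined unique rectangle}, and by the definition of "contained in a rectangle" (Definition \ref{Defi:converge in a sector } together with the uniqueness in Lemma \ref{Lemm: sector contined unique rectangle}) the tail of $\{f(x_n)\}$ lies in the interior $\overset{o}{R_k}$. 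Once this is cleanly stated, the admissibility $a_{e(x,j)_z,\,e(x,j)_{z+1}}=1$ for all $z$ follows by applying the $z=0$ argument to the point $f^z(x)$ and its sector $f^z(e_j(x))$, and we conclude $\underline{e_j(x)}\in\Sigma_A$.
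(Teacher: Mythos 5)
Your proposal is correct and follows essentially the same route as the paper's proof: reduce admissibility to showing $f(\overset{o}{R_{e_z}})\cap\overset{o}{R_{e_{z+1}}}\neq\emptyset$, witness this with a sequence converging to $f^z(x)$ in the sector (placed in the open rectangle via Lemma \ref{Lemm: sector contined unique rectangle}), and push it forward with Proposition \ref{Prop: image secto is a sector}. The only cosmetic difference is how the "eventually in the interior" step is justified (you reuse the rectangle $\overset{o}{Q}$ from the lemma's proof, the paper argues the sequence cannot sit on a boundary leaf between the two bounding separatrices), which changes nothing of substance.
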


\begin{proof}
Let $A=(a_{ij})$ the incidence matrix.	The code $\underline{e}=(e_z)$ is in $\Sigma_A$ if and only if for all $z\in \ZZ$, $a_{e_z e_{z+1}}=1$. By definition, this happens if and only if $f(\overset{o}{R_{e_z}})\cap \overset{o}{R_{e_{z+1}}}\neq \emptyset$.

Let $\{x_n\}_{n\in \NN}$ be a sequence converging to $f^z(x)$ and contained in the sector $f^z(e)$. By Lemma \ref{Lemm: sector contined unique rectangle} the sector $f^z(e)$ is contained in a unique rectangle $R_{e_z}$, and we can assume $\{x_n\}\subset R_{e_z}$. Moreover, there exists $N\in \NN$ such that $x_n \in \overset{o}{R_{e_z}}$ for all $n\geq N$. Remember that the sector $f^z(e)$ is bounded by two consecutive local stable and unstable separatrices of $f^z(x)$: $F^s(f^z(x))$ and $F^u(f^z(x))$. If for every $n\in \NN$, $x_n$ is contained in the boundary of $R_{e_z}$, this boundary component is a local separatrice of $x$ between $F^s(f^z(x))$ and $F^u(f^z(x))$, which is not possible.

Since the image of a sector is a sector, the sequence $\{f(x_n)\}$ converges to $f^{z+1}(x)$ and is contained in the sector $f^{z+1}(e)$. The argument in the last paragraphs also applies to this sequence, and $f(x_n)\in \overset{o}{R_{e_{z+1}}}$ for $n$ big enough. This proves that $f(\overset{o}{R_{e_z}})\cap \overset{o}{R_{e_{z+1}}}$ is not empty.
	
\end{proof}

The sector codes of a point $x$ are not only admissible; as the following Lemma shows, they are, in fact, the only codes in $\Sigma_A$ that project to $x$.  

\begin{lemm}\label{Lemm: every code is sector code }
If $\underline{w}=(w_z)\in \Sigma_A$ projects under $\pi_f$ to $x$, then $\underline{w}$ is equal to a sector code of $x$.
\end{lemm}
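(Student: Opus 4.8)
The plan is to extract from the code $\underline{w}$ an auxiliary sequence converging to $x=\pi_f(\underline{w})$ along a single sector, and then to check that the sector code of that sector is exactly $\underline{w}$.

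First I would record the geometry of the projection. Since $\pi_f(\underline{w})=x$, the rectangles $Q_n:=\overline{\bigcap_{z=-n}^{n} f^{-z}(\overset{o}{R_{w_z}})}$ are nondegenerate, nested ($Q_{n+1}\subseteq Q_n$), and satisfy $\bigcap_n Q_n=\{x\}$ with $\operatorname{diam}(Q_n)\to 0$ — this is precisely the content recalled before Proposition \ref{Prop:proyecion semiconjugacion} that makes $\pi_f$ (Definition \ref{Defi: projection pi}) well defined. For each $n$ I choose $x_n\in\overset{o}{Q_n}$ not lying on the local stable or unstable leaf of $x$ (possible because those two arcs are nowhere dense in the open set $\overset{o}{Q_n}$). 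Then $x_n\to x$, and for $n$ large the $x_n$ lie inside a regular neighborhood $D(x,\epsilon_0)$ off the separatrices of $x$, hence each belongs to one of the finitely many components $E(\epsilon_0)_{j}(x)$. By the pigeonhole principle, after passing to a subsequence there is a fixed $j$ with $\{x_n\}\in e_j(x)$; passing to a subsequence does not change this sector class, as observed in the proof of Lemma \ref{Lemm: sector contined unique rectangle}.

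Next I would show $\underline{e_j(x)}=\underline{w}$. Fix $z\in\mathbb{Z}$. For every $n\geq |z|$ we have $x_n\in\overset{o}{Q_n}\subseteq f^{-z}(\overset{o}{R_{w_z}})$, so $f^z(x_n)\in\overset{o}{R_{w_z}}$. On the other hand, by Proposition \ref{Prop: image secto is a sector} the sequence $\{f^z(x_n)\}$ converges to $f^z(x)$ inside the sector $f^z(e_j(x))$, and by Lemma \ref{Lemm: sector contined unique rectangle} this sector is contained in a unique rectangle $R_i$ of $\mathcal{R}$; moreover the argument in the proof of that lemma shows that, after passing to a further subsequence, $\{f^z(x_n)\}$ is eventually contained in the \emph{interior} $\overset{o}{R_i}$. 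Along this subsequence the points $f^z(x_n)$ lie in $\overset{o}{R_i}\cap\overset{o}{R_{w_z}}$, which is therefore nonempty; since distinct rectangles of a Markov partition have disjoint interiors (Definition \ref{Defi: Markov partition}, item $ii)$), this forces $i=w_z$. By Definition \ref{Defi: Sector codes}, the $z$-th entry of $\underline{e_j(x)}$ is precisely the index of the rectangle containing $f^z(e_j(x))$, namely $i$, so $e(x,j)_z=w_z$. As $z$ was arbitrary, $\underline{e_j(x)}=\underline{w}$.

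The only delicate points are bookkeeping: arranging that the auxiliary sequence $\{x_n\}$ converges within a single sector (so that a well-defined sector code is attached to it) while still shadowing the itinerary $\underline{w}$, and upgrading ``the sector $f^z(e_j(x))$ lies in $R_i$'' to the interior statement ``$f^z(x_n)\in\overset{o}{R_i}$ eventually'', which is what lets the disjoint-interiors property identify $i$ with $w_z$. The first is handled by the pigeonhole argument on the finitely many components $E(\epsilon_0)_{j}(x)$, and the second by re-reading the proof of Lemma \ref{Lemm: sector contined unique rectangle}; no ideas beyond those already present in this section are required.
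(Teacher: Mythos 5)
Your proposal is correct and follows essentially the same route as the paper: both arguments use the nested itinerary rectangles $F_n=\bigcap_{z=-n}^{n} f^{-z}(\overset{o}{R_{w_z}})$ accumulating at $x$, pigeonhole over the finitely many sectors of $x$ to single out one sector persisting in all of them, and then match the itinerary of that sector with $\underline{w}$ using disjointness of the rectangle interiors. The only cosmetic difference is that you run the matching through an auxiliary point sequence and Lemma~\ref{Lemm: sector contined unique rectangle}, while the paper argues directly that the chosen sector satisfies $f^m(e)\subset \overset{o}{R_{w_m}}$; your ``delicate'' interior upgrade is harmless, since a point of $R_i\cap\overset{o}{R_{w_z}}$ with $i\neq w_z$ is already impossible for a Markov partition.
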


\begin{proof}
		
		For each $n \in \NN$, we take the rectangle $F_n=\cap_{j=-n}^n f^{-j}(\overset{o}{R_{w_j}})$, which is non-empty because $\underline{w}$ is in $\Sigma_A$. The following properties hold:

	\begin{itemize}
		\item[i)] $\pi_f(\underline{w})=x \in \overline{F_n}$ for every $n\in \ZZ$.
		\item[ii)] For all $n\in \NN$, $F_{n+1}\subset F_n$.
		\item[iii)]	For every $n\in \NN$, there exists at least one sector $e$ of $x$ contained in $F_n$. If this does not occur, there is $\epsilon>0$ such that the regular neighborhood of size $\epsilon$ around $x$, given by Theorem \ref{Theo: Regular neighborhood}, is disjoint from $F_n$. However, $x\in F_n$ by item i).
	
		\item[iv)] If the sector $e \subset H_n$, then for every $m\in  \ZZ$ such that $\vert m\vert \leq n$:
		$$
		f^m(e) \subset f^m(H_n)= \cap_{j=m-n}^{m+n}f^{-j}(\overset{o}{R_{w_{m-j}}}) \subset \overset{o}{R_m}.
		$$ 
		which implies that $e_m=w_m$ for all $m\in \{-n,\cdots,n\}$.
	\end{itemize}
	
	By item $ii)$, if a sector $e$ is not in $F_n$, then $e$ is not in $F_{n+1}$. Together with the fact that for all $n$ there is always a sector in $F_n$ (item $iii)$), we deduce that there is at least one sector $e$ of $x$ that is in $F_n$ for all $n$. Then, we apply point $iv)$ to deduce $e_z=w_z$ for all $z$.
\end{proof}
Let $x$ be a point with $k$ stable and $k$ unstable separatrices. Then $x$ has at most $2k$ sector codes projecting to $x$.
\begin{coro}\label{Coro: Caracterisation fibers}
For all $x\in S$, if $x$ has $k$ separatrices, then $\pi_f^{-1}(x)=\{\underline{e_j(x)}\}_{j=1}^{2k}$. In particular, $\pi_f$ is finite-to-one.
\end{coro}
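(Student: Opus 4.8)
The statement follows by combining Lemmas \ref{Lemm: sector code is admisible} and \ref{Lemm: every code is sector code } with a short check that each sector code projects back to $x$. Recall first that a point $x$ with $k$ separatrices has exactly $2k$ sectors $e_1(x),\dots,e_{2k}(x)$, so it has at most $2k$ sector codes $\underline{e_1(x)},\dots,\underline{e_{2k}(x)}$ (a priori two distinct sectors could yield the same code, which only helps the finiteness claim).

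For the inclusion $\{\underline{e_j(x)}\}_{j=1}^{2k}\subseteq \pi_f^{-1}(x)$: by Lemma \ref{Lemm: sector code is admisible} every $\underline{e_j(x)}$ belongs to $\Sigma_A$, so it suffices to verify $\pi_f(\underline{e_j(x)})=x$. Fix $j$, write $\underline{e_j(x)}=(e(x,j)_z)_{z\in\ZZ}$, and choose a sequence $\{x_m\}$ converging to $x$ inside the sector $e_j(x)$. By Definition \ref{Defi: Sector codes}, together with Lemma \ref{Lemm: sector contined unique rectangle} and Proposition \ref{Prop: image secto is a sector}, the image sector $f^z(e_j(x))$ is contained in $\overset{o}{R_{e(x,j)_z}}$ for every $z\in\ZZ$; hence, for each fixed $n$ and all sufficiently large $m$, one has $f^z(x_m)\in\overset{o}{R_{e(x,j)_z}}$ simultaneously for all $|z|\le n$, that is, $x_m\in F_n:=\cap_{z=-n}^{n}f^{-z}(\overset{o}{R_{e(x,j)_z}})$. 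Therefore $x\in\overline{F_n}$ for every $n$. Since the rectangles $\overline{F_n}$ are nested with diameters shrinking to $0$ (as recalled in the discussion preceding Proposition \ref{Prop:proyecion semiconjugacion}), their intersection is the single point $\pi_f(\underline{e_j(x)})$, and we conclude $\pi_f(\underline{e_j(x)})=x$.

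The reverse inclusion $\pi_f^{-1}(x)\subseteq\{\underline{e_j(x)}\}_{j=1}^{2k}$ is precisely Lemma \ref{Lemm: every code is sector code }: any $\underline{w}\in\Sigma_A$ with $\pi_f(\underline{w})=x$ equals a sector code of $x$. Putting the two inclusions together gives $\pi_f^{-1}(x)=\{\underline{e_j(x)}\}_{j=1}^{2k}$, a set of cardinality at most $2k<\infty$, so $\pi_f$ is finite-to-one. The only step requiring any care is the verification $\pi_f(\underline{e_j(x)})=x$ in the second paragraph; the rest is a direct application of the two preceding lemmas.
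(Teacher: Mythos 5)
Your proof is correct and follows essentially the same route as the paper: the fiber is contained in the set of sector codes by Lemma \ref{Lemm: every code is sector code }, and the sector codes are admissible and project to $x$ by Lemma \ref{Lemm: sector code is admisible} together with the surjectivity construction, giving the stated equality and finiteness. The only difference is that you write out explicitly the verification $\pi_f(\underline{e_j(x)})=x$ via the nested rectangles $\overline{F_n}$, a step the paper leaves implicit inside Lemma \ref{Lemm: projection is surjective}; this is a welcome but not essentially different addition.
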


This ends the proof of Proposition \ref{Prop:proyecion semiconjugacion}.

\subsection{The quotient space  is a surface.}\label{subsec: quotien surface}

There is a very natural equivalence relation in $\Sigma_A$ in terms of the projection $\pi_f$. Two codes $\underline{w}$ and $\underline{v}$ in $\Sigma_A$ are $f$-related, written as $\underline{w} \sim_f \underline{v}$, if and only if $\pi_f(\underline{w}) = \pi_f(\underline{v})$. The quotient space is denoted as $\Sigma_f = \Sigma_A/\sim_f$, and $[\underline{w}]_f$ represents the equivalence class of $\underline{w}$. If $\underline{w} \sim_f \underline{v}$, by definition:
$$
[\pi_f]([\underline{w}]_f)=\pi_f(\underline{w})=\pi_f(\underline{v})=[\pi_f]([\underline{v}]_f).
$$

Even more, since $\pi_f: \Sigma_A \rightarrow S$ is a continuous function, $\Sigma_A$ is compact, and $S$ is a Hausdorff topological space, the \emph{closed map lemma} implies that $\pi_f$ is a closed function. Furthermore, since $\pi_f$ is surjective, it follows that $\pi_f$ is a quotient map. Therefore, the projection $\pi_f$ induces a homeomorphism $[\pi_f]: \Sigma_f \rightarrow S$ in the quotient space. The shift behaves well under this quotient since: $$[\sigma_f([\underline{w}]_f)]_f:=[\sigma([\underline{w}])]_f
$$
and $\underline{w} \sim_f \underline{v}$. The semi-conjugation of $f$ and $\sigma$ through $\pi_f$ implies that:
$$
[\sigma(\underline{w})]\sim_f [\sigma(\underline{v})].
$$
then the map $[\sigma]:\Sigma_f \rightarrow \Sigma_f$ is well defined and is, in fact, a homeomorphism as well, and

\begin{eqnarray*}
[\pi_f]\circ[\sigma]([\underline{w}]_f)=
[\pi_f][\sigma_A(\underline{w})]_f =\pi_f(\sigma(\underline{w})))= \\
f \circ (\pi_f(\underline{w}))= f\circ[\pi_f]([\underline{w}]_f).
\end{eqnarray*}

Therefore, $[\pi_f]$ determines a topological conjugation between $f$ and $[\sigma_A]$. This implies that $\Sigma_f$ is a surface homeomorphic to $S$, and $[\sigma]$ is a pseudo-Anosov homeomorphism. It is almost the ideal situation. Let us summarize this discussion in the following proposition.

\begin{prop}\label{Prop: quotien by f}
	The quotient space $\Sigma_f:=\Sigma_A/\sim_f$ is homeomorphic to $S_f$, and the quotient shift $\sigma_f:\Sigma_f\rightarrow \Sigma_f$ is a generalized pseudo-Anosov homeomorphism, topologically conjugated to $f:S_f \rightarrow S_f$ through the quotient projection:
	$$
	[\pi_f]:\Sigma_f\rightarrow S_f.
	$$
\end{prop}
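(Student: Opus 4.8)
The plan is to verify, in order, the four things that make $[\pi_f]$ a homeomorphism conjugating $f$ to $\sigma_f$, and then read off the structural conclusions. Almost all of the hard work has already been done: Proposition \ref{Prop:proyecion semiconjugacion} tells us that $\pi_f:\Sigma_A\to S_f$ is continuous, surjective, finite-to-one, and satisfies $f\circ\pi_f=\pi_f\circ\sigma_A$. So the proof is mostly an exercise in pushing this data through the quotient by $\sim_f$, where by definition $\underline w\sim_f\underline v$ iff $\pi_f(\underline w)=\pi_f(\underline v)$.

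\textbf{Step 1: $[\pi_f]$ is a well-defined continuous bijection.} Since $\sim_f$ is exactly the fibre relation of $\pi_f$, the universal property of the quotient topology gives a continuous map $[\pi_f]:\Sigma_f\to S_f$ with $[\pi_f]\circ q=\pi_f$, where $q:\Sigma_A\to\Sigma_f$ is the quotient projection. By construction $[\pi_f]$ is injective (two classes with the same image are equal by definition of $\sim_f$) and surjective (because $\pi_f$ is).

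\textbf{Step 2: $[\pi_f]$ is a homeomorphism.} Here I would invoke the \emph{closed map lemma}: $\Sigma_A$ is compact (Tychonoff), $S_f$ is Hausdorff, and $\pi_f$ is continuous, so $\pi_f$ is a closed map; being also surjective, it is a quotient map, hence $\Sigma_f$ carries the quotient topology and $[\pi_f]$ is a continuous bijection from a compact space (the continuous image $q(\Sigma_A)$ of a compact space) to a Hausdorff space, therefore a homeomorphism. In particular $\Sigma_f$ is homeomorphic to $S_f$, hence is a closed orientable surface.

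\textbf{Step 3: $\sigma_f$ is well-defined and conjugate to $f$.} Because $f\circ\pi_f=\pi_f\circ\sigma_A$, the shift $\sigma_A$ preserves the fibres of $\pi_f$: if $\pi_f(\underline w)=\pi_f(\underline v)$ then $\pi_f(\sigma_A\underline w)=f(\pi_f(\underline w))=f(\pi_f(\underline v))=\pi_f(\sigma_A\underline v)$. Hence $\sigma_A$ descends to a well-defined map $\sigma_f:\Sigma_f\to\Sigma_f$ with $\sigma_f\circ q=q\circ\sigma_A$. The same argument applied to $\sigma_A^{-1}$ (also a shift-commuting homeomorphism, as the Markov property is symmetric) shows $\sigma_f$ is a homeomorphism. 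Chasing the identities $[\pi_f]\circ q=\pi_f$ and $f\circ\pi_f=\pi_f\circ\sigma_A$ through the quotient yields $[\pi_f]\circ\sigma_f=f\circ[\pi_f]$, exactly the displayed computation in the excerpt; thus $[\pi_f]$ is a topological conjugacy between $(\Sigma_f,\sigma_f)$ and $(S_f,f)$, and since all maps involved preserve orientation, it is an orientation-preserving conjugacy.

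\textbf{Step 4: transporting the pseudo-Anosov structure.} Finally, Proposition \ref{Prop: pA closed up conjugation} says the conjugacy class of a pseudo-Anosov homeomorphism consists of pseudo-Anosov homeomorphisms: since $\sigma_f=[\pi_f]^{-1}\circ f\circ[\pi_f]$ with $[\pi_f]$ an orientation-preserving homeomorphism and $f$ pseudo-Anosov (possibly with spines), $\sigma_f$ is a generalized pseudo-Anosov homeomorphism on the surface $\Sigma_f$, with invariant foliations the pushforwards of those of $f$ under $[\pi_f]^{-1}$. This completes the proof. I do not expect a genuine obstacle here; the only point requiring a little care is Step 2 — making sure we really are entitled to the quotient-topology identification — but the closed map lemma dispatches it, and this is precisely why Proposition \ref{Prop:proyecion semiconjugacion} was set up to deliver continuity plus surjectivity. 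The substantive content (finite-to-one, surjectivity, the sector-code description of fibres) lives in the earlier propositions; the present statement is the clean packaging of that content.
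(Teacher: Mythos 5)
Your proposal is correct and follows essentially the same route as the paper: the closed map lemma makes $\pi_f$ a quotient map so that $[\pi_f]$ is a homeomorphism, the semi-conjugation $f\circ\pi_f=\pi_f\circ\sigma_A$ lets the shift descend to $\sigma_f$, and Proposition \ref{Prop: pA closed up conjugation} transports the pseudo-Anosov structure. The only superfluous addition is your remark on orientation, which the paper does not claim at this stage and which is not needed here.
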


If we have two pseudo-Anosov maps $f:S_f\rightarrow S_f$ and $g:S_g\rightarrow S_g$ with Markov partitions of the same geometric type, after a horizontal refinement, they have the same incidence matrix $A$ with coefficients in $\{0,1\}$ and are associated with the same sub-shift of finite type $(\Sigma_A,\sigma)$. However, the projections $\pi_f$ and $\pi_g$ are not necessarily the same. In particular, while $\Sigma_A/\sim_f$ is homeomorphic to $S_f$ and $\Sigma_A/\sim_g$ is homeomorphic to $S_g$, we cannot affirm that $S_f$ is homeomorphic to $S_g$.

The problem arises from the fact that given $x\in S_f$ and $y\in S_g$, we don't know if $\pi_f^{-1}(x)\cap\pi_g^{-1}(x)\neq \emptyset$ implies $\pi_f^{-1}(x)=\pi_g^{-1}(x)$. In the proof of the finite-to-one property Lemma, it was shown that every code in $\pi_f^{-1}(x)$ is a sector code of $x$. Therefore, if $\pi_f^{-1}(x)\cap \pi_g^{-1}(y)\neq \emptyset$, there is a common sector code of $x$ and $y$. However, this does not imply a unique (or continuous) correspondence between the sets of sectors of $x$ and $y$. For example, it is possible for $x$ to have a different number of prongs than $y$.

This ambiguity cannot be resolved by the incidence matrix alone because two sector codes are considered different if, after iterations of $\sigma$, they are in different rectangles. If a point $x$ is in the corner of $4$ rectangles and $y$ is in the corner of $3$ rectangles, then $x$ has $4$ different sector codes, while $y$ has only $3$. This discrepancy in the number of incident rectangles at a point is overcome by the implementation of the geometric type.

To address this, we will construct another quotient space of $\Sigma_A$ called $\Sigma_T$ and define an equivalence relation $\sim_T$ in terms of the geometric type $T$.

 %dinamica simbolica generales
\subsection{The equivalent relation $\sim_T$.}\label{Sub-sec:Sub-conjugation}

 In this section, we will define a the equivalence relation that satisfy the properties enumerated  in Proposition \ref{Prop: The relation determines projections}. Given a Markov partition $\cR=\{R_i\}_{i=1}^n$, we have introduced the following notation: the interior of the partition is denoted by $\overset{o}{\mathcal{R}}:=\cup_{i=1}^n \overset{o}{R_i}$, and the stable or unstable boundary of $\mathcal{R}$ is denoted by $\partial^{s,u}\mathcal{R}:=\cup_{i=1}^n \partial^{s,u}R_i$.
 
 The general idea is to start with a geometric type $T$ within the pseudo-Anosov class and define a decomposition of $\Sigma_{A(T)}$ in terms of $T$ into three subsets: $\Sigma_{I(T)}$, $\Sigma_{S(T)}$, and $\Sigma_{U(T)}$. We will then introduce three relations, $\sim_{I,s,u}$, over these sets based on the properties of $T$. These relations will be extended to an equivalence relation $\sim_T$ in $\Sigma_A$. Finally, we will prove that for any pair $(f,\mathcal{R})$ that realizes $T$, $\pi_f(\underline{w})=\pi_f(\underline{v})$ if and only if $\underline{w}\sim_T \underline{v}$. The subsets and the equivalence relation $\sim_T$ will be determined through several steps, which we will outline below:

\begin{itemize} 
	\item 	We define the set of periodic points in $\Sigma_A$ denoted by $\Sigma_P$. It is proven that $\pi_f(\Sigma_P)$ corresponds to the periodic points of $f$, and for every periodic point $p$ of $f$, $\pi_f^{-1}(p)\subset \Sigma_P$.

	\item 	We construct a finite family of positive codes $\mathcal{S}(T)^+$, called $s$-\emph{boundary label codes}. It is proven that every code $\underline{w}\in \Sigma_A$ whose positive part is in $\mathcal{S}(T)^+$ is projected by $\pi_f$ to the stable boundary of a Markov partition $(f,\mathcal{R})$ whose geometric type is $T$. This determines a subset $\underline{\mathcal{S}(T)}\subset \Sigma_{A(T)}$ of $s$-\emph{boundary codes}.

	\item 	A relation $\sim_s$ is defined in $\underline{\mathcal{S}(T)}$, which satisfies the property that two codes in $\underline{\mathcal{S}(T)}$ are $\sim_s$-related, i.e., $\underline{w}\sim_s \underline{v}$ if and only if they project to the same point $\pi_f(\underline{w})=\pi_f(\underline{v})$ in the stable boundary of two adjacent rectangles of the Markov partition.

	\item  	Similarly, we construct a finite family of negative codes $\mathcal{U}(T)^-$, called $u$-\emph{boundary label codes}, and the subset $\underline{\mathcal{U}(T)}\subset \Sigma_{A(T)}$ of codes whose negative part is in $\mathcal{U}(T)^-$. It is proven that $\underline{\mathcal{U}(T)}$ projects by $\pi_f$ to the unstable boundary of rectangles in the Markov partition. A relation $\sim_u$ is defined in $\underline{\mathcal{U}(T)}$ such that if two codes are $\sim_u$-related, they are projected to the same point in the unstable boundary of two adjacent rectangles of the Markov partition.

	\item 	Using the relations $\sim_s$ and $\sim_u$, we define a set of periodic codes $\Sigma_{\mathcal{P}(T)}\subset \Sigma_A$, which are projected by $\pi_f$ to the boundary periodic points of $\mathcal{R}$.
	
	\item 	We define the $s$-\emph{boundary leaf codes} $\Sigma_{\mathcal{S}(T)}\subset\Sigma_A$ as the set of codes $\underline{w}$ for which there exists $k\in \mathbb{Z}$ such that $\sigma^k(\underline{w})\in \underline{\mathcal{S}(T)}$. Similarly, the $u$-\emph{boundary leaf codes} $\Sigma_{\mathcal{U}(T)}\subset\Sigma_A$ is the set of codes $\underline{w}$ for which there exists $k\in \mathbb{Z}$ such that $\sigma^k(\underline{w})\in \underline{\mathcal{U}(T)}$. The relations $\sim_s$ and $\sim_u$ are extended to the sets of $s$-\emph{boundary leaf codes} and $u$-\emph{boundary leaf codes}, respectively, by declaring that $\underline{w},\underline{v}\in \Sigma_{\mathcal{S}(T)}$ are $S$-related, i.e., $\underline{w}\sim_S \underline{v}$, if and only if there is $k\in\mathbb{Z}$ such that $\sigma^k(\underline{w}),\sigma^k(\underline{v})\in \Sigma_{\mathcal{S}(T)}$ and $\sigma^k(\underline{w})\sim_s\sigma^k(\underline{v})$. Similarly, we define the relation $\sim_U$ over $\Sigma_{\mathcal{U}(T)}\subset \Sigma_A$.

	\item We prove that $\pi_f(\Sigma_{\mathcal{S}(T)})$ and $\pi_f(\Sigma_{\mathcal{U}(T)})$ corresponds to the stable and unstable leaves of the periodic boundary points of $(f,\mathcal{R})$, and they are the only codes in $\Sigma_A$ with this property.
	
	\item 	It is proved that if $\underline{w}, \underline{v} \in \Sigma_{\mathcal{S}(T)}, \Sigma_{\mathcal{U}(T)}$ are $S, U$-related, then $\pi_f(\underline{w}) = \pi_f(\underline{v})$.
	
	\item 	We extend the relations $\sim_S$ and $\sim_U$ to a relation $\sim_{S,U}$ in the union $\Sigma_{\mathcal{S}(T)} \cup \Sigma_{\mathcal{U}(T)}$. It is proved that $\underline{w}, \underline{v} \in \Sigma_{\mathcal{S}(T)} \cup \Sigma_{\mathcal{U}(T)}$ are $\sim_{S,U}$-related if and only if they are projected by $\pi_f$ to the same point.

	\item 	The subset $\Sigma_{\mathcal{I}(T)}$ of $\Sigma_A$ is the complement of $\Sigma_{\mathcal{S}(T)} \cup \Sigma_{\mathcal{U}(T)}$ in $\Sigma_A$. $\pi_f(\Sigma_{I})$ is contained in the complement of the stable and unstable foliation of the periodic boundary points of $\mathcal{R}$ and are called \emph{totally interior codes}. We define $\sim_I$ as the identity relation in $\Sigma_{\mathcal{I}(T)}$. It is proved that $\underline{w} \in \Sigma_{I}$ if and only if all the sector codes of $\pi_f(\underline{w})$ are the same.

	\item 	We extend the relations $\sim_{S,U,I}$ to a relation $\sim_T$ in $\Sigma_A$. We prove that it is an equivalence relation in $\Sigma_A$ and satisfies Proposition \ref{Prop: The relation determines projections}.
	
\end{itemize}

\subsubsection{Periodic codes and periodic points.}
Let $T$ be a geometric type whose incidence matrix is binary. Let $f: S \rightarrow S$ be a generalized pseudo-Anosov homeomorphism with a Markov partition of geometric type $T$, denoted as $(f, \mathcal{R})$. Consider $\pi_f: \Sigma_A \rightarrow S$ as the projection induced by $(f, \mathcal{R})$ into $S$. In this subsection, $A := A(T)$ represents the incidence matrix of $T$. The periodic points in the dynamical system $(\Sigma_A, \sigma)$ are denoted as $\text{Per}(\sigma) \subset \Sigma_A$, and the periodic points of $f$ are denoted as $\text{Per}(f) \subset S$. Note that $(f, \mathcal{R})$ implies that $\mathcal{R}$ is seen as a Markov partition of $f$. Hence, to avoid overloading the notation, we may omit the explicit mention of the homeomorphism if it is clear from the context.

The following lemma is classical in the literature when the authors work with the more classical definition of the projection, anyway, this lemma  represents the type of result we aim to prove. On one hand, we have periodic shift codes determined by $T$, and on the other hand, a valid property holds for every pseudo-Anosov homeomorphism with a Markov partition of the prescribed geometric type.

\begin{lemm}\label{Lemm: Periodic to peridic}
Let $T$ be a geometric type in the pseudo-Anosov class. Assume that its incidence matrix is binary, and $(f, \mathcal{R})$ is a pair realizing $T$. If $\underline{w} \in \text{Per}(\sigma)$ is a periodic code, then $\pi_f(\underline{w})$ is a periodic point of $f$. Moreover, if $p$ is a periodic point for $f$, then $\pi^{-1}_f(p) \subset \text{Per}(\sigma)$, meaning that all codes projecting onto $p$ are periodic.
\end{lemm}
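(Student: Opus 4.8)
The plan is to prove both halves by exploiting the semi-conjugacy $f\circ\pi_f=\pi_f\circ\sigma_A$ established in Proposition~\ref{Prop:proyecion semiconjugacion}, together with the explicit description of fibers as sector codes from Corollary~\ref{Coro: Caracterisation fibers}.

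For the first direction, suppose $\underline{w}\in\text{Per}(\sigma)$, say $\sigma^m(\underline{w})=\underline{w}$. Applying the semi-conjugacy relation $m$ times gives $f^m(\pi_f(\underline{w}))=\pi_f(\sigma^m(\underline{w}))=\pi_f(\underline{w})$, so $x:=\pi_f(\underline{w})$ is a periodic point of $f$. This half is essentially immediate from the intertwining; no use of the finite-to-one structure is needed.

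For the second direction, let $p$ be a periodic point of $f$ with period $m$, and let $\underline{w}\in\pi_f^{-1}(p)$. By Corollary~\ref{Coro: Caracterisation fibers}, $\underline{w}$ is a sector code $\underline{e_j(p)}$ for some sector $e_j(p)$ of $p$. The idea is that $\sigma^m(\underline{w})$ again lies in $\pi_f^{-1}(f^m(p))=\pi_f^{-1}(p)$ by the semi-conjugacy, so it is again a sector code of $p$; thus $\sigma^m$ permutes the finite set $\pi_f^{-1}(p)=\{\underline{e_1(p)},\dots,\underline{e_{2k}(p)}\}$, which has at most $2k$ elements. Since $\sigma^m$ acts as a permutation of a finite set, some power $(\sigma^m)^r=\sigma^{mr}$ fixes $\underline{w}$, hence $\underline{w}\in\text{Per}(\sigma)$. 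Alternatively, and perhaps more cleanly, one can argue directly that $\sigma^m$ sends the sector code $\underline{e_j(p)}$ to the sector code $\underline{e_{j'}(p)}$ where $e_{j'}(p)=f^m(e_j(p))$ is the image sector (well-defined by Proposition~\ref{Prop: image secto is a sector}), so that $\sigma^m$ realizes on codes the same permutation that $f^m$ induces on the $2k$ sectors of $p$; since that permutation has finite order dividing $(2k)!$, the code $\underline{w}$ is periodic with period dividing $m(2k)!$. Either way one concludes $\pi_f^{-1}(p)\subset\text{Per}(\sigma)$.

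The main (and only real) obstacle is making rigorous the claim that $\sigma$ acts on sector codes by the same combinatorics with which $f$ acts on sectors — i.e.\ that $\sigma(\underline{e_j(x)})=\underline{e_{j'}(f(x))}$ where $e_{j'}(f(x))$ is the image sector of $e_j(x)$. This follows by unwinding Definition~\ref{Defi: Sector codes}: the $z$-th entry of $\underline{e_j(x)}$ is the index of the rectangle containing $f^z(e_j(x))$, so the $z$-th entry of $\sigma(\underline{e_j(x)})$ is the index of the rectangle containing $f^{z+1}(e_j(x))=f^z(f(e_j(x)))=f^z(e_{j'}(f(x)))$, which is exactly the $z$-th entry of $\underline{e_{j'}(f(x))}$; here one invokes Lemma~\ref{Lemm: sector contined unique rectangle} to know each sector lies in a unique rectangle and Proposition~\ref{Prop: image secto is a sector} to know $f$ carries sectors to sectors. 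Once this identity is in hand, the finiteness of the set of sectors and the pigeonhole/finite-group-action argument close the proof.
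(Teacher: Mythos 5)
Your proposal is correct and follows essentially the same route as the paper: the first half is the same one-line application of the semi-conjugacy $f\circ\pi_f=\pi_f\circ\sigma_A$, and the second half, like the paper, identifies codes over $p$ with sector codes and concludes from a finite-permutation (pigeonhole) argument on the finitely many sectors of $p$. Your primary variant — letting $\sigma^m$ permute the finite fiber $\pi_f^{-1}(p)$ directly, rather than tracking the action of $f^m$ on sectors as the paper does — is a mild streamlining of the same idea and is equally valid.
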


\begin{proof}
If $\underline{w}\in \Sigma_{A}$ is periodic of period $k$, the semi-conjugation given by $\pi_f$ implies that:

$$
f^k(\pi_f(\underline{w}))=\pi_f(\sigma^k(\underline{w}))=\pi_f(\underline{w}).
$$

If $p$ is a periodic point of period $P$, and let $\underline{w} \in \Sigma_{A}$ be a code such that $\pi_f(\underline{w}) = p$, then from our proof that $\pi_f$ is finite-to-one, we established that every code projecting to $p$ is a sector code. Thus, we have $\underline{w} = \underline{e}$, corresponding to some sector $e$ of $p$.

If $p$ is a saddle point of index $k$ (with $k \geq 1$), there are at most $2k$ rectangles in $\cR$ that can contain $p$. Let's consider the rectangle $R_{w_0}$. We are aware that under $f$, the sectors incident to $p$ can be permuted, with each sector contained within a single rectangle. The action of $f^{kP}$ on the set of sectors of $p$ is a permutation. Hence, there exists a multiple of the period of $p$, denoted as $Q = km$, such that $f^Q(e) = e$. However, once we observe that a sector $e$ returns to itself after the action of $f^Q$ for all $m \in \{1, 2, \ldots, Q\}$, it  becomes necessary that the sector $f^{Q+m}(e)$ and $f^{m}(e)$ must coincide. Consequently, the sector code $\underline{e} = \underline{w}$ is periodic, with a period $Q$.

\end{proof}

 The following Lemma characterizes the periodic boundary points of $(f,\cR)$ in terms of their iterations under $f$. It is a technical result that we will use in future arguments.

\begin{lemm} \label{Lemm: no periodic boundary points}
	Let $\underline{w} \in \Sigma_A$ be a code such that for every $k \in \mathbb{Z}$, $f^k(\pi_f(\underline{w})) \in \partial^s\mathcal{R}$, then $\pi_f(\underline{w})$ is a periodic point of $f$. Similarly, if $f^k(\pi_f(\underline{w})) \in \partial^u\mathcal{R}$ for all integers $k$, then $\underline{w}$ is a periodic code.
\end{lemm}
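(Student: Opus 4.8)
\emph{Proof plan.} Write $x:=\pi_f(\underline w)$ and $B:=\partial^s\mathcal{R}=\bigcup_{i=1}^n\partial^sR_i$, a compact set with finitely many components, each a compact stable arc. The plan is: (1) show $B$ is forward invariant under $f$; (2) extract the compact $f$‑invariant ``core'' $B_\infty$ of $B$, which sits inside finitely many periodic stable leaves; (3) observe that the hypothesis places $x$ in $B_\infty$; and (4) use the uniform contraction of $f$ along stable leaves to conclude that the only $f$‑invariant compact subset of such a leaf is the periodic point carrying it. First I would prove $f(B)\subseteq B$. If $V$ is a vertical sub‑rectangle of $R_j$, then by Definition \ref{Defi: Vertical/horizontal subrec} its vertical leaves coincide with those of $R_j$, hence its stable boundary $\partial^sV$ is contained in $\partial^sR_j$. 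By the Markov property (Definition \ref{Defi: Markov partition}) $f(R_i)\cap\overline{R_j}$ is a union of such vertical sub‑rectangles, and $f(\partial^sR_i)$, being the stable boundary of $f(R_i)$, is covered by their stable boundaries; therefore $f(\partial^sR_i)\subseteq\bigcup_{j}\partial^sR_j=B$. Consequently $\{f^k(B)\}_{k\ge0}$ is a decreasing sequence of compacta, and $B_\infty:=\bigcap_{k\ge0}f^k(B)$ is compact and $f$‑invariant, since $f(B_\infty)=\bigcap_{k\ge1}f^k(B)=B_\infty$ (using $f(B)\subseteq B$). By Lemma \ref{Lemm: Boundary of Markov partition is periodic} each component of $B$ lies on the stable leaf of some periodic point; let $p_1,\dots,p_r$ be these periodic points, so $B\subseteq\bigcup_{j=1}^r W^s(p_j)$ and hence $B_\infty\subseteq\bigcup_{j=1}^r W^s(p_j)$.

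Next I would place $x$ inside $B_\infty$. The hypothesis gives $f^{-k}(x)\in B$ for every $k\ge0$, so $x\in f^k(B)$ for every $k\ge0$, i.e.\ $x\in B_\infty$; in particular $x$ lies on some leaf $W^s(p_j)$, and $f^{k}(x)\in B_\infty$ for all $k\in\mathbb Z$ by invariance. Now choose $M\in\NN_{>0}$ such that $f^M$ fixes each $p_j$ and each stable separatrix issuing from each $p_j$; this is possible because $f$ permutes the finitely many periodic points $p_j$ and permutes the finitely many prongs at each of them. Parametrize each such separatrix by $\mu^u$‑arclength (finite on compact sub‑arcs, since a stable leaf is topologically transverse to $\mathcal{F}^u$ away from singularities); since $f_*\mu^u=\lambda\mu^u$ with $\lambda>1$, the map $f^M$ acts on this separatrix as $t\mapsto\lambda^{-M}t$, so $f^{-M}$ multiplies $\mu^u$‑arclength measured from $p_j$ by $\lambda^{M}>1$. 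Because $B_\infty$ is $f$‑invariant and $W^s(p_j)$ is $f^{-M}$‑invariant ($f^{-M}(p_j)=p_j$), the set $K:=B_\infty\cap W^s(p_j)$ is compact and $f^{-M}$‑invariant; put $D:=\sup\{\mu^u\text{-arclength of }[p_j,y]:y\in K\}<\infty$. If $x\neq p_j$, then $f^{-kM}(x)\in K$ for all $k\ge0$, yet its $\mu^u$‑arclength from $p_j$ equals $\lambda^{kM}$ times that of $[p_j,x]>0$, which tends to $\infty$, contradicting the bound $D$. Hence $x=p_j$ is a periodic point of $f$, which is the first assertion.

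For the unstable case I would run the same argument with $f$ replaced by $f^{-1}$, which is again pseudo‑Anosov with $\mathcal{R}$ as a Markov partition, the roles of $\partial^s\mathcal{R}$ and $\partial^u\mathcal{R}$ exchanged; by Lemma \ref{Lemm: Boundary of Markov partition is periodic} the left and right boundaries of the rectangles lie on unstable leaves of periodic points, and $f^{-1}$ contracts $\mu^s$ along unstable leaves. So the hypothesis $f^{k}(\pi_f(\underline w))\in\partial^u\mathcal{R}$ for all $k$ forces $\pi_f(\underline w)$ to be periodic, and then Lemma \ref{Lemm: Periodic to peridic} (every code projecting onto a periodic point of $f$ is periodic) gives $\underline w\in\mathrm{Per}(\sigma)$.

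The routine parts are the arclength bookkeeping along possibly‑singular periodic leaves and the precise statement of the contraction constant; the only step that needs a genuine (though short) argument beyond the quoted lemmas is the forward invariance $f(\partial^s\mathcal{R})\subseteq\partial^s\mathcal{R}$, and I expect that — together with verifying that the decreasing intersection $B_\infty$ is genuinely $f$‑invariant — to be the main point to get right.
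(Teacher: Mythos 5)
Your proof is correct and rests on exactly the same mechanism as the paper's: the $\mu^u$-length of the stable segment from $x$ to the periodic point on its boundary leaf is multiplied by $\lambda^{m}$ under $f^{-m}$, which is incompatible with the finitely many compact stable boundary components (your bound $D$ is the paper's implicit finite-length argument), and the unstable case follows by passing to $f^{-1}$ together with Lemma \ref{Lemm: Periodic to peridic}. The forward invariance of $\partial^s\mathcal{R}$ and the set $B_\infty$ are a harmless but unnecessary detour, since the hypothesis already places every iterate $f^{k}(\pi_f(\underline{w}))$, $k\in\mathbb{Z}$, on $\partial^s\mathcal{R}$.
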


\begin{proof}

Suppose $\underline{w}$ is non-periodic. Lemma \ref{Lemm: Periodic to peridic}  implies that $x = \pi_f(\underline{w})$ is non-periodic and lies on the stable boundary of $\mathcal{R}$, which is a compact set with each component having finite $\mu^u$ length. Take $[x,p]^s$ to be the stable segment joining $x$ with the periodic point on its leaf (which exists by Lemma \ref{Lemm: Boundary of Markov partition is periodic}). For all $m \in \mathbb{N}$,
$$
\mu^u([f^{-m}(x), f^{-m}(p)]^s)=\mu^u(f^{-m}[x,p]^s)=\lambda^m\mu^u([x,p]^s),
$$
 which is unbounded. Therefore, there exists $m \in \mathbb{N}$ such that $f^{-m}(x)$ is no longer in $\partial^s\mathcal{R}$, contradicting the hypothesis of our lemma. A similar reasoning applies to the second proposition.
 \end{proof}

This will allow us to provide a characterization of the inner periodic points of $\mathcal{R}$. The proof of Corollary \ref{Coro: interior periodic points unique code} follows from the fact that all sector codes of a point satisfying the hypotheses are equal.

\begin{coro}\label{Coro: interior periodic points unique code}

Let $\underline{w}\in \Sigma_A$ be a periodic code. If $\pi_f(\underline{w})\in \overset{o}{\cR}$, then for all $z\in \ZZ$, $f^z(\pi_f(\underline{w}))\in \overset{o}{\cR}$, which means it stays in the interior of the Markov partition. Moreover, $\pi^{-1}_f(\pi_f(\underline{w}))=\{\underline{w}\}$ have a unique point.

\end{coro}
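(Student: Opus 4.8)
The plan is to combine Lemma \ref{Lemm: no periodic boundary points} with a sector-counting argument in the spirit of the proof that $\pi_f$ is finite-to-one (Corollary \ref{Coro: Caracterisation fibers}). First I would prove the orbit statement by contraposition. Suppose $\underline{w}$ is periodic with $x := \pi_f(\underline{w}) \in \overset{o}{\mathcal{R}}$ but that some iterate $f^{z_0}(x)$ is \emph{not} in the interior of the partition, i.e. $f^{z_0}(x) \in \partial\mathcal{R} = \partial^s\mathcal{R} \cup \partial^u\mathcal{R}$. Since $\underline{w}$ is periodic of some period $P$, the orbit $\{f^{z}(x)\}_{z \in \mathbb{Z}}$ is finite, and by the invariance properties of the stable and unstable boundaries under $f$ and $f^{-1}$ (the image of $\partial^s\mathcal{R}$ under $f$ stays in $\partial^s\mathcal{R}$, used already in Lemma \ref{Lemm: Boundary of Markov partition is periodic}), one shows that if $f^{z_0}(x) \in \partial^s\mathcal{R}$ then $f^{z}(x) \in \partial^s\mathcal{R}$ for all $z \geq z_0$; by periodicity this forces $f^{z}(x) \in \partial^s\mathcal{R}$ for \emph{all} $z \in \mathbb{Z}$. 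Symmetrically, if $f^{z_0}(x) \in \partial^u\mathcal{R}$ then $f^z(x) \in \partial^u\mathcal{R}$ for all $z$. Either way, we contradict nothing directly yet — but the point $x$ itself lies in $\overset{o}{\mathcal{R}}$, so $x \notin \partial^s\mathcal{R}$ and $x \notin \partial^u\mathcal{R}$, which already contradicts the conclusion "$f^z(x)\in\partial^s\mathcal{R}$ (resp. $\partial^u\mathcal{R}$) for all $z$" applied at $z = 0$. Hence every iterate of $x$ lies in $\overset{o}{\mathcal{R}}$.

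For the uniqueness of the fiber, recall from the proof of Corollary \ref{Coro: Caracterisation fibers} that every code in $\pi_f^{-1}(x)$ is a sector code $\underline{e_j(x)}$ of $x$, and that $x$ has $2k$ sectors if it has $k$ separatrices. The key observation is that when $x \in \overset{o}{\mathcal{R}}$ and all its forward and backward iterates lie in $\overset{o}{\mathcal{R}}$, Lemma \ref{Lemm: sector contined unique rectangle} gives, for each $z$, a unique rectangle $R_{i(z)}$ containing \emph{all} sectors of $f^z(x)$ simultaneously: indeed, if $f^z(x)$ is in the interior of a rectangle, then a whole regular neighborhood $D(f^z(x),\epsilon)$ is contained in that single rectangle (for $\epsilon$ small), so every sector of $f^z(x)$ — being represented by sequences eventually inside $D(f^z(x),\epsilon)$ — is contained in that same rectangle $R_{i(z)}$. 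Consequently the sector code of $f^z(x)$'s rectangle index is independent of $j$: for every $z$ and every $j$, $e(x,j)_z = i(z)$. Therefore all sector codes $\underline{e_j(x)}$ coincide with the single code $(i(z))_{z\in\mathbb{Z}}$, which must be $\underline{w}$; hence $\pi_f^{-1}(x) = \{\underline{w}\}$.

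The main obstacle I anticipate is the bookkeeping in the first part: making precise the claim that $f^{z_0}(x) \in \partial^s\mathcal{R}$ propagates the boundary condition forward (and $\partial^u$ backward), using only the Markov property from Definition \ref{Defi: Markov partition} together with periodicity of the orbit. This is where one must be careful about whether a point can sit on a stable boundary at time $z_0$ but move off it at time $z_0+1$; the resolution is that $f$ maps the stable boundary of a rectangle into the stable boundary of the vertical sub-rectangle containing its image, which lies in $\partial^s\mathcal{R}$, so the forward orbit stays in $\partial^s\mathcal{R}$ once it enters, and finiteness of the orbit (periodicity) then pulls it back to time $0$. Everything after that is a direct application of the sector machinery already assembled in Section \ref{Sec: Total Invariant}, so it should go through without new ideas.
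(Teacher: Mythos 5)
Your proposal is correct and follows essentially the same route as the paper: the invariance of the stable (resp.\ unstable) boundary under $f$ (resp.\ $f^{-1}$) together with periodicity forces the whole orbit onto the boundary, contradicting $x\in\overset{o}{\cR}$, and then the fact that each $f^z(x)$ lies in the interior of a single rectangle makes all sector codes coincide, so the fiber is $\{\underline{w}\}$. Your write-up just spells out the forward-propagation and regular-neighborhood details that the paper leaves implicit.
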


\begin{proof}
 If, for some $z\in \ZZ$, $f^z(x)$ is in the stable (or unstable) boundary of the Markov partition, as $x$ is periodic and the stable boundary of a Markov partition is $f$-invariant, the entire orbit of $x$ lies in the stable boundary of $\cR$. This leads to a contradiction because at least one point in the orbit of $x$ is in the interior of $\cR$.
 
 From this perspective, all the sectors of $f^z(x)$ are contained within the interior of the same rectangle, and therefore all its sector codes are the same.
\end{proof}

\subsubsection{Decomposition of the surface: totally interior points and boundary laminations.}

The property that the entire orbit of $x$ is always in the interior of the partition is very important because it distinguishes the points that are in the complements of the stable and unstable laminations of periodic boundary points. We are going to give them a name.

\begin{defi}\label{Defi: totally interior points}
Let $(f,\cR)$ be a realization of $T$. The \emph{totally interior points} of $(f,\cR)$ are points $x \in \cup\cR=S$ such that for all $z \in \ZZ$, $f^z(x) \in \overset{o}{\cR}$. They are denoted Int$(f,\cR)\subset S$.
\end{defi}

 Proposition \ref{Prop: Carterization injectivity of pif} characterizes the points of $S$ where the projection $\pi_f$ is invertible as the \emph{totally interior points} of $\cR$.

\begin{prop}\label{Prop: Carterization injectivity of pif}
Let $x$ be any point in  $S$. Then $\vert \pi_f^{-1}(x)\vert =1$ if and only if $x$ is a totally interior point of $\cR$.
\end{prop}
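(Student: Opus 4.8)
The plan is to prove the two implications separately, using the characterization of fibers of $\pi_f$ as sector codes (Corollary \ref{Coro: Caracterisation fibers}), together with Lemma \ref{Lemm: sector contined unique rectangle} (each sector lies in a unique rectangle) and Proposition \ref{Prop: image secto is a sector} (the image of a sector is a sector).

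First I would prove the easy direction: if $x$ is a totally interior point of $\cR$, then $|\pi_f^{-1}(x)| = 1$. By Corollary \ref{Coro: Caracterisation fibers}, every code in $\pi_f^{-1}(x)$ is a sector code $\underline{e_j(x)}$ for some sector $e_j(x)$ of $x$, where $j$ ranges over the $2k$ sectors if $x$ has $k$ separatrices. Since $f^z(x)\in\overset{o}{\cR}$ for every $z\in\ZZ$, there is a unique rectangle $R_{i(z)}$ with $f^z(x)\in\overset{o}{R_{i(z)}}$; moreover a whole neighborhood of $f^z(x)$ is contained in $\overset{o}{R_{i(z)}}$, so every sector of $f^z(x)$ is contained in $R_{i(z)}$. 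Consequently, for any two sectors $e_j(x)$ and $e_{j'}(x)$ and every $z$, the rectangles assigned to $f^z(e_j(x))$ and $f^z(e_{j'}(x))$ coincide (both equal $R_{i(z)}$), so the sector codes agree: $\underline{e_j(x)}=\underline{e_{j'}(x)}$. Hence $\pi_f^{-1}(x)$ consists of a single code. (This essentially reproduces the argument already used in Corollary \ref{Coro: interior periodic points unique code}, now without the periodicity hypothesis.)

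For the converse, I would argue by contrapositive: if $x$ is \emph{not} totally interior, I want to exhibit at least two distinct codes in $\pi_f^{-1}(x)$. By hypothesis there exists $z_0\in\ZZ$ with $y:=f^{z_0}(x)\in\partial^s\cR\cup\partial^u\cR$; since $\pi_f$ is a semi-conjugacy it suffices (replacing $x$ by $y$ and using $\sigma$-equivariance) to produce two distinct sector codes for a boundary point $y$. Say $y$ lies in the interior of a stable boundary arc (the unstable case is symmetric). Then $y$ lies on the common stable boundary of (at least) two rectangles $R_i$ and $R_{i'}$, one "above" and one "below" this local stable leaf, and $y$ possesses sectors on both sides of that stable leaf: some sector $e$ lies in $R_i$ and a different sector $e'$ lies in $R_{i'}$, with $i\neq i'$ (using that the Markov partition covers $S$ with disjoint interiors, so the two sides of a boundary arc belong to genuinely different rectangles — here one should be slightly careful when $y$ is a corner point of several rectangles, but in every case at least two of the incident rectangles are distinct). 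Therefore the $0$-th symbols of $\underline{e}$ and $\underline{e'}$ differ, so these are two distinct codes both projecting to $y$ by Corollary \ref{Coro: Caracterisation fibers}, giving $|\pi_f^{-1}(y)|\geq 2$ and hence $|\pi_f^{-1}(x)|\geq 2$.

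The main obstacle is the bookkeeping in the converse at boundary and corner points: one must verify that a point on the boundary of $\cR$ really does meet two rectangles with \emph{distinct} indices in distinct sectors, which relies on the fact that the stable (resp. unstable) boundary of the partition is a union of arcs each shared by exactly the rectangles on its two sides, and on the definition of sectors near such a point. A clean way to handle this uniformly is to invoke Lemma \ref{Lemm: sector contined unique rectangle}: pick sequences $x_n\to y$ approaching from two sectors on opposite sides of the local stable leaf through $y$; each sequence eventually lies in the interior of a single rectangle, and these two rectangles cannot coincide since their common interior would then contain points on both sides of an arc of $\partial^s\cR$, contradicting that interiors of distinct rectangles are disjoint and that $\partial^s R_i\cap\overset{o}{R_i}=\emptyset$. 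This pins down $i\neq i'$ and completes the argument.
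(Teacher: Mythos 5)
Your overall route is the same as the paper's: both implications are reduced, via Corollary \ref{Coro: Caracterisation fibers}, to comparing the sector codes of the point, and your first implication (totally interior $\Rightarrow$ a single code) is exactly the paper's argument. The only structural difference is that you prove the harder implication by contrapositive, trying to exhibit two distinct sector codes at a boundary point, where the paper argues directly that a unique code forces every iterate of the point into $\overset{o}{\cR}$.

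There is, however, a genuine gap at the step you yourself flag. You claim that the two sectors of $y$ on opposite sides of a stable boundary arc lie in rectangles with \emph{distinct} indices, and justify it by saying that otherwise the common interior "would contain points on both sides of an arc of $\partial^s\cR$", contradicting disjointness of interiors and $\partial^s R_i\cap\overset{o}{R_i}=\emptyset$. That is not a contradiction: if the \emph{same} rectangle $R_i$ sits on both sides of the arc (its upper stable boundary identified in $S$ with its lower stable boundary), only one rectangle is involved, so disjointness of interiors is vacuous, and the arc itself still avoids $\overset{o}{R_i}$. Rectangles in this paper are only immersed (Definition \ref{Defi: Rectangle} demands a homeomorphism only on the open square), and nothing in the standing hypotheses (binary incidence matrix, pseudo-Anosov class) excludes such a self-adjacency; the same issue recurs at corner points, where a priori all incident local rectangles could carry one and the same label. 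In that situation the two sectors have the \emph{same} $0$-th symbol and your argument produces no second code. The conclusion still holds, but for a non-local reason: the forward itineraries of the two boundary sectors follow the $\Gamma(T)$-orbits of $(i,+1)$ and $(i,-1)$ and eventually differ; this is precisely the content of Proposition \ref{Prop: s code Inyective} (via Lemma \ref{Lemm: positive h-i} and Lemma \ref{Lemm: diferen positive code}, which use binarity and uniform expansion — equivalently, if the two codes agreed for all time the rectangle would collapse onto a stable segment). So your contrapositive needs either to invoke that injectivity of boundary codes or to rule out self-adjacent rectangles by some other means; as written, the exclusion of the case $i=i'$ does not hold up.
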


\begin{proof}
If $\vert \pi_f^{-1}(x)\vert = 1$, for all $z\in \ZZ$ the sector codes of $f^z(x)$ are all equal. Therefore, for all $z\in \ZZ$, the sectors of $f^z(x)$ are all contained in the interior of the same rectangle. Furthermore, the union of all sectors determines an open neighborhood of $f^z(x)$, which must be contained in the interior of a rectangle. Therefore, $f^z(x)\in \overset{o}{\cR}$.

On the other hand, if $f^z(x)\in \overset{o}{\cR}$ for all $z\in \ZZ$, then all sectors of $f^z(x)$ are contained in the same rectangle. This implies that the sector codes of $x$ are all equal. In view of Lemma  \ref{Lemm: every code is sector code }, this implies $\vert \pi_f^{-1}(x)\vert = 1$.

\end{proof}

Lemma \ref{Lemm: Caraterization unique codes}  characterizes the stable and unstable manifolds of periodic boundary points of $\cR$ as the complement of the totally interior points. In this manner, we have a decomposition of $S$ as the union of three sets: the totally interior points, the stable lamination of periodic $s$-boundary points $\cF^s(\text{Per}^{s}(f,\cR))$, and the unstable lamination of periodic $u$-boundary points $\cF^u(\text{Per}^{u}(f,\cR))$.

\begin{lemm}\label{Lemm: Caraterization unique codes}
A point $x\in S$ is a totally interior point of $\cR$ if and only if it is not in the stable or unstable leaf of a periodic boundary point of $\cR$.
\end{lemm}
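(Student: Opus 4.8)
The statement to prove is the equivalence: $x$ is a totally interior point of $\cR$ if and only if $x$ does not lie on the stable or unstable leaf of a periodic boundary point of $\cR$. I would prove the two contrapositive implications separately, using that the stable boundary $\partial^s\cR$ is forward $f$-invariant (and the unstable boundary backward $f$-invariant), a fact already exploited in Lemma~\ref{Lemm: Boundary of Markov partition is periodic} and Lemma~\ref{Lemm: no periodic boundary points}.

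First I would show that if $x$ lies on the stable leaf of a periodic $s$-boundary point $p$, then $x$ is \emph{not} totally interior. Indeed, $p\in \partial^s\cR$ is periodic of some period $k$, and the stable leaf through $p$ is $f^k$-invariant; since $\partial^s\cR$ is a union of $u$-arcs contained in stable leaves of periodic points (Lemma~\ref{Lemm: Boundary of Markov partition is periodic}) and is forward-invariant, the whole forward orbit of $x$ lies on the stable leaf of a periodic point. One must then locate \emph{one} iterate of $x$ actually on $\partial^s\cR$: the segment $[x,p]^s$ has finite $\mu^u$-length, and under $f^{k}$ it contracts (the $\mu^u$-length is multiplied by $\lambda^{-k}$), so its forward iterates shrink; because $p$ is a corner-type endpoint of the $u$-arcs of $\partial^s\cR$ through $p$, for $m$ large enough $f^{mk}(x)$ lies in the $u$-arc of $\partial^s\cR$ emanating from $p$, hence $f^{mk}(x)\in\partial^s\cR\subset S\setminus\overset{o}{\cR}$. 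This contradicts totally interiority. The symmetric argument with backward iterates and $\mu^s$-contraction handles the unstable case.

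Conversely, suppose $x$ is not totally interior, so there is $z_0\in\ZZ$ with $f^{z_0}(x)\notin\overset{o}{\cR}$, i.e.\ $f^{z_0}(x)\in\partial^s\cR\cup\partial^u\cR$. Replacing $x$ by $f^{z_0}(x)$ (which lies on a given leaf iff $x$ does, up to flowing the leaf), assume $x\in\partial^s\cR$; then by forward-invariance $f^k(x)\in\partial^s\cR$ for all $k\ge 0$. I would like to invoke Lemma~\ref{Lemm: no periodic boundary points}, but that lemma assumes membership in $\partial^s\cR$ for \emph{all} $k\in\ZZ$. So instead I argue directly: the forward orbit of $x$ stays in the compact set $\partial^s\cR$, whose components have finite $\mu^u$-length; the point $x$ lies on a stable segment $[x,p]^s$ where $p$ is the periodic point on its stable leaf given by Lemma~\ref{Lemm: Boundary of Markov partition is periodic}; hence $x$ lies on the stable leaf of the periodic boundary point $p$, which is exactly what we want. (If $f^{z_0}(x)\in\partial^u\cR$ one runs the backward version, concluding $x$ is on the unstable leaf of a periodic $u$-boundary point.)

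The main obstacle is the first implication: showing that \emph{some} forward iterate of a point on the stable leaf of a periodic $s$-boundary point actually lands on $\partial^s\cR$, not merely on the abstract stable leaf. This requires using the local picture near the periodic boundary point — that $\partial^s\cR$ contains a genuine $u$-arc neighborhood of $p$ inside its stable leaf, together with the contraction of $\mu^u$-length under forward iteration by the period — to force $f^{mk}(x)$ into that arc. The rest is bookkeeping with invariance of the boundary and the already-established structure lemmas.
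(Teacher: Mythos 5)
Your proposal is correct and takes essentially the same route as the paper's proof: one direction by contracting the stable segment from $x$ to the periodic boundary point until a forward iterate lands in the boundary arc adjacent to it (the unstable case being symmetric under backward iteration), and the converse by passing to a boundary iterate and using Lemma~\ref{Lemm: Boundary of Markov partition is periodic} together with the invariance of $\partial^{s}\cR$ and $\partial^{u}\cR$ to recognize that the leaf carries a periodic boundary point. Only a cosmetic remark: the components of $\partial^{s}\cR$ are \emph{stable} segments, so calling them ``$u$-arcs'' clashes with the paper's terminology, where a $u$-arc is an unstable interval.
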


\begin{proof}

Suppose that for all $z\in \ZZ$, $f^z(x)\in \overset{o}{\cR}$ and also $x$ is in the stable (unstable) leaf of some periodic boundary point $p\in \partial^s R_i$ ($p\in \partial^u R_i$). The contraction (expansion) on the stable (unstable) leaves of $p$ implies the existence of a $z\in \ZZ$ such that $f^z(x)\in \partial^s R_i$ ($f^z(x)\in \partial^u R_i$), which leads to a contradiction.

On the contrary, Lemma  \ref{Lemm: Boundary of Markov partition is periodic} implies that the only stable or unstable leaves intersecting the boundary $\partial^{s,u} R_i$ are the stable and unstable leaves of the $s$-boundary and $u$-boundary periodic points. If $x$ is not in these laminations, neither are its iterations $f^z(x)$, and we have $f^z(x)\in \overset{o}{\cR}$ for all $z\in \ZZ$.
\end{proof}

It will be easier to provide a combinatorial definition of codes that project to stable and unstable laminations of periodic $s,u$-boundary points than to define codes that project to a totally interior point based on $T$. However, we can define the latter as the complement of the union of the former.

In the next section, we will demonstrate how to construct codes that project to stable and unstable $\cR$-boundaries and then expand this set to include codes that project to theirs stable and unstable leaves.

\subsubsection{Codes that project to the stable or unstable leaf.}

 Given $\underline{w} \in \Sigma_A$, the task is to determine, in terms of $(\Sigma_A, \sigma)$ and $T$, the subset $\underline{F}^s(\underline{w})$ of codes that are projected by $\pi_f$ to the stable leaf of $\pi_f(\underline{w})$. The following definition is completely determined by $T$ and its sub-shift, this what we use to call a \emph{combinatorial definition}.

\begin{defi}\label{Defi: s,u-leafs}
Let $\underline{w} \in \Sigma_A$. The set of \emph{stable leaf codes} of $\underline{w}$ is defined as follows:
	$$
	\underline{F}^s(\underline{w}):=\{\underline{v}\in \Sigma_A: \exists Z\in \ZZ \text{ such that } \forall z\geq Z, \, v_z=w_z\}.
	$$
Similarly, the \emph{unstable leaf codes} of $\underline{w}$ is the set:
	$$
	\underline{F}^u(\underline{w}):=\{\underline{v}\in \Sigma_A: \exists Z\in \ZZ \text{ such that } \forall z\leq Z, \, v_{z}=w_{z}\}.
	$$
\end{defi}

We will now prove that these sets project to the stable leaf at the point $\pi_f(\underline{w})$, and that they are the only codes that do so. This provides a combinatorial definition of stable and unstable leaves.

Let's introduce some notation. For $\underline{w}\in \Sigma_A$, we define its positive part as $\underline{w}_+ := (w_n)_{n\in \NN}$ (with $0\in \NN$), and its negative part as $\underline{w}_- := (w_{-n})_{n\in \NN}$. We denote the set of positive codes in $\Sigma_A$ as $\Sigma_A^+$, which corresponds to the positive parts of all codes in $\Sigma_A$. Similarly, we define the set of negative codes in $\Sigma_A$ as $\Sigma^-_A$. Moreover, we use $F^s(x)$ to denote the stable leaf $\cF^s(f)$ passing through $x$, and $F^u(x)$ for the corresponding unstable leaf $\cF^u(f)$.

\begin{prop}\label{Prop: Projection foliations}
Let $\underline{w}\in \Sigma_A$. Then we have $\pi_f(\underline{F}^{s}(\underline{w}))\subset F^{s}(\pi_f(\underline{w}))$. Furthermore, assume that $\pi_f(\underline{w})=x$.
\begin{itemize}
\item If $x$ is not a $u$-boundary point, then for all $y\in F^u(x)$, there exists a code $\underline{v}\in \underline{F}^s(\underline{w})$ such that $\pi_f(\underline{v})=y$.

\item  If $x$ is a $u$-boundary point and $\underline{w}_0=w_0$, then for all $y$ in the stable separatrix of $x$ that enters the rectangle $R_{w_0}$, denoted $F^s_0(x)$, there is a code $\underline{v}\in \underline{F}^s(x)$ that projects to $y$, i.e., $\pi_f(\underline{v})=x$.

\end{itemize}

A similar statement applies to the unstable manifold of $\underline{w}$ and its respective projection onto the unstable manifold of $\pi_f(\underline{w})$.
	\end{prop}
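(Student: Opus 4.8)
The plan is to prove the inclusion $\pi_f(\underline F^s(\underline w))\subset F^s(\pi_f(\underline w))$ first, and then the two surjectivity-type statements, using throughout the characterization of $\pi_f^{-1}(x)$ by sector codes (Corollary~\ref{Coro: Caracterisation fibers}) and the contraction/expansion properties of $f$ along the foliations. For the inclusion: take $\underline v\in\underline F^s(\underline w)$, so $v_z=w_z$ for all $z\geq Z$. Then for every $n\geq Z$ the rectangles $\overline{\cap_{z=-n}^n f^{-z}(\overset{o}{R_{w_z}})}$ and $\overline{\cap_{z=-n}^n f^{-z}(\overset{o}{R_{v_z}})}$ share a common stable side — both are contained in a single stable leaf once we apply $f^n$ and look at the diameters shrinking — so $f^n(\pi_f(\underline v))$ and $f^n(\pi_f(\underline w))$ lie in a common stable interval of bounded $\mu^s$-length; since $f^{-n}$ contracts along unstable leaves (equivalently, since $\mu^s$ of the segment joining them tends to $0$ under positive iterates is the wrong direction — rather: the two points agree in forward itinerary, so they lie on the same stable leaf because the stable leaf is exactly the set of points with asymptotic forward itinerary). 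Concretely, $d(f^n(\pi_f(\underline v)),f^n(\pi_f(\underline w)))$ is bounded (both in $R_{w_n}$), while they lie on a common stable leaf segment whose unstable-width shrinks; this forces $\pi_f(\underline v)\in F^s(\pi_f(\underline w))$.

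For the first surjectivity statement, suppose $x=\pi_f(\underline w)$ is not a $u$-boundary point and let $y\in F^u(x)$. The idea is to build $\underline v$ by keeping the forward itinerary of $x$ and recording the backward itinerary of $y$. Since $y$ is on the unstable leaf of $x$, the backward orbits of $x$ and $y$ stay close and, crucially, $f^{-n}(x)$ and $f^{-n}(y)$ eventually lie in a common rectangle for each $n$; I would define $v_z$ for $z\leq 0$ as (a sector code index of) the rectangle containing $f^z(y)$, and $v_z=w_z$ for $z>0$. The admissibility of $\underline v$ (that it lies in $\Sigma_A$) follows as in Lemma~\ref{Lemm: sector code is admisible} by passing to sectors of $y$ that converge toward the relevant stable separatrix; the hypothesis that $x$ is not a $u$-boundary point is what guarantees the forward sectors of $x$ and the backward sectors of $y$ can be matched into a single coherent sector code — there is no ambiguity about which side of the unstable boundary we are on. Then $\pi_f(\underline v)$ is a point whose forward itinerary agrees with $x$ and backward itinerary agrees with $y$; by the uniqueness part of $\pi_f$ restricted to the rectangle $\cap f^{-z}(\overset{o}{R_{v_z}})$, this point must be $y$, and by construction $\underline v\in\underline F^s(\underline w)$.

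For the second statement, $x$ is a $u$-boundary point: then $x$ lies on $\partial^u R_i$ and a stable separatrix $F_0^s(x)$ enters into $R_{w_0}$ while the other stable separatrix need not be seen by the rectangle $R_{w_0}$. Here I restrict attention to $y\in F_0^s(x)$ — wait, the statement as written asks for $y$ in the stable separatrix; I would instead argue: for $y$ on the stable separatrix of $x$ entering $R_{w_0}$, the forward orbit of $y$ is asymptotic to that of $x$ and shares the relevant sector, so the same itinerary $\underline w$ (possibly after adjusting finitely many negative coordinates to record where $f^{-n}(y)$ sits) works — the key point is that near $x$, the sector code determined by $w_0$ is precisely the one adjacent to $F_0^s(x)$, so $y$ inherits a code $\underline v$ with $v_z=w_z$ for $z\geq 0$, giving $\underline v\in\underline F^s(\underline w)$ with $\pi_f(\underline v)=y$. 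The analogous statement for unstable leaves is obtained by applying everything already proved to $f^{-1}$, whose Markov partition has geometric type $T^{-1}$, exchanging the roles of stable and unstable, of positive and negative parts of codes, and of $s$- and $u$-boundary points.

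The main obstacle I anticipate is the second bullet (the $u$-boundary case): one must be careful that a $u$-boundary point $x$ may have its two stable separatrices lying in different rectangles (or $x$ may be a corner with several incident rectangles), so the claim is genuinely only about the separatrix $F_0^s(x)$ singled out by the coordinate $w_0$, and the bookkeeping of sectors — showing that the forward sector code of $y$ coincides with $\underline w$ and not with the code of the "other side" — needs Lemma~\ref{Lemm: sector contined unique rectangle} and Proposition~\ref{Prop: image secto is a sector} applied carefully to the sectors bounded by $F_0^s(x)$. The rest is a matter of assembling codes coordinatewise and invoking the already-established properties of $\pi_f$ (continuity, finite-to-one, semi-conjugacy, and the sector-code description of fibers).
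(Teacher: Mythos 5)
Your first paragraph (the inclusion $\pi_f(\underline{F}^s(\underline{w}))\subset F^s(\pi_f(\underline{w}))$) is, despite some hedging, essentially the paper's argument: the common positive tail forces both points into the nested rectangles $\overline{\cap_{z=0}^{n}f^{-z}(\overset{o}{R_{w_{z+k}}})}$, whose intersection is a single stable segment. The genuine problem is your treatment of the first bullet. You take the hypothesis ``$y\in F^u(x)$'' at face value and splice a code $\underline{v}$ whose negative part records the backward itinerary of $y$ and whose positive part is that of $\underline{w}$, and then assert that $\pi_f(\underline{v})=y$. That last step is false: a point whose forward itinerary is that of $x$ and whose backward itinerary is that of $y$ is the local intersection of the stable leaf through $x$ with the unstable leaf through $y$ (for $y$ on $F^u(x)$ near $x$ this is $x$ itself), not $y$. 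Indeed $f^z(\pi_f(\underline{v}))\in R_{w_z}$ for all large $z$ forces $\pi_f(\underline{v})$ onto $F^s(x)$ by the very inclusion you proved in your first paragraph, so no element of $\underline{F}^s(\underline{w})$ can project to a point of $F^u(x)\setminus F^s(x)$. The occurrence of $F^u(x)$ in the statement is a slip: the claim being proved (and what the paper proves) is that the stable-leaf codes of $\underline{w}$ project \emph{onto} the stable leaf $F^s(x)$ (onto the separatrix $F^s_0(x)$ entering $R_{w_0}$ in the $u$-boundary case). Your construction does not prove that, because you never show that the forward itinerary of $y$ itself agrees with $\underline{w}$.

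What is actually needed, and what the paper does, is the following: given $y\in F^s(x)$, use contraction along stable leaves to find $k$ with $f^k(y)$ in a small stable interval $I$ through $x$ inside $R_{w_0}$ (so it suffices to treat $y\in I$ and then shift by $\sigma^{-k}$), and then take the sector $e_y$ of $y$ pointing toward $x$ along the stable direction and lying on the same unstable side as the sector of $x$ coding $\underline{w}$; one checks inductively, using Lemma \ref{Lemm: sector contined unique rectangle} and Proposition \ref{Prop: image secto is a sector}, that $f^n(e_y)$ and $f^n(e_x)$ lie in the same horizontal/vertical sub-rectangles, so the sector code $\underline{e_y}$ has the same positive part as $\underline{w}$ and projects to $y$. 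Your sketch of the second bullet (matching the sector of $y$ to the sector singled out by $w_0$ adjacent to $F^s_0(x)$) is in the right spirit and is essentially this argument, but the first bullet must be redone along these lines for $y\in F^s(x)$; as written, the splicing argument proves nothing and the appeal to ``uniqueness of $\pi_f$ on $\cap f^{-z}(\overset{o}{R_{v_z}})$'' identifies the wrong point.
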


\begin{proof}

Let $\underline{v}\in \underline{F}^s(\underline{w})$ be a stable leaf code of $\underline{w}$. Based on the definition of stable leaf codes, we can deduce that $w_z=v_z$ for all $z\geq k$ for certain $k\in \NN$. Consequently, $\pi_f(\sigma^k(\underline{w})),\pi_f(\sigma^k(\underline{v}))\in R_{w_k}$. Since the positive codes of $\underline{w}$ and $\underline{v}$ are identical starting from $k$, they define the same horizontal sub-rectangles of $R_{w_k}$ in which the codes $\sigma^k(\underline{w})$ and $\sigma^k(\underline{v})$ are projected. For $n\in \mathbb{N}$, let $H_n$ be the rectangle determined by:

$$
H_n=\cap_{z=0}^n f^{-z}(R_{w_{z+k}})=\cap_{z=0}^n f^{-z}(R_{v_{z+k}}).
$$
The intersection of all the $H_n$ forms a stable segment of $R_{w_k}$. Furthermore, each $H_n$ contains the rectangles:
$$
\overset{o}{Q_n}=\cap_{z=-n}^{n} f^{-z}(\overset{o}{R_{w_{z+k}}})= \cap_{z=-n}^{n} f^{-z}(\overset{o}{R_{v_{z+k}}})
$$
Therefore, the projections $\pi_f(\underline{v})$ and $\pi_f(\underline{w})$ are in the same stable leaf. 
	
	\begin{figure}[h]
		\centering
		\includegraphics[width=1\textwidth]{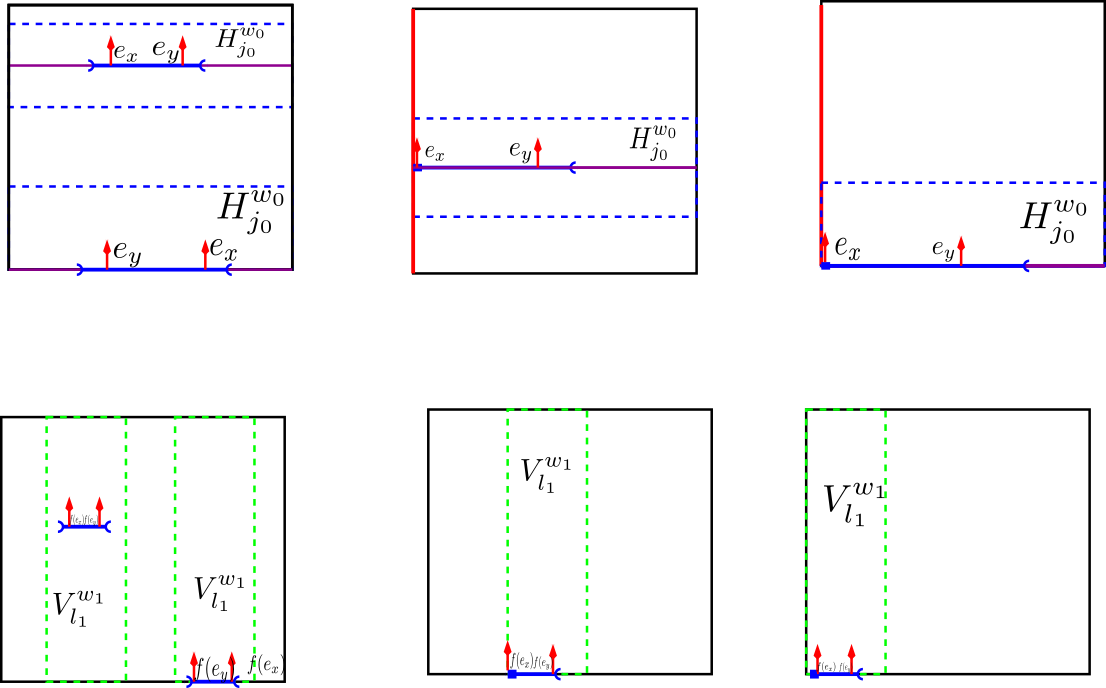}
		\caption{Projections of the stable leaf codes}
		\label{Fig: Proyection fol}
	\end{figure}

For the other part of the argument, let's refer to the image in Figure \ref{Fig: Proyection fol} to gain some visual intuition. Let's address the situation when $x=\pi_f(\underline{w})$ is not a $u$-boundary point. We recall that is implicit that the incidence matrix $A$ is binary.

Let $y\in F^s(\pi_f(\underline{w}))$ such that $y\neq x$. Suppose $y$ is not a periodic point (if it is, we can replace $y$ with $x$). Since  $x$ is not a $u$-boundary point, there is a small interval $I$ properly contained  in the stable segment of  $\cR_{w_0}$ that pass trough $x$. This allows us to apply  the next argument on both stable separatrices of $x$.  By definition of stable leaf, there exist $k\in \mathbb{N}$ such that $f^k(y)\in I$. If we prove there is  $\underline{v}\in \underline{F}^s(\underline{w})$ such that $\pi_f(\underline{v})=f^k(y)$, we obtain that $\sigma^{-k}(\underline{w})\in \underline{F}^s(\underline{w})$  is a code that projects to $y$, i.e $\pi_f(\sigma^{-k}(\underline{v}))=y$. Therefore we can assume  that $y\in I$. This situation is given by the left side pictures in Figure \ref{Fig: Proyection fol}.

There are two situations for $f^z(y)$: either it lies in the stable boundary of $R_{w_0}$ or  not. In any case $x$, the code $\underline{w}$ is equal to  a sector code $\underline{e}_x$ of $x$. The sector $e_x$ is contained in  a unique horizontal sub-rectangle of $R_{w_0}$, $H^{w_0}_{j_0}$, therefore the sector $f(e_x)$ is contained in  $f(H^{w_0}_{j_0})=V^{w_1}_{l_1}$, implying $w_1=\underline{e_x}_1$.

Take the sector   $e_y$ of $y$ in such that in the stable direction points toward $x$ and in the unstable direction point in the same direction than $e_y$. In this manner $e_y$ is contained in the rectangle $H^{w_0}_{j_0}$, therefore $f(e_y)$ is contained in $V^{w_1}_{l_1}$ and $\underline{e_y}_1=w_1$.

 In fact, it turns out that $f^{n}(e_y)$ and $f^n(e_x)$ are contained in the same $R_{w_n}$ for all $n\in \NN$. We can deduce that the positive part of $\underline{e_y}$ coincides with the positive part of $\underline{w}$. Therefore $\underline{e_y}\in \underline{F}^s(\underline{w})$.

%%%%%%%%

In the case where $x\in \partial^u\cR$, there is a slight variation. Let's assume that $\underline{w}=\underline{e_x}$, where $e_x$ is a sector of $x$. This sector code is contained within a unique horizontal sub-rectangle of $R_{w_0}$, $H^{w_0}_{j_0}$ such that $f(H^{w_0}_{j_0})=V^{w_1}_{l_1}$. This horizontal sub-rectangle contain a unique stable interval $I$ that contains $x$ in one of its end points. Consider $y\in I$.

Similar to the previous cases, we can define a sector  $e_y$ of $x$ that is contained in horizontal sub-rectangle $H^{w_0}_{j_0}$, i.e. $\underline{e_y}=w_0$. This implies that $f(e_y)$ is contained in $V^{w_1}_{l_1}$. Then the first term of $\underline{e_y}$ is $w_1$. By applied this process inductively  for all $n\in \NN$, we get that $\underline{e_y}_n=w_n$ and therefore $\underline{e_y}\in \underline{F}^s(\underline{w})$ and projects to $y$.
\end{proof}

The next task is to define, in terms of $T$, those codes that project to the boundary of the Markov partition. We will then use Proposition  \ref{Prop: Projection foliations} to determine the codes that project to the stable and unstable laminations of $s,u$-boundary periodic points.

\subsubsection{Boundary codes and $s,u$-generating functions.}

Let's proceed with the construction of the codes that are projected onto the boundary of the Markov partition, $\partial^{s,u}\cR$. We assume that $\cR=\{R_i\}_{i=1}^n$ is a geometric Markov partition of $f$ with geometric type $T$. For each $i\in \{1,\cdots,n\}$, we label the boundary components of $R_i$ as follows:

\begin{itemize}
\item $\partial^s_{+1}R_i$ denotes the upper stable boundary of the rectangle $R_i$.
\item $\partial^s_{-1}R_i$ denotes the lower stable boundary of $R_i$.
\item $\partial^u_{-1}R_i$ denotes the left unstable boundary of $R_i$.
\item $\partial^u_{+1}R_i$ denotes the right unstable boundary of $R_i$.
\end{itemize}

By using these labeling conventions, we can uniquely identify the boundary components of $\cR$ based on the geometric type $T$.

\begin{defi}\label{Defi; s,u boundary labels of T}
	Let $T=\{n,\{(h_i,v_i)\}_{i=1}^n,\Phi_T\}$  be an abstract geometric type. The $s$-\emph{boundary labels} of $T$ are defined as the formal set:
		$$
	\cS(T):=\{(i,\epsilon): i\in \{1,\cdots,n\} \text{ and } \epsilon\in \{1,-1\} \}, 
	$$
Similarly, the $u$-\emph{boundary labels} of $T$ are defined as the formal set:
	$$
	\cU(T):=\{(k,\epsilon): k\in \{1,\cdots,n\} \text{ and } \epsilon\in \{1,-1\} \}
	$$
\end{defi}

In a while, we will justify such names. It's important to note that this definition was made using only the value of $n$ given by the geometric type $T$, so it does not depend on the specific realization. With these labels, we can formulate in terms of the geometric type the codes that $\pi_f$ projects to $\partial^{u,s}_{\pm 1 }R_i$. The first step is to introduce a \emph{generating function}

We begin by relabeling the stable boundary component of $\mathcal{R}$ using the next  function:
$$
\theta_T:\{1,\cdots,n\}\times \{1,-1\}\rightarrow \{1,\cdots,n\} \times\cup_{i=1}^n \{1,h_i\}_{i=1}^n \subset \cH(T)
$$ 
that is defined as:

\begin{equation}\label{Equa: theta T relabel}
\theta_T(i,-1)=1 \text{ and } \theta(i,1)=h_i.
\end{equation}

The effect of $\theta_T$ is to choose the sub-rectangle of $R_i$ that contains the stable boundary component in question. Specifically, the boundary $\partial^s_{-1}R_1$ is contained in $\partial^s_{-1}H^i_1$, and the boundary $\partial^s_{+1}R_1$ is contained in $\partial^s_{+1}H^i_{h_i}$. This allows us to track the image of a stable boundary component, as the image of $\partial^s_{+1}R_i$ under $f$ should be contained in the stable boundary of $f(H^i_{h_i})$. But remember, the image of $\partial^s_{-1}R_i$ is a boundary component of $f(H^i_1) = V^k_l$, and the pair $(k,l)$ is uniquely determined by $\Phi_T$. By considering the value of $\epsilon_T(i,h_i)$, we can trace the image of the upper boundary component of $H^i_{h_i}$ in a more precise manner. If $\epsilon_T(i,h_i) = 1$, it indicates that the map $f$ does not alter the vertical orientations. As a result, the image of the upper boundary component of $H^i_{h_i}$ will remain on the upper boundary component of $f(H^i_{h_i})$. i.e. in this example:
$$
f(\partial^s_{+1}R_i)\subset f(\partial^s_{+1}H^i_{h_i}) \subset \partial^s_{+1}R_k.
$$ 
On the contrary, if $\epsilon_T(i,h_i)=-1$, it means that the map $f$ changes the vertical orientation. This has the following implication:
 $$
 f(\partial^s_{+1}R_i)\subset f(\partial^s_{+1}H^i_{h_i})\subset \partial_{-1}R_k.
 $$

 We don't really care about the index $l$ in $(k,l) \in \mathcal{V}(T)$, so it's convenient to decompose $\rho_T$ into two parts: $\rho_T:=(\xi_T,\nu_T)$. In this decomposition
$$
\xi_T:\cH(T)\rightarrow \{1,\cdots, n\},
$$
is defined as $\xi_T(i,j)=k$ if and only if $\rho_T(i,j)=(k,l)$.

 Let's continue with our example. In order to determine where $f$ sends the upper boundary component $\partial^s_{+1}R_i$, we need to identify the rectangle in $\mathcal{R}$ to which $f$ maps $H^i_j$. This can be determined using the following relation:
 $$
 \xi_T(i,\theta_T(1))=\xi_T(i,h_i)=k.
 $$ 
We now incorporate the change of orientation to determine the boundary component of $R_k$ that contains $f(\partial^s_{+1}H^i_{h_i})$. If we assume that $f(\partial^s_{+1}R_i) \subset \partial^s_{\epsilon}R_k$, then the value of $\epsilon$ can be determined using the following formula:
  $$
  \epsilon=+1 \cdot\epsilon_T(i,h_i) = +1 \cdot \epsilon_T(i,\theta_T(i,1)).
  $$   
This procedure is illustrated in Figure  \ref{Fig: theta T}.
 	\begin{figure}[h]
 	\centering
 	\includegraphics[width=0.6\textwidth]{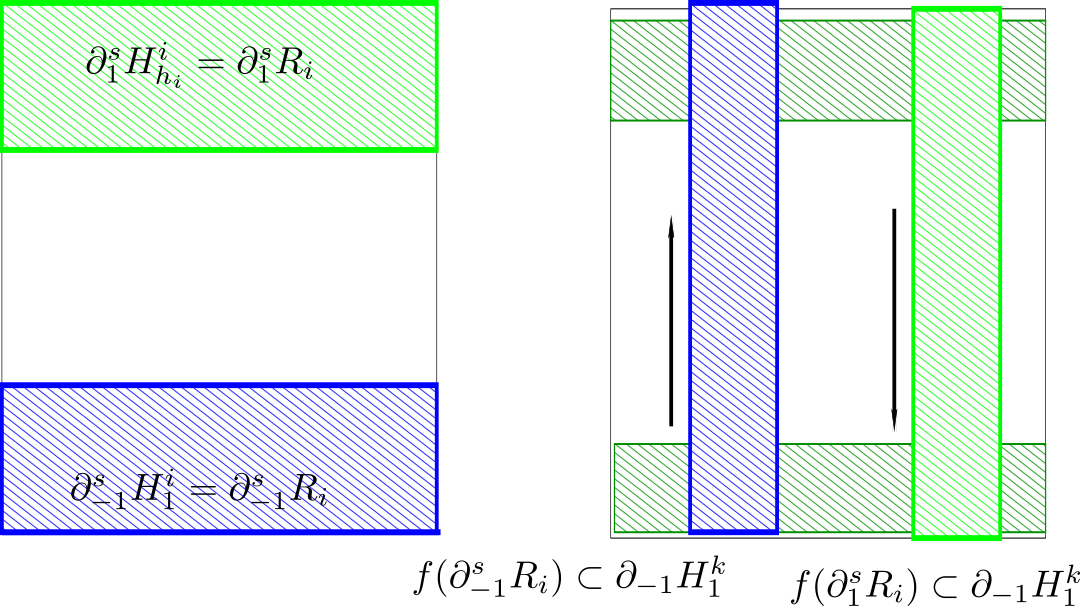}
 	\caption{The effect of $\theta_T$}
 	\label{Fig: theta T}
 \end{figure}
 The $S$-generating function utilizes $\theta_T$ and $\xi_T$ to generalize this idea.

\begin{defi}\label{Defi: s-boundary generating funtion}
 The $s$-generating function of $T$ is the function 
 	$$
 \Gamma(T):\cS(T) \rightarrow \cS(T),
 $$
defined for every $s$-boundary label $(i_0,\epsilon_0) \in \mathcal{S}(T)$ by the following formula:

\begin{equation}\label{Equ: Gamma generation funtion}
	\Gamma(T)(i_0,\epsilon_0)=(\xi_T(i_0,\theta_T(i_0,\epsilon_0)),
\epsilon_0 \cdot \epsilon_T(i_0,\theta_T(i_0,\epsilon_0)) ).
\end{equation}
	
\end{defi}

The $u$-boundary generating function can be defined as either the $s$-boundary generating function of $T^{-1}$ (associated with $f^{-1}$), or it can be defined directly as follows.

\begin{defi}\label{Defi: u-boundary generating funtion}
The $u$-\emph{generating function} of $T$ is
$$
\Upsilon(T):\cU(T)\rightarrow  \cU(T).
$$
defined as the $s$-\emph{generating function} of $T^{-1}$, $\Gamma(T^{-1})$.
\end{defi}

 The orbit of a label $(i_0,\epsilon_0)\in \cS(T)$ under $\Gamma(T)$ is defined as:
$$
\{(i_m,\epsilon_m)\}_{m\in \NN}:=\{\Gamma(T)^m(i_0,\epsilon_0) : m\in\NN\}. 
$$
We define $\varsigma_T:\cS(T)\rightarrow {1,\dots,n}$ as the projection onto the first component of the funtion $\Gamma_T$, i.e $\varsigma_T(i,\epsilon)=k$ if and only if $\Gamma_T(i,j)=(k,\epsilon)$. We will focus on the $s$-generating function. The ideas for the unstable case naturally extend, but sometimes we are going to recall them too.

The generating functions allow us to create a positive (or negative) code in $\Sigma^+$ for every $s$-boundary label of $T$ by composing $\Gamma(T)$ ($\Upsilon(T)$) with $\varsigma_T$ to obtain:
 
\begin{eqnarray}\label{Equa: $s$-boundary code +}
\underline{I}^{+}(i_0,\epsilon_0):=\{ \varsigma_T\circ\Gamma(T)^m(i_0,\epsilon_0)\}_{m\in\NN}=\{i_m\}_{m\in \NN}
\end{eqnarray}

Similarly, for $(k_0,\epsilon_0)\in \cU(T)$, we can construct the negative code in $\Sigma^{-}$ given by:

\begin{eqnarray}\label{Equa: $u$-boundary code -}
\underline{J}^{-}(k_0,\epsilon_0):=\{ \varsigma_T\circ\Upsilon(T)^m(k_0,\epsilon_0)\}_{m\in\NN}=\{k_{-m}\}_{m\in \NN}
\end{eqnarray}

These positive and negative codes are of great importance and deserve a name.

\begin{defi}\label{Defi: positive negative boundary codes }
	
	 Let $(i_0,\epsilon_0)\in \cS(T)$. The $s$-\emph{boundary positive code} of $(i_0,\epsilon_0)$ is denoted as $\underline{I}^+(i_0,\epsilon_0)\in \Sigma^+$, and it is determined by Equation \ref{Equa: $s$-boundary code +}. The set of $s$-boundary positive codes of $T$ is denoted as:
	 $$
	\underline{\cS}^{+}(T)=\{\underline{I}^{+}(i,\epsilon): (i,\epsilon)\in \cS(T)\} \subset \Sigma^+.
	$$
	Let $(k_0,\epsilon_0)\in \cU(T)$. The $u$-\emph{boundary negative code }of $(k_0,\epsilon_0)$ is denoted as $\underline{J}^{-}(k_0,\epsilon_0)=\{k_{-m}\}_{m=0}^{\infty}$ and is determined by Equation \ref{Equa: $u$-boundary code -}. The set of $u$-boundary negative codes of $T$ is denoted as
		
	$$
	\underline{\cJ}^{-}(T)=\{\underline{J}^{-}(k,\epsilon): (k,\epsilon)\in \cU(T)\}\subset \Sigma^{-}
	$$
\end{defi}
 
 Lemma \ref{Lemm: positive code admisible} below asserts that every $s$-boundary positive code of $T$ corresponds to the positive part of at least one element of $\Sigma_A$. The analogous result for $u$-boundary negative codes is true using a fully symmetric approach with the $u$-generating function. Therefore, we will only provide the proof for the stable case.

\begin{lemm}\label{Lemm: positive code admisible}
	Every $s$-boundary positive code $\underline{I}^{+}(i_0,\epsilon_0)$ belongs to $\Sigma_A^+$. Similarly, every $u$-boundary negative code $\underline{J}^{-}(k_0,\epsilon_0)$ belongs to $\Sigma_A^-$.
\end{lemm}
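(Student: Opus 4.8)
The plan is to reduce everything to the defining property of the incidence matrix. Recall that, since $A=A(T)$ is binary, one has $a_{i,k}=1$ precisely when there is some $j\in\{1,\dots,h_i\}$ with $\xi_T(i,j)=k$; so to check that a one-sided sequence $(i_m)_{m\in\NN}$ is $A$-admissible it is enough to exhibit, for every $m$, a horizontal label lying over $i_m$ that $\rho_T$ sends to a vertical label lying over $i_{m+1}$.

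First I would unwind the definition of $\underline{I}^{+}(i_0,\epsilon_0)=\{i_m\}_{m\in\NN}$. Writing $\{(i_m,\epsilon_m)\}_{m\in\NN}$ for the $\Gamma(T)$-orbit of $(i_0,\epsilon_0)$, Equation \eqref{Equ: Gamma generation funtion} gives $i_{m+1}=\xi_T(i_m,\theta_T(i_m,\epsilon_m))$. Here $\theta_T(i_m,\epsilon_m)$ is the horizontal label $(i_m,j_m)\in\cH(T)$ with $j_m\in\{1,h_{i_m}\}$, and since $h_{i_m}\geq 1$ we have $j_m\in\{1,\dots,h_{i_m}\}$; hence $\rho_T(i_m,j_m)=(i_{m+1},l_m)$ for some $l_m$, and this is exactly the witness that $a_{i_m,i_{m+1}}\geq 1$, hence $=1$ because $A$ is binary. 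Thus $\underline{I}^{+}(i_0,\epsilon_0)$ satisfies the admissibility condition for all $m$. To conclude that it belongs to $\Sigma_A^+$, i.e. that it is the positive part of a genuine bi-infinite code in $\Sigma_A$, I would invoke that $A$ is mixing: every column of $A$ is nonzero, so any admissible finite block ending at $i_0$ can be prolonged one more step to the left, and by compactness of $\Sigma$ together with closedness of $\Sigma_A$ this produces a bi-infinite admissible extension of $\underline{I}^{+}(i_0,\epsilon_0)$.

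The $u$-boundary statement I would obtain from the symmetry $\Upsilon(T)=\Gamma(T^{-1})$. It is worth recording first that $A(T^{-1})=A(T)^{\mathsf T}$: since $\rho_T$ is a bijection, counting the pairs $(j,l)$ with $\rho_T(i,j)=(k,l)$ by their $j$-coordinate or by their $l$-coordinate yields the same number, so the $(k,i)$-entry of $A(T^{-1})$ equals $a_{i,k}$; in particular $A(T^{-1})$ is again binary and mixing. Applying the previous paragraph to $T^{-1}$ shows that $\underline{J}^{-}(k_0,\epsilon_0)=\{k_{-m}\}_{m\in\NN}$ is admissible in the backward direction, i.e. $a_{k_{-m-1},k_{-m}}=1$ for all $m$, and the mixing of $A$ (now used through every row being nonzero) lets us extend it forward to an element of $\Sigma_A$, so its negative part lies in $\Sigma_A^-$.

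I do not expect a serious obstacle: the whole content is the bookkeeping identity $i_{m+1}=\xi_T(i_m,\theta_T(i_m,\epsilon_m))$ combined with the characterization of binary incidence matrices. The two points that require a little care are (i) keeping straight the roles of $\theta_T$, $\xi_T$ and $\rho_T=(\xi_T,\nu_T)$, so that the horizontal label produced by $\theta_T$ genuinely lies in $\{1,\dots,h_{i_m}\}$, and (ii) the passage from a one-sided admissible sequence to an honest positive (resp. negative) part of a bi-infinite code, which is precisely where the mixing hypothesis, rather than the binary hypothesis alone, is used.
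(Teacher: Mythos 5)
Your proof is correct and follows essentially the same route as the paper: the identity $i_{m+1}=\xi_T(i_m,\theta_T(i_m,\epsilon_m))$ exhibits, for each $m$, a horizontal label over $i_m$ sent by $\rho_T$ to a vertical label over $i_{m+1}$, which is exactly the witness that $a_{i_m,i_{m+1}}=1$, and the $u$-case follows by the symmetry through $T^{-1}$ (the paper phrases this via the realization $(f,\cR)$ for visualization only). Your additional step extending the one-sided admissible sequence to a bi-infinite element of $\Sigma_A$ via nonvanishing columns/rows of the mixing matrix is a welcome explicit justification of a point the paper leaves implicit, and your observation that $A(T^{-1})=A(T)^{\mathsf T}$ is the correct form of what the paper loosely calls "the inverse of the incidence matrix".
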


\begin{proof}
The incidence matrix $A$ of $T$ has a relation to any realization $(f,\cR)$ of $T$, which we can exploit to visualize the situation, anyway observe that our arguments only use the properties of $A$ or $T$. 
	In the construction of the positive code $\underline{I}^{+}(i_0,\epsilon_0)$, the term $i_1$ corresponds to the index of the unique rectangle in $\cR$ such that $f(H^{i_0}_{\theta_T(1,\epsilon_0)})=V^{i_1}_{l_1}$ (where $\theta_T(i_0,\epsilon)=1$ or $h_i$). In either case, we have:
	
	$$f^{-1}(\overset{o}{R{i_1}})\cap \overset{o}{R_{i_0}}=\overset{o}{H^{i_0}_{\theta_T(1,\epsilon_0)}´}\neq \emptyset,
	$$
	
	 which implies $a_{i_0,i_1}=1$. By induction, we can indeed extend this result to the entire positive code $\underline{I}^{+}(i_0,\epsilon_0)$.
	 
	 Similarly, for the case of $u$-boundary negative codes, we can use the symmetric approach by considering $T^{-1}$ and the inverse of the incidence matrix $A^{-1}$. Additionally, we can consider the realization $(f^{-1},\cR)$ to obtain a visualization. 
\end{proof}

Proposition \ref{Prop: s code Inyective} is the first step in proving that each $s$-boundary label uniquely determines a set of codes that projects to a single boundary component of the partition.

\begin{prop}\label{Prop: s code Inyective}	
	The map $I:\cS(T)\rightarrow \Sigma_A^+$ defined by $I(i,\epsilon):=\underline{I}^+(i,\epsilon)$ is  injective map. Similarly, the map $J:\cU(T)\rightarrow \Sigma_A^-$ defined by $J(i,\epsilon):=\underline{J}^-(i,\epsilon)$ is also injective.
\end{prop}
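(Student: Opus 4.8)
The plan is to derive the injectivity of $J$ from that of $I$, and to prove the latter by passing to a realization of $T$ and then invoking the absence of double boundaries. First note that $T^{-1}$ also lies in the pseudo-Anosov class (if $(f,\cR)$ realizes $T$, then $f^{-1}$ together with $\cR$, after exchanging the roles of the vertical and horizontal foliations, realizes $T^{-1}$), and by Definition~\ref{Defi: u-boundary generating funtion} one has $\underline{J}^-(k,\epsilon)$ for $T$ equal to $\underline{I}^+(k,\epsilon)$ computed for $T^{-1}$; hence $J$ for $T$ is exactly the map $I$ for $T^{-1}$, and the second assertion reduces to the first. So I concentrate on proving $I$ injective.

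Suppose $\underline{I}^+(i,\epsilon)=\underline{I}^+(i',\epsilon')$. Comparing the two codes at position $0$ — which, by the description $\underline{I}^+(i_0,\epsilon_0)=\{i_m\}_{m\in\NN}$, records the index of the starting rectangle — gives $i=i'$. It remains to show $\epsilon=\epsilon'$, so assume for contradiction that $\{\epsilon,\epsilon'\}=\{+1,-1\}$. Fix a realization $(f,\cR)$ of $T$. By the orientation bookkeeping that underlies Definition~\ref{Defi: s-boundary generating funtion} (and the induction in the proof of Lemma~\ref{Lemm: positive code admisible}), for every $m\geq 0$ the forward image of the stable side $\partial^s_{\epsilon}R_i$ satisfies $f^m(\partial^s_{\epsilon}R_i)\subseteq\partial^s_{\epsilon_m}R_{i_m}\subseteq R_{i_m}$, where $(i_m,\epsilon_m):=\Gamma(T)^m(i,\epsilon)$, and likewise $f^m(\partial^s_{-1}R_i)\subseteq R_{i_m}$ using the same index sequence $(i_m)$ (this is where the hypothesis $\underline{I}^+(i,+1)=\underline{I}^+(i,-1)$ enters). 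Hence both $\partial^s_{+1}R_i$ and $\partial^s_{-1}R_i$ are contained in $X:=\bigcap_{m\geq 0}f^{-m}(R_{i_m})$. Each partial intersection $\bigcap_{m=0}^{M}f^{-m}(R_{i_m})$ is a horizontal sub-rectangle of $R_i$ (iterate item iii) of Definition~\ref{Defi: Markov partition}), so $X$ is a connected sub-rectangle of $R_i$; but the only sub-rectangle of $R_i$ containing both extreme horizontal leaves $\partial^s_{+1}R_i=\rho([0,1]\times\{1\})$ and $\partial^s_{-1}R_i=\rho([0,1]\times\{0\})$ is $R_i$ itself (Definition~\ref{Defi: Rectangle}). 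Therefore $f^m(R_i)\subseteq R_{i_m}$ for all $m\geq 0$.

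Now $f(R_i)\subseteq R_{i_1}$ forces $f(R_i)$ to be a single vertical sub-rectangle of $R_{i_1}$ and $h_i=1$, and then $f^2(R_i)=f(f(R_i))\subseteq R_{i_2}$ with $f(R_i)$ a full-height vertical rectangle forces $h_{i_1}=1$; inductively $h_{i_m}=1$ for all $m$ and $\rho_T(i_m,1)=(i_{m+1},l_{m+1})$. Since the sequence $(i_m)$ lives in the finite set $\{1,\dots,n\}$ it is eventually periodic, and its periodic block is a cycle $i_M\to i_{M+1}\to\cdots\to i_{M+P}=i_M$ along which every $h$ equals $1$ and $\rho_T$ steps from one index to the next — precisely a double $s$-boundary of $T$ in the sense of Definition~\ref{Defi: double boundary}. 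But $T$ lies in the pseudo-Anosov class, so its incidence matrix is mixing (Definition~\ref{Defi: Simbolically prensentable and induced shift}), and a mixing matrix cannot have a row that is a single $1$ cyclically permuting a set of indices; hence $T$ has no double $s$-boundary, a contradiction. Thus $\epsilon=\epsilon'$, $I$ is injective, and so is $J$.

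The step I expect to take the most care is the translation of the combinatorial hypothesis into the geometric statement $f^m(\partial^s_{\epsilon}R_i)\subseteq\partial^s_{\epsilon_m}R_{i_m}$: one must keep track simultaneously of the index $i_m$ \emph{and} of the sign $\epsilon_m$ exactly as the generating function $\Gamma(T)$ prescribes, and be slightly careful because the rectangles are only immersed (Definition~\ref{Defi: Rectangle}), so the argument that $X=R_i$ and that $f(R_i)$ is a genuine vertical sub-rectangle should be phrased in terms of stable and unstable leaves rather than of the parametrizations. Everything else is bookkeeping plus the standard fact (already invoked in this section) that a pseudo-Anosov type has mixing incidence matrix, which rules out double boundaries.
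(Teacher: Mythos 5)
Your proof is correct in substance, but it follows a genuinely different route from the paper's. The paper argues purely combinatorially on the two orbits $\Gamma(T)^m(i,+1)$ and $\Gamma(T)^m(i,-1)$: a key lemma (proved with a realization and the non-degeneracy of rectangles under uniform expansion) produces a first index $M$ with $h_{i_M}>1$; before $M$ the signs of the two orbits stay opposite while the indices agree (since $h=1$ forces $\theta_T$ to select the same sub-rectangle), and at step $M+1$ the binarity of $A(T)$ forces $\xi_T(i_M,1)\neq\xi_T(i_M,h_{i_M})$, so the index sequences differ there. You instead assume the index sequences coincide for all $m$, geometrize this (both stable sides of $R_i$ follow the same itinerary, hence so does all of $R_i$), deduce $h_{i_m}=1$ along the whole orbit, and contradict the absence of double $s$-boundaries, which you derive from mixing of $A(T)$ — essentially re-proving the paper's Lemma~\ref{Lemm: T mixing implies non double boundaries} inline. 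Both routes use a realization and binarity: in your argument binarity enters silently at the step ``$f^{m+1}(R_i)\subseteq R_{i_{m+1}}$ with $f^m(R_i)$ full height forces $h_{i_m}=1$'' (without $a_{i_m,i_{m+1}}\leq 1$ this could fail), so you should flag it, though it is the standing hypothesis of the subsection, exactly as in the paper's Lemma~\ref{Lemm: positive h-i}. Your reliance on $f^m(\partial^s_{\epsilon}R_i)\subseteq\partial^s_{\epsilon_m}R_{i_m}$ is legitimate: the paper proves it by an easy induction only later (Proposition~\ref{Prop: positive codes are boundary}), but that proof does not use the injectivity statement, so there is no circularity; still, in a self-contained write-up you would need to carry out that induction rather than gesture at ``orientation bookkeeping''. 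Your reduction of $J$ to $I$ via $T^{-1}$ matches the paper's appeal to symmetry, with the small extra check (worth stating) that $A(T^{-1})=A(T)^{t}$ is again binary and $T^{-1}$ is again in the pseudo-Anosov class. What your approach buys is a more geometric, contradiction-based picture (a trapped rectangle forcing a double boundary); what the paper's buys is a sharper conclusion — it exhibits an explicit position $M+1$ where the two codes differ — which is also what makes the pre-periodicity bound in Corollary~\ref{Coro: preperiodic finite s,u boundary codes} effective.
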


\begin{proof}
	
	Clearly $I$ is a  well defined map as $\Gamma^n(T)$ is itself a well defined map with the same domain. 	If $i_0 \neq i'_0$, then the sequences $\underline{I}^+(i_0,\epsilon_0)$ and $\underline{I}^+(i'_0,\epsilon'_0)$ will differ in their first term, making them distinct. The remaining case involves considering $(i_0,1)$ and $(i_0,-1)$.
	
	Let's denote $\underline{I}^+(i_0,1) = {i_m}$ and $\underline{I}^+(i_0,-1) = {i'_m}$ as their positive codes. We will analyze the sequences ${\Gamma(T)^m(i_0,1)}$ and ${\Gamma(T)^m(i_0,-1)}$ and show that there exists an $m \in \NN$ such that $i_m \neq i'_m$. Our approach starts with the following technical lemma:
	
	\begin{lemm}\label{Lemm: positive h-i}
If $T$ is in the pseudo-Anosov class and $A$ is a binary matrix, then there exists $M \in \mathbb{N}$ such that if  $\underline{I}^+(i_0,1) = \{i_m\}_{m\in \NN}$, then  $h_{i_M} > 1$.
	\end{lemm}
	
	\begin{proof}
		Since $T$ is in the pseudo-Anosov class, there exists a realization $(f,\mathcal{R})$. The infinite sequence ${i_m}$ takes a finite number of values (at most $n$), so there exist natural numbers $m_1 < m_2$ such that $R_{i_{m_1}} = R_{i_{m_2}}$. If such an $M$ does not exist, then for all $m_1 \leq m \leq m_2$, $h_{i_m} = 1$. This implies that $f^{m_2-m_1}(R_{i_{m_1}}) \subset R_{i_{m_2}} = R_{i_{m_1}}$, which is a vertical sub-rectangle of $R_{i_{m_1}}$.
	
	By uniform expansion in the vertical direction, the rectangle $R_{i_{m_1}}$ reduces to a stable interval, and this is not permitted in a Markov partition, leading us to a contradiction. 	
	\end{proof}
	
In view of Lemma $M:=\min\{m\in \NN: h_{i_m}>1\}$ exists. If there is $0\leq m\leq M$ such that $i_m\neq i'_m$, our proof is complete. If not, the following Lemma addresses the remaining situation.
	
	\begin{lemm}\label{Lemm: diferen positive code}
	Suppose that for all $0\leq m\leq M$, $i_m=i_m'$, then $i_{M+1}\neq i'_{M+1}$.
	\end{lemm}
	
	\begin{proof}
		
	Observe that for all $0\leq m\leq M$, if $\Gamma(T)^m(i_0,+1)=(i_m,\epsilon_m)$ then $$\Gamma(T)^m(i_0,-1)=(i_m,-\epsilon_m)$$.
	
	 They have the same index $i_m=i'_m$ but still have inverse $\epsilon$ part, i.e., $\epsilon_m=-\epsilon'_m$. In fact, the first term $\epsilon_0=-\epsilon'_0$, without lost of generality $\epsilon_0=1$  and $\epsilon'_0=-1$,   by hypothesis and considering that  $1=h_{i_0}=h_{i'_0}$ we infer that:
		$$
		\theta_T(i_0,\epsilon_0)=(i_0,h_{i_m})=(i'_0,1)=\theta_T(i'_0,\epsilon_0'),
		$$
Therefore:	
		$$
		\epsilon_1=\epsilon_0\cdot \epsilon_T(i_0,h_{i_0})=-\epsilon'_0\epsilon_T(i'_0,1)=-\epsilon'_1.
		$$
		then continue the argument by induction. In particular: $\Gamma(T)^M(i_0,1)=(i_M,\epsilon_M)$ and  $\Gamma(T)^M(i'_0,-1)=(i_M,-\epsilon_M)$.
		
		 The incidence matrix of $T$ have $\{0,1\}$ has coefficients ${0,1}$, and since that $1\neq h_{i_M}$,  if $\rho_T(i_M,1)=(k,l)$ then $\rho_T(i_M,h_{i_M})=(k',l')$ where $k\neq k'$.
		 
	Consider the  case when, $\theta_T(i_M,\epsilon_M)=(i_M,1)$ and  $\theta_T(i_M,\epsilon'_M)=(i_M, h_{i_M})$. Lets apply the formula of $\Gamma(T)$:
	$$
	\Gamma(T)^{M+1}(i_0,1)=\Gamma(T)(i_M,\epsilon_M)=(\xi_T(i_M,1),\epsilon_M\cdot \epsilon_T(i_M,1))=(k,\epsilon_{M+1})
	$$
	and 				
		$$
	\Gamma(T)^{M+1}(i_0,-1)=\Gamma(T)(i_M,-\epsilon_M)=(\xi_T(i_M,h_{i_M}),-\epsilon_M\cdot \epsilon_T(i_M,h_{i_M}))=(k',\epsilon'_{M+1})
	$$	
	therefore $i_{M+1}=k\neq k'=i'_{M+1}$.
	
		 The situation $\theta_T(i_M,\epsilon_M)=(i_M,h_{i_M})$ and  $\theta_T(i_M,\epsilon'_M)=(i_M, 1)$ is treated similarly. The has been lemma  proved.

	\end{proof}

The proposition follows from the previous lemma. The result for negative codes associated with $u$-boundary labels is proven using a fully symmetric approach with the $u$-generating function.
\end{proof}

If $\Gamma(T)(i,\epsilon)=(i_1,\epsilon_1)$, applying the shift to this code gives another $s$-boundary positive code. That is clearly given by 
$$
\sigma(\underline{I}^+(i,\epsilon))=\underline{I}^+(i_1,\epsilon_1),
$$
where $\Gamma(T)(i_0,\epsilon_0)=(i_1,\epsilon_1)$. Since there are $2n$ different $s$-boundary positive codes, there exist natural numbers $k_1\neq k_2$ with $k_1,k_2\leq 2n$ such that $\sigma^{k_1}(\underline{I}^+(i,\epsilon))=\sigma^{k_2}(\underline{I}^+(i,\epsilon))$, and the code $\underline{I}^+(i,\epsilon)$ is pre-periodic. This implies the following corollary.

\begin{coro}\label{Coro: preperiodic finite s,u boundary codes}
There are exactly $2n$ different $s$-boundary positive codes and $2n$ different $u$-boundary negative codes. Furthermore, every $s$-boundary positive code and every $u$-boundary negative code is pre-periodic under the action of the shift $\sigma$. 

Moreover, for every $s$-boundary positive code $\underline{I}^+(i,\epsilon)$, there exists $k\leq 2n$ such that $\sigma^k(\underline{I}^+(i,\epsilon))$ is periodic. Similarly, for every $u$-boundary negative code $\underline{J}^-(i,\epsilon)$, there exists $k\leq 2n$ such that $\sigma^{-k}(\underline{J}^-(i,\epsilon))$ is periodic.
\end{coro}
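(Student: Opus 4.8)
The plan is to count the $s$-boundary positive codes by exhibiting a bijection between them and the $s$-boundary labels $\cS(T)$, and then to use a pigeonhole argument on the iterated application of $\Gamma(T)$ to extract preperiodicity. First I would observe that, by Definition~\ref{Defi: positive negative boundary codes }, the assignment $(i,\epsilon)\mapsto \underline{I}^+(i,\epsilon)$ is surjective onto the set $\underline{\cS}^{+}(T)$ of $s$-boundary positive codes, and by Proposition~\ref{Prop: s code Inyective} it is injective. Since $\cS(T)=\{(i,\epsilon): 1\le i\le n,\ \epsilon\in\{1,-1\}\}$ has exactly $2n$ elements, there are exactly $2n$ distinct $s$-boundary positive codes. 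The symmetric statement for $u$-boundary negative codes follows by applying the same argument to $T^{-1}$ via the $u$-generating function $\Upsilon(T)=\Gamma(T^{-1})$, whose orbits produce the negative codes $\underline{J}^-(k,\epsilon)$; injectivity there is the second half of Proposition~\ref{Prop: s code Inyective}.

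Next I would establish the shift-equivariance of the generating function: if $\Gamma(T)(i,\epsilon)=(i_1,\epsilon_1)$ then $\sigma(\underline{I}^+(i,\epsilon))=\underline{I}^+(i_1,\epsilon_1)$. This is immediate from the defining Equation~\ref{Equa: $s$-boundary code +}, since deleting the first term $i_0=\varsigma_T(i_0,\epsilon_0)$ of the sequence $\{\varsigma_T\circ\Gamma(T)^m(i_0,\epsilon_0)\}_{m\in\NN}$ and reindexing gives precisely the sequence generated by $\Gamma(T)(i_0,\epsilon_0)$. Consequently, the orbit of any $s$-boundary positive code under $\sigma$ stays inside $\underline{\cS}^{+}(T)$, which is a set of cardinality $2n$. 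The pigeonhole principle applied to $\{\sigma^k(\underline{I}^+(i,\epsilon))\}_{k=0}^{2n}$ then yields indices $0\le k_1<k_2\le 2n$ with $\sigma^{k_1}(\underline{I}^+(i,\epsilon))=\sigma^{k_2}(\underline{I}^+(i,\epsilon))$, so $\underline{I}^+(i,\epsilon)$ is preperiodic, with period dividing $k_2-k_1$ and preperiod at most $k_1\le 2n$; taking $k=k_1$ gives a $k\le 2n$ with $\sigma^k(\underline{I}^+(i,\epsilon))$ periodic. The $u$-case is handled identically, replacing $\sigma$ by $\sigma^{-1}$ on the negative codes, or equivalently passing to $T^{-1}$.

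I do not expect a serious obstacle here; the only point requiring a little care is making the shift-equivariance precise, namely that $\sigma$ acting on a positive code $\underline{I}^+(i,\epsilon)\in\Sigma^+$ is compatible with the "drop the first symbol" operation on the orbit $\{\Gamma(T)^m(i,\epsilon)\}_m$, so that the cyclic structure of orbits of $\Gamma(T)$ on the finite set $\cS(T)$ transfers to the cyclic structure of $\sigma$-orbits on $\underline{\cS}^{+}(T)$. Once that bookkeeping is set up, everything is a direct counting argument. One should also note that $\Sigma^+$ here is to be understood as one-sided sequences and $\sigma$ its one-sided shift; no appeal to the realization $(f,\cR)$ or to the incidence matrix being binary is needed for this corollary (those hypotheses were already used, through Proposition~\ref{Prop: s code Inyective}, to guarantee injectivity).
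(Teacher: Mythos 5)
Your proposal is correct and follows essentially the same route as the paper: injectivity of $(i,\epsilon)\mapsto \underline{I}^+(i,\epsilon)$ from Proposition~\ref{Prop: s code Inyective} gives exactly $2n$ codes, the shift-equivariance $\sigma(\underline{I}^+(i,\epsilon))=\underline{I}^+(i_1,\epsilon_1)$ with $\Gamma(T)(i,\epsilon)=(i_1,\epsilon_1)$ keeps $\sigma$-orbits inside this finite set, and the pigeonhole argument yields pre-periodicity with the bound $k\le 2n$; the $u$-case follows symmetrically via $T^{-1}$. Your added care about making the shift-equivariance and the one-sided shift precise is a reasonable tightening of the paper's brief discussion, but it is not a different argument.
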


Now we are ready to define a family of admissible codes that project onto the stable and unstable leaves of periodic boundary points of $\cR$.

\begin{defi}\label{Defi: s,u-boundary codes}
	The set of $s$-\emph{boundary codes} of $T$ is:
	\begin{equation}
	\underline{\cS}(T):=\{\underline{w}\in \Sigma_A: \underline{w}_+\in \underline{\cS}^{+}(T)\}.
	\end{equation}
The set of  $u$-\emph{boundary codes} of $T$ is 
\begin{equation}
\underline{\cU}(T):=\{\underline{w}\in \Sigma_A: \underline{w}_-\in \underline{\cU(T)}^{-}(T)\}.
\end{equation}
\end{defi}

Next, Proposition \ref{Prop: positive codes are boundary}  states that the projection of $s$-boundary codes of $T$ through $\pi^f$ is always contained in the stable boundary of the Markov partition $(f,\cR)$. Similarly, Proposition \ref{Prop: boundary points have boundary codes} ensures that these are the only codes in $\Sigma_A$ that project to the stable boundary. This provides us with a symbolic characterization of the boundary of the Markov partition. As we have done so far, we will provide a detailed proof for the stable case, noting that the unstable version follows by symmetry.

\begin{prop}\label{Prop: positive codes are boundary}
Let $(f,\cR)$ be a pair consisting of a homeomorphism and a partition that realizes $T$. Suppose $(i,\epsilon)\in \cS(T)$ is an $s$-boundary label of $T$, and let $\underline{w}\in \underline{\cS}(T)$ be a code such that $\underline{I}^+(i,\epsilon)=\underline{w}_+$. Then, $\pi_f(\underline{w})\in \partial^s_{\epsilon} R_{i}$.

Similarly, if $(i,\epsilon)\in \cU(T)$ is a $u$-boundary label of $T$, and $\underline{w}\in \underline{\cU}(T)$ is a code such that $\underline{J}^-(i,\epsilon)=\underline{w}_-$, then $\pi_f(\underline{w})\in \partial^u_{\epsilon}R_i$.

\end{prop}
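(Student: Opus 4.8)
The plan is to prove the statement by induction on the position, using the relation between the combinatorial $s$-generating function $\Gamma(T)$ and the actual dynamics of $f$ on the stable boundary of a realization $(f,\cR)$. First I would set up the base case: let $\underline{w}\in\underline{\cS}(T)$ with $\underline{w}_+=\underline{I}^+(i,\epsilon)$, so in particular $w_0=i$. By the construction of the positive code (Equation \ref{Equa: $s$-boundary code +}) we have $w_1=\varsigma_T(i,\epsilon)=\xi_T(i,\theta_T(i,\epsilon))$. The point $x:=\pi_f(\underline{w})$ lies in $R_{w_0}=R_i$, and since $w_1$ is determined by the horizontal sub-rectangle $H^{i}_{\theta_T(i,\epsilon)}$ (that is, the top or bottom horizontal sub-rectangle of $R_i$ according to $\theta_T$), I would argue that $x$ must lie in that extremal horizontal sub-rectangle. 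The key point here is that $x$ belongs to the closed horizontal sub-rectangle whose interior is the connected component of $\overset{o}{R_i}\cap f^{-1}(\overset{o}{R_{w_1}})$ singled out by the positive tail of $\underline{w}$; because that component is exactly $\overset{o}{H^i_{\theta_T(i,\epsilon)}}$, which is $\partial^s_\epsilon R_i$-extremal, $x\in H^i_{\theta_T(i,\epsilon)}$.

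The heart of the argument is then to show $x\in\partial^s_\epsilon R_i$, i.e.\ $x$ actually sits on the stable boundary of $R_i$, not just in the extremal horizontal strip. For this I would iterate: apply $f$ to get $f(x)=\pi_f(\sigma(\underline{w}))$, note $\sigma(\underline{w})_+=\underline{I}^+(\Gamma(T)(i,\epsilon))$ by the shift identity $\sigma(\underline{I}^+(i,\epsilon))=\underline{I}^+(\Gamma(T)(i,\epsilon))$, and observe that $f$ maps $H^i_{\theta_T(i,\epsilon)}$ onto a vertical sub-rectangle $V^{w_1}_{l_1}=f(H^i_{\theta_T(i,\epsilon)})$ of $R_{w_1}$, carrying $\partial^s_\epsilon R_i$ into $\partial^s_{\epsilon_1}R_{w_1}$ where $\epsilon_1=\epsilon\cdot\epsilon_T(i,\theta_T(i,\epsilon))$ is precisely the second coordinate of $\Gamma(T)(i,\epsilon)$; this is the content of the worked example preceding Definition \ref{Defi: s-boundary generating funtion}. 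By Corollary \ref{Coro: preperiodic finite s,u boundary codes} there is $k\leq 2n$ with $\sigma^k(\underline{I}^+(i,\epsilon))$ periodic, say $\Gamma(T)^k(i,\epsilon)=(i',\epsilon')$ with $(i',\epsilon')$ periodic under $\Gamma(T)$; then $f^k(x)=\pi_f(\sigma^k(\underline{w}))$ and, running the same reasoning, $f^k(x)$ lies in the extremal horizontal strip $H^{i'}_{\theta_T(i',\epsilon')}$. Now using Lemma \ref{Lemm: no periodic boundary points}: the point $y:=f^k(x)$ has the property that $f^m(y)$ stays in $\partial^s\cR$ for all $m\geq 0$ (since $\Gamma(T)$-periodicity keeps pushing $y$'s forward images into extremal strips and their $f$-images into stable boundaries) — but to conclude $y\in\partial^s\cR$ outright I would instead argue that $y$ lies on a periodic stable leaf: the extremal horizontal strip $H^{i'}_{\theta_T(i',\epsilon')}$ has its $\epsilon'$-stable boundary contained in $\partial^s_{\epsilon'}R_{i'}$, which by Lemma \ref{Lemm: Boundary of Markov partition is periodic} lies on the stable leaf of a periodic point, and the periodicity of $(i',\epsilon')$ forces $y$ itself onto that leaf. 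Then pulling back by $f^{-k}$ shows $x\in F^s(\text{periodic }s\text{-boundary point})$, and combining with $x\in H^i_{\theta_T(i,\epsilon)}$ (extremal strip of $R_i$) pins $x$ to $\partial^s_\epsilon R_i$.

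The main obstacle I anticipate is the step that upgrades "$x$ lies in the extremal horizontal sub-rectangle $H^i_{\theta_T(i,\epsilon)}$" to "$x$ lies on the stable boundary component $\partial^s_\epsilon R_i$." A priori $x$ could be an interior point of that strip; one has to use that the \emph{entire} forward tail $\underline{w}_+$ equals an $s$-boundary positive code, which — via pre-periodicity — returns infinitely often to extremal horizontal strips, together with the uniform contraction along stable leaves to rule out $x$ drifting away from the boundary. Concretely, if $x\notin\partial^s\cR$ then the stable segment $[x,p]^s$ to the periodic point on its leaf would have positive $\mu^u$-length; but the analysis shows that $f^m(x)$ remains in extremal horizontal strips of the partition whose stable boundaries are on periodic leaves, and invoking Lemma \ref{Lemm: no periodic boundary points} (or rather its proof mechanism, the unbounded expansion $\mu^u(f^{-m}[x,p]^s)=\lambda^m\mu^u([x,p]^s)$) forces $\mu^u([x,p]^s)=0$, i.e.\ $x=p\in\partial^s\cR$, or more precisely $x$ on the periodic leaf inside $\partial^s_\epsilon R_i$. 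The unstable case is entirely symmetric, obtained by applying the stable argument to $T^{-1}$ and $(f^{-1},\cR)$, using that $\Upsilon(T)=\Gamma(T^{-1})$ and that $u$-boundary negative codes of $T$ are $s$-boundary positive codes of $T^{-1}$.
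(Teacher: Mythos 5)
The first half of your argument is sound and runs parallel to the paper's: with $A(T)$ binary, the positive tail $\underline{w}_+=\underline{I}^+(i,\epsilon)$ does force $f^m(x)$ into the extremal horizontal strip $H^{i_m}_{\theta_T(i_m,\epsilon_m)}$ of $R_{i_m}$ for every $m\geq 0$, where $(i_m,\epsilon_m)=\Gamma(T)^m(i,\epsilon)$. The gap is in the upgrade from ``in the extremal strip at all forward times'' to ``on $\partial^s_{\epsilon}R_i$''. Two steps fail. First, ``the periodicity of $(i',\epsilon')$ forces $y$ itself onto that leaf'' is circular: knowing that the forward itinerary of $y$ is the $\Gamma(T)$-periodic sequence only places $y$ in the nested stable segment $\bigcap_s \overline{\bigcap_{m=0}^{s}f^{-m}(\overset{o}{R_{i_m}})}$, and identifying that segment with the boundary component is precisely the content of the proposition (applied to the periodic code); it cannot be quoted to prove itself. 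Second, the concrete mechanism you invoke does not apply: if $x$ is an interior point of the strip, nothing guarantees that its stable leaf contains a periodic point (Lemma \ref{Lemm: Boundary of Markov partition is periodic} concerns only the boundary leaves), so the segment $[x,p]^s$ need not exist; and the expansion $\mu^u(f^{-m}[x,p]^s)=\lambda^m\mu^u([x,p]^s)$ from Lemma \ref{Lemm: no periodic boundary points} lives on \emph{backward} iterates, which are completely unconstrained by $\underline{w}_+$, so no contradiction arises. Finally, even granting that $x$ lies on the stable leaf of a periodic $s$-boundary point, ``combining with $x\in H^i_{\theta_T(i,\epsilon)}$'' pins nothing down: by minimality (Proposition \ref{Prop: pseudo-Anosov properties.}) that leaf is dense and crosses the interior of the strip infinitely often.

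What is missing is the dual statement about the boundary itself, which is how the paper closes the argument: one proves by induction on $s$ that $f^s(\partial^s_{\epsilon_0}R_{i_0})\subset\partial^s_{\epsilon_s}R_{i_s}$, i.e.\ the boundary component realizes the same itinerary that $\Gamma(T)$ generates. Consequently $\partial^s_{\epsilon_0}R_{i_0}\subset H_s:=\overline{\bigcap_{m=0}^{s}f^{-m}(\overset{o}{R_{i_m}})}$ for every $s$; since unstable lengths expand uniformly, the heights of the nested rectangles $H_s$ tend to $0$, so $\bigcap_s H_s$ is a single stable segment, which must therefore be $\partial^s_{\epsilon_0}R_{i_0}$, and $\pi_f(\underline{w})\in\bigcap_s H_s$. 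If you prefer your ``distance'' formulation, the correct estimate is forward, not backward: the transverse (unstable) distance from $f^m(x)$ to $\partial^s_{\epsilon_m}R_{i_m}$ equals $\lambda^m$ times the distance from $x$ to $\partial^s_{\epsilon_0}R_{i_0}$ and is bounded by the heights of the finitely many rectangles, forcing that distance to vanish --- but equating those distances already uses $f^m(\partial^s_{\epsilon_0}R_{i_0})\subset\partial^s_{\epsilon_m}R_{i_m}$, i.e.\ exactly the induction you skipped. Your reduction of the unstable case to $T^{-1}$ and $(f^{-1},\cR)$ is fine once the stable case is repaired.
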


\begin{proof}
	Make $(i,\epsilon)=(i_0,\epsilon_0)$ to make coherent the following notation and $\underline{I}^+(i,\epsilon)=\{i_m\}_{m\in \NN}$. 	For every $s\in \NN$ define the rectangles $\overset{o}{H_s}=\cap_{m=0}^s f^{-1}(\overset{o}{R_{i_m}})$. The limit of the closures of such rectangles when $s$ converge to infinity is a unique stable segment of $R_{i_0}$. The proof is achieved  if $\partial^s_{\epsilon_0}R_{i_0}\subset H_s:=\overline{ \overset{o}{H_s} }$ for all $s\in \NN$. In this order of ideas is enough to argument that for all $s\in \NN$:
	$$
	f^s(\partial^s_{\epsilon_0}R_{i_0}) \subset R_{i_{s}}.
	$$
	We are going to probe  by induction over $s$ something more specific:
	$$
	f^s(\partial_{\epsilon_0} R_{i_0})\subset \partial_{\epsilon_s}R_{i_s}.
	$$  
	
\emph{Base of induction}: For $s=0$. This is the case as $f^0(\partial^s_{\epsilon_0}R_{i_0})\subset \partial^s_{\epsilon_0}R_{i_0}$.
	
\emph{Hypothesis of induction }: Assume  that $f^s(\partial_{\epsilon_0} R_{i_0})\subset \partial_{\epsilon_s}R_{i_s}$ and  $f^s(\partial^s_{\epsilon_0}R_{i_0})\subset R_{i_s}$.

\emph{Induction step}: We are going to prove that
$$
f^{s+1}(\partial_{\epsilon_0} R_{i_0})\subset \partial_{\epsilon_{s+1}}R_{i_{s+1}}
$$ 

 and then  that $f^{s+1}(\partial^s_{\epsilon_0}R_{i_0})\subset R_{i_{s+1}}$. For that reason consider the two following cases:
	
	\begin{itemize}
		\item $\epsilon_{s}=1$. In this situation $f^s(\partial^s_{\epsilon_0}R_{i_0})\subset H^{i_s}_{h_{i_s}}$.  Hence  $f^{s+1}(\partial^s_{i_0}R_{i_0})\subset f(H^{i_s}_{h_{i_s}}) \subset R_{i'_{s+1}}$. Where $R_{i'_{s+1}}$ is the only rectangle such that $\xi_T(i_s,h_{i_s})=(i'_{s+1})$.		
		Even more $f^{s+1}(\partial^s_{\epsilon_0} R_{i_0})\subset \partial^s_{\epsilon'_{s+1}} R_{i'_{s+1}}$, where $\epsilon'_{s+1}$ obey to the formula $\epsilon'_{s+1}= \epsilon_T(i_s,h_{i_s})=\epsilon_s \cdot \epsilon_T(i_s,h_{i_s})$.
		
		\item  $\epsilon_{s}=-1$. In this situation  $f^s(\partial^s_{\epsilon_0}R_{i_0})\subset H^{i_s}_1$. Hence $f^{s+1}(\partial^s_{i_0}R_{i_0})\subset f(H^{i_s}_{1}) \subset R_{i'_{s+1}}$, where $R_{i_{s+1}}$ is the only rectangle such that $\xi_T(i_s,1)=(i'_{s+1}$. 
		Even more, $f^{s+1}(\partial^s_{\epsilon_0} R_{i_0})\subset \partial^s_{\epsilon'_{s+1}} R_{i'_{s+1}}$, where  $\epsilon'_{s+1}$ obey to the formula $\epsilon'_{s+1}= -\epsilon_T(i_s,1)=\epsilon_s \cdot \epsilon_T(i_s,1)$.
	\end{itemize}
	In bot situations:
	$$
	f^{s+1}(\partial^s_{\epsilon_0})R_{i_0})\subset \partial^s_{\epsilon'_{s+1}}R_{i'_{s+1}}
	$$
and they follows the rule:
	$$
	(i'_{s+1},\epsilon'_{s+1})=(\xi_T(i_s,\theta_T(i_s,\epsilon_s)),\epsilon_s\cdot \epsilon_T(i_s,\theta_T(\epsilon_s)))=\Gamma(T)^{s+1}(i_0,\epsilon_0)=(i_{s+1},\epsilon_{s+1}).
	$$
Therefore $f^{s+1}(\partial_{\epsilon_0} R_{i_0})\subset \partial_{\epsilon_{s+1}}R_{i_{s+1}}$, as we claimed and the result is proved.
The unstable case is totally symmetric.

\end{proof}

Proposition \ref{Prop: positive codes are boundary} provides justification for naming the $s$-boundary and $u$-boundary labels of $T$ as such, as they generate codes that are projected to the boundary of the Markov partition. The next proposition asserts that these codes are the only ones that have such a property.

\begin{prop}\label{Prop: boundary points have boundary codes}
	If $\underline{w}\in \Sigma_A$ projects to the stable boundary of the Markov partition $(f,\cR)$ under $\pi_f$, i.e., $\pi_f(\underline{w})\in \partial^s \cR$, then it follows that $\underline{w}\in \underline{\cS}(T)$. Similarly, if $\pi_f(\underline{w})\in \partial^u\cR$, then $\underline{w}\in \underline{\cU}(T)$.
	
\end{prop}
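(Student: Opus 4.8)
The plan is to argue by contradiction, combining the previously established description of $\partial^s\cR$ (Lemma \ref{Lemm: Boundary of Markov partition is periodic}) with the fact that $\pi_f$ is finite-to-one with sector codes as fibers (Corollary \ref{Coro: Caracterisation fibers}), and with the explicit tracking of boundary components done via the $s$-generating function $\Gamma(T)$. Suppose $\underline w\in\Sigma_A$ with $x:=\pi_f(\underline w)\in\partial^s\cR$; say $x\in\partial^s_{\epsilon_0}R_{i_0}$, and note $\underline w$ must be a sector code of $x$ (Corollary \ref{Coro: Caracterisation fibers}), so in particular $w_0=i_0$. The key observation is that, since the image under $f$ of a stable boundary component of a rectangle is again contained in a stable boundary component of a rectangle (this is implicit in the Markov property and used throughout Subsection \ref{Sub-sec:Sub-conjugation}), the orbit $\{f^z(x)\}_{z\geq 0}$ stays in $\partial^s\cR$, and for each $z\geq 0$ there is a well-defined pair $(i_z,\epsilon_z)\in\cS(T)$ with $f^z(x)\in\partial^s_{\epsilon_z}R_{i_z}$. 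First I would verify that this sequence of pairs is exactly the $\Gamma(T)$-orbit of $(i_0,\epsilon_0)$: this is the same computation that appears in the induction in the proof of Proposition \ref{Prop: positive codes are boundary}, only read in the forward direction — if $f^z(x)\in\partial^s_{\epsilon_z}R_{i_z}$ then $f^z(x)$ lies in the stable boundary of the horizontal sub-rectangle $H^{i_z}_{\theta_T(i_z,\epsilon_z)}$, hence $f^{z+1}(x)$ lies in the stable boundary of $f(H^{i_z}_{\theta_T(i_z,\epsilon_z)})=V^{i_{z+1}}_{l}$ with $i_{z+1}=\xi_T(i_z,\theta_T(i_z,\epsilon_z))$ and the side determined by $\epsilon_{z+1}=\epsilon_z\cdot\epsilon_T(i_z,\theta_T(i_z,\epsilon_z))$, i.e. $(i_{z+1},\epsilon_{z+1})=\Gamma(T)(i_z,\epsilon_z)$.

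Next I would conclude. Since $f^z(x)\in R_{i_z}$ for all $z\geq 0$, and $\underline w$ is a sector code of $x$, the sector of $x$ corresponding to $\underline w$ lies, under $f^z$, in the stable boundary of $R_{i_z}$, hence in $R_{i_z}$; thus $w_z=i_z$ for all $z\geq 0$. But $(i_z)_{z\geq 0}$ is precisely the sequence $\varsigma_T\circ\Gamma(T)^z(i_0,\epsilon_0)$, which by Definition \ref{Defi: positive negative boundary codes } is the $s$-boundary positive code $\underline I^+(i_0,\epsilon_0)$. Therefore $\underline w_+=\underline I^+(i_0,\epsilon_0)\in\underline{\cS}^+(T)$, which by Definition \ref{Defi: s,u-boundary codes} is exactly the statement $\underline w\in\underline{\cS}(T)$. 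The unstable case is obtained verbatim by replacing $f$ with $f^{-1}$, $T$ with $T^{-1}$, $\Gamma(T)$ with $\Upsilon(T)=\Gamma(T^{-1})$, and the positive part of a code with its negative part.

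The one point that needs care — and is the main obstacle — is the claim that $w_z=i_z$ for all $z\geq 0$, i.e. that the sector of $x$ determined by $\underline w$ really sits inside $R_{i_z}$ after applying $f^z$, rather than merely having $f^z(x)$ on the boundary of $R_{i_z}$. Here I would reuse the argument from the proof of Lemma \ref{Lemm: sector code is admisible}: a sequence converging to $f^z(x)$ within the sector cannot remain on a boundary component of $R_{i_z}$, because that boundary component is a local separatrix of $f^z(x)$ and a sector is bounded strictly between two consecutive local separatrices; hence the sector is eventually in $\overset{o}{R_{i_z}}$, forcing the code index at position $z$ to equal $i_z$. A subtlety is that $f^z(x)$ may be a corner of $R_{i_z}$, so it lies in the stable boundary of several rectangles; but the sector of $x$ picked out by the fixed sector code $\underline w$ singles out exactly one of them at each stage, and the $\Gamma(T)$-computation above is precisely the combinatorial record of which one — this is consistent because $\Gamma(T)$ tracks the sub-rectangle $H^{i_z}_{\theta_T(i_z,\epsilon_z)}$ adjacent to that particular stable side, and the image of the sector is adjacent to the image of that sub-rectangle's stable side. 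Once this consistency is spelled out, the proof is complete.
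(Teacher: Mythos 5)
Your route is essentially the paper's: you follow the stable boundary along the forward orbit via the $s$-generating function $\Gamma(T)$ and then identify the code with that itinerary. The only structural difference is how the signs are recovered: you read each $\epsilon_z$ off the geometry of the side containing $f^z(x)$, whereas the paper works code-first, observes that $f^m(x)\in\partial^s\cR$ forces the code's sub-rectangle indices $j_m$ to be extreme, fixes $\epsilon_M$ at the first index $M$ with $h_{w_M}>1$ (the adaptation of Lemma~\ref{Lemm: positive h-i}), and propagates the signs backwards to time $0$. When it works, your forward geometric determination of the $\epsilon_z$ is a harmless (even slightly cleaner) variant of that step.

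The problem is the step you yourself flag, $w_z=i_z$, and your repair does not close it; in particular the opening assertion ``$\underline{w}$ is a sector code of $x$, so in particular $w_0=i_0$'' is not justified. Being a sector code only gives $x\in R_{w_0}$ and that the sector lies in $R_{w_0}$; the hypothesis $x\in\partial^s\cR$ only gives $x\in\partial^s R_{i_0}$ for \emph{some} $i_0$, and nothing in Definition~\ref{Defi: Markov partition} forces $i_0=w_0$: the point may lie on the stable side of one rectangle while sitting at an interior point of an unstable side of the rectangle $R_{w_0}$ that actually contains the sector (a maximal stable boundary arc terminating on an unstable side, the familiar T-junction of Markov partitions; the corner-type properties that would exclude such configurations are only arranged later, for the joint refinements). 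In that configuration the sector sits in a horizontal sub-rectangle $H^{w_0}_{j}$ with $1<j<h_{w_0}$, and since $A(T)$ is binary, $w_1=\xi_T(w_0,j)$ differs from both $\xi_T(w_0,1)$ and $\xi_T(w_0,h_{w_0})$, so the matching with the $\Gamma(T)$-orbit of $(w_0,\pm1)$ cannot even start. Your proposed fix (a sequence in the sector cannot stay on a boundary component of $R_{i_z}$, hence the sector is eventually in $\overset{o}{R_{i_z}}$) proves something different: that the sector eventually lies in the interior of \emph{its own} rectangle, not that this rectangle is $R_{i_z}$; already at a point on the common stable side of two stacked rectangles, two sectors lie in the rectangle above and two in the one below, so the pair $(i_z,\epsilon_z)$ must be chosen adapted to the sector, which presupposes exactly what has to be proved, namely $f^z(x)\in\partial^s R_{w_z}$ with the sector adjacent to that side from inside $R_{w_z}$. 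What your induction genuinely provides is the propagation step: once this adjacency holds at time $z$ (equivalently $j_z\in\{1,h_{w_z}\}$), $f$ carries the extreme sub-rectangle onto a vertical sub-rectangle of $R_{w_{z+1}}$ together with the adjacent stable side, and the itinerary follows $\Gamma(T)$. The base case --- extracting from $\pi_f(\underline{w})\in\partial^s\cR$ that the adjacency holds for the rectangle $R_{w_0}$ singled out by the code --- is precisely the content the paper draws from the hypothesis at the corresponding point of its proof, and it is missing from your proposal; as written, your argument would also (incorrectly) apply verbatim to a sector sitting across an unstable side from the boundary arc, so some additional input is genuinely needed there.
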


\begin{proof}

Like $A(T)$ has coefficients ${0,1}$, the sequence $\underline{w}_+$ determines, for all $m\in \mathbb{N}$, a pair $(w_m,j_m)\in \mathcal{H}(T)$ such that $\xi_T(w_m, j_m)=w_{m+1}$ and a number $\epsilon_T(w_m,j_m)=\epsilon_{m+1} \in {1,-1}$. It is important to note that, like  $f^m(x)\in \partial^s\cR$, in fact,  there are only two  cases (unless $h_{w_m}=1$ where they are the same):
$$
w_{m+1}=\xi_T(w_m,1) \text{ or well } w_{m+1}=\xi_T(w_m,h_{w_m}).
$$

This permit to define $\epsilon_m\in \{-1,+1\}$ as the only number such that:
\begin{equation}\label{Equa: determine epsilon m}
w_{m+1}=\xi_T(w_m,\theta_T(w_m,\epsilon_m)).
\end{equation}
Even more $\epsilon_m$ determine $\epsilon_{m+1}$ by the formula:
$$
\epsilon_{m+1}=\epsilon_m\cdot \epsilon_T(w_m,\theta_T(w_m,\epsilon_m)).
$$
In resume:
\begin{equation}\label{Equa: w determine by gamma}
\Gamma(T)(w_m,\epsilon_m)=(w_{m+1},\epsilon_{m+1})
\end{equation}
follow the rule dictated by the $s$-generating function. So if we know $\epsilon_M$ for certain $M\in \NN$ we can determine  $\sigma^{M}(\underline{w})_+=\underline{I}^+(w_M,\epsilon_M)$, so it rest to determine at least one $\epsilon_M$. 

Lemma \ref{Lemm: positive h-i} can be adapted to this context to prove the existence of a minimal $M\in \mathbb{N}$ such that $h_{w_M}>1$. Then, equation \ref{Equa: determine epsilon m} determines $\epsilon_M$. Now, we need to recover $\epsilon_0$, but we proceed backwards. Since $h_{w_m}=1$ for all $m<1$, we have:

$$
\epsilon_M=\epsilon_{M-1}\cdot \epsilon_T(w_{M-1},1)
$$
and then $\epsilon_{M-1}=\epsilon_M \cdot \epsilon_T(w_{M-1},1)$. By applied this procedure we can determine $\epsilon_0$ and then using \ref{Equa: w determine by gamma} to get that
$$
\Gamma(T)^m(w_0,\epsilon_0)=(w_m,\epsilon_m).
$$
The conclusion is that $\underline{w}_+=\underline{I}^{+}(w_0,\epsilon_0)$. 

The unstable boundary situation is analogous.
\end{proof}

Therefore, $\underline{\cS}(T)$ and $\underline{\cU}(T)$ are the only admissible codes that project to the boundary of a Markov partition. With this in mind, we can distinguish the periodic boundary codes from the non-periodic ones. It is important to note that the cardinality of each of these sets is less than or equal to $2n$.

\begin{defi}\label{Defi: s,u-boundary periodic codes}
	The set of $s$-\emph{boundary periodic codes} of $T$ is:
	\begin{equation}
\text{ Per }(\underline{\cS(T)}):=\{\underline{w}\in \underline{\cS}(T): \underline{w} \text{ is periodic }\}.
	\end{equation}
	The set of  $u$-\emph{boundary periodic codes} of $T$ is 
	\begin{equation}
\text{ Per }(\underline{\cU(T)}):=\{\underline{w}\in \underline{\cU}(T): \underline{w} \text{ is periodic }\}.
	\end{equation}
\end{defi}

\subsection{Decomposition of $\Sigma_{A}$.}
 Now we can describe the stable and unstable leaves of $f$ corresponding to the periodic points of the boundary.

\begin{defi}\label{Defi: stratification Sigma A}
	We define the $s$-\emph{boundary leaves codes} of $T$:
	\begin{equation}
	\Sigma_{\cS(T)}=\{\underline{w}\in \Sigma_A:   \exists k\in\NN \text{ such that } \sigma^k(\underline{w})\in \underline{\cS(T)}\}.
	\end{equation}
	We define the $u$-\emph{boundary leaves codes} of $T$
	\begin{equation}
	\Sigma_{\cU(T)}=\{\underline{w}\in \Sigma_A:  \exists k\in\NN \text{ such that } \sigma^{-k}(\underline{w})\in \underline{\cU(T)}\}.
	\end{equation}
	Finally, the \emph{totally interior codes} of $T$ are
	$$
	\Sigma_{\cI nt(T)}=\Sigma_A\setminus(\Sigma_{\cS(T)} \cup \Sigma_{\cU(T)}).
	$$
\end{defi}

The importance of such a division of $\Sigma_A$ is that its projections are well determined, as indicated by the following lemma.

\begin{lemm}\label{Lemm: Projection Sigma S,U,I}

	A code $\underline{w}\in \Sigma_A$ belongs to $\Sigma_{\cS(T)}$ if and only if its projection $\pi_f(\underline{w})$ is within the stable leaf of a boundary periodic point of $\cR$.
	
A code $\underline{w}\in \Sigma_A$ belongs to $\Sigma_{U(T)}$ if and only if its projection $\pi_f(\underline{w})$ lies within the unstable leaf of a boundary periodic point of $\cR$.

	A code $\underline{w}\in \Sigma_A$ belongs to $\Sigma_{\cI nt(T)}$ if and only if its projection $\pi_f(\underline{w}$ is contained within Int$(f,\cR)$>
\end{lemm}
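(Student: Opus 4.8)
The plan is to prove the three equivalences in Lemma \ref{Lemm: Projection Sigma S,U,I} by combining the projection statements already established for boundary codes (Propositions \ref{Prop: positive codes are boundary} and \ref{Prop: boundary points have boundary codes}) with the leaf-projection statement (Proposition \ref{Prop: Projection foliations}) and the characterization of totally interior points (Proposition \ref{Prop: Carterization injectivity of pif} together with Lemma \ref{Lemm: Caraterization unique codes}). Since the unstable case is symmetric (apply everything to $f^{-1}$ and $T^{-1}$) and the interior case is the logical complement of the other two, the only real work is the $s$-boundary leaf statement, and the proof will have two directions.

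For the forward direction, suppose $\underline{w}\in\Sigma_{\cS(T)}$, so there exists $k\in\NN$ with $\sigma^k(\underline{w})\in\underline{\cS(T)}$. By Proposition \ref{Prop: positive codes are boundary}, $\pi_f(\sigma^k(\underline{w}))\in\partial^s_\epsilon R_i$ for the appropriate boundary component, and by Lemma \ref{Lemm: Boundary of Markov partition is periodic} this point lies on the stable leaf of some periodic boundary point $p$ of $\cR$. Using the semi-conjugacy $f\circ\pi_f=\pi_f\circ\sigma$, we get $\pi_f(\underline{w})=f^{-k}(\pi_f(\sigma^k(\underline{w})))$, and since $f^{-k}$ maps the stable leaf of $p$ to the stable leaf of the (still periodic) point $f^{-k}(p)$, we conclude $\pi_f(\underline{w})\in F^s(\text{Per}^b(f,\cR))$. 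For the converse, suppose $\pi_f(\underline{w})=x$ lies on the stable leaf of a periodic boundary point $p$. Because $f$ contracts along stable leaves uniformly by $\lambda^{-1}$ (as in the proof of Lemma \ref{Lemm: no periodic boundary points}), there is some $k\in\NN$ with $f^k(x)\in\partial^s\cR$; then $\pi_f(\sigma^k(\underline{w}))=f^k(x)\in\partial^s\cR$, and Proposition \ref{Prop: boundary points have boundary codes} forces $\sigma^k(\underline{w})\in\underline{\cS(T)}$, i.e.\ $\underline{w}\in\Sigma_{\cS(T)}$. The $\Sigma_{\cU(T)}$ statement is obtained verbatim by replacing $f$ with $f^{-1}$, $T$ with $T^{-1}$, ``stable'' with ``unstable,'' and $\sigma^k$ with $\sigma^{-k}$.

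For the totally interior statement, note first that by Lemma \ref{Lemm: Caraterization unique codes} the set $\text{Int}(f,\cR)$ is exactly the complement in $S$ of the union of the stable leaves of periodic $s$-boundary points and the unstable leaves of periodic $u$-boundary points. Hence $\underline{w}\in\Sigma_{\cI nt(T)}=\Sigma_A\setminus(\Sigma_{\cS(T)}\cup\Sigma_{\cU(T)})$ holds if and only if $\underline{w}$ is in neither $\Sigma_{\cS(T)}$ nor $\Sigma_{\cU(T)}$, which by the equivalences just proved happens if and only if $\pi_f(\underline{w})$ lies in neither of those two laminations, i.e.\ if and only if $\pi_f(\underline{w})\in\text{Int}(f,\cR)$. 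One small point to verify for rigor is that the three decompositions are genuinely compatible: a priori a code could project to the stable leaf of one periodic boundary point and the unstable leaf of another, but this is harmless since the statement only concerns membership in the named sets, not disjointness; if disjointness of $\Sigma_{\cS(T)}$ and $\Sigma_{\cU(T)}$ were needed it would follow because a point on both a stable and an unstable leaf of periodic boundary points is itself forced to be periodic (it is the transverse intersection), but we do not need this here.

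The main obstacle is purely bookkeeping: making sure the index $k$ in the definition of $\Sigma_{\cS(T)}$ (where only forward shifts $\sigma^k$ with $k\in\NN$ are allowed) matches the direction of contraction, and confirming via the uniform contraction estimate that such a nonnegative $k$ always exists — the argument is the same length computation $\mu^u(f^k[x,p]^s)=\lambda^k\mu^u([x,p]^s)\to\infty$ as in Lemma \ref{Lemm: no periodic boundary points}, applied in forward time, forcing $f^k(x)$ to leave the interior of the stable leaf segment and land on $\partial^s\cR$. No genuinely new idea is required; everything reduces to invoking Propositions \ref{Prop: positive codes are boundary}, \ref{Prop: boundary points have boundary codes}, \ref{Prop: Projection foliations}, Lemma \ref{Lemm: Caraterization unique codes}, and the semi-conjugacy.
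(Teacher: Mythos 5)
Your proposal is correct and follows essentially the same route as the paper: forward direction via the fact that $s$-boundary codes project into $\partial^s\cR$ together with Lemma~\ref{Lemm: Boundary of Markov partition is periodic} and the semi-conjugacy, converse via forward contraction along the stable leaf until the orbit lands in $\partial^s\cR$ and then Proposition~\ref{Prop: boundary points have boundary codes}, with the unstable case by symmetry and the interior case as the complement using Lemma~\ref{Lemm: Caraterization unique codes}. The only step you treat informally --- that a point on the stable leaf of a periodic boundary point has some forward iterate landing exactly on $\partial^s\cR$ --- is asserted at the same level of detail in the paper's own proof, so there is no gap relative to it.
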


\begin{proof}
The $s$-boundary leaf codes satisfy Definition  \ref{Defi: s,u-leafs}, and according to Proposition \ref{Prop: Projection foliations}, if $\underline{w}\in \Sigma_{\cS(T)}$, their projection lies on the same stable manifold as an $s$-boundary component of $\cR$. Stable boundary components of a Markov partition represent the stable manifold of an $s$-boundary periodic point, and for each $\underline{w}\in \Sigma_{\cS(T)}$, there exists $k=k(\underline{w})\in \mathbb{N}$ such that $\sigma^k(\underline{w})_+$ is a periodic positive code (Corollary \ref{Coro: preperiodic finite s,u boundary codes}). This positive code corresponds to a periodic point on the boundary of $\cR$ within whose stable manifold $\pi_f(\underline{w})$ is contained. This proves  one direction in the first assertion  of the lemma.

If $\underline{v}$ is a periodic boundary code ,$\pi_f(\underline{v})\in \partial^s\cR$, by definition $\underline{v}\in \underline{\cS(T)}$.  Suppose that $\pi_f(\underline{w})$ is on the stable leaf of $\underline{v}$, then there exist $k \in \NN$ such that $\pi_f(\sigma^k(\underline{w}))$ is on the same stable boundary of $\cR$ as $\pi_f(\underline{v})$.  The proposition\ref{Prop: boundary points have boundary codes} implies that $\sigma^k(\underline{w})\in \underline{\cS(T)}$, hence  $\underline{w}\in \Sigma_{\cS(T)}$ by definition. This completes the proof of the first assertion of the lemma. A similar argument proves the unstable case.	
	
 The conclusion of these items is that  $\underline{w}\in \Sigma_{\cS(T)}\cap \Sigma_{\cU(T)}$ if and only if $\pi_f(\underline{w})\in \cF^{s,u}(\text{ Per }^{s,u}(\cR))$. 
 
   As proved in the Lemma \ref{Lemm: Caraterization unique codes} totally interior points are disjoint from the stable and unstable lamination generated by boundary periodic points. If $\underline{w}\in  \Sigma_{\cI nt(T)}$ and $\pi_f(\underline{w})$ is on the stable or unstable leaf of a $s,u$-boundary periodic point, we have seen that $\underline{w}\in \Sigma_{\cS(T),\cU(T)}$ which is not possible, therefore $\pi_f(\underline{w})\in $Int$(f,\cR)$. In the conversely direction, if  $\pi_f(\underline{w})$ is not in the stable or unstable lamination of $s,u$-boundary points, $\underline{w}\notin (\Sigma_{\cS(T)})\cup \Sigma_{\cU(T)}$, so $\underline{w}\in\Sigma_{\cI nt(T)}$. This ends the proof.
 \end{proof}
We have obtained the decomposition 
$$
\Sigma_{A(T)}=\Sigma_{\cI nt(T)} \cup \Sigma_{\cS(T)} \cup \Sigma_{\cU(T)}
$$ 
 and have characterized the image under $\pi_f$ of each of these sets. In the next subsection we use this decomposition to define relations on each of these parts and then extend them to an equivalence relation in $\Sigma_A$.

\subsubsection{Relations in $\Sigma_{\cS(T)}$ and $\Sigma_{\cU(T)}$}

Let $\underline{w}\in \Sigma_{\cS(T)}\setminus$Per$(\sigma_A)$ be non-periodic $s$-boundary leaf code of $T$. Since it is not a periodic boundary points of $T$ there exists a number $k:=k(\underline{w})\in \ZZ$ with the following properties: 
\begin{itemize}
\item $\sigma^k(\underline{w})\notin \underline{\cS(T)}$, but
\item $\sigma^{k+1}(\underline{w})\in \underline{\cS(T)}$
\end{itemize}
 i.e. $\pi_f(\sigma^k(\underline{w}))\notin\partial^s\cR$ but $\pi_f(\sigma^{k+1}(\underline{w}))\in\partial^s\cR$. This easy observation lead to the next Lemma.

\begin{lemm}\label{Lemm: minumun k}
The number $k:=k(\underline{w})$ is the unique integer such that $f^{k}(\pi_f(\underline{w})) \in \cR\setminus\partial^s\cR$ and for all $k'>k(\underline{w})$,  $f^{k'}(\pi_f(\underline{w}))\in \partial^s\cR \setminus$Per$(f)$.
\end{lemm}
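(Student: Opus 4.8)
The statement to be proved (Lemma \ref{Lemm: minumun k}) refines the previous observation: given a non-periodic $s$-boundary leaf code $\underline{w}\in\Sigma_{\cS(T)}\setminus\mathrm{Per}(\sigma_A)$, the integer $k=k(\underline{w})$ characterized by $\sigma^k(\underline{w})\notin\underline{\cS(T)}$ and $\sigma^{k+1}(\underline{w})\in\underline{\cS(T)}$ is in fact the \emph{last} time the forward orbit of $x:=\pi_f(\underline{w})$ is off the stable boundary, i.e. $f^k(x)\in\cR\setminus\partial^s\cR$ while $f^{k'}(x)\in\partial^s\cR\setminus\mathrm{Per}(f)$ for every $k'>k$. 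The plan is to translate everything into statements about $x$ using Proposition \ref{Prop: boundary points have boundary codes} and Proposition \ref{Prop: positive codes are boundary}, and then invoke the $f$-invariance of $\partial^s\cR$ together with the non-periodicity of $x$ (Lemma \ref{Lemm: Periodic to peridic}).

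First I would record the dictionary: by Proposition \ref{Prop: boundary points have boundary codes}, for any $m\in\ZZ$ one has $\sigma^m(\underline{w})\in\underline{\cS(T)}$ if and only if $\pi_f(\sigma^m(\underline{w}))=f^m(x)\in\partial^s\cR$, using the semi-conjugacy $\pi_f\circ\sigma=f\circ\pi_f$. So the defining property of $k$ reads $f^k(x)\notin\partial^s\cR$ (equivalently $f^k(x)\in\cR\setminus\partial^s\cR$, since $\cR=S$) and $f^{k+1}(x)\in\partial^s\cR$. Then I would use Lemma \ref{Lemm: Boundary of Markov partition is periodic}: the stable boundary $\partial^s\cR$ is contained in the stable leaves of periodic points and is invariant under $f$ (the image of a stable boundary component is the stable boundary of a vertical sub-rectangle, hence lies in $\partial^s\cR$). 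Therefore, once $f^{k+1}(x)\in\partial^s\cR$, all further forward iterates $f^{k'}(x)$ for $k'\geq k+1$ also lie in $\partial^s\cR$; this gives the first half of the assertion for $k'>k$, and it also shows $k$ is the unique integer with the stated "last exit" property — if some $k''<k$ also had $f^{k''}(x)\notin\partial^s\cR$ but $f^{k''+1}(x)\in\partial^s\cR$, forward invariance would force $f^k(x)\in\partial^s\cR$, contradicting the definition of $k$; uniqueness upward is likewise immediate.

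Next I would establish the "$\setminus\,\mathrm{Per}(f)$" clause: for $k'>k$, the point $f^{k'}(x)$ is not a periodic point of $f$. Since $\underline{w}$ is non-periodic, Lemma \ref{Lemm: Periodic to peridic} (its contrapositive: if $\pi_f(\underline{w})$ were periodic then every code projecting to it, in particular $\underline{w}$, would be periodic) tells us $x=\pi_f(\underline{w})$ is non-periodic, hence no iterate $f^{k'}(x)$ is periodic either. This finishes that clause. The only genuine subtlety — and the step I expect to require the most care — is verifying that such a finite $k(\underline{w})$ really exists and is well-defined for every non-periodic $\underline{w}\in\Sigma_{\cS(T)}$: one must rule out the possibility that $f^m(x)\in\partial^s\cR$ for all $m\in\ZZ$. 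But this is exactly Lemma \ref{Lemm: no periodic boundary points}, which says that if the whole $f$-orbit of $\pi_f(\underline{w})$ stays in $\partial^s\cR$ then $\pi_f(\underline{w})$ — and hence $\underline{w}$, by Lemma \ref{Lemm: Periodic to peridic} — is periodic, contradicting our hypothesis; combined with forward invariance of $\partial^s\cR$, the set $\{m:f^m(x)\notin\partial^s\cR\}$ is a nonempty set of the form $\{m:m\leq k\}$ for some $k\in\ZZ$, which is precisely $k(\underline{w})$. Assembling these observations yields the lemma; I would present it in three short steps (dictionary via Proposition \ref{Prop: boundary points have boundary codes}; existence and "lower" shape of $k$ via Lemma \ref{Lemm: no periodic boundary points} and invariance; the non-periodicity clause via Lemma \ref{Lemm: Periodic to peridic}).
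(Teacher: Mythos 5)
Your proof is correct and follows essentially the same route as the paper, which treats this lemma as an immediate consequence of the preceding observation: the code/boundary dictionary (Propositions \ref{Prop: positive codes are boundary} and \ref{Prop: boundary points have boundary codes} together with the semi-conjugacy $f\circ\pi_f=\pi_f\circ\sigma$), the forward invariance of $\partial^s\cR$, and the periodicity transfer given by Lemmas \ref{Lemm: Periodic to peridic} and \ref{Lemm: no periodic boundary points} are exactly the ingredients the paper relies on, and you assemble them correctly, including the existence and uniqueness of $k(\underline{w})$ which the paper leaves implicit.
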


Let $x=\pi_f(\underline{w})$, as consequence of Lemma \ref{Lemm: minumun k} there are indices $i\in \{1,\cdots,n\}$ and $j\in \{1,\cdots,h_{i}-1\}$ such that:
\begin{itemize}
\item $f^k(x)$ is the rectangle $R_i$ but not in its stable boundary, i.e. $f^k(x)\in R_{i}\setminus \partial^s R_i$ 
\item $f^k(x)$ is in two adjacent horizontal sub-rectangles of $R_i$, i.e. $f^k(x)\in \partial^s_{+1} H^i_j$ and $x\in \partial_{-1}^s H^i_{j+1}$
\end{itemize}

\begin{comment}
In fact as $x$ is not a periodic point it is not a singularity, so $x$ and $f^k(x)$ have at most $4$ sector codes (because it have at most $4$ sectors). Then $x$ could be in at most $4$ rectangles of $\cR$, but the fact $f^k(x)$ is not in a stable boundary of the  Markov partition implies $f^k(x)$ is in at most $2$ rectangles of $\cR$.
%Take $\phi_T(i,j)=(i_0,l_0,\epsilon_T(i,j))$ and $\phi_T(i,j+1)=(i'_0,l'_0,\epsilon_T(i,j+1))$.
\end{comment}

As $f^{k+1}(x)\in \partial^s \cR$, the rectangles  $H^i_j$ and $H^i_{j+1}$ determine the following conditions:

\begin{itemize}
\item As $f^k(x)\in \partial^s_{+1}H^i_j$,  we have that $f^{k+1}(x)\in \partial^s_{\epsilon_0} R_{i_0}$ where:
\begin{equation}
i_0=\xi_T(i,j) \text{ and  } \epsilon_0=\epsilon_T(i,j)
\end{equation}
\item  As $f^k(x)\in \partial^s_{-1}H^i_{j+1}$ we have that $f^{k+1}(x)\in \partial^s_{\epsilon'_0} R_{i'_0}$ where
\begin{equation}
i_0=\xi_T(i,j+1)  \text{ and  } \epsilon'_0=-\epsilon_T(i,j+1)
\end{equation}
\end{itemize}

This mechanism of identification is illustrated in Figure  \ref{Fig: stable identification}
\begin{figure}[h]
	\centering
	\includegraphics[width=0.6\textwidth]{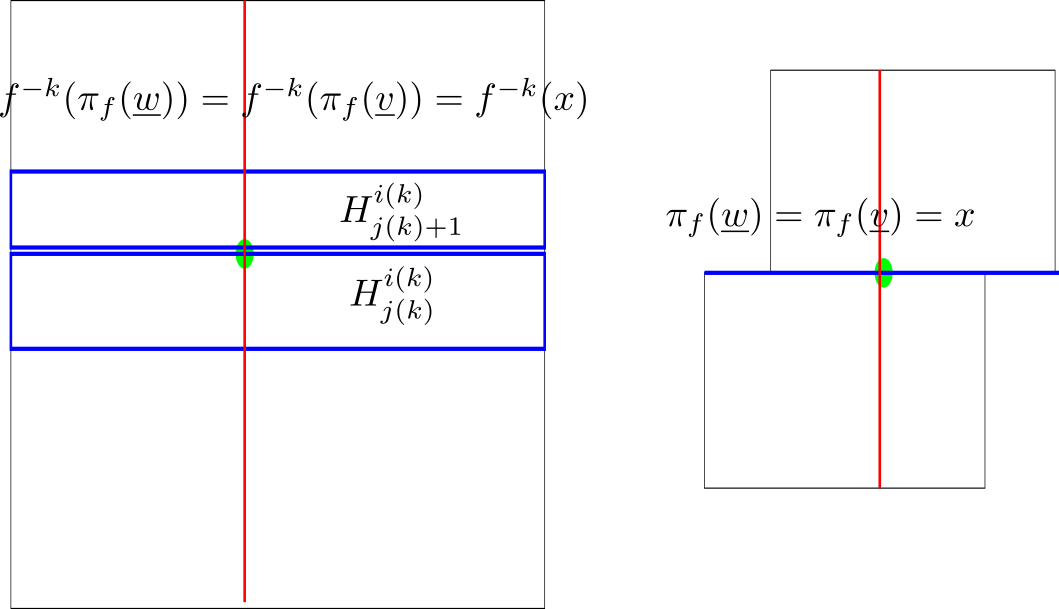}
	\caption{The stable identification mechanism}
	\label{Fig: stable identification}
\end{figure}

This analysis suggest the following definition

\begin{defi}\label{Defi: s realtion in Sigma S no per}
Let $\underline{w},\underline{v}\in \Sigma_{\cS(T)}\setminus$Per$(\sigma_A)$, they are $s$-related and its write $\underline{w}\sim_{s} \underline{v}$ if they are equals or: There exist  $k\in \ZZ$ such that
\begin{itemize}
\item[i)]  $\sigma^k(\underline{w}),\sigma^k(\underline{w})\notin \underline{\cS(T)}$ but $\sigma^{k+1}{\underline{w}},\sigma^{k+1}(\underline{v})\in \underline{\cS(T)}$ 

\item[ii)] The zero terms of $\sigma^k(\underline{w})$ and  $\sigma^k(\underline{v})$ are equals.i.e  $v_k=w_k$ and $h_{w_k}=h_{v_k}>1$. 

\item[iii)] Lets take $i=v_k=w_k$. There exist $j\in \{1, \cdots,h_i -1\}$ such that only one of the next options occurs:
\begin{equation}\label{Equa: Sim s obtion 1}
\xi_T(i,j)=w_{k+1} \text{ and then }  \xi_T(i,j+1)=v_{k+1},
\end{equation}
or
\begin{equation}\label{Equa: Sim s obtion 2}
\xi_T(i,j)=v_{k+1} \text{ and then }  \xi_T(i,j+1)=w_{k+1}.
\end{equation}

\item[iv)] Suppose the positive code of $\sigma^{k+1}(\underline{w})$  is equal to the $s$-boundary code $\underline{I}^{+}(w_{k+1},\epsilon_w)$ and the positive code of $\sigma^{k+1}(\underline{v})$  is equal to the $s$-boundary code $\underline{I}^{+}(v_{k+1},\epsilon_v)$, then:

If equation \ref{Equa: Sim s obtion 1} happen:
\begin{equation}\label{Equa: sim s epsilon 1}
\epsilon_w=\epsilon_T(w_{k},j) \text{ and } \epsilon_v=-\epsilon_T(v_{k},j+1)
\end{equation}
If equation \ref{Equa: Sim s obtion 2} happen:
\begin{equation}\label{Equa: sim s epsilon 2}
\epsilon_v=\epsilon_T(v_{k},j) \text{ and } \epsilon_w=-\epsilon_T(w_{k},j+1)
\end{equation}

\item[v)] The negative codes  $\sigma^{k}(\underline{w})_{-}$ and $\sigma^{k}(\underline{v})_{-}$ are equal.
\end{itemize}
\end{defi}

Our challenge is to show that $\sim_s$ is an equivalence relation in $\Sigma_{\cS(T)}$ and that two $\sim_s$-related  codes project by $\pi_f$ to the same point. The basis of the argument will be that when $\underline{w},\underline{v}\in \underline{\cS(T)}$ and are $\sim_s$-related, they project to adjacent stable boundaries of the Markov partition.  We subsequently can generalize the result to the whole set $\Sigma_{\cS(T)}\setminus$Per$(\sigma_A)$. The situation with the periodic points will be discussed later.

\begin{prop}\label{Prop: sim s equiv in Sigma non per}
The relation $\sim_{s}$ is an equivalence relation in $\Sigma_{\cS(T)}\setminus$Per$(\sigma_A)$.
\end{prop}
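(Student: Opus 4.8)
The plan is to verify the three defining properties of an equivalence relation—reflexivity, symmetry, and transitivity—for $\sim_s$ on the set $\Sigma_{\cS(T)}\setminus\text{Per}(\sigma_A)$, reading off each from Definition \ref{Defi: s realtion in Sigma S no per}. Reflexivity is immediate, since the definition explicitly declares $\underline{w}\sim_s\underline{w}$. Symmetry is also essentially built into the definition: the two alternatives (\ref{Equa: Sim s obtion 1}) and (\ref{Equa: Sim s obtion 2}), together with the paired conditions (\ref{Equa: sim s epsilon 1}) and (\ref{Equa: sim s epsilon 2}), are set up so that interchanging the roles of $\underline{w}$ and $\underline{v}$ simply swaps which of the two options holds; the index $k$ and the witness $j$ are common to both codes, so no choice is disturbed. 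I would spell this out in one short paragraph, checking that items (i), (ii), (v) are manifestly symmetric and that (iii)--(iv) swap consistently.

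The substantive work is transitivity. Suppose $\underline{w}\sim_s\underline{v}$ and $\underline{v}\sim_s\underline{u}$, with witnessing integers $k_1$ (for the pair $\underline{w},\underline{v}$) and $k_2$ (for the pair $\underline{v},\underline{u}$). The first step is to show $k_1=k_2$. This is where Lemma \ref{Lemm: minumun k} does the heavy lifting: for a non-periodic $s$-boundary leaf code $\underline{v}$, the integer $k$ appearing in the definition is \emph{uniquely} characterized as the last time the orbit of $\pi_f(\underline{v})$ leaves $\partial^s\cR$—equivalently, $\sigma^k(\underline{v})\notin\underline{\cS(T)}$ while $\sigma^{k'}(\underline{v})\in\underline{\cS(T)}$ for all $k'>k$. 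Since both $k_1$ and $k_2$ satisfy this for the same code $\underline{v}$, we get $k_1=k_2=:k$. Once the index is shared, I would argue that the witnessing $j$ is forced: at level $k$ the code $\sigma^k(\underline{v})$ has zero-term $i:=v_k$ with $h_i>1$, the point $f^k(\pi_f(\underline{v}))$ lies in $R_i\setminus\partial^s R_i$, and $\sigma^{k+1}(\underline{v})$ projects into $\partial^s_{\epsilon}R_{\xi_T(i,j')}$ for a specific consecutive pair $H^i_{j'},H^i_{j'+1}$ of horizontal sub-rectangles sandwiching the stable leaf through $f^k(\pi_f(\underline{v}))$; since a given $\epsilon$-part and index $\xi_T(i,j')$ of $\sigma^{k+1}(\underline{v})$ is produced by at most one such $j'$ (this uses that $A(T)$ is binary, so distinct horizontal sub-rectangles of $R_i$ with distinct target indices), the value of $j$ relative to $\underline{v}$ is the same in both relations. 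With common $k$ and $j$, the conditions defining $\underline{w}\sim_s\underline{u}$ then follow by transitively chaining: the negative codes agree ($\sigma^k(\underline{w})_-=\sigma^k(\underline{v})_-=\sigma^k(\underline{u})_-$ from item (v)), the zero-terms satisfy $w_k=v_k=u_k$ with $h_{w_k}>1$, and the positive-code data from item (iv) composes so that $\sigma^{k+1}(\underline{w})_+$ and $\sigma^{k+1}(\underline{u})_+$ are the two $s$-boundary codes $\underline{I}^+$ attached to the outer horizontal sub-rectangles of the chain—one checks the $\epsilon$-bookkeeping is consistent, noting the asymmetry between the $+\epsilon_T$ and $-\epsilon_T$ prescriptions reflects that $\underline{w}$ sits on the ``upper'' side and $\underline{u}$ on the ``lower'' side of their respective sub-rectangles.

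I expect the main obstacle to be exactly this matching of the witness $j$ and the careful orientation bookkeeping in item (iv): one must confront the case where $\underline{v}$ plays the ``upper'' role in one relation and the ``lower'' role in the other (i.e.\ option (\ref{Equa: Sim s obtion 1}) holds for $\underline{w}\sim_s\underline{v}$ but option (\ref{Equa: Sim s obtion 2}) holds for $\underline{v}\sim_s\underline{u}$), and show that the three codes $\underline{w},\underline{v},\underline{u}$ really occupy three consecutive horizontal sub-rectangles $H^i_{j-1},H^i_j,H^i_{j+1}$, so that $\underline{w}$ and $\underline{u}$ are related through the outer pair with the correct signs. This is a finite but fiddly case analysis; I would organize it by first fixing notation $i=w_k=v_k=u_k$, recording the index/sign pairs $(i_0,\epsilon_0)$ for each of the three codes at level $k+1$ as dictated by $\xi_T$ and $\epsilon_T$, and then observing that the arithmetic in (\ref{Equa: sim s epsilon 1})--(\ref{Equa: sim s epsilon 2}) telescopes. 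A clean way to avoid a pure symbol-pushing proof is to phase the argument through the geometry: by (the analog, for $\underline{\cS(T)}$, of) Proposition \ref{Prop: positive codes are boundary}, $\sim_s$-relatedness of $\underline{w},\underline{v}$ amounts to $f^k(\pi_f(\underline{w}))=f^k(\pi_f(\underline{v}))$ lying on a common stable leaf in the interior of $R_i$ at the shared height $\partial^s_{+1}H^i_j=\partial^s_{-1}H^i_{j+1}$; transitivity of equality of these projected points then gives transitivity of $\sim_s$, and Lemma \ref{Lemm: minumun k} guarantees the level $k$ is unambiguous. I would use this geometric reformulation as the backbone and relegate the combinatorial verification to confirming the reformulation is faithful to Definition \ref{Defi: s realtion in Sigma S no per}.
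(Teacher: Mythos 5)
Your reflexivity and symmetry remarks, and your use of the uniqueness of the integer $k$ (Lemma \ref{Lemm: minumun k}) to see that the three codes share the same $k$ and the same $k$-term, coincide with the paper's proof. The problem is in how you plan to finish transitivity. Your own key sentence — that binarity of $A(T)$ together with the $\epsilon$-data forces ``the value of $j$ relative to $\underline{v}$'' to be the same in both relations — is the right claim, and it is exactly the paper's argument: since the map $(i,\epsilon)\mapsto \underline{I}^+(i,\epsilon)$ is injective (Proposition \ref{Prop: s code Inyective}), the sign $\epsilon_v$ attached to $\sigma^{k+1}(\underline{v})_+$ is well defined, and playing the lower role of a pair forces $\epsilon_v=\epsilon_T(i,j)$ while playing the upper role forces $\epsilon_v=-\epsilon_T(i,j)$; so the mixed-role case is simply impossible, the witnessing pair and the role of $\underline{v}$ agree in both relations, and then $\sigma^k(\underline{w})$ and $\sigma^k(\underline{u})$ have the same positive part, the same negative part by item (v), hence $\underline{w}=\underline{u}$ and there is nothing left to prove. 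But you then abandon this and describe the ``main obstacle'' as the mixed-role case, proposing to show that $\underline{w},\underline{v},\underline{u}$ occupy three consecutive sub-rectangles $H^i_{j-1},H^i_j,H^i_{j+1}$ and that $\underline{w}$ and $\underline{u}$ are ``related through the outer pair with the correct signs.'' That resolution cannot work: Definition \ref{Defi: s realtion in Sigma S no per} only relates two codes through the shared stable boundary of two \emph{consecutive} horizontal sub-rectangles, codes sitting over $H^i_{j-1}$ and $H^i_{j+1}$ lie on different stable leaves, and each $\sim_s$-class of a non-periodic code has at most two elements (Remark \ref{Rema: two codes in sim s}). A correct proof must show this configuration does not occur (the sign contradiction above, which is how the paper argues), not accommodate it.

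The proposed ``geometric backbone'' is also not sound at this stage. Only the implication ``$\sim_s$-related $\Rightarrow$ equal projections'' is available (and it is proved afterwards, in Proposition \ref{Prop: s-relaten implies same projection}); the converse is false as a characterization of $\sim_s$, because two codes with the same projection may have different negative parts at level $k$ (they are then identified only through $\sim_u$, or through a chain of $\sim_s$ and $\sim_u$), so ``$f^k(\pi_f(\underline{w}))=f^k(\pi_f(\underline{v}))$ on a common stable leaf'' does not faithfully reformulate Definition \ref{Defi: s realtion in Sigma S no per}, and using it as the backbone would either prove too much or be circular with respect to results established later. Keep the purely combinatorial argument: uniqueness of $k$, binarity forcing the slot of $\underline{v}$, the sign forcing its role, and conclude $\underline{u}=\underline{w}$.
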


\begin{proof}
Reflexivity and symmetry are fairly obvious from the definition and we will concentrate on transitivity. Lets to assume $\underline{w}\sim_{s}\underline{v}$ and $\underline{v}\sim_s\underline{u}$. 

The number $k\in \ZZ$ of item $i)$  is unique because it is characterized as:
$$
k(\underline{w})=\min\{z\in \ZZ: \sigma^k(\underline{w})\in \underline{w}\}-1,
$$
 therefore $k:=k(\underline{w})=k(\underline{v})=\underline{u}$ is the same for the three codes.

The $k$-terms of such codes are equal too, as $w_k=v_k$ and $v_k=u_k$, lets take $i=v_k=w_k=u_k$. Without lost of generality we could assume that $j\in \{1,\cdots,h_{i}-1\}$ is such that:
$$
\xi_T(i,j+1)=w_{k+1} \text{ and } \xi_T(i,j)=v_{k+1},
$$ 

Suppose too that $\sigma^{k+1}(\underline{w})_+=\underline{I}^+(w_{k+1},\epsilon_w)$  and $\sigma^{k+1}(\underline{v})_+=\underline{I}^+(v_{k+1},\epsilon_v)$. They satisfy the relations:
$$
\epsilon_w=-\epsilon_T(i,j+1) \text{ and } \epsilon_v=\epsilon_T(i,j).
$$

Like  $\underline{v}\sim_s \underline{u}$ there is a unique $j'\in \in \{1,\cdots,h_{i}-1\}$ such that:
$$
\xi_T(i,j')=u_{k+1}
$$
but the relation, $\underline{v}\sim_s \underline{u}$ implies that, $j'=j+1$ or $j-1$. If we prove that $j'=j+1$ necessarily $\underline{u}=\underline{w}$ and we have finished.   Lets to analyses the situation of $j'=j-1$.

 Suppose $\sigma^{k+1}(\underline{u})_+=\underline{I}^+(u_{k+1},\epsilon_u)$, as $j=(j-1)+1$ we have in the situation of Equation \ref{Equa: Sim s obtion 1} and then we applied Equation \ref{Equa: sim s epsilon 1} to obtain that:
$$
\epsilon_v= -\epsilon_T(i,j) \text{ and } \epsilon_u=\epsilon_T(i,j-1).
$$
Therefore, $\epsilon_T(i,j)=-\epsilon_T(i,j)$ and this is a direct contradiction.  Then $j'=j+1$ and the positive part of $\sigma^k(\underline{w})$ coincides with the positive part of $\sigma^k(\underline{u})$.
 
  Item $v)$ implies, that the negative part of $\sigma^{k}(\underline{w})$ is equal to the negative part of $\sigma^{k}(\underline{v})$, and  the negative part of   $\sigma^{k}(\underline{v})$ is equal to the negative part of $\sigma^{k}(\underline{u})$. Hence, the negative part of   $\sigma^{k}(\underline{w})$ is equal to the negative part  of $\sigma^{k}(\underline{u})$, which implies that $\underline{w}\sim_s \underline{u}$. 
\end{proof}

\begin{rema}\label{Rema: two codes in sim s}
We deduce from the proof that the equivalence class of every code $\underline{w}\in \Sigma_{\cS(T)}\setminus$Per$(\sigma_A)$ have at most $2$ elements.  In fact it has exactly $2$-elements. The number $k(\underline{w})$ always exists  and as $\sigma^k(\underline{w})\notin \underline{\cS(T)}$, its projection is in the complement of $\partial^s\cR$, this implies $\pi_f(\sigma^k{\underline{w}})\notin \partial^s\cR$ but  $\pi_f(\sigma^{k+1}{\underline{w}})\in \partial^s\cR$ therefore is in the boundary of two consecutive horizontal sub-rectangles of $R_{w_k}$, this gives rise to two  different codes $\sim_s$-related.
\end{rema}

It remains to extend the relation $\sim_s$ to the $s$-boundary periodic codes that were defined in \ref{Defi: s,u-boundary periodic codes} by Per$(\underline{\cS(T)})$. Observe first that, Per$(\sigma)\cap\Sigma_{\underline{\cS(T)}}=$Per$(\underline{\cS(T)})$ because if $\underline{w}\in$Per$(\sigma)\cap\Sigma_{\cS(T)}$ there is $K\in \ZZ$ such that for all $k\geq K$, $\sigma^k(\underline{w})\in \underline{\cS(T)}$ but there exist $r>k$ such that $\sigma^r(\underline{w})=\underline{w}$, so $\underline{w}\in \underline{\cS(T)}$ since the beginning. The relation $\sigma_s$ could be extended to $\Sigma_S$ as follows.

\begin{defi}\label{Defi: sim s in per}
Let $\underline{\alpha},\underline{\beta}\in$Per$\underline{\cS(t)}$ be  $s$-boundary periodic codes. They are $s$-related, $\underline{\alpha}\sim_s \underline{\beta}$,  if and only if they are equal or there are $\underline{w},\underline{v} \in \Sigma_{\cS(T)}\setminus$Per$(\sigma)$ such that:
\begin{itemize}
	\item $\underline{w}\sim_{s}\underline{v}$,
	\item There exists $k\in \ZZ$ such that,  $\sigma^k(\underline{w})_+=\underline{\alpha}_+$ and   $\sigma^k(\underline{v})_+=\underline{\beta}_+$.
\end{itemize}
\end{defi}

\begin{prop}\label{Prop: sim s equiv in Sigma S }
The relation $\sim_s$ in $\Sigma_{\cS(T)}$ is an equivalence relation.
\end{prop}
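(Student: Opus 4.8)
The plan is to reduce the statement to the periodic codes, where the only real content is transitivity. First I would observe that the relation $\sim_s$ on $\Sigma_{\cS(T)}$ is a disjoint union of two relations: by Definition~\ref{Defi: s realtion in Sigma S no per} it is declared only between codes of $\Sigma_{\cS(T)}\setminus\mathrm{Per}(\sigma_A)$, by Definition~\ref{Defi: sim s in per} only between codes of $\mathrm{Per}(\underline{\cS(T)})$, and, as recorded just before Definition~\ref{Defi: sim s in per}, $\mathrm{Per}(\sigma_A)\cap\Sigma_{\cS(T)}=\mathrm{Per}(\underline{\cS(T)})$; so these two subsets partition $\Sigma_{\cS(T)}$ and no periodic code is $\sim_s$-related to a non-periodic one. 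Since Proposition~\ref{Prop: sim s equiv in Sigma non per} already gives that $\sim_s$ is an equivalence relation on $\Sigma_{\cS(T)}\setminus\mathrm{Per}(\sigma_A)$, it remains to verify reflexivity, symmetry and transitivity of the relation of Definition~\ref{Defi: sim s in per} on $\mathrm{Per}(\underline{\cS(T)})$. Reflexivity is part of that definition; symmetry is immediate, since if $\underline\alpha\sim_s\underline\beta$ is witnessed by a non-periodic pair $\underline w\sim_s\underline v$ and an integer $k$, then $\underline v\sim_s\underline w$ by symmetry of $\sim_s$ on non-periodic codes and the same $k$ witnesses $\underline\beta\sim_s\underline\alpha$.

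For transitivity, let $\underline\alpha\sim_s\underline\beta$ and $\underline\beta\sim_s\underline\gamma$ with $\underline\alpha,\underline\beta,\underline\gamma$ pairwise distinct (otherwise the conclusion is trivial), witnessed respectively by a non-periodic pair $\underline w_1\sim_s\underline v_1$ with an integer $k_1$ and a non-periodic pair $\underline w_2\sim_s\underline v_2$ with an integer $k_2$. The first move is to align the witnesses: the relation $\sim_s$ on non-periodic codes is shift-equivariant — the clauses of Definition~\ref{Defi: s realtion in Sigma S no per} constrain only a bounded window of coordinates around the break index — and $\underline\beta_+$ is periodic, so after replacing the two pairs by suitable $\sigma$-iterates I may assume $k_1=k_2=0$; then $\underline v_1$ and $\underline w_2$ agree on every non-negative coordinate (both equal to $\underline\beta_+$) and differ at most in negative ones. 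Now fix a realization $(f,\cR)$ of $T$, which exists because $T$ lies in the pseudo-Anosov class. By Proposition~\ref{Prop: Projection foliations} and Lemma~\ref{Lemm: Boundary of Markov partition is periodic}, the four witness points lie on $\partial^s\cR$, are non-periodic, and their forward orbits converge to the periodic point $p:=\pi_f(\underline\beta)$; since two periodic points on a common stable leaf must coincide (uniform contraction along stable leaves), also $\pi_f(\underline\alpha)=\pi_f(\underline\gamma)=p$, and by Proposition~\ref{Prop: boundary points have boundary codes} together with Corollary~\ref{Coro: Caracterisation fibers}, $\underline\alpha$ and $\underline\gamma$ are two of the finitely many codes projecting to $p$, each a sector code whose sector abuts a stable separatrix of $p$ contained in $\partial^s\cR$.

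It then remains to splice the two witnesses into a single non-periodic pair realizing $\underline\alpha\sim_s\underline\gamma$. Each relation $\underline w_i\sim_s\underline v_i$ records, at its break index, a rectangle $R_{i'}$ of $\cR$ and two consecutive horizontal sub-rectangles $H^{i'}_{j},H^{i'}_{j+1}$ whose $f$-images carry the two $s$-boundary codes, together with the sign data of equations~\ref{Equa: sim s epsilon 1}--\ref{Equa: sim s epsilon 2}. Using the $f$-invariance of $\partial^s\cR$ and the contraction along the stable separatrix of $p$ adjacent to the sector coded by $\underline\beta$, I would push both break indices back to a common index $k_0$ with a common negative code, so that there the three relevant witness points occupy three consecutive horizontal sub-rectangles of one rectangle; applying $\xi_T$ and the sign bookkeeping of Definition~\ref{Defi: s realtion in Sigma S no per} to the outer pair then yields $\underline\alpha_+$ and $\underline\gamma_+$, and hence the desired witness $\underline w_3\sim_s\underline v_3$ with the integer realizing $\underline\alpha\sim_s\underline\gamma$. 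I expect this last step — controlling simultaneously the break indices and the negative parts of the two witnesses so as to bring the relevant points into a single rectangle — to be the main obstacle; it leans on the finiteness (at most $2n$) of the $s$-boundary positive codes from Corollary~\ref{Coro: preperiodic finite s,u boundary codes}, on the description of the codes projecting to a stable leaf provided by Lemma~\ref{Lemm: every code is sector code } and Proposition~\ref{Prop: Projection foliations}, and on a careful analysis of the stable boundary components of $\cR$ accumulating on the periodic corner point $p$. The relation $\sim_u$ on $\Sigma_{\cU(T)}$ is handled symmetrically, applying the whole argument to $T^{-1}$ and $f^{-1}$.
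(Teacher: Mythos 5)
Your reduction to the periodic codes, and your treatment of reflexivity and symmetry there, coincide with what the paper does: reflexivity is built into Definition \ref{Defi: sim s in per}, and symmetry passes through the witnessing non-periodic pairs using Proposition \ref{Prop: sim s equiv in Sigma non per}. The genuine gap is transitivity on $\mathrm{Per}(\underline{\cS(T)})$, which is the only nontrivial content left, and which you do not actually establish: you describe a strategy and then declare its crucial step (producing a single witness pair $\underline{w}_3\sim_s\underline{v}_3$ for $\underline{\alpha}\sim_s\underline{\gamma}$) to be ``the main obstacle'', leaving it unproved. Moreover, the manoeuvre you propose does not work as described. The break index of a non-periodic code in $\Sigma_{\cS(T)}$ is intrinsic to the code (it is the unique $k$ with $\sigma^{k}(\underline{w})\notin\underline{\cS(T)}$ and $\sigma^{k+1}(\underline{w})\in\underline{\cS(T)}$, cf. Lemma \ref{Lemm: minumun k}), so shifting a witness pair moves both of its break indices simultaneously and gives no freedom to ``push both break indices back to a common index $k_0$ with a common negative code'': the two witness pairs may break at different times, in different rectangles, and nothing in your argument shows that the relevant points ever lie in one rectangle, let alone in three consecutive horizontal sub-rectangles. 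Even granting that picture, applying the sign bookkeeping to the ``outer pair'' is not compatible with Definition \ref{Defi: s realtion in Sigma S no per}, which only relates codes whose break terms select two \emph{consecutive} sub-rectangles $H^i_j$, $H^i_{j+1}$ and which requires equal negative parts; an outer pair separated by a middle sub-rectangle satisfies neither condition (and geometrically its two points would project to distinct stable segments). So the heart of the proposition is asserted, not proved.

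By contrast, the paper stays entirely at the formal level: it observes that, by the very form of Definition \ref{Defi: sim s in per}, symmetry and transitivity for periodic codes are inherited from the relation on $\Sigma_{\cS(T)}\setminus\mathrm{Per}(\sigma_A)$ through the witnesses, where transitivity was already settled in Proposition \ref{Prop: sim s equiv in Sigma non per} (whose proof shows each non-periodic class has exactly two elements, Remark \ref{Rema: two codes in sim s}). If you want to complete your argument along your own lines, the missing step is precisely to show that two witness pairs sharing the periodic positive part $\underline{\beta}_+$ can be compared --- for instance by showing that the middle witnesses can be taken equal, after which transitivity of the non-periodic relation finishes the job; importing a realization, $\pi_f$, and sector codes does not by itself supply that comparison.
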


\begin{proof}
Two codes in $\Sigma_{\cS(T)}$ are $s$-related if and only if both are periodic or non-periodic.We have already determined the non-periodic situation, it remains to analyze what happens in the periodic setting. But reflexivity is for free, while symmetry and transitivity are inherited from the relation in $\Sigma_{\cS(T)}\setminus$Per$(\sigma)$ to the periodic codes in view of the definition we have given.
\end{proof}

Now we are ready to see that $\sim_s$ related codes  are projected to the same point.

\begin{prop}\label{Prop: s-relaten implies same projection}
Let $\underline{w},\underline{v}\in \Sigma_{\cS(T)}$ be to $s$-boundary leaves codes. If  $\underline{w}\sim_{s}\underline{v}$ then $\pi_f(\underline{w})=\pi_f(\underline{v})$.
\end{prop}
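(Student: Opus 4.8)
The plan is to reduce the statement to the case of $s$-boundary codes (that is, elements of $\underline{\cS(T)}$) and then invoke Proposition~\ref{Prop: positive codes are boundary} together with a uniqueness-of-stable-boundary argument. First I would treat the generic case where $\underline{w},\underline{v}\in \Sigma_{\cS(T)}\setminus\mathrm{Per}(\sigma_A)$. By Definition~\ref{Defi: s realtion in Sigma S no per} there is an integer $k$ such that $\sigma^k(\underline{w}),\sigma^k(\underline{v})\notin \underline{\cS(T)}$ but $\sigma^{k+1}(\underline{w}),\sigma^{k+1}(\underline{v})\in\underline{\cS(T)}$, with common $k$-th symbol $i=w_k=v_k$, common negative part of $\sigma^k$, and the index data of items $iii)$ and $iv)$. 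Writing $x=\pi_f(\underline{w})$ and $y=\pi_f(\underline{v})$, the semi-conjugacy of $\pi_f$ with $\sigma$ (Proposition~\ref{Prop:proyecion semiconjugacion}) lets me replace $\underline{w},\underline{v}$ by $\sigma^k(\underline{w}),\sigma^k(\underline{v})$ and hence assume $k=0$; it suffices to prove $\pi_f(\sigma^0(\underline{w}))=\pi_f(\sigma^0(\underline{v}))$, since applying $f^{-k}$ then gives the original equality.

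With $k=0$, both $f(x)$ and $f(y)$ lie on the stable boundary of $\cR$: by Proposition~\ref{Prop: positive codes are boundary}, since $\sigma(\underline{w})_+=\underline{I}^+(w_1,\epsilon_w)$ and $\sigma(\underline{v})_+=\underline{I}^+(v_1,\epsilon_v)$, we get $f(x)\in\partial^s_{\epsilon_w}R_{w_1}$ and $f(y)\in\partial^s_{\epsilon_v}R_{v_1}$. Meanwhile $x,y\in R_i\setminus\partial^s R_i$. The combinatorial conditions in items $iii)$ and $iv)$ are precisely the translation, through the geometric type $T$, of the geometric fact that $H^i_j$ and $H^i_{j+1}$ are two consecutive horizontal sub-rectangles of $R_i$ whose common stable boundary arc $\partial^s_{+1}H^i_j=\partial^s_{-1}H^i_{j+1}$ is sent by $f$ to the single stable boundary component $\partial^s_{\epsilon_w}R_{w_1}=\partial^s_{\epsilon_v}R_{v_1}$ (the sign matching being exactly Equations~\ref{Equa: sim s epsilon 1}--\ref{Equa: sim s epsilon 2}, where the extra minus on the lower boundary of $H^i_{j+1}$ accounts for the orientation of the stable arc relative to $R_i$). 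Thus $x$ and $y$ both lie on the local stable interval $\overline{H_s}$ associated to the positive code $\underline{I}^+(w_1,\epsilon_w)=\underline{I}^+(v_1,\epsilon_v)$; here I use the argument inside the proof of Proposition~\ref{Prop: positive codes are boundary} showing $\bigcap_s \overline{H_s}$ is a single stable segment. To pin down the point on this segment I use the common negative part (item $v)$): the nested rectangles $\overset{o}{Q_n}=\cap_{z=-n}^n f^{-z}(\overset{o}{R_{w_z}})=\cap_{z=-n}^n f^{-z}(\overset{o}{R_{v_z}})$ agree for $z=0$ by $w_0=v_0$, for $z>0$ because both positive parts coincide from index $1$ on (both equal the same $s$-boundary positive code) and $w_0=v_0$, and for $z<0$ by item $v)$; hence $\pi_f(\underline{w})=\bigcap_n\overline{Q_n}=\pi_f(\underline{v})$.

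For the periodic case $\underline{\alpha},\underline{\beta}\in\mathrm{Per}(\underline{\cS(T)})$ with $\underline{\alpha}\sim_s\underline{\beta}$, Definition~\ref{Defi: sim s in per} furnishes non-periodic $\underline{w}\sim_s\underline{v}$ and $k\in\ZZ$ with $\sigma^k(\underline{w})_+=\underline{\alpha}_+$ and $\sigma^k(\underline{v})_+=\underline{\beta}_+$. By the previous paragraph $\pi_f(\underline{w})=\pi_f(\underline{v})=:z$, so $\pi_f(\sigma^k(\underline{w}))=f^k(z)=\pi_f(\sigma^k(\underline{v}))$. Now $\sigma^k(\underline{w})$ and $\underline{\alpha}$ share the same positive part, hence their projections lie on the same stable boundary component of $\cR$, i.e. on the stable leaf of the same $s$-boundary periodic point $p=\pi_f(\underline{\alpha})$; similarly $\pi_f(\sigma^k(\underline{v}))$ and $\pi_f(\underline{\beta})$ lie on the stable leaf of $q=\pi_f(\underline{\beta})$. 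Since a periodic point is uniquely determined as the (unique) periodic point on its stable boundary component and $f^k(z)$ lies on both of these components simultaneously, the two components coincide, forcing $p=q$, i.e. $\pi_f(\underline{\alpha})=\pi_f(\underline{\beta})$.

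The main obstacle I anticipate is the bookkeeping in items $iii)$--$iv)$: one must verify carefully that the sign conventions ($\epsilon_T$ versus $-\epsilon_T$ on the two adjacent sub-rectangles, and the resulting $\epsilon_w,\epsilon_v$) genuinely describe the \emph{same} oriented stable boundary component of $R_{w_1}=R_{v_1}$, rather than two distinct ones; this is where the binary hypothesis on $A(T)$ (so that $\xi_T(i,j)$ is unambiguous) and the explicit formula for $\Gamma(T)$ are used. Everything else is a routine consequence of the semi-conjugacy, the nested-intersection description of $\pi_f$, and Proposition~\ref{Prop: positive codes are boundary}.
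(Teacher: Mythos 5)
The core of your non-periodic argument contains a genuine error. You claim that $\underline{I}^+(w_1,\epsilon_w)=\underline{I}^+(v_1,\epsilon_v)$ and that "both positive parts coincide from index $1$ on". This is false for genuinely distinct $\sim_s$-related codes: by item $iii)$ of Definition~\ref{Defi: s realtion in Sigma S no per} one has $w_1=\xi_T(i,j)$ and $v_1=\xi_T(i,j+1)$, and because the incidence matrix is binary two distinct horizontal sub-rectangles of $R_i$ are sent into distinct rectangles, so $w_1\neq v_1$; moreover the map $(i,\epsilon)\mapsto\underline{I}^+(i,\epsilon)$ is injective (Proposition~\ref{Prop: s code Inyective}), so the two $s$-boundary positive codes are different. (If the positive tails did coincide, then together with $w_0=v_0$ and item $v)$ the codes $\underline{w}$ and $\underline{v}$ would be equal and there would be nothing to prove.) Consequently your final step collapses: the nested sets $\cap_{z=-n}^{n}f^{-z}(\overset{o}{R_{w_z}})$ and $\cap_{z=-n}^{n}f^{-z}(\overset{o}{R_{v_z}})$ are not equal --- for $n\geq 1$ they are open rectangles lying on opposite sides of the shared stable arc (one inside $\overset{o}{H^i_j}$, the other inside $\overset{o}{H^i_{j+1}}$), hence disjoint --- so you cannot conclude $\pi_f(\underline{w})=\pi_f(\underline{v})$ by identifying the two intersections. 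The whole point of $\sim_s$ is to identify two \emph{different} admissible itineraries of a point sitting on the stable boundary between two adjacent rectangles of the partition.

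What the sign conditions actually buy (and what the paper's proof does) is different from what you extract from them: with $k=0$ and $x_w=\pi_f(\underline{w})\in H^i_{j+1}$, $x_v=\pi_f(\underline{v})\in H^i_j$, the condition $\epsilon_w=-\epsilon_T(i,j+1)$ rules out $x_w$ lying on the \emph{upper} stable boundary of $H^i_{j+1}$ (if it did, its image would land on the $\epsilon_T(i,j+1)$-boundary of $R_{w_1}$, contradicting Proposition~\ref{Prop: positive codes are boundary}), so $x_w$ lies on the lower boundary of $H^i_{j+1}$; symmetrically $\epsilon_v=\epsilon_T(i,j)$ forces $x_v$ onto the upper boundary of $H^i_j$. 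These two arcs are the \emph{same} stable segment of $R_i$, and the equal negative parts (item $v)$) place $x_w$ and $x_v$ on the same unstable segment of $R_i$; a stable and an unstable segment of a rectangle meet in a single point, giving $x_w=x_v$. Your reduction to $k=0$ and your treatment of the periodic case are fine in outline, but the latter rests on the non-periodic case, so the argument as written does not go through until this central step is repaired.
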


\begin{proof}

Assume that $\underline{w},\underline{v}$ are $s$-related. For simplicity assume that the $k$ in the definition is equal to zero. Like the negative part of such codes is equal $\pi_f(\underline{w})$ and $\pi_f(\underline{v})$ are in the same unstable segment of $R_{w_0}$. Take $w_0=v_0=i$.
		
	We shall prove that $x_w:=\pi_f(\underline{w})$ and $x_v:=\pi_f(\underline{v})$ are in the same stable segment of $R_i$. We can assume that $x_w\in H^{i}_{j+1}$ and $x_v \in H^{i}_{j}$, condition imposed by Item $iii)$ of Definition \ref{Defi: s realtion in Sigma S no per}.
	
	The point $x_w$ is on the stable boundary $\delta_w=+1,-1$ of $H^{i}_{j+1}$ because its image is on the stable boundary of $\cR$. If $\delta_w=+1$ then, following the dynamics of $f$, the point $f(x_w)$ is the boundary $\epsilon_w=\epsilon_T(i,j+1)$, which is not possible since $\epsilon_w=-\epsilon_T(i,j+1)$ by definition of the relation $\sim_s$. Then $\delta_w=-1$ and $x_w$ is on the lower boundary of $H^i_{j+1}$.
	
	Analogously $x_v$ is the boundary component $\delta_v=+1,-1$ of $H^{i}_j$. 	If $\delta_v=-1$,  following the dynamics of $f$, $f(x_v)$ is the boundary component of $R_{v_1}$ corresponding to $\epsilon_v=-\epsilon_T(i,j)$ which is a contradiction with the condition $\epsilon_v=\epsilon_T(i,j)$ imposed by $\sim_s$. Therefore $\delta_v=+1$ and is on the upper boundary of $H^i_j$.
	
	Then $\pi_f(\underline{w})$ and $=\pi_f(\underline{v})$ are on the lower boundary of $H^{i}_{j+1}$ and the upper boundary of $H^{i}_j$ respectively, which are the same stable segment of $R_i$, so $\pi_f(\underline{w})=\pi_f(\underline{v})$.

\end{proof}

In the same spirit of $\sim_s$ there is a equivalence relation $\sim_u$ for the elements in $\Sigma_{\cU(T)}$, there is a easy ways to determine such a relation that appeals to the idea of the inverse of the geometric type $T$ as introduced in \ref{Defi: inverse of Type}. As the Observation \ref{Defi: inverse of Type} says if $(f,\cR)$ represent $T$, then $(f^{-1},\cR)$ represent $T^{-1}$. In this setting $f^{-1}(V^k_l)=H^i_j$ if and only if $f(H^i_j)=V^k_l$ and $f^{-1}$ preserve the horizontal direction restricted to $V^k_l$ if and only if $f$ preserve the horizontal direction restricted to $H^i_j$. We summarize this discussion as follows:

\begin{lemm}\label{Lemm: geometric type of inverse}
If $\cR$ is a geometric Markov partition for the pseudo-Anosov homeomorphism $f$ with geometric type $T$, then:
\begin{itemize}
\item $\cR$ is a geometric Markov partition for $f^{-1}$ in which the horizontal and vertical directions of $(f,\cR)$ become the vertical and horizontal directions of $(f^{-1},\cR)$ respectively.
\item The geometric type of $\cR$ view as Markov partition for $f^{-1}$ is $T^{-1}$.
\end{itemize}
\end{lemm}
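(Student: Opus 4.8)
The statement to be proved is Lemma~\ref{Lemm: geometric type of inverse}, which asserts two things about a geometric Markov partition $\cR$ of a pseudo-Anosov homeomorphism $f$ with geometric type $T$: first, that $\cR$ is also a geometric Markov partition for $f^{-1}$ with the roles of the horizontal and vertical directions exchanged; and second, that its geometric type as a partition for $f^{-1}$ is precisely $T^{-1}$ as defined in the discussion following Definition~\ref{Defi: Abstract geometric types}.

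The plan is to verify the defining properties of a Markov partition (Definition~\ref{Defi: Markov partition}) directly, exploiting the manifest symmetry of the definition under the exchange $f \leftrightarrow f^{-1}$ together with the exchange of the stable and unstable foliations. Since $f$ is pseudo-Anosov with invariant foliations $(\cF^s,\mu^s)$ and $(\cF^u,\mu^u)$, the map $f^{-1}$ is also pseudo-Anosov (by Proposition~\ref{Prop: pA closed up conjugation} applied trivially, or directly from Definition~\ref{Defi: pseudo-Anosov}), with the same foliations but with the stretch factor inverted: $f^{-1}_*(\mu^s) = \lambda \mu^s$ and $f^{-1}_*(\mu^u) = \lambda^{-1}\mu^u$. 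Thus what was the \emph{contracting} (stable) direction for $f$ is the \emph{expanding} direction for $f^{-1}$, and vice versa. First I would observe that a rectangle $R$ adapted to $f$ is automatically a rectangle adapted to $f^{-1}$: Definition~\ref{Defi: Rectangle} only refers to the leaves of $\cF^{s}$ and $\cF^u$, not to the dynamics, so the parametrization $\rho$ works verbatim; however, since for $f^{-1}$ the ``vertical'' (unstable) direction is $\cF^s$, the horizontal leaves $\cI(R)$ of $R$-as-$f$-rectangle become the vertical leaves of $R$-as-$f^{-1}$-rectangle and conversely. Then conditions (i) and (ii) of Definition~\ref{Defi: Markov partition} (covering, disjoint interiors) are about $\cR$ alone and are unchanged. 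For conditions (iii) and (iv), note that the closure of a connected component of $\overset{o}{R_i}\cap f^{-1}(\overset{o}{R_j})$ being a horizontal sub-rectangle of $R_i$ for $(f,\cR)$ is, after applying $f^{-1}$ (a homeomorphism) and relabeling, exactly the statement that the closure of a connected component of $\overset{o}{R_j}\cap (f^{-1})^{-1}(\overset{o}{R_i}) = \overset{o}{R_j}\cap f(\overset{o}{R_i})$ is a sub-rectangle of $R_j$ which is ``horizontal'' in the $f$-sense, i.e. ``vertical'' in the $f^{-1}$-sense; this is condition (iv) for $(f^{-1},\cR)$ with the direction conventions swapped, and symmetrically for the other inclusion. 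This establishes the first bullet point, and the statement about coherence of the transverse orientation with the orientation of $S$ (needed for $\cR$ to be a \emph{geometric} Markov partition, Definition~\ref{Defi: Geometric rectangle}) is automatic because the same orientation of $S$ and the same choice of vertical direction on each $R_i$ is being reinterpreted.

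For the second bullet point I would unwind Definition~\ref{Defi: geometric type of a Markov partition} for $(f^{-1},\cR)$ and compare it termwise with the definition of $T^{-1}$. The number of rectangles $n$ is unchanged. The horizontal sub-rectangles of $(f^{-1},\cR)$ contained in $R_i$ are, by the direction swap and by what was shown above, precisely the vertical sub-rectangles of $(f,\cR)$ contained in $R_i$; hence the pair $(h_i,v_i)$ of $(f,\cR)$ becomes $(v_i,h_i)$ for $(f^{-1},\cR)$, matching $T^{-1} = (n,\{v_i,h_i\}_{i=1}^n,\rho_{T^{-1}},\epsilon_{T^{-1}})$. The label sets therefore satisfy $\mathcal{H}(f^{-1},\cR) = \mathcal{V}(f,\cR)$ and $\mathcal{V}(f^{-1},\cR) = \mathcal{H}(f,\cR)$, in agreement with $\mathcal{H}(T^{-1}) = \mathcal{V}(T)$ and $\mathcal{V}(T^{-1}) = \mathcal{H}(T)$. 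Finally, if $f(H^i_j) = V^k_l$ with $\epsilon_T(i,j)$ recording whether $f$ preserves or reverses the vertical direction of $H^i_j$, then applying $f^{-1}$ gives $f^{-1}(V^k_l) = H^i_j$; reading the labels in the $f^{-1}$-conventions, $V^k_l$ is the $(k,l)$-th horizontal sub-rectangle for $f^{-1}$ and $H^i_j$ is the $(i,j)$-th vertical sub-rectangle, so the permutation for $(f^{-1},\cR)$ sends $(k,l)\mapsto(i,j)$, i.e. it is $\rho_T^{-1} = \rho_{T^{-1}}$. The sign: $f$ reverses the vertical ($\cF^u$) direction on $H^i_j$ exactly when $f^{-1}$ reverses it on $V^k_l$, but for $f^{-1}$ the relevant transverse direction bookkeeping is that of the ``new vertical'' $= \cF^s$; a short check of the orientation conventions in Convention~\ref{Defi: Label and geometrization of subrectangles} shows that the sign attached to $(k,l)$ for $(f^{-1},\cR)$ equals $\epsilon_T(i,j) = \epsilon_T(\rho_T^{-1}(k,l))$, which is exactly $\epsilon_{T^{-1}}(k,l) = (\epsilon_T\circ\rho_T^{-1})(k,l)$. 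Hence $T(f^{-1},\cR) = T^{-1}$.

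The routine part is the bookkeeping of which sub-rectangle is ``horizontal'' versus ``vertical'' and the verification that the same orientation of $S$ makes everything coherent; none of it is deep, but it is the kind of thing that is easy to state loosely and hard to state cleanly. The main obstacle I anticipate is purely notational: keeping the direction conventions from Convention~\ref{Defi: Label and geometrization of subrectangles} straight so that the sign function genuinely comes out to be $\epsilon_T\circ\rho_T^{-1}$ rather than, say, $-\epsilon_T\circ\rho_T^{-1}$ or $\epsilon_T$ composed with the wrong map. I would resolve this by fixing, once and for all, an orientation-preserving parametrization $\rho_i:\II^2\to R_i$ of each rectangle (Lemma~\ref{Lemm: Orientation preserving parametrization}) and expressing $f$ and $f^{-1}$ in these coordinates, so that the sign is literally the sign of the determinant of a diagonal change-of-coordinates map and the claimed identity becomes a transparent linear-algebra triviality. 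Given how symmetric Definitions~\ref{Defi: Markov partition} and~\ref{Defi: geometric type of a Markov partition} already are, I expect the final write-up to be short.
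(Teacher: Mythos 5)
Your proposal is correct and takes essentially the same route as the paper: the paper gives no formal proof of this lemma, merely recording beforehand the observation that $f^{-1}(V^k_l)=H^i_j$ if and only if $f(H^i_j)=V^k_l$, together with the corresponding statement about preservation of directions, which is exactly the definition-unwinding you carry out in more detail. Your label identifications $\mathcal{H}(f^{-1},\cR)=\mathcal{V}(f,\cR)$, the permutation $(k,l)\mapsto(i,j)=\rho_T^{-1}(k,l)$, and the sign computation $\epsilon_{(f^{-1},\cR)}(k,l)=\epsilon_T(\rho_T^{-1}(k,l))=\epsilon_{T^{-1}}(k,l)$ (resting on the fact that an orientation-preserving map of geometric rectangles preserves the vertical direction if and only if it preserves the horizontal one) agree with the conventions the paper intends.
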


With this notion we could define the relation $\sim_u$ in $\Sigma_{\cU(T)}$ as the relation $\sim_s$ in $\Sigma_{\cS(T^{-1})}$, this formulation is sometimes useful. However, we prefer to give the following definition which is fully symmetric to  \ref{Defi: s realtion in Sigma S no per} and illuminates the mechanism by which codes are identified in $\Sigma_{\cU(T)}$, this insight will be use later when extending the $\sim_s$ and $\sim_u$ relations to $\Sigma_A$.

\begin{defi}\label{Defi: u realtion in Sigma U no per}
	Let $\underline{w},\underline{v}\in \Sigma_{\cU(T)}\setminus$Per$(\sigma)$, they are $u$-related and write $\underline{w}\sim_{u} \underline{v}$ if and only if they are equal or:
	\begin{itemize}
		\item[i)] There exist $z\in \ZZ$ such that $\sigma^z(\underline{w}),\sigma^z(\underline{v})\notin \underline{\cU(T)}$ but $\sigma^{z-1}(\underline{w}),\sigma^{z-1}(\underline{v})\in \underline{\cU(T)}$.
		
		\item[ii)] The $0$-terms, $\sigma^z(\underline{w})_0=w_z$ and  $\sigma^z(\underline{v})_0=v_z$ are equals, i.e. $w_z=v_z:=k\in\{1,\cdots,n\}$  and the number $v_k>1$ (given by $T$).
		
		\item[iii)] There is $l\in \{1, \cdots,v_{k}-1\}$ such that only one of the two possibilities happen:
	\begin{equation}\label{Equa: Sim u 1 option}
	\xi_{T^{-1}}(k,l)=(w_{z-1}) \text{ and  } \xi_{T^{-1}}(k,l+1)=(v_{z-1})
	\end{equation}
		or
	\begin{equation}\label{Equa: Sim u 2 option}
	\xi_{T^{-1}}(k,l)=(v_{z-1}) \text{ and  } \xi_{T^{-1}}(k,l+1)=(w_{z-1})
	\end{equation}
		
		\item[iv)] Suppose the negative code of $\sigma^{z-1}{\underline{w}}$  is equal to the $u$-boundary code $\underline{J}^-(w_{z-1},\epsilon_w)$ and  the negative code of  $\sigma^{z-1}(\underline{v})$  is equal to the $u$-boundary code $\underline{J}^-(v_{z-1},\epsilon_v)$, then:
		
		If Equation \ref{Equa: Sim u 1 option} holds:
	\begin{equation}\label{Equa: Sim u 1 epsilon}
\epsilon_w=\epsilon_{T^{-1}}(k,l) \text{ and } \epsilon_v=-\epsilon_{T^{-1}}(k,l+1)
	\end{equation}
		In the case Equation \ref{Equa: Sim u 2 option} happen:
	\begin{equation}\label{Equa: Sim u 2 epsilon}
		\epsilon_v=-\epsilon_{T^{-1}}(k,l) \text{ and } \epsilon_w=-\epsilon_{T^{-1}}(k,j+1)
	\end{equation}
		\item[v)] The positive codes of $\sigma^z(\underline{w})$ is equal to the positive code of $\sigma^z(\underline{v})$.
	\end{itemize}
\end{defi}

Similarly to Proposition \ref{Prop: sim s equiv in Sigma non per} we could prove $\sim_{u}$ is a equivalent relation in $\Sigma_{U}\setminus$Per$(\sigma_A)$ and then extend the relation to the periodic boundary points as in  \ref{Defi: sim s in per}.

\begin{defi}\label{Defi: sim u in per}
	Let $\underline{\alpha},\underline{\beta}\in$Per$(\Sigma_{\cU(T)})$ be periodic $u$-boundary codes. They are $u$-related if and only if there exist $\underline{w},\underline{v}\in \Sigma_{\cU(T)}\setminus$Per$(\sigma$ such that:
	\begin{itemize}
		\item $\underline{w}\sim_{u}\underline{v}$,
		\item There exist $p\in \ZZ$ such that the negative  codes  $\sigma^p(\underline{w})_-$ is equal to the negative code of $\underline{\alpha}$ and the negative code  $\sigma^p(\underline{v})_-$ coincide with the negative code of $\underline{\beta}$.
	\end{itemize}
\end{defi}

With the same techniques used in \ref{Prop: sim s equiv in Sigma S } we could prove 

\begin{prop}\label{Prop: sim u equiv in Sigma U }
	The relation $\sim_u$ in $\Sigma_{\cU(T)}$ is of equivalence.
\end{prop}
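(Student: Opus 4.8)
The plan is to mirror, via the inversion device of Lemma \ref{Lemm: geometric type of inverse}, the chain of arguments already established for $\sim_s$. Concretely, if $(f,\cR)$ realizes $T$ then $(f^{-1},\cR)$ realizes $T^{-1}$, and under this correspondence the vertical direction of $(f,\cR)$ plays the role of the horizontal direction of $(f^{-1},\cR)$, so that $\Sigma_{\cU(T)}$ (with respect to $f$) is literally $\Sigma_{\cS(T^{-1})}$ (with respect to $f^{-1}$), and Definition \ref{Defi: u realtion in Sigma U no per} together with Definition \ref{Defi: sim u in per} become, term by term, Definition \ref{Defi: s realtion in Sigma S no per} and Definition \ref{Defi: sim s in per} applied to $T^{-1}$. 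Here one must be slightly careful: the shift $\sigma$ for $f$ corresponds to $\sigma^{-1}$ for $f^{-1}$, which is exactly why all the indices $k$, $k+1$ in the stable definition are replaced by $z$, $z-1$ in the unstable one, and why $\xi_T$, $\epsilon_T$ are replaced by $\xi_{T^{-1}}$, $\epsilon_{T^{-1}}$; the dictionary is cosmetic once this bookkeeping is pinned down.

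First I would record the translation lemma precisely: a pair $(\underline w,\underline v)$ of codes in $\Sigma_{\cU(T)}\setminus\mathrm{Per}(\sigma)$ satisfies $\underline w\sim_u\underline v$ in the sense of Definition \ref{Defi: u realtion in Sigma U no per} if and only if the reversed codes $\underline w',\underline v'$ (where $w'_n:=w_{-n}$) satisfy $\underline w'\sim_s\underline v'$ for the geometric type $T^{-1}$, and likewise for the periodic extension of Definition \ref{Defi: sim u in per} versus Definition \ref{Defi: sim s in per}. Granting this lemma, the proposition is immediate: code reversal $\underline w\mapsto\underline w'$ is a bijection of $\Sigma_{A(T)}$ onto $\Sigma_{A(T^{-1})}=\Sigma_{A(T)^{\mathrm{T}}}$ carrying $\Sigma_{\cU(T)}$ bijectively onto $\Sigma_{\cS(T^{-1})}$, and a relation that is pulled back along a bijection from an equivalence relation is again an equivalence relation. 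Since Proposition \ref{Prop: sim s equiv in Sigma non per} and Proposition \ref{Prop: sim s equiv in Sigma S } establish that $\sim_s$ is an equivalence relation on $\Sigma_{\cS(T^{-1})}$, we conclude that $\sim_u$ is an equivalence relation on $\Sigma_{\cU(T)}$.

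Alternatively, if one prefers a self-contained argument that does not invoke the $f\leftrightarrow f^{-1}$ dictionary, I would simply repeat the proof of Proposition \ref{Prop: sim s equiv in Sigma non per} verbatim with the roles of the positive and negative halves of a code interchanged and with $\xi_T,\epsilon_T$ replaced by $\xi_{T^{-1}},\epsilon_{T^{-1}}$: reflexivity and symmetry are immediate from the symmetric shape of Definition \ref{Defi: u realtion in Sigma U no per}; for transitivity one notes that the integer $z=z(\underline w)$ of item (i) is characterized intrinsically as $z(\underline w)=\max\{z\in\ZZ:\sigma^{z}(\underline w)\in\underline{\cU(T)}\}+1$, hence is common to $\underline w,\underline v,\underline u$ when $\underline w\sim_u\underline v\sim_u\underline u$; then the index $l'$ produced by $\underline v\sim_u\underline u$ must equal $l+1$, because the alternative $l'=l-1$ forces $\epsilon_{T^{-1}}(k,l)=-\epsilon_{T^{-1}}(k,l)$ exactly as in the stable case, a contradiction; and finally item (v) transfers equality of positive codes along the chain, giving $\underline w\sim_u\underline u$. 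The periodic case then follows as in Proposition \ref{Prop: sim s equiv in Sigma S }, reflexivity being free and symmetry and transitivity being inherited through Definition \ref{Defi: sim u in per} from the non-periodic relation.

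The only genuine subtlety — the step I would watch most carefully — is the index reversal in the characterization of $z(\underline w)$ and the corresponding $z-1$ versus $z+1$ asymmetry: because $u$-boundary codes are pre-periodic \emph{backwards} under $\sigma$ (Corollary \ref{Coro: preperiodic finite s,u boundary codes}), one must check that the "first exit" integer is taken with the correct sign so that $\sigma^{z-1}(\underline w)\in\underline{\cU(T)}$ while $\sigma^{z}(\underline w)\notin\underline{\cU(T)}$, exactly the mirror of Lemma \ref{Lemm: minumun k}. Once this is stated correctly, the rest is a routine transcription, and I would present the proof in the second, self-contained form for the reader's convenience while remarking that it is formally the image of Proposition \ref{Prop: sim s equiv in Sigma S } under Lemma \ref{Lemm: geometric type of inverse}.
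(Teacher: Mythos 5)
Your proposal is correct and follows essentially the same route as the paper, which itself only remarks that $\sim_u$ can be viewed as $\sim_s$ for $T^{-1}$ on $\Sigma_{\cS(T^{-1})}$ and that the equivalence-relation property is proved "with the same techniques" as Propositions~\ref{Prop: sim s equiv in Sigma non per} and~\ref{Prop: sim s equiv in Sigma S }; your two routes (pullback along the code-reversal/inversion dictionary, or verbatim transcription with $z-1$ in place of $k+1$ and $\xi_{T^{-1}},\epsilon_{T^{-1}}$ in place of $\xi_T,\epsilon_T$) are exactly the arguments the paper has in mind, and your characterization of $z(\underline w)$ as the mirror of Lemma~\ref{Lemm: minumun k} is the right bookkeeping.
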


And similarly 

\begin{prop}\label{Prop: u-relaten implies same projection}
	Let $\underline{w},\underline{v}\in \Sigma_{\cU(T)}$ be two $s$-boundary leaves codes, if $\underline{w}\sim_{u}\underline{v}$ then $\pi_f(\underline{w})=\pi_f(\underline{v})$.
\end{prop}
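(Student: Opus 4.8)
The plan is to mirror exactly the argument given for Proposition \ref{Prop: s-relaten implies same projection}, transferring it through the dictionary provided by Lemma \ref{Lemm: geometric type of inverse}. Since $\cR$ is simultaneously a geometric Markov partition for $f$ with geometric type $T$ and for $f^{-1}$ with geometric type $T^{-1}$, and since the horizontal/vertical directions swap roles under this change, the relation $\sim_u$ on $\Sigma_{\cU(T)}$ is precisely the relation $\sim_s$ on $\Sigma_{\cS(T^{-1})}$ applied to the pair $(f^{-1},\cR)$. Moreover, the projection $\pi_{(f,\cR)}:\Sigma_A\to S$ and the projection $\pi_{(f^{-1},\cR)}:\Sigma_{A^{-1}}\to S$ are related by reversal of codes: if $\underline{w}=(w_z)_{z\in\ZZ}\in\Sigma_A$ and $\underline{w}^{\mathrm{rev}}:=(w_{-z})_{z\in\ZZ}$, then $\underline{w}^{\mathrm{rev}}\in\Sigma_{A^{-1}}$ and $\pi_{(f^{-1},\cR)}(\underline{w}^{\mathrm{rev}})=\pi_{(f,\cR)}(\underline{w})$, because following the itinerary of $\underline{w}$ under $f$ is the same as following the itinerary of $\underline{w}^{\mathrm{rev}}$ under $f^{-1}$. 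Under this identification, $\underline{w}\sim_u\underline{v}$ (for $\underline{w},\underline{v}\in\Sigma_{\cU(T)}$) if and only if $\underline{w}^{\mathrm{rev}}\sim_s\underline{v}^{\mathrm{rev}}$ in $\Sigma_{\cS(T^{-1})}$; so the already-proven Proposition \ref{Prop: s-relaten implies same projection} gives $\pi_{(f^{-1},\cR)}(\underline{w}^{\mathrm{rev}})=\pi_{(f^{-1},\cR)}(\underline{v}^{\mathrm{rev}})$, hence $\pi_f(\underline{w})=\pi_f(\underline{v})$.

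If instead one prefers a direct proof that does not invoke the inverse type explicitly, I would copy the proof of Proposition \ref{Prop: s-relaten implies same projection} line by line, interchanging the roles of stable and unstable, of horizontal and vertical sub-rectangles, and of positive and negative codes. Concretely: assume $\underline{w}\sim_u\underline{v}$ and, after applying a power of $\sigma$, that the integer $z$ of Definition \ref{Defi: u realtion in Sigma U no per} is $0$. Since the positive parts of $\underline{w}$ and $\underline{v}$ agree (item $(v)$ there), the points $x_w:=\pi_f(\underline{w})$ and $x_v:=\pi_f(\underline{v})$ lie on a common stable segment of $R_{w_0}$; writing $k:=w_0=v_0$ and using item $(iii)$ we may assume $x_w\in V^k_{l+1}$ and $x_v\in V^k_l$. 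Each of $x_w,x_v$ lies on an unstable boundary component of its vertical sub-rectangle because $\sigma^{-1}(\underline{w}),\sigma^{-1}(\underline{v})\in\underline{\cU(T)}$, i.e. $f^{-1}(x_w),f^{-1}(x_v)\in\partial^u\cR$. Tracking the horizontal orientation through $f^{-1}$ exactly as in the stable case, the sign conditions in items $(iv)$ (Equations \ref{Equa: Sim u 1 epsilon} or \ref{Equa: Sim u 2 epsilon}) force $x_w$ to be on the left unstable boundary of $V^k_{l+1}$ and $x_v$ to be on the right unstable boundary of $V^k_l$; these two unstable boundary components coincide in $R_k$, so $x_w=x_v$, namely $\pi_f(\underline{w})=\pi_f(\underline{v})$. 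For periodic $u$-boundary codes the claim follows from Definition \ref{Defi: sim u in per}: if $\underline{\alpha}\sim_u\underline{\beta}$ there exist non-periodic $\underline{w}\sim_u\underline{v}$ and $p\in\ZZ$ with $\sigma^p(\underline{w})_-=\underline{\alpha}_-$ and $\sigma^p(\underline{v})_-=\underline{\beta}_-$; since all these codes project into the unstable boundary of $\cR$, which is the unstable leaf of a periodic point, the non-periodic case already proven yields $\pi_f(\underline{\alpha})=\pi_f(\underline{\beta})$ by following the leaf to its periodic endpoint.

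The only genuine point requiring care is the sign bookkeeping: one must verify that the two codes, after reaching the unstable boundary, end up on the \emph{opposite} unstable sides of \emph{consecutive} vertical sub-rectangles (left side of $V^k_{l+1}$ and right side of $V^k_l$), so that their projections land on the single shared boundary leaf rather than on two distinct leaves. This is exactly the content of Equations \ref{Equa: Sim u 1 epsilon} and \ref{Equa: Sim u 2 epsilon}, and it is checked by following a fixed-orientation unstable arc through one application of $f^{-1}$, precisely as in the proof of Proposition \ref{Prop: s-relaten implies same projection}. Everything else — the reduction to $z=0$, the agreement of the positive codes placing $x_w,x_v$ on a common stable segment, and the extension to periodic codes — is entirely parallel to the stable case and needs no new idea; hence the statement follows, and one may legitimately write that the proof is "analogous to that of Proposition \ref{Prop: s-relaten implies same projection}" or obtain it formally by applying that proposition to $(f^{-1},\cR)$ via Lemma \ref{Lemm: geometric type of inverse}.
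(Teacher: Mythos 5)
Your proposal is correct and matches the paper's intent exactly: the paper states this proposition without proof, asserting it follows by the same symmetry as Proposition \ref{Prop: s-relaten implies same projection} (either directly, or by viewing $\sim_u$ as $\sim_s$ for $T^{-1}$ via Lemma \ref{Lemm: geometric type of inverse}), and both of your routes are faithful implementations of that, with the direct one correctly placing the two projections on the shared unstable boundary of consecutive vertical sub-rectangles inside a common stable segment. The only cosmetic point is the notation $\Sigma_{A^{-1}}$, which should be read as the subshift of the incidence matrix of $T^{-1}$ (the transpose of $A$), not a matrix inverse.
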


We have defined $\sim_{s,u}$ in terms of the combinatorial information of the geometric type $T$ so that they do not depend on the homeomorphism-partition representation $(f,\cR)$ of $T$. Note that the Propositions \ref{Prop: s-relaten implies same projection} and \ref{Prop: u-relaten implies same projection} give a sufficient condition for two codes to project to the same point but this is not a necessary condition, since there could be more codes projecting to the same point, for this reason we need to complete our relations to $\sim_T$ in $\Sigma_A$.  We end this subsection by defining an equivalence relation $\sim_I$ on the totally interior codes $\Sigma_{\cI nt(T)}$. As proved in Lemma \ref{Lemm: Projection Sigma S,U,I} totally interior codes are the only codes that project to totally interior points of any representation $(f,\cR)$ of $T$, moreover any totally interior point of $(f,\cR)$ has associated to it a unique totally interior code that projects to it, this inspires the following definition.

\begin{defi}\label{Defi: I sim relation}
Let $\underline{w},\underline{v}\in \Sigma_{\cI nt(T)}$be two totally interior codes, they are $I$-related and we write $\underline{w}\sim_I\underline{v}$ if and only if $\underline{w}=\underline{v}$.
\end{defi} 

The following result follows from  Proposition \ref{Prop: Carterization injectivity of pif} where we have characterized totally interior points as having a single code projecting to them. The equivalent relation part is totally trivial because the relation $\sim_I$ is the equality between codes.

\begin{prop}\label{Prop: totally interior points projection sim I}
The relation $\sim_I$ is an equivalence relation in $\Sigma_{\cI nt(T)}$ and two codes $\underline{w},\underline{v}\in \Sigma_{\cI nt(T)}$ are $\sim_I$ related if and only if $\pi_f(\underline{w})=\underline{v}$, i.e they project to the same point.
\end{prop}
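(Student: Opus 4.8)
The plan is to observe that Proposition~\ref{Prop: totally interior points projection sim I} is essentially a bookkeeping consequence of two results already established, namely Lemma~\ref{Lemm: Projection Sigma S,U,I} and Proposition~\ref{Prop: Carterization injectivity of pif}, together with the fact that $\sim_I$ was \emph{defined} in Definition~\ref{Defi: I sim relation} to be nothing more than equality of codes on the set $\Sigma_{\cI nt(T)}$.

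First I would dispose of the equivalence-relation claim: since $\underline{w}\sim_I\underline{v}$ holds precisely when $\underline{w}=\underline{v}$, the relation $\sim_I$ is the restriction of the identity relation to $\Sigma_{\cI nt(T)}$, and equality of codes is obviously reflexive, symmetric, and transitive. So this part requires no argument beyond unwinding the definition.

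Next I would prove the biconditional. The forward implication is immediate: if $\underline{w}\sim_I\underline{v}$ then $\underline{w}=\underline{v}$, hence in particular $\pi_f(\underline{w})=\pi_f(\underline{v})$. For the converse, suppose $\underline{w},\underline{v}\in\Sigma_{\cI nt(T)}$ satisfy $\pi_f(\underline{w})=\pi_f(\underline{v})=:x$. By Lemma~\ref{Lemm: Projection Sigma S,U,I}, membership of $\underline{w}$ in $\Sigma_{\cI nt(T)}$ forces $x=\pi_f(\underline{w})\in\mathrm{Int}(f,\cR)$, i.e.\ $x$ is a totally interior point of $\cR$. Proposition~\ref{Prop: Carterization injectivity of pif} then gives $\lvert\pi_f^{-1}(x)\rvert=1$. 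Since both $\underline{w}$ and $\underline{v}$ lie in the fiber $\pi_f^{-1}(x)$, which has a single element, we conclude $\underline{w}=\underline{v}$, that is, $\underline{w}\sim_I\underline{v}$.

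There is no real obstacle here: the genuine content---that totally interior points are exactly those with singleton fibers, and that totally interior codes are exactly those projecting into $\mathrm{Int}(f,\cR)$---has already been extracted in the earlier lemmas, so the proof is a two-line chain of implications. The only point worth flagging is the (evident) typo in the statement: the displayed condition should read $\pi_f(\underline{w})=\pi_f(\underline{v})$ rather than $\pi_f(\underline{w})=\underline{v}$, and the proof is written with that correction in mind.
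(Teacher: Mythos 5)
Your proof is correct and matches the paper's own (very brief) argument: the equivalence-relation part is trivial because $\sim_I$ is equality, and the biconditional follows from Lemma~\ref{Lemm: Projection Sigma S,U,I} together with Proposition~\ref{Prop: Carterization injectivity of pif}, which gives singleton fibers over totally interior points. Your reading of the statement's typo ($\pi_f(\underline{w})=\pi_f(\underline{v})$) is also the intended one.
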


\subsubsection{ The equivalent relation $\sim_T$ on $\Sigma_{A(T)}$}

Finally we are ready to define the relation $\sim_T$ in $\Sigma_A$ that Proposition \ref{Prop: The relation determines projections} claim to exist, it consist essentially in the relation generated by $\sim_s,\sim_u$ and $\sim_I$.

\begin{defi}\label{Defi: Sim-T equivalent relation}
Let $\underline{w},\underline{v}\in \Sigma_{A}$ they are $T$-related and write $\underline{w}\sim_T\underline{v}$ if and only if any of the following disjoint situations occurs:
\begin{itemize}
	\item[i)] $\underline{w},\underline{v}\in \Sigma_{\cI nt(T)}$ and $\underline{w}\sim_I\underline{v}$, i.e. they are equals.
	\item[ii)] $\underline{w},\underline{v}\in \Sigma_{\cS(T)} \cup \Sigma_{\cU(T)}$ and there exist a finite number of codes $\{\underline{x_i}\}_{i=1}^m \subset  \Sigma_{\cS(T)} \cup \Sigma_{\cU(T)}$ such that:
\begin{equation}
	\underline{w}\sim_{s}\underline{x_i}\sim_{u} \underline{x_2}\sim_{s} \cdots \sim_{s} \underline{x_m}\sim_{u} \underline{v},
\end{equation}
or
\begin{equation}
\underline{w}\sim_{u}\underline{x_i}\sim_{s} \underline{x_2}\sim_{u} \cdots \sim_{u} \underline{x_m}\sim_{u} \underline{v}.
\end{equation}
\end{itemize}
\end{defi}

\begin{prop}
The relation $\sim_T$ is an equivalence relation in $\Sigma_A$.
\end{prop}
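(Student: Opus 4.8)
The plan is to verify the three axioms of an equivalence relation — reflexivity, symmetry, and transitivity — for $\sim_T$ on $\Sigma_A$, reducing each to the properties already established for $\sim_I$, $\sim_s$, and $\sim_u$ on the three pieces of the decomposition $\Sigma_{A(T)}=\Sigma_{\cI nt(T)}\cup\Sigma_{\cS(T)}\cup\Sigma_{\cU(T)}$.

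First I would observe that the decomposition is disjoint in the relevant sense: $\Sigma_{\cI nt(T)}$ is by definition the complement of $\Sigma_{\cS(T)}\cup\Sigma_{\cU(T)}$, so case $i)$ and case $ii)$ of Definition \ref{Defi: Sim-T equivalent relation} never overlap, and $\sim_T$ is well-defined. \emph{Reflexivity}: if $\underline{w}\in\Sigma_{\cI nt(T)}$ then $\underline{w}\sim_I\underline{w}$ by Proposition \ref{Prop: totally interior points projection sim I}; if $\underline{w}\in\Sigma_{\cS(T)}\cup\Sigma_{\cU(T)}$ then $\underline{w}$ lies in at least one of the two sets and $\sim_s$ or $\sim_u$ is reflexive on it (Propositions \ref{Prop: sim s equiv in Sigma S } and \ref{Prop: sim u equiv in Sigma U }), giving a length-one (or length-zero) chain. \emph{Symmetry}: in case $i)$, $\sim_I$ is equality, hence symmetric; in case $ii)$, reversing the chain $\underline{w}\sim_s\underline{x_1}\sim_u\cdots\sim_u\underline{v}$ and using that each $\sim_s$ and $\sim_u$ is symmetric produces an admissible chain witnessing $\underline{v}\sim_T\underline{w}$ — one only needs to check that the reversed chain still alternates legitimately, which it does because the definition permits chains starting with either $\sim_s$ or $\sim_u$.

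\emph{Transitivity} is where the only real care is needed. Suppose $\underline{w}\sim_T\underline{v}$ and $\underline{v}\sim_T\underline{u}$. If all three lie in $\Sigma_{\cI nt(T)}$, then $\underline{w}=\underline{v}=\underline{u}$ and we are done. If $\underline{v}\in\Sigma_{\cS(T)}\cup\Sigma_{\cU(T)}$, then by the disjointness both relations must be instances of case $ii)$, so we have chains $\underline{w}\sim\cdots\sim\underline{v}$ and $\underline{v}\sim\cdots\sim\underline{u}$ entirely inside $\Sigma_{\cS(T)}\cup\Sigma_{\cU(T)}$; concatenating them at $\underline{v}$ gives a chain from $\underline{w}$ to $\underline{u}$. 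The subtlety is that the concatenation may fail to alternate strictly at the junction — e.g. the first chain ends $\cdots\sim_s\underline{v}$ and the second begins $\underline{v}\sim_s\cdots$, producing two consecutive $\sim_s$ steps. Here I would invoke transitivity of $\sim_s$ itself (Proposition \ref{Prop: sim s equiv in Sigma S }, and the analogous Proposition \ref{Prop: sim u equiv in Sigma U } for $\sim_u$) to collapse any such repeated step into a single one, after which the resulting chain alternates and exhibits $\underline{w}\sim_T\underline{u}$. The remaining degenerate possibility — that $\underline{v}\in\Sigma_{\cI nt(T)}$ while, say, $\underline{w}\sim_T\underline{v}$ via case $ii)$ — cannot occur, since case $ii)$ forces $\underline{v}\in\Sigma_{\cS(T)}\cup\Sigma_{\cU(T)}$, contradicting membership in the complement; so there is nothing further to check.

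The main obstacle, then, is purely bookkeeping: making precise the claim that an arbitrary finite chain of $\sim_s$ and $\sim_u$ steps can always be rewritten as a strictly alternating one by merging consecutive same-type steps, and confirming that this rewriting is legitimate because $\sim_s$ and $\sim_u$ are each already equivalence relations on their respective domains. No genuinely new geometric or dynamical input is required beyond Propositions \ref{Prop: sim s equiv in Sigma S }, \ref{Prop: sim u equiv in Sigma U }, and \ref{Prop: totally interior points projection sim I}, together with the disjointness of the decomposition from Definition \ref{Defi: stratification Sigma A}.
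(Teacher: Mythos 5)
Your proof is correct and follows essentially the same route as the paper: equality handles the totally interior case, and transitivity on $\Sigma_{\cS(T)}\cup\Sigma_{\cU(T)}$ is obtained by concatenating chains and using the transitivity of $\sim_s$ and $\sim_u$ to merge consecutive same-type steps. The extra bookkeeping you spell out (disjointness of the two cases, rewriting a concatenated chain into an alternating one) is exactly what the paper's shorter proof leaves implicit.
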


\begin{proof}
For  $\underline{w}\in \Sigma_{\cI nt(T)}$ the relation is reflexive, symmetric and transitive by virtue of equality. 
In the situation $\underline{w},\underline{v} \Sigma_{\cS(T)} \cup \Sigma_{\cU(T)}$, the reflexivity and symmetry are free, the transitivity comes from the concatenation of the codes and the fact that $\sim_{s}$ and $\sim_{u}$ are transitive as well.

\end{proof}

 If $\underline{w}\sim_{s,u}\underline{v}$  then $\sigma(\underline{w})\sim_{s,u}\sigma(\underline{v})$ because the numbers $k,z\in \ZZ$ at Item $i)$ of the definitions \ref{Defi: s realtion in Sigma S no per} and  \ref{Defi: u realtion in Sigma U no per} are changed to $k-1$ and $z-1$ respectively and the rest of the conditions hold for $\sigma(\underline{w})$ and $\sigma(\underline{v})$. The same property holds for the relation $\sim_I$. This remark implies the following lemma which will simplify some arguments.

\begin{lemm}\label{lemma: simT related iterations related}
If two codes $\underline{w},\underline{v}\in \Sigma_A$ are $\sim_T$ related then for all $k\in \ZZ$, $\sigma^k(\underline{w})\sim_T\sigma^k(\underline{v})$.
\end{lemm}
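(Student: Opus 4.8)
The plan is to verify the equivariance of each of the three relations $\sim_I$, $\sim_s$, $\sim_u$ under a single application of $\sigma$, then assemble the result for $\sim_T$ and finally iterate. For $\sim_I$ the statement is immediate: $\sim_I$ is equality on $\Sigma_{\cI nt(T)}$, and $\Sigma_{\cI nt(T)}$ is $\sigma$-invariant (it is the complement of $\Sigma_{\cS(T)}\cup\Sigma_{\cU(T)}$, and both of those are $\sigma$-invariant by their very definitions in \ref{Defi: stratification Sigma A}, since being eventually in $\underline{\cS(T)}$ (resp. in $\underline{\cU(T)}$ under negative iteration) is preserved by shifting). So if $\underline w\sim_I\underline v$ then $\underline w=\underline v$, hence $\sigma^k(\underline w)=\sigma^k(\underline v)$, and both are still totally interior codes, so $\sigma^k(\underline w)\sim_I\sigma^k(\underline v)$.

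The core of the argument is the case $\underline w\sim_s\underline v$ with both non-periodic (the periodic sub-case and the relation $\sim_u$ being completely analogous, the latter by the symmetry with $T^{-1}$ recorded in Lemma \ref{Lemm: geometric type of inverse}). Here I would appeal to the characterization of the integer $k=k(\underline w)$ appearing in Item $i)$ of Definition \ref{Defi: s realtion in Sigma S no per}, namely that $k(\underline w)$ is uniquely determined as $\min\{z\in\ZZ:\sigma^z(\underline w)\in\underline{\cS(T)}\}-1$ (this is exactly the description used in the proof of Proposition \ref{Prop: sim s equiv in Sigma non per}). Consequently $k(\sigma(\underline w))=k(\underline w)-1$, and likewise for $\underline v$; since $\underline w\sim_s\underline v$ forces $k(\underline w)=k(\underline v)$, we also get $k(\sigma(\underline w))=k(\sigma(\underline v))$. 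Writing $k'=k-1$, conditions $i)$ through $v)$ of Definition \ref{Defi: s realtion in Sigma S no per} for the pair $(\sigma(\underline w),\sigma(\underline v))$ at index $k'$ are literally the same conditions that held for $(\underline w,\underline v)$ at index $k$, because $\sigma^{k'}(\sigma(\underline w))=\sigma^{k}(\underline w)$ and $\sigma^{k'+1}(\sigma(\underline w))=\sigma^{k+1}(\underline w)$, and similarly for $\underline v$. Thus all the combinatorial requirements on the $0$-terms, on the index $j$ with its two alternatives \ref{Equa: Sim s obtion 1}--\ref{Equa: Sim s obtion 2}, on the signs $\epsilon_w,\epsilon_v$ via \ref{Equa: sim s epsilon 1}--\ref{Equa: sim s epsilon 2}, and on the negative parts, transfer verbatim. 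Also $\sigma(\underline w),\sigma(\underline v)$ remain in $\Sigma_{\cS(T)}\setminus\mathrm{Per}(\sigma_A)$ since $\Sigma_{\cS(T)}$ is $\sigma$-invariant and non-periodicity is preserved. Hence $\sigma(\underline w)\sim_s\sigma(\underline v)$. The periodic case follows since Definition \ref{Defi: sim s in per} reduces a periodic $\sim_s$-relation to a non-periodic one together with a shift, and that data is itself stable under an extra shift.

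Having established that each of $\sim_I,\sim_s,\sim_u$ is preserved by $\sigma^{\pm1}$ (the $\sigma^{-1}$ direction follows by applying the $\sigma$ statement to $\sigma^{-1}(\underline w),\sigma^{-1}(\underline v)$, or symmetrically), the statement for $\sim_T$ is then a formal consequence of Definition \ref{Defi: Sim-T equivalent relation}: if $\underline w\sim_T\underline v$ either they lie in $\Sigma_{\cI nt(T)}$ and we are done by the $\sim_I$ case, or there is a finite chain $\underline w\sim_s\underline{x_1}\sim_u\cdots\sim_u\underline v$ (or the version starting with $\sim_u$); applying $\sigma$ to every code in the chain, each individual link is preserved by the previous paragraph, so $\sigma(\underline w)\sim_s\sigma(\underline{x_1})\sim_u\cdots\sim_u\sigma(\underline v)$ is again an admissible chain witnessing $\sigma(\underline w)\sim_T\sigma(\underline v)$. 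Finally, iterating this single-step statement (and its inverse) $|k|$ times gives $\sigma^k(\underline w)\sim_T\sigma^k(\underline v)$ for every $k\in\ZZ$. The only point requiring any care is the uniqueness/behavior of the marker index $k(\underline w)$ under the shift; everything else is bookkeeping, and since that uniqueness has already been extracted in the proof of Proposition \ref{Prop: sim s equiv in Sigma non per}, I expect no genuine obstacle.
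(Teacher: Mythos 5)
Your proposal is correct and follows essentially the same route as the paper, which justifies the lemma by the remark that shifting both codes simply replaces the marker indices $k$ (resp.\ $z$) of Definitions \ref{Defi: s realtion in Sigma S no per} and \ref{Defi: u realtion in Sigma U no per} by $k-1$ (resp.\ $z-1$) while all the remaining conditions transfer verbatim, the relation $\sim_I$ being trivially equivariant and $\sim_T$ following by applying $\sigma$ to each link of the chain. You merely spell out these steps (the $\sigma$-invariance of the three strata and the uniqueness of $k(\underline{w})$) in more detail than the paper does.
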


The only property that remains to be corroborated in the relation $\sim_T$ to obtain Proposition\ref{Prop: The relation determines projections}   is the one that relates it to the projection. That is the content of the following result.

\begin{prop}\label{Prop: T related iff same projection}
Let $\underline{w},\underline{v}\in \Sigma_A$ be any admissible codes. Then $\underline{w} \sim_T\underline{v}$ if and only if $\pi_f(\underline{w})=\pi_f(\underline{v})$.
\end{prop}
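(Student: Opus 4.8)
The plan is to prove the two implications separately. The direction $\underline w\sim_T\underline v\Rightarrow\pi_f(\underline w)=\pi_f(\underline v)$ is essentially immediate: by Definition \ref{Defi: Sim-T equivalent relation} either $\underline w,\underline v\in\Sigma_{\cI nt(T)}$ with $\underline w\sim_I\underline v$, so that $\underline w=\underline v$ (or invoke Proposition \ref{Prop: totally interior points projection sim I} directly), or $\underline w$ and $\underline v$ are joined by a finite chain alternating $\sim_s$ and $\sim_u$; by Propositions \ref{Prop: s-relaten implies same projection} and \ref{Prop: u-relaten implies same projection} each link of the chain preserves the projection, so the endpoints have the same image under $\pi_f$.

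For the converse, write $x:=\pi_f(\underline w)=\pi_f(\underline v)$ and recall from Corollary \ref{Coro: Caracterisation fibers} that $\pi_f^{-1}(x)$ is exactly the finite set of sector codes $\underline{e_1(x)},\dots,\underline{e_{2k}(x)}$ of $x$. First I would dispose of the case where $x$ is a totally interior point of $(f,\cR)$: then Proposition \ref{Prop: Carterization injectivity of pif} gives $\underline w=\underline v$, and both lie in $\Sigma_{\cI nt(T)}$ by Lemma \ref{Lemm: Projection Sigma S,U,I}, whence $\underline w\sim_I\underline v$ and $\underline w\sim_T\underline v$. If $x$ is not totally interior, Lemma \ref{Lemm: Caraterization unique codes} puts $x$ on the stable leaf of a periodic $s$-boundary point and/or on the unstable leaf of a periodic $u$-boundary point, and Lemma \ref{Lemm: Projection Sigma S,U,I} places every sector code of $x$ in $\Sigma_{\cS(T)}\cup\Sigma_{\cU(T)}$. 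Since the sectors $e_1(x),\dots,e_{2k}(x)$ are arranged cyclically around $x$, it then suffices to establish the key claim that for each pair of adjacent sectors $e_j(x),e_{j+1}(x)$ the codes $\underline{e_j(x)},\underline{e_{j+1}(x)}$ are $\sim_T$-related; transitivity of $\sim_T$ links all the sector codes, in particular $\underline w$ to $\underline v$, which is the assertion.

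It remains to prove the key claim. Two adjacent sectors share exactly one separatrix $\delta$ of $x$, stable or unstable; using Lemma \ref{Lemm: geometric type of inverse} to pass to $(f^{-1},\cR)$ I may assume $\delta$ is stable. If $f^z(e_j(x))$ and $f^z(e_{j+1}(x))$ lie in the same rectangle of $\cR$ for every $z\in\ZZ$, then the two sector codes coincide and there is nothing to do. Otherwise pick $z$ with $f^z(e_j(x))\subset\overset{o}{R_a}$, $f^z(e_{j+1}(x))\subset\overset{o}{R_b}$, $a\neq b$; since these two sectors still share the stable separatrix $f^z(\delta)$ of $f^z(x)$, that arc is a common boundary arc of $R_a$ and $R_b$, so $f^z(x)\in\partial^s\cR$, and by Lemma \ref{Lemm: Boundary of Markov partition is periodic} the leaf $F^s(x)$ is the stable leaf of a periodic $s$-boundary point, so both codes belong to $\Sigma_{\cS(T)}$. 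Forward invariance of $\partial^s\cR$ and Lemma \ref{Lemm: minumun k} then give a common threshold $k$ with $f^k(x)\notin\partial^s\cR$ and $f^{k'}(x)\in\partial^s\cR$ for $k'>k$; for all $z\le k$ one has $f^z(x)\notin\partial^s\cR$ (else forward invariance forces $f^k(x)\in\partial^s\cR$), so the rectangle argument above shows $f^z(e_j(x))$ and $f^z(e_{j+1}(x))$ stay together for $z\le k$, which takes care of condition (i), of condition (v), and of the symbol equality in condition (ii) of Definition \ref{Defi: s realtion in Sigma S no per}; at time $k$ the point $f^k(x)$ sits in the common stable boundary of two adjacent horizontal sub-rectangles $H^i_{j'},H^i_{j'+1}$ of $R_i$ ($i$ the common $k$-th symbol), and reading off $\xi_T$ and $\epsilon_T$ on these sub-rectangles, exactly as in the discussion preceding Definition \ref{Defi: s realtion in Sigma S no per} and in the proof of Proposition \ref{Prop: positive codes are boundary}, verifies the remaining conditions (ii)--(iv), so that $\underline{e_j(x)}\sim_s\underline{e_{j+1}(x)}$. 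The case of an unstable $\delta$ is symmetric and produces $\sim_u$, and when $x$ is itself a periodic boundary point one runs the same analysis through the periodic extensions of Definitions \ref{Defi: sim s in per} and \ref{Defi: sim u in per}, choosing the auxiliary non-periodic codes by approaching $x$ along the boundary leaves of $\cR$. I expect the main obstacle to be precisely this last step: matching the geometric picture of the two adjacent sectors at the threshold time $k$ with all of conditions (i)--(v), and in particular with the sign conditions on the $\epsilon_T$-values; the rest of the argument is formal.
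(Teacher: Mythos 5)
Your proposal is correct and shares the paper's overall skeleton: the forward implication is dispatched exactly as in the paper, and the converse is reduced, via the fiber characterization, to the key claim that all sector codes of a point are $\sim_T$-related (the paper's Lemma \ref{Lemm: sector codes}). Where you differ is in how that key claim is organized. The paper argues globally on the position of $x$: totally interior, in the stable lamination only, in the unstable lamination only, or in both, the last case splitting further according to the comparison of the thresholds $k(s)$ and $k(u)$; the periodic case is then handled by the cyclic-adjacency argument. You instead run the adjacency argument uniformly for every point: each pair of adjacent sectors shares a single separatrix, either the two sector codes never separate (and are equal) or they first separate across that separatrix at a time which forces the orbit into $\partial^{s}\cR$ (resp. $\partial^{u}\cR$), and the per-pair threshold produces exactly conditions (i)–(v) of Definition \ref{Defi: s realtion in Sigma S no per}; chaining around the cyclic order then relates all sector codes. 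This buys uniformity — the paper's $k(u)\lessgtr k(s)$ subcases disappear because each adjacent pair carries its own threshold, and singular points need no separate treatment — at the cost of pushing the sign bookkeeping for $\epsilon_T$ into the "read off $\xi_T,\epsilon_T$ at the separation time" step, which you only gesture at (as does the paper, via its figures). The one place you should add a line is the periodic case: to invoke Definitions \ref{Defi: sim s in per} and \ref{Defi: sim u in per} you must exhibit non-periodic witnesses whose shifted positive (resp. negative) parts \emph{equal} the periodic sector codes' tails, and "approaching $x$ along the boundary leaves" only works if the auxiliary point is taken on the periodic boundary component containing $x$ and close enough that no endpoint of an unstable (resp. stable) boundary arc of another rectangle lies between the two forward (resp. backward) orbits — finitely many conditions, using Lemma \ref{Lemm: Boundary of Markov partition is periodic} and Proposition \ref{Prop: boundary points have boundary codes} — so that the itineraries of the corresponding sectors literally coincide; the paper is equally terse here, so this is a refinement rather than a gap.
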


\begin{proof}

If  $\underline{w}\sim_T\underline{v}$ then we have two options:

\begin{itemize}
\item $\underline{w},\underline{v}\in \Sigma_{\cI nt(T)}$ and they are $\sim_I$-related. By Proposition \ref{Prop: totally interior points projection sim I}  this happen if and only if $\pi_f(\underline{w})=\pi_f(\underline{w})$. This situation is over.
\item  $\underline{w},\underline{v}\in \Sigma_{\cS(T)}\cup\Sigma_{\cU(T)}$. Using alternately Propositions \ref{Prop: s-relaten implies same projection} and \ref{Prop: u-relaten implies same projection} we deduce
$$
\pi_f(\underline{w})=\pi_f(\underline{x_1})=\cdots=\pi_f(\underline{x_m}) =\pi_f(\underline{v}).
$$
and complete a direction of the proposition.
\end{itemize}

Now suppose that  $x=\pi_f(\underline{w})=\pi_f(\underline{v})$, we need to prove that they are $\sim_T$ related. Since the only codes that project to the same point are sector codes of the point (Lemma \ref{Lemm: every code is sector code }), $\underline{w},\underline{v}$ are sector codes of $x$ and the following Lemma implies our Proposition.

\begin{lemm}\label{Lemm: sector codes}
Let $\{\underline{e_i}\}_{i=1}^{2k}$ be the sector codes of the point $x=\pi_f(\underline{e_i})$. Then $\underline{e_i}\sim_T \underline{e_j}$ for all $i,j\in \{1,\cdots,2k\}$.
\end{lemm}

\begin{proof}
If $x$ is a totally interior point the Corollary\ref{Coro: interior periodic points unique code} implies that all sector codes of $x$ are equal to a then $\underline{e_j}\sim_T\underline{e_j}$. In the Figure \ref{Fig: Sim T items i y ii} this correspond to the situation when $f^k(x)$ have all its quadrants, like in item $b)$.

The remaining situation is when  $x\in \cF^s(\text{ Per }^s(\cR))\cup \cF^u(\text{ Per }^u(\cR))$, i.e. $x$ is in stable or unstable lamination generated by $s,u$-boundary periodic points, we will concentrate on this case.  Numbering the sectors of $x$ in cyclic order, we consider three situations depending on where $x$ is located:
\begin{itemize}
\item[i)] $x\in \cF^s(\text{ Per }^s(\cR))$ but $x\notin \cF^u(\text{ Per }^u(\cR))$ (Items $c)$ and $f)$ in Figure \ref{Fig: Sim T items i y ii})
\item[ii)] $x\in \cF^u(\text{ Per }^u(\cR))$ but $x\notin \cF^s(\text{ Per }^s(\cR))$ (Items $a)$ and $d)$ in Figure \ref{Fig: Sim T items i y ii}).
\item[iii)]  $x\in \cF^s(\text{ Per }^s(\cR))\cap\cF^u(\text{ Per }^u(\cR))$ ((Items $b)$ and $e)$ in Figure \ref{Fig: Sim T items i y ii})).
\end{itemize}

In either case we first consider that $x$ is not periodic, which means that no sector code $\underline{e_j}$  is periodic and the point $x$ have $4$ sectors because  is not a periodic point and then it is not a singularity.

\textbf{Item} $i)$ (Look at (Items $c)$ and $f)$ in Figure \ref{Fig: Sim T items i y ii})).
 There exist $k\in \ZZ$  such that $f^k(x)\notin\partial^s\cR$ and like $x\notin  \cF^u(\text{ Per }^u(\cR))$, $f^z(x)\notin \partial^u\cR$ for all $z\in \ZZ$. In particular $f^z(x)\in \overset{o}{\cR}$ for all $z\leq k$, we conclude that : $f^k(x)$ have only four quadrants like $x$ and according with  the definition of the $\sim_s$ relation ( discussion accompanied by the  Figure \ref{Fig: stable identification}):
\begin{itemize}
\item For all $z< k$, all the quadrants of $f^{z}(x)$ are in the same rectangle: $\underline{e_1}_{z}=\underline{e_2}_{z}=\underline{e_3}_{z}=\underline{e_4}_{z}$.
\item $f^{k+n}(x)\in \partial^s\cR$ and therefore its quadrants are like in   (Items $c)$ and $f)$ in Figure \ref{Fig: Sim T items i y ii}) then: $\underline{e_1}_{k+n}= \underline{e_2}_{k+n}$ and $\underline{e_3}_{k+n}= \underline{e_4}_{k+n}$

\item Therefore:  $\underline{e_1}\sim_{s}\underline{e_4}$ and $\underline{e_2}\sim_{s}\underline{e_3}$. 

\item Like $f^z(x)\notin \partial^u\cR$ for all $z\in \ZZ$ the sector codes of $x$ satisfy that: $\underline{e_2}=\underline{e_1}$ and $\underline{e_3}=\underline{e_4}$. 
\end{itemize}
In conclusion $\underline{e_2}\sim_u\underline{e_1}$ and $\underline{e_3}\sim_u\underline{e_4}$ and then: 
 $$
 \underline{e_1}\sim_s\underline{e_4}\sim_u\underline{e_3}\sim_s \underline{e_2}\sim_u\underline{e_1}.
 $$ 
 so $\underline{e_i}\sim_T\underline{e_j}$ for $i,j=1,2,3,4$.
 
\textbf{Item} $ii)$ is argued in the same way. Look at Figure \ref{Fig: Sim T items i y ii} to get the intuition of \textbf{Item} $i)$ and realize that the same idea applies for the second item. In this case up some iteration $f^k(x)$ is in are a rectangle like in items $a)$ or $d)$ in the image and the negative iterations of $f^k(x)$ keep this configurations. Therefore the  terms of the sector codes $\underline{e_1}_{k-n}=\underline{e_4}_{k-n}$ and $\underline{e_2}_{k-n}=\underline{e_3}_{k-n}$ and like $f^{k+n}(x)\in \overset{o}{\cR}$ the therms $\underline{e_{\sigma}}_{k+n}$ is the same for $\sigma=1,2,3,4$: this implies that
\begin{itemize}
\item $\underline{e_1} \sim_u \underline{e_4}$ and $\underline{e_2}\sim_u \underline{e_3}$.
\item  $\underline{e_1}\sim_s \underline{e_2}$ and $\underline{e_3}\sim_s \underline{e_4}$.
\end{itemize}

Finally: $\underline{e_i}\sim_T\underline{e_j}$ for $i,j=1,2,3,4$.
\begin{figure}[h]
	\centering
	\includegraphics[width=0.9\textwidth]{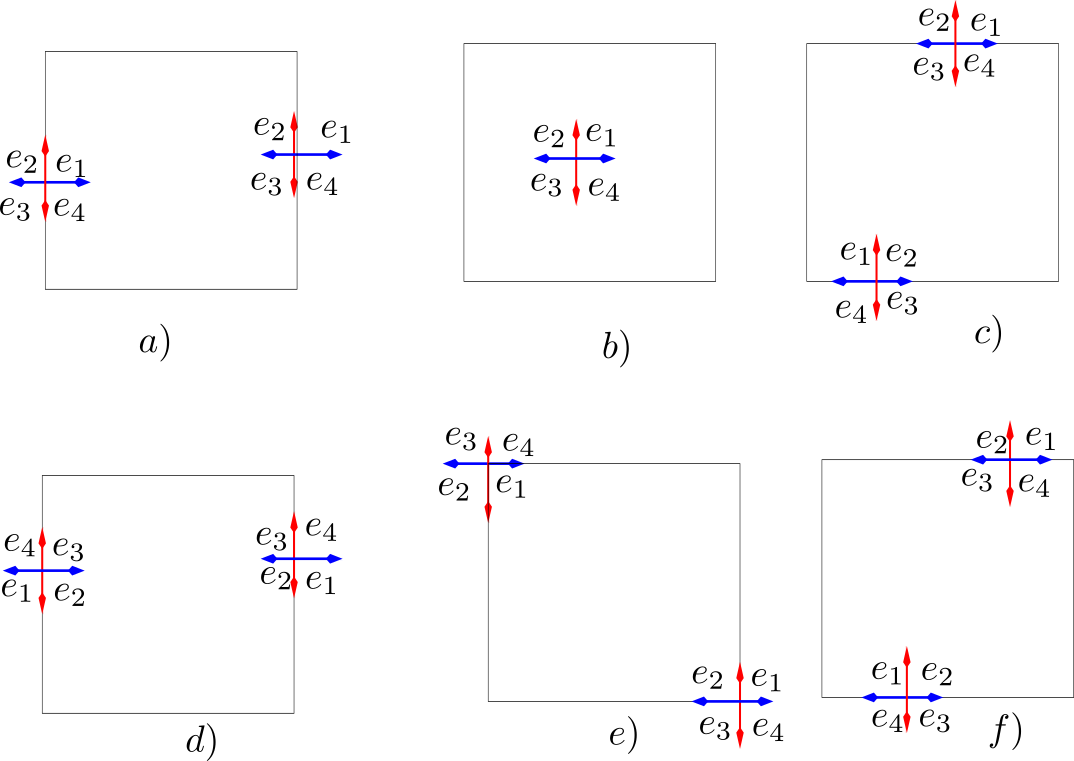}
	\caption{The sector codes that are identify.}
	\label{Fig: Sim T items i y ii}
\end{figure}

\textbf{Item} $iii)$ is the most technical situation. There are unique numbers $k(s),k(u)\in \ZZ$ defined as:
$$
k(s):=\max\{k\in \ZZ:f^k(x)\notin \partial^s \cR \text{ but } f^{k+1}(x)\in \partial^s \cR\},
$$
and 
$$
k(u)=\min\{k\in \ZZ:f^k(x)\notin \partial^u \cR \text{ but } f^{k-1}(x)\in \partial^u \cR\},
$$

The $f$-invariance of $\cR^s$ implies that for all $k>k(s)$, $f^k(x)\in \partial^s \cR$, but for all $k\leq k(s)$,  $f^k(x)\notin \partial^s \cR$. Similarly the $f^{-1}$ invariance of $\partial^u \cR$ implies that for all $k<k(u)$, $f^k(x)\in \partial^u\cR$ but for all $k\geq k(u)$ $f^k(x)\notin \partial^u \cR$.

They are three possibilities to detail: $k(u)<k(s)$, $k(u)=k(s)$ and $k(u)>k(s)$. Lets to divide the proof in this cases.

\textbf{First case} $k(u)<k(s)$. This inequality implies that $f^{k(s)}(x)\in \overset{o}{\cR}$ (and $f^{k(u)}(x)\in \overset{o}{\cR}$), this conditions  implies:

\begin{itemize}
\item The sector codes  take the same value for all $k(u)\leq  k\leq k(s)$.
	
\item  For all $k\geq k(s)$ the configuration of the sectors of $f^k(x)$ is like in Items $c)$ or $f)$ in in figure \ref{Fig: Sim T items i y ii}.

\item For all $k< k(u)$ the configurations of the sector of  $f^{k}(x)$ is like in Items  Item $a)$ or $d)$ in in figure \ref{Fig: Sim T items i y ii}.

\end{itemize}

 In view of Lemma \ref{lemma: simT related iterations related} we can deduce that: 
$$
\underline{e_1}\sim_s\underline{e_4}\sim_u\underline{e_3}\sim_s \underline{e_2}\sim_u\underline{e_1}.
$$
Hence $\underline{e_i}\sim_T\underline{e_j}$ for all $i=1,\cdots,4$.

\textbf{Second case} $k(u)=k(s)$. Analogously, $f^{k(u)}(x)=f^{k(s)}(x)\in \overset{o}{\cR}$ and we repeat the analysis of the previous case to deduce that $x$ have $4$ sector codes and all of them are $\sim_T$ related.  Figure \ref{Fig: k(u) less than k(s)} illustrates the ideas behind our arguments.

\begin{figure}[h]
	\centering
	\includegraphics[width=0.7\textwidth]{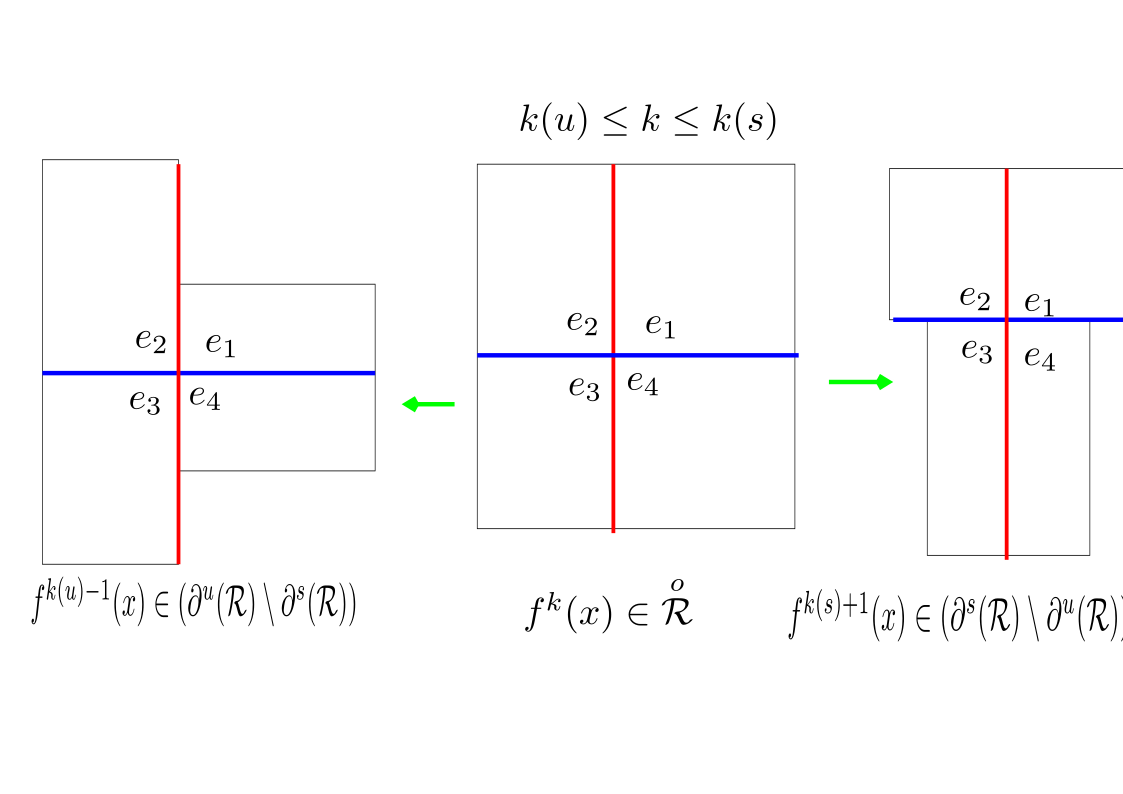}
	\caption{Situation $k(u)\leq k(s)$}
	\label{Fig: k(u) less than k(s)}
\end{figure}

\textbf{Third case} $k(s)<k(u)$.   In this situation $f^{k(s)}(x)\notin \partial^s \cR$ but $f^{k(s)}\in \partial^u\cR$ and for all $k$,  between $k(s)$ and $k(u)$, $f^k(x)$ is a corner point, and this have the following consequences:

\begin{itemize}
\item For all $n\in \NN$ $f^{k(s)-n}(e_1)$ and $ f^{k(s)-n}(e_4)$  are in the same rectangle of $\cR$ and similarly  $f^{k(s)}(e_2)$ and $f^{k(s)}(e_3)$ are in the same rectangle of $\cR$.
\item For all $n\in \NN$, $f^{k(u)+n}(e_1)$ and $f^{k(u)+n}(e_4)$ are in the same rectangle, and  similarly $f^{k(u)+n}(e_2)$ and $f^{k(u)+n}(e_3)$, are in the same rectangle.
\end{itemize}

Now for all $n\in \NN$, the negative part of the codes $\sigma^{k(s)-n}(e_1)$ and  $\sigma^{k(s)-n}(e_1)$ are equal. But for $k\in\{k(s),\cdots,k(u)\}$, $f^{k}(x)$ is in the stable boundary of $\cR$ and the configuration of the sector is like in Items $e)$, for $k\geq k(s)+1$ the configuration of the sector is like in $c)$ or $f)$. This lets us to conclude that $\underline{e_1}\sim_s \underline{e_4}$. The same argument applied for $\underline{e_2}$ and $\underline{e_3}$.

Using Lemma \ref{lemma: simT related iterations related} we deduce that:
$$
\underline{e_1}\sim_s \underline{e_4} \text{ and  } \underline{e_2}\sim_s \underline{e_3}.
$$

 Finally  for all $n\in \NN$, the sector codes of $f^{k(u)+s}(x)$ are like in configurations $c)$ and $f)$ so thee positive therms of $\sigma^{k(u)+1}(e_1)$ and  $\sigma^{k(u)+1}(e_2)$ are equal and $f^{k(u)}(x)$ is a corner point like in items $e)$, in particular the sector codes $\sigma^{k(u)}(\underline{e_1})$ and $\sigma^{k(u)}(\underline{e_1})$ share a unstable boundary of the Markov partition and then $\underline{e_1}\sim_u \underline{e_2}$. This process applied for the rest of sectors. Once again we can use Lemma \ref{lemma: simT related iterations related} to deduce that:
$$
\underline{e_1}\sim_u \underline{e_2} \text{ and  } \underline{e_3}\sim_u \underline{e_4}.
$$  
Then we put all together to obtain that:
$$
\underline{e_1}\sim_s\underline{e_4}\sim_u\underline{e_3}\sim_s \underline{e_2}\sim_u\underline{e_1}.
$$
proving that $\underline{e_i}\sim_T \underline{e_j}$ for $i,j=1,2,3,4$. This mechanism is illustrated in Figure \ref{Fig: k(s) less than k(s)}.

\begin{figure}[h]
	\centering
	\includegraphics[width=0.6\textwidth]{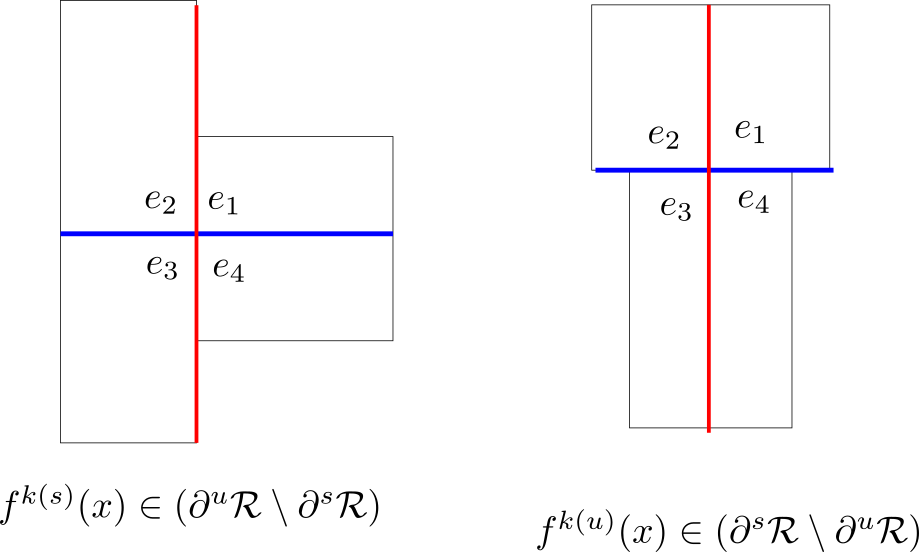}
	\caption{Situation $k(s)\leq k(u)$}
	\label{Fig: k(s) less than k(s)}
\end{figure}

It remains to address the periodic coding case. Let $x$ be a periodic point with $2k$-sectors, we label them with the cyclic order with respect to the surface orientation.  Two adjacent $e_i,e_{i+1}$-sectors are $s$-related if they share a stable local separatrix, as indeed, the stable separatrix lies in unique pair of rectangles and the boundary code of such sides are $s$-related. The reason is the mechanism of stable boundary identification that appears in Figure  \ref{Fig: stable identification} and that motivated the definition of the $\sim_s$-relation. If $ e_i,e_{i+1}$ share the same unstable separatrice they are $u$-related for the same reason. In this way one can get from one sector of $x$ to another by means of a finite number of intermediate sectors, which are alternatively $\sim_s$ and $\sim_u$-related. Then $\underline{e_i}\sim_T\underline{e_j}$ for all $i,j=1,\cdots, 2k$. This argument ends the proof.
\end{proof}

\end{proof}

 %descomposicion de shift y relacion de equivalencia
\subsection{The geometric type is a total conjugacy invariant.}

All the work of this section is condensed in the following Proposition.

\begin{prop}\label{Prop:  cociente T}
	Let $T$ be a symbolically modelable geometric type and $(f,\cR)$ a realization of $T$, where $f:S\rightarrow S$ is a pseudo-Anosov homeomorphism (with or without Spines). Let $A:=A(T)$ be the incidence matrix of $T$, assume that $A$ is a binary matrix,  $(\Sigma_A,\sigma)$ the  sub-shift of finite-type associated to $T$ and $\pi_f:\Sigma_A\rightarrow S$ is the projection.
	
	The quotient space $\Sigma_T=\Sigma_A/\sim_T$ is equal to $\Sigma_f:=\Sigma_A/\sim_{f}$. Therefore $\Sigma_T$ is a closed surface. The sub-shift of finite type $\sigma$ pass to the quotient by $\sim_T$ to a homeomorphism $\sigma_T:\Sigma_T \rightarrow \Sigma_T$ which is a  generalized pseudo-Anosov homeomorphism topologically conjugate to $f:S\rightarrow S$ via the quotient homeomorphism:
	$$
	[\pi_f]: \Sigma_T=\Sigma_f \rightarrow S.
	$$ 
\end{prop}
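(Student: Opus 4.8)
The plan is to deduce Proposition~\ref{Prop:  cociente T} almost entirely from Proposition~\ref{Prop: T related iff same projection} together with the material of Subsection~\ref{subsec: quotien surface}. Recall that in Subsection~\ref{subsec: quotien surface} the equivalence relation $\sim_f$ on $\Sigma_A$ was defined by $\underline{w}\sim_f\underline{v}$ if and only if $\pi_f(\underline{w})=\pi_f(\underline{v})$, and Proposition~\ref{Prop: quotien by f} established that $\Sigma_f=\Sigma_A/\sim_f$ is homeomorphic to $S$, that $\sigma$ descends to a generalized pseudo-Anosov homeomorphism $\sigma_f$ on $\Sigma_f$, and that the quotient map $[\pi_f]\colon\Sigma_f\to S$ is a topological conjugacy between $\sigma_f$ and $f$. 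So the whole content of the present proposition is the identification $\sim_T\ =\ \sim_f$ as relations on $\Sigma_A$; once this is in hand, the equality $\Sigma_T=\Sigma_f$ of quotient spaces is immediate (same underlying set, same quotient topology since both are quotients of the same space by the same partition), and every remaining assertion is literally Proposition~\ref{Prop: quotien by f}.

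First I would spell out the reduction: since $T$ is symbolically modelable (equivalently symbolically presentable, so that $A=A(T)$ is binary and $T\in\mathcal{G}\mathcal{T}(\textbf{pA})^{sp}$) and $(f,\cR)$ realizes $T$, all hypotheses of Proposition~\ref{Prop: T related iff same projection} are met. That proposition states precisely that for $\underline{w},\underline{v}\in\Sigma_A$ one has $\underline{w}\sim_T\underline{v}$ if and only if $\pi_f(\underline{w})=\pi_f(\underline{v})$, which is by definition $\underline{w}\sim_f\underline{v}$. Hence $\sim_T$ and $\sim_f$ are the same equivalence relation on $\Sigma_A$, and therefore $\Sigma_T:=\Sigma_A/\sim_T = \Sigma_A/\sim_f =: \Sigma_f$. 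In particular $\Sigma_T$ is a closed (and, as noted in the introduction, orientable) surface by Proposition~\ref{Prop: quotien by f}.

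Next I would transport the dynamics. The quotient map $q\colon\Sigma_A\to\Sigma_T$ coincides with the quotient map $\Sigma_A\to\Sigma_f$. Since $\sigma$ is $\sim_f$-equivariant (if $\pi_f(\underline{w})=\pi_f(\underline{v})$ then, using the semiconjugacy $f\circ\pi_f=\pi_f\circ\sigma$ of Proposition~\ref{Prop:proyecion semiconjugacion}, $\pi_f(\sigma(\underline{w}))=f(\pi_f(\underline{w}))=f(\pi_f(\underline{v}))=\pi_f(\sigma(\underline{v}))$), and equally $\sim_T$-equivariant by Lemma~\ref{lemma: simT related iterations related}, $\sigma$ descends to a homeomorphism $\sigma_T\colon\Sigma_T\to\Sigma_T$, and $\sigma_T=\sigma_f$ under the identification $\Sigma_T=\Sigma_f$. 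By Proposition~\ref{Prop: quotien by f}, $[\pi_f]\colon\Sigma_f\to S$ is a homeomorphism satisfying $[\pi_f]\circ\sigma_f=f\circ[\pi_f]$; reading this through $\Sigma_T=\Sigma_f$ gives that $[\pi_f]\colon\Sigma_T\to S$ is a topological conjugacy between $\sigma_T$ and $f$. Since $f$ is a generalized pseudo-Anosov homeomorphism and, by Proposition~\ref{Prop: pA closed up conjugation}, this property is preserved under (orientation-preserving) topological conjugacy, $\sigma_T$ is itself a generalized pseudo-Anosov homeomorphism. This completes the proof.

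I do not expect a genuine obstacle here: the proposition is a packaging statement, and all the real work has already been done in Proposition~\ref{Prop: T related iff same projection} (the hard combinatorial identification of $\sim_T$ with equality of projections) and in Subsection~\ref{subsec: quotien surface}. The only point requiring a word of care is the claim that two quotients of the same space by the same partition are not merely in bijection but equal as topological spaces — this is because the quotient topology depends only on the partition, not on how the relation was described — and the bookkeeping that $\sigma_T$ is well defined on $\Sigma_T$, which is exactly Lemma~\ref{lemma: simT related iterations related} phrased for the relation $\sim_T$.
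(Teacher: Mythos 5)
Your proposal is correct and follows essentially the same route as the paper: identify $\sim_T$ with $\sim_f$ via Proposition~\ref{Prop: T related iff same projection} (the content of Proposition~\ref{Prop: The relation determines projections}), conclude equality of the quotients, and then invoke Proposition~\ref{Prop: quotien by f} for the surface structure and conjugacy and Proposition~\ref{Prop: pA closed up conjugation} for the pseudo-Anosov property of $\sigma_T$. Your added remarks on equivariance of $\sigma$ and on the quotient topology depending only on the partition are fine touches of bookkeeping but introduce nothing beyond the paper's argument.
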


\begin{proof}
As the proposition \ref{Prop: The relation determines projections} indicates, the relation $\sim_f$ given by $\underline{w}\sim_f\underline{v}$ if and only if $\pi_f(\underline{w})=\pi_f(\underline{w})$ is equal to $\sim_T$, regardless of the pair $(f,\cR)$ that represent $T$. Therefore the quotient spaces $\Sigma_T$ and $\Sigma_f$ are the same.

The Proposition \ref{Prop: quotien by f} implies that $S$ and $\Sigma_f$ are homeomorphic, hence $\Sigma_T$ is homeomorphic to $S$ which is a closed surface. Moreover the shift map $\sigma$ passes to the quotient under $\sim_f$ to a homeomorphism $[\sigma]$ which is topologically conjugate to $f$ via the quotient map $[\pi_f]$. On the other hand the shift $\sigma$ passes to the quotient under $\sim_T$ too, so we have the homeomorphism $\sigma_T:\Sigma_T\rightarrow \Sigma_T$, which is identical to $[\sigma]$ and  we conclude  as in Proposition \ref{Prop: quotien by f} that:
	$$
[\pi_f]^{-1}\circ f\circ [\pi_f]= \sigma_T.
	$$
Finally, the Proposition \ref{Prop: pA closed up conjugation} implies that $\sigma_T:\Sigma_T \rightarrow \Sigma_T$ is  pseudo-Anosov because it is topologically conjugate to a pseudo-Anosov homeomorphism. 
\end{proof}

Observe that if $(g,\cR_g)$ and $(f,\cR_f)$ are two pair that represent $T$. It is possible to consider the product homeomorphism $[\pi_f]^{-1}\circ [\pi_g]$, because if $\underline{w}\in \pi_f^{-1}(x)\cap \pi^{-1}_g(y)\neq \emptyset$, any other codes $\underline{X}\in \pi^{-1}(x)$ and $\underline{Y}\in \pi^{-1}_g(y)$ are $\sim_T$ related, i.e. $\underline{X}\sim_T\underline{w}\sim_T\underline{Y}$, therefore  $ \pi_f^{-1}(x)= \pi^{-1}_g(y)$. It is time to prove our main Theorem \ref{Theo: conjugated iff  markov partition of same type}

\begin{theo*}
	Let $f:S_f\rightarrow S_f$ and $g:S_g \rightarrow S_g$ two pseudo-Anosov homeomorphism maybe with Spines. If $f$ and $g$ had  Markov partitions  of same geometric type then there exist an orientation preserving homeomorphism between the surfaces $h:S_f\rightarrow S_g$ that conjugate them $g=h\circ f\circ h^{-1}$.
\end{theo*}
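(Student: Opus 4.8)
The plan is to deduce Theorem~\ref{Theo: conjugated iff  markov partition of same type} directly from Proposition~\ref{Prop:  cociente T} together with Proposition~\ref{Prop: quotien by f} and the normalization remarks of Subsection~\ref{subsec: quotien surface}. The key point is that the quotient dynamical system $(\Sigma_T,\sigma_T)$ built from an abstract geometric type $T$ is \emph{intrinsic to $T$}: it does not depend on the pseudo-Anosov homeomorphism used to realize $T$. So if $f$ and $g$ both realize the same geometric type, they are both conjugate to this one fixed model, and hence to each other.

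First I would reduce to the case where the incidence matrix is binary. If $f$ has a geometric Markov partition $\cR_f$ of geometric type $T_f$ and $g$ has a geometric Markov partition $\cR_g$ of the same geometric type, then after passing to the horizontal refinements $(f,\mathcal{H}(\cR_f))$ and $(g,\mathcal{H}(\cR_g))$ (Remark~\ref{Rema: Convencion sobre matriz binaria}), both homeomorphisms have geometric Markov partitions of a common geometric type $T:=T(f,\mathcal{H}(\cR_f))=T(g,\mathcal{H}(\cR_g))$ whose incidence matrix $A:=A(T)$ is binary; the refinements of two partitions of the same type have the same type, since the refinement procedure is defined purely combinatorially. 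Thus without loss of generality $T\in\mathcal{G}\mathcal{T}(\textbf{pA})^{sp}$ and $A=A(T)$ is binary and mixing.

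Next I would invoke Proposition~\ref{Prop:  cociente T} twice. Applied to the realization $(f,\mathcal{H}(\cR_f))$ of $T$, it gives a generalized pseudo-Anosov homeomorphism $\sigma_T:\Sigma_T\to\Sigma_T$ on the quotient surface $\Sigma_T=\Sigma_{A}/\!\sim_T$ and an orientation-preserving homeomorphism $[\pi_f]:\Sigma_T\to S_f$ conjugating $\sigma_T$ to $f$, i.e. $f=[\pi_f]\circ\sigma_T\circ[\pi_f]^{-1}$. Applied to the realization $(g,\mathcal{H}(\cR_g))$ of the \emph{same} $T$, and using that $\sim_T$ is defined solely in terms of $T$ (Proposition~\ref{Prop: The relation determines projections}), so that the quotient space $\Sigma_T$ and the quotient map $\sigma_T$ are literally the same objects, it gives an orientation-preserving homeomorphism $[\pi_g]:\Sigma_T\to S_g$ with $g=[\pi_g]\circ\sigma_T\circ[\pi_g]^{-1}$. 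Setting $h:=[\pi_g]\circ[\pi_f]^{-1}:S_f\to S_g$, which is a composition of orientation-preserving homeomorphisms and hence orientation-preserving, one computes
\begin{equation*}
h\circ f\circ h^{-1}=[\pi_g]\circ[\pi_f]^{-1}\circ f\circ[\pi_f]\circ[\pi_g]^{-1}=[\pi_g]\circ\sigma_T\circ[\pi_g]^{-1}=g,
\end{equation*}
which is exactly the required conjugacy. The converse direction is immediate: if $h:S_f\to S_g$ is an orientation-preserving homeomorphism with $g=h\circ f\circ h^{-1}$ and $\cR_f$ is a geometric Markov partition of $f$ of geometric type $T$, then $h(\cR_f)$ is a geometric Markov partition of $g$ of the same geometric type, since $h$ carries the invariant foliations, the rectangle structure, the sub-rectangle combinatorics and (being orientation-preserving) the chosen vertical directions of $\cR_f$ to the corresponding data for $g$.

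The only subtlety — and the step I expect to require the most care in writing — is the assertion that $\Sigma_T$, $\sigma_T$ and the relation $\sim_T$ genuinely depend on $T$ alone and not on the chosen realization; but this is precisely the content of Proposition~\ref{Prop: The relation determines projections} and Proposition~\ref{Prop:  cociente T}, so the proof here is a short bookkeeping argument assembling those results. I would also remark explicitly that the passage to the horizontal refinement does not change the conjugacy class of $f$ (it is the same homeomorphism on the same surface), so no information is lost in the reduction to the binary case.
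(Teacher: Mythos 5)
Your overall strategy (reduce to a binary incidence matrix via horizontal refinement, apply Proposition~\ref{Prop:  cociente T} to both realizations of the same $T$, and set $h=[\pi_g]\circ[\pi_f]^{-1}$) is exactly the route the paper takes, and the conjugacy identity $h\circ f\circ h^{-1}=g$ is handled the same way. However, there is a genuine gap at the point you dismiss in one clause: you assert that $h$ is orientation-preserving "being a composition of orientation-preserving homeomorphisms," but Proposition~\ref{Prop:  cociente T} does not (and cannot, as stated) assert that $[\pi_f]$ or $[\pi_g]$ is orientation-preserving. The quotient $\Sigma_T$ is only shown to be an orientable surface; no orientation determined by $T$ alone is fixed on it, so "orientation-preserving" has no meaning for $[\pi_f]$ and $[\pi_g]$ individually. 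One would either have to construct a canonical orientation on $\Sigma_T$ from $T$ and show both quotient maps respect it, or argue directly about $h$ -- and neither step is free.

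The paper closes this gap by working directly with $h$: it first proves the lemma that $h$ carries each horizontal sub-rectangle $H^i_j$ of $\cR_f$ onto the correspondingly labelled sub-rectangle $\underline{H}^i_j$ of $\cR_g$ (using the symbolic description $R(i,j)=\{\underline{w}: w_0=i,\ \rho(i,j)=(w_1,l)\}$), then deduces that $h$ preserves the transverse orientations of the vertical and horizontal foliations on each rectangle $R_i$ -- and it is precisely here that the refinement is taken so that $h_i,v_i>2$, a condition your reduction omits -- and finally checks that these orientations are coherent across adjacent rectangles along shared stable boundaries, so that $h$ preserves orientation globally. Your proposal should be repaired by incorporating this (or an equivalent) argument; as written, the orientation claim does not follow from the propositions you cite.
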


\begin{proof}
We already know that if $f$ and $g$ are conjugate they have Markov partitions of the same geometric type, so let's focus on the other direction of the proof.

Let $T := T(f, \cR_f) = T(g, \cR_g)$ be the geometric type of the Markov partitions of $f$ and $g$. If $T$ does not have a binary incidence matrix, we can consider a horizontal refinement of $(f, \cR)$ and $(g, \cR)$ in such a manner that the incidence matrix of such refinement is binary and the number of horizontal and vertical sub-rectangles in the respective Markov partitions is bigger than $2$.  The geometric types of these refinements remain the same. Therefore, we can assume from the beginning that $T$ has an incidence matrix $A$ with coefficients in $\{0, 1\}$ and $v_i,h_i>2$

 The quotient spaces of $\Sigma_A$ by $\sim_f$ and $\sim_g$ are equal to $\Sigma_T$ as proved in Proposition \ref{Prop:  cociente T}, i.e.  $\Sigma_g=\Sigma_{T}=\Sigma_f$. Moreover, the quotient shift  $\sigma_{T}$ is topologically conjugate to $f$ via $[\pi_f]:\Sigma_T \rightarrow S_f$ and to $g$ via $[\pi_g]:\Sigma_T \rightarrow S_g$. Therefore $f$ is topologically conjugate to $g$ by $h:=[\pi_g]\circ[\pi_f]^{-1}:S_f \rightarrow S_g$  which is a well-defined homeomorphism. It rest to prove that $h$ preserve the orientation. 
 
 \begin{lemm}
Let $H^i_j$ and $\underline{H}^i_j$ the respective horizontal sub-rectangles of $\cR_f$ and $\cR_g$, then $h(H^i_j)=\underline{H}^i_j$.
 \end{lemm}
 
 \begin{proof}
The set  $R(i,j)=\{\underline{w}: w_0=i \text{ and } \rho(i,j)=(w_1,l_0) \}$ is such that: $\pi_f(R_i,j)=H^i_j$ and  $\pi_g(\underline{H}^i_j)$. Consider the set $R(i,j)_T$ like the equivalent classes of $\Sigma_T$ that contains and element of $R(i,j)$. Clearly $[\pi_g]\circ[\pi_f]^{-1}(H^i_j)=[\pi_g](R(i,j)_T)=\underline{H}^i_j$
 \end{proof}

This have the following consequence.

\begin{coro}
The homeomorphism $h$ restricted to every rectangle $R_i$ preserve the traverse orientation of its vertical and horizontal foliations. In particular $h$ preserve the orientation restricted to $R_i$.
\end{coro}

\begin{proof}
Let $R_i$ be a rectangle in $\cR_f$ such that $x$ is its bottom-left corner point. This implies that the stable separatrix $I$ that determines the lower boundary of $R_i$ points in the opposite direction from $x$. Similarly, for the left stable boundary $J$ of $R_i$, it also points away from $x$. It's worth noting that within $R_{i}$, there are two sub-rectangles $H^i_1$ and $V^i_1$, whose the intersection defines a sector $e_x$ associated with $x$.

Now, consider $h(x)$, which is a corner point of $\underline{R}_i$. The rectangles $h(H^i_1)=\underline{H}^i_1$ and $h(V^i_1)=\underline{V}^i_1$ that are incident in $h(x)$ define a sector $h(e_x)$. It's necessary that the rectangle $\underline{H}^i_2$, the image of $H^i_2$, be adjacent to $\underline{H}^i_1$. This adjacency ensures the coherent ordering of horizontal sub-rectangles within $\underline{R}_i$, in accordance with the horizontal orientation of $\underline{R}_i$. Simultaneously, $h(V^i_2)$ becomes the vertical sub-rectangle of $\underline{R}_i$ next to $\underline{V}^i_1$. The sequence of rectangles $\underline{V}^i_l$ retains its coherence with the horizontal orientation of $\underline{R}_i$.

As both $h_i$ and $v_i$ are greater than $2$, $h$ simultaneously maps positive-oriented vertical and horizontal sub-rectangles of $R_i$ to positive-oriented vertical and horizontal sub-rectangles of $\underline{R}_i$. This also implies that $h$ preserves the transversal orientation of the foliation, and consequently, it maintains the orientation restricted to $R_i$.

\end{proof}

Finally, if the rectangles $R_i$ and $R_j$ intersect at a stable boundary point $x$, we can assign vertical orientations to $R_i$ and $R_j$ such that $x$ becomes the lower boundary point of $R_i$ and the upper boundary point of $R_j$. Necessary adjustments can be made to the stable directions of these rectangles to ensure that their orientations remain unchanged. By doing so, we can define $\underline{R_i}$ and $\underline{R_j}$ with vertical orientations induced by $h$. Since $h$ preserves the transversal orientation of the foliations in both $R_i$ and $R_j$, it also preserves the horizontal orientations of these rectangles. Consequently, $h$ preserve the orientation in the union of $R_i$ and $R_j$.

\end{proof}

The space $\Sigma_T$ and the homeomorphism $\sigma_T$ are constructed in terms of $T$ and as we have seen represent the conjugacy class of pseudo-Anosov homeomorphisms realizing $T$. In this sense we think that the symbolic dynamical system $(\Sigma_T,\sigma_T)$ deserves a name.

\begin{defi}\label{Defi: symbolic model}
Let $T$ be a geometric type in the pseudo-Anosov class, with an incidence matrix $A$ that is binary. Let $(f, \cR)$ be a pair representing $T$. The pair $(\Sigma_T, \sigma_T)$ represents the symbolic model of the geometric type $T$.
\end{defi}

Finally, we have obtained a combinatorial representation of a geometric type $T$ in the pseudo-Anosov class by solving Topic 1.III in \ref{Prob: Clasification}.

\section{The pseudo-Anosov class of geometric types.}\label{Chapter: Realization}

The objective of this section is to prove the following Theorem.

\begin{theo}[Algorithmic characterization]\label{Theo: caracterization is algoritmic}
	There exists a finite and explicit algorithm that determine whether or not given geometric type $T$ belongs to the pseudo-Anosov class. Such algorithm requires calculating at most $6n$ iterations of the geometric type $T$, where $n$ is the first parameter of $T$.	
\end{theo}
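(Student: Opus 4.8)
The plan is to assemble Theorem \ref{Theo: caracterization is algoritmic} directly from the two preparatory results already stated: the structural characterization in Proposition \ref{Prop: pseudo-Anosov iff basic piece non-impace} and the effectivity statement in Proposition \ref{Prop: mixing+genus+impase is algorithm}. So the proof is essentially a bookkeeping argument that unwinds these two inputs and tracks the iteration bound; the real mathematical content has been pushed into the earlier sections (the combinatorial translations of the three topological obstructions in Subsection \ref{Sub-sec: Comb Impasse genus}, Theorem \ref{Theo: finite type iff non-obtruction}, and Theorem \ref{Theo: Geometric and combinatoric are equivalent}).

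First I would invoke Proposition \ref{Prop: pseudo-Anosov iff basic piece non-impace}: a geometric type $T$ is in the pseudo-Anosov class if and only if its incidence matrix $A(T)$ is mixing, its genus is finite, and $T$ has no impasse. Thus deciding membership in $\mathcal{GT}(\textbf{p-A})$ reduces to deciding these three combinatorial properties of $T$. Next I would cite Proposition \ref{Prop: mixing+genus+impase is algorithm}, which provides a finite algorithm for exactly this conjunction of properties. The algorithm of Theorem \ref{Theo: caracterization is algoritmic} is then simply: compute whether $A(T)$ is mixing, whether $\text{gen}(T)<\infty$, and whether $T$ has a combinatorial impasse, using the procedures of Proposition \ref{Prop: mixing+genus+impase is algorithm}; answer ``yes'' precisely when all three hold. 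Each sub-procedure is finite because, by Proposition \ref{Prop: algoritm iterations type} (referenced in the excerpt), computing a fixed power $T^m$ of a geometric type is itself an algorithmic operation on the finite data $(n,\{h_i,v_i\},\Phi_T)$.

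For the iteration bound I would track each contribution separately, as is done inside the proof of Proposition \ref{Prop: mixing+genus+impase is algorithm}. Mixing of $A(T)$ is detected by checking positivity of $A(T^n)=A(T)^n$, which needs at most the $n$-th power (Proposition \ref{Prop: bound positive incidence matriz } / Corollary \ref{Coro: algoritmic mixing}); finiteness of genus is detected by verifying absence of the type-$(1)$, $(2)$, $(3)$ combinatorial obstructions on $T$, which by Theorem \ref{Theo: finite type iff non-obtruction} requires only the $6n$-realizer, i.e. computing up to $T^{6n}$; and absence of an impasse requires by Theorem \ref{Theo: Geometric and combinatoric are equivalent} computing only up to $T^{2n+1}$. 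Since $n\ge 1$ gives $n\le 2n+1\le 6n$, the maximum power needed across all three checks is $T^{6n}$, which yields the stated bound of at most $6n$ iterations.

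The only point requiring a little care — and the one I would expect a referee to press on — is the logical status of the combinatorial obstruction of type $(3)$ and of the combinatorial impasse, which are defined as existence of \emph{some} power $m$ with a certain property, a priori an unbounded search. This apparent obstacle is precisely what Theorem \ref{Theo: finite type iff non-obtruction} and Theorem \ref{Theo: Geometric and combinatoric are equivalent} dispose of: the topological obstructions (equivalently, by Lemmas \ref{Lemm: Equiv com and top type 1}, \ref{Lemm: Equiv com and top type 2}, \ref{Lemm: Equiv com and top type 3}, the combinatorial ones) are already visible on the $6n$-realizer, respectively the $(2n+1)$-st power, so the unbounded existential quantifier collapses to a bounded one. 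I would make this reduction explicit in the write-up, so that the final assertion ``the algorithm terminates after at most $6n$ iterations of $T$'' is fully justified rather than inherited as a black box. With that remark in place, the theorem follows by combining Propositions \ref{Prop: pseudo-Anosov iff basic piece non-impace} and \ref{Prop: mixing+genus+impase is algorithm}.
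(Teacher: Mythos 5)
Your proposal is correct and follows the same route as the paper, which likewise treats the theorem as an immediate consequence of Proposition \ref{Prop: pseudo-Anosov iff basic piece non-impace} combined with the effectivity statement of Proposition \ref{Prop: mixing+genus+impase is algorithm}, with the $6n$ bound coming from Theorem \ref{Theo: finite type iff non-obtruction} and the $2n+1$ bound from Theorem \ref{Theo: Geometric and combinatoric are equivalent}. Your explicit remark that the unbounded existential quantifier in the combinatorial obstruction/impasse definitions collapses to a bounded search is a welcome clarification, but it does not change the argument.
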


In Proposition \ref{Prop: mixing+genus+impase is algorithm}, we presented a finite algorithm to determine whether the last properties in Item $iii)$ of Proposition \ref{Prop: pseudo-Anosov iff basic piece non-impace} are satisfied or not. This is the main theoretical result of this section, and it is evident that Theorem \ref{Theo: caracterization is algoritmic} is a direct consequence of it.

 \begin{prop}\label{Prop: pseudo-Anosov iff basic piece non-impace}
	Let $T$ be an abstract geometric type. The following conditions are equivalent.
\begin{itemize}
	\item[i)] The geometric type $T$ is realized as a mixing basic piece of a surface Smale diffeomorphism without impasse.
\item[ii)] The geometric type $T$ is in the pseudo-Anosov class.
\item[iii)] The geometric type $T$ satisfies the following properties:
	\begin{enumerate}
		\item  The incidence matrix $A(T)$ is mixing
		\item The genus of $T$ is finite
		\item $T$ does not have an impasse.
	\end{enumerate}
\end{itemize}
\end{prop}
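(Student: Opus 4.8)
\textbf{Proof proposal for Proposition \ref{Prop: pseudo-Anosov iff basic piece non-impace}.}

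The plan is to establish the cycle of implications $i)\Rightarrow ii)\Rightarrow iii)\Rightarrow i)$, using the Bonatti--Langevin theory surveyed in Section \ref{Sec: Bonatti-Langevin theory} together with the formal $\mathbf{DA}$ construction and a blow-up/blow-down argument. The central object is the bridge between pseudo-Anosov homeomorphisms and saddle-type basic pieces of Smale diffeomorphisms: a pseudo-Anosov $f$ carries stable and unstable \emph{foliations}, whereas a basic piece carries stable and unstable \emph{laminations}; passing from one to the other is exactly a matter of splitting (blowing up) each separatrix of a singularity into a boundary arc, and conversely collapsing the boundary components bounding an impasse-free complementary region.

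\emph{$i)\Rightarrow ii)$.} Suppose $T$ is realized by a mixing saddle-type basic piece $K$ of a Smale diffeomorphism $\phi:S_0\to S_0$ without impasse, with geometric Markov partition $\mathcal{R}$. I would first pass to the domain $\Delta(K)$ (Subsection \ref{Sub-sec: Domain basic piece}), a finite-genus surface with boundary carrying $K$ as its maximal invariant set. The complement $\Delta(K)\setminus(W^s(K)\cup W^u(K))$ decomposes into finitely many open regions, each of which, by the \emph{non-impasse hypothesis}, is bounded by a polygon of stable/unstable arcs with at least two distinct ``corners'' on $K$ --- no region is a bigon bounded by a single $s$-arc and a single $u$-arc. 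One then collapses each such complementary region and each boundary component of $\Delta(K)$ to a point (a $k$-prong, or a spine when a boundary circle meets exactly one $s$-arc and one $u$-arc of the lamination), exactly as in the inverse of the blow-up described in \cite[Exposition 13]{fathi2021thurston}. The absence of impasse is precisely what guarantees that no collapse creates a $0$-prong (i.e.\ no region disappears improperly), so the quotient is a closed surface $S$ and $\phi$ descends to a homeomorphism $f:S\to S$; the laminations become the desired transverse measured singular foliations (the transverse measures survive the collapse since collapsed arcs have zero transverse measure), and the affine expansion/contraction of $\phi$ on $\mathcal{R}$ gives the uniform stretch factor. The image of $\mathcal{R}$ under the collapse is a geometric Markov partition of $f$ of the same geometric type $T$, so $T\in\mathcal{GT}(\mathbf{p\text{-}A})$.

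\emph{$ii)\Rightarrow iii)$.} Suppose $T=T(f,\mathcal{R})$ for a pseudo-Anosov $f:S\to S$. Mixing of $A(T)$ is \cite[Lemma 10.21]{fathi2021thurston} (the incidence matrix of any pseudo-Anosov Markov partition is mixing), as already recorded after Definition \ref{Defi: Sub-shift of finite type}. For the finiteness of the genus: perform the blow-up of every spine and every $k$-prong of $f$ along each separatrix to obtain a Smale diffeomorphism $\hat f$ on a \emph{compact} surface $\hat S$ with a saddle-type basic piece $\hat K$; the Markov partition $\mathcal{R}$ blows up to a Markov partition $\hat{\mathcal{R}}$ of $\hat K$ whose geometric type is still $T$ (blowing up a singularity that sits on the boundary of rectangles does not change the combinatorics $(\rho_T,\epsilon_T)$, only replaces boundary points by boundary arcs). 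Since $\hat K$ is realized as a basic piece of a surface Smale diffeomorphism, \cite[Corollary 7.3.10]{bonatti1998diffeomorphismes} (cited here as part of Theorem \ref{Theo: finite genus iff realizable}) forces $\mathrm{gen}(T)<\infty$; and $T$ has no double boundary because $A(T)$ is mixing, hence positive on some power, while a double $s$- or $u$-boundary would make a canonical basis vector orbit periodically under $A(T)$, contradicting positivity (this is the argument sketched in the commented-out Lemma \ref{Lemm: T realizable implies A non-boundaries} of the excerpt). Finally, $T$ has no impasse: an impasse of $\hat K$ would, by the well-suited-impasse reduction of Lemma \ref{Lemm: well suit impasse} and Theorem \ref{Theo: Geometric and combinatoric are equivalent}, force the combinatorial impasse property on $T^{2n+1}$; but an impasse disk is disjoint from $\hat K=\widehat{\mathrm{blowup}}(f)$ and, collapsing back down, would produce a disk in $S$ disjoint from all leaves except for its two bounding arcs --- impossible since the foliations $\mathcal{F}^{s,u}$ of $f$ are \emph{minimal} (Proposition \ref{Prop: pseudo-Anosov properties.}), so every leaf is dense and no such disk can exist. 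Hence $T$ satisfies the three conditions in $iii)$.

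\emph{$iii)\Rightarrow i)$.} This is the heart of the matter and where I expect the main difficulty. Assume $A(T)$ mixing, $\mathrm{gen}(T)<\infty$, and no impasse. Mixing implies no double boundary (as above), so by Theorem \ref{Theo: finite genus iff realizable} (Bonatti--Langevin necessity plus Béguin's sufficiency, \cite[Corollary 4.4]{beguin1999champs}) the finiteness of the genus implies that $T$ \emph{is} realizable as a saddle-type basic piece $K$ of a Smale surface diffeomorphism. Mixing of $A(T)$ upgrades $K$ to a \emph{mixing} basic piece. It remains to see that $K$ can be chosen without impasse, and this is exactly what the hypothesis ``$T$ has no impasse'' provides via Theorem \ref{Theo: Geometric and combinatoric are equivalent}: since $T$ does not have the combinatorial impasse property on any power $T^m$ --- in particular on $T^{2n+1}$ --- the basic piece $K$ has no topological impasse. (One should note that ``having an impasse'' is a property of the geometric type, not of the particular realization, by \cite[Proposition 7.2.2, Corollary 2.4.8]{bonatti1998diffeomorphismes}, so this conclusion is unambiguous.) This closes the cycle.

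\emph{Main obstacle.} The delicate point throughout is the blow-up/blow-down correspondence and, more precisely, checking that it is \emph{geometric-type preserving} and that the non-impasse condition is exactly the obstruction to collapsing the boundary of the domain into a genuine (spine or $k$-prong) singular foliation without accidentally creating a $0$-prong or destroying the surface structure. The measured-foliation structure must also be shown to pass to the quotient: the collapsed arcs lie on stable/unstable manifolds and a transverse measure assigns them measure zero, but one must verify that the quotient foliations are still transverse \emph{singular} foliations in the sense of Definition \ref{Defi: Singular foliation} (i.e.\ that the $C^0$ foliated charts around the new singular points exist), which is where the regular-neighborhood description of Theorem \ref{Theo: Regular neighborhood} and the careful analysis of complementary regions of $W^s(K)\cup W^u(K)$ in $\Delta(K)$ do the real work. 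Everything else is bookkeeping on top of the cited Bonatti--Langevin--Béguin machinery.
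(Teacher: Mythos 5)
Your cycle $i)\Rightarrow ii)\Rightarrow iii)\Rightarrow i)$ is the same global scheme as the paper, and two of the three legs essentially coincide with it: for $i)\Rightarrow ii)$ the paper also collapses the complementary structure, but it does so by invoking the Bonatti--Jeandenans theorem (Theorem \ref{Theo: Basic piece projects to pseudo-Anosov}) and then spends most of Section \ref{Sec: type basic piece then pA} on the point you only flag as ``the main obstacle'', namely Proposition \ref{Prop: type of basic piece is type of pseudo-Anosov}: that $\pi(\mathcal{R})$ is again a geometric Markov partition and that its geometric type is exactly $T$ (rectangles project to rectangles, orientability of the quotient, matching of sub-rectangles and of $\epsilon$). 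For $iii)\Rightarrow i)$ your argument (Theorem \ref{Theo: finite genus iff realizable} plus mixing of $A(T)$ plus Theorem \ref{Theo: Geometric and combinatoric are equivalent} to exclude the impasse) is exactly the paper's proof.

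The genuine gap is in your $ii)\Rightarrow iii)$. You obtain finiteness of the genus by ``blowing up every spine and every $k$-prong of $f$ along each separatrix'' to get a Smale diffeomorphism $\hat f$ with a saddle-type basic piece $\hat K$ admitting a Markov partition of the same geometric type $T$, and then citing the necessity half of Theorem \ref{Theo: finite genus iff realizable}. As described, this surgery does not produce such a basic piece: the stable and unstable foliations of $f$ stay locally dense along the (generally regular, non-singular) boundary leaves of $\mathcal{R}$, so after opening only the singular separatrices the maximal invariant set still has two-dimensional local structure and is not a non-trivial saddle-type basic piece with a lamination; to create the gaps one must open along the stable and unstable leaves of the \emph{boundary periodic orbits} of $\mathcal{R}$ (the formal \textbf{DA} of Section \ref{Sec: Formal DA}), and proving that this surgery preserves the geometric type is substantial work (it is essentially Proposition \ref{Prop: preimage is Markov Tfg type} together with Proposition \ref{Prop: type of basic piece is type of pseudo-Anosov}), which in the paper's logical order is only available \emph{after} the present proposition -- so your route is close to circular. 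Your no-impasse argument inherits the same defect, since it is phrased on $\hat K$; moreover an impasse region collapses to an arc or a point, not to ``a disk disjoint from all leaves'', so minimality is not invoked correctly there. The paper instead proves $ii)\Rightarrow iii)$ intrinsically: it compares a realization of $T$ by disjoint rectangles with the pseudo-Anosov Markov partition of the same type and shows that a combinatorial impasse or a type-$(1)$, $(2)$, $(3)$ condition would force configurations incompatible with the invariant foliations (the strip between the two identified boundaries collapses onto a spine, Lemma \ref{Lemm: gamma periodic points}; a closed stable leaf in the type-$(2)$ case, Lemma \ref{Lemm: T pA class then no condition 2}; etc.), applies the same statement to every power $T^m$, and then deduces finite genus from Theorem \ref{Theo: finite type iff non-obtruction}. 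To repair your draft you should either reproduce that obstruction analysis or give a genuine construction and type-preservation proof for the DA surgery along the boundary leaves of $\mathcal{R}$; the blow-up at singularities alone will not do.
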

 
 The proof of \ref{Prop: pseudo-Anosov iff basic piece non-impace} is divided in three parts: In Subsection \ref{Sec: type basic piece then pA}  we get Item $i)$ implies Item $ii)$, in Section \ref{Sec: Tipes PA finite genus but not impasse}   Item $ii)$ implies Item $iii)$ and Section \ref{Sec: finite genus no impas implies basic piece} close the cycle by proof that Item $iii)$ implies Item $i)$.

\subsection{The geometric type of a mixing basic piece without impasse.}\label{Sec: type basic piece then pA}

In this section, our goal is to prove that if a geometric type $T$ is realized as a mixing, saddle-type basic piece of a surface Smale diffeomorphism without impasse, then $T$ is in the pseudo-Anosov class, in this manner we obtain that Item $(1)$ implies Item $(2)$ in the Proposition \ref{Prop: pseudo-Anosov iff basic piece non-impace}. Our starting point is the following theorem by Christian Bonatti and Emmanuelle Jeandenans.(see \cite[Theorem 8.3.1]{bonatti1998diffeomorphismes}):

\begin{theo}\label{Theo: Basic piece projects to pseudo-Anosov}

	Let $f$ be a Smale diffeomorphism of a compact and oriented surface $S$, $K$ be a non-trivial saddle-type basic piece of $f$, $\Delta(K)$ be its domain, and suppose that $K$ does not have an impasse. Then, there exists a compact surface $S'$, a generalized pseudo-Anosov homeomorphism $\phi$ on $S'$, and a continuous and surjective function $\pi: \Delta(K) \rightarrow S'$ such that:
	
	$$
	\pi\circ f\vert_{\Delta(K)}=\phi\circ \pi.
	$$

Furthermore, the semi-conjugation $\pi$ is injective over the periodic orbits of $K$, except for the finite number of periodic $s,u$-boundary points..
\end{theo}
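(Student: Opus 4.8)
The plan is to realize $S'$ as an explicit quotient of the domain $\Delta(K)$ obtained by crushing the ``wandering'' complementary regions of the two laminations of $K$, and to check that on this quotient the collapsed laminations become transverse measured foliations that are uniformly stretched. First I would fix a Markov partition $\cR=\{R_i\}$ of $(f,K)$ contained in a domain $\Delta(K)$, refining it once (Remark~\ref{Rema: Convencion sobre matriz binaria}) so that its geometric type $T$ has a binary incidence matrix. Since $K$ realizes $T$ as a saddle-type basic piece of a Smale diffeomorphism of a compact surface, Theorem~\ref{Theo: finite genus iff realizable} gives that $T$ has no double boundary and $\mathrm{gen}(T)<\infty$; the hypothesis that $K$ has no impasse translates, via Theorem~\ref{Theo: Geometric and combinatoric are equivalent}, into the absence of a combinatorial impasse for $T$. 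By Theorem~\ref{Theo: Presentation in a domain} (applied to $K$ and to the maximal invariant set of an affine realization of $T$, which exists by Proposition~\ref{Prop: Types transitive have a realization}) I may replace $(f,\Delta(K))$ by the topologically conjugate piecewise affine model attached to $T$, on which the representative has derivative $\mathrm{diag}(\lambda,\lambda^{-1})$ on each rectangle for some $\lambda>1$; in that model the stable (resp.\ unstable) lamination of $K$ is a Cantor set times an interval in each rectangle chart, and the horizontal and vertical coordinate measures give canonical non-atomic transverse measures $\mu^{u}$, $\mu^{s}$ multiplied by $\lambda$, $\lambda^{-1}$ under the dynamics.

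Next I would define the collapsing relation $\sim$ on $\Delta(K)$: identify $x\sim y$ whenever $x$ and $y$ lie on a common unstable arc whose interior is disjoint from $W^{s}(K)$ (this crushes each complementary band of the stable lamination onto a single unstable arc), and symmetrically along stable arcs disjoint from $W^{u}(K)$; let $\sim$ be the equivalence relation generated, set $S':=\Delta(K)/\sim$, and let $\pi$ be the projection. The core of the argument is then threefold. First, \emph{$\sim$ is a closed relation and $S'$ is a closed surface of finite genus}: hyperbolicity forces the complementary bands to shrink geometrically under iteration, and up to the action of $f$ there are only finitely many of them, indexed by the horizontal and vertical stripes of $\cR$ (cf.\ the ribbon structure of the realizer, Proposition~\ref{Prop: properties ribbons}); together with compactness of $\Delta(K)$ this makes $\pi$ a closed quotient map onto a Hausdorff space whose fibres are points, arcs, or finite trees of arcs, so a local analysis of the rectangle charts (collapsing the Cantor gaps in both directions) exhibits a disk neighbourhood of each $\pi(x)$. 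The boundary components of $\Delta(K)$, being cyclic alternations of $s$- and $u$-arcs through $s,u$-boundary periodic points (Lemma~\ref{Lemm: Boundary of Markov partition is periodic}), are entirely collapsed, so $S'$ has no boundary, and finiteness of its genus follows from $\mathrm{gen}(T)<\infty$. Second, \emph{$\phi$ is well defined and pseudo-Anosov}: since $f$ carries complementary bands to complementary bands it descends to $\phi:S'\to S'$ with $\pi\circ f\vert_{\Delta(K)}=\phi\circ\pi$; the images $\pi_{*}W^{s}(K)$ and $\pi_{*}W^{u}(K)$ are singular foliations carrying the push-forward transverse measures $\pi_{*}\mu^{u}$, $\pi_{*}\mu^{s}$ (still non-atomic, since the collapsed gaps have measure zero transversally), and $\phi$ multiplies these by $\lambda^{\pm1}$; the singular points are precisely the images of the boundary periodic points and of the finitely many interior periodic points seated at a corner of several rectangles, the local model being a $k$-prong with $k\geq 1$ the cyclic number of rectangle-corners meeting there. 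Third, this is the place where the hypothesis is indispensable: an impasse would be a complementary disk bounded by one $s$-arc and one $u$-arc, and crushing it would glue a stable half-leaf to an unstable half-leaf, destroying transversality (equivalently producing a disk leaf or a closed leaf); the no-impasse condition guarantees $\pi_{*}W^{s}(K)\pitchfork\pi_{*}W^{u}(K)$ off the singular set and that no leaf is closed, so $\phi$ is a generalized pseudo-Anosov homeomorphism.

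Finally, for the last assertion: two points of $K$ are $\sim$-identified only if they are the endpoints of an $s$- or a $u$-arc, hence only if each of them lies in $\partial^{s}\cR\cup\partial^{u}\cR$, i.e.\ on the stable or unstable manifold of a boundary periodic point. A periodic point of $K$ can therefore be identified with another point only if it is itself one of the finitely many $s,u$-boundary periodic points, and on the complement of that set $\pi$ is injective. The step I expect to be the main obstacle is the local analysis inside the first and second points above — proving rigorously that $\Delta(K)/\sim$ is a topological surface with only $k$-pronged singularities and no worse local structure. This is where one genuinely needs the finiteness of the Markov combinatorics, the no-double-boundary property, and the no-impasse hypothesis simultaneously, to exclude the degenerate collapsed pictures (a ``pinched bigon'', a collapsed annulus, or a disk leaf), and it is the least formal part of the argument.
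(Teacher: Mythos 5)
Your general strategy (collapse the complementary regions of the two laminations of $K$ and check that the collapsed laminations become transverse measured foliations) is exactly the idea behind the cited Bonatti--Jeandenans result, which the paper does not reprove but quotes and then summarizes via the relation $\sim_R$ of Definition~\ref{Defi: sim R equiv relation}. However, your equivalence relation is genuinely too small, and the gap sits precisely at the place you flag as ``the least formal part''. The relation generated by identifying points along $u$-arcs whose interior misses $W^s(K)$ and $s$-arcs whose interior misses $W^u(K)$ never touches the regions $C(p)$ bounded by the free separatrices of corner points: these components of $\Delta(K)\setminus\delta(K)$ are disjoint from both laminations, so no arc identification enters them, and they survive in your quotient as open disks carrying the punctures of $\Delta(K)$ --- the quotient stays non-compact. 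Worse, two distinct periodic points $p_1\neq p_2$ of a cycle are never related by any finite chain of your arc identifications (the $u$-arcs join points of $W^s_1(p_1)\cap K$ to points of $W^s_1(p_2)\cap K$, accumulating on the periodic points but never having them as endpoints), so your claim that ``$\sim$ is a closed relation'' fails exactly along the cycles and $\Delta(K)/\sim$ is not Hausdorff there. This is not a technicality one can wave away: the whole point of the cycle classes $\mathcal{P}(\mathcal{C})$ in Definitions~\ref{Defi: relation cycle}--\ref{Defi: P(C) region of cycle} (cycle points, their free separatrices, \emph{and} the disks $C(p)$, crushed to a single point) is that they produce the spines and $k$-prong singularities of $\phi$; without them there is no generalized pseudo-Anosov on a closed surface, and the no-impasse hypothesis enters first here, through Lemma~\ref{Lemm: well define neighboor } (well-definedness of adjacent separatrices) and the exclusion of infinite chains of arcs, not merely through ``transversality after collapse''.

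A second, more local flaw: in the affine model the ``horizontal and vertical coordinate measures'' are the wrong transverse measures. The gaps of the lamination have \emph{positive} Lebesgue measure in the transverse coordinate, so after collapsing them your pushed-forward measure acquires atoms at every collapsed gap (and may even vanish on the Cantor directions); your parenthetical ``the collapsed gaps have measure zero transversally'' is false for these measures. The construction needs the Margulis (eigenvector) measures $\upsilon^{s},\upsilon^{u}$ supported on the lamination, which is exactly what is used in Lemma~\ref{Lemm: projection of rectangle by pi} to parametrize $\pi(\cO)$ and to obtain the non-atomic invariant measures of $\phi$. Your final paragraph on injectivity over non-boundary periodic orbits is fine, but it relies on the corrected relation: with the classes of Definition~\ref{Defi: equivalen clases sim-r} (singletons, minimal rectangles, arcs, and the sets $\mathcal{P}(\mathcal{C})$) the statement follows as you say, whereas with your generated relation the assertion about the quotient being a surface, which it presupposes, is not available.
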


The surface $S'$ has an induced orientation through $\pi$, we are going to clarify this point in Lemma \ref{Lemm: S is oriented} . Using the function $\pi$ and the orientation of $S'$, we will establish a more detailed version of Theorem \ref{Theo: Basic piece projects to pseudo-Anosov} because we aim to compare the geometric types of the basic piece and the pseudo-Anosov homeomorphism. We have defined the induced partition Markov partition by an orientation-preserving homeomorphism in the context of pseudo-Anosov homeomorphisms. 
There is a similar construction using $\pi$ that produces the induced Markov partition, which we will refer to in the following Proposition. We will provide more information about this concepts before starting the proof, but let's present the statement first.

\begin{prop}\label{Prop: type of basic piece is type of pseudo-Anosov}
	 Let $f:S\rightarrow S$ be a Smale surface diffeomorphism. Let $K$ be a mixing saddle-type basic piece of $f$ without impasse, and $\mathcal{R}=\{R_i\}_{i=1}^n$ be a geometric Markov partition of $K$ of geometric type $T$. Let $\pi:\Delta(K) \rightarrow S'$ be the projection, and let $\phi$ be the generalized pseudo-Anosov homeomorphism described in Theorem \ref{Theo: Basic piece projects to pseudo-Anosov}.
	 Then, the induced partition $\pi(\mathcal{R})=\{\pi(R_i)\}_{i=1}^n$ is a geometric Markov partition of $\phi$ of geometric type $T$.
\end{prop}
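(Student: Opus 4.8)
The plan is to carry the Markov structure across the semi-conjugacy $\pi:\Delta(K)\to S'$ and check that nothing is lost or duplicated, then to track the geometrization (vertical directions and the permutation $\Phi_T$) through $\pi$. First I would record the basic properties of $\pi$ needed throughout: $\pi$ is continuous, surjective onto $S'$, commutes with the dynamics ($\pi\circ f|_{\Delta(K)}=\phi\circ\pi$), and — crucially — is injective off the finitely many periodic $s,u$-boundary points of $\cR$, which are exactly the points where the stable/unstable laminations of $K$ are ``collapsed'' to produce the prongs of $\phi$. I would quote from \cite[Chapter 8]{bonatti1998diffeomorphismes} that $\pi$ restricted to the complement of the (pre-images of the) periodic boundary leaves is a homeomorphism onto its image, and that $\pi$ sends stable (resp. unstable) arcs of $K$ into stable (resp. unstable) leaves of $\phi$, contracting/expanding them compatibly. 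This is what makes $\pi(R_i)$ a rectangle adapted to $\phi$ in the sense of Definition \ref{Defi: Rectangle}: the image of the horizontal (stable) foliation of $R_i$ is the horizontal foliation of $\pi(R_i)$, likewise vertically, and $\pi|_{\overset{o}{R_i}}$ is a homeomorphism onto $\overset{o}{\pi(R_i)}$ because the interior of a Markov rectangle meets no periodic boundary leaf of any other rectangle. I would also need Lemma \ref{Lemm: S is oriented} to fix an orientation on $S'$ making $\pi$ locally orientation-preserving on rectangle interiors, so that ``geometric rectangle'' structure transfers.

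Next I would verify the four conditions of Definition \ref{Defi: Markov partition} for $\pi(\cR)=\{\pi(R_i)\}$. Covering ($S'=\bigcup\pi(R_i)$) is immediate from surjectivity of $\pi$ and $\Delta(K)$ being covered, up to closure, by the $R_i$ together with the strips removed in forming the domain — here I would appeal to the fact that $\pi$ kills precisely the ``embryonic'' separatrices and the disks/strips of $\delta(K)$, so $\pi(\bigcup R_i)=S'$. Disjointness of interiors: if $\overset{o}{\pi(R_i)}\cap\overset{o}{\pi(R_j)}\neq\emptyset$ with $i\neq j$, a point in the intersection has two $\pi$-preimages in $\overset{o}{R_i}$ and $\overset{o}{R_j}$ respectively, hence lies on a collapsed periodic boundary leaf — but such leaves meet no rectangle interior, contradiction. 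The Markov property (items iii, iv): since $\pi$ intertwines $f$ and $\phi$ and is a homeomorphism on rectangle interiors away from a measure-zero collapsed set, the connected components of $\phi(\overset{o}{\pi(R_i)})\cap\overset{o}{\pi(R_j)}$ are exactly the $\pi$-images of the components of $f(\overset{o}{R_i})\cap\overset{o}{R_j}$, each of which is a vertical subrectangle of $R_j$; its image is a vertical subrectangle of $\pi(R_j)$ because $\pi$ preserves the vertical foliation. The count of components is preserved: $\pi$ cannot merge two distinct vertical subrectangles of $R_j$, since distinct components are separated by an interior vertical leaf of $R_j$, which $\pi$ does not collapse. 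This gives a bijection between horizontal subrectangles $H^i_j$ of $(f,\cR)$ and those of $(\phi,\pi(\cR))$, and likewise for verticals, under which $h_i,v_i$ are unchanged and $A(T(\phi,\pi(\cR)))=A(T)$.

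Finally I would pin down the geometric type. Endow $\pi(R_i)$ with the vertical direction pushed forward by $\pi$ from the chosen vertical direction on $R_i$ (well-defined after Lemma \ref{Lemm: S is oriented}, since $\pi$ is orientation-preserving on interiors and change-of-parametrization is handled by Definition \ref{Defi: equivalent parametrizations}), and order the subrectangles by the induced labelling from Convention \ref{Defi: Label and geometrization of subrectangles}. Under the bijection $H^i_j\leftrightarrow \pi(H^i_j)$ the bottom-to-top order is preserved because $\pi$ respects the vertical direction; similarly left-to-right for $V^k_l$. Then $\phi(\pi(H^i_j))=\pi(f(H^i_j))=\pi(V^k_l)$, so $\rho_{T(\phi,\pi(\cR))}(i,j)=(k,l)=\rho_T(i,j)$; and $\phi$ preserves the vertical direction of $\pi(H^i_j)$ iff $f$ preserves the vertical direction of $H^i_j$ (again because $\pi$ transports vertical orientation), so $\epsilon_{T(\phi,\pi(\cR))}=\epsilon_T$. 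Hence $T(\phi,\pi(\cR))=T$, which is the claim. The main obstacle I anticipate is the bookkeeping around the collapsed set: making airtight the assertions that $\pi$ is injective on rectangle interiors and does not merge adjacent subrectangles — i.e. that the only identifications $\pi$ performs are along the periodic boundary leaves of $\cR$, never in the interior — which is where the ``no impasse'' hypothesis and the precise construction of $\Delta(K)$ and $\pi$ in \cite{bonatti1998diffeomorphismes} must be invoked carefully rather than taken for granted.
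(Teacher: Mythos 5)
Your overall skeleton (images of rectangles are rectangles, covering, disjoint interiors, Markov property via the semi-conjugacy, a bijection of sub-rectangles preserving order and orientation, hence equality of types) matches the structure of the paper's argument, but the justification rests on a premise that is false, and the steps that depend on it do not survive. You assert that $\pi$ is injective off the periodic $s,u$-boundary leaves, that $\pi|_{\overset{o}{R_i}}$ is a homeomorphism onto $\overset{o}{\pi(R_i)}$, and that ``the only identifications $\pi$ performs are along the periodic boundary leaves of $\cR$, never in the interior.'' This is not what the Bonatti--Jeandenans construction does. $K$ is a saddle-type basic piece, transversally a Cantor set, and the equivalence relation $\sim_R$ (Definition \ref{Defi: equivalen clases sim-r}) collapses, besides the cycle regions, every \emph{minimal rectangle} and every arc of a non-boundary invariant manifold --- i.e.\ all the complementary gaps of the laminations, which are abundant inside the interior of every Markov rectangle. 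Theorem \ref{Theo: Basic piece projects to pseudo-Anosov} only guarantees injectivity of $\pi$ on the periodic orbits of $K$ other than the finitely many boundary ones; it does not give a homeomorphism on rectangle interiors, and indeed $\pi$ is massively non-injective there. Consequently your argument for disjointness of interiors (``a point with two preimages lies on a collapsed periodic boundary leaf''), your claim that $\pi(R_i)$ is a rectangle ``because $\pi$ preserves the foliations,'' and your separation argument for sub-rectangles (``distinct components are separated by an interior vertical leaf of $R_j$, which $\pi$ does not collapse'') all invoke properties $\pi$ does not have. In fact $\pi$ typically collapses the entire vertical stripe between two consecutive vertical sub-rectangles onto a single leaf; what prevents the two sub-rectangles from merging is not absence of collapsing but the fact that the collapsed stripe, being disjoint from $f(\overset{o}{\cR})$, does not land in $\phi(\overset{o}{\pi(R_i)})$, so the two images remain distinct components of the open intersection.

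The paper's proof fills exactly these holes by working directly with the equivalence classes: the interior $\overset{o}{R_i}$ is shown to project to an open rectangle via the Margulis-measure parametrization of Lemma \ref{Lemm: projection of rectangle by pi}; a genuine argument (Lemma \ref{Lemm: pi(R) is a rectangle}) is needed to extend that parametrization continuously to the closed rectangle and identify $\Psi(H)=\pi(R_i)$; disjointness of interiors (Lemma \ref{Lemm: disjoint interiors}) is obtained by checking, case by case on the type of class (singleton, minimal rectangle, arc, cycle), that the whole equivalence class of an interior point stays in $\overset{o}{R_i}$ --- not by injectivity; covering (Lemma \ref{Lemm: the rectangles pi R cover S}) follows from the simpler observation that every class contains a point of $K\subset\cup R_i$; and the bijection of sub-rectangles (Lemma \ref{Lemm: pi send sub-rec in sub rec}) is proved by identifying $\pi(\overset{o}{H^i_j})$ with a connected component of $\overset{o}{\pi(R_i)}\cap\phi^{-1}(\overset{o}{\pi(R_k)})$ and reusing the disjointness argument. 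Your final bookkeeping for $\rho_T$ and $\epsilon_T$ is fine once those facts are available, but as written the proposal takes them for granted on the strength of an injectivity statement that the construction simply does not provide, so the proof has a genuine gap at its foundation.
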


The proof requires two main results. First, Proposition \ref{Prop: pi cR is Markov partition} demonstrates that $\pi(\cR)$ is a Markov partition for $\phi$. Under certain natural assumptions regarding the orientation of the rectangles in $\pi(\cR)$,  $\pi(\cR)$ possesses a well-defined geometric type. Subsequently, Lemma \ref{Lemm: pi send sub-rec in sub rec} establishes the relationships between the vertical and horizontal sub-rectangles of $\cR$ and $\pi(\cR)$. Finally in the sub subsection \ref{Subsec: Proof Proposition} we provide a concise proof of Proposition \ref{Prop: type of basic piece is type of pseudo-Anosov}.

%%%%%%%%%%%%%
The Bonatti-Jeandenans Theorem \ref{Theo: Basic piece projects to pseudo-Anosov} contains all the ideas that we are going to use achieve Proposition \ref{Prop: type of basic piece is type of pseudo-Anosov}. This imposes the task of explaining how the function $\pi$ collapses the invariant laminations of $f$. In a simplified manner, the strategy in the proof of Bonatti-Jeandenans can be divided into the following parts:

\begin{itemize}
\item An equivalence relation $\sim_{R}$ is defined on the domain of $K$, $\Delta(K)$.

\item It is proven that the quotient space of $\Delta(K)$ by $\sim_{R}$ is a compact surface $S'$, and the diffeomorphism $f$ induces a generalized pseudo-Anosov homeomorphism $\phi$ on $S'$.

\item The function $\pi:S\rightarrow S'$  associates every point in $S$ with its equivalence class, i.e., it is the canonical projection.

\item  It is proved that $\pi$ is a semi-conjugation with the desired properties.
\end{itemize}

Understand the equivalence relation $\sim_{R}$ is to understand the function $\pi$. For this reason, we will explain how they work. The facts about $K$ and its invariant manifolds that were proven in \cite[Proposition 2.1.1]{bonatti1998diffeomorphismes} and recalled in Proposition \ref{Prop: sub-boundary points 2.1.1} will be especially useful at this point.
 
 \subsubsection{The equivalent relation $\sim_{R}$ described by parts.}
 
Assume that $f:S\rightarrow S$ is a Smale surface diffeomorphism and $K$ is a non-trivial, mixing, saddle-type basic piece of $f$ without double boundary points nor impasse. These are the only general assumptions in this section. 
 
 \subsubsection{Neighbor points and adjacent separatrices}
 
 In Figure \ref{Fig: Neigboor}, there are four periodic points of a saddle type basic piece $K$ with non-double boundaries, let us explain the notations:
 
 \begin{itemize}
\item[i)] The point $p_1$ is a $s$-boundary point but not a $u$-boundary point, while the point $p_3$ is a $u$-boundary point but not a $s$-boundary point.

\item The points $p_2$ and $p_3$ are $s$ and $u$ boundary points, respectively; we refer to them as \emph{corner points}.

\item The non-free separatrices of these periodic points are labeled as $W^s_{1,2}(p_1)$, $W^{s,u}_1(p_2)$, $W^u_{1,2}(p_3)$, and $W^{s,u}_1(p_4)$.

\item The unstable intervals $J_{p_1,p_2}(1,1)$ and $J'_{p_1,p_2}(1,1)$ are $u$-arches that have one endpoint in $W^s_1(p_1)$ and the other in $W^s_1(p_2)$. Similarly, $I_{p_3,p_4}(2,1)$ and $I'_{p_3,p_4}(2,1)$ are $s$-arcs that connect $W^u_2(p_3)$ with $W^u_1(p_4)$.

\item The rest of the arcs follows the same login in their notations.

 \end{itemize}

We are going  to discuss the notion of \emph{neighborhood point}. Take a $s$-boundary periodic point of $K$, in our example we take $p_1$ and let $x\in W^s_1(p_1) \cap K$. Like $x$ is not a $u$-boundary point, there exist a $u$-arc, like $J_{p_1,p_2}(1,1)$, with one end point in $x$ and the other in a point $y\in W^s_1(p_2)$, where $p_2$ is a $s$-boundary point and $W^s_1(p_2)$ is non-free separatrice. In this setting,  $p_2$ is what we call a \emph{neighbor point} of $p_1$ and $W^s_1(p_2)$ is an \emph{adjacent separatrice}  of $W^s_1(p_1)$.

This procedure can be applied to any $u$ or $s$ periodic boundary point. For example $p_2$ is a $u$-boundary point. Therefore if $x\in W^u_1(p_2)\cap K$ there is a $s$-arc, $I_{p_2,p_3}(1,1)$, that have and end point in $x$ and the other in a unstable separatrice of a $u$-boundary point, in the case of the picture such point is $p_3$, we said that $p_3$ is a neighbor point of $p_2$ and $W^u_1(p_2)$ is an adjacent separatrice of $W^u_1(p_2)$.

In general, let $p$ a $s$ (or $u$)-boundary point and $W^s_1(p)$ ( resp. $W^u_1(p)$) a non-free stable (unstable) separatrice. There are two properties that we what to establish:
\begin{enumerate}
\item The adjacent separatrice of $W^s_1(p)$ (resp. $W^u_1(p)$) is unique, and 
\item The adjacent separatrice of $W^s_1(p)$ (resp. $W^u_1(p)$) is different than $W^s_1(p)$ (resp. $W^u_1(p)$).
\end{enumerate}

Since the basic piece $K$ does not have an impasse, there cannot be a $u$-arc joining two points in the same stable separatrice. As a result, the adjacent separatrice of $W^s_1(p)$ cannot be $W^s_1(p)$ itself. However, it is possible that the adjacent separatrice of $W^s_1(p)$ is the other stable separatrice of $p$. We will soon see that this configuration leads to the formation of a spine.  The adjacent separatrices will be well determined  if, for every pair of points $x_1$ and $x_2$ on the same stable separatrice $W^u_1(p)$, the $u$-arcs (see $J_1$ and $J_2$ in Figure \ref{Fig: Neigboor}) that pass through $x_1$ and $x_2$ have their other endpoints in the same stable separatrice.

\begin{figure}[h]
	\centering
	\includegraphics[width=0.8\textwidth]{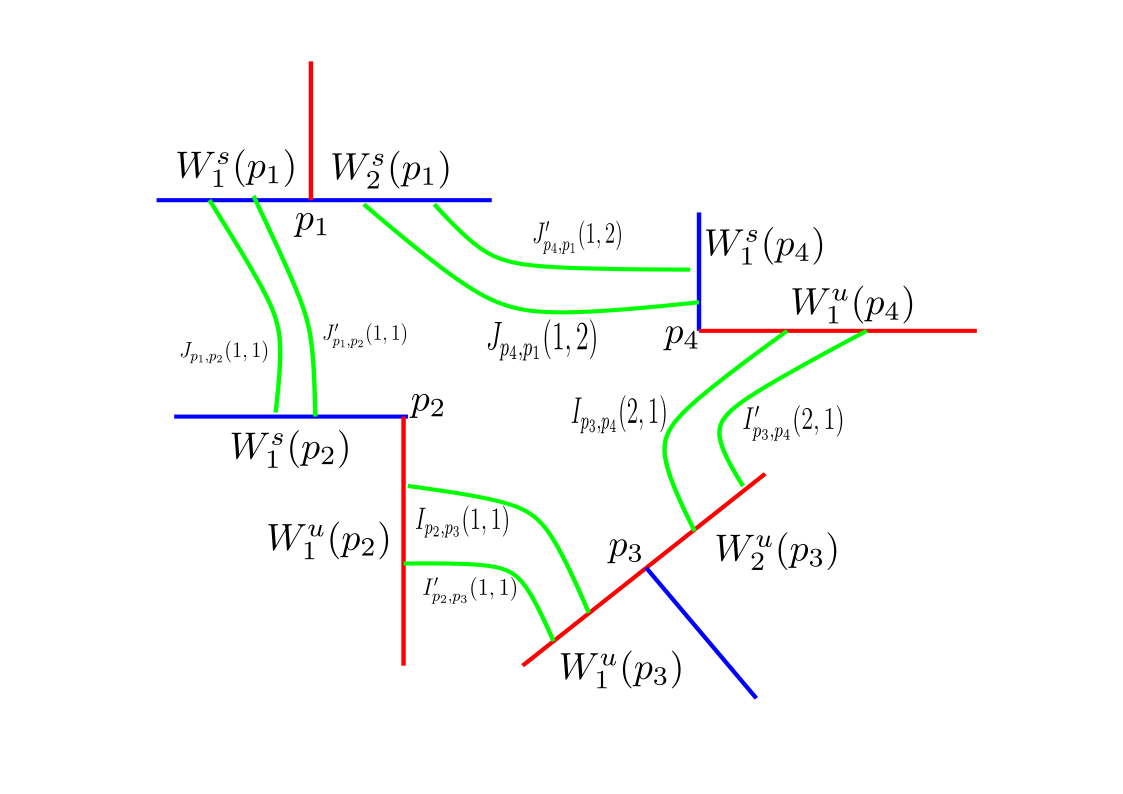}
	\caption{ Neighbors points.}
	\label{Fig: Neigboor}
\end{figure}

These assertions were proven in \cite[Proposition 2.4.9]{bonatti1998diffeomorphismes}. The hypothesis that $K$ does not have an impasse is crucial (see \cite[Definition 2.3.2]{bonatti1998diffeomorphismes} for understanding the statement). We summarize this discussion in the following lemma.

\begin{lemm}\label{Lemm: well define neighboor }
Let $K$ be a saddle-type basic piece without an impasse. If a $u$-arc $J_1$ joins two $s$-boundary stable separatrices $W^s_1(p_1)$ and $W^s_2(p_2)$, then:
\begin{itemize}
\item[i)] The separatrices are different, i.e., $W^s_1(p_1) \neq W^s_2(p_2)$.
\item[ii)]  If $J_2$ is any other $u$-arc with one endpoint in $W^s_1(p_1)$, then the other endpoint of $J_2$ is in $W^s_2(p_2)$.
\end{itemize} 
\end{lemm}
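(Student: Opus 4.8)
The statement to prove is Lemma~\ref{Lemm: well define neighboor }, which in fact is exactly the content of \cite[Proposition 2.4.9]{bonatti1998diffeomorphismes}, so the strategy is to recall the structure of that argument and adapt notation. The key dynamical input is the description of $s,u$-boundary points and their non-free separatrices that we have already invoked as Proposition~\ref{Prop: sub-boundary points 2.1.1} (the fact that a non-free stable separatrix $W^s_1(p)$ meets $K$ in a set that accumulates on every point of $W^s_1(p)$ from at least one side, together with the density of unstable leaves of $K$ in $K$), plus the \emph{no impasse} hypothesis, which forbids any open disk bounded by a $u$-arc and an $s$-arc disjoint from $K$.

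First I would prove item $(i)$ by contradiction. Suppose a $u$-arc $J_1$ has both endpoints on the same stable separatrix $W^s_1(p_1)$; call the endpoints $x_1,x_1'$. The sub-interval $[x_1,x_1']^s\subset W^s_1(p_1)$ together with $J_1$ bounds a region; I would argue, using that $x_1,x_1'\in K$ and the interior of $J_1$ is disjoint from $K$, that this region contains no points of $K$ in its interior on the $J_1$-side (here one uses that an extremal $u$-arc is saturated by unstable leaves of $K$ on the side opposite the disk, cf.\ Remark~\ref{Rema: extremal arc}), and hence one obtains precisely a topological impasse in the sense of Definition~\ref{Defi: Impasse geo}, contradicting the hypothesis. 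The only subtlety is making sure the curve $[x_1,x_1']^s\cup J_1$ actually bounds a disk; this is handled the same way as in the proof of Theorem~\ref{Theo: Geometric and combinatoric are equivalent}, using finite genus of the ambient surface to rule out a non-null-homotopic configuration, or more directly by appealing to \cite[Corollary 2.4.8]{bonatti1998diffeomorphismes}.

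Next, for item $(ii)$, let $J_1$ join $W^s_1(p_1)$ to $W^s_2(p_2)$ with endpoints $x\in W^s_1(p_1)$, $y\in W^s_2(p_2)$, and let $J_2$ be another $u$-arc with one endpoint $x'\in W^s_1(p_1)$ and other endpoint $y'$ on some stable separatrix $W^s(p')$. The idea is to slide along $W^s_1(p_1)$ from $x$ to $x'$: consider the stable segment $[x,x']^s\subset W^s_1(p_1)$. The union of $[x,x']^s$, $J_1$, a stable segment in $W^s_2(p_2)$, and $J_2$ would, if $y'$ were on a separatrix other than $W^s_2(p_2)$, enclose a region whose boundary analysis forces either an impasse or a violation of the ``no double boundary'' hypothesis on $K$ — concretely, one shows the $u$-leaves of $K$ accumulating on $[x,x']^s$ must have their opposite endpoints accumulating on a single stable separatrix by a connectedness/continuity argument along $[x,x']^s$, so the target separatrix cannot jump. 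I would phrase this as: the map sending a point $z\in W^s_1(p_1)\cap K$ to the stable separatrix containing the far endpoint of the $u$-arc through $z$ is locally constant along $W^s_1(p_1)\cap K$, hence constant, since $W^s_1(p_1)\cap K$ is ``connected enough'' (its closure is the whole separatrix and the far-endpoint assignment extends continuously).

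The main obstacle I anticipate is the second point: turning the intuitive ``the far endpoint cannot jump between separatrices'' into a rigorous statement requires care about which side of $W^s_1(p_1)$ the set $K$ accumulates on, and about the fact that there are only finitely many $s$-boundary separatrices, so that a jump would create infinitely many disjoint non-isotopic $u$-arcs or, alternatively, an impasse. Since all of this is already carried out in \cite[Proposition 2.4.9, Corollary 2.4.8]{bonatti1998diffeomorphismes}, the honest and efficient route is to state Lemma~\ref{Lemm: well define neighboor } as a direct citation, reproducing only the impasse-exclusion step ($(i)$) in detail because it is short and illustrative, and referring to Bonatti--Jeandenans for the uniqueness step $(ii)$. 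This keeps the section self-contained at the level of statements while not re-deriving a substantial piece of \cite{bonatti1998diffeomorphismes}.
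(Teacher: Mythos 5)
Your proposal matches what the paper actually does: the lemma is stated as a summary of \cite[Proposition 2.4.9]{bonatti1998diffeomorphismes} (with \cite[Corollary 2.4.8]{bonatti1998diffeomorphismes} and the no-impasse hypothesis doing the work), and no independent proof is given in the text. Your supplementary sketches of items $(i)$ and $(ii)$ are reasonable but not needed, since the paper, like you ultimately recommend, simply cites Bonatti--Jeandenans.
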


\begin{defi}
Let $p$ and $q$ be periodic boundary points of the same nature, i.e. $p$ is $s$-boundary (resp. $u$-boundary) if and only if $q$ is $s$-boundary (resp. $u$-boundary) . They are \emph{neighbor points} if one of the following conditions is satisfied:
\begin{enumerate}
\item They are $s$-boundary points and have non-free stable separatrices $W^s_1(p)$ and $W^s_2(q)$, with $W^s_1(p) \neq W^s_2(q)$, for which there exists a $u$-arc with one endpoint in $W^s_1(p)$ and the other in $W^s_2(q)$. In this case, we say that $W^s_1(p)$ is the \emph{adjacent separatrice} of $W^s_2(q)$.

\item They are $u$-boundary points and have unstable separatrices $W^u_1(p)$ and $W^u_2(q)$, with $W^u_1(p) \neq W^u_2(q)$, for which there exists an $s$-arc with one endpoint in $W^u_1(p)$ and the other in $W^u_2(q)$. In this case, we say that $W^u_1(p)$ is the \emph{adjacent separatrice} of $W^u_2(q)$.
\end{enumerate}

\end{defi}

\subsubsection{The cycles}

Next, we define an equivalence relation on the set of periodic $s$-boundary points and $u$-boundary points.

\begin{defi}\label{Defi: relation cycle}
Let $p$ and $q$ two boundary points of $K$ we have the next relations:
 \begin{itemize}
\item  If $p$ and $q$ are $s$-boundary points, then $p$ and $q$ are $\sim_s $-related if they are neighbors points.
\item  If $p$ and $q$ are $u$-boundary points, then $p$ and $q$ are $\sim_u $-related if they are neighbors points.
\item The points $p$ and $q$ are $\sim_c$-related if there is a sequence of boundary periodic points $\{p = p_0, \ldots, p_{k-1} = q\}$ such that $p_i \sim_s p_{i+1}$ or $p_i \sim_u p_{i+1}$ for $i \in \mathbb{Z}/k$ (the integers modulo $k$).
 \end{itemize}
\end{defi}

Is not difficult to see that the relation $\sim_c$ is reflexive, symmetric and transitive, therefore the next lemma is immediate.

\begin{lemm}\label{Lem: equiv cycle}
	The relation $\sim_c$ is a equivalence relation in the finite set of boundary periodic points of $K$, and every equivalent $\mathcal{C}$ class is called a  \emph{cycle}. We denote $\mathcal{C}=\{p_1, \ldots, p_k\}$ to indicate that $p_1\sim_{s,u}p_{i+1}$, in this manner we give an order to the cycle.
\end{lemm}

It might not be immediately apparent that if we follow the cycle, we return to the original point, i.e.,  $\mathcal{C}=\{p_i: i\in \ZZ/k \}$, and in particular $p_1=p_k$. However, this is proven in \cite[Lemma 8.3.4]{bonatti1998diffeomorphismes}. Another important feature is that when we follow the adjacent separatrices that determine the cycle, we always return to the same stable or unstable separatrice. In other words, the cycle is closed, this property is ensured by \cite[Lemma 8.3.5]{bonatti1998diffeomorphismes}, which proves that the number of corner points in a cycle is even. Thus, if we start with a stable separatrice, we will end with a stable separatrice of the same point.

In \cite[Proposition 3.1.2]{bonatti1998diffeomorphismes}, it is proven that every connected component of $\Delta(K)\setminus \delta(K)$ (See definitions \ref{Defi: Domain of K} and \ref{Defi: restricted domain} resp.) is homeomorphic to $\mathbb{R}^2$ and is periodic under the action of the diffeomorphism $f$. The boundary of each connected component has two possibilities: it is either the union of two free separatrices of a corner point or it consists of an infinite chain of arcs and their closures. The possibility of having an infinite chain of arcs is excluded by \cite[Proposition 2.6.4, Lemma 2.6.7]{bonatti1998diffeomorphismes}, in conjunction with the fact that there are no impasses in $K$. Consequently, every curve formed by a corner point and its free separatrices corresponds to the boundary of a connected component of $\Delta(K)\setminus \delta(K)$. Let $p$ be a corner point. Based on the previous observations, we can define a set $C(p) \subset \Delta(K)$ as the connected component of $\Delta(K)\setminus\delta(K)$ that is bounded by the (stable or unstable) free separatrices of $p$.

\begin{defi}\label{Defi: P(C) region of cycle}
Let $\mathcal{C} = \{p_1, \ldots, p_k\}$ be a cycle. We denote $\mathcal{P}(\mathcal{C})$ as the union of points in the cycle together with their free separatrices, and the sets $C(p)$ for all the corner points in the cycle.
\end{defi}

\subsubsection{Four different equivalent classes}

Now we can define certain families in $\Delta(K)$ that will serve as models for the equivalence classes for the relation $\sim_R$. They are:

\begin{defi}\label{Defi: equivalen clases sim-r}
		Let $\Delta(K)$ be the domain of the basic piece $K$, consider the four distinguished types of subsets of the domain of $K$.
\begin{itemize}
	\item[i)] The \emph{singletons} $\{x\in K: \, x \text{ is not a boundary point}\}$.
	
	\item[ii)] The \emph{minimal rectangles}, i.e., rectangles whose boundary consists of two unstable arcs and two stable arcs.
	
	\item[ii)] The arcs included in a non-boundary invariant manifold.
	
	\item[iii)] The sets $\cP(C)$, where $C$ is a cycle.
\end{itemize}
\end{defi}

By considering all sets of the form described above, we obtain a partition of $\Delta(K)$ that is invariant under $f$ as its proved in \cite[Lemma 8.3.7]{bonatti1998diffeomorphismes}. Moreover, this partition enables us to define an equivalence relation ( See \cite[Definition 8.3.8]{bonatti1998diffeomorphismes}).

\begin{defi}\label{Defi: sim R equiv relation}
We define $\sim_{R}$ as the equivalence relation where the classes are the components of $\Delta(K)$ that correspond to one of the four families previously described.
\end{defi}

The surface $S'$ in the Bonatti-Jeandenans Theorem \ref{Theo: Basic piece projects to pseudo-Anosov} is the quotient of $S$ by this equivalence relation,  i.e $S'=S/\sim_R$, and the map $\pi: S \rightarrow S'$ is the projection that maps each point to its equivalence class.

\subsubsection{ The projection of a Markov partition is a Markov partition.}

In this subsection we are going to prove that the projection of the Markov partition $\cR$ by $\pi$ is a Markov partition for $\phi$. For that reason, there are two important aspects to consider: the behavior of rectangles in a Markov partition $(\cR,f)$ under the action of $\pi$, and the behavior of the complement of this Markov partition in $\Delta(K)$. The first situation is summarized in \cite[Lemma 8.4.2]{bonatti1998diffeomorphismes}, which we reproduce below as Lemma \ref{Lemm: projection of rectangle by pi}. Essentially, the lemma states that the interior of a rectangle in the Markov partition projects to the interior of a rectangle under the pseudo-Anosov homeomorphism $\phi$ (see Theorem \ref{Theo: Basic piece projects to pseudo-Anosov}).

\begin{lemm}\label{Lemm: projection of rectangle by pi}
	Let $\cO$ the interior of a rectangle where every boundary component is not isolated in $\cO$ (i.e it boundary is accumulated by $K$ from the inside). The quotient of $\cO$ by the equivalence relation is homeomorphic to  a rectangle $(a_1,a_2)\times (b_1,b_2)\subset \RR^2$. 
	
	Moreover, the stable lamination of $\cO$ given by the segment of stable manifold of $K$, $W^s(K)\cap \cO$ has image the foliation by horizontal lines of the rectangle $(a_1,a_2)\times (b_1,b_2)$ and the unstable lamination of $\cO$ induced by $W^u(K)\cap \cO$ has  image the foliation by vertical lines of such rectangle.
	
	Finally, if the coordinates of the rectangle $(a_1, a_2) \times (b_1, b_2)$ are given by $(s, t)$, then the \emph{Margulis measures} $\upsilon^s$ and $\upsilon^u$ of $f$ pass to induce the measures $dt$ and $ds$, respectively.
	
\end{lemm}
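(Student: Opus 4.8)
\textbf{Proof proposal for Lemma \ref{Lemm: projection of rectangle by pi}.}

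The plan is to exploit the explicit description of the equivalence relation $\sim_R$ given in Definition \ref{Defi: sim R equiv relation} together with the structure of the restricted domain $\delta(K)$ and the complement components $C(p)$. First I would fix the rectangle and a parametrization $\rho:\II^2\to \overline{\cO}$, and I would analyze the trace $\sim_R\!\restriction_{\overline\cO}$ of the equivalence relation on the closed rectangle. The key point is that, by the hypothesis that no boundary component of $\cO$ is isolated in $\cO$, the rectangle is \emph{saturated by $K$} in the sense that $K$ is dense in $\overline\cO$; consequently every point of $\overline\cO$ lies in $\delta(K)$, so the only $\sim_R$-classes meeting $\overline\cO$ are of types (i), (ii) and (iii) from Definition \ref{Defi: equivalen clases sim-r} — singletons, minimal rectangles, and arcs of non-boundary invariant manifolds — and none of type (iv) (no $\cP(C)$-pieces appear, since those are bounded by \emph{free} separatrices of corner points, which cannot accumulate $K$ from the inside of $\cO$). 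This localizes the combinatorics.

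Next I would identify precisely which classes collapse inside $\overline\cO$. A minimal rectangle $Q$ meeting $\overline\cO$ is a maximal rectangle with $\overset{o}{Q}\cap K=\emptyset$ whose four sides are $s$- and $u$-arcs; inside $\cO$ such a $Q$ must have its stable sides running along two leaves of $W^s(K)\cap\cO$ and its unstable sides along two leaves of $W^u(K)\cap\cO$, and collapsing $Q$ amounts to collapsing a product of two gaps (one in the stable transversal direction, one in the unstable transversal direction) to a point. An arc-class that meets $\cO$ and is not reduced to a point is a maximal arc in a non-boundary leaf that does \emph{not} meet $K$ in its interior, i.e. an $s$- or a $u$-arc with endpoints in $K$; collapsing it removes a single gap in one transversal direction. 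Thus on $\overline\cO$ the relation $\sim_R$ is exactly the product of two one-dimensional "collapse-the-gaps" relations: one on the stable transversal arc (collapsing the complementary intervals of $K$), one on the unstable transversal arc. Passing to the quotient of each transversal arc of $K$ by its complementary gaps is the standard Cantor-set-to-interval collapse; choosing a homeomorphism of each quotient onto a real interval and taking the product gives the homeomorphism $\overline\cO/\!\sim_R \;\cong\; [a_1,a_2]\times[b_1,b_2]$ sending horizontal (stable) leaves to horizontal lines and vertical (unstable) leaves to vertical lines. To see this descends to a homeomorphism on the open rectangle and that it is compatible with the quotient topology, I would invoke the closed map lemma (as was done for $\pi_f$ in Subsection \ref{subsec: quotien surface}): $\pi$ restricted to $\overline\cO$ is a continuous surjection from a compact space onto a Hausdorff space, hence closed, hence a quotient map, and the product coordinates are then forced.

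For the last sentence about the Margulis measures, I would use that $\upsilon^s$ and $\upsilon^u$ are non-atomic transverse measures supported on $K$, so each gives zero mass to the complementary gaps; therefore the pushforward of $\upsilon^u$ under the stable-transversal collapse is a well-defined non-atomic measure on $[a_1,a_2]$ with full support, and likewise for $\upsilon^s$ on $[b_1,b_2]$. Finally I would \emph{choose} the collapsing homeomorphisms of the transversals to be precisely the (cumulative) distribution functions of these measures — this is exactly the freedom in the coordinates $(s,t)$ — so that $\upsilon^u$ becomes Lebesgue $ds$ and $\upsilon^s$ becomes $dt$. I expect the main obstacle to be the bookkeeping needed to check that the two one-dimensional collapses genuinely assemble into a product relation on all of $\overline\cO$ (not just away from finitely many leaves) and that the resulting product map is a homeomorphism rather than merely a continuous bijection; this is where the non-isolation hypothesis on the boundary of $\cO$ and the absence of impasses (which rules out the pathological $\cP(C)$-classes) do the essential work, and I would lean on \cite[Lemma 8.3.7, Lemma 8.4.2]{bonatti1998diffeomorphismes} for the parts of this that are already established.
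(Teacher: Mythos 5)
Your construction is essentially the one the paper (following Bonatti--Jeandenans) uses: the paper's map $\psi(m)=(\epsilon\int_0^s d\upsilon^u,\ \nu\int_0^t d\upsilon^s)$ is exactly the pair of cumulative distribution functions you propose for the two transversal collapses, and the paper's key assertion that the fibers of $\psi$ inside $\cO$ coincide with the $\sim_R$-classes is your claim that $\sim_R\!\restriction_{\overline\cO}$ is the product of two one-dimensional gap-collapse relations. Your additional use of the closed map lemma to pass to the quotient topology is a harmless repackaging of the paper's "covering map / quotient" step, so in structure the two arguments match.

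There is, however, one genuinely false assertion in your write-up: for a non-trivial saddle-type basic piece $K$ of a Smale diffeomorphism, $K\cap\overline\cO$ is locally a product of Cantor sets and is nowhere dense, so "$K$ is dense in $\overline\cO$" cannot be the reason that only classes of types (i)--(iii) meet $\overline\cO$. The conclusion you want is true, but it comes from the rectangle's bracket (local product) structure together with the non-isolation hypothesis: every point of $\cO$ has horizontal coordinate in the Cantor trace of the unstable lamination or in the closure of one of its gaps, and likewise vertically, so it lies in $K$, in an $s$- or $u$-arc, or in a gap-times-gap minimal rectangle --- in particular $\cO\subset\delta(K)$, and type-(iv) classes are excluded because the regions $C(p)$ are components of $\Delta(K)\setminus\delta(K)$ bounded by free separatrices, which therefore cannot cross $\cO$. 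The same bracket property is what guarantees that an arc class spans exactly one gap closure (so that your product description of $\sim_R$ is correct, including the case where an arc lies on the boundary of a minimal rectangle and its class is the whole minimal rectangle); this should be said explicitly rather than attributed to density. Finally, note that citing \cite[Lemma 8.4.2]{bonatti1998diffeomorphismes} "for parts already established" is circular here, since that lemma is precisely the statement being proved; the legitimate external inputs are the invariance of the partition into classes (\cite[Lemma 8.3.7]{bonatti1998diffeomorphismes}) and the standard properties of the transverse measures (non-atomic, supported on the laminations, hence giving zero mass to gaps), which is all your distribution-function argument actually needs.
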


The proof relies on an alternative formulation the projection $\pi$ inside these types of rectangles. It begins by considering a parametrization of the rectangle $\overline{\cO}$, $r:[-1,1]^2 \rightarrow \overline{\cO}$ with coordinates $(s,t)$.  Then, a function is defined $\psi:\cO \to (a_1,a_2)\times(b_1,b_2)\subset \RR^2$ as follows:
$$
\psi=(\psi_1,\phi_2)(m)=(\epsilon\int_{0}^{s}d\upsilon^u,\nu \int_{0}^{t} d\upsilon^s).
$$
Where $m \in \cO$ has coordinates $(s,t)$, $\epsilon = +1$ if $s \geq 0$ and $-1$ otherwise, and $\nu = +1$ if $t \geq 0$ and $-1$ otherwise. 

 The pre-image of any point in the rectangle $(a_1,a_2)\times(b_1,b_2)$ under this map coincides with the equivalence classes determined by $\sim_R$ inside $\cO$. Therefore,  $(\psi_1,\phi_2)$  is a covering map. By taking the quotient we obtain a homeomorphism, that intrinsically depends on $r$:
$$
\Psi^{-1}=(\Psi_1,\Psi_2):  \cO/\sim_{R} \rightarrow (a_1,a_2)\times (b_1,b_2)
$$
The homeomorphism has an inverse $\Phi: (a_1,a_2)\times (b_1,b_2) \to \cO/\sim_R$: ,  which serves as a parametrization of $\pi(\cO)$.
The next lemma is implicit in the Bonatti-Jeandenans proof of Lemma \ref{Lemm: projection of rectangle by pi}.

\begin{lemm}\label{Lemm: psi bonatti }
	The function $\psi$ is continuous and respect the parametrization $r$, $\phi_{1,2}$ is  non-decreasing along stable and unstable laminations in the rectangle $\cO$. Even more
	$$
	\pi(\cO)=\Psi( (a_1,a_2)\times (b_1,b_2) ).
	$$
\end{lemm}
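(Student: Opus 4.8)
The statement to prove is Lemma~\ref{Lemm: psi bonatti }, which records three facts about the auxiliary function
$$
\psi = (\psi_1,\psi_2) : \cO \to (a_1,a_2)\times(b_1,b_2), \qquad \psi(m) = \Bigl(\epsilon\int_0^s d\upsilon^u,\ \nu\int_0^t d\upsilon^s\Bigr),
$$
namely: continuity of $\psi$, compatibility with the parametrization $r$ in the sense that each coordinate $\psi_i$ is non-decreasing along the respective lamination, and the identity $\pi(\cO) = \Psi\bigl((a_1,a_2)\times(b_1,b_2)\bigr)$. The overall approach is to unpack each clause directly from the structure of the Margulis measures $\upsilon^{s}, \upsilon^{u}$ and from the definition of the equivalence relation $\sim_R$ recalled in Definition~\ref{Defi: sim R equiv relation}; this is a ``bookkeeping'' lemma extracted from the Bonatti--Jeandenans proof of Lemma~\ref{Lemm: projection of rectangle by pi}, so the work is to make the implicit argument explicit rather than to introduce new ideas.

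First I would establish continuity. Fix the parametrization $r:[-1,1]^2 \to \overline{\cO}$ with coordinates $(s,t)$. The coordinate function $\psi_1(m) = \epsilon\int_0^s d\upsilon^u$ depends only on $s$; since $\upsilon^u$ is a non-atomic (Borel, finite, positive) transverse measure on the stable intervals of $\cO$ — this is exactly the regularity built into the notion of transverse measure and inherited by the Margulis measure — the map $s \mapsto \int_0^s d\upsilon^u$ is continuous and monotone on $[-1,1]$, hence so is $\psi_1$ after multiplying by the sign $\epsilon$ (the two one-sided definitions agree at $s=0$ where the integral vanishes). Symmetrically $\psi_2$ is continuous in $t$. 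Therefore $\psi = (\psi_1,\psi_2)$ is continuous on $\cO$. The monotonicity statement ``$\psi_i$ non-decreasing along stable and unstable laminations'' is then immediate from the same computation: along a leaf of the stable lamination in $\cO$ the coordinate $t$ is locally constant while $s$ ranges monotonically, so $\psi_1$ moves monotonically and $\psi_2$ is constant; along an unstable leaf the roles are exchanged. I would phrase this carefully using the monotone-along-leaves property so that it matches the way the coordinates $(s,t)$ were set up from $r$.

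Next comes the identity $\pi(\cO) = \Psi\bigl((a_1,a_2)\times(b_1,b_2)\bigr)$. Here the key point, already noted in the discussion preceding the lemma, is that the fibers of $\psi$ coincide with the equivalence classes of $\sim_R$ restricted to $\cO$: a fiber $\psi^{-1}(a,b)$ is the set of $m\in\cO$ whose $\upsilon^u$-distance to the central stable leaf is $|a|$ on the appropriate side and whose $\upsilon^s$-distance to the central unstable leaf is $|b|$ — this is precisely an arc of a non-boundary invariant manifold (one of the families in Definition~\ref{Defi: equivalen clases sim-r}) or a single point, which is what the restriction of $\sim_R$ to the interior $\cO$ collapses. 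Consequently $\psi$ descends to the quotient: writing $q:\cO \to \cO/\!\sim_R$ for the canonical map (the restriction of $\pi$ to $\cO$, up to identifying $\pi(\cO)$ with $\cO/\!\sim_R$), there is a well-defined bijection $\Psi^{-1} = (\Psi_1,\Psi_2): \cO/\!\sim_R \to (a_1,a_2)\times(b_1,b_2)$ with $\Psi^{-1}\circ q = \psi$, and $\Psi$ is its inverse. Since $q(\cO) = \pi(\cO)$ and $\psi$ is surjective onto the open rectangle (the image is $(a_1,a_2)\times(b_1,b_2)$ by the choice of $a_i,b_i$ as the total $\upsilon^u,\upsilon^s$ masses to either side), we get $\pi(\cO) = q(\cO) = \Psi^{-1,-1}\bigl((a_1,a_2)\times(b_1,b_2)\bigr) = \Psi\bigl((a_1,a_2)\times(b_1,b_2)\bigr)$, which is the asserted equality.

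The main obstacle I anticipate is not any single hard estimate but the need to verify cleanly that the fibers of $\psi$ are exactly the $\sim_R$-classes inside $\cO$ — i.e. that no further collapsing happens and no two distinct $\sim_R$-classes get sent to the same point. This requires invoking that $\cO$ is the interior of a rectangle all of whose boundary components are accumulated by $K$ from inside (the hypothesis of Lemma~\ref{Lemm: projection of rectangle by pi}), so that the only $\sim_R$-classes meeting $\cO$ are singletons and arcs of non-boundary invariant manifolds, never a minimal rectangle or a set $\cP(C)$; and it requires that the transverse measures $\upsilon^{s},\upsilon^{u}$ separate the leaves, i.e. have full support on $\cO$, which is part of the Margulis property. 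Once these two points are in place — and both are available from the results cited in the excerpt — the descent of $\psi$ to a homeomorphism onto the open rectangle, and hence all three clauses of the lemma, follow formally. I would therefore organize the write-up as: (1) continuity and monotonicity of $\psi$ from properties of transverse measures; (2) identification of fibers of $\psi$ with $\sim_R\!\restriction_{\cO}$-classes using the rectangle hypothesis; (3) the covering/quotient argument giving $\Psi$ and the image identity.
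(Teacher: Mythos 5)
Your step (1) — continuity of $\psi$ and monotonicity of each coordinate along the laminations, coming from the non-atomicity of the Margulis transverse measures and the fact that $\psi_1$ depends only on $s$ and $\psi_2$ only on $t$ — is fine, and your overall architecture (fibers of $\psi$ $=$ classes of $\sim_R$ inside $\cO$, then descend to the quotient to get $\Psi$ and the identity $\pi(\cO)=\Psi((a_1,a_2)\times(b_1,b_2))$) is the right one. The genuine problem is in your step (2). You assert that the only $\sim_R$-classes meeting $\cO$ are singletons and arcs of non-boundary invariant manifolds, ``never a minimal rectangle,'' and correspondingly that every fiber of $\psi$ is a point or an arc. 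This is false in the Smale/basic-piece setting: $W^s(K)\cap\cO$ and $W^u(K)\cap\cO$ are laminations with Cantor transversals, so a point of $\cO$ both of whose coordinates lie in gaps of the respective transversal Cantor sets has $\psi$-fiber equal to a product of two gap intervals — a two-dimensional set whose closure is a minimal rectangle (family (ii) of Definition \ref{Defi: equivalen clases sim-r}). Such minimal rectangles do sit inside the interiors of Markov rectangles; the paper itself relies on this, e.g.\ in the proof of Lemma \ref{Lemm: disjoint interiors} (``the minimal rectangle lies in the interior of $R_i$'') and in the proof of Lemma \ref{Lemm: pi(R) is a rectangle}. Relatedly, your appeal to the measures having ``full support on $\cO$'' is not the correct property: $\upsilon^{s},\upsilon^{u}$ are supported on the lamination, so gaps have measure zero — and it is exactly this vanishing on gaps (giving constancy of $\psi$ on each class) together with positivity on any transversal segment separating two leaves of the lamination (from the local product structure of the Margulis measure of a mixing basic piece, giving injectivity of the induced map) that makes the fiber identification work. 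As written, the step on which the third clause of the lemma rests is justified by a false dichotomy.

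The repair is straightforward and keeps your plan intact: in step (2) distinguish three cases according to whether the $s$- and $t$-coordinates of a point are gap points or points of the transversal Cantor sets — both in the Cantor sets gives a singleton of $K$ (local product structure), exactly one gap gives an $s$- or $u$-arc, two gaps gives a minimal rectangle — and check that these exhaust the $\sim_R$-classes meeting $\cO$ (cycle classes $\cP(\mathcal{C})$ do not, since every point of $\cO$ already lies in $\delta(K)$, again by product structure, and the hypothesis of Lemma \ref{Lemm: projection of rectangle by pi} on non-isolated boundary is what rules out boundary pathologies). With that correction the descent and the image identity go through as you outline; note also that the paper offers no proof of this lemma itself (it is attributed as implicit in Bonatti--Jeandenans), so this corrected level of detail is exactly what a self-contained write-up should supply.
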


\subsubsection{The induced orientation}

Once we have a the parametrization $\Psi^{-1}$ of  $\pi(\cO)$, whose inverse coincides with $\pi$. We can used $\Psi$ to endow $\pi(\cO)$ with a vertical and horizontal orientations, that is coherent with the vertical and horizontal orientation of $\cO$ that are induced by the parametrization $r$.

\begin{lemm}\label{Lemm: pi preserve transversals}
The homeomorphism $\Phi: (a_1,a_2)\times (b_1,b_2) \times \cO$ preserve the transversal orientations of the vertical and horizontal foliations.
\end{lemm}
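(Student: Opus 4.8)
The statement to be proven is that the homeomorphism $\Phi: (a_1,a_2)\times (b_1,b_2) \to \cO/\sim_R$ (the inverse of the quotient map $\Psi^{-1}$ built from $\psi$) preserves the transversal orientations of the vertical and horizontal foliations. The plan is to deduce this directly from the structure of the map $\psi = (\psi_1,\psi_2)$ introduced just before Lemma \ref{Lemm: psi bonatti }, together with the fact that $r:[-1,1]^2 \to \overline{\cO}$ was chosen as a parametrization inducing the given vertical and horizontal orientations on $\cO$, and that the Margulis measures $\upsilon^s,\upsilon^u$ are positive (non-atomic, positive on every nondegenerate transverse arc).

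First I would recall what "transversal orientation" means here: a transverse orientation of the horizontal (stable) foliation of $\cO$ is a consistent choice, along each vertical leaf, of a direction of increase across the horizontal leaves; concretely it is the orientation of the $t$-coordinate given by $r$. Likewise the transverse orientation of the vertical foliation is the orientation of the $s$-coordinate. On the target rectangle $(a_1,a_2)\times(b_1,b_2)$ the transverse orientations of its horizontal and vertical line foliations are simply the standard orientations of the two coordinate axes. So the claim is exactly that $\Psi^{-1}$ (equivalently $\psi$ passed to the quotient) sends the $s$-direction to the first-coordinate direction and the $t$-direction to the second-coordinate direction in an orientation-preserving way on each transversal.

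Next I would carry out the short computation. Fix a vertical leaf $J$ of $\cO$, parametrized by $r(s_0,\cdot)$ for fixed $s_0$. By the definition $\psi_2(m) = \nu \int_0^t d\upsilon^s$ where $m$ has $r$-coordinates $(s_0,t)$ and $\nu = \mathrm{sgn}(t)$. Since $\upsilon^s$ is a positive measure and $\int_0^t d\upsilon^s$ is, for $t>0$, a strictly increasing function of $t$ with the sign $\nu=+1$, and for $t<0$ the quantity $\int_0^t d\upsilon^s$ is negative while $\nu=-1$ so $\psi_2 = -\int_0^t d\upsilon^s = \int_t^0 d\upsilon^s$ increases as $t$ increases toward $0$; matching at $t=0$ both branches give $\psi_2=0$. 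Hence $t \mapsto \psi_2(r(s_0,t))$ is a (continuous, strictly) increasing function, which is precisely the statement that $\psi_2$, hence $\Psi_2^{-1}$ on the quotient, respects the transverse orientation of the horizontal foliation. The identical argument with $\psi_1(m) = \epsilon\int_0^s d\upsilon^u$ and $\epsilon=\mathrm{sgn}(s)$ shows $s\mapsto \psi_1(r(s,t_0))$ is increasing along each horizontal leaf, giving the transverse orientation statement for the vertical foliation. Passing to the quotient and inverting (an increasing homeomorphism of an interval has an increasing inverse) yields the claim for $\Phi$. Since these two transverse-orientation conditions hold simultaneously, $\Phi$ also preserves the product orientation, but that is not what is being asserted here; I would simply note it in passing.

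The only genuinely delicate point — and the one I expect to be the main obstacle — is making rigorous the assertion that $\psi$ descends to the quotient as a homeomorphism $\Psi^{-1}$ whose transverse behaviour is inherited leafwise from that of $\psi$. This was essentially established inside the proof of Lemma \ref{Lemm: projection of rectangle by pi} (that $\psi$ is a covering map whose fibres are exactly the $\sim_R$-classes inside $\cO$, and that $\Psi^{-1}$ is the induced homeomorphism), so I would invoke Lemma \ref{Lemm: psi bonatti } for the monotonicity of $\psi_{1,2}$ along the laminations and for the identity $\pi(\cO) = \Psi((a_1,a_2)\times(b_1,b_2))$, and simply observe that the quotient map $\cO \to \cO/\sim_R$ sends each stable (resp. unstable) leaf of $\cO$ homeomorphically onto a stable (resp. unstable) leaf of $\pi(\cO)$ respecting the chosen orientations, because $\sim_R$ does not collapse transversals — its classes inside $\cO$ lie along single leaves. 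Composing with the increasing maps $\psi_1,\psi_2$ computed above then gives the result, and strictness of the monotonicity (hence the homeomorphism property leafwise) comes from positivity of $\upsilon^s$ and $\upsilon^u$. This concludes the plan.
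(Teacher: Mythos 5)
Your proposal is correct and takes essentially the same route as the paper: the paper's own proof likewise just records that $\psi_{1,2}$ is non-decreasing along the stable and unstable leaves of $\cO$ with the orientations fixed by $r$ (the content of the preceding lemma on $\psi$), that this becomes increasing on the quotient, and that $\Phi$ therefore preserves both transverse orientations. One small caution in your explicit check: for $t<0$ the expression $\int_0^t d\upsilon^s$ should be read as the unsigned $\upsilon^s$-measure of the transverse arc between the points of parameters $t$ and $0$, so that the factor $\nu=-1$ is precisely what makes $\psi_2$ negative there and increasing through $t=0$; your signed-integral reading with the double negative would spoil monotonicity, but this does not affect the argument since you also invoke the lemma's monotonicity of $\psi$ along the laminations.
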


\begin{proof}
The parametrization  $r$ of $\cO$ determines the vertical and horizontal positive orientation, furthermore with this orientations, $\psi$ is non-decreasing along the stable and unstable leaves of $\cO$, and when we pass to the quotient $\Psi^{-1}$ is increasing along the stable and unstable leafs of $\pi(\cO)$. This property ensures that $\Phi$ preserves the transversal orientations.
\end{proof}

\begin{defi}\label{defi: orientations induced by pi}
The orientation given to $\pi(\cO)$ by the homeomorphism $\Psi$ is  the \emph{orientation induced} by $\pi$ in $\pi(\cO)$. The vertical and horizontal orientations are $\cO$ are such given by the parametrization $\Phi$. With all this conventions we refer to $\pi(\cO)$ as the \emph{oriented rectangle induced by $\pi$}.
\end{defi}

\subsubsection{The image of rectangle is a rectangle.} The interiors of the rectangles in a Markov partition  satisfy the properties of Lemma \ref{Lemm: projection of rectangle by pi}, but our desire is the projection  by $\pi$ of every rectangle be a rectangle. Therefore, we need to extend the homeomorphism $\Psi$ to a parametrization of a rectangle.

\begin{lemm}\label{Lemm: pi(R) is a rectangle}
	For all $i\in \{1,\cdots,n\}$, $\pi(R_i)$ is a rectangle 
\end{lemm}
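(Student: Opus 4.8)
The plan is to use Lemma \ref{Lemm: projection of rectangle by pi} and Lemma \ref{Lemm: psi bonatti } to handle the interior, and then extend the parametrization $\Psi$ continuously to the boundary of $\II^2$. First I would fix a parametrization $r:[-1,1]^2\to R_i$ of the rectangle $R_i$ from the Markov partition $\cR$, which is possible because $R_i$ is a genuine (embedded) rectangle whose boundary components lie on stable and unstable leaves of periodic points (Lemma \ref{Lemm: Boundary of Markov partition is periodic}), hence each boundary side is not isolated from $K$ on the inside. This means the hypothesis of Lemma \ref{Lemm: projection of rectangle by pi} is satisfied by $\cO:=\overset{o}{R_i}$, giving a homeomorphism $\Psi:(a_1,a_2)\times(b_1,b_2)\to\pi(\overset{o}{R_i})$ which maps the vertical/horizontal foliations of the open square onto the images of $W^u(K)\cap\cO$ and $W^s(K)\cap\cO$, and which respects the parametrization $r$ in the sense of Lemma \ref{Lemm: psi bonatti } (the maps $\psi_{1,2}$ are non-decreasing along leaves).

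The core of the argument is then the extension of $\Psi$ (equivalently, of the defining integrals $\psi_1(s)=\epsilon\int_0^s d\upsilon^u$ and $\psi_2(t)=\nu\int_0^t d\upsilon^s$) to the closed square. Since the Margulis measures $\upsilon^{u},\upsilon^s$ are finite and non-atomic on the compact arcs forming the sides of $R_i$, the functions $s\mapsto\int_0^s d\upsilon^u$ and $t\mapsto\int_0^t d\upsilon^s$ extend to continuous, non-decreasing functions on $[-1,1]$ with finite values at the endpoints; normalizing, we obtain a continuous surjection $\bar\psi:[-1,1]^2\to[a_1,a_2]\times[b_1,b_2]$ whose restriction to the open square is $\psi$ and which is constant exactly on the fibers of the equivalence relation $\sim_R$ restricted to $R_i$. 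Passing to the quotient $R_i/\sim_R=\pi(R_i)$ (here I use that $\pi$ restricted to the compact set $R_i$ is exactly the quotient by $\sim_R\vert_{R_i}$, by definition of $\pi$) yields a continuous bijection $\bar\Psi:[a_1,a_2]\times[b_1,b_2]\to\pi(R_i)$; since the source is compact and $\pi(R_i)$ is Hausdorff, $\bar\Psi$ is a homeomorphism on the interior and a homeomorphism of the closed square onto $\pi(R_i)$ after collapsing, giving a parametrization $\rho_i:=\bar\Psi\circ(\text{affine})\,:\II^2\to\pi(R_i)$. One then checks the three conditions of Definition \ref{Defi: Rectangle}: $\rho_i$ is a homeomorphism on $\overset{o}{\II^2}$ (this is exactly $\Psi$), and for each $t$ the horizontal (resp. vertical) segment maps into a single leaf of $\cF^s$ (resp. $\cF^u$) of $\phi$ — this is because horizontal/vertical leaves of the square go to images of stable/unstable leaves of $K$, which are contained in single leaves of the induced foliations on $S'$, together with a limiting argument for the four boundary sides, which lie on stable/unstable leaves of periodic points of $\phi$.

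The main obstacle I anticipate is the boundary behavior: one must rule out that $\pi$ identifies a whole side of $R_i$ to a point, or folds two distinct sides together, which would destroy the rectangle structure. This is where the hypotheses that $K$ has no impasse and no double boundary points enter — the side of $R_i$ lies on the stable (or unstable) separatrix of a periodic boundary point, and the analysis of cycles (Definitions \ref{Defi: relation cycle}, \ref{Defi: equivalen clases sim-r} and the accompanying facts, in particular that the number of corner points in a cycle is even) guarantees that the equivalence classes meeting $\partial R_i$ either reduce to points of non-boundary type, or are arcs inside $\partial R_i$ that do not connect opposite sides, or are pieces of $\cP(\mathcal C)$ which project to singular points of $\phi$; in every case the quotient of the closed square is again a disk with the four sides mapping to four arcs (possibly degenerate at corners that become spines, which is still allowed in the generalized pseudo-Anosov setting). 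Making this boundary bookkeeping precise — i.e. that $\bar\Psi$ is injective on the closed square modulo $\sim_R$ and that the four sides remain honest stable/unstable intervals of $\phi$ — is the delicate point; the interior statement is essentially immediate from Lemma \ref{Lemm: projection of rectangle by pi}.
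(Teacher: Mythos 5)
Your overall strategy is the same as the paper's: invoke Lemma \ref{Lemm: projection of rectangle by pi} (and Lemma \ref{Lemm: psi bonatti }) for the open rectangle and then extend the parametrization to the closed square. The genuine gap is that the step you yourself flag as ``the delicate point'' is precisely the content of the lemma, and your proposal asserts it rather than proves it. Concretely, you claim that the extension $\bar\psi$ of the coordinate integrals to $[-1,1]^2$ is constant \emph{exactly} on the fibers of $\pi\vert_{R_i}$. On the open square this is exactly Lemma \ref{Lemm: projection of rectangle by pi}; on the boundary it is equivalent to what has to be established, namely: that $\pi$ sends each side of $R_i$ into a single stable (resp.\ unstable) leaf without folding, that the extended parametrization is continuous at boundary points, and that its image is all of $\pi(R_i)$. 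The paper's proof is devoted almost entirely to these three points: the single-leaf property is proved by a local product-structure argument (if the images of two points of one side lay on different leaves, one of them would be forced into the interior of $\pi(\overset{o}{R_i})$, contradicting that it is a boundary point); continuity at the boundary is proved by an explicit $\epsilon/4$ estimate using that the transverse measures are Borel and invariant under leaf isotopy; and the equality $\Psi(H)=\pi(R_i)$ is proved by two separate inclusions using sequences of points of $K$ accumulating on the boundary of $R_i$. Finiteness and non-atomicity of the transverse measures give you only the continuity of $\bar\psi$ on the closed square; they do not give the fiber identification, which is where all the work lies.

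Moreover, the boundary bookkeeping you sketch is not the mechanism that makes this work: the parity of corner points in a cycle and the fact that $\cP(\mathcal C)$ collapses to a single point do not by themselves exclude that two points of the same side separated by points of $K$ become identified, or that one side folds onto another; in the paper these possibilities are excluded by the product-structure argument described above, while the no-impasse and no-double-boundary hypotheses enter upstream (in the existence of $\pi$ and in Lemma \ref{Lemm: well define neighboor }) and later (Lemma \ref{Lemm: gamma periodic points}), not in the body of this proof. So your proposal is a sound plan whose decisive steps — the single-leaf property of the sides, continuity at the boundary, and surjectivity onto $\pi(R_i)$ — still need to be carried out.
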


\begin{proof}

Take a rectangle $R$ in the Markov partition. This rectangle has non-isolated boundaries from its interior. In view of Lemma \ref{Lemm: projection of rectangle by pi}, $\pi(\overset{o}{R_i})$ is homeomorphic to a certain afine rectangle $\overset{o}{H}:=(a_1,a_2)\times (b_1,b_2)$ through the parametrization:

$$
\Phi:\overset{o}{H}\rightarrow \pi(\overset{o}{R}).
$$
Such parametrization sends (open) oriented vertical and horizontal intervals of $(a_1,a_2)\times (b_1,b_2)$ to oriented unstable and stable intervals of $\pi(\overset{o}{R})$, respectively.

It remains to show that this parametrization can be continuously extended to the closed rectangle $H=[a_1,a_2]\times [b_1,b_2]$ in such a way that its image is $\pi(R)$. Moreover, it is necessary for $H$ to be a homeomorphism restricted to any vertical or horizontal interval of $H$ whose image is contained in a single stable or unstable leaf of $\pi(R_i)$, as expected. In particular, the image of every boundary of the affine rectangle is contained in a single leaf.

Take a vertical segment $J\subset (a_1,a_2)\times [b_1,b_2]$. The closure of $\overset{o}{J}$ relative to the unstable leaf in which it is contained consists of adding the two boundary points to the image to obtain a closed interval: $\overline{\Psi(\overset{o}{J})}=\Psi(\overset{o}{J})\cup \{B_1,B_2\}$. Since $\pi(R)$ has the induced geometrization by $\pi$, $\Psi(\overset{o}{J})$ is positively oriented. Let $B_1$ be the lower point and $B_2$ be the upper point of $\overline{\Psi(\overset{o}{J})}$. The natural way to extend $\Psi$ to $J$ is given by the formula:

$$
\Psi(s,b_1)=B_1 \text{ and } \Psi(s,b_2)=B_2.
$$

We can do this for every unstable leaf and every stable leaf. The corners are the only points that remain to be determined. They will be naturally determined if we can prove that the image of the interior of the horizontal boundary of $[a_1,a_2]\times [b_1,b_2]$ lies in a single stable leaf, and the same holds for the image of the vertical boundary. Let us explain the idea.

Assume for a moment that the parametrization $\Psi$ sends the left side $\overset{o}{J}$ of $[a_1,a_2]\times (b_1,b_2)$ into an open interval contained in a unique unstable leaf $L^u$. Furthermore, assume that the lower boundary $I$ of $H$ is such that $\Psi(\overset{o}{I})$ lies in a unique stable leaf $F^s$. Then, $\Psi(\overset{o}{J})$ has a unique lower boundary point $A$, and $\Psi(\overset{o}{I})$ has a well-defined left endpoint $B$. These points are forced to be the same.

To visualize that they coincide, consider a sequence $J_n$ of vertical intervals of $[a_1,a_2]\times [b_1,b_2]$ that converges to the left side of $J$, and similarly, a decreasing sequence of intervals $I_n$ that converges to the lower side of the rectangle. The sequence of points $\{x_n\}=\Psi(\overset{o}{I_n})\cap \Psi(\overset{o}{J_n})$ (which lies in the interior of the rectangle) converges simultaneously to the lower point $A$ of $\Psi(I)$ and the left point $B$. Therefore, $\Psi$ is well-defined at the lower left corner point. The same argument can be applied to the other corner configurations.

So lets prove that the boundary is included in a  single leaf. Let be $s,s'\in(a_1,a_2)$ different numbers,  we need to observe that $\Psi(s,b_1)$  and $\Psi(s',b_1)$ are in the same stable leaf.

\begin{figure}[h]
	\centering
	\includegraphics[width=0.6\textwidth]{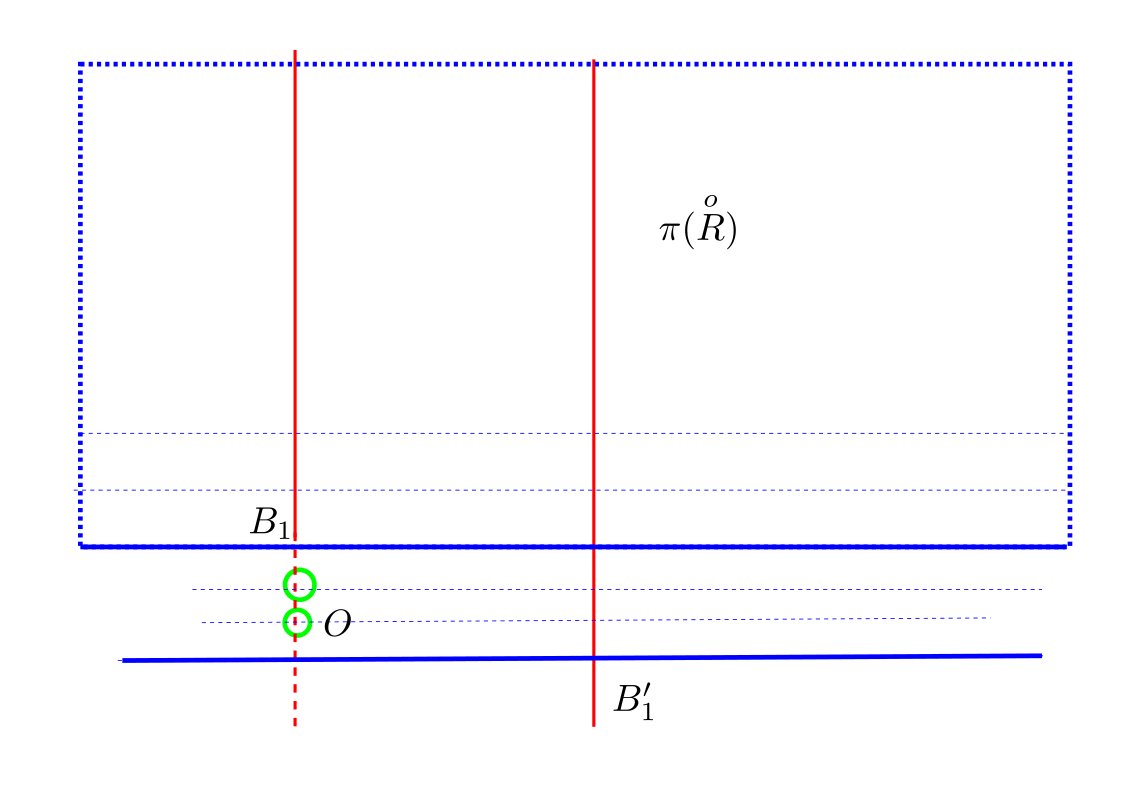}
	\caption{ Same unstable boundary.}
	\label{Fig: Same boundary}
\end{figure}

If $\Psi(s_1,b_1)=B_1$ and $\Psi(s'b_1)=B'_1$ are not in the same leaf, we would have a configuration like Figure \ref{Fig: Same boundary}. Using the horizontal orientation, we can assume that $B_1<B'_1$. However, this would create a problem because,  the intersection of a stable leaf near but above $B'_1$ with the vertical leaf passing trough $B_1$ would induce a point $O$ in the interior of the rectangle $\pi(\overset{o}{R})$. According to the trivial product structure in the interior, this configuration implies that $\Psi(s',b_1)=B'_1$ is in the interior of the corresponding unstable interval $\pi(\overset{o}{R})$, not in its boundary as intended. Therefore, $B_1$ and $B'_1$ must be in the same leaf.

Let $x$ be a stable boundary point of $h(R)$ and let $U$ be an open set containing $x$. The transverse measures of the pseudo-Anosov homeomorphism $\phi$ are Borel measures, and we can observe the existence of an open rectangle with a small enough diameter that can be sent inside $U$ under $\Psi$. This allows us to prove the continuity of $\Psi$ at the boundary points.

Take $J$ to be the interval such that $\Psi(J)$ has its lower point equal to $x$. Let $\epsilon > 0$. There exist vertical segments $\overset{o}{J_1}, \overset{o}{J_2} \subset H$ such that, in the horizontal order of rectangle $H$, $J_1 \leq J < J_2$, with equality only if $J$ is the left side of $H$, in which case it is not difficult to make the necessary adjustments to our arguments.

We have $\Psi(\overset{o}{J_1}) < \Psi(\overset{o}{J}) < \Psi(\overset{o}{J_2})$. We know that the length of every stable interval between $\Psi(\overset{o}{J_1})$ and $\Psi(\overset{o}{J_2})$ is a constant $C(1, 2) > 0$ (due to the invariance of the transverse measures under leaf isotopy). We can choose $J_1$ and $J_2$ to be sufficiently close to each other so that $C(1, 2) < \epsilon/4$.

Now, let $\overset{o}{I}$ be a horizontal leaf in $H$ such that the unstable segments contained into $\Psi(\overset{o}{J_1})$ and $\Psi(\overset{o}{J_2})$, with one endpoint in $\overset{o}{I}$ and the other in the stable leaf trough $x$, have $\mu^s$ length less than $\epsilon/4$.

Continuing from the previous point, we can choose the horizontal leaf $\overset{o}{I}$ to be sufficiently close to $x$ so that the unstable segments contained between $\Psi(\overset{o}{J_1})$ and $\Psi(\overset{o}{J_2})$, with one endpoint in $\overset{o}{I}$ and the other in the stable leaf of $x$, have length less than $\epsilon/4$.

In this way, by considering the rectangle whose boundary is given by the intersections of $I', J_1, J_2$, and the lower boundary of $H$, we can ensure that its image is contained in a rectangle with diameter less than $\epsilon$. For $\epsilon$ small enough, we can provide an open set $H$ whose image is completely contained in $U$. This completes the argument for continuity.
Finally we are going to see that:
$$
\Psi(H)=\pi(R).
$$

Let $x\in \partial^s R$. Without loss of generality, we can assume that $x$ is on the lower boundary of $R$. In order to show that $\pi(R)\subset \Psi(H)$, we need to prove that $\pi(x)$ is in $\Psi(H)$. The equivalence class of $x$ under the relation $\sim_R$ can either be a minimal rectangle or an unstable arc. In either case, there is a point in $K$ that belongs to the same equivalence class as $x$. Since all points in the equivalence class collapse to the same point, we can assume that $x\in K$. Let $x_n\in K\cap \overset{o}{R_n}$ be a sequence converging to $x$. By the continuity of $\pi$, we have $\pi(x_n)\rightarrow \pi(x)$.

After taking a refinement, we can assume that $\pi(x_{n+1})< \pi(x_{n})$ with respect to the vertical and horizontal orientation of the foliations induced by $\pi(\overset{o}{R})$. For every $\pi(x_n)$, there exists a unique point $y_n\in \overset{o}{H}$ such that $\Psi(y_n)=\pi(x_n)$. Moreover, we can assume that the sequence $(y_n)$ is decreasing with respect to both the vertical and horizontal orientation of $H$.

For each point $y_n$, it passes through a unique pair of stable and unstable intervals of $H$, denoted as $I_n$ and $J_n$ respectively. Since the intervals are decreasing and bounded from below with respect to the vertical order, there is a horizontal interval $I$ to which $I_n$ converges. Similarly, $J_n$ converges to an interval $J$, which may or may not be in the boundary of $H$. In any case, there exists a point $y\in I\cap J$ that is accumulated by the sequence $(y_n)$. By continuity, we have $\lim \pi(x)=\pi(x_n)=\lim \Psi(y_n)=\Psi(y)$.

To show the other inclusion $\Psi(H)\subset \pi(R)$, we consider a point $x\in H$ and aim to find a point $y\in R$ such that $\pi(y)=\Psi(x)$. We will focus on the case when $x$ lies on the inferior boundary of $H$, and the argument can be adapted for other boundary cases.

Let $J\subset H$ be the vertical interval passing through $x$. The set $\Psi(J)$ is a vertical interval in $\pi(R)$. Consider $\pi^{-1}(\Psi(\overset{o}{J}))$, which is a horizontal sub-rectangle of $R$ that may be reduced to an open interval. In any case, the stable boundary of this sub-rectangle belongs to a unique equivalence class under $\sim_R$. This equivalence class contains a point $k\in K\cap \partial R$. Let $J'$ be the unstable interval of $R$ that contains $k$ as one of its boundary points.
By continuity and the orientation-preserving properties of $\Psi$ and $\pi$, we have $\Psi(J)=\pi(J')$. Moreover, $\Psi(x)$ corresponds to the inferior boundary of $\pi(J')$ under this identification. Thus, we have found a point $y\in R$ such that $\pi(y)=\Psi(x)$, completing the proof.

\end{proof}

 \subsubsection{The projection of a Markov partition is a Markov partition}

\begin{prop}\label{Prop: pi cR is Markov partition}
	Let $\cR=\{R_i\}_{i=1}^n$ be a Markov partition of $f$. The family of rectangles $\pi(\cR)=\{\overline{\pi(\overset{o}{R_i})}\}_{i=1}^n$ is a  Markov partition of $\phi$.
\end{prop}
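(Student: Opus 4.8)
The plan is to verify the four defining properties of a Markov partition (Definition \ref{Defi: Markov partition}) for the family $\pi(\cR) = \{\overline{\pi(\overset{o}{R_i})}\}_{i=1}^n$ under $\phi$, using the already-established facts that each $\pi(R_i)$ is a rectangle (Lemma \ref{Lemm: pi(R) is a rectangle}), that $\pi$ semi-conjugates $f|_{\Delta(K)}$ with $\phi$ (Theorem \ref{Theo: Basic piece projects to pseudo-Anosov}), and that inside the interior of each rectangle the projection collapses precisely the $\sim_R$-classes while carrying the stable/unstable laminations of $K$ to the horizontal/vertical foliations (Lemma \ref{Lemm: projection of rectangle by pi}). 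First I would address the covering property $S' = \cup_{i=1}^n \pi(R_i)$: since $S = \cup R_i$ and $\Delta(K) \subset S$ contains $K$ together with $\delta(K)$, and $S' = \pi(\Delta(K))$ (the domain surjects onto the quotient surface), surjectivity of $\pi$ gives $S' = \pi(\cup R_i) = \cup \pi(R_i)$; one must only note that the part of $S$ outside $\Delta(K)$ consists of attractor/repeller basins that do not contribute, or more precisely that we restrict attention to $\Delta(K)$ from the outset and the Markov rectangles $R_i$ already live in a neighborhood of $K$.

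Next I would prove the disjoint-interiors property: for $i \neq j$, $\overset{o}{\pi(R_i)} \cap \overset{o}{\pi(R_j)} = \emptyset$. The interior $\overset{o}{\pi(R_i)}$ equals $\pi(\overset{o}{R_i})$ by Lemma \ref{Lemm: projection of rectangle by pi} (the homeomorphism $\Psi$ identifies it with an open affine rectangle). A point in $\pi(\overset{o}{R_i}) \cap \pi(\overset{o}{R_j})$ would be the image of a $\sim_R$-class meeting both $\overset{o}{R_i}$ and $\overset{o}{R_j}$; but the $\sim_R$-classes that meet the interior of a Markov rectangle are either singletons in $K$, minimal rectangles, or arcs of non-boundary invariant manifolds, and each such class, once it meets $\overset{o}{R_i}$, is forced to stay inside $R_i$ because $\overset{o}{R_i} \cap \overset{o}{R_j} = \emptyset$ and because minimal rectangles / invariant-manifold arcs appearing inside a Markov rectangle cannot cross the stable or unstable boundary of that rectangle (the boundary of a Markov rectangle lies on stable/unstable leaves of periodic points, which are $\sim_R$-saturated). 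Here I would use the structure theory of $\sim_R$-classes (Definition \ref{Defi: equivalen clases sim-r}) to rule out a class straddling two rectangles; this is the first genuinely delicate point.

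Then comes the Markov property itself, items iii) and iv). Using the semi-conjugacy $\pi \circ f = \phi \circ \pi$ on $\Delta(K)$, for any $i,j$ we have $\phi(\pi(\overset{o}{R_i})) \cap \pi(\overset{o}{R_j}) = \pi(f(\overset{o}{R_i})) \cap \pi(\overset{o}{R_j})$. Each connected component of $f(\overset{o}{R_i}) \cap \overset{o}{R_j}$ is, by hypothesis on $\cR$, (the interior of) a vertical sub-rectangle $V$ of $R_j$; I would show that $\pi(V)$ is a vertical sub-rectangle of $\pi(R_j)$. The key is that $\pi$ restricted to $\overset{o}{R_j}$ sends unstable leaves of $K$ (which foliate $V$ in the vertical direction and fill out the vertical boundary of $V$ up to the induced parametrization) to vertical leaves of $\pi(R_j)$, with vertical leaves going all the way across $V$ going to vertical leaves going all the way across $\pi(R_j)$; this is exactly the content of Lemma \ref{Lemm: projection of rectangle by pi} and the compatibility of the parametrizations $\Phi$. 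One must also check there are no "extra" components: a component of $\phi(\pi(\overset{o}{R_i})) \cap \pi(\overset{o}{R_j})$ not coming from a component of $f(\overset{o}{R_i}) \cap \overset{o}{R_j}$ would, by surjectivity of $\pi$, still have to be the image of points in $f(\overset{o}{R_i}) \cap \overset{o}{R_j}$, so the components correspond bijectively — modulo the subtlety that two distinct components of $f(\overset{o}{R_i}) \cap \overset{o}{R_j}$ might a priori collapse to the same component downstairs; I would argue this cannot happen because such components lie in distinct vertical strips of $R_j$ separated by a genuine unstable arc, which projects to a genuine vertical leaf separating their images. The horizontal case iii) is symmetric, applying the same reasoning to $f^{-1}$, $\phi^{-1}$, and $T^{-1}$. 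The main obstacle throughout is bookkeeping the boundary behavior: ensuring that the closures behave well, i.e. that $\overline{\pi(\overset{o}{R_i})} = \pi(R_i)$ (already granted by Lemma \ref{Lemm: pi(R) is a rectangle}) and that sub-rectangle boundaries match up after projection, which requires invoking that the stable/unstable boundaries of the $R_i$ lie on leaves of periodic points and hence in $\delta(K)$, so they are $\sim_R$-saturated and project to genuine boundary leaves rather than being smeared into interiors. I would conclude by assembling these four verified items into the statement that $\pi(\cR)$ is a Markov partition of $\phi$.
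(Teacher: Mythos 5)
Your covering step fails as written. In this section $\cR$ is a Markov partition of the basic piece $K$ of the Smale diffeomorphism $f$, so the rectangles $R_i$ cover only $K$, not the surface $S$ (nor $\Delta(K)$): the identity $S=\cup_{i=1}^n R_i$ that you invoke is false in this setting, and surjectivity of $\pi:\Delta(K)\rightarrow S'$ only gives $\pi(\Delta(K))=S'$, not $\pi(\cup_i R_i)=S'$. The missing ingredient, which is exactly what the paper uses in Lemma \ref{Lemm: the rectangles pi R cover S}, is that every equivalence class of $\sim_R$ contains a point of $K$: singleton classes are points of $K$, minimal rectangles have their corners in $K$, arcs of non-boundary invariant manifolds have their endpoints in $K$, and the sets $\cP(\mathcal{C})$ contain the periodic points of the cycle. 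Hence $\pi(K)=S'$, and since $K\subset\cup_i R_i$ one gets $\cup_i\pi(R_i)=S'$. Your fallback remark that the rectangles ``already live in a neighborhood of $K$'' does not substitute for this: a priori a fiber of $\pi$ could miss $\cup_i R_i$ entirely, and the corresponding point of $S'$ would then not be covered.

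Apart from this, your argument is workable but takes a different route from the paper for the Markov property itself: you verify items iii) and iv) of Definition \ref{Defi: Markov partition} directly through the semi-conjugacy, keeping track of how components of $f(\overset{o}{R_i})\cap\overset{o}{R_j}$ project, whereas the paper only proves that $\partial^s\pi(\cR)$ is $\phi$-invariant and $\partial^u\pi(\cR)$ is $\phi^{-1}$-invariant (Lemma \ref{Lemm: Boundaries are f-invariant}), via $\phi(\pi(\partial^s R_i))=\pi(f(\partial^s R_i))\subset\pi(\partial^s\cR)$, and then concludes from the standard fact that, for a family of rectangles with disjoint interiors covering the surface, boundary invariance yields the intersection conditions. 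Your direct verification requires exactly the extra care you flag (no components appearing or merging under $\pi$, and sub-rectangle boundaries projecting to boundaries), which the boundary-invariance route avoids entirely. Your disjoint-interiors argument is essentially the paper's Lemma \ref{Lemm: disjoint interiors} (a $\sim_R$-class meeting $\overset{o}{R_i}$ stays inside $R_i$), though to be complete you should also note that classes of cycle type $\cP(\mathcal{C})$ never project to interior points of the rectangles.
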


\begin{lemm}\label{Lemm: disjoint interiors}
	If $i\neq j$, then  $\pi(\overset{o}{R_i})\cap \pi(\overset{o}{R_j})= \emptyset$
\end{lemm}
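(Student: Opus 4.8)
The plan is to reduce the statement to a property of the relation $\sim_R$: since $\pi$ is the quotient projection $\Delta(K)\to\Delta(K)/\sim_R$, one has $\pi(x)=\pi(y)$ exactly when $x\sim_R y$, so it is enough to show that no $\sim_R$-class meets the interiors of two distinct rectangles of $\cR$. Assume, for a contradiction, that $x\in\overset{o}{R_i}$, $y\in\overset{o}{R_j}$, $i\neq j$, and $x\sim_R y$; write $E$ for their common class. Since the rectangles of $\cR$ have disjoint interiors, $x\neq y$, so $E$ is not a singleton, and by Definition \ref{Defi: equivalen clases sim-r} it is a minimal rectangle, an arc contained in a non-boundary invariant manifold, or a set $\cP(\cC)$ attached to a cycle $\cC$.

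The technical core would be the remark that $W^s(K)\cap W^u(K)=K$ — a point lying on both a stable and an unstable manifold of $K$ has its whole orbit in a compact neighbourhood of $K$, hence lies in $K$ by local maximality of the basic piece. From this I would extract: (a) a minimal rectangle $Q$ has $\overset{o}{Q}$ disjoint from $W^s(K)\cup W^u(K)$ (a horizontal leaf of $Q$ through an interior point reaches the $u$-arc sides of $Q$ at points of $W^s(K)\cap W^u(K)=K$, whose interiors miss $K$, so those points are corners of $Q$, forcing the leaf to be a side of $Q$ — impossible for an interior point; the unstable direction is symmetric); (b) an arc of type (iii) has interior disjoint from $K$, hence again from $W^s(K)\cup W^u(K)$; (c) a set $\cP(\cC)$ is the union of the finitely many periodic points of the cycle, their free separatrices (which meet $K$ only at those periodic points), and the disks $C(p)$, each a connected component of $\Delta(K)\setminus\delta(K)$ and so disjoint from $W^s(K)\cup W^u(K)$ — here one uses that $K$ has no impasse, via \cite[Propositions 2.4.9, 2.6.4, 3.1.2]{bonatti1998diffeomorphismes}.

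Next I would invoke Lemma \ref{Lemm: Boundary of Markov partition is periodic}: $\partial\cR:=\bigcup_k\partial R_k$ lies in $W^s(K)\cup W^u(K)$. Setting $E^{\circ}:=E\setminus K$, items (a)--(c) give $E\cap\partial\cR\subseteq E\cap K$, so $E^{\circ}$ is disjoint from $\partial\cR$; moreover $E^{\circ}$ is connected (for a minimal rectangle $Q$ it is $Q$ minus its corners, for an arc it is the arc minus its two endpoints, for $\cP(\cC)$ it is the union of the disks $C(p)$ with the relative interiors of the free separatrices, and the chain of adjacencies of the cycle keeps this connected) and dense in $E$; in particular $E^{\circ}$ meets the open sets $\overset{o}{R_i}$ and $\overset{o}{R_j}$. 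Being connected and disjoint from $\partial\cR$, $E^{\circ}$ cannot cross the topological frontier of $\bigcup_k R_k$ (which is contained in $\partial\cR$), so it lies in $\bigl(\bigcup_k R_k\bigr)\setminus\partial\cR$, and this set is contained in the disjoint union of the open sets $\overset{o}{R_1},\dots,\overset{o}{R_n}$; a connected subset of such a union lies in a single $\overset{o}{R_k}$. This contradicts $E^{\circ}\cap\overset{o}{R_i}\neq\emptyset\neq E^{\circ}\cap\overset{o}{R_j}$ with $i\neq j$.

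The step I expect to be the genuine obstacle is establishing (c) together with the connectedness of $E^{\circ}$ in the cycle case: one must check that a set $\cP(\cC)$ stays off $W^s(K)\cup W^u(K)$ except at the cycle's periodic points and remains connected after those points are deleted. This is exactly where the absence of an impasse is essential — it guarantees that adjacent separatrices are well defined, that the complementary regions of $\delta(K)$ are precisely the disks $C(p)$ (no infinite chains of arcs), and that the cycle closes up — and it is what ultimately forbids a single $\sim_R$-class from bridging the boundary between two rectangles of the Markov partition.
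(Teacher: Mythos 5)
Your reduction is sound up to the point where everything hinges on the claim that $E^{\circ}=E\setminus K$ is connected and disjoint from $\partial\cR$, and that is where the proof breaks. First, item (b) is false as stated: a class of type (iii) is an arc \emph{contained in} an invariant manifold of $K$, so its interior lies in $W^s(K)\cup W^u(K)$; ``disjoint from $K$, hence disjoint from $W^s(K)\cup W^u(K)$'' is not a valid implication. (What actually saves the arc case is different: the arc sits on a single non-boundary leaf, so it cannot meet $\partial^s\cR$, and a crossing with $\partial^u\cR$ would put a point of $W^s(K)\cap W^u(K)=K$ in its interior.) Second, and more seriously, even where (a) and (c) are correct they only control the open core of the class ($\overset{o}{Q}$, respectively the disks $C(p)$), whereas $E^{\circ}$ also contains the open $s$- and $u$-arcs bounding a minimal rectangle and the punctured free separatrices; these lie on leaves of the laminations and can perfectly well sit inside $\partial\cR$. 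Remember that in the basic-piece setting the rectangles do \emph{not} cover $\Delta(K)$: a minimal rectangle can be glued from outside $R_i$ onto a gap of $K\cap\partial^s R_i$, so that one open side of $E$ is literally contained in $\partial\cR$. Hence the deduction ``$E\cap\partial\cR\subseteq E\cap K$'' is false in general, and the connectedness argument has nothing to run on. The connectedness of $E^{\circ}$ also fails for cycle classes: for a cycle consisting of two non-corner $s$-boundary points, $\cP(\cC)$ is just the two points with their free unstable separatrices, which is already disconnected (the $u$-arcs realizing the adjacency are not part of $\cP(\cC)$). Finally, the remark $W^s(K)\cap W^u(K)=K$ is not because the orbit of such a point stays in a small compact neighbourhood of $K$ (it need not); it holds because the orbit stays in the invariant domain $\Delta(K)$, whose compact invariant subsets are contained in $K$.

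The lemma itself is of course true, and the paper proves it by establishing the stronger containment directly: if a class meets $\overset{o}{R_i}$ then it is entirely contained in $\overset{o}{R_i}$, which gives the statement at once since the rectangles of a basic-piece Markov partition are pairwise disjoint. Cycles are excluded because $R_i\subset\delta(K)$ while $C(p)$ and the free separatrices avoid the rectangle interiors; for a minimal rectangle one combines your observation (a) with the fact that $\partial R_i$ is accumulated by $K$ from inside $R_i$, so a class with interior disjoint from $K$ that meets $\overset{o}{R_i}$ can neither touch nor cross $\partial R_i$; for an arc one uses that it lies on a non-boundary leaf, so its endpoints are not on $\partial^s R_i$. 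Your scheme can be repaired along the same lines by running the argument with the part of the class that genuinely avoids $W^s(K)\cup W^u(K)$ and then passing to closures using $R_i=\overline{\overset{o}{R_i}}$ and the disjointness of the rectangles, but as written the proof does not go through.
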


\begin{proof}
	
	Please observe that $\pi\left(\overset{o}{R_i}\right)=\overset{o}{\left(\pi(R_i)\right)}$.  If $x\in \pi(\overset{o}{R_i})$, then $\pi^{-1}(x)$ is an equivalence class determined by the relation $\sim_R$, and we need to argue that $\pi^{-1}(x)\subset \overset{o}{R_i}$. This implies that if $\pi(\overset{o}{R_i})\cap \pi(\overset{o}{R_j}) \neq \emptyset$, then $\overset{o}{R_i}=\overset{o}{R_j}$.

	The points in $\pi^{-1}(x)$ do not belong to a cycle since cycles do not project to the interior of the rectangles. If $\pi^{-1}(x)$ is a singleton in the basic set, then $\pi^{-1}(x)\subset \overset{o}{R_i}$. If $\pi^{-1}(x)$ is a minimal rectangle, its stable (or unstable) boundary is different from the stable boundary of $R_i$ because $\pi^{-1}(x)$ is a minimal rectangle whose interior is disjoint from the basic piece $K$. Since the stable boundary of $R_i$ is isolated from one side, these two leaves are different, and the minimal rectangle lies in the interior of $R_i$. In the case where $\pi^{-1}(x)$ is an arc of a non-isolated leaf, we have $\pi^{-1}(x)\subset \overset{o}{R_i}$ because its endpoints do not belong to the stable boundary of $R_i$.
\end{proof}

\begin{lemm}\label{Lemm: the rectangles pi R cover S}
	The function $\pi$ restricted to $K$ is surjective, and $\cup_{i}^n \pi(R_i)=S'$.
\end{lemm}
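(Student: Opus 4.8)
The plan is to reduce both assertions to the single claim that every $\sim_R$-equivalence class contains a point of $K$. Granting this, surjectivity of $\pi|_K$ is immediate: for $y\in S'$ the fibre $\pi^{-1}(y)$ is, by construction of $S'=\Delta(K)/\sim_R$, exactly one equivalence class, so any point of $\pi^{-1}(y)\cap K$ is a preimage of $y$ lying in $K$. The covering statement then follows formally, because $\cR$ is a Markov partition of $K$ and hence $K\subseteq\bigcup_{i=1}^n R_i$; applying $\pi$ gives $S'=\pi(K)\subseteq\bigcup_{i=1}^n\pi(R_i)\subseteq S'$, so all the inclusions are equalities.

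So the real work is in the claim $\pi^{-1}(y)\cap K\neq\emptyset$, which I would verify by running through the four possible shapes of an equivalence class listed in Definition \ref{Defi: equivalen clases sim-r}. If the class is a singleton $\{x\}$ with $x\in K$, there is nothing to do. If it is a minimal rectangle, its four corners are the endpoints of the $u$-arcs and $s$-arcs bounding it, hence belong to $K$ by Definition \ref{Defi: arc}, and they belong to the class. If it is a set $\cP(\cC)$ associated with a cycle $\cC=\{p_1,\dots,p_k\}$, then each periodic point $p_i$ lies in $K$ and, by Definition \ref{Defi: P(C) region of cycle}, in the class. The remaining case, where the class is an arc contained in a non-boundary invariant manifold, is the only one needing a word of care: such a collapsed arc is a component of $W^s(K)\setminus K$ (resp. $W^u(K)\setminus K$) along a non-boundary leaf — leaves themselves are not squashed in the pseudo-Anosov quotient — so its two endpoints lie in $K$; and since $\pi^{-1}(y)$ is closed, being a fibre of the continuous map $\pi$ into the Hausdorff space $S'$, those endpoints belong to the class as well.

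I expect this last case to be the main (and only) obstacle, since it is the one place where one must invoke the precise description of the collapsed arcs from \cite[Lemma 8.3.7]{bonatti1998diffeomorphismes} rather than merely the statement of Definition \ref{Defi: equivalen clases sim-r}. A purely topological alternative that sidesteps the internal structure of the partition: given $z$ in an arc-class and a sequence $z_m\to z$ inside that arc tending to one of its endpoints $k\in K$, continuity of $\pi$ forces $\pi(k)=\lim_m\pi(z_m)=\pi(z)=y$, so $k\in\pi^{-1}(y)\cap K$ regardless of which class $k$ is ultimately assigned to. Once the claim is established, combining $\pi(K)=S'$ with the covering property $K\subseteq\bigcup_i R_i$ of the Markov partition completes the proof of $\bigcup_{i=1}^n\pi(R_i)=S'$.
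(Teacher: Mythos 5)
Your proposal is correct and follows essentially the same route as the paper: the paper's proof is precisely the observation that every $\sim_R$-equivalence class contains a point of $K$, combined with $K\subseteq\bigcup_{i=1}^n R_i$. You simply spell out the case-by-case verification of that observation (which the paper leaves implicit), so no further comment is needed.
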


\begin{proof}
	It is enough to observe that in any equivalence class of $\sim_{R}$ there is an element of $K$. Since $K$ is contained in $\cup_{i=1}^n R_i$, we have our result.
\end{proof}

\begin{lemm}\label{Lemm: Boundaries are f-invariant}
	The stable boundary of $\pi(\cR)$, $\partial^s \pi(\cR) = \cup_{i=1}^n \partial^s \pi(R_i)$, is $\phi$-invariant. Similarly, the unstable boundary of $\pi(\cR)$, $\partial^u \pi(\cR) = \cup_{i=1}^n \partial^u \pi(R_i)$, is $\phi^{-1}$-invariant.
	
\end{lemm}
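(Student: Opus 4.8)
The plan is to deduce the $\phi$-invariance of $\partial^s\pi(\cR)$ from the classical forward invariance of the stable boundary $\partial^s\cR$ of the Markov partition of $f$, pushed through the semi-conjugacy $\pi\circ f|_{\Delta(K)}=\phi\circ\pi$ of Theorem~\ref{Theo: Basic piece projects to pseudo-Anosov}. The first, and only genuinely non-formal, step is to establish the identities
\[
\partial^s\pi(\cR)=\pi(\partial^s\cR),\qquad \partial^u\pi(\cR)=\pi(\partial^u\cR).
\]
For the inclusion $\partial^s\pi(R_i)\subseteq\pi(\partial^s R_i)$ I would read off the construction carried out in the proof of Lemma~\ref{Lemm: pi(R) is a rectangle}: the parametrization $\Psi$ of $\pi(R_i)$ is obtained there as the continuous extension, to the closed affine rectangle $H$, of the covering map of Lemmas~\ref{Lemm: projection of rectangle by pi} and~\ref{Lemm: psi bonatti }, and it is shown that $\Psi(H)=\pi(R_i)$ with the horizontal boundary of $H$ sent onto $\pi$ of the points of $K$ lying on $\partial^s R_i$. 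For the reverse inclusion $\pi(\partial^s R_i)\subseteq\partial^s\pi(R_i)$ I would argue that a point $x\in\partial^s R_i$ cannot project into $\pi(\overset{o}{R_i})$, since by the proof of Lemma~\ref{Lemm: disjoint interiors} one has $\pi^{-1}(\pi(\overset{o}{R_i}))=\overset{o}{R_i}$; on the other hand $\pi$ carries the bottom (resp.\ top) stable leaf of $R_i$ monotonically into a stable leaf of $\pi(R_i)$, and combining these two facts forces $\pi(x)$ onto the stable boundary of $\pi(R_i)$.

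Next I would invoke the elementary Markov-partition fact that $f(\partial^s\cR)\subseteq\partial^s\cR$: by Definition~\ref{Defi: Markov partition}, for $x\in\partial^s R_i$ the image $f(x)$ lies on the stable boundary of some vertical sub-rectangle $V$ of a rectangle $R_j$, and since a vertical sub-rectangle spans $R_j$ from bottom to top we have $\partial^s V\subseteq\partial^s R_j$, whence $f(x)\in\partial^s\cR$. Putting the two steps together,
\[
\phi(\partial^s\pi(\cR))=\phi(\pi(\partial^s\cR))=\pi(f(\partial^s\cR))\subseteq\pi(\partial^s\cR)=\partial^s\pi(\cR),
\]
which is the stable assertion. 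The unstable statement is obtained by the mirror argument, using the dual Markov property $f^{-1}(\partial^u\cR)\subseteq\partial^u\cR$ (a horizontal sub-rectangle spans $R_i$ from left to right, so its unstable boundary is contained in $\partial^u R_i$) together with the identity $\partial^u\pi(\cR)=\pi(\partial^u\cR)$.

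The main obstacle is concentrated entirely in the identity $\partial^{s}\pi(R_i)=\pi(\partial^{s}R_i)$: one must control how $\sim_R$ collapses leaves near the boundary of a rectangle, and in particular check that a boundary stable arc is not sent into the relative interior of the unstable boundary of the image rectangle. However this analysis has already been carried out in the proof of Lemma~\ref{Lemm: pi(R) is a rectangle} and in Lemmas~\ref{Lemm: disjoint interiors}--\ref{Lemm: psi bonatti }, so here it is only a matter of quoting those results correctly; everything else is bookkeeping with the semi-conjugacy and with Definition~\ref{Defi: Markov partition}.
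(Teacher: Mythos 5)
Your proposal is correct and follows essentially the same route as the paper: the paper's proof also rests on the identity $\partial^s\pi(R_i)=\pi(\partial^s R_i)$ (which it simply declares "clear") and then runs the same chain $\phi(\partial^s\pi(\cR))=\phi(\pi(\partial^s\cR))=\pi(f(\partial^s\cR))\subset\pi(\partial^s\cR)=\partial^s\pi(\cR)$ via the semi-conjugacy, with the unstable case handled symmetrically. Your extra care in justifying that identity from Lemmas~\ref{Lemm: pi(R) is a rectangle} and~\ref{Lemm: disjoint interiors}, and in spelling out $f(\partial^s\cR)\subseteq\partial^s\cR$, only fills in details the paper leaves implicit.
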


 Proposition \ref{Prop: pi cR is Markov partition} follows from  Lemmas \ref{Lemm: pi(R) is a rectangle}, \ref{Lemm: disjoint interiors}, \ref{Lemm: the rectangles pi R cover S}, and \ref{Lemm: Boundaries are f-invariant}.

\begin{proof}
We need to show that $\partial^s \pi(\cR)=\cup_{i=1}^n\partial^s\pi(R_i)$ is $f$-invariant and $\partial^u \pi(\cR)=\cup_{i=1}^n\partial^u\pi(R_i)$ is $f^{-1}$-invariant. 
Given $i\in\{1,\cdots,n\}$, it is clear that $\partial^s\pi(R_i)=\pi(\partial^sR_i)$,This implies the following contentions and equalities between sets:
\begin{eqnarray*}
	\phi(\partial^s\pi(R_i))=\phi(\pi(\partial^sR_i))=\\
	\pi(f(\partial^sR_i))\subset \pi(\partial^s \cR)=\partial^s\pi(\cR).
\end{eqnarray*}
This proves $\partial^s \pi(\cR)$ is $\phi$-invariant. A similar argument proves the unstable boundary of $\pi(\cR)$ is  $\phi^{-1}$-invariant.
\end{proof}

\subsubsection{The surface $S$ is oriented}

Before to prove that a geometric Markov partition of $f$ projects to a geometric Markov partition we shall to determine  that the surface $S'$ is orientable. In the next lemma, observe that the orientation induced by $\pi$ in each rectangles of the Markov partition $\pi(\cR)$ is consistent along the boundary of the partition, which means that the orientations of adjacent squares in $\pi(\cR)$ match along their shared boundaries.

\begin{lemm}\label{Lemm: S is oriented}
	The surface $S'$ is orientable. 
\end{lemm}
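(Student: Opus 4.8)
The plan is to cover $S'$ by the projected rectangles $\{\pi(R_i)\}_{i=1}^n$ and use the orientation that each $\pi(R_i)$ carries as an \emph{oriented rectangle induced by $\pi$} (Definition \ref{defi: orientations induced by pi}), checking that these orientations glue consistently along the overlaps. First I would recall that by Lemma \ref{Lemm: the rectangles pi R cover S} the family $\{\pi(R_i)\}$ covers $S'$, and by Lemma \ref{Lemm: pi(R) is a rectangle} each $\pi(R_i)$ is a rectangle with a parametrization $\Psi_i : [a_1^i,a_2^i]\times[b_1^i,b_2^i] \to \pi(R_i)$ whose restriction to the interior is a homeomorphism; such a parametrization fixes an orientation on $\overset{o}{\pi(R_i)}$, namely the orientation induced by $\pi$ coming from a chosen parametrization $r_i$ of $R_i$. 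Since the original surface $S$ is oriented and $\cR$ is a \emph{geometric} Markov partition of $f$, we may (by Lemma \ref{Lemm: Orientation preserving parametrization}) choose all the parametrizations $r_i$ to be orientation-preserving with respect to the fixed orientation of $S$; I would then argue that the induced orientations on the $\overset{o}{\pi(R_i)}$ are precisely the ones that need to be compared.

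The core of the argument is the overlap check. The interiors $\overset{o}{\pi(R_i)}$ are pairwise disjoint by Lemma \ref{Lemm: disjoint interiors}, so two projected rectangles can only meet along portions of their boundaries, which by Lemma \ref{Lemm: Boundaries are f-invariant} and the structure of $\pi(\cR)$ as a Markov partition lie in $\partial^s\pi(\cR)\cup\partial^u\pi(\cR)$. Take a point $y \in \pi(R_i)\cap\pi(R_j)$ lying on a shared stable (resp. unstable) boundary arc. Pick a preimage point $x\in R_i\cap R_j$ with $\pi(x)=y$; since $\cR$ is a Markov partition of $f$ and $S$ is oriented, the parametrizations $r_i,r_j$, being orientation-preserving, induce \emph{coherent} vertical-and-horizontal orientations near $x$ in the sense discussed after Definition \ref{Defi: Geometric rectangle}: the transverse orientation of $\cF^s$ and of $\cF^u$ agree on the overlap because both are read off the fixed ambient orientation of $S$. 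By Lemma \ref{Lemm: pi preserve transversals}, each homeomorphism $\Psi_i$ (equivalently $\Phi_i$) preserves the transverse orientations of the vertical and horizontal foliations; hence the orientation that $\Psi_i$ transports to a neighborhood of $y$ in $\pi(R_i)$ and the one $\Psi_j$ transports to a neighborhood of $y$ in $\pi(R_j)$ both agree with the local transverse orientations of $\cF^s(\phi)$ and $\cF^u(\phi)$, and therefore agree with each other on the common boundary arc. Because the transverse orientations of a pair of transverse foliations determine an orientation of the surface, this is exactly the compatibility needed.

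From this local coherence I would conclude globally: the collection of local orientations on the $\overset{o}{\pi(R_i)}$, together with the matching along boundary arcs, assembles into a consistent orientation of $S'$ — concretely, one checks that the transition between any two overlapping charts (obtained by composing $\Psi_i^{-1}$ with $\Psi_j$ on the relevant boundary neighborhood, after passing to genuine $\mathbb{R}^2$-charts) is orientation-preserving. I expect the main obstacle to be the careful bookkeeping at the \emph{corner} points of $\pi(\cR)$, where up to four rectangles meet: there one must verify that going around the corner the induced orientations do not "flip", which amounts to showing that $\phi$ (being a generalized pseudo-Anosov, orientation-preserving by construction via $\pi$ and Theorem \ref{Theo: Basic piece projects to pseudo-Anosov}) acts compatibly on the cyclic order of the incident sectors — a fact that can be extracted from the cyclic labelling conventions for separatrices set up before Definition \ref{Defi:converge in a sector } and from the $\phi$-invariance of $\partial^s\pi(\cR)$ and $\phi^{-1}$-invariance of $\partial^u\pi(\cR)$ in Lemma \ref{Lemm: Boundaries are f-invariant}. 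Once this corner compatibility is in hand, orientability of $S'$ follows.
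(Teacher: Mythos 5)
Your overall strategy (cover $S'$ by the projected rectangles, check that the induced orientations match along shared boundary arcs, then worry about the corner points) is the same as the paper's, but the central overlap check has a genuine gap. You write: ``Take a point $y \in \pi(R_i)\cap\pi(R_j)$ lying on a shared stable boundary arc. Pick a preimage point $x\in R_i\cap R_j$ with $\pi(x)=y$.'' In this setting the rectangles of the Markov partition of the basic piece $K$ are pairwise \emph{disjoint} in $S$: the shared boundary of $\pi(R_i)$ and $\pi(R_j)$ in $S'$ is created by the collapse $\sim_R$, and $\pi^{-1}(y)$ is an equivalence class (a $u$-arc, a minimal rectangle, or a cycle region) that meets $\partial^s R_i$ and $\partial^s R_j$ in \emph{distinct} points $x_i\neq x_j$. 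So there is no common point $x$, and the subsequent claim that the transverse orientations ``agree on the overlap because both are read off the fixed ambient orientation of $S$'' is not justified as stated: what must be checked is that the identification of the two boundary segments induced by collapsing the band of $u$-arcs between $\partial^s R_i$ and $\partial^s R_j$ is orientation-coherent. This is exactly the content of the paper's proof, which (after possibly reversing the vertical orientation of $R_i$ and/or $R_j$, noting that this does not change the orientation of the rectangle) orients the stable boundaries dynamically, towards the periodic $s$-boundary points, and verifies that collapsing the $u$-arc $J$ joining $x_i$ to $x_j$ glues the orientations of $\pi(R_i)$ and $\pi(R_j)$ coherently.

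A second, related imprecision: you argue that both $\Psi_i$ and $\Psi_j$ transport orientations agreeing with ``the'' local transverse orientations of $\cF^s(\phi)$ and $\cF^u(\phi)$ at $y$. The individual transverse orientations coming from $R_i$ and from $R_j$ need not agree at $y$ (that is precisely what the paper's re-geometrization step addresses); only the resulting surface orientation (vertical together with the compatible horizontal) has to match, and concluding this requires the band/collapse analysis above rather than an appeal to a canonical transverse orientation downstairs. Finally, your treatment of the corner points is only a sketch; the paper handles it by running around a cycle $[p_1,\dots,p_k]$, orienting the adjacent non-free separatrices towards the periodic points and checking that the cyclic order of the collapsed rectangles around $\pi(p_i)$ is preserved — some such argument is needed, since several rectangles are identified at a single point there and coherence is not a purely two-rectangle statement. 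With the overlap check rewritten in terms of the collapsed $u$-arcs (and the cycle case done as in the paper), your plan goes through.
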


\begin{proof}
Like the union of all $\pi(R_i)$ covers the whole surface $S'$, in order to obtain a orientation on it, is necessary to check that the orientations of the rectangles $\{\pi(R_i)\}_{i=1}^n$ are coherent in their boundaries. Let us study the situation of two adjacent stable boundaries
Let $x_i\in \partial^s R_i$ and $x_j\in \partial^s R_j$ be two points such that $\pi(x_i)=\pi(x_j)$. We will consider the case where they are not periodic $s$ or $u$-boundary points (i.e., they do not belong to a cycle). Without loss of generality, let's assume that $x_i$ and $x_j$ belong to the same unstable leaf $F^u$ and are both contained in $K$. 

Possibly after changing the vertical orientation of $R_i$ and/or $R_j$, it can be assumed that the horizontal direction of the stable boundaries where the points $x_i$ and $x_j$ of these rectangles meet, point to the respective periodic $s$-boundary points. This change the geometrization of the Markov partition, but  for this new geometrization, there is a unique orientation of the unstable leaf $F^u$ containing $J$ such that the vertical and horizontal orientations of $R_i$ and $R_j$, with respect to this geometrization, match the orientation induced by $\pi$. When we collapse the $u$-arc $J$ with end point $x_i$ and $x_j$, the stable and unstable directions of the leaves passing trough $\pi(x_i)=\pi(x_j)$ preserve their geometrization in every rectangle, and the resulting orientation at $\pi(x_j)$ is coherent for the two adjacent rectangles $\pi(R_i)$ and $\pi(R_j)$. (see \ref{Fig: Gluing orientation}).

\begin{figure}[h]
	\centering
	\includegraphics[width=0.4\textwidth]{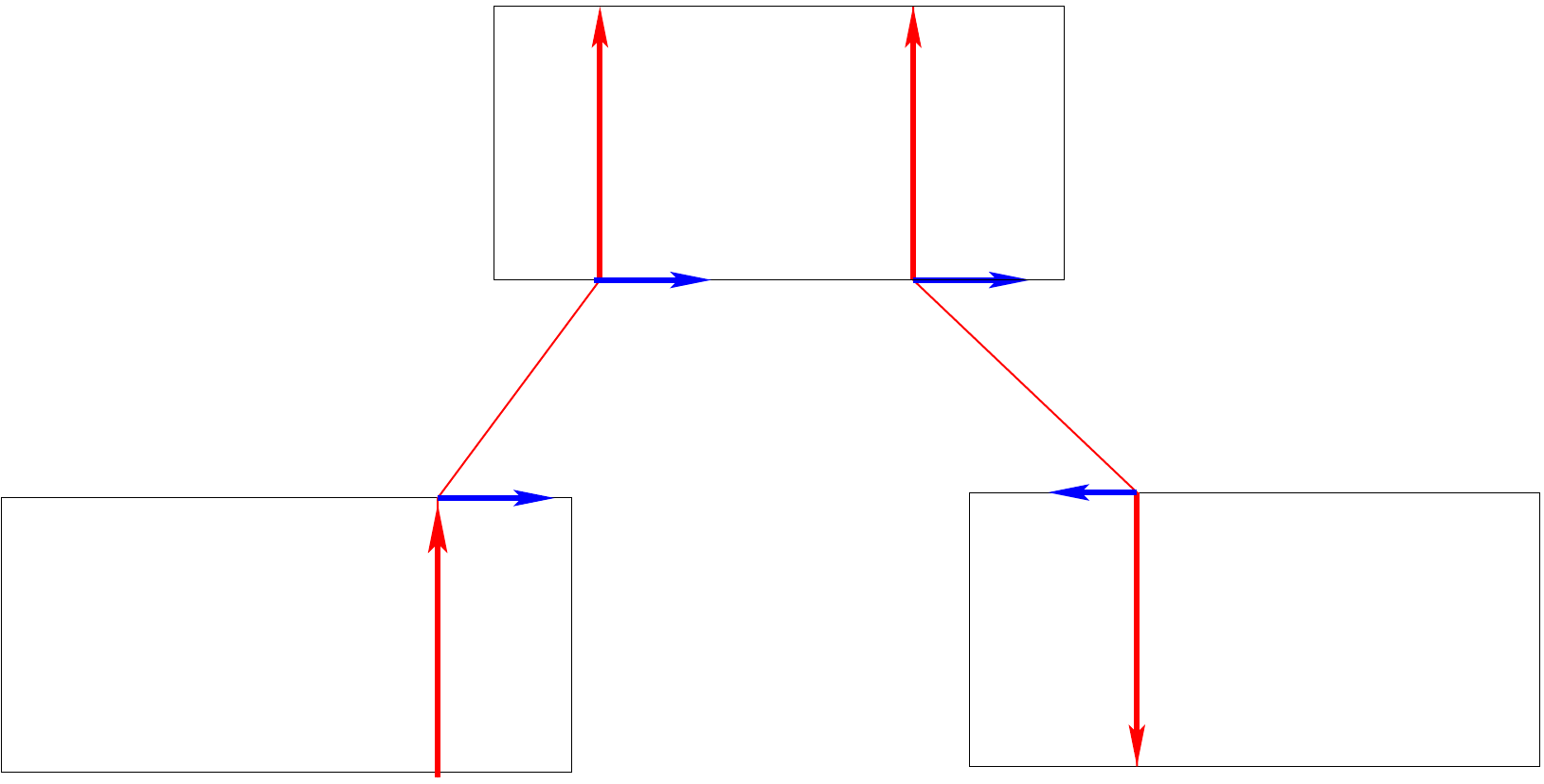}
	\caption{ The projection by $\pi$ match in the boundary}
	\label{Fig: Gluing orientation}
\end{figure}

The situation for a cycle is similar. Consider a cycle $[p_1,\cdots, p_k]$ and assign orientations to the non-free stable separatrices of each point in the cycle, pointing towards the periodic boundary point. Let's take a pair of rectangles $R_i$ and $R_j$ that contain two neighboring points $p_i$ and $p_j$ in the cycle. The stable boundaries of these rectangles intersect the two adjacent stable separatrices determined by $p_i$ and $p_j$.

We can orient $R_i$ and $R_j$ by completing their vertical orientations with the previously chosen horizontal orientation. After collapsing these two rectangles, $\pi(R_i)$ and $\pi(R_j)$ become two rectangles that share a stable interval in their boundary. The horizontal orientation of $\pi(R_i)$ and $\pi(R_j)$ is induced by the orientation of the two adjacent stable separatrices determined by $p_i$ and $p_j$. This forces a coherent orientation when we glue $\pi(R_i)$ with $\pi(R_j)$. This orientation consistency applies to all rectangles in the cycle, and the cyclic order around $\pi(p_i)$ must also be preserved.

\end{proof}

\begin{defi}\label{Defi: Orientation S'}
We endow the surface $S'$ with the orientation induced by the quotient homeomorphism $[\pi]: S/\phi \rightarrow S'$. 
\end{defi}

\subsubsection{The geometric type of the projection}
  The orientation of $S'$ given in Definition \ref{Defi: Orientation S'}, assigns to each rectangle in $\pi(\cR)$ the orientation induced by $\pi$ and this orientation is consistent along the boundary of the partition.

\begin{defi}\label{Defi: geometric partition induced by}
Let $\cR$ a geometric Markov partition of $f$ whose geometric type is $T$. Let $\pi(\cR)$ the geometric Markov partition of $\phi$ that consist of the  rectangles  whose orientation is induced by $\pi$,  its geometric type is denoted by $\pi(T)$ and we call $\pi(\cR)$ the geometric Markov partition induced by $\pi$.
\end{defi}

 Proposition \ref{Prop: type of basic piece is type of pseudo-Anosov} is consequence of this other proposition
 
 \begin{prop}\label{Prop: same type projection }
The geometric type $T$ is equal to $\pi(T)$
 \end{prop}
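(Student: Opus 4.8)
The statement $T = \pi(T)$ asserts that the combinatorial data $(n,\{(h_i,v_i)\},\rho,\epsilon)$ extracted from $(\phi,\pi(\cR))$ agrees with the data extracted from $(f,\cR)$. We already know from Proposition~\ref{Prop: pi cR is Markov partition} that $\pi(\cR)=\{\pi(R_i)\}_{i=1}^n$ is a Markov partition of $\phi$, and from Lemma~\ref{Lemm: S is oriented} and Definition~\ref{Defi: Orientation S'} that the orientations induced by $\pi$ on the rectangles are globally coherent, so $\pi(\cR)$ is a bona fide geometric Markov partition and $\pi(T)$ is well defined. The number of rectangles is $n$ in both cases by construction, so what remains is to match, for each $i$: the number $h_i$ of horizontal sub-rectangles, the number $v_i$ of vertical sub-rectangles, the permutation $\rho$, and the sign function $\epsilon$. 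The key technical ingredient I would establish first is a lemma (call it Lemma~\ref{Lemm: pi send sub-rec in sub rec}, already flagged in the surrounding text) stating: if $H^i_j$ is a horizontal sub-rectangle of $R_i$ for $(f,\cR)$, then $\pi(H^i_j)$ is a horizontal sub-rectangle of $\pi(R_i)$ for $(\phi,\pi(\cR))$, and conversely every horizontal sub-rectangle of $\pi(R_i)$ arises this way; and symmetrically for vertical sub-rectangles $V^k_l$ and $\pi(V^k_l)$.

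\textbf{Carrying out the sub-rectangle lemma.} Recall $H^i_j = \overline{\overset{o}{R_i}\cap f^{-1}(\overset{o}{R_j})}$ (one connected component), and the horizontal sub-rectangles of $\pi(R_i)$ are, by definition applied to $\phi$, the closures of connected components of $\overset{o}{\pi(R_i)}\cap \phi^{-1}(\overset{o}{\pi(R_j)})$. Using $\pi\circ f = \phi\circ\pi$ and $\pi(\overset{o}{R_i})=\overset{o}{\pi(R_i)}$ (established inside Lemma~\ref{Lemm: pi(R) is a rectangle} and Lemma~\ref{Lemm: disjoint interiors}), one gets $\pi(\overset{o}{R_i}\cap f^{-1}(\overset{o}{R_j})) = \overset{o}{\pi(R_i)}\cap \phi^{-1}(\overset{o}{\pi(R_j)})$. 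The only real content is that $\pi$ induces a bijection between connected components of the two sides, i.e.\ that $\pi$ neither splits nor merges the components $\overset{o}{H^i_j}$. No splitting holds because $\pi$ is continuous (a connected set stays connected). No merging: two distinct components $\overset{o}{H^i_j}$ and $\overset{o}{H^i_{j'}}$ are separated inside $R_i$ by a horizontal strip $\tilde H^i_{j''}$ whose interior meets $K$ (since in a Markov partition of a basic piece there are no double boundaries, the complementary strips are not collapsed to leaves), and $\pi$ is injective on $K$ outside the finitely many periodic boundary points; a small dimension/measure argument using Lemma~\ref{Lemm: projection of rectangle by pi} — where $\pi$ restricted to the interior of a rectangle is a covering onto an honest affine rectangle carrying $dt, ds$ from the Margulis measures — shows the images have disjoint interiors. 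Combined with Lemma~\ref{Lemm: pi preserve transversals} (that $\pi$ preserves the vertical/horizontal transverse orientations), this also shows the \emph{order} of the sub-rectangles is preserved: $H^i_1,\dots,H^i_{h_i}$ ordered bottom-to-top by the vertical direction of $R_i$ map to $\pi(H^i_1),\dots,\pi(H^i_{h_i})$ ordered bottom-to-top by the vertical direction of $\pi(R_i)$. Hence $h_i(\phi,\pi(\cR)) = h_i(f,\cR)$, and symmetrically $v_i(\phi,\pi(\cR)) = v_i(f,\cR)$, with matching labelings.

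\textbf{Matching $\rho$ and $\epsilon$.} Once the labeled sub-rectangles correspond, the rest is a diagram chase. By definition $\rho(i,j)=(k,l)$ for $(f,\cR)$ means $f(H^i_j)=V^k_l$. Applying $\pi$ and using $\pi\circ f=\phi\circ\pi$ gives $\phi(\pi(H^i_j)) = \pi(f(H^i_j)) = \pi(V^k_l)$; since $\pi(H^i_j)$ is the $j$-th horizontal sub-rectangle of $\pi(R_i)$ and $\pi(V^k_l)$ is the $l$-th vertical sub-rectangle of $\pi(R_k)$ (by the lemma of the previous paragraph), this reads exactly $\rho_{\pi(T)}(i,j) = (k,l)$. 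For the signs: $\epsilon(i,j)=+1$ for $(f,\cR)$ iff $f$ sends the chosen vertical direction of $H^i_j$ (inherited from $R_i$) to the vertical direction of $V^k_l$ (inherited from $R_k$); since the orientation induced by $\pi$ on $\pi(R_i)$ is, by Lemma~\ref{Lemm: pi preserve transversals}/Definition~\ref{defi: orientations induced by pi}, the one making $\pi$ vertical-orientation-preserving, and $\phi\circ\pi=\pi\circ f$, the map $\phi$ preserves the vertical direction of $\pi(H^i_j)$ exactly when $f$ preserves the vertical direction of $H^i_j$. Thus $\epsilon_{\pi(T)}(i,j)=\epsilon(i,j)$. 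Assembling the four pieces yields $\pi(T)=T$, and Proposition~\ref{Prop: type of basic piece is type of pseudo-Anosov} follows immediately since $\pi(\cR)$ is then a geometric Markov partition of $\phi$ of geometric type $T$.

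\textbf{Main obstacle.} The delicate point is the "no merging of components" claim in the sub-rectangle lemma: a priori $\pi$ could identify a whole complementary strip $\tilde H^i_{j}$ to a single leaf and thereby glue $\overset{o}{H^i_j}$ to $\overset{o}{H^i_{j+1}}$. Ruling this out is exactly where the hypotheses enter — $K$ mixing (so $K$ meets the interior of every strip), $K$ saddle-type without double boundaries (so complementary strips are genuine rectangles, not degenerate), and no impasse (so the equivalence classes of $\sim_R$ inside a strip are minimal rectangles or non-boundary leaf-arcs, never the whole strip). I would isolate this as the one careful argument, leaning on Lemma~\ref{Lemm: projection of rectangle by pi} and Lemma~\ref{Lemm: psi bonatti } to control $\pi$ on rectangle interiors; everything else is formal from $\pi\circ f = \phi\circ\pi$ and the orientation lemmas.
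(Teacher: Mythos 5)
Your overall route is exactly the paper's: establish the sub-rectangle correspondence (Lemma~\ref{Lemm: pi send sub-rec in sub rec}), read off $h_i,v_i$ and the labels from it, get $\rho$ by the diagram chase with $\pi\circ f=\phi\circ\pi$, and get $\epsilon$ from the fact that the orientations on $\pi(R_i)$ are the ones induced by $\pi$; all of that matches the paper's proof. The genuine problem is the justification you give for the step you yourself single out as delicate, the ``no merging'' of components. You argue that the horizontal strip separating $H^i_j$ from $H^i_{j+1}$ has interior meeting $K$ (``$K$ mixing, so $K$ meets the interior of every strip''). This is false: if $x\in K\cap R_i$ then $f(x)$ lies in some $R_k$, so $x$ lies in (the closure of) a component of $R_i\cap f^{-1}(R_k)$; hence $K\cap R_i$ is contained in the union of the horizontal sub-rectangles and the open complementary strips are precisely gaps of $K$ --- under $f$ they become the ribbons of Section~\ref{Sec: Bonatti-Langevin theory}, whose interiors are disjoint from the maximal invariant set. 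Mixing gives nothing here, and ``the strip is not collapsed'' is not available either: in general $\pi$ \emph{does} collapse entire strips (that is exactly how the derived-from-Anosov modification is undone), so injectivity of $\pi$ on $K$ cannot be propagated across the strip. As written, your separation argument therefore does not establish that $\pi(\overset{o}{H^i_j})$ and $\pi(\overset{o}{H^i_{j+1}})$ are disjoint.

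What rescues the statement, and what the paper actually invokes, is an adaptation of Lemma~\ref{Lemm: disjoint interiors}: for $y$ in the interior of $\pi(H^i_j)$ the fiber $\pi^{-1}(y)$ is a single $\sim_R$-class (a singleton of $K$, a minimal rectangle, or an arc of a non-boundary leaf), and such a class cannot meet the interiors of both $H^i_j$ and $H^i_{j+1}$. The point to use is that $\partial^s_{+1}H^i_j$ is a full-width stable segment of $R_i$ lying on a leaf of the stable lamination of $K$ and carrying points of $K$ (its $f$-image lies in the stable boundary of the partition), so by the local product structure an unstable leaf through a point of $K$ inside $H^i_j$ meets this segment in a point of $K$; consequently $u$-arcs, and hence minimal rectangles and leaf-arcs, cannot straddle the strip. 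When a strip is collapsed, it is collapsed along its unstable fibers, which glues $\pi(\partial^s_{+1}H^i_j)$ to $\pi(\partial^s_{-1}H^i_{j+1})$ but never identifies interior points of the two sub-rectangles. With this substitution for your separation argument, the remainder of your proposal (count and order of sub-rectangles, $\rho$, $\epsilon$) goes through exactly as in the paper.
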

The proof will arrive from another Lemma.

\subsubsection{Same sub-rectangles:} For each $i \in \{1, \ldots, n\}$, the rectangle $R_i$ of the Markov partition $\cR = \{R_i\}_{i=1}^n$ for the Smale diffeomorphism $(f, S)$ has horizontal sub-rectangles $\{H^i_j\}_{j=1}^n$, which are enumerated in increasing order with respect to the vertical order in $R_i$. Similarly, the horizontal sub-rectangles of the rectangle $\pi(R_i)$ of the Markov partition $\pi(\cR)$ for the pseudo-Anosov homeomorphism $(\phi, S')$ are enumerated as $\{\underline{H^i_j}\}_{j=1}^{\underline{h_i}}$ in increasing order with respect to the vertical orientation of $\pi(R_i)$ induced by $\pi$.

\begin{lemm}\label{Lemm: pi send sub-rec in sub rec}
With the previous notation, we observe that $h_i = \underline{h_i}$, meaning that the number of horizontal sub-rectangles in $R_i$ is equal to the number of horizontal sub-rectangles in $\pi(R_i)$. Furthermore, for each $j \in \{1, \ldots, n\}$, we have $\pi(H^i_j) = \underline{H^i_j}$.

Similarly, if $\{V^k_l\}_{l=1}^{v_k}$ are the vertical sub-rectangles of $R_k$ and $\{\underline{V^k_l}\}_{l=1}^{\underline{v_k}}$ are the vertical sub-rectangles of $\pi(R_k)$, we have $v_k = \underline{v_k}$, indicating that the number of vertical sub-rectangles in $R_k$ is equal to the number of vertical sub-rectangles in $\pi(R_k)$. Moreover, we have $\pi(V^k_l) = \underline{V^k_l}$ for each $l \in \{1, \ldots, v_k\}$.

\end{lemm}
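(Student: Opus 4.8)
\textbf{Proof plan for Lemma \ref{Lemm: pi send sub-rec in sub rec}.}

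The plan is to establish a bijective correspondence between horizontal sub-rectangles of $R_i$ and horizontal sub-rectangles of $\pi(R_i)$ via the map $\pi$, and likewise for vertical sub-rectangles; the statement about the counts $h_i = \underline{h_i}$ and $v_k = \underline{v_k}$ then follows automatically. By symmetry (applying the argument to $f^{-1}$, or to $\pi$ as a map between the Markov partitions of $f^{-1}$ and $\phi^{-1}$), it suffices to treat the horizontal case. Recall that the horizontal sub-rectangles of $(f,\cR)$ contained in $R_i$ are exactly the closures of the non-empty connected components of $\overset{o}{R_i} \cap f^{-1}(\overset{o}{R_k})$, and similarly the horizontal sub-rectangles of $\pi(\cR)$ contained in $\pi(R_i)$ are the closures of the non-empty connected components of $\pi(\overset{o}{R_i}) \cap \phi^{-1}(\pi(\overset{o}{R_k}))$. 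So the core task is to show that $\pi$ carries each connected component of $\overset{o}{R_i} \cap f^{-1}(\overset{o}{R_k})$ onto a connected component of $\pi(\overset{o}{R_i}) \cap \phi^{-1}(\pi(\overset{o}{R_k}))$, and that this assignment is a bijection preserving the vertical order.

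First I would observe that, by Proposition \ref{Prop: pi cR is Markov partition}, $\pi(\cR)$ is a Markov partition of $\phi$, and by Lemma \ref{Lemm: pi(R) is a rectangle} together with the semi-conjugacy $\pi \circ f = \phi \circ \pi$, we have $\pi(f^{-1}(\overset{o}{R_k})) = \phi^{-1}(\pi(\overset{o}{R_k}))$ (using that $\pi$ maps $\overset{o}{R_k}$ onto $\pi(\overset{o}{R_k})$ by Lemma \ref{Lemm: disjoint interiors}, and that $\pi$ commutes with the dynamics). Hence
$$
\pi\bigl(\overset{o}{R_i} \cap f^{-1}(\overset{o}{R_k})\bigr) = \pi(\overset{o}{R_i}) \cap \phi^{-1}(\pi(\overset{o}{R_k})),
$$
provided one checks that the preimage under $\pi$ of $\pi(\overset{o}{R_i}) \cap \phi^{-1}(\pi(\overset{o}{R_k}))$ is contained in $\overset{o}{R_i}$; this uses Lemma \ref{Lemm: disjoint interiors} (equivalence classes meeting $\overset{o}{R_i}$ stay in $\overset{o}{R_i}$) and the analogous fact for $f^{-1}(\overset{o}{R_k})$. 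Since $\pi$ is continuous and closed (it is a quotient map between compact Hausdorff spaces), it sends connected components onto connected components, and it cannot merge two distinct components $H^i_j$ and $H^i_{j'}$: their $\pi$-images lie in distinct horizontal sub-rectangles because each $H^i_j$ is separated from the next by a horizontal strip whose interior meets $K$ (the boundary of a Markov partition is accumulated by $K$ on both sides, by Lemma \ref{Lemm: Boundary of Markov partition is periodic} and the structure of basic pieces), and $\pi$ is injective on $K$ off the finite set of periodic boundary points. Concretely, the stable boundary leaves of $H^i_j$ are carried by $\pi$ to genuinely distinct stable leaves of $\pi(R_i)$, because they are limits of $W^s(K)$-leaves that are not $\sim_R$-identified. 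This gives $\pi(H^i_j) = \underline{H^i_{\tau(j)}}$ for some permutation-like assignment $\tau$, and surjectivity onto all $\underline{h_i}$ sub-rectangles follows from $\cup_i \pi(R_i) = S'$ (Lemma \ref{Lemm: the rectangles pi R cover S}) together with surjectivity of $\pi$ on $K$.

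The order-preservation $\tau = \mathrm{id}$ is where the geometrization matters: since the rectangles of $\pi(\cR)$ carry the orientation \emph{induced by} $\pi$ (Definition \ref{Defi: geometric partition induced by}), and $\pi$ restricted to the interior of $R_i$ preserves the transverse orientation of the stable foliation by Lemma \ref{Lemm: pi preserve transversals}, the vertical order of the $H^i_j$ inside $R_i$ is mapped to the vertical order of the $\underline{H^i_j}$ inside $\pi(R_i)$; hence $\pi(H^i_j) = \underline{H^i_j}$ and in particular $h_i = \underline{h_i}$. I expect the main obstacle to be the careful verification that $\pi$ does not collapse the separation between consecutive horizontal sub-rectangles — i.e., that distinct horizontal strips of $R_i$ are not swallowed into a single leaf of $\pi(R_i)$ — which requires invoking that every such separating strip contains points of $K$ in its interior (so $\pi$ keeps the images apart) and that the finitely many periodic $s,u$-boundary identifications performed by $\pi$ occur precisely on $\partial^{s,u}\cR$, never in a way that would merge two horizontal sub-rectangles that are both genuinely present. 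Once this ``no-collapse'' statement is nailed down, the rest is a formal consequence of continuity, the semi-conjugacy, and the orientation conventions, and the vertical case follows by the symmetric argument applied to $T^{-1}$ and $\phi^{-1}$.
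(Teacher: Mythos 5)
Your proposal is correct and follows essentially the same route as the paper's proof: identify each horizontal sub-rectangle as a connected component of $\overset{o}{R_i}\cap f^{-1}(\overset{o}{R_k})$, use the semi-conjugacy to see its $\pi$-image is a connected component of $\pi(\overset{o}{R_i})\cap\phi^{-1}(\pi(\overset{o}{R_k}))$, show distinct components have disjoint images (the paper invokes an adaptation of Lemma \ref{Lemm: disjoint interiors}, which you spell out via the no-collapse argument), and conclude order preservation from the orientation induced by $\pi$, with the vertical case by symmetry. The extra detail you supply on why $\pi$ cannot merge consecutive sub-rectangles is exactly the step the paper leaves implicit, so no gap.
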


\begin{proof}
		 
Let $H:=H^i_j$ be a connected component of $R_i \cap f^{-1}(R_k)$, which is an horizontal sub-rectangle of the Markov partition $\cR$. Therefore, $\overset{o}{H}$ is a connected component of $\overset{o}{R_i} \cap f^{-1}(\overset{o}{R_k})$ for certain indexes of rectangles.
	 
 Now, let $\pi(\overset{o}{H})$ is a connected component of $\pi(\overset{o}{R_i}) \cap \pi(f^{-1}(\overset{o}{R_k}))= \overset{o}{\pi(R_i)} \cap \phi^{-1}(\pi(\overset{o}{R_k}))$. Therefore $\pi(\overset{o}{H})$ is an horizontal sub-rectangle of $\pi(\cR)$. If $H_1,H_2$ are two different  horizontal sub-rectangles of $R_i$. Their interior don't intersect and a adaptation of the argument in \ref{Lemm: disjoint interiors} implied $\pi(\overset{o}{H_1})\cap \pi(\overset{o}{H_2})=\emptyset$. Therefore there is a bijection between the horizontal sub-rectangles of $R_i$ and those of $\pi(R_i)$.
 
The projection $\pi$ preserve the vertical order of the rectangles, then the rectangles $H^i_{j}$ and $H^i_{j+1}$ are projected in adjacent rectangles preserving the order, hence $\pi(H^i_{j})=\underline{H^i_{j}}$.

The proof of the assertions concerning the vertical sub-rectangles is completely similar.
\end{proof}

\begin{coro}\label{Coro: Same sub-rectangle}
For all $i\in \{1,\cdots, n\}$ and $j\in \{1,\cdots,h_i\}$, $\phi(\underline{H^i_j})=\underline{V^k_l}$ if and only if $f(H^i_j)=V^k_l$.
\end{coro}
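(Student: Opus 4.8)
The statement I must establish, Corollary~\ref{Coro: Same sub-rectangle}, is the final combinatorial bridge that lets me conclude $T = \pi(T)$ in Proposition~\ref{Prop: same type projection }: I need to show that the map $\phi$ acts on the sub-rectangles of $\pi(\cR)$ in exactly the same way that $f$ acts on those of $\cR$, under the correspondence $H^i_j \leftrightarrow \underline{H^i_j}$ and $V^k_l \leftrightarrow \underline{V^k_l}$ provided by Lemma~\ref{Lemm: pi send sub-rec in sub rec}. The whole proof rests on the semiconjugacy relation $\pi \circ f|_{\Delta(K)} = \phi \circ \pi$ from Theorem~\ref{Theo: Basic piece projects to pseudo-Anosov}, together with the identifications already proven in Lemma~\ref{Lemm: pi send sub-rec in sub rec}.

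First I would fix indices $i \in \{1,\dots,n\}$ and $j \in \{1,\dots,h_i\}$, and suppose $f(H^i_j) = V^k_l$, i.e.\ $f$ maps the horizontal sub-rectangle $H^i_j$ of $R_i$ homeomorphically onto the vertical sub-rectangle $V^k_l$ of $R_k$. Applying $\pi$ to both sides and using the semiconjugacy $\pi \circ f = \phi \circ \pi$ on $\Delta(K)$ (note $H^i_j \subset R_i \subset \Delta(K)$, so the relation applies), I get $\phi(\pi(H^i_j)) = \pi(f(H^i_j)) = \pi(V^k_l)$. By Lemma~\ref{Lemm: pi send sub-rec in sub rec}, $\pi(H^i_j) = \underline{H^i_j}$ and $\pi(V^k_l) = \underline{V^k_l}$, hence $\phi(\underline{H^i_j}) = \underline{V^k_l}$, which is one direction of the claimed equivalence. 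I should be a little careful to phrase this at the level of the actual sets (closures of rectangles) rather than just their interiors; since $\pi$ is continuous and the rectangles are compact, $\pi$ of a closed rectangle is its closed image, and the interior-level statement from Lemma~\ref{Lemm: pi send sub-rec in sub rec} propagates to closures, so no difficulty arises here.

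For the converse direction, suppose $\phi(\underline{H^i_j}) = \underline{V^k_l}$. I would argue that $f(H^i_j)$ is \emph{some} vertical sub-rectangle $V^{k'}_{l'}$ of the Markov partition $\cR$ (this is just the defining Markov property of $\cR$, Definition~\ref{Defi: Markov partition}), and then apply the direction already proved to conclude $\phi(\underline{H^i_j}) = \underline{V^{k'}_{l'}}$. Comparing with the hypothesis gives $\underline{V^k_l} = \underline{V^{k'}_{l'}}$, and since the bijective correspondence $V \leftrightarrow \underline{V}$ of Lemma~\ref{Lemm: pi send sub-rec in sub rec} sends distinct vertical sub-rectangles of $\cR$ to distinct vertical sub-rectangles of $\pi(\cR)$, we recover $(k,l) = (k',l')$, i.e.\ $f(H^i_j) = V^k_l$. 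This completes the equivalence.

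\textbf{Main obstacle.} There is essentially no deep obstacle remaining: the substantive work was already done in establishing the semiconjugacy (Theorem~\ref{Theo: Basic piece projects to pseudo-Anosov}) and the sub-rectangle correspondence (Lemma~\ref{Lemm: pi send sub-rec in sub rec}). The only point requiring genuine care is making sure the set-theoretic manipulations are done at the correct level — one must check that $\pi(f(H^i_j))$ really equals $\phi(\pi(H^i_j))$ as sets (not merely up to the $\sim_R$-collapse ambiguity), which follows because $\pi$ is exactly the quotient map for $\sim_R$ and $f$ descends to $\phi$, so equality of the images is automatic. Once the equivalence $\phi(\underline{H^i_j}) = \underline{V^k_l} \iff f(H^i_j) = V^k_l$ is in hand, Proposition~\ref{Prop: same type projection } follows by comparing, entry by entry, the data $(n, \{(h_i,v_i)\}, \rho, \epsilon)$ defining $T$ and $\pi(T)$: the counts $h_i = \underline{h_i}$ and $v_k = \underline{v_k}$ come from Lemma~\ref{Lemm: pi send sub-rec in sub rec}, the permutation $\rho$ matches by this corollary, and the sign data $\epsilon$ matches because $\pi$ preserves the transverse orientations of both foliations on each rectangle (Lemma~\ref{Lemm: pi preserve transversals}), so $f$ reverses the vertical direction on $H^i_j$ exactly when $\phi$ reverses it on $\underline{H^i_j}$.
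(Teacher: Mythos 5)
Your proposal is correct and follows essentially the same route as the paper: both directions use the semiconjugacy $\pi\circ f=\phi\circ\pi$ together with the identifications $\pi(H^i_j)=\underline{H^i_j}$, $\pi(V^k_l)=\underline{V^k_l}$ from Lemma~\ref{Lemm: pi send sub-rec in sub rec}, and the converse is handled exactly as in the paper by writing $f(H^i_j)=V^{k'}_{l'}$ and using injectivity of the correspondence to force $(k',l')=(k,l)$. The extra remarks on closures and on the set-level equality $\pi(f(H^i_j))=\phi(\pi(H^i_j))$ are harmless refinements of the same argument.
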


\begin{proof}

If $f(H^i_j)=V^k_l$, then $\pi(f(H^i_j))=\pi(V^k_l)$. Using the semi-conjugation, we obtain that $\phi(\pi(H^i_j))=\phi(\underline{H^i_j})=\underline{V^k_l}$.

Conversely, if $\phi(\underline{H^i_j})=\underline{V^k_l}$, then $\underline{H^i_j}=\pi(H^i_j)$ and $\underline{V^k_l}=\pi(V^k_l)$. In the case that $f(H^i_j)=V^{k'}{l'}$, it is clear that $\underline{V^k_l}=\pi(f(H^i_j))=\pi(V^{k'}{l'})=\underline{V^{k'}_{l'}}$, which implies $k'=k$ and $l'=l$.

\end{proof}

\begin{rema}\label{Rema: pi preserve the orientation}
As the orientation in $\pi(R_i)$ is induced by $\pi$, it is clear that $\pi$ preserves the vertical orientation relative to $R_i$ and $\pi(R_i)$. Similarly, $\pi$ preserves the horizontal orientation relative to $R_k$ and $\pi(R_k)$.

This implies that $f$ restricted to $H^i_j$ preserves the vertical orientation if and only if $\phi$ restricted to $\underline{H^i_j}$ preserves the vertical orientation.
\end{rema}

\subsubsection{Proof of proposition \ref{Prop: type of basic piece is type of pseudo-Anosov}. } \label{Subsec: Proof Proposition}
 After \ref{Coro: Same sub-rectangle} and Remark \ref{Rema: pi preserve the orientation} we can conclude the proof of Proposition \ref{Prop: type of basic piece is type of pseudo-Anosov}.
 
\begin{proof}
	Let  $T(\cR)=\{n,\{(h_i,v_i)\}_{i=1}^n, (\rho,\epsilon)\}$ be the geometric type of $\cR$, and let $\pi(T)=\{n',\{(h'_i,v'_i)\}_{i=1}^{n'}, (\rho',\epsilon')\}$  be the geometric type of $\pi(\cR)$.
	
	The number of rectangles in $\cR$ and $\pi(\cR)$ is equal to $n$, and for every $i\in \{1,\cdots, n\}$, Lemma \ref{Lemm: pi send sub-rec in sub rec} implies that $h_i=h_i'$ and $v_i=v_i'$.
	
	The function $\rho(i,j)=(k,l)$ if and only if $f(H^i_j)=V^k_l$. Corollary \ref{Coro: Same sub-rectangle} states that this occurs if and only if $\phi(\underline{H^i_j})=\underline{V^k_l}$. In terms of the geometric type, this is equivalent to the fact that $\rho'(i,j)=(k,l)$.
	
	Finally, Remark \ref{Rema: pi preserve the orientation} tells us that $\epsilon(i,j)$ is $1$ if and only if $\epsilon'(i,j)$ is $1$. This completes our proof.

\end{proof}
 
\subsection{The pseudo-Anosov class have finite genus and no impasse.}\label{Sec: Tipes PA finite genus but not impasse}

In this section, we assume that $T$ is a geometric type in the pseudo-Anosov class. We are going to prove that its incidence matrix is mixing, it genus is finite and does not display any impasse. The arguments of this section use the equivalence between the combinatorial and topological conditions of finite genus and impasse.

Nowadays the following Proposition is a  classic result, its proof could be found for example in \cite{farb2011primer} and \cite{fathi2021thurston} and directly implies  the first Item of Proposition \ref{Prop: pseudo-Anosov iff basic piece non-impace}.

\begin{prop}\label{Prop: Incidence matrix mixing}
	Let $\cR$ be the Markov partition of a generalized pseudo-Anosov homeomorphism  then its incidence matrix is \emph{mixing}, i.e. there exists a number $n\in \NN$ such that for all $i,j\in {1,\cdots,n}$, $a_{i,j}^{(m)}>0$.
\end{prop}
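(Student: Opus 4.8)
The statement asserts that the incidence matrix of a Markov partition for a generalized pseudo-Anosov homeomorphism is mixing. The plan is to reduce this to the topological mixing of $f$ itself, which follows from the minimality of the invariant foliations (Proposition \ref{Prop: pseudo-Anosov properties.}, item (2)). Recall that a nonnegative $n\times n$ matrix $A=(a_{ij})$ is mixing in the sense of Definition \ref{Defi: Mixing and binary} if there is $N$ with $A^N$ positive-definite; since the coefficients $a^{(m)}_{ij}$ of $A^m$ count the number of sub-rectangles of $R_i$ that $f^m$ maps into vertical sub-rectangles of $R_j$ (Definition \ref{Defi: Incidence matrix markov partition} iterated), it suffices to show $a^{(m)}_{ij}>0$ for all large $m$; positive-definiteness for a single exponent then follows because, by Proposition \ref{Prop: bound positive incidence matriz }, a nonnegative matrix that is mixing in the weaker "eventually positive entrywise" sense already has $A^n$ positive.

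First I would fix indices $i,j$ and a point $x_0$ in the interior of $R_i$ that lies on an unstable leaf segment; by minimality of $\mathcal{F}^u$, the unstable leaf through $x_0$ is dense in $S$, and in particular enters $\overset{o}{R_j}$. More efficiently, I would argue directly at the level of rectangles: since $f$ expands the $\mu^u$-measure of every unstable arc by the factor $\lambda>1$, the image $f^m(J)$ of any unstable interval $J\subset \overset{o}{R_i}$ has $\mu^u$-length $\lambda^m \mu^u(J)\to\infty$. An unstable interval of sufficiently large $\mu^u$-length inside $S$ must cross some rectangle $R_j$ completely in the unstable direction, because the rectangles of $\cR$ have finite $\mu^u$-width and there are finitely many of them; here one uses the Markov property (items iii, iv of Definition \ref{Defi: Markov partition}) to see that once $f^m(J)$ meets $\overset{o}{R_j}$ with a long enough unstable sub-arc, the closure of a connected component of $f^m(\overset{o}{R_i})\cap\overset{o}{R_j}$ is a genuine vertical sub-rectangle of $R_j$, hence contributes to $a^{(m)}_{ij}$. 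The key quantitative point is that the threshold length after which "long unstable arc in $S$ $\Rightarrow$ crosses $R_j$" can be chosen uniformly in $j$, so there is a single $m_0=m_0(i,j)$ (depending only on $\mu^u(\partial^u R_i)$ and on $\min_k \mu^u$-width of $R_k$) with $a^{(m)}_{ij}>0$ for all $m\ge m_0$; taking $N=\max_{i,j} m_0(i,j)$ and invoking nonnegativity of $A$ (so positivity persists under further multiplication) gives $A^N>0$.

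The step I expect to be the main obstacle is the uniform-threshold claim: one must verify carefully that a long unstable arc necessarily traverses some rectangle \emph{in the unstable direction} rather than merely wandering through the stable boundary, and that the resulting intersection is a vertical sub-rectangle and not a degenerate piece. This is handled using Lemma \ref{Lemm: Boundary of Markov partition is periodic} (the unstable boundary of $\cR$ lies on finitely many unstable leaves of periodic points, so a generic long unstable arc cannot stay in it) together with the local product structure of the rectangles: if $f^m(J)$ enters $\overset{o}{R_j}$ and is too long to terminate inside $R_j$, it must exit through $\partial^u R_j$, producing a full vertical crossing. Alternatively, and perhaps more cleanly, I would cite that $f$ is topologically mixing — itself a standard consequence of minimality of the foliations, as recorded in \cite[Lemma 10.21]{fathi2021thurston} and already used implicitly after Definition \ref{Defi: Sub-shift of finite type} — and then note that topological mixing of $f$ transfers to the shift $(\Sigma_{A},\sigma)$ through the semi-conjugacy $\pi_f$ of Proposition \ref{Prop:proyecion semiconjugacion}, which forces $A$ to be mixing: given $i,j$, mixing of $f$ gives $m$ with $f^m(\overset{o}{R_i})\cap\overset{o}{R_j}\ne\emptyset$ for all large $m$, and the Markov property upgrades this nonempty intersection to the existence of an admissible word of length $m+1$ from $i$ to $j$, i.e. $a^{(m)}_{ij}>0$. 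Either route closes the argument; I would present the second one since it keeps the proof short and leverages results already invoked in the paper.
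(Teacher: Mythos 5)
The paper never proves this proposition at all: it is recorded as a classical fact with a pointer to \cite{farb2011primer} and \cite{fathi2021thurston}, so your first route is in effect a reconstruction of the standard argument behind those citations, and it is essentially correct. Two caveats on that route. First, the uniform-threshold claim is \emph{not} a consequence of ``finitely many rectangles of finite width,'' as your parenthetical $m_0(i,j)$ estimate suggests; what you actually need is the bounded-return-time property of a minimal foliation on a compact surface (there is an $L$ such that every unstable leaf segment of $\mu^s$-length at least $L$ meets the interior of every fixed rectangle), which follows from minimality of $\mathcal{F}^u$ plus compactness and is the real content of the proposition --- state and prove (or cite) it explicitly. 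Note also that in the paper's conventions unstable arcs are measured by $\mu^s$, not $\mu^u$. Second, once $f^m(\overset{o}{R_i})\cap\overset{o}{R_j}\neq\emptyset$ for all $m\ge m_0(i,j)$, taking $N=\max_{i,j}m_0(i,j)$ already gives $A^N>0$; the appeal to Proposition \ref{Prop: bound positive incidence matriz } and to ``positivity persists under further multiplication'' is unnecessary (and the latter is false for a fixed entry of a general nonnegative matrix --- it is the ``for all $m\ge m_0$'' form of your estimate that saves you).

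The alternative route you say you would actually present is circular as written. The citation \cite[Lemma 10.21]{fathi2021thurston} is precisely the statement that the incidence matrix of a Markov partition of a pseudo-Anosov map is mixing --- that is exactly how the paper invokes it just before Definition \ref{Defi: Simbolically prensentable and induced shift} --- and in the standard references topological mixing of $f$ is deduced from this primitivity, not the other way around. Moreover, the projection $\pi_f$ of Proposition \ref{Prop:proyecion semiconjugacion} only exists after the subshift $(\Sigma_A,\sigma)$ has been defined, which in this paper presupposes that $A$ is mixing; and in any case a semi-conjugacy from the shift \emph{onto} $f$ transports dynamical properties forward, not backward, so nothing is ``transferred to the shift'' this way. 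If you want the short version, you must first prove topological mixing of $f$ independently from minimality and uniform expansion --- which is your first route in disguise. Keep the first route (with the uniformity lemma made explicit, and the perturbation of the orbit segment off $\partial\cR$ to produce an admissible word of length $m$) and drop the second.
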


According to Proposition \ref{Prop: Types transitive have a realization} any geometric type without double boundaries admits a realization. The following Lemma can be applied to a geometric type $T$ in the pseudo-Anosov class and implies that $T$ does not have double boundaries.

\begin{lemm}\label{Lemm: T mixing implies non double boundaries}
If the incidence matrix of $T$ is mixing, then $T$ does not have double boundaries.
\end{lemm}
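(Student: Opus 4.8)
\textbf{Proof plan for Lemma \ref{Lemm: T mixing implies non double boundaries}.}

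The plan is to argue by contraposition: I assume $T$ has a double boundary and show that the incidence matrix $A(T)$ cannot be mixing. Recall from Definition \ref{Defi: double boundary} that a double $s$-boundary of $T$ is a cycle of indices $i_1 \to i_2 \to \cdots \to i_k \to i_{k+1} = i_1$ such that for every $t \in \{2,\dots,k\}$ one has $h_{i_t} = 1$ and $\rho_T(i_t,1) = (i_{t+1},l)$ for some $l$. The key observation is that if $h_{i_t} = 1$, then the rectangle $R_{i_t}$ has a single horizontal sub-rectangle, so the row of $A(T)$ indexed by $i_t$ has exactly one nonzero entry, namely a $1$ in column $i_{t+1}$ (this follows directly from Definition \ref{Defi: Incidence matrix of a type}, since $a_{i_t,m} = \#\{j : \rho_T(i_t,j) = (m,l)\}$ and there is only the value $j=1$ to consider).

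First I would set up the combinatorial bookkeeping. Let $\{e_i\}_{i=1}^n$ be the canonical basis of $\mathbb{R}^n$ (thinking of $A=A(T)$ as acting on row or column vectors — I will act on the right on row vectors, or equivalently track where paths in the associated graph go). Since each $i_t$ with $2 \le t \le k$ has out-degree exactly $1$ pointing to $i_{t+1}$, a path in the graph of $A$ that ever enters one of the vertices $i_2,\dots,i_k$ is forced to continue along the cycle $i_2 \to i_3 \to \cdots \to i_k \to i_1 \to i_2 \to \cdots$. In particular, consider the vertex $i_2$: the only edge out of $i_2$ goes to $i_3$, the only edge out of $i_3$ goes to $i_4$, and so on around the cycle. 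Therefore for every $m \in \mathbb{N}$, the $m$-th power $A^m$ has the property that its row indexed by $i_2$ has a single nonzero entry, located in column $i_{2+m}$ (indices taken cyclically modulo the length of the cycle among $\{i_2,\dots,i_k,i_1\}$ — or more carefully, one tracks the unique vertex reached after $m$ steps from $i_2$). Since the cycle is finite, this reached vertex is never equal to, say, a vertex not on the cycle, or more simply: the row of $A^m$ indexed by $i_2$ always has at least one zero entry (it has exactly one nonzero entry and $n \ge 2$, since a cycle of indices together with the Markov condition forces $n \ge 2$; if $n=1$ the single rectangle would reduce to an interval, contradicting that $\mathcal{R}$ is a genuine rectangle partition). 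Hence $A^m$ is never positive definite, so $A$ is not mixing.

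The main obstacle — really the only subtlety — is making the indexing in the cyclic argument clean, especially distinguishing the case where the cycle $i_1 \to \cdots \to i_k \to i_1$ visits all of $\{1,\dots,n\}$ versus only a proper subset, and handling the vertex $i_1$ (whose out-degree need not be $1$, since the definition only requires $h_{i_t}=1$ for $t \in \{2,\dots,k\}$). To sidestep this I would phrase the argument purely in terms of the subgraph spanned by $\{i_2,\dots,i_k\}$: each of these vertices has out-degree exactly $1$ and all of these edges stay inside the closed walk $i_2 \to i_3 \to \cdots \to i_k \to i_1$, so from $i_2$ one can only reach a bounded cyclic set of vertices, and in particular for every $m$ there is some column in which the $i_2$-row of $A^m$ vanishes. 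This contradicts mixing, which requires $A^N$ to be positive definite for some $N$ (Definition \ref{Defi: Mixing and binary}). The dual statement for a double $u$-boundary follows by applying the same reasoning to $T^{-1}$, whose incidence matrix is $A(T)^{\mathsf{T}}$ (or by the symmetric column-based argument), and noting that $T^{-1}$ is mixing if and only if $T$ is. This completes the contrapositive, hence the lemma.
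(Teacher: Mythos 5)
Your core argument is the same as the paper's: a cycle of indices each having $h_{i_t}=1$ forces the corresponding rows of $A(T)$ to have a single entry equal to $1$, so the basis vectors indexed by the cycle are permuted among themselves, every power $A^m$ retains zero entries in those rows, and $A$ cannot be mixing; the $u$-case via $T^{-1}$ is a fine (and slightly more explicit) way to finish, where the paper simply treats the $s$-case.

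The one place you deviate is the "sidestep" for the vertex $i_1$, and that step fails as stated. If $h_{i_1}>1$, then the forced path starting at $i_2$ reaches $i_1$ after $k-1$ steps and is no longer forced: from $i_1$ it may branch, and after further iterations the $i_2$-row of $A^m$ can become entirely positive, so it is not true that "from $i_2$ one can only reach a bounded cyclic set of vertices." Indeed, under the literal reading of Definition~\ref{Defi: double boundary} (only $t\in\{2,\dots,k\}$ constrained) the lemma itself is false: take $n=2$, $h_1=2$, $h_2=1$, with $\rho_T(1,1)=(1,\cdot)$, $\rho_T(1,2)=(2,\cdot)$, $\rho_T(2,1)=(1,\cdot)$; the cycle $1\to 2\to 1$ satisfies the displayed conditions, yet $A(T)$ has entries $a_{11}=a_{12}=a_{21}=1$, $a_{22}=0$, which is mixing. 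The intended definition — as the discussion preceding it and the paper's own proof (which uses $A(e_{i_s})=e_{i_{s+1}}$ for \emph{every} index of the cycle) make clear — requires $h_{i_t}=1$ for all indices in the cycle. With that reading your argument closes exactly as in the paper; so the fix is not a new estimate but simply to use the full-cycle hypothesis rather than trying to work only with the subgraph on $\{i_2,\dots,i_k\}$.
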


\begin{proof}

	Let $\{e_i\}_{i=1}^n$  be the canonical basis of $\mathbb{R}^n$, and let $A$ be the incidence matrix of $T$. We recall that the entry $a_{ij}$ represents the number of horizontal sub-rectangles of $R_i$ that are mapped to $R_j$. In particular, if $h_i=1$, there exists a unique $j_0$ such that $f(R_i)\cap R_{j_0}$ is a horizontal sub-rectangle of $R_{j_0}$. This implies that $a_{ij_0}=1$, and for all $j\neq j_0$, $a_{ij}=0$.
	
	The condition of double $s$-boundaries implies the existence of $\{i_s\}_{s=1}^S\subset\{1,\cdots,n\}$  such that$A(e_{i_s})=e_{i_{s+1}}$ for $1\leq s<S$ and $A(e_{i_S})=e_{i_1}$. This forces $A^m$ to not be positive definite for any $m\in \mathbb{N}$, which contradicts the mixing property of the incidence matrix.
	
	Therefore, if $T$ has a mixing incidence matrix, it does not have double $s$-boundaries.
	
\end{proof}

This lemma have the following corollaries.

\begin{coro}\label{coro: pseudo Anosov non-doble boundaries}
	If $T$ is in the pseudo-Anosov class, then $T$ does not have double boundaries.
\end{coro}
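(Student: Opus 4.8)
\textbf{Proof proposal for Corollary \ref{coro: pseudo Anosov non-doble boundaries}.}

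The plan is to combine two results already established in the excerpt. First, Proposition \ref{Prop: Incidence matrix mixing} asserts that the incidence matrix of a Markov partition of any generalized pseudo-Anosov homeomorphism is mixing. Second, if $T$ is in the pseudo-Anosov class, then by Definition \ref{Defi: pseudo-Anosov class} there exists a pseudo-Anosov homeomorphism $f$ together with a geometric Markov partition $\cR$ with $T = T(f,\cR)$; and the incidence matrix $A(T)$ of the abstract geometric type coincides with the incidence matrix $A(f,\cR)$ of that Markov partition (this is the Lemma right after Definition \ref{Defi: Incidence matrix of a type}). Putting these together, $A(T)$ is mixing whenever $T$ lies in the pseudo-Anosov class.

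With $A(T)$ known to be mixing, I would invoke Lemma \ref{Lemm: T mixing implies non double boundaries}, which states precisely that a geometric type whose incidence matrix is mixing has no double boundaries (neither of $s$-type nor, by applying the same statement to $T^{-1}$ whose incidence matrix is $A(T)^{\mathrm{transpose}}$ and hence also mixing, of $u$-type). Therefore $T$ has no double boundaries, which is exactly the assertion of the corollary.

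Concretely, the proof is just the chain of implications: $T \in \mathcal{GT}(\mathrm{pA})$ $\Rightarrow$ there is a realization $(f,\cR)$ of $T$ $\Rightarrow$ $A(T) = A(f,\cR)$ is mixing (Proposition \ref{Prop: Incidence matrix mixing}) $\Rightarrow$ $T$ has no double boundaries (Lemma \ref{Lemm: T mixing implies non double boundaries}). There is essentially no obstacle here; the only point worth spelling out is the remark that the double-$u$-boundary case follows from the double-$s$-boundary argument applied to the inverse geometric type $T^{-1}$, whose incidence matrix is the transpose of $A(T)$ and is mixing precisely when $A(T)$ is. Hence the statement follows with no further work.

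\begin{proof}
Suppose $T$ is in the pseudo-Anosov class. By Definition \ref{Defi: pseudo-Anosov class}, there exists a generalized pseudo-Anosov homeomorphism $f:S\to S$ with a geometric Markov partition $\cR$ such that $T = T(f,\cR)$. By the lemma following Definition \ref{Defi: Incidence matrix of a type}, the incidence matrix of the abstract geometric type satisfies $A(T) = A(f,\cR)$. Proposition \ref{Prop: Incidence matrix mixing} tells us that $A(f,\cR)$ is mixing, so $A(T)$ is mixing. By Lemma \ref{Lemm: T mixing implies non double boundaries}, a geometric type with mixing incidence matrix has no double $s$-boundaries. Applying the same lemma to the inverse geometric type $T^{-1}$, whose incidence matrix is the transpose of $A(T)$ and is therefore also mixing, we conclude that $T$ has no double $u$-boundaries either. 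Hence $T$ has no double boundaries.
\end{proof}
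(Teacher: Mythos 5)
Your proof is correct and follows exactly the paper's intended route: the paper derives this corollary directly from Proposition \ref{Prop: Incidence matrix mixing} together with Lemma \ref{Lemm: T mixing implies non double boundaries}, just as you do. Your added remark handling double $u$-boundaries via $T^{-1}$ (whose incidence matrix is the transpose, hence also mixing) is a welcome clarification, since the paper's definition of a double $u$-boundary is precisely a double $s$-boundary of $T^{-1}$.
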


\begin{coro}\label{Coro: T pA then realization}
If $T$  is a geometric type in the pseudo-Anosov class, $T$ it admits a realization.
\end{coro}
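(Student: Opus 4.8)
## Plan of proof for Corollary \ref{Coro: T pA then realization}

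The claim is that every geometric type $T$ in the pseudo-Anosov class admits a realization in the sense of Definition \ref{Defi: realization}. The plan is to deduce this as a short chain of implications from results already established in the excerpt, so that essentially no new work is needed beyond assembling them in the right order.

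First I would recall that by Proposition \ref{Prop: Incidence matrix mixing} (together with the way the pseudo-Anosov class is defined in Definition \ref{Defi: pseudo-Anosov class}), if $T$ is in the pseudo-Anosov class then $T$ is realized by some pair $(f,\cR)$ with $f$ pseudo-Anosov, and the incidence matrix $A(T) = A(f,\cR)$ is mixing. Next, I would invoke Lemma \ref{Lemm: T mixing implies non double boundaries}: since $A(T)$ is mixing, $T$ has no double boundaries. This is exactly the observation packaged as Corollary \ref{coro: pseudo Anosov non-doble boundaries}, so I can cite that directly. Finally, I would apply Proposition \ref{Prop: Types transitive have a realization}, which asserts that every geometric type without a double boundary admits a realization (indeed an affine concretization, which is automatically a realization). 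Chaining these three facts gives that $T$ admits a realization, which is the assertion of the corollary.

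Concretely, the proof is two sentences: "By Corollary \ref{coro: pseudo Anosov non-doble boundaries}, a geometric type $T$ in the pseudo-Anosov class has no double boundaries. By Proposition \ref{Prop: Types transitive have a realization}, every such geometric type admits a realization; hence $T$ does." I should be mildly careful about one subtlety: Proposition \ref{Prop: Types transitive have a realization} is stated for geometric types \emph{without a double boundary}, and the notion of realization there (Definition \ref{Defi: realization}) requires the maximal invariant set of the affine map $\phi$ to be hyperbolic — this is guaranteed by the remark preceding Proposition \ref{Prop: Types transitive have a realization} that every affine concretization is a realization, so there is nothing further to check.

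The only place where one could imagine an obstacle is verifying that the $T$ supplied by the pseudo-Anosov class genuinely has a \emph{mixing} incidence matrix rather than merely a non-negative or transitive one; but this is precisely the content of Proposition \ref{Prop: Incidence matrix mixing} (cf. also the cited \cite[Lemma 10.21]{fathi2021thurston}), so in fact there is no real difficulty — the corollary is a formal consequence of the preceding lemma and Proposition \ref{Prop: Types transitive have a realization}, and the proof can be given in a couple of lines.
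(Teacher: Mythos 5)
Your argument is correct and is exactly the paper's (implicit) one: Proposition \ref{Prop: Incidence matrix mixing} gives that $A(T)$ is mixing, Lemma \ref{Lemm: T mixing implies non double boundaries} (packaged as Corollary \ref{coro: pseudo Anosov non-doble boundaries}) rules out double boundaries, and Proposition \ref{Prop: Types transitive have a realization} then yields a realization. Nothing is missing.
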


\subsubsection{Some notation related to orientations}

 Let $T$ be a geometric type in the pseudo-Anosov class that we denoted by:
$$
T:=\{n,\{h_i,v_i\}_{i=1}^n, \Phi:=(\rho,\epsilon)\}.
$$
In future discussions, we will compare two different objects associated to a to the geometric type $T$:

\begin{itemize}
\item[i] A \emph{realization} by \emph{disjoint rectangles} of the geometric type $T$. We shall denote such realization by a pair \emph{partition/diffeomorphism}: $\{\cR=\{R_i\}_{i=1}^n, \phi\}$. 

\item[ii)] A \emph{geometric Markov partition} by non-disjoint rectangles, $\tilde{\cR}=\{\tilde{ R_{i}}\}_{i=1}^n,$ of a pseudo-Anosov homeomorphism $f:S\rightarrow S$ with geometric type $T$.
\end{itemize}

We shall to fix some notations:

\begin{itemize}
\item[i)] For all $i\in \{1,\cdots,n\}$, $\tilde{R_i}$ is a rectangles in the Markov partition and $R_i$ is a rectangle in the realization.

\item[ii)]  The vertical and horizontal sub-rectangles of the realization are denote by:
$$
\{V^k_l: (k,l)\in \cV(T)\} \text{ and } \{H^i_j: (i,j)\in \cH(T)\}
$$.
\item[iii)] The vertical and horizontal sub-rectangles of the Markov partition are denoted by:
$$
\{\tilde{V^k_l}:(k,l)\in \cV(T)\} \text{ and  } \{ \tilde{H^i_j}: (i,j)\in \cH(T) \}.
$$

\item[iv)] If $\alpha$ is the lower boundary of a vertical sub-rectangle $V^k_l$ in the realization, then $\tilde{\alpha}$ is the lower boundary of the corresponding vertical sub-rectangle $\tilde{V^k_l}$ in the Markov partition. Similarly, if $\alpha$ is the left boundary of $H^i_j$, then $\tilde{\alpha}$ is the left boundary of $\tilde{H^i_j}$.

\item[v)] Take two vertical sub-rectangles $V^k_{l_1}$ and $V^k_{l_2}$, with $l_1<l_2$, and let $\alpha$ and $\beta$ be the horizontal boundaries of these sub-rectangles. Suppose both boundaries are contained in either the upper or lower boundary of $R_k$. Without loss of generality, let's assume they are in the lower boundary of $R_k$. Using the fact that $l_1<l_2$, we can write $\alpha=[a_1,a_2]^s<\beta=[b_1,b_2]^s$, and we denote 
$$
\gamma=[a_2,b_1]^s
$$
 as the curve contained in the lower boundary of $R_k$ and lying between $\alpha$ and $\beta$.

\item[vi)] The curve $\tilde{\gamma}$ is the curve contained in the lower stable boundary of $\tilde{R_k}$ that corresponds to the curve obtained by taking the union of the lower boundaries of all vertical sub-rectangles in the Markov partition $\tilde{\cR}$ that lie between $\tilde{V^k_{l_1}}$ and $\tilde{V^k_{l_2}}$. This is
$$
\tilde{\gamma}:=\cup_{l_1<l<l_2}\partial^s_{-}\tilde{V^k_l}.
$$

\end{itemize}

With respect to Item $v)$ and $vi)$: if $l_2=l_1 +1$ the curve $\tilde{\gamma}$ consist in a point but $\gamma$ is a non-trivial interval.

\begin{rema}\label{Rema: geometric type rules the images}
Suppose that: $\Phi_T(i,j)=(k,l,\epsilon)$. This implies that: $\phi(\tilde{H^i_j}) = \tilde{ V^k_l}$ and  $f(H^i_j)=V^k_l$.
\end{rema}

\subsubsection{Orientation} Here we should define three different orientations of a curve contained in the stable boundary of the Markov partition and the realization. This will complete the notation that we require to deduce that the realization doesn't have the topological obstructions by looking at what kind of obstruction it induces in the Markov partition.

Let $H^i_j$ and $V^k_l$ be a horizontal and a vertical sub-rectangle of $R_i$ in the realization. They have orientations induced by the rectangle $R_i$ in which they are contained. The function $\epsilon_T$ in the geometric type $T$ measures the change in relative vertical orientation induced by the action of $f$.

\begin{defi}\label{Defi: induced and glin for realization}
	Let $\alpha$ be a horizontal boundary component of $V^k_l. $ Since $R_k$ is a geometrized rectangle, there are two types of orientations for this curve:
\begin{itemize}
	\item The \emph{induced orientation} of $\alpha$ is the one that is inherited from the horizontal orientation of $R_k$. This means that $\alpha$ is oriented in the same direction as the horizontal sides of $R_k$.
	
	\item  	The \emph{gluing orientation} of $\alpha$, defined as follows: If $\epsilon_T(i,j) = 1$, then $\alpha$ has the induced orientation. However, if $\epsilon(i,j) = -1$, then $\alpha$ has the inverse orientation of the induced orientation. In other words, if the change in relative orientation between $H^i_j$ and $V^k_l$ is negative, then the gluing orientation of $\alpha$ is the opposite of the induced orientation.
	
\end{itemize}
\end{defi}

Similarly, for the Markov partition, we can define the induced orientation and gluing orientation of the curve  $\tilde{\alpha}$.

\begin{defi}\label{Defi: Induced/gluin in Markov}
	Let  $\tilde{\alpha}$ be the corresponding curve in the Markov partition:
\begin{itemize}
	\item The \emph{induced orientation} of $\tilde{\alpha}$ is the one that is inherited from the horizontal orientation of $\tilde{R}_k$.
	
	\item  The \emph{gluing orientation} of $\tilde{\alpha}$ is defined in a similar manner as for the concretization. If $\epsilon(i,j) = 1$, then $\tilde{\alpha}$ has the induced orientation. However, if $\epsilon(i,j) = -1$, then $\tilde{\alpha}$ has the inverse orientation of the induced orientation.
\end{itemize}
\end{defi}

 If $\tilde{\alpha}$ is contained in the stable boundary of two rectangle $R_1$ and $R_2$, the induced orientation of $\tilde{\alpha}$ coincides with its glue orientation if and the horizontal orientation of $R_1$ and $R_2$ is coherent along the stable boundaries that contains $\tilde{\alpha}$.

The boundary of a geometric Markov partition is contained in the stable leaves of periodic points and we can define another orientation in such stable boundaries.

\begin{defi}\label{Def: dynmaic orientation}
	Let $\tilde{O}$ be a periodic point of $f$ and $\tilde{L}$ be a stable separatrix of $\tilde{O}$. The  \emph{dynamic orientation} of the separatrix  $\underline{L}$ is defined by declaring that: for all $x \in \tilde{L}$, $f^{2Pm}(x) < x$, where $P$ is the period of $f$. 
\end{defi}

This orientation points towards the periodic point. Suppose that $\tilde{L}$ intersects the stable boundary of $\tilde{R_i}$ in an interval $\tilde{I}$, which may or may not contain a periodic point $\tilde{P}$. The \emph{dynamic orientation} of $\tilde{I}$ is the orientation induced by the dynamic orientation of the separatrix  $\tilde{L}$ within the interval. Let's address the case of the realization.

\begin{defi} \label{Defi: Dynac orinted for realization}
Suppose $O$ is a periodic point located on the horizontal boundary $L$ of $R_i$. Let $I$ be a connected component of $L \setminus \{O\}$. It can be observed that $I$ is mapped into  a smaller interval under the action of $\phi$, and after a certain number of iterations, it becomes contained within the interior of $I$.  This contraction property in  us to define the \emph{dynamical orientation} of $I$ in such a way that it points towards the periodic point $O$.
\end{defi}

Suppose that in the realization, there is a ribbon $r$ joining the boundary $\alpha$ of $V^k_l$ with the boundary $\beta$ of $V^{k'}_{l'}$. This configuration implies that in the Markov partition of $f$, the boundary $\tilde{\alpha}$ of $\tilde{V^k_l}$ is identified with the boundary $\tilde{\beta}$ of $\tilde{V^{k'}_{l'}}$. This identification preserves the \emph{gluing orientation} of $\tilde{\alpha}$ and $\tilde{\beta}$. However, it is possible for the curves to have different induced orientations. The induced orientation may vary along the curves, but the gluing orientation remains the same, ensuring coherence in the identification process.

\subsubsection{Finite genus and not impasse} After establishing the formalism, we proceed to prove that $T$ has no impasse and none of the obstructions for finite genus. We begin by assuming that $T$ satisfies the topological conditions for have fine genus and not impasse in the first realizer $\cR_1$, which implies the existence of certain ribbons connecting the boundaries $\alpha$ and $\beta$ of vertical sub-rectangles in the realization. This implies that $\tilde{\alpha}$ and $\tilde{\beta}$ are identified in the Markov partition, and this identification must be consistent with the induced, gluing, and dynamic orientations previously defined. By carefully considering these orientations, we will be able to derive certain contradiction, thereby establishing that $T$ has finite genus and not impasse.

In the next lemma we are going to prove that if there is a stripe that joints two stable intervals inside a rectangle $R_k$ of the realization implies that such rectangle contain a spine in one of its horizontal boundaries, like is indicated in Figure \ref{Fig: Colapse point}. 

\begin{figure}[h]
	\centering
	\includegraphics[width=0.4\textwidth]{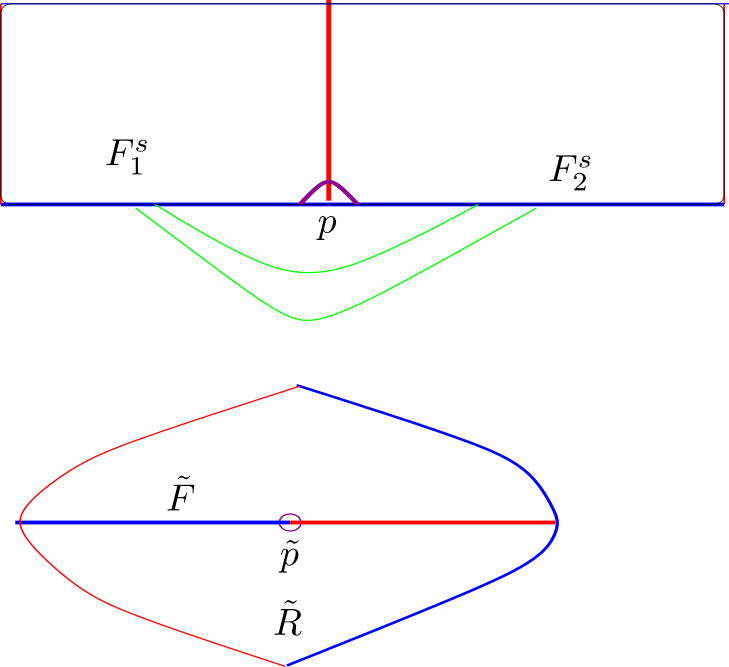}
	\caption{ Two stable separatrices collapse in a Spine }
	\label{Fig: Colapse point}
\end{figure}

\begin{lemm}\label{Lemm: gamma periodic points}
Consider a rectangle $R_k$, with the horizontal orientation of $R_k$ suppose there exists a ribbon joining the boundaries $\alpha=[a_1,a_2]^s\subset V^k_{l_1}$ and $\beta:=[b_1,b_2]^s \subset V^k_{l_2}$, assume that $l_1<l_2$ and  let $\gamma=[a_2,b_1]^s$ the curve between $\alpha$ and $\beta$. Let  $\tilde{\alpha}:=[a_1',a_2']^s$, $\tilde{\beta}:=[b_1',b_2']^s$ and $\tilde{\gamma}:=[a_1',b_2']$ be the previously defined curves with the induced orientation in $\tilde{R_k}$. In this circumstances:
\begin{itemize}
\item[i)] The curve $\tilde{\gamma}$ is a closed interval with one end point equal to a Spine type periodic point of $f$.
\item[ii)] The curve $\gamma$ contains a periodic point.
\end{itemize}
\end{lemm}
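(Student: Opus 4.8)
\textbf{Proof plan for Lemma \ref{Lemm: gamma periodic points}.}

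The plan is to exploit the semi-conjugacy $\pi:\Delta(K)\to S'$ of Theorem \ref{Theo: Basic piece projects to pseudo-Anosov} and the structure of the equivalence relation $\sim_R$ from Definition \ref{Defi: sim R equiv relation}, together with Lemma \ref{Lemm: well define neighboor } on neighboring separatrices. First I would set up the dictionary: since $T$ is in the pseudo-Anosov class, by Corollary \ref{Coro: T pA then realization} it admits a realization $(\cR=\{R_i\},\phi_{\text{real}})$, and there is a pseudo-Anosov homeomorphism $f:S\to S$ with a geometric Markov partition $\tilde\cR=\{\tilde R_i\}$ of the same geometric type $T$; the rectangles $\tilde R_k$ are the projections $\pi(R_k')$ of the rectangles of some Markov partition $\cR'$ of a basic piece $K$, via Proposition \ref{Prop: type of basic piece is type of pseudo-Anosov}. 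A ribbon $r$ joining $\alpha\subset V^k_{l_1}$ with $\beta\subset V^k_{l_2}$ corresponds (Proposition \ref{Prop: properties ribbons}) to the image under $\phi^m_{\text{real}}$ of a horizontal stripe $\tilde H^i_j\times\{m\}$ lying between two consecutive horizontal sub-rectangles of $R_i$; under the identifications that build $\tilde\cR$ from the realizer, this ribbon is collapsed, which is precisely the mechanism by which $\tilde\gamma$ (the union $\cup_{l_1<l<l_2}\partial^s_{-}\tilde V^k_l$) gets squeezed to a single leaf in $\tilde R_k$.

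For part $(i)$: I would argue that the two stable boundary arcs $\tilde\alpha$ and $\tilde\beta$ of $\tilde V^k_{l_1}$ and $\tilde V^k_{l_2}$ are, after the collapsing, the two sides of one and the same stable leaf segment of $\tilde R_k$; hence in the basic piece $K$, the $u$-arc that is the preimage of the ribbon $r$ joins a point of $W^s_1(p)$ to a point of $W^s_2(p)$ where $p$ is the $s$-boundary periodic point whose stable leaf carries this boundary of $\tilde R_k$ (such a periodic point exists by Lemma \ref{Lemm: Boundary of Markov partition is periodic}). By Lemma \ref{Lemm: well define neighboor }(i), a $u$-arc cannot join a stable separatrix to itself (this uses the no-impasse hypothesis, which holds since $T$ is pA and hence by Proposition \ref{Prop: pseudo-Anosov iff basic piece non-impace} has no impasse — but careful: at this point in the logical development we are inside the proof that pA implies no impasse, so instead I would use directly that $K$, being a mixing basic piece coming from a pA homeomorphism via the construction, has no impasse, or re-derive the needed consequence combinatorially). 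Thus $W^s_1(p)\neq W^s_2(p)$, so $p$ has (at least) two distinct non-free stable separatrices abutting the same boundary side, and the $u$-arcs between consecutive ones sweep out exactly the portion of the boundary corresponding to $\tilde\gamma$; collapsing all of these pinches the separatrices together at $p$, and since $p$ then has all its sectors on one side of the collapsed leaf, $\pi(p)$ is a $1$-pronged singularity, i.e.\ a spine of $f$. Hence $\tilde\gamma$ is a nondegenerate closed interval (it contains $\tilde\alpha$'s and $\tilde\beta$'s shared endpoint) with the spine as one endpoint; the endpoint claim follows because the spine is the periodic point sitting where the pinched separatrices meet, which is the $\tilde R_k$-boundary point $a_1'=b_2'$.

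For part $(ii)$: in the realization, $\gamma=[a_2,b_1]^s$ is the honest (nondegenerate, since $l_1<l_2$) stable interval of $R_k$ separating $V^k_{l_1}$ from $V^k_{l_2}$. I would show $\gamma$ is $\phi_{\text{real}}$-periodic by tracking that the ribbon configuration forces the endpoints $a_2,b_1$ to lie on a periodic side of $R_k$: the boundary of $R_k$ is (as in Lemma \ref{Lemm: Boundary of Markov partition is periodic}, transported to the realizer setting) eventually periodic, and since $\pi$ identifies $\gamma$ with the single point $\tilde\gamma=\{p\}$ where $p$ is periodic of some period $P$, and $\pi$ semi-conjugates $\phi_{\text{real}}$ restricted to $\Delta(K)$ with $\phi$, the arc $\gamma$ must be invariant under $\phi^P_{\text{real}}$ up to the dynamics on the boundary; then $\gamma$, being a compact stable arc invariant under a power of the contraction, must contain a fixed point of that power, which is the required periodic point of $f$ (equivalently of the realization). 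The main obstacle I expect is bookkeeping: making precise, with the correct orientations (induced/gluing/dynamic, Definitions \ref{Defi: induced and glin for realization}--\ref{Defi: Dynac orinted for realization}), that the collapsed object is genuinely a spine and not, say, a regular $2$-prong obtained by gluing two separatrices of two \emph{different} periodic points — ruling this out is exactly where Lemma \ref{Lemm: well define neighboor }(ii) (uniqueness of the adjacent separatrix) and the parity-of-corner-points fact (\cite[Lemma 8.3.5]{bonatti1998diffeomorphismes}) enter, guaranteeing the chain of adjacent separatrices along $\tilde\gamma$ closes up at a single point.
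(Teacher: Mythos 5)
Your proposal has a genuine structural gap: it is circular at the point in the paper where this lemma sits. You run the argument through a Smale basic piece $K$ realizing $T$, its domain $\Delta(K)$, the Bonatti--Jeandenans collapse $\pi:\Delta(K)\rightarrow S'$ of Theorem \ref{Theo: Basic piece projects to pseudo-Anosov}, and Lemma \ref{Lemm: well define neighboor } on adjacent separatrices. But at this stage of the logical development none of this is available: the existence of a basic piece realizing $T$ requires $T$ to have finite genus (Theorem \ref{Theo: finite genus iff realizable}), and finite genus together with the absence of an impasse is exactly what this section is in the middle of proving (indeed, item $(ii)$ of the lemma is the key input to Corollary \ref{Coro: T pA clas no com impasse}, and Lemma \ref{Lemm: well define neighboor } itself needs the no-impasse hypothesis on $K$). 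You notice the circularity yourself, but the proposed repair --- ``use directly that $K$, being a mixing basic piece coming from a pA homeomorphism via the construction, has no impasse'' --- is not substantiated: there is no such construction at this point, and by Theorem \ref{Theo: Geometric and combinatoric are equivalent} a basic piece realizing $T$ has no topological impasse if and only if $T$ has no combinatorial impasse, which is the very statement being targeted. The paper avoids all of this machinery: it works entirely inside the pseudo-Anosov surface, using the (possibly non-injective on the boundary) parametrization $\Psi:[0,1]^2\rightarrow \tilde{R_k}$ and the fact that the ribbon forces the two boundary arcs to be identified, $\tilde{\alpha}=\tilde{\beta}$; the fold endpoint $\tilde{o}$ then acquires a half-disk neighborhood mapped into $\overset{o}{\tilde{R_k}}$ with its diameter collapsed to an interval, so $\tilde{o}$ has a single separatrix, i.e.\ it is a spine, hence a (periodic) singularity lying on the periodic stable boundary leaf.

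Beyond the circularity, part $(ii)$ of your argument is also not sound as written. You assert that ``$\pi$ identifies $\gamma$ with the single point $\tilde{\gamma}=\{p\}$'': in general $\tilde{\gamma}$ is a nondegenerate interval (it is a point only when $l_2=l_1+1$), so this misreads the correspondence; and the subsequent claim that $\gamma$ is ``invariant under $\phi^P_{\text{real}}$ up to the dynamics on the boundary'' is not justified --- $\gamma$ lives in the realization, where the map is only partially defined, and no invariance of $\gamma$ is established. In the paper, the periodic point in $\gamma$ is simply the spine produced in part $(i)$, located via the folding of the stable boundary, with no invariance argument on $\gamma$ needed.
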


\begin{proof}

The curve $\tilde{\gamma}$ is a closed manifold of dimension one, it is either an interval or a circle. Since $\tilde{\gamma}$ is contained in the stable leaf $I$ of a periodic point $\tilde{ O}$ of the pseudo-Anosov homeomorphism $f$, it couldn't be a circle therefore is closed interval $I$. We claim that such interval must contains an end  point of a stable separatrix  of a periodic point  $\tilde{O}$ of $f$, and then such end point is $\tilde{ O}$.

 Look at figure \ref{Fig: tildeRk} to follow our next arguments, the curve $\tilde{\gamma}$ is the union of all the lower  (or upper) boundaries of $\tilde{ V^k_l}$ with $l_1<l<l_2$ an it is contained in inferior boundary of $\tilde{R_k}$. It could be a single point if $l_2=l_1$ or the not trivial interval, $\tilde{\gamma}:=[a_1',b_2']$, lets  to consider the last situation, let $\tilde{o}$ the extreme point of $I$ that is not in $\tilde{\alpha}$ nor $\tilde{\beta}$.
 
 \begin{figure}[h]
 	\centering
 	\includegraphics[width=0.4\textwidth]{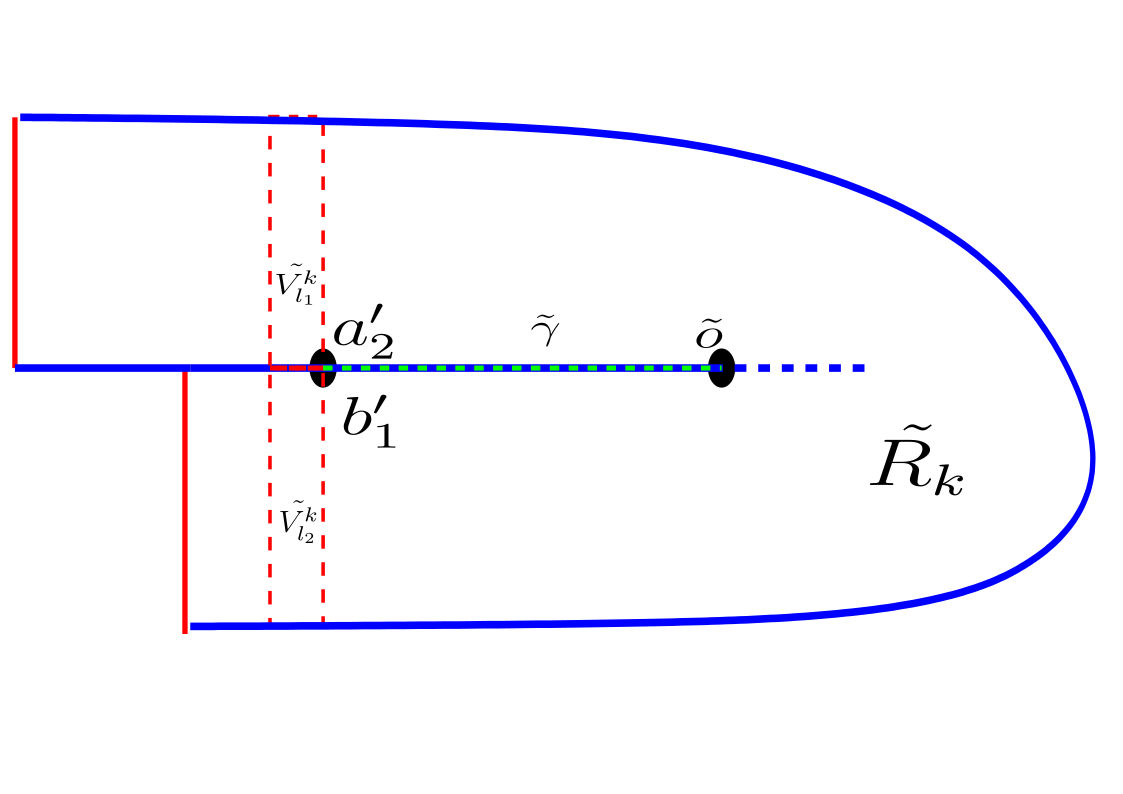}
 	\caption{Rectangle $\tilde{R_k}$ }
 	\label{Fig: tildeRk}
 \end{figure}

Let $\Psi: H := [0,1]\times[0,1] \rightarrow \tilde{R_k}$ the parametrization of the rectangle $\tilde{R_k}$. Following the induced horizontal orientation of $\tilde{R_k}$ the point $\tilde{o}$ lies in the interior of the lower boundary of $\tilde{R_k}$ and we can assume that $\Psi(t_0,0) = \tilde{o}$ for some $t_0 \in (0,1)$. Since $\tilde{R_k}$ is a rectangle and there exist two closed intervals $A_1$ and $A_2$ in $H$ such that by the action of $\Phi$ these intervals are identified, i.e. $\Phi(A_1) =\tilde{ \alpha}= \tilde{\beta}=\Psi(A_2)$, then there exists closed  semicircle $U'\Subset \overset{o}{H}\cup \partial^s H$ with center at $(t_0,0)$ that contains its  that $\Psi(U')=U'\Subset \overset{o}{R_k}$ (see Figure) . Consequently, $\tilde{o}$ has an open neighborhood $\overset{o}{U}$ that is contained in the interior of $\tilde{R_k}$, that have center in $\tilde{o}$. Moreover, it is clear that the diameter of $U'$ was collapsed by the action of $\Phi$ in a closed interval with a extreme in $\tilde{o}$ and therefore $\tilde{o}$ has only one separatrice. This allows us to deduce that $\tilde{o}$ is a Spine, and so it is equal to the periodic point $\tilde{O}$.

 \begin{figure}[h]
	\centering
	\includegraphics[width=0.5\textwidth]{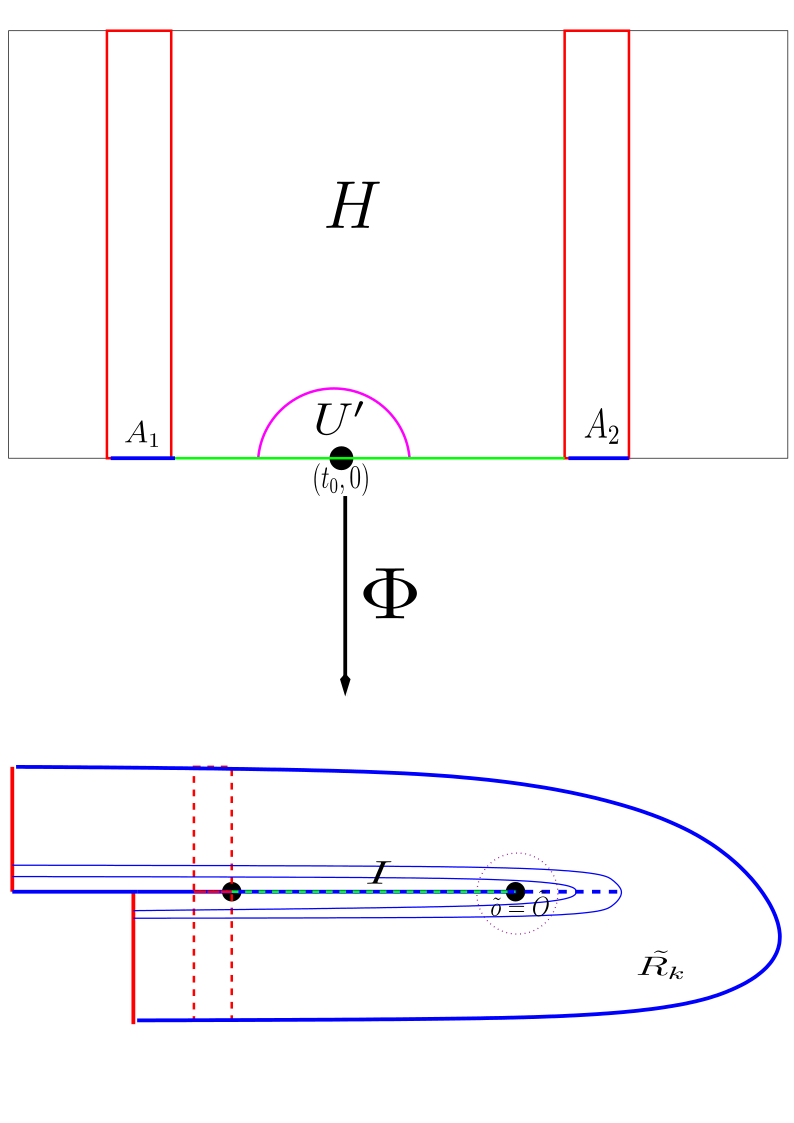}
	\caption{The projection by $\Phi$ of the rectangle $H$}
	\label{Fig: Rectangle H}
\end{figure}

The situation when $I$ is a point is similarly proved.

\end{proof}

\begin{coro}\label{Coro: T pA clas no com impasse}
A geometric type $T$ in the pseudo-Anosov class does not have combinatorial impasses.
\end{coro}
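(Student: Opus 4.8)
\textbf{Proof proposal for Corollary \ref{Coro: T pA clas no com impasse}.}

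The plan is to combine three ingredients already assembled in this section: the equivalence, for a geometric type of finite genus realized by a basic piece, between having a combinatorial impasse and the basic piece having a topological impasse (Theorem \ref{Theo: Geometric and combinatoric are equivalent}); the fact that a geometric type in the pseudo-Anosov class has finite genus and no double boundaries (Corollary \ref{coro: pseudo Anosov non-doble boundaries} together with Corollary \ref{Coro: T pA then realization}, or more precisely the implication $ii)\Rightarrow iii)$ of Proposition \ref{Prop: pseudo-Anosov iff basic piece non-impace} insofar as it is available at this point); and Lemma \ref{Lemm: gamma periodic points}, which says that a ribbon joining two distinct vertical sub-rectangles inside the \emph{same} rectangle $R_k$ of a realization forces a periodic point (a spine) to sit on the curve $\gamma$ between the corresponding stable boundaries. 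The combinatorial impasse condition of Definition \ref{Defi: Impasse combinatoric}, read on the realization, is exactly the statement that for some power $T^m$ there is a ribbon joining the boundaries of two consecutive vertical sub-rectangles $V^k_l$ and $V^k_{l+1}$ of $R_k$ with inverse gluing orientation; since the two sub-rectangles are consecutive, the interval $\gamma=[a_2,b_1]^s$ between them reduces to a point, yet Lemma \ref{Lemm: gamma periodic points}(ii) would require $\gamma$ to contain a periodic point, which is possible only if $\gamma$ is already that periodic point.

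Concretely, I would argue by contradiction. Suppose $T\in\mathcal{GT}(\textbf{pA})$ has a combinatorial impasse; then there is $m\in\NN$ such that $T^m$ has the impasse property, so there are $(i,j),(i,j+1)\in\mathcal{H}(T^m)$ with $\rho(i,j)=(k,l)$, $\rho(i,j+1)=(k,l+1)$ (up to the symmetric case $l\leftrightarrow l+1$) and $\epsilon(i,j)=-\epsilon(i,j+1)$. Since $T$ is in the pseudo-Anosov class it has finite genus and admits a realization $(\{R_i\},\phi)$; the power $T^m$ is then realized by $(\{R_i\},\phi^m)$. The two consecutive horizontal sub-rectangles $H^i_j,H^i_{j+1}$ of $R_i$ for $\phi^m$ bound a horizontal stripe $\tilde H^i_j$ whose image $r=\phi^m(\tilde H^i_j)$ is a ribbon with one horizontal boundary $\alpha$ on $V^k_l$ and the other $\beta$ on $V^k_{l+1}$, and the sign condition $\epsilon(i,j)=-\epsilon(i,j+1)$ says precisely that both boundaries lie on the same horizontal boundary component of $R_k$ (upper or lower) — this is the "impasse" picture, a ribbon that returns to the same side of the same rectangle between two adjacent vertical sub-rectangles. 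Now apply Lemma \ref{Lemm: gamma periodic points} with $l_1=l$, $l_2=l+1$: part (ii) asserts that the curve $\gamma=[a_2,b_1]^s$ lying between $\alpha$ and $\beta$ on that boundary of $R_k$ contains a periodic point of $\phi^m$. But $V^k_l$ and $V^k_{l+1}$ are \emph{consecutive} vertical sub-rectangles, so there is no vertical sub-rectangle strictly between them, and $\gamma$ has empty interior; a single-point curve between adjacent sub-rectangles cannot carry a nondegenerate stable separatrix. More carefully, Lemma \ref{Lemm: gamma periodic points}(i) tells us $\tilde\gamma$ (in a Markov partition of a pseudo-Anosov homeomorphism realizing $T^m$) is a closed interval one of whose endpoints is a spine; but when $l_2=l_1+1$ the interval $\tilde\gamma$ degenerates to a point while $\gamma$ is a nondegenerate interval of $W^s$ of the basic piece missing the basic piece in its interior — equivalently, the boundary arcs of $r$ bound a disk disjoint from $K$, i.e. a topological impasse of the basic piece realizing $T^m$, contradicting the non-impasse half of the realization hypothesis for pseudo-Anosov types.

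The cleanest route, and the one I would actually write, is to short-circuit the geometric discussion: since $T$ is in the pseudo-Anosov class, by Proposition \ref{Prop: type of basic piece is type of pseudo-Anosov} (in the direction already proved, $i)\Rightarrow ii)$, or directly since $T=T(f,\mathcal R)$ for a genuine p-A homeomorphism $f$) any realization-level impasse configuration for $T^m$ would, via Theorem \ref{Theo: Geometric and combinatoric are equivalent}, produce a topological impasse in a saddle-type basic piece whose projection is a pseudo-Anosov homeomorphism; but Lemma \ref{Lemm: gamma periodic points} combined with the fact that pseudo-Anosov foliations have no closed leaves and each leaf contains at most one singularity (Proposition \ref{Prop: pseudo-Anosov properties.}) shows such a configuration forces two distinct stable separatrices of distinct periodic points to be identified across the impasse while simultaneously the minimal-rectangle/arc structure of $\sim_R$ collapses them — this is incompatible with $f$ being pseudo-Anosov without spines in the relevant blow-up, equivalently with the non-impasse condition that is part of Proposition \ref{Prop: pseudo-Anosov iff basic piece non-impace}. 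I expect the main obstacle to be bookkeeping: making precise, in the degenerate case $l_2=l_1+1$, that "the curve $\gamma$ between $V^k_{l}$ and $V^k_{l+1}$ contains a periodic point" is genuinely contradictory — one must check that the periodic point supplied by Lemma \ref{Lemm: gamma periodic points} cannot simply be an interior (non-boundary) periodic point hidden in $\gamma$, which is ruled out because $\gamma$ lies in $\partial^s\mathcal R$ and hence, by Lemma \ref{Lemm: Boundary of Markov partition is periodic}, only on stable leaves of $s$-boundary periodic points, whose separatrices are nondegenerate intervals, not points. Once that degeneracy is handled, the contradiction is immediate and the corollary follows.
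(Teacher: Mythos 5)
Your skeleton matches the paper's: pass to $T^m$, realized on the Markov-partition side by $f^m$ and on the realization side by $\phi^m$, and run Lemma \ref{Lemm: gamma periodic points} on the configuration produced by the impasse property. But the contradiction you actually draw is not the paper's, and as written it has two problems. First, you assert (twice) that when $l_2=l_1+1$ the curve $\gamma=[a_2,b_1]^s$ degenerates to a point or has empty interior. That is backwards: in the realization the rectangles $V^k_l$ and $V^k_{l+1}$ are disjoint, so $\gamma$ is a nondegenerate stable interval; it is the Markov-partition curve $\tilde\gamma$ that collapses when $l_2=l_1+1$ --- the paper says this explicitly right after items v) and vi) of the notation. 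Because of this mix-up your ``immediate'' contradiction (a single-point curve cannot carry the periodic point) evaporates. The correct, and much simpler, clash --- the one the paper uses --- is that $\gamma$ lies strictly between two \emph{consecutive} vertical sub-rectangles of the realization, so its interior misses the maximal invariant set and carries no periodic point of $\phi^m$, while Lemma \ref{Lemm: gamma periodic points}(ii) forces $\gamma$ to contain one.

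Second, the fallback you land on --- that the configuration is ``a topological impasse of the basic piece realizing $T^m$, contradicting the non-impasse half of the realization hypothesis for pseudo-Anosov types'' --- is circular at this point of the paper. That a type in the pseudo-Anosov class is realized by a basic piece \emph{without impasse} is item (i) of Proposition \ref{Prop: pseudo-Anosov iff basic piece non-impace}, and the implication (ii)$\Rightarrow$(i) becomes available only after the present section finishes (ii)$\Rightarrow$(iii); this corollary is part of that very step. Moreover, at this stage you do not even have a basic piece realizing $T^m$: that requires finite genus via Theorem \ref{Theo: finite genus iff realizable}, and the lemmas excluding the type-(1),(2),(3) obstructions come after this corollary. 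The same objection defeats your ``cleanest route'': Theorem \ref{Theo: Geometric and combinatoric are equivalent} merely translates a combinatorial impasse into a topological impasse of a hypothetical basic piece and yields no contradiction unless one already knows that piece has no impasse, which is exactly what is being proven (and Theorem \ref{Theo: Basic piece projects to pseudo-Anosov} cannot supply it, since absence of impasse is one of its hypotheses). Replace your final step by the direct clash with Lemma \ref{Lemm: gamma periodic points}(ii) described above, keep your passage to powers $T^m$ via $f^m$, and you recover the paper's argument.
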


\begin{proof}
First, observe that the geometric type $T$ does not have the impasse condition. If there were an impasse, it would be in a situation as described in the previous lemma, where $\tilde{\alpha}$ and $\tilde{\beta}$ are the horizontal boundaries that are identified. The curve $\tilde{\gamma}$ would be reduced to a periodic point, and $\gamma$ would have a periodic point. However, $\gamma$ is the $s$-arc given by the impasse, so it cannot have periodic points. This contradiction implies that $T$ does not have the impasse condition.

Now, if $T$ represents (via a Markov partition) the pseudo-Anosov homeomorphism $f$, then $f^n$ has a Markov partition of geometric type $T^n$. Since $f^n$ is pseudo-Anosov, $T^n$ is also in the pseudo-Anosov class and does not have the impasse condition. This implies that $T$ does not have an impasse.
\end{proof}

\begin{lemm}\label{Lemm: T pA class then no condition 1}
A geometric type $T$ in the pseudo-Anosov class does not satisfy the combinatorial condition of type $(1)$.
\end{lemm}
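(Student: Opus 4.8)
The strategy is identical in spirit to the proof of Corollary~\ref{Coro: T pA clas no com impasse}: we argue by contradiction, assuming that some power $T^m$ satisfies the combinatorial condition of type $(1)$, and we extract a topological contradiction inside a realization/Markov-partition comparison. Since $T$ is in the pseudo-Anosov class, so is $T^m$ (because if $(f,\cR)$ realizes $T$ then $(f^m,\cR)$ realizes $T^m$, and $f^m$ is again pseudo-Anosov), so it suffices to treat $m=1$ and show directly that $T$ in the pseudo-Anosov class cannot satisfy the combinatorial condition of type $(1)$.

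First I would translate the combinatorial condition back to the topological one: by Lemma~\ref{Lemm: Equiv com and top type 1}, if $T$ satisfies the combinatorial condition of type $(1)$ then the Markov partition $\cR_0$ exhibits the topological obstruction of type $(1)$ in the first realizer $\cR_1$. Concretely this gives a ribbon $r$ with both horizontal boundaries on a single horizontal boundary component $A$ of a rectangle $R_k$ of the realization, together with a second ribbon $r'$ having a single horizontal boundary on $A$ lying strictly between the two boundaries of $r$. Now I transport this picture to a geometric Markov partition $(f,\tilde\cR)$ of a pseudo-Anosov homeomorphism $f$ realizing $T$, using the correspondence between vertical sub-rectangles of the realization and of $\tilde\cR$ recorded in Remark~\ref{Rema: geometric type rules the images} and the notation fixed in the preceding subsection. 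The ribbon $r$, since it joins the two horizontal boundaries of two vertical sub-rectangles $V^k_{l_1},V^k_{l_2}$ both lying in the same side of $R_k$, falls exactly in the hypothesis of Lemma~\ref{Lemm: gamma periodic points}: the corresponding curve $\tilde\gamma\subset\partial^s_{-}\tilde R_k$ is a closed interval one of whose endpoints is a spine-type periodic point $\tilde O$ of $f$, and the curve $\gamma=[a_2,b_1]^s$ in the realization contains a periodic point.

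The contradiction then comes from the presence of the second ribbon $r'$. Its single horizontal boundary on $A$ lies strictly between the two horizontal boundaries of $r$, hence corresponds to a horizontal boundary of some vertical sub-rectangle $V^{k}_{l}$ with $l_1<l<l_2$ in the realization, i.e. it sits on the interval $\gamma$; its other horizontal boundary lies on a distinct boundary component, outside the interval between $V^k_{l_1}$ and $V^k_{l_2}$ (this is precisely the clause $(k',l')\notin\{(k,l):l_0\le l\le l_2\}$ in Definition~\ref{Defi: Type 1 combinatoric}). Passing to $\tilde\cR$, the boundary of $r'$ on $\gamma$ gets collapsed into the interval $\tilde\gamma$, which by Lemma~\ref{Lemm: gamma periodic points} is attached to the spine $\tilde O$: so in $\tilde\cR$ the ribbon $r'$ would have one side on the (unique) separatrix of $\tilde O$ and the other side on a different separatrix. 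But $\tilde O$ is a periodic point, its separatrix is $f$-invariant, and a ribbon of generation $k$ in the realizer corresponds in $\tilde\cR$ to an arc of stable lamination joining two stable boundary components; after a sufficient iteration of $f$ one end converges to $\tilde O$ along its separatrix while the other stays on a genuinely different stable leaf, which — combining the uniform contraction on the stable separatrix of $\tilde O$ with the fact that $f$ has no double $s$-boundary points (Corollary~\ref{coro: pseudo Anosov non-doble boundaries}) and Lemma~\ref{Lemm: Boundary of Markov partition is periodic} — forces that end also to be pinned to $\tilde O$, contradicting the fact that the second endpoint of $r'$ lies outside the interval $[V^k_{l_1},V^k_{l_2}]$. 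Equivalently: the collapsing map $\pi$ of Theorem~\ref{Theo: Basic piece projects to pseudo-Anosov} identifies the entire segment $\gamma$ (on which the endpoint of $r'$ lies, together with the periodic point it contains) to the single spine point $\tilde O$, while the other endpoint of $r'$ is not so identified, so $r'$ cannot simultaneously be a single embedded ribbon — a contradiction with $r'$ being a genuine connected component of $\cR_1\setminus\cR_0$.

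The main obstacle I anticipate is making the last step fully rigorous: one must argue carefully that the endpoint of $r'$ sitting "between the boundaries of $r$" genuinely lands on the interval $\gamma$ that Lemma~\ref{Lemm: gamma periodic points} collapses to a spine, and that the combinatorial non-containment clause $(k',l')\notin\{(k,l):l_0\le l\le l_2\}$ really does prevent the other endpoint of $r'$ from also being collapsed to $\tilde O$; this is a matter of chasing the orders of the vertical sub-rectangles under the induced/gluing/dynamic orientations set up in Definitions~\ref{Defi: induced and glin for realization}--\ref{Defi: Dynac orinted for realization}, exactly as in the proof of Lemma~\ref{Lemm: Equiv com and top type 1}. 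Once this bookkeeping is in place, the contradiction is immediate. I would also handle the degenerate case $l_2=l_1+1$ (where $\tilde\gamma$ is a single point, i.e. $\tilde O$ is a spine pinched between two adjacent vertical sub-rectangles) separately but in the same way, noting that even then $\gamma$ is a non-degenerate interval carrying a periodic point, which is all that is needed.
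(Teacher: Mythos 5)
Your skeleton does match the paper's: reduce via Lemma~\ref{Lemm: Equiv com and top type 1} to the topological type-$(1)$ obstruction in the first realizer, invoke Lemma~\ref{Lemm: gamma periodic points} to produce the spine $\tilde{O}$ and the periodic point $p$ inside $\gamma$, and then contradict the existence of the second ribbon $r'$. But both of your attempts to articulate that final contradiction fail. The ``equivalently'' version rests on the claim that $\pi$ identifies the entire segment $\gamma$ to the single spine point $\tilde{O}$; this is false. Lemma~\ref{Lemm: gamma periodic points} only gives that $\tilde{\gamma}$ is a closed interval with one endpoint at the spine: $\gamma$ is folded two-to-one about $p$ onto $\tilde{\gamma}$, and in the type-$(1)$ situation $\tilde{\gamma}$ is a nontrivial interval (it contains $\tilde{\beta}$), so nothing is collapsed to a point except $p$ itself. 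The first version is also off target: after collapsing, $\tilde{\beta'}$ is the \emph{same} interval as $\tilde{\beta}$, so both sides of $r'$ land on the one separatrix of $\tilde{O}$; there is no ``genuinely different stable leaf'' to play against $f$-invariance or contraction, and the step ``forces that end also to be pinned to $\tilde{O}$'' does not follow from the absence of double $s$-boundaries or from Lemma~\ref{Lemm: Boundary of Markov partition is periodic}.

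The missing content --- which is the heart of the lemma --- is the orientation argument, and you defer it to the wrong place: Lemma~\ref{Lemm: Equiv com and top type 1} is only the combinatorics-to-topology dictionary and contains none of it. What is needed (and what the paper's proof does) is to check that on $\gamma$ the gluing orientation and the dynamical orientation coincide and point toward the spine, so the only identifications inside that leaf fold one separatrix of $p$ onto the other: every glued interval $I\subset\gamma$ has its partner $I'$ also contained in $\gamma$, on the other side of $p$. Applied to $\beta\subset\gamma$, this forces its gluing partner to lie inside $\gamma$; but its partner is $\beta'$, which by the clause $(k',l')\notin\{(k,l):l_0\leq l\leq l_2\}$ lies outside $\gamma$ --- that is the contradiction. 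Until you supply this fold/orientation step, the proposal does not close.
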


\begin{proof}

Suppose $T$ satisfies the combinatorial condition of type $1$. This implies that the $1$-realization of $T$ has the topological obstruction of type $1$, according to the Lemma \ref{Lemm: Equiv com and top type 1}. Therefore, there exist curves $\alpha$ and $\alpha'$ in the boundary of a rectangle $R_k$ in the realization that are joined by a ribbon $r$. Let $\gamma$ the curve that is comprised between $\alpha$ and $\alpha'$ with respect the vertical orientation of $R_k$.  Then  there exists another ribbon $r'$ with a horizontal boundary $\beta$ that is contained within the interior of $\gamma$, while the other boundary of $r'$ is located outside of $\gamma$.

The curve  $\tilde{\gamma}$ contains  $\tilde{\beta}$  in particular $\tilde{\gamma}$ is a non-trivial interval and due to the identification induced by the ribbon $r'$ it contains $\tilde{\beta'}$. However, like it was established in Lemma \ref{Lemm: gamma periodic points}, the stable boundary of $R_k$ is collapsed into a Spine, the curve $\gamma$ have a periodic point $p$ in their interior and moreover bout separatrices of $p$ are identified. 

Is not difficult to see that the  gluing orientation and the dynamical orientation of $\gamma$ must be the same, for that reason look at Figure \ref{Fig: Type 1 obstruction} where:

\begin{itemize}
\item The horizontal h orientation of the stable boundary of $R_k$ is indicated with a blue solid arrow.
\item The induced orientation in $\tilde{\gamma}$ is the indicated with two dotted blue lines.
\item The gluing orientation in $\tilde{\gamma}$ is the indicated with two dotted black lines,and clearly they point towards the spine $\tilde{ O}$.
\end{itemize}

  Hence, any interval $I\subset \gamma$ that is  one separatrice of $p$ must be identify with another interval $I'$ in the other stable separatrice of $p$, and they are the only intervals that are identify. In particular $\beta$ must me identify with another interval inside $\gamma$, like this interval is $\beta'$ we deduce that $\beta'$ is inside $\gamma$ and this is a contradiction.

\begin{figure}[h]
	\centering
	\includegraphics[width=0.7\textwidth]{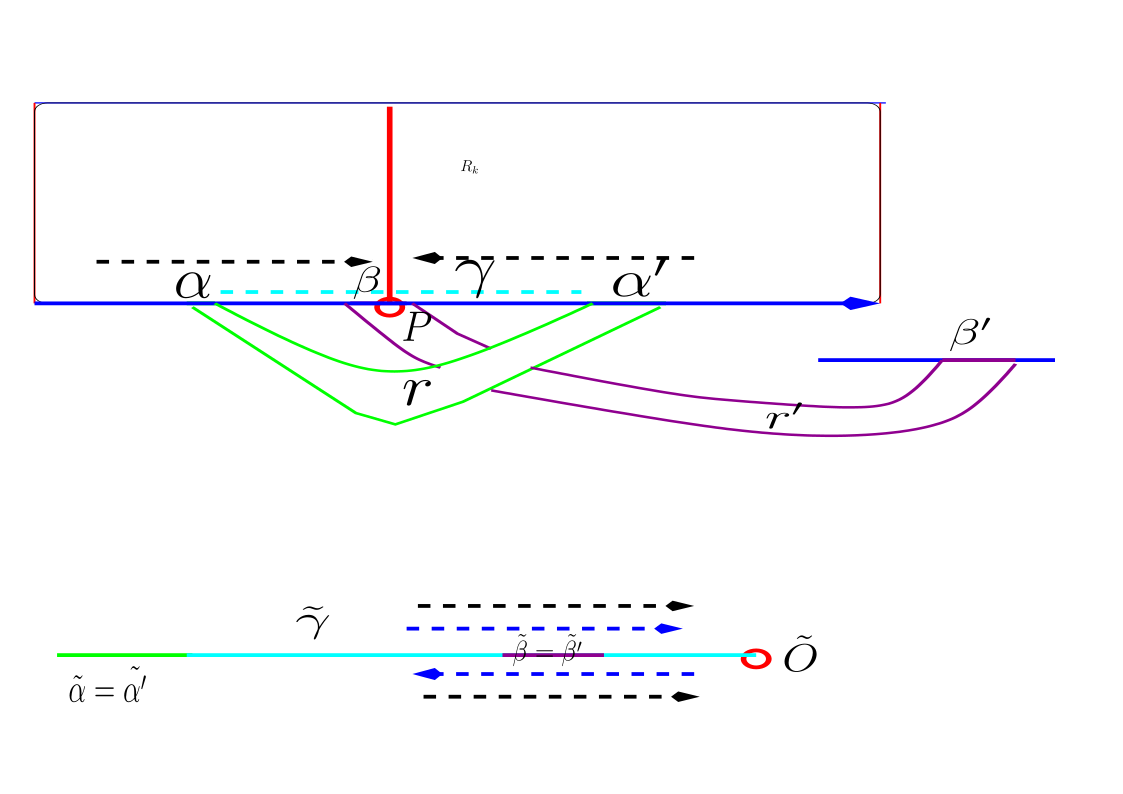}
	\caption{ Type $1$ obstruction}
	\label{Fig: Type 1 obstruction}
\end{figure}
\end{proof}

If $T$ is a geometric type in the pseudo-Anosov class and $f$ is a generalized pseudo-Anosov homeomorphism that realizes $T$, it follows that $T^n$ is the geometric type of a Markov partition of $f^n$. Since $f^n$ is also pseudo-Anosov, we can conclude that $T^n$ does not have the type $(1)$ property or an impasse. In particular, for any positive integer $n$, $T^{6n}$ does not have the type $(1)$ property, and $T^{2n+1}$ does not have an impasse. This discussion leads to the following corollary.

\begin{coro}\label{Lemm: T pA class then no obstruction 1}
A geometric type $T$ in the pseudo-Anosov class don't have the type-$(1)$ obstruction or impasse.
\end{coro}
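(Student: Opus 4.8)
The statement to prove is Corollary~\ref{Lemm: T pA class then no obstruction 1}: a geometric type $T$ in the pseudo-Anosov class has neither the type-$(1)$ obstruction nor an impasse. Recall that by Definition~\ref{Defi: Type 1 obstruction top}--\ref{Defi: Type 1 combinatoric} (via Lemma~\ref{Lemm: Equiv com and top type 1}), having the \emph{combinatorial obstruction} of type-$(1)$ means there exists some $m\in\NN_{>0}$ such that $T^m$ satisfies the \emph{combinatorial condition} of type-$(1)$; similarly, by Definition~\ref{Defi: Impasse combinatoric}, having a combinatorial impasse means $T^m$ has the impasse property for some $m$. So the plan is to reduce the "obstruction/impasse" assertions about $T$ to the "condition/property" assertions about all powers $T^m$, and then invoke Lemma~\ref{Lemm: T pA class then no condition 1} and Corollary~\ref{Coro: T pA clas no com impasse} --- but applied to $T^m$ rather than $T$.

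The key observation that makes this work is the stability of the pseudo-Anosov class under powers. First I would record the elementary fact that if $T$ is realized by $(f,\cR)$ --- i.e. $f$ is a generalized pseudo-Anosov homeomorphism with a geometric Markov partition $\cR$ of geometric type $T$ --- then for every $n\in\NN_{>0}$, the same partition $\cR$ is a geometric Markov partition for $f^n$, and its geometric type is $T^n$ (this is the natural $n$-th power of geometric types alluded to in the introduction and used repeatedly, e.g. in the proof of Corollary~\ref{Coro: T pA clas no com impasse}). Since $f^n$ is again a generalized pseudo-Anosov homeomorphism, $T^n$ again lies in the pseudo-Anosov class. Thus the pseudo-Anosov class is closed under taking powers.

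With this in hand, the argument is short. Suppose for contradiction that $T\in\mathcal{GT}(\text{pA})$ has the type-$(1)$ obstruction. By definition there is $m\in\NN_{>0}$ with $T^m$ satisfying the combinatorial condition of type-$(1)$. But $T^m$ is in the pseudo-Anosov class by the previous paragraph, so Lemma~\ref{Lemm: T pA class then no condition 1} applied to $T^m$ says $T^m$ does \emph{not} satisfy the combinatorial condition of type-$(1)$ --- a contradiction. Hence $T$ has no type-$(1)$ obstruction. Identically, if $T$ had a combinatorial impasse, some power $T^m$ --- still in the pseudo-Anosov class --- would have the impasse property, contradicting Corollary~\ref{Coro: T pA clas no com impasse} (which, read literally, states that pseudo-Anosov geometric types do not have the impasse condition; but as just noted this applies to every power as well). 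Therefore $T$ has no impasse either.

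I do not expect a genuine obstacle here: the corollary is a "closure under powers" packaging of the two preceding results, and the only thing requiring a sentence of care is the verification that $(f^n,\cR)$ realizes $T^n$ and hence that $T^n\in\mathcal{GT}(\text{pA})$ --- a fact the paper has already been using freely (for instance in the last paragraph of the proof of Corollary~\ref{Coro: T pA clas no com impasse}). The one point worth stating explicitly, so the logic is airtight, is that the combinatorial \emph{obstruction} (resp.\ combinatorial \emph{impasse}) of $T$ is by definition a statement quantified over powers of $T$, which is exactly why we need Lemma~\ref{Lemm: T pA class then no condition 1} and Corollary~\ref{Coro: T pA clas no com impasse} for arbitrary members of the pseudo-Anosov class rather than for $T$ alone.
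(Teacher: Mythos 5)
Your proposal is correct and follows essentially the same route as the paper: the paragraph preceding the corollary argues exactly that $(f^m,\cR)$ realizes $T^m$, so every power $T^m$ stays in the pseudo-Anosov class, and then Lemma~\ref{Lemm: T pA class then no condition 1} and Corollary~\ref{Coro: T pA clas no com impasse} applied to those powers rule out the combinatorial condition of type~$(1)$ and the impasse property for every $m$, hence the obstruction and the impasse for $T$. Your only addition is making the quantification over powers explicit, which matches the paper's intent.
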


\begin{figure}[h]
	\centering
	\includegraphics[width=0.7\textwidth]{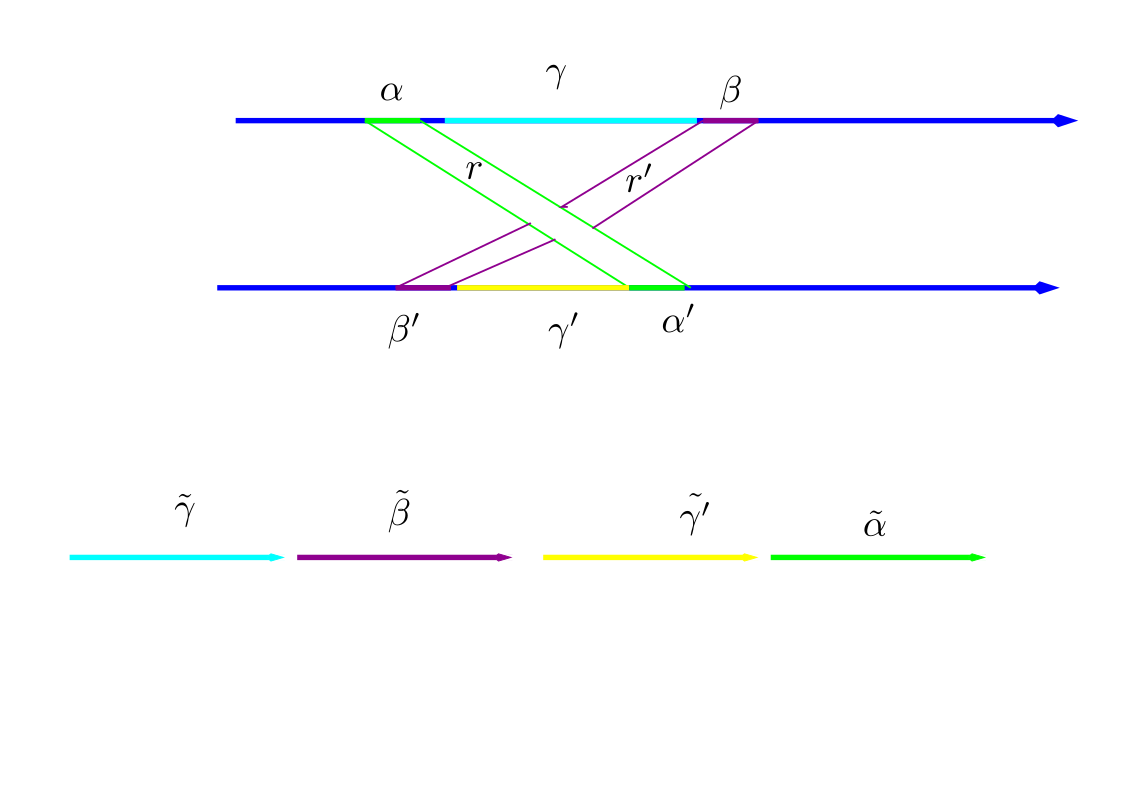}
	\caption{ Type $2$ obstruction}
	\label{Fig: Type 2 obstruction}
\end{figure}

\begin{lemm}\label{Lemm: T pA class then no condition 2}
A geometric type $T$ in the pseudo-Anosov class does not have the combinatorial condition of type-$(2)$. 
\end{lemm}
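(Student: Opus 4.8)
The plan is to mirror exactly the argument given for the type-$(1)$ condition in Lemma \ref{Lemm: T pA class then no condition 1}, since the type-$(2)$ combinatorial condition is, via Lemma \ref{Lemm: Equiv com and top type 2}, equivalent to the topological obstruction of type-$(2)$ appearing in the first realizer $\cR_1$ of a realization of $T$. So first I would assume, for contradiction, that $T$ is in the pseudo-Anosov class and satisfies the combinatorial condition of type-$(2)$. By Corollary \ref{Coro: T pA then realization} the type admits a realization $(\cR=\{R_i\}_{i=1}^n,\phi)$, and by Lemma \ref{Lemm: Equiv com and top type 2} the Markov partition $\cR_0$ then exhibits the topological obstruction of type-$(2)$ in $\cR_1$: there are two distinct horizontal boundary components $A$ and $A'$ of $\cR_0$, a ribbon $r$ with one horizontal boundary in $A$ and the other in $A'$, and a second ribbon $r'$ with one horizontal boundary $\alpha\subset A$ and the other $\alpha'\subset A'$, such that with orientations of $A$ and $A'$ chosen compatibly with the horizontal orientation of $r$, the cyclic order of $\alpha$ and $r\cap A$ along $A$ is reversed relative to that of $\alpha'$ and $r\cap A'$ along $A'$.

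Next I would pass from the realization to a generalized pseudo-Anosov homeomorphism $f:S\to S$ realizing $T$ together with the associated geometric Markov partition $\tilde\cR=\{\tilde R_i\}$, using the notational dictionary of the subsection on orientations: the ribbon $r$ joining $A$ and $A'$ forces, in $\tilde\cR$, an identification of the corresponding stable-boundary arcs $\tilde A$-side and $\tilde A'$-side that preserves the \emph{gluing orientation}, and likewise $r'$ forces a second identification of $\tilde\alpha$ with $\tilde\alpha'$ preserving gluing orientation. By Lemma \ref{Lemm: Boundary of Markov partition is periodic} all four arcs lie on stable leaves of periodic points of $f$, so each carries a well-defined \emph{dynamic orientation} pointing towards its periodic point. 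The contradiction will come from comparing three orientations on $\tilde A$ and $\tilde A'$ — induced, gluing, and dynamic — exactly as in the type-$(1)$ proof and as illustrated in Figure \ref{Fig: Type 2 obstruction}: the gluing orientations of the two identified pairs are consistent by construction, but the \emph{reversed} cyclic order hypothesis on $\alpha,\,r\cap A$ versus $\alpha',\,r\cap A'$ means that the identification realized by $r$ and the identification realized by $r'$ cannot both be orientation-coherent with the stable foliation of $f$ near the periodic points on $\tilde A$ and $\tilde A'$. Concretely, chasing the induced vs.\ gluing orientation change recorded by $\epsilon_T$ on the horizontal sub-rectangles whose images are the four arcs (using Remark \ref{Rema: geometric type rules the images}), one finds that the stable leaf containing $\tilde A$ would have to be glued to the stable leaf containing $\tilde A'$ in two incompatible ways, producing either a non-orientable identification of a neighbourhood in $S$ or a violation of the embeddedness of the rectangles $\tilde R_i$ — the same kind of contradiction obtained in Lemma \ref{Lemm: T pA class then no condition 1}.

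Finally, to upgrade from the combinatorial \emph{condition} of type-$(2)$ to the combinatorial \emph{obstruction} of type-$(2)$, I would invoke the standard persistence-under-powers argument: if $f$ is generalized pseudo-Anosov with a geometric Markov partition of type $T$, then $f^n$ is generalized pseudo-Anosov with a geometric Markov partition of type $T^n$ for every $n\in\NN$, so $T^n$ is again in the pseudo-Anosov class; applying the first part of the lemma to $T^n$ shows $T^n$ does not satisfy the type-$(2)$ combinatorial condition for any $n$, which is precisely the statement that $T$ does not have the type-$(2)$ combinatorial obstruction. I expect the main obstacle to be the careful bookkeeping in the orientation comparison — tracking which of the two options (1) or (2), and which sub-case (i) or (ii), of Definition \ref{Defi: Type 2 combinatoric} one is in, and translating each into the precise gluing-orientation relation on $\tilde A$ and $\tilde A'$ — rather than any conceptual difficulty; the geometric heart is identical to the type-$(1)$ case, only the two identifications now live on two different boundary components instead of on the two separatrices of a single spine.
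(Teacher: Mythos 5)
Your setup is fine (realization via Corollary \ref{Coro: T pA then realization}, translation to the topological obstruction via Lemma \ref{Lemm: Equiv com and top type 2}, and the powers argument at the end to pass from the condition to the obstruction matches the paper's Corollary), but the heart of the proof is missing: you assert that the contradiction is "the same kind" as in the type-$(1)$ case, and this is precisely where the analogy breaks down. The type-$(1)$ proof rests on Lemma \ref{Lemm: gamma periodic points}, which applies to a ribbon whose two horizontal boundaries lie on the \emph{same} stable boundary component of a single rectangle: that is what forces the spine and forces every interval on one separatrice of the periodic point to be identified with an interval on the other separatrice, giving the localization of $\beta'$ inside $\gamma$ that yields the contradiction. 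In the type-$(2)$ situation both ribbons connect two \emph{distinct} boundary components $A\neq A'$, so no spine is produced and that lemma gives you nothing; moreover the contradictions you name (a non-orientable identification, or failure of embeddedness of the rectangles $\tilde R_i$) are not what goes wrong — a type with the type-$(2)$ condition still admits an orientable realizer with embedded rectangles; the obstruction is of a different, more global nature.

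What the paper actually does, and what your sketch does not supply, is a case analysis on the \emph{dynamic} orientations after passing to the pseudo-Anosov partition: all four arcs $\tilde\alpha,\tilde\beta,\tilde\alpha',\tilde\beta'$ lie on stable leaves of periodic points, and one compares induced, gluing and dynamic orientations. If the dynamic orientations of $\tilde\alpha$ and $\tilde\beta$ agree, the identifications forced by the two ribbons concatenate $\tilde\alpha'\cdot\tilde\gamma'\cdot\tilde\beta\cdot\tilde\gamma$ into a simple closed curve contained in a stable leaf; if they disagree, one gets a periodic point $\tilde O$ with $\tilde\alpha<\tilde O<\tilde\beta$ and then the chain $\tilde O<\tilde\beta'<\tilde\alpha'<\tilde O$, again forcing a closed leaf. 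Both cases contradict Proposition \ref{Prop: pseudo-Anosov properties.}, which says the invariant foliations of a pseudo-Anosov homeomorphism have no closed leaves. Without this (or some equivalent concrete mechanism), your "orientation bookkeeping" step is a gap, not a routine verification.
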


\begin{proof}

Assume this is not true. In view of Lemma \ref{Lemm: Equiv com and top type 2}, $T$ has the topological condition of type-$(2)$ in its $1$-realization. This implies the existence of a ribbon $r$ joining two curves $\alpha$ and $\alpha'$ that lie on different sides of the stable boundary of the Markov partition. Similarly, there is another ribbon $r'$ joining curves $\beta$ and $\beta'$, where $\beta$ is on the same stable side as $\alpha$ and $\beta'$ is on the same side as $\alpha'$. These curves correspond to the boundaries of vertical sub-rectangles in the realization. The type-$(2)$ condition imposes certain conditions about the orientation on these curves, namely:

\begin{itemize}
\item  The gluing orientation of $\alpha$ is equal to its induced orientation, and the gluing orientation of $\alpha'$ is equal to its respective induced orientation.

\item  The gluing orientation of $\beta$ coincides with its induced orientation if and only if the r gluing and the induced orientations of $\beta'$ coincides.
\end{itemize}

As usual, $\gamma$ is the curve between $\alpha$ and $\beta$ with respect to the induced orientation: $\alpha \leq \gamma \leq \beta$. Similarly, $\gamma'$ is the curve between $\beta'$ and $\alpha'$ with respect to the induced orientation: $\beta' \leq \gamma' \leq \alpha'$. The same orientation properties hold for the curves $\tilde{\alpha}$, $\tilde{\alpha'}$, $\tilde{\beta}$, $\tilde{\beta'}$, $\tilde{\gamma}$, and $\tilde{\gamma'}$ in the Markov partition of $f$.

Furthermore, the dynamic orientation of $\tilde{\alpha}$ matches its induced orientation if and only if the dynamic orientation of $\tilde{\alpha'}$ matches the induced orientation. We adopt the convention that the induced orientation, gluing orientation, and dynamic orientation of $\tilde{\alpha}$ and $\tilde{\alpha'}$ are the same and fixed from now on. Even more, we assume that the dynamical orientation and gluing orientation of $\tilde{\beta}$ and $\tilde{\beta'}$ are the same, as they both point towards the same periodic point. We shall analyze two disjoint situations: either the dynamical orientation of $\tilde{\beta}$ is the same as that of $\tilde{\alpha}$, or it is different.

If the dynamical orientation of $\tilde{\alpha}$ and $\tilde{\beta}$ are the same, we define the curve $\tilde{L}$ as follows:
$$
\tilde{L}:=\tilde{\alpha'}\cdot \tilde{\gamma'}\cdot \tilde{\beta}\cdot \tilde{\gamma}.
$$

This curve $\tilde{L}$ is a simple closed curve because its entire interior is oriented with respect to the dynamical orientation in a stable leaf. However, this is not possible because the stable foliation of $f$ does not have closed leaves.

If the dynamical orientations of $\tilde{\alpha}$ and $\tilde{\beta}$ are different, this implies that $\tilde{\gamma}$ contains a periodic point $\tilde{O}$, and we have the following inequalities with respect to the induced orientation: 
$$
\tilde{\alpha} < \tilde{O} < \tilde{\beta}.
$$ 
Like the induced and dynamical orientations of $\tilde{\alpha}$ and $\tilde{\alpha'}$ coincide: $\tilde{\alpha'}<\tilde{O}$. Similarly, the dynamical orientation of $\tilde{\beta}$ matches that of $\tilde{\beta'}$ and they are the inverse of the induced orientation, so $\tilde{O}<\tilde{\beta'}$. This implies that, with respect to the induced orientation:
$$
\tilde{ O}<\tilde{\beta'}< \tilde{\alpha'}<\tilde{O}
$$
so the only way for this to occur is to have a closed leaf, which contradicts the fact that the stable foliation of $f$ does not have closed leaves.
This ends our proof by contradiction.
\end{proof}

\begin{coro}\label{Lemm: T pA class then no obstruction 2}
A geometric type $T$ in the pseudo-Anosov class does not have the type-$(2)$ obstruction.
\end{coro}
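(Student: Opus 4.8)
The statement to prove is Corollary \ref{Lemm: T pA class then no obstruction 2}: a geometric type $T$ in the pseudo-Anosov class does not have the type-$(2)$ obstruction. The plan is to mirror exactly the argument already used to pass from Lemma \ref{Lemm: T pA class then no condition 1} to Corollary \ref{Lemm: T pA class then no obstruction 1}. In other words, the bridge from the pointwise combinatorial \emph{condition} (which has just been excluded in Lemma \ref{Lemm: T pA class then no condition 2}) to the combinatorial \emph{obstruction} (the existence of some power $T^m$ satisfying the condition) is supplied by the stability of the pseudo-Anosov class under taking powers.

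First I would recall that if $T$ is in the pseudo-Anosov class, then there is a generalized pseudo-Anosov homeomorphism $f : S \to S$ with a geometric Markov partition $\cR$ of geometric type $T$ (Definition \ref{Defi: pseudo-Anosov class}). Then, for every $m \in \NN$, the family $\cR$ is also a geometric Markov partition for $f^m$, and its geometric type is $T^m$; since $f^m$ is again a generalized pseudo-Anosov homeomorphism, the geometric type $T^m$ is itself in the pseudo-Anosov class. Next I would apply Lemma \ref{Lemm: T pA class then no condition 2} to $T^m$: being in the pseudo-Anosov class, $T^m$ does not satisfy the combinatorial condition of type-$(2)$. Since $m$ was arbitrary, no power of $T$ satisfies the combinatorial condition of type-$(2)$, which is precisely the statement that $T$ does not have the combinatorial obstruction of type-$(2)$.

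Finally, to conclude in terms of the \emph{topological} type-$(2)$ obstruction rather than the combinatorial one, I would invoke Lemma \ref{Lemm: Equiv com and top type 2}, or more precisely the equivalence between the combinatorial obstruction of type-$(2)$ and the topological obstruction of type-$(2)$ on the realizer (recorded just before and after that lemma, using that $T$ admits a realization by Corollary \ref{Coro: T pA then realization}). Thus the absence of the combinatorial obstruction of type-$(2)$ for $T$ is equivalent to the absence of the type-$(2)$ obstruction for $T$, completing the proof.

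There is essentially no hard part here: all the substantive work lies in Lemma \ref{Lemm: T pA class then no condition 2} (the orientation/closed-leaf contradiction) and in the earlier equivalences between the combinatorial and topological formulations. The only point that requires a line of care is the bookkeeping that $(f^m, \cR)$ realizes $T^m$ and that $f^m$ is still generalized pseudo-Anosov — but this is standard and already used implicitly in the analogous Corollary \ref{Lemm: T pA class then no obstruction 1}. So the proof is a short two-sentence deduction of the form: pass to powers, apply Lemma \ref{Lemm: T pA class then no condition 2} to each power, then translate back via Lemma \ref{Lemm: Equiv com and top type 2}.
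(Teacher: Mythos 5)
Your proposal is correct and follows essentially the same route as the paper: the paper's proof also just notes that $T^m$ is the geometric type of the Markov partition $\cR$ for the pseudo-Anosov homeomorphism $f^m$, applies Lemma \ref{Lemm: T pA class then no condition 2} to exclude the type-$(2)$ condition for every power, and concludes that $T$ has no type-$(2)$ obstruction. Your extra remark translating between the combinatorial and topological formulations via Lemma \ref{Lemm: Equiv com and top type 2} is harmless bookkeeping that the paper leaves implicit.
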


\begin{proof}

It is sufficient to prove that $T^m$ does not have the type-$(2)$ condition. However, $T^m$ is a geometric type in the pseudo-Anosov class associated with $f^m$. By Lemma \ref{Lemm: T pA class then no condition 2}, we know that $T^m$ does not have the type-$(2)$ condition. Therefore, it follows that $T$ does not have the type-$(2)$ obstruction.
\end{proof}

%%%%%%%%% Type 3 condition%%%%%

\begin{lemm}\label{Lemm: T pA class then no condition 3}
A geometric type $T$ in the pseudo-Anosov class does not have the combinatorial condition of type $(3)$.
\end{lemm}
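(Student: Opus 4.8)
The strategy mirrors the proofs of Lemmas~\ref{Lemm: T pA class then no condition 1} and \ref{Lemm: T pA class then no condition 2}: assume for contradiction that $T$ has the combinatorial condition of type $(3)$, translate this into a topological obstruction of type $(3)$ on a realizer of $T$ via Lemma~\ref{Lemm: Equiv com and top type 3}, and then exploit the fact that a pseudo-Anosov homeomorphism realizing $T$ (which exists by hypothesis, and whose powers realize the powers of $T$) cannot accommodate the configuration of ribbons forced by the obstruction. The key transfer mechanism is the same as before: a ribbon in the realization joining two horizontal boundary components corresponds, in the geometric Markov partition $(f,\tilde{\cR})$ of the pseudo-Anosov $f$, to an identification of the corresponding curves $\tilde{\alpha}$, $\tilde{\beta}$ in $\partial^s\tilde{\cR}$ that is compatible with the gluing, induced, and dynamic orientations introduced before Lemma~\ref{Lemm: gamma periodic points}.

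Concretely, I would proceed as follows. Suppose $T^m$ satisfies the combinatorial condition of type $(3)$ for some $m$; since $T^m$ is again in the pseudo-Anosov class (it is realized by $f^m$), it suffices to derive a contradiction assuming $T$ itself satisfies the condition. By Lemma~\ref{Lemm: Equiv com and top type 3} the $1$-realizer $\cR_1$ then exhibits the topological obstruction of type $(3)$: there are three distinct embryonic separatrices $S_1,S_2,S_3$ of periodic stable boundary components $A_{k_1},A_{k_2},A_{k_3}$, a ribbon $r$ from $S_1$ to $S_2$, and a ribbon $r'$ from $S_1$ to $S_3$. Passing to $(f,\tilde{\cR})$, the ribbon $r$ identifies a boundary curve $\tilde{\alpha}\subset S_1$ with a curve $\tilde{\beta}\subset S_2$, and $r'$ identifies another curve $\tilde{\alpha'}\subset S_1$ with $\tilde{\gamma}\subset S_3$, each identification preserving gluing orientation. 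Since $S_1,S_2,S_3$ sit on stable separatrices of (possibly equal) periodic points, each carries a well-defined dynamic orientation pointing toward its periodic point, and on a fixed separatrix the dynamic orientation of $\tilde{\alpha}$ and $\tilde{\alpha'}$ agree.

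The contradiction then comes from building a closed loop in a stable leaf, exactly as in the type $(2)$ argument. Reading along $S_1$ from $\tilde{\alpha}$ to $\tilde{\alpha'}$, then jumping via the identification from $\tilde{\alpha'}$ into $S_3$ along $\tilde{\gamma}$, and similarly using that both $S_2$ and $S_3$ (or the relevant segments of them, up to the positions dictated by the index inequalities $l_1<l_1'<l_{k_1}$, $l_2\neq l_{k_2}$, etc.\ from Definition~\ref{Defi: Type 3 combinatoric}) are traversed coherently with respect to their dynamic orientations, one assembles a path whose interior is monotone for the dynamic orientation in a single stable leaf and which closes up. Because each $S_i$ is an embryonic separatrix — lying strictly between a periodic boundary and its image under the relevant power of $f$ — and because the three separatrices are distinct, the index constraints in items $ii$)--$iv$) of Definition~\ref{Defi: Type 3 combinatoric} force the ordering of $\tilde{\alpha}$, $\tilde{\alpha'}$, the periodic points, and the far endpoints of $r,r'$ to be cyclically inconsistent unless a stable leaf of $f$ is closed. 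But by Proposition~\ref{Prop: pseudo-Anosov properties.} the stable foliation of a pseudo-Anosov homeomorphism has no closed leaves, a contradiction. I would split the argument into the cases according to which (if any) of the $S_i$ coincide on the same rectangle side — items $1)$, $2)$, $3)$ of item $iv)$ — since in cases $2)$ and $3)$ one gets a forced periodic point (a spine, as in Lemma~\ref{Lemm: gamma periodic points}) between two of the separatrices, and the dynamic orientations point toward it, which sharpens the cyclic-ordering contradiction.

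The main obstacle I anticipate is bookkeeping the orientations across the three separatrices: unlike the type $(2)$ case there are three curves and the combinatorial condition of type $(3)$ has several sub-cases distinguishing whether $(k_3,j_3)$ equals $(k_1,j_1)$ or $(k_2,j_2)$, each imposing slightly different index inequalities. Getting the dynamic-vs-induced-vs-gluing orientation comparisons right in every sub-case — and confirming that in each the assembled loop is genuinely closed and genuinely contained in one leaf (rather than merely a concatenation that backtracks) — is the delicate part. A secondary point to handle carefully is the periodic case: if any of the $p_{k_i}$ is itself a periodic point of $f$ lying in the relevant configuration, one must argue as in Corollary~\ref{Coro: interior periodic points unique code} and Lemma~\ref{Lemm: no periodic boundary points} that its orbit cannot realize the obstruction either, but this follows the same template. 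Once the orientation analysis is in place the contradiction with minimality of $\cF^s$ is immediate, giving the lemma; the corresponding statement ``$T$ has no type $(3)$ obstruction'' then follows by applying the lemma to $T^m$ for all $m$, completing the ingredients needed for Proposition~\ref{Prop: pseudo-Anosov iff basic piece non-impace}.
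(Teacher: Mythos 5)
Your reduction to the case where $T$ itself satisfies the condition, and the transfer to the topological obstruction of type $(3)$ on the realizer via Lemma~\ref{Lemm: Equiv com and top type 3}, both follow the paper. But the core of your argument --- assembling a closed loop in a stable leaf ``exactly as in the type $(2)$ argument'' and contradicting the absence of closed leaves --- does not work for the type-$(3)$ configuration, and it is not what the paper does. In the type-$(2)$ situation the two ribbons join the \emph{same} pair of boundary components $A$ and $A'$, so the identifications form a cycle and the reversed-order hypothesis lets one concatenate arcs into a closed (or dynamically monotone) curve inside one leaf. In the type-$(3)$ situation the two ribbons share only $S_1$ and end on two \emph{different} separatrices $S_2\neq S_3$: the adjacency graph is a tree, so no closed curve can be assembled from the identifications, and nothing in the configuration is inconsistent with the images of $S_1,S_2,S_3$ all lying on a single non-closed separatrix of a periodic point. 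Hence the ``cyclic inconsistency unless a leaf is closed'' you invoke is not forced, and in the main case (three separatrices on genuinely different boundary components) your proof produces no contradiction.

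The paper's contradiction is of a different nature. After passing to the pseudo-Anosov quotient, the arcs $\tilde{\alpha},\tilde{\beta}$ coming from $S_1$ and their partners $\tilde{\alpha}',\tilde{\beta}'$ coming from $S_3,S_2$ all lie on one stable separatrix $\tilde{F}$ of a periodic point, with the rectangle $\tilde{R}_{k_1}$ on one side of $\tilde{F}$ and the rectangles containing $\tilde{\alpha}'$ and $\tilde{\beta}'$ on the other side. Along a given stable separatrix there is only one periodic stable boundary component of the Markov partition (this is the same fact used in the proof of Lemma~\ref{Lemm: well suit impasse}), while both $\alpha'$ and $\beta'$ come from embryonic, hence periodic, boundaries; so the two rectangles on the far side must coincide and $\tilde{\alpha}',\tilde{\beta}'$ lie on the same periodic boundary component. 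Coherence of the dynamical orientation with the gluing then forces $\alpha'$ and $\beta'$ into the same embryonic separatrix, contradicting $S_2\neq S_3$. Your loop idea has a chance only in the special sub-case where $S_2$ and $S_3$ are the two embryonic separatrices of one boundary component, and even there the putative loop passes through the periodic point, so it is not literally a closed leaf; in general you need the two-sidedness/uniqueness-of-periodic-boundary argument, which is absent from your proposal.
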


\begin{figure}[h]
	\centering
	\includegraphics[width=0.7\textwidth]{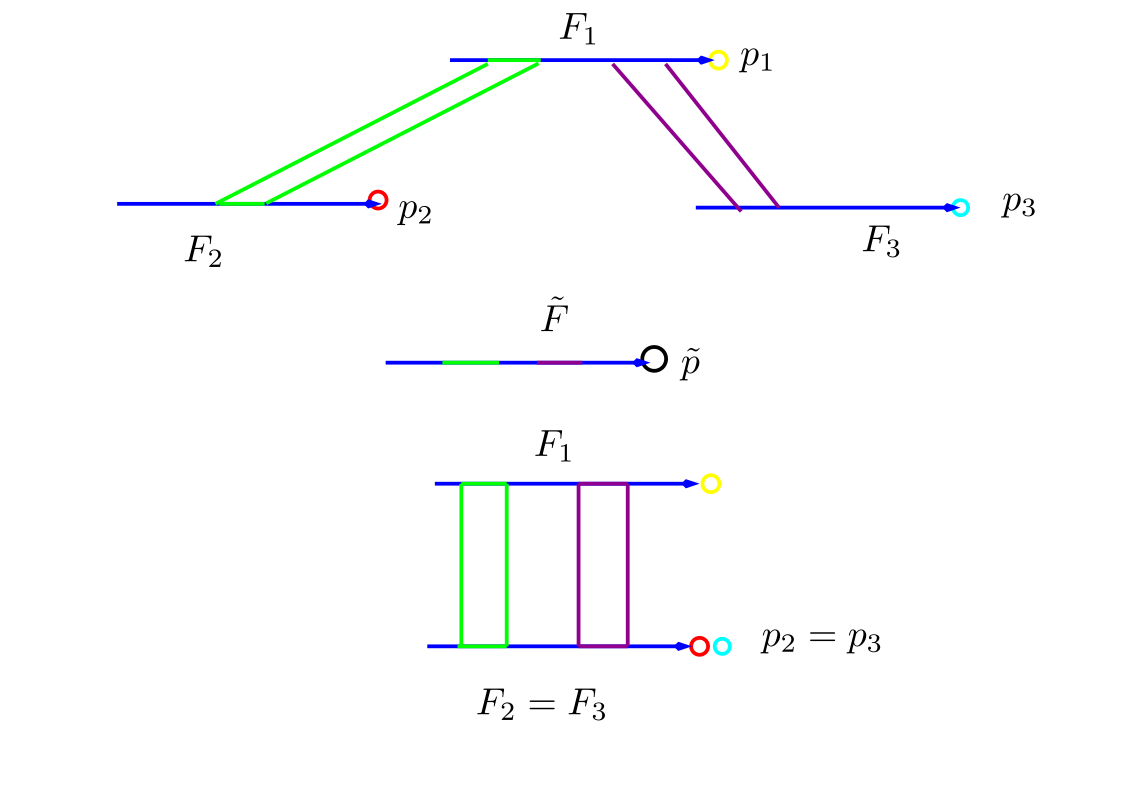}
	\caption{ Type $3$ obstruction}
	\label{Fig: Type 3 obstruction}
\end{figure}

\begin{proof}

Suppose $T$ has the combinatorial condition of type $(3)$. Then, according to Lemma \ref{Lemm: Equiv com and top type 3}, the first realization of $T$ has the topological obstruction of type $(3)$. Let $\alpha$ and $\alpha'$ be the curves identified by the ribbon $r$, and let $r'$ be the other ribbon given by the combinatorial condition that connects $\beta$ and $\beta'$. Suppose that $\alpha$ and $\beta$ are contained in the separatrix  $F^s_1(p_0)$, while $\beta'$ is contained in $F^s_1(p_1)$ and $\alpha'$ is contained in $F^s_1(p_2)$, where $p_0$, $p_1$, and $p_2$ are periodic points. By hypothesis  $F^s_1(p_0)$, $F^s_1(p_1)$, and $F^s_1(p_2)$ are all distinct and the curves are all embrionary . Now, let us analyze what happens in the Markov partition.

By the identifications, the curves $\tilde{\alpha}$, $\tilde{\alpha'}$, $\tilde{\beta}$, and $\tilde{\beta'}$ are all contained in the same stable separatrix  $\tilde{F}$ associated with a periodic point $\tilde{O}$ and they don't contain this point (remember they comes from embrionary separatrices).

Now, the separatrix  $\tilde{F}$ has only two sides. On one side, we have the rectangle $\tilde{R_k}$ which contains $\tilde{\alpha}$ and $\tilde{\beta}$ in their periodic boundary. On the other side, we have the rectangle $\tilde{R_a}$ which contains $\tilde{\alpha'}$, and $\tilde{R_b}$ which contains $\tilde{\beta'}$.

If $\tilde{R_a}$ is not the same as $\tilde{R_b}$, then only one of these rectangles can have a periodic stable boundary. However, our hypothesis states that both $\alpha'$ and $\beta'$ are in embrionary separatrices, which means that both $\tilde{R_a}$ and $\tilde{R_b}$ should have periodic boundaries containing these curves. This leads to a contradiction.

Therefore, the rectangle $R_a$ of the Markov partition contains, in one of its periodic boundaries, the curves $\alpha'$ and $\beta'$. However, by the coherence of the dynamical orientation with the induced orientations, $\alpha'$ and $\beta'$ should be in the same stable separatrix. This contradicts our assumption.

\end{proof}

\begin{coro}\label{Lemm: T pA class then no obstruction 3}
A geometric type $T$ in the pseudo-Anosov class doesn't have the type  $(3)$ combinatorial obstruction.
\end{coro}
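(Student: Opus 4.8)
The statement to prove is Corollary~\ref{Lemm: T pA class then no obstruction 3}, which asserts that a geometric type $T$ in the pseudo-Anosov class does not have the type-$(3)$ combinatorial obstruction. The plan is to imitate precisely the argument used for the type-$(1)$ and type-$(2)$ obstructions in Corollaries~\ref{Lemm: T pA class then no obstruction 1} and~\ref{Lemm: T pA class then no obstruction 2}: reduce the statement about the \emph{obstruction} (which is a statement about \emph{some} power $T^m$) to the statement about the \emph{condition} (which concerns a single geometric type), and then invoke the already-established Lemma~\ref{Lemm: T pA class then no condition 3}.

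More concretely, first I would recall the definition: by Definition~\ref{Defi: Type 3 combinatoric}, $T$ has the combinatorial obstruction of type-$(3)$ precisely when there exists $m \in \NN$ such that $T^m$ satisfies the combinatorial condition of type-$(3)$. So it suffices to show that no power $T^m$ satisfies the combinatorial condition of type-$(3)$. Here I would use the fact, used repeatedly in this section, that if $f : S \to S$ is a generalized pseudo-Anosov homeomorphism realizing $T$ via a geometric Markov partition $\cR$ (which exists because $T$ is in the pseudo-Anosov class), then $f^m : S \to S$ is again a generalized pseudo-Anosov homeomorphism and $\cR$ is a geometric Markov partition of $f^m$ whose geometric type is exactly $T^m$. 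Consequently $T^m$ is itself in the pseudo-Anosov class for every $m \in \NN_{>0}$. Applying Lemma~\ref{Lemm: T pA class then no condition 3} to $T^m$ shows that $T^m$ does not satisfy the combinatorial condition of type-$(3)$. Since this holds for all $m$, the type-$(3)$ combinatorial obstruction cannot occur for $T$, which is the desired conclusion.

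I do not anticipate a genuine obstacle here: all of the substantive geometric work (collapsing stable boundaries into spines via Lemma~\ref{Lemm: gamma periodic points}, the equivalence of combinatorial and topological obstructions via Lemma~\ref{Lemm: Equiv com and top type 3}, and the contradiction with the separatrix structure of a pseudo-Anosov foliation) has already been carried out in Lemma~\ref{Lemm: T pA class then no condition 3}. The only point requiring a line of care is the observation that powers of $f$ remain in the pseudo-Anosov class with the same partition and geometric type $T^m$ — this is standard and is the same device invoked just above for types $(1)$ and $(2)$, so I would simply state it and cite the analogous reasoning. Thus the proof is essentially a two-sentence deduction mirroring the proofs of Corollaries~\ref{Lemm: T pA class then no obstruction 1} and~\ref{Lemm: T pA class then no obstruction 2}.

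\begin{proof}
It suffices to show that no power $T^m$ satisfies the combinatorial condition of type-$(3)$. Since $T$ is in the pseudo-Anosov class, there is a generalized pseudo-Anosov homeomorphism $f$ with a geometric Markov partition $\cR$ such that $T(f,\cR)=T$; then for every $m\in\NN_{>0}$ the homeomorphism $f^m$ is again a generalized pseudo-Anosov homeomorphism and $\cR$ is a geometric Markov partition of $f^m$ of geometric type $T^m$, so $T^m$ is in the pseudo-Anosov class. By Lemma~\ref{Lemm: T pA class then no condition 3}, $T^m$ does not satisfy the combinatorial condition of type-$(3)$. As this holds for every $m$, the geometric type $T$ does not have the type-$(3)$ combinatorial obstruction.
\end{proof}
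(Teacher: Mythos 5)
Your proof is correct and follows exactly the paper's own argument: the paper likewise observes that $T^m$ is the geometric type of the Markov partition $\cR$ viewed for the pseudo-Anosov homeomorphism $f^m$, hence $T^m$ lies in the pseudo-Anosov class and Lemma~\ref{Lemm: T pA class then no condition 3} rules out the type-$(3)$ condition for every power, so the obstruction cannot occur. No gaps; this matches the paper's two-line deduction.
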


\begin{proof}

Similarly to the other obstruction, for all $m\in \mathbb{N}$, $T^m$ is the geometric type of the pseudo-Anosov homeomorphism $f^m$. This implies that $T^m$ doesn't have the type $(3)$ combinatorial condition.
\end{proof}

\subsection{Finite genus and no-impasse implies basic piece without impasse.}\label{Sec: finite genus no impas implies basic piece}

In this part we are going to prove the next implication of Proposition \ref{Prop: pseudo-Anosov iff basic piece non-impace}. It correspond tho the following proposition.

\begin{prop}\label{Prop: Charaterization pseudo Anosov class}
A geometric type $T$  which satisfy the following properties:
	\begin{enumerate}
	\item The incidence matrix in transitive.
	\item The genus of $T$ is finite.
	\item $T$ don't have impasse.
	\end{enumerate}
is realized as a \emph{mixing} basic piece of a surface Smale diffeomorphism without topological impasse
\end{prop}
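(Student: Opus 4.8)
The plan is to build, from the combinatorial data of $T$, an explicit affine model on a HV-surface and then compactify it, invoking the Bonatti--Langevin--B\'eguin machinery recalled in Section~\ref{Sec: Bonatti-Langevin theory}. First I would note that by hypothesis~(1) and Lemma~\ref{Lemm: T mixing implies non double boundaries}, $T$ has no double boundaries, so Proposition~\ref{Prop: Types transitive have a realization} (equivalently, Corollary~\ref{Coro: T pA then realization}) gives an affine realization $(\cR=\{R_i\}_{i=1}^n,\phi)$ of $T$ whose maximal invariant set $K$ is hyperbolic. By hypothesis~(2), $\text{gen}(T)<\infty$, so Theorem~\ref{Theo: finite genus iff realizable} (B\'eguin's converse to the Bonatti--Langevin necessary condition) applies: $T$ is realizable as a saddle-type basic piece of a Smale surface diffeomorphism. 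Concretely, one extends the ``dynamics'' on the $m$-realizers $\cR_m$ (Definition~\ref{Defi: m realizer }) — which stabilize in genus by the finiteness of $\text{gen}(T)$ — to a genuine Smale diffeomorphism $F$ on a closed surface $\hat S$ obtained by capping off the finitely many boundary circles of a large enough realizer with attracting/repelling disks; the maximal invariant set $K$ in the interior is a basic piece with a Markov partition (the image of $\cR$) of geometric type exactly $T$.

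Next I would upgrade ``basic piece'' to ``\emph{mixing} basic piece''. Since $A(T)$ is mixing (hypothesis~(1)), there is $N$ with $A(T)^N$ positive; because $A(T^N)=A(T)^N$ and the incidence matrix governs the transition graph of the subshift coding $K$, the restriction $F|_K$ is topologically mixing. (If one prefers to argue on the diffeomorphism side directly: a basic piece whose incidence matrix on some Markov partition is mixing is a single mixing piece, not a cyclic union.) Thus the basic piece produced above is automatically mixing, and we do not need to pass to a power.

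The remaining, and I expect the most delicate, point is to arrange that the realizing basic piece $K$ has \textbf{no topological impasse}. Here I would use the combinatorial/topological dictionary from Subsection~\ref{Sub-sec: Comb Impasse genus}: by Theorem~\ref{Theo: Geometric and combinatoric are equivalent}, $K$ has a topological impasse if and only if $T$ has a combinatorial impasse, i.e.\ if and only if some power $T^{m}$ (in fact $T^{2n+1}$ suffices) has the impasse property of Definition~\ref{Defi: Impasse combinatoric}. Hypothesis~(3) says precisely that $T$ has no impasse, so no power of $T$ exhibits the impasse property, and hence the basic piece $K$ of $F$ has no topological impasse. This is the step where the hypothesis ``$T$ does not have impasse'' is consumed, and the only subtlety is making sure that the Markov partition on $K$ we use to apply Theorem~\ref{Theo: Geometric and combinatoric are equivalent} is the one of geometric type $T$ built in the first paragraph — which it is, by construction.

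Putting these three steps together: from hypotheses~(1)--(3) we have produced a Smale surface diffeomorphism $F:\hat S\to\hat S$ with a non-trivial saddle-type basic piece $K$ that is mixing, admits a Markov partition of geometric type $T$, and has no topological impasse. That is exactly the assertion of Proposition~\ref{Prop: Charaterization pseudo Anosov class} (equivalently, item~$iii)\Rightarrow i)$ of Proposition~\ref{Prop: pseudo-Anosov iff basic piece non-impace}). The main obstacle, as indicated, is the impasse bookkeeping — one must be careful that the combinatorial impasse condition of Definition~\ref{Defi: Impasse combinatoric}, read off $T^{2n+1}$, genuinely detects the topological impasses of the \emph{specific} realization constructed, which is guaranteed by Theorem~\ref{Theo: Geometric and combinatoric are equivalent} applied to that realization; everything else is an assembly of the cited results of Bonatti--Langevin and B\'eguin.
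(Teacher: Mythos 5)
Your proposal is correct and follows essentially the same route as the paper: apply Theorem~\ref{Theo: finite genus iff realizable} (using that the mixing incidence matrix rules out double boundaries) to realize $T$ as a saddle-type basic piece, deduce mixing of the piece from the mixing of $A(T)$, and then use the dictionary of Theorem~\ref{Theo: Geometric and combinatoric are equivalent} to conclude the realization has no topological impasse. The extra detail you supply (the affine realization and the capping-off of the realizers, and the symbolic argument for mixing) is just an unpacking of the cited Bonatti--Langevin--B\'eguin results and of the handbook reference the paper uses, not a different argument.
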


\begin{proof}
 The Theorem \ref{Theo: finite genus iff realizable}  due to Bonatti-Langevin-Beguin implies that $T$ is realized as a saturated basic set $K$ of a surface Smale diffeomorphism . Like the incidence matrix is mixing, we deduce $K$ is mixing and then it is basic piece (Look at \cite[Proposition 18.7.7]{hasselblatt2002handbook}). The Theorem \ref{Theo: Geometric and combinatoric are equivalent} implies that $K$ does not have a topological impasse.
\end{proof}

\section{Equivalent representations.}

\subsection{The formal DA of a geometric type.}\label{Sec: Formal DA}

Let $T$ be a geometric type in the pseudo-Anosov class. Our previous results showed the existence of a closed surface $S$ and a Smale diffeomorphism $\phi: S \rightarrow S$, that have a  mixing saddle-type basic piece $K(T)$ which has a Markov partition of geometric type $T$. Therefore, we can define $\Delta(T) := \Delta(K(T))\subset S$ as the domain of $K(T)$. Moreover, the diffeomorphism $\phi$ uniquely determines (up to conjugation) a diffeomorphism in the domain $\phi_T:\Delta(T) \rightarrow \Delta(T)$. This justify the following definition.

\begin{defi}\label{Defi: Formal DA}
		Let $T$ be a geometric type of the pseudo-Anosov class. Let $S$ be a closed surface and $\phi: S\rightarrow S$ be a Smale surface diffeomorphism having a mixing saddle-type basic piece $K(T)$ with a geometric Markov partition of geometric type $T$, let $\Delta(T)\subset S$ be the domain of the basic piece $K(T)$.  We define the \emph{formal derived from Anosov} of $T$ to be the triplet
	$$
	\textbf{DA}(T):=(\Delta(T),K(T),\phi_T).
	$$
where $\phi_T: \Delta(T) \to \Delta(T)$ is the restriction of the diffeomorphism $\phi$ restricted to the domain of $K(T)$.
	
\end{defi}

The following proposition is consequence of the Bonatti-Langevin results that were presented in  \ref{Sub-sec: Domain basic piece}.

\begin{prop}\label{Defi: Unique DA}
	Let $T$ be a geometric type in the pseudo-Anosov class. Up to a topological conjugacy in their respective domains, that is the identity in the non-boundary periodic points of $K(T)$, there exists a unique \textbf{DA}$(T)$.
\end{prop}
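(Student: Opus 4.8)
The plan is to reduce the statement to the uniqueness results for the domain of a basic piece that were recalled in Subsection~\ref{Sub-sec: Domain basic piece}, in particular Corollary~\ref{Coro: Uniqueness of domain} and Theorem~\ref{Theo: Presentation in a domain}. First, existence: since $T$ is in the pseudo-Anosov class, Proposition~\ref{Prop: pseudo-Anosov iff basic piece non-impace} guarantees that $T$ has finite genus and no impasse, so by Theorem~\ref{Theo: finite genus iff realizable} (together with the mixing of $A(T)$) there is a Smale surface diffeomorphism $\phi:S\to S$ with a mixing saddle-type basic piece $K(T)$ carrying a geometric Markov partition of geometric type $T$. Then $\Delta(T):=\Delta(K(T))$ is well defined by the Bonatti--Langevin construction, and $\phi_T$ is the restriction of $\phi$ to that domain; this is exactly Definition~\ref{Defi: Formal DA}. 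So the only real content is uniqueness up to conjugacy fixing the non-boundary periodic points.

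For uniqueness, suppose $\phi:S\to S$ and $\phi':S'\to S'$ are two Smale surface diffeomorphisms, each with a mixing saddle-type basic piece, $K$ and $K'$ respectively, each admitting a geometric Markov partition of the same geometric type $T$. I would first invoke Theorem~\ref{Theo: Presentation in a domain}: since $K$ and $K'$ have Markov partitions of the same geometric type $T$, there exists a homeomorphism $H:\Delta(K)\to\Delta(K')$ conjugating $\phi|_{\Delta(K)}$ to $\phi'|_{\Delta(K')}$. This already gives a conjugacy between the two candidate $\textbf{DA}(T)$'s. The remaining point is to upgrade $H$ so that it is the identity on the non-boundary periodic orbits of $K$. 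Here I would use that a conjugacy, by construction via Markov partitions, carries $\cR$ to a Markov partition of $K'$ of the same geometric type, and carries $K$ onto $K'$; composing with an interior conjugacy of $(K',\phi')$ that realigns periodic orbits, one arranges $H$ to restrict to the identity on $K\setminus(\text{periodic }s,u\text{-boundary points})$. The precise mechanism is the one behind Corollary~\ref{Coro: Uniqueness of domain}: the domain is \emph{versatile} and \emph{closed modulo} $\phi$, and by Proposition~\ref{Prop: unicity of domain} any $\phi$-invariant neighborhood containing $\delta(K)$ receives an embedding of $\Delta(K)$ that is the identity over the restricted domain; applying this in both directions and using that the symbolic dynamics $(\Sigma_{A(T)},\sigma)$ pins down the periodic orbit structure of $K$ from $T$ alone, one gets that the conjugacy can be normalized to be the identity on the non-boundary periodic set.

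The step I expect to be the main obstacle is precisely this normalization: Theorem~\ref{Theo: Presentation in a domain} only provides \emph{some} conjugating homeomorphism between the domains, with no a priori control on what it does to individual periodic points, whereas the statement asks for uniqueness \emph{rel} the non-boundary periodic points. Overcoming it requires showing that (i) the labelled periodic orbits of $K$ of each period are determined by $T$ (which follows from the finite-to-one projection $\pi_f:\Sigma_{A(T)}\to S$ and Lemma~\ref{Lemm: Periodic to peridic}, so that a conjugacy must permute them in a way dictated by $T$), and (ii) the subgroup of self-conjugacies of $(\Delta(K),\phi_T)$ acts transitively enough on the non-boundary periodic orbits to absorb this permutation — or, more cleanly, that after marking the periodic orbits the domain is still unique by the closed-module argument of Bonatti--Langevin. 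I would cite \cite[Propositions 3.2.2 and 3.2.5, Theorem 5.2.2]{bonatti1998diffeomorphismes} for this last point, since the \textbf{DA}$(T)$ is built so that these apply verbatim, and conclude that any two realizations of $\textbf{DA}(T)$ are conjugate by a homeomorphism that is the identity on the non-boundary periodic points of $K(T)$.
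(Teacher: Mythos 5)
Your proposal is correct and follows essentially the same route as the paper: the paper gives no argument beyond declaring the proposition a consequence of the Bonatti--Langevin results recalled in Subsection~\ref{Sub-sec: Domain basic piece} (Proposition~\ref{Prop: unicity of domain}, Corollary~\ref{Coro: Uniqueness of domain}, Theorem~\ref{Theo: Presentation in a domain}), which are exactly the results you invoke, together with the realization statement for existence. Your extra discussion of normalizing the conjugacy on non-boundary periodic points goes beyond what the paper records, but it resolves as you suggest, since the conjugacy of Theorem~\ref{Theo: Presentation in a domain} is built from the Markov partitions and therefore respects the symbolic coding of periodic points determined by $T$.
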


Take $T\in \cG\cT(\textbf{p-A})$ be a geometric type in the pseudo-Anosov class and a \textbf{p-A} homeomorphism $f$ with a geometric Markov  partition $\cR$ of geometric type $T$. The formal derived from Anosov of $T$ coincides with the classical idea of open  the stable and unstable manifolds of the periodic point of $f$ in the boundary of $\cR$, after this process, a saddle-type basic piece without singularities is  obtained and each periodic point on the boundary of the Markov partition $\cR$ becomes a corner point of the basic piece.  However, in  \textbf{DA}$(T)$ the process is different as the periodic points on $\partial^s\cR$ become $s$-boundary points of the basic piece $K(T)$ while the periodic points on  $\partial^u\cR$ become $u$-boundary points of $K(T)$.  When comparing two saddle-type basic pieces obtained as the formal \textbf{DA} of two different geometric types in the pseudo-Anosov class, the first obstruction for such basic pieces to be topologically conjugate is to have the same number of periodic $s$ and $u$-boundary points. In the next sub-section, we introduce the joint refinement of two geometric types that permits us to overcome this first difficulty

\subsection{The joint refinement of two geometric types.} 

Let $T_1$ and $T_2$ be geometric types in the pseudo-Anosov class, and suppose $(f,\cR)$ and $(g,\cR)$ are two geometric Markov partitions of pseudo-Anosov homeomorphisms $f$ and $g$ such that $T_1=T(f,\cR_f)$ and $T_2=T(g,\cR_g)$. Let 
$$
N(\cR_f)=\max{Per(p): p \in \partial \cR_f \text{ is periodic for } f } 
$$
 and 
$$
N(\cR_g)=\max\{Per(p): p \in \partial \cR_g \text{ is periodic for } g \} 
$$

Finally, let $N(f,g)=\max\{N(\cR_f),N(\cR_g)\}$ and take $Per(f,N(f,g))$ as the set of periodic points of $f$ with period less than or equal to $N(f,g)$ and similarly $Per(g,N(f,g))$.   Along \cite[Chapter 5.4]{IntiThesis} (to papers in \cite{IntiII}) we have carefully define the  joint refinement of $\cR_f$ with respect to $\cR_g$ and we have proved that the family of rectangles in \ref{Defi: Joint refinament} is a geometric  Markov partition for $f$, it symmetrization detailed and we compute its geometric type $T_{(T_g,T_f)}$, finally we has proved it have the  corner property \ref{Defi: Corner type partition }. 

\begin{defi}\label{Defi: Joint refinament}
Consider $(\cR_f)_{\cR_g}$ the family of rectangles obtained by cutting each rectangle $R_i\in \cR_f$ along the stable and unstable segments of all periodic points in $Per(f,N(f,g))$ contained in $R_i$. These rectangles are given a lexicographic order and the orientation induced by the $R_i$ in which they are contained, and we call $(\cR_f)_{\cR_g}$ the joint refinement of $\cR_f$ with $\cR_g$.Moreover, the joint refinement  $(f, (\cR_f)_{\cR_g}$ is a  geometric partition of $f$ whose geometric type is denote  $T_{(T_g,T_f)}$.
\end{defi}

 The number $N(f,g)$ is equal to $N(g,f)$, and the corner points of $\cR_{(R_f,R_g)}$ coincide with the set of periodic points of $f$ with periods less than or equal to $N(f,g)$. By construction, the geometric Markov partition $\cR_{(R_f,R_g)}$ has the corner property (\cite[Corollary 5.14.]{IntiThesis}) and all its boundary points are corners of any rectangle that contains them.

\begin{defi}\label{Defi: Corner type partition }
	A geometric Markov partition $\cR$ of $f$ have the \emph{corner property} if every rectangle $R\in \cR$ that contains a boundary point $p\in P_b(f,\cR)$ have $p$ as a corner point. A geometric type $T$ have the \emph{corner property} if every periodic boundary code is a  corner peridic code, i.e $\underline{B(T)}=\underline{S(T)}\cap \underline{U(T)}$.
\end{defi}

The following corollary correspond to \cite[Lemma 5.50.]{IntiII}

\begin{lemm}\label{Lemm: Corner prop equivalence}
	The geometric Markov partition $\cR$ has the corner property if and only if its geometric type $T$ has the corner property.
\end{lemm}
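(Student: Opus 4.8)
\textbf{Proof plan for Lemma \ref{Lemm: Corner prop equivalence}.}

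The statement asserts the equivalence between a geometric condition on a Markov partition $\cR$ of $f$ (every rectangle containing a boundary periodic point has that point as a corner) and a purely combinatorial condition on its geometric type $T$ (every periodic boundary code is a corner periodic code, i.e. $\underline{B(T)} = \underline{S(T)}\cap \underline{U(T)}$). The plan is to translate both sides into statements about the projection $\pi_f:\Sigma_{A(T)}\to S$ and the symbolic characterizations of the $s$- and $u$-boundary laminations established in Section \ref{Sec: Total Invariant}. As a preliminary step I would reduce to the case where $A(T)$ is binary: if it is not, pass to the horizontal refinement $(f,\mathcal{H}(\cR))$ as in Remark \ref{Rema: Convencion sobre matriz binaria}; one checks that the corner property for $\cR$ is equivalent to the corner property for $\mathcal{H}(\cR)$ (a boundary point is a corner of a rectangle of $\cR$ iff it is a corner of the corresponding sub-rectangle of $\mathcal{H}(\cR)$, since horizontal refinement only subdivides along stable leaves), and the geometric type is unchanged as a combinatorial object up to this refinement; so without loss of generality $A(T)$ is binary and the machinery of $\pi_f$, $\sim_T$, $\Sigma_{\cS(T)}$, $\Sigma_{\cU(T)}$ applies.

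For the main equivalence, first I would recall the dictionary: by Proposition \ref{Prop: positive codes are boundary} and Proposition \ref{Prop: boundary points have boundary codes}, a code $\underline{w}\in\Sigma_A$ has $\pi_f(\underline{w})\in\partial^s\cR$ if and only if $\underline{w}\in\underline{\cS(T)}$, and analogously for $\partial^u\cR$ and $\underline{\cU(T)}$; moreover by Lemma \ref{Lemm: Periodic to peridic} periodicity of the code corresponds to periodicity of the projected point. Hence the set $\mathrm{Per}^s(f,\cR)$ of periodic $s$-boundary points is exactly $\pi_f(\mathrm{Per}(\underline{\cS(T)}))$, and similarly for the $u$-side and for corner points. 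Now a periodic boundary point $p$ fails to be a corner of some rectangle $R_i$ containing it precisely when $p$ lies on the stable boundary of $R_i$ but in the interior of its unstable boundary, or vice versa. I would show: $p$ is a corner of every rectangle containing it $\iff$ every sector code $\underline{e}$ of $p$ lies in both $\underline{\cS(T)}$ and $\underline{\cU(T)}$, i.e. $\underline{e}\in\underline{B(T)}=\underline{S(T)}\cap\underline{U(T)}$. The forward direction uses that if $p$ is a corner of every incident rectangle then each sector $e_j(p)$ sits in a rectangle $R_i$ with $p$ on both a stable and an unstable boundary component, so the corresponding sector code is generated by both the $s$-generating function $\Gamma(T)$ and the $u$-generating function $\Upsilon(T)$ (by the boundary-tracking argument of Proposition \ref{Prop: positive codes are boundary} applied in both directions), hence is simultaneously an $s$-boundary periodic code and a $u$-boundary periodic code. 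The converse direction runs the same argument backwards: a code in $\underline{S(T)}\cap\underline{U(T)}$ projects, by Propositions \ref{Prop: positive codes are boundary} and its unstable analogue, to a point lying on both $\partial^s$ and $\partial^u$ of the rectangle $R_{w_0}$, i.e. a corner of $R_{w_0}$; ranging over all sector codes of $p$ (which by Lemma \ref{Lemm: sector codes} form a $\sim_T$-class and cover all rectangles incident to $p$) forces $p$ to be a corner of each such rectangle.

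Finally, to pass from "every periodic boundary point of $\cR$ is a corner of its incident rectangles" to the type-level statement "$\underline{B(T)}=\underline{S(T)}\cap\underline{U(T)}$" I would argue that $\underline{S(T)}\cap\underline{U(T)}$ is, by the two characterizations above, exactly the set of sector codes of points of $\mathrm{Per}^b(f,\cR)$ that are corners, while $\underline{B(T)}$ (the set of periodic boundary codes) is the set of sector codes of all of $\mathrm{Per}^b(f,\cR)$; these two sets coincide iff every periodic boundary point is a corner of each incident rectangle, which is the corner property for $\cR$. The containment $\underline{B(T)}\supseteq\underline{S(T)}\cap\underline{U(T)}$ holds unconditionally (a corner periodic code is in particular a boundary periodic code), so the content is the reverse containment, which is precisely what the geometric corner property delivers. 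Since all these identifications are independent of the particular realization $(f,\cR)$ of $T$ — they are phrased entirely through $\Gamma(T)$, $\Upsilon(T)$, and the combinatorics of $T$ — the equivalence is genuinely between a property of $\cR$ and a property of $T$.

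The main obstacle I anticipate is the bookkeeping in the forward direction: one must verify carefully that for a periodic boundary point $p$ which is a corner of every incident rectangle, \emph{each} of its $2k$ sector codes is generated by \emph{both} generating functions, which requires matching the sector-by-sector local picture (which rectangle each of the four quadrants at $f^z(p)$ lies in) with the formulas \eqref{Equ: Gamma generation funtion} for $\Gamma(T)$ and its $u$-analogue, uniformly in $z$. This is the same type of argument as in the proof of Lemma \ref{Lemm: sector codes}, cases $i)$–$iii)$, and I would largely cite that structure rather than redo it; the new ingredient is simply observing that the corner hypothesis forces the configuration to be the "all four quadrants distinct, $p$ on both boundaries" case at the relevant times, so that $k(s)$ and $k(u)$ in the notation of that proof can be taken with $f^{k(s)}(p),f^{k(u)}(p)$ still boundary points rather than interior points.
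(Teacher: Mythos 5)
You should first note that the paper itself contains no proof of this lemma: it is stated with a pointer to \cite[Lemma 5.50]{IntiII}, so there is no internal argument to compare yours against, and I can only judge the proposal on its own terms. Your translation through the symbolic dictionary of Section \ref{Sec: Total Invariant} is the right idea and, in the setting where $A(T)$ is binary, it does give a proof; in fact it can be streamlined to a per-code argument that avoids the bookkeeping over all $2k$ sector codes you flag as the main obstacle. If $\underline{w}$ is a periodic boundary code, Proposition \ref{Prop: positive codes are boundary} places $p=\pi_f(\underline{w})$ on $\partial^s_{\epsilon}R_{w_0}$ (or $\partial^u_{\epsilon}R_{w_0}$) for the \emph{specific} rectangle $R_{w_0}$ named by the code; the corner property of $\cR$ applied to $R_{w_0}$ upgrades $p$ to a corner of $R_{w_0}$, and then the unstable analogue of Proposition \ref{Prop: boundary points have boundary codes} puts $\underline{w}$ in $\underline{\cS}(T)\cap\underline{\cU}(T)$. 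Conversely, given a periodic boundary point $p$ and a rectangle $R\ni p$, take the sector code of a sector of $p$ contained in $R$: since $p\in\partial^s R$ or $\partial^u R$ with $R=R_{w_0}$, the induction of Proposition \ref{Prop: positive codes are boundary} (stable edges of vertical sub-rectangles lie in stable edges of the ambient rectangle, so the orbit stays on the boundary of the rectangles named by the code) shows this sector code is a periodic boundary code, and the corner property of $T$ then forces $p\in\partial^s R\cap\partial^u R$. This is essentially your argument with the quantifier over sectors removed.

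The genuine gap is the preliminary reduction to the binary case. Horizontal refinement does \emph{not} leave the geometric type "unchanged as a combinatorial object": $(f,\mathcal{H}(\cR))$ has a different geometric type, with $\sum_i h_i$ rectangles, and computing it is exactly the content of the reference invoked in Remark \ref{Rema: Convencion sobre matriz binaria}. So if you want the lemma for a $T$ with non-binary incidence matrix you must (i) prove that the corner property of $\cR$ is equivalent to that of $\mathcal{H}(\cR)$ — the sets of periodic boundary points do coincide, since a periodic point on $f^{-1}(\partial^s\cR)$ already lies on $\partial^s\cR$ by forward invariance and periodicity, but cornerhood in the direction from $\mathcal{H}(\cR)$ back to $\cR$ is not automatic and needs an argument — and (ii) relate the corner property of $T$ to that of the \emph{different} refined type, which your sketch does not address at all. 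The least painful repair is to drop the reduction and take binarity of $A(T)$ as a standing hypothesis: the corner property of $T$ is defined in this paper through the code sets $\underline{\cS}(T),\underline{\cU}(T)$, which are only set up for binary mixing incidence matrices anyway, so nothing is lost; but as written the reduction step is false and cannot be kept.
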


 Here we have choose just introduce the characterization of the joint refinement that we require in order to complete our proofs but it explicit construction and computations is carried along \cite[Chapter 5]{IntiThesis} since we must use this construction, we pose it as a result.

\begin{coro}\label{Coro. algortim compute joint}
There exist a constructive algorithm to compute the joint refinement of two geometric types in the pseudo-Anosov class.
\end{coro}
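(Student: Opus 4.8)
The plan is to realize every ingredient of Definition~\ref{Defi: Joint refinament} as an object that depends effectively on the abstract geometric types $T_f$ and $T_g$ alone, and then to describe how the cuts are recorded and how the geometric type of the refined partition is read off. Throughout, I would use the sub-shift picture of Section~\ref{Sec: Total Invariant}: replace $T_f$ by a horizontal refinement so that $A(T_f)$ is binary, work inside $\Sigma_{A(T_f)}$, and exploit that the relation $\sim_{T_f}$ is algorithmically defined by Proposition~\ref{Prop: The relation determines projections}.

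First I would extract the cutting bound $N(f,g)$ from $T_f$ and $T_g$. By Lemma~\ref{Lemm: Boundary of Markov partition is periodic} the periodic points on $\partial\cR_f$ are exactly the projections $\pi_f(\underline w)$ of the periodic $s$- and $u$-boundary codes of $T_f$, and by Corollary~\ref{Coro: preperiodic finite s,u boundary codes} these codes are obtained by iterating the $s$-generating function $\Gamma(T_f)$ and the $u$-generating function $\Upsilon(T_f)$ at most twice the number of rectangles of $T_f$. For such a periodic code $\underline w$ the period of the point $\pi_f(\underline w)$ is the least $P$ with $\sigma^P(\underline w)\sim_{T_f}\underline w$, which is decidable; taking the maximum over the finitely many periodic boundary codes gives $N(\cR_f)$, and likewise $N(\cR_g)$, whence $N(f,g)=\max\{N(\cR_f),N(\cR_g)\}=N(g,f)$.

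Second I would locate the periodic points of $f$ of period at most $N(f,g)$ inside the rectangles of $\cR_f$. By Lemma~\ref{Lemm: Periodic to peridic} these are the projections of periodic codes $\underline w\in\Sigma_{A(T_f)}$ with $\sigma^P(\underline w)\sim_{T_f}\underline w$ for some $P\le N(f,g)$, a finite set found by searching admissible words up to length $N(f,g)$. For such a code the containing rectangle is $R_{w_0}$, and the stable segment of $\cR_f$ through $\pi_f(\underline w)$ is the nested intersection $\bigcap_{m\ge 0}f^{-m}(R_{w_m})$ of horizontal sub-rectangles prescribed by $\rho_{T_f}$; its position relative to another such segment is decided by comparing these nested sequences of sub-rectangle indices in the order prescribed by $T_f$. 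The unstable segment is obtained symmetrically from the negative part of $\underline w$ using $T_f^{-1}$. Thus for each $R_i$ one obtains a totally ordered finite list of stable cut segments and one of unstable cut segments, and cutting $R_i$ along them produces the combinatorial grid $(\cR_f)_{\cR_g}$.

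Finally, giving the grid cells the lexicographic order and the vertical and horizontal directions inherited from the $R_i$ yields a geometric Markov partition of $f$ whose geometric type $T_{(T_g,T_f)}$ is computed by following, for each new horizontal sub-rectangle $H$, its image $f(H)$: since the cutting set is a union of full stable and unstable segments of periodic points it is $f$-invariant, so $f(H)$ is again one of the new vertical sub-rectangles, and $\rho_{T_f},\epsilon_{T_f}$ determine which one and the accompanying sign. All steps are finite and explicit; the detailed construction and the verification that the output is a bona fide abstract geometric type realizing the joint refinement are carried out in \cite[Chapter~5]{IntiThesis} (to appear in \cite{IntiII}). The main obstacle is exactly this last bookkeeping step — keeping the orderings of the cut segments and the orientation data coherent across adjacent rectangles so that the $\rho$ and $\epsilon$ of $T_{(T_g,T_f)}$ are consistently defined — which is the technical heart of \cite[Chapter~5]{IntiThesis}.
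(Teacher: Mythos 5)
The paper gives no internal proof of this corollary: it is stated as a result imported wholesale from \cite[Chapter 5]{IntiThesis} (to appear in \cite{IntiII}), where the joint refinement is constructed, shown to be a geometric Markov partition of $f$ with the corner property, and its geometric type $T_{(T_g,T_f)}$ is computed. So there is no in-paper argument to measure you against; what you propose is a genuinely different, more intrinsic route, namely making every ingredient of Definition~\ref{Defi: Joint refinament} effective through the symbolic machinery of Section~\ref{Sec: Total Invariant} (boundary codes via $\Gamma(T)$ and $\Upsilon(T)$, periods read off through $\sim_T$, positions of the cut segments read off from itineraries). That scaffolding is sound and is arguably more explicit than anything the paper says here, and your use of Corollary~\ref{Coro: preperiodic finite s,u boundary codes} and Proposition~\ref{Prop: The relation determines projections} to get $N(f,g)$ and the finite list of cutting points is correct.

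However, as a standalone proof your proposal stops exactly where the paper's citation starts: the assertion that the cut family is again a geometric Markov partition of $f$, the coherent bookkeeping of the orderings and orientations across adjacent rectangles, and the resulting computation of $\rho$ and $\epsilon$ for $T_{(T_g,T_f)}$ are precisely the content of \cite[Chapter 5]{IntiThesis}, and you defer them to that same reference rather than proving them. You should also be careful with your opening move of replacing $T_f$ by its horizontal refinement to make $A(T_f)$ binary: the joint refinement in Definition~\ref{Defi: Joint refinament} cuts the \emph{original} rectangles $R_i\in\cR_f$, so the periods, memberships $p\in R_i$, and relative positions of cut segments computed in the refined alphabet must be translated back to the original rectangles (each $R_i$ being the union of its horizontal sub-rectangles); otherwise your algorithm outputs the type of a different partition than $T_{(T_g,T_f)}$. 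This translation is routine but needs to be said; with it, your sketch is a plausible alternative effective construction, though its technical heart remains, as you acknowledge, outsourced to the same external chapter the paper cites.
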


\subsection{The lifting of a geometric  Markov partition of $f$ to the formal \textbf{DA}$(T)$.}

The important property of the joint refinement it that every periodic boundary point is a corner point of any rectangle that contain such point. We must use this property to prove the following Proposition. 

\textbf{Notation:} Once we have fixed $\cR_f$ and $\cR_g$ to avoid an overwhelming notation, we make the following conventions:
\begin{eqnarray}
\cR_{f,g}:=\cR_{(\cR_f,\cR_g)} \text{ and } \cR_{g,f}:=\cR_{(\cR_g,\cR_f)}, \\
T_{f,g}:=T_{(T_f,T_g)} \text{ and } T_{g,f}:=T_{(T_g,T_f)}.
\end{eqnarray}

The formal derived from pseudo-Anosov of such geometric types are:
\begin{eqnarray}
\textbf{DA}(T_{f,g}):=\{\Delta(T_{f,g}), K(T_{f,g}),\Phi_{T_{f,g}}) \} \text{ and } \\
\textbf{DA}( T_{g,f}):=\{\Delta( T_{g,f}), K( T_{g,f}),\Phi_{ T_{g,f}})\} 
\end{eqnarray}

\begin{prop}\label{Prop: preimage is Markov Tfg type}
	Let $f:S_f \rightarrow S_f$ and $g:S_g \rightarrow S_g$ be two generalized pseudo-Anosov homeomorphisms with geometric Markov partitions, $\cR_f$ and $\cR_g$, whose geometric types, $T_{f}$ and $T_g$, have binary incidence matrix. Suppose that $f$ and $g$  are  topologically conjugate through an orientation preserving homeomorphism $h:S_f\rightarrow S_g$ and let:
	$$
	\pi:\Delta(T_{g,f})\rightarrow S_g,
	$$
	be  the projection given by Proposition \ref{Prop: type of basic piece is type of pseudo-Anosov}. In this situation:
	\begin{itemize}
		\item[(1)] The induced geometric partition, $h(\cR_{f,g})$ is a geometric Markov partition of $g$ whose geometric type $ T_{f,g}$ have the corner property.
		
		\item[2)] A  point $p\in S_f$ is a periodic boundary point of the Markov partition $(f,\cR_{f,g})$ if and only if $h(p)$ is a periodic boundary point of $(g,\cR_{g,f})$. In particular, the corner periodic points of $h(\cR_{f,g})$ are the corner periodic points of $\cR_{\cR_{g,f}}$.
		
		\item[(3)] There exit a unique Markov partition $\underline{h}(\cR_{f,g})=\{\underline{h}(R_i)\}_{i=1}^n$ of $K(T_{g,f})\subset \Delta(T_{(g,f)}))$ such that:
		$$
		\pi\left(\overset{o}{\underline{h}}(R_i)\right)=\overset{o}{(h(R_i))}.
		$$
		\item[(4)] Keep the previous label of the rectangles in  $\underline{h}(\cR_{f,g})$ and give to the rectangle $\underline{h}(\cR_{f,g})$  the unique vertical and horizontal orientation  such that $\pi$ is increasing along both the vertical and horizontal leaves within $\underline{h}(R_i)$. With this geometrization, the geometric type of $\underline{h}(\cR)$ is $T_{f,g}$.
	\end{itemize}
\end{prop}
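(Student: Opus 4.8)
The proof proceeds by assembling four items, each of which leans on a previous result in the excerpt; the genuinely new work is bookkeeping about how the conjugacy $h$ interacts with the joint refinement and with the projection $\pi:\Delta(T_{g,f})\to S_g$. The plan is to establish the items in the stated order, since each one feeds the next.

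For item (1): since $h$ is an orientation-preserving homeomorphism conjugating $f$ and $g$, Proposition \ref{Prop: pA closed up conjugation} tells us $g$ is pseudo-Anosov with invariant foliations $h_*(\cF^{s,u})$, and the image under $h$ of any geometric Markov partition of $f$ is a geometric Markov partition of $g$ of the same geometric type (this is exactly the ``easy'' direction of Theorem \ref{Theo: conjugated iff  markov partition of same type}, together with the observation that $h$ carries stable/unstable sub-rectangles to stable/unstable sub-rectangles and preserves the transverse orientations). Hence $h(\cR_{f,g})$ is a geometric Markov partition of $g$ with geometric type $T_{f,g}$. The corner property of $T_{f,g}$ is inherited from $\cR_{f,g}$ via Lemma \ref{Lemm: Corner prop equivalence}, since $\cR_{f,g}$ was constructed as a joint refinement and therefore has the corner property by \cite[Corollary 5.14]{IntiThesis} (Definition \ref{Defi: Corner type partition }).

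For item (2): a periodic boundary point of $(f,\cR_{f,g})$ is, by Lemma \ref{Lemm: Boundary of Markov partition is periodic}, a periodic point of $f$ lying on the stable or unstable boundary of the partition; conjugacy by $h$ sends periodic points of $f$ to periodic points of $g$ of the same period, and sends $\partial^{s}\cR_{f,g}$, $\partial^{u}\cR_{f,g}$ to $\partial^{s}(h(\cR_{f,g}))$, $\partial^{u}(h(\cR_{f,g}))$ respectively. So it suffices to identify $h(\cR_{f,g})$ with $\cR_{g,f}$ at the level of boundary periodic points. Here one uses the defining characterization of the joint refinement (Definition \ref{Defi: Joint refinament}): the corner periodic points of $\cR_{f,g}$ are exactly $\mathrm{Per}(f,N(f,g))$, those of $\cR_{g,f}$ are exactly $\mathrm{Per}(g,N(g,f))$, and $N(f,g)=N(g,f)$ because $h$ is a conjugacy and the maximal periods of boundary points of $\cR_f$ and $\cR_g$ are intertwined by $h$. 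Thus $h$ carries $\mathrm{Per}(f,N(f,g))$ bijectively to $\mathrm{Per}(g,N(g,f))$, and the periodic boundary points of $h(\cR_{f,g})$ coincide with those of $\cR_{g,f}$. Combined with item (1), $h(\cR_{f,g})$ and $\cR_{g,f}$ are two geometric Markov partitions of $g$ with the same geometric type $T_{f,g}=T_{g,f}$... one must be careful: they need not be literally equal as partitions, only conjugate; but for the purpose of items (3) and (4) what is needed is that $h(\cR_{f,g})$ is a geometric Markov partition of $g$ of geometric type $T_{f,g}$ all of whose boundary periodic points are corner points, which is what has been shown.

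For items (3) and (4): this is the crux, and the main obstacle, because it requires lifting a partition across the semi-conjugacy $\pi$ rather than pushing one forward. Recall from Theorem \ref{Theo: Basic piece projects to pseudo-Anosov} and Proposition \ref{Prop: type of basic piece is type of pseudo-Anosov} that $\pi:\Delta(T_{g,f})\to S_g$ is a continuous surjection semi-conjugating $\phi_{T_{g,f}}|_{\Delta(T_{g,f})}$ with $g$, injective off the finitely many $s,u$-boundary periodic orbits of $K(T_{g,f})$, and that $\pi$ applied to the Markov partition of $K(T_{g,f})$ of geometric type $T_{g,f}$ gives $\cR_{g,f}$. The plan is: for each rectangle $R_i$ of $h(\cR_{f,g})$, define $\underline{h}(R_i)$ as the closure of $\pi^{-1}(\overset{o}{R_i})$ restricted appropriately — more precisely, take the rectangle of the $K(T_{g,f})$-Markov partition whose $\pi$-image has interior $\overset{o}{R_i}$. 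Existence and uniqueness of such a rectangle follow because: $h(\cR_{f,g})$ and $\cR_{g,f}$ have the same geometric type with the corner property, so there is a canonical labelled correspondence between their rectangles (both index sets have size $n$, the same $h_i,v_i$, and $\pi$ on the $K(T_{g,f})$-partition realizes $\cR_{g,f}$); and $\pi$ restricted to the interior of each $K(T_{g,f})$-rectangle is a homeomorphism onto the interior of the corresponding $\cR_{g,f}$-rectangle by Lemma \ref{Lemm: projection of rectangle by pi} and Lemma \ref{Lemm: pi(R) is a rectangle}. One then checks the displayed identity $\pi(\overset{o}{\underline{h}}(R_i))=\overset{o}{(h(R_i))}$ directly, and that the $\underline{h}(R_i)$ have disjoint interiors and cover $K(T_{g,f})$ and satisfy the Markov property (Lemmas \ref{Lemm: disjoint interiors}, \ref{Lemm: the rectangles pi R cover S}, \ref{Lemm: Boundaries are f-invariant} are the templates). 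For item (4), endow each $\underline{h}(R_i)$ with the vertical/horizontal orientation making $\pi$ increasing along leaves (this is legitimate by Lemma \ref{Lemm: pi preserve transversals} / Definition \ref{defi: orientations induced by pi}); then by Lemma \ref{Lemm: pi send sub-rec in sub rec} and Corollary \ref{Coro: Same sub-rectangle} and Remark \ref{Rema: pi preserve the orientation}, exactly as in the proof of Proposition \ref{Prop: same type projection }, the geometric type of $\underline{h}(\cR_{f,g})$ equals the geometric type of $h(\cR_{f,g})$, which is $T_{f,g}$. The subtle point to handle with care — the real obstacle — is that $\pi$ collapses the $s,u$-boundary periodic orbits, so on the boundary of the rectangles the lift $\underline{h}(R_i)$ is not simply $\pi^{-1}(h(R_i))$; here is where the corner property of $T_{f,g}$ is used decisively, because it guarantees that every collapsed orbit sits at a corner of every rectangle containing it, so the lift of each closed rectangle is unambiguous and the closures behave as required (one argues via approaching boundary points by interior points and using continuity of $\pi$, as in the proof of Lemma \ref{Lemm: pi(R) is a rectangle}).
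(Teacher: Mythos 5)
There is a genuine gap in your treatment of item (3), and it is located exactly where you place the ``crux''. You propose to obtain $\underline{h}(R_i)$ by taking ``the rectangle of the $K(T_{g,f})$-Markov partition whose $\pi$-image has interior $\overset{o}{R_i}$'', justified by a ``canonical labelled correspondence'' between the rectangles of $h(\cR_{f,g})$ and those of the given Markov partition of $K(T_{g,f})$, and earlier you assert $T_{f,g}=T_{g,f}$. No such correspondence exists and that equality cannot be assumed: the Markov partition of $K(T_{g,f})$ built into $\textbf{DA}(T_{g,f})$ projects under $\pi$ to $\cR_{g,f}$, of geometric type $T_{g,f}$, whereas $h(\cR_{f,g})$ has geometric type $T_{f,g}$; these are different partitions of $S_g$ (they only share their set of boundary periodic points, which is item (2)), and whether $K(T_{g,f})$ \emph{also} admits a partition of type $T_{f,g}$ is precisely the content of the proposition. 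Indeed, if $T_{f,g}=T_{g,f}$ were known a priori, strong equivalence would be automatic and Corollary \ref{Coro: equivalence pA and DA} would be vacuous. So the rectangles $\underline{h}(R_i)$ cannot be selected from an existing partition of $K(T_{g,f})$; they must be constructed by lifting each rectangle of $h(\cR_{f,g})$ through the collapsing map $\pi$, which is what Lemma \ref{Lemm: preimage pi rectangle} does in the paper: for each boundary interval of $h(R_i)$ the preimage of its interior has \emph{two} components lying in two distinct boundary separatrices of $K(T_{g,f})$, and one must choose the one accumulated from the correct side by singleton preimages of interior points, produce the corner points via the cycles and minimal rectangles of the relation $\sim_R$, invoke the absence of impasse to see that opposite lifted sides lie in distinct separatrices, and verify that the interior preimage is trivially bifoliated. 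None of this is replaced by your appeal to Lemmas \ref{Lemm: projection of rectangle by pi} and \ref{Lemm: pi(R) is a rectangle}, which concern pushing a partition of $K(T_{g,f})$ \emph{forward}, not lifting an arbitrary partition of $g$ backward.

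A second, smaller omission: even granting the lifted rectangles, you assert the Markov property of $\underline{h}(\cR_{f,g})$ by saying the earlier lemmas ``are the templates''. The nontrivial step there is the $\Phi_{g,f}$-invariance of the lifted stable boundary; since $\pi$ is only a semi-conjugacy that is injective off the boundary orbits, the paper proves this by approximating the endpoints of each lifted boundary interval by non-boundary periodic points (where $\pi$ is one-to-one and conjugating) and then ruling out, by a separate sequence argument, that the image of a lifted side could straddle two distinct stable boundary components of the target rectangle. Your sketch does not address this, and it does not follow formally from the cited lemmas. Items (1), (2) and (4) of your plan are essentially the paper's arguments and are fine, but (4) is conditional on a correct construction in (3), so the proof as proposed does not go through.
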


	\begin{defi}\label{Defi: induced geometric Markov partition}
	Let $f:S\rightarrow S$ be a generalized pseudo-Anosov homeomorphism, and let $h:S\rightarrow S$ be an orientation-preserving homeomorphism, and $g:=h\circ f \circ h^{-1}$. If $\cR$ is a geometric Markov partition of $f$, the \emph{induced geometric Markov partition} of $g$ by $h$ is the Markov partition $h(\mathcal{R})=\{h(R_i)\}_{i=1}^n$. For each $h(R_i)$, we choose the unique orientation in the vertical and horizontal foliations such that $h$ preserves both orientations at the same time.
\end{defi}

\begin{proof}

	\textbf{Item} $(1)$. 	In Definition  \ref{Defi: induced geometric Markov partition}, we introduced the induced geometric Markov partition $h(\cR_{f,g})$ and theorem \ref{Theo: conjugated iff  markov partition of same type} establishes that $h(\cR_{f,g})$ is a geometric Markov partition with the same geometric type as $\cR_{f,g}$, specifically $T_{f,g}$.  By lemma \ref{Lemm: Corner prop equivalence} since $\cR_{f,g}$ has the corner property if and only if  $T_{f,g}$ has the corner property.

\textbf{Item} $(2)$.	By definition of the joint refinement, a periodic point $p$ of $f$ is a corner point  of $\cR_{f,g}$ if and only if it is a period is less than or equal to $N(f,g)$. But $h(p)$ has the same period that $p$, therefore $h(p)$ is a periodic point of $g$ with period less than or equal to $N(g,f)=N(f,g)$, then $h(p)$ is corner point of $\cR_{g,f}$, by definition it is a boundary point. This proves the second item of the proposition.

	\textbf{Item} $(3)$. We are going to prove the second statement through a series of lemmas. 	The reader can refer to Figure \ref{Fig: rectangle split} for a visual representation of our notation.
	
	\begin{figure}[h]
		\centering
		\includegraphics[width=0.7\textwidth]{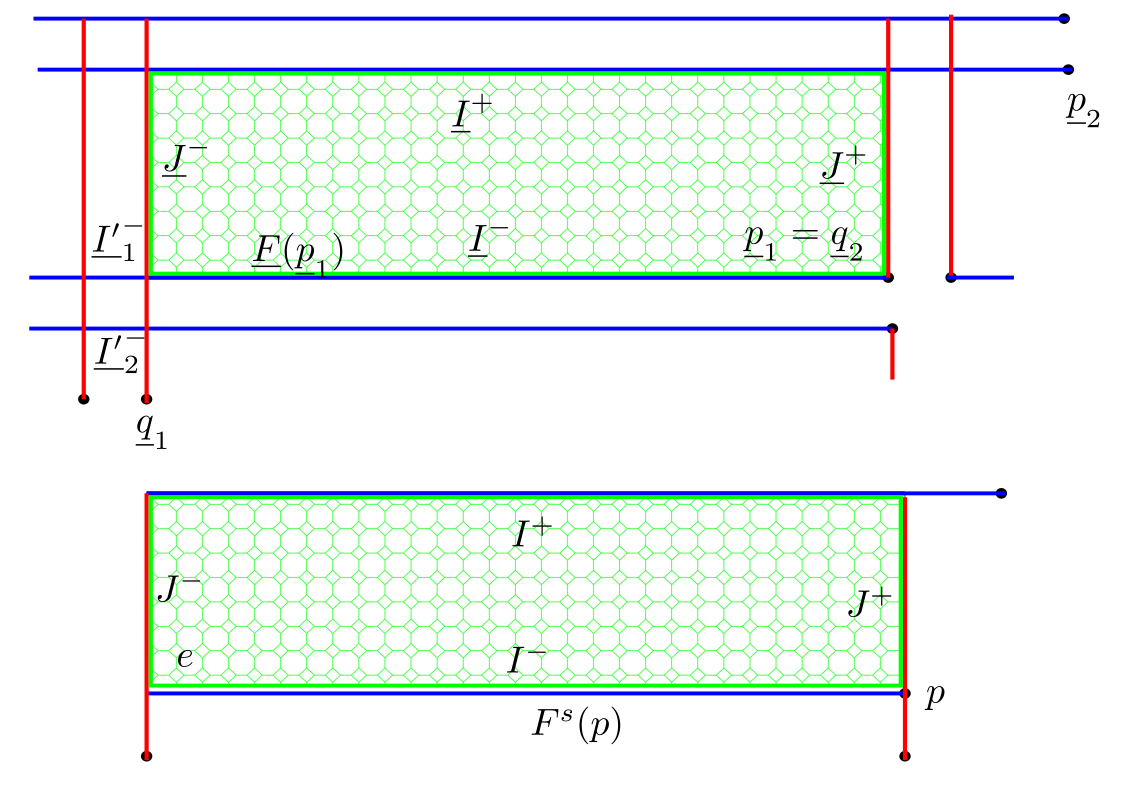}
		\caption{The lift of a rectangle $R$. }
		\label{Fig: rectangle split}
	\end{figure}

	\begin{lemm}\label{Lemm: preimage pi rectangle}
		Let $R$ be a rectangle in the Markov partition $h(\cR_{f,g})$. Then $\pi^{-1}(R)$ contains a unique rectangle $\underline{R}\subset \Delta(T_{g,f})$ with the following properties:
		\begin{itemize}
			\item The interior of $\underline{R}$ is mapped under $\pi$ to the interior of $R$, i.e., $\pi(\overset{o}{\underline{R}})=\overset{o}{R}$, and the image of $\underline{R}$ is the rectangle $R$, i.e., $\pi(\underline{R})=R$.
			
			\item The horizontal  boundary of $\underline{R}$ is contained in a stable separatrice of an $s$-boundary periodic point of $K(T_{g,f})$, and its vertical boundary is in an unstable separatrice of an $u$-boundary point.

			\item The rectangle $\underline{R}$ is isolated from the basic piece $K(T_{g,f})$ from its exterior, i.e., there is an open neighborhood $U$ of $\underline{R}$ such that $K(T_{g,f})\cap U =K(T_{g,f}) \cap \underline{R}$.
		\end{itemize}
	\end{lemm}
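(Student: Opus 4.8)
The strategy is to exploit the semi-conjugacy $\pi:\Delta(T_{g,f})\to S_g$ from Theorem \ref{Theo: Basic piece projects to pseudo-Anosov} and Proposition \ref{Prop: type of basic piece is type of pseudo-Anosov} in reverse: we know $\pi$ collapses the domain $\Delta(T_{g,f})$ onto $S_g$ and is a bijection away from the boundary laminations, so a rectangle $R\subset S_g$ of the geometric Markov partition $h(\cR_{f,g})$ should have a unique ``un-collapsed'' preimage rectangle inside $\Delta(T_{g,f})$. First I would recall that by Lemma \ref{Lemm: pi(R) is a rectangle} and its proof, $\pi$ restricted to the interior of any rectangle of a Markov partition of $K(T_{g,f})$ realizes the collapse described in Lemma \ref{Lemm: projection of rectangle by pi}: the interior of a rectangle with non-isolated boundary maps homeomorphically (after passing to the quotient by $\sim_R$) to an affine rectangle. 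The key point is that the rectangle $R=h(R_i)$ we start with is a rectangle of the Markov partition $(g,\cR_{g,f})$ — this is Item $(1)$, already proved — and every boundary point of $\cR_{g,f}$ is a corner point (the corner property, from Item $(1)$ and Lemma \ref{Lemm: Corner prop equivalence}). The corner property is precisely what guarantees that the preimage under $\pi$ of each $R$ is a \emph{single} rectangle rather than a more complicated set: were a non-corner boundary point present, the cycle structure and the sets $\cP(\cC)$ would force the $\pi$-preimage of a boundary arc to be a $1$-complex rather than a boundary interval of a genuine rectangle.

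Concretely, I would argue as follows. Take $R$ a rectangle of $h(\cR_{f,g})$ and consider its open interior $\overset{o}{R}$, which is disjoint from $\partial^s h(\cR_{f,g})\cup\partial^u h(\cR_{f,g})$, hence (by Lemma \ref{Lemm: Caraterization unique codes} transported through $\pi$, or directly by Lemma \ref{Lemm: disjoint interiors} applied to $\pi(\cR_{g,f})$ where $\pi(\cR_{g,f})=h(\cR_{f,g})$ as Markov partitions) $\pi^{-1}(\overset{o}{R})$ is an open subset of $\Delta(T_{g,f})$ on which $\pi$ is a bijection onto $\overset{o}{R}$; I would identify this set as the interior of a rectangle $\overset{o}{\underline R}$ using the parametrization $\Phi$ constructed in the proof of Lemma \ref{Lemm: pi(R) is a rectangle}, noting that the trivial product structure on $\overset{o}{R}$ pulls back to one on $\overset{o}{\underline R}$ because $\pi$ maps stable/unstable leaves of $K(T_{g,f})$ to stable/unstable leaves of $\phi$ (Lemma \ref{Lemm: projection of rectangle by pi}). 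Then $\underline R:=\overline{\overset{o}{\underline R}}$ is a rectangle, and the extension-to-the-boundary argument of Lemma \ref{Lemm: pi(R) is a rectangle} shows $\pi(\underline R)=R$ and $\pi(\overset{o}{\underline R})=\overset{o}{R}$, giving the first bullet. Uniqueness follows because any two rectangles of $\Delta(T_{g,f})$ with interiors mapping onto $\overset{o}{R}$ would have their interiors coincide (again by the bijectivity of $\pi$ off the laminations, i.e.\ the analogue of Lemma \ref{Lemm: disjoint interiors}), and a rectangle is determined by its interior.

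For the second bullet: the horizontal boundary $\partial^s\underline R$ projects into $\partial^s R$, which is contained in $\partial^s h(\cR_{f,g})=\pi(\partial^s\cR_{g,f})$; by Lemma \ref{Lemm: Boundary of Markov partition is periodic} (applied in $\Delta(T_{g,f})$) the stable boundary of $\cR_{g,f}$ lies on stable leaves of periodic points of $K(T_{g,f})$, and the corner property of $T_{g,f}$ forces these to be $s$-boundary periodic points whose free-side separatrices bound the rectangle $\underline R$; symmetrically for the vertical boundary and $u$-boundary points. For the third bullet — that $\underline R$ is isolated from $K(T_{g,f})$ on its exterior — I would use exactly the corner property once more: across each boundary component of $\underline R$ there is, locally, either the interior of $\underline R$ or a region of $\Delta(T_{g,f})$ that is a $\sim_R$-equivalence class of the minimal-rectangle or non-isolated-leaf type, or a free separatrix of a corner point; in the relevant cases the stable (resp.\ unstable) boundary of $\underline R$ sits on a separatrix that is non-free \emph{only} on the side of $\underline R$, so on the exterior side it is free, i.e.\ not accumulated by $K(T_{g,f})$. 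Combining the four boundary components gives an open neighborhood $U$ with $K(T_{g,f})\cap U=K(T_{g,f})\cap\underline R$.

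\textbf{Main obstacle.} The delicate step is the careful identification of $\pi^{-1}(\overset{o}{R})$ as the interior of a single rectangle and the continuous extension of the parametrization across $\partial\underline R$ so that corners are well defined — essentially re-running, in the reverse direction, the technical core of the proof of Lemma \ref{Lemm: pi(R) is a rectangle}, and checking at each step that the \emph{corner property} of $T_{g,f}$ (guaranteed by Item $(1)$) is exactly what rules out the pathological $\sim_R$-classes (the sets $\cP(\cC)$ and infinite chains of arcs) that would otherwise make $\pi^{-1}(R)$ fail to be a rectangle or fail to be isolated from $K(T_{g,f})$ on its exterior. I expect the isolation statement (third bullet) to require the most care, since it is where one must combine the description of the four families of $\sim_R$-classes with the fact that a corner point has both its free separatrices on the boundary of the same complementary disk $C(p)$.
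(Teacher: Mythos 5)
Your plan rests on a claim that is false in this setting: that $\pi$ is a bijection away from the boundary laminations, so that $\pi^{-1}(\overset{o}{R})$ is an open set on which $\pi$ is a bijection onto $\overset{o}{R}$. The semi-conjugacy $\pi$ of Theorem \ref{Theo: Basic piece projects to pseudo-Anosov} is injective only on the non-boundary points of the basic piece $K(T_{g,f})$ itself; over the interior of $R$ its fibers are the full $\sim_R$-classes, which are generically minimal rectangles or arcs of non-boundary invariant manifolds (Definition \ref{Defi: equivalen clases sim-r}), and these gap classes are dense in $\Delta(T_{g,f})$. Neither Lemma \ref{Lemm: disjoint interiors} nor Lemma \ref{Lemm: Caraterization unique codes} gives injectivity (the latter concerns the symbolic projection $\pi_f$, a different map). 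For the same reason the step ``the trivial product structure on $\overset{o}{R}$ pulls back to one on $\overset{o}{\underline R}$'' does not work as stated: the $\pi$-preimage of a single stable or unstable segment of $\overset{o}{R}$ is in general a two-dimensional sub-rectangle, not a leaf (the paper's own proof uses exactly this fact when it notes that $\pi^{-1}(\overset{o}{J})$ is a rectangle without its horizontal boundary). Your uniqueness argument invokes the same false bijectivity, so both the identification of $\pi^{-1}(\overset{o}{R})$ as the interior of a rectangle and the uniqueness of $\underline R$ are left without justification; acknowledging the step as ``delicate'' does not supply the missing mechanism.

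What the paper actually does, and what your proposal is missing, is a construction of $\underline R$ from its boundary rather than from its interior. One analyzes $\pi^{-1}(\overset{o}{I^\pm})$ and $\pi^{-1}(\overset{o}{J^\pm})$ for the four sides of $R$: each such preimage has two connected components lying on two distinct boundary separatrices upstairs, and one must select the component that is accumulated by singleton fibers $\pi^{-1}(x_n)$ with $x_n\in\overset{o}{R}$; the endpoints of the selected intervals are pinned down by examining whether the corresponding corner of $R$ is periodic (its fiber is part of a cycle) or not (its fiber is a minimal rectangle), which yields $\vert\underline I^-\cap\underline J^-\vert=1$ and, crucially, shows the endpoints are $u$-boundary (resp. $s$-boundary) points; the no-impasse property of $K(T_{g,f})$ is then needed to guarantee that $\underline I^-$ and $\underline I^+$ lie on different separatrices, so the four intervals form a closed curve. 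Only after that does one use the fiber structure over interior points (no cycles there, by the corner property) to see that the enclosed region is trivially bifoliated, hence a rectangle projecting onto $R$, and the isolation of the third bullet comes from the endpoints being $u$- and $s$-boundary points together with the sides lying on boundary leaves accumulated by $K(T_{g,f})$ only from the side of $\underline R$ — not from a ``free separatrix'' argument, which conflates free separatrices (bounding the regions $C(p)$ of a cycle) with boundary leaves of the lamination. These selection, corner-matching, and no-impasse steps are the core of the proof and are absent from your proposal.
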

	
	\begin{proof}
		
		Denote the left side of $R$ as $J^-$, its right side as $J^+$, its lower boundary as $I^-$, and its upper boundary as $I^+$. All these intervals are contained within the invariant manifold of a corner periodic point of $h(\cR_{f,g})$. Moreover, the boundary points of $h(\cR_{f,g})$ are all corner points and as proven in Item $(2)$, they coincide with the corner points of $\cR_{g,f}$. This implies that neither $J^{\pm}$ nor $I^{\pm}$ contain a periodic point in their interior; therefore, they are contained within a unique separatrice of a corner point. Now, let's analyze the set $\pi^{-1}(I^-)$ to get some conclusions that can be applied to the remaining intervals $J^+, J^-, I^+$ in the boundary of $R$.
		
		Assume that $I^{-}$ is contained in the  stable separatrice $F^s(p)$ of the corner point $p$.  Inside the domain  $\textbf{DA}(T_{g,f})$ there are two $s$-boundary periodic points of $K(T_{g,f})$, $\underline{p}_1$ and $\underline{p}_2$ with different stable separatrices $\underline{F}^s(\underline{p}_1)$ and $\underline{F}^s(\underline{p}_2)$, such that:  $\pi^{-1}(\overset{o}{I^-})$ has two connected components that are are contained in such separatrices:
		$$
		\underline{\overset{o}{I'}}_1\subset F^s(\underline{p}_1) \text{ and } \underline{\overset{o}{I'}}_2\subset F^s(\underline{p}_2).
		$$
		and $\pi(\underline{\overset{o}{I'}}_1)=\pi(\underline{\overset{o}{I'}}_2)=\overset{o}{I^-}$. 
		Such stable intervals are contained in the stable boundary of  $s$-boundary periodic point, therefore only one of these intervals have points that can be approximated by singletons (Item $i)$ in Definition \ref{Defi: equivalen clases sim-r} ) of the form $\pi^{-1}(x_n)\in K(T_{g,f})$, where $x_n\in \overset{o}{R}$ and $x_n$ converge to a point in $I^-$. Assume that $\overset{o}{\underline{I'}_1}$  is such an interval. 
		
		Let's understated the closure of $\overset{o}{\underline{I'}_1}$ inside the stable separatrice $\underline{F}^s(p_1)$.  We need to consider two different of extreme points in $I^-$:
		\begin{itemize}
			\item[i)] The interval $I^{-}$ has a periodic point $p$ as one of its endpoints. In this case, $\pi^{-1}(p)$ is contained in a cycle. The endpoint of $\underline{I'}^-=\overline{\overset{o}{\underline{I'}_1}}$ which projects to $p$ needs to be contained in such a cycle. However, at the same time, such a point can be approximated by points in $K(T_{g,f})\cap \overset{o}{\underline{I'}_1}$ and is, therefore, an element of $K(T_{g,f})$. The unique hyperbolic points in the cycle are periodic points, so the endpoint of $\underline{I'}^-$ that projects to $p$ is the periodic point $\underline{p}_1$, which is approximated by points in  $\underline{F}^s(\underline{p}_1)$.
			
			\item[ii)]   The endpoint $O\in I^-$ is not periodic. In this case, $\pi^{-1}(O)$ is a minimal rectangle. Similar to before, the endpoint of $\underline{I'}^-=\overline{\overset{o}{\underline{I'}_1}}$ which projects to $O$ needs to be contained in such a minimal rectangle and be an element of the basic piece. Therefore, it is the unique corner point $\underline{O}$, in the minimal rectangle  that is saturated by points in $K(T_{g,f})\cap \pi^{-1}(\overset{o}{\underline{I'}_1})$.
		\end{itemize}
		
		In this setting: If $I^-$ have a periodic end point:
		$$
		\underline{I}^{-}:=[\underline{O},\underline{p}_1]^s\subset F^s(\underline{p}_1).
		$$
		If $I^-=[O_1,O_2]^s$ have no periodic end points, we can applied the analysis in Item $i)$ to construct point $\underline{O_1}$ and $\underline{O_2}$ such that:
		
		$$
		\underline{I}^{-}:=[\underline{O_1},\underline{O_1}]^s\subset F^s(\underline{p}_1).
		$$ 
		
		\begin{rema}\label{Rema: boundary points s-u}
			The extreme points of $\underline{I}^-$ are, in any case, $u$-boundary points. Therefore,  $\underline{I}^-$ is contained within an open interval $\underline{I}_1^-\subset \underline{F}^s(\underline{p}_1)$ such that $K(T_{g,f})\cap \underline{I}_1^- = K(T_{g,f})\cap \underline{I}^-$ and $\pi(\underline{I'}_1^-)=I^-$.  Furthermore, $\underline{I}^{-}$  is the smallest interval contained in $\underline{F}^s(\underline{p}_1)$ that projects to $I'$. These two properties determine it uniquely.
		\end{rema}

		The same construction applies to the other boundary components. That is, there are unique periodic corner points $\underline{p}_2$, $\underline{q}_1$, and $\underline{q}_2$ of the basic piece $K(T_{g,f})$ and unique separatrices of such points that contain minimal intervals:
		
		\begin{eqnarray*}
			\underline{I}^{+}\subset \underline{F}^s(\underline{p}_2)\\
			\underline{J}^{-}\subset \underline{F}^s(\underline{q}_1)\\
			\underline{J}^{+}\subset \underline{F}^s(\underline{q}_2)
		\end{eqnarray*}

		These intervals project into the respective boundary components of $R$. The stable separatrice $\underline{F}^s(\underline{p}_2)$ is the only stable separatrice that is saturated by stable leaves of non $s$ boundary periodic points that project to $R$. Clearly, these intervals are unique, as they are minimal with these properties.
		
		We claim that $\vert \underline{I}^-\cap\underline{J}^-\vert =1$. Let $O$ be the corner of $R$ determined by $I^-$ and $J^-$, and let $e$ be the sector of $O$ that is determined by the left-inferior corner of $R$. Then $\pi^{-1}(O)$ could be:
		
		\begin{itemize}
			\item Part of a cycle if $O$ is a periodic point. In this cycle, all the boundary points are corner points and has only one sector saturated by points in $K(T_{g,f})$. Let $\{x_n\}\subset \overset{o}{R}$ be a sequence on non-boundary periodic codes that converges to $0$ in the sector $e$, then $\pi^{-1}(x_n)$ is a singleton and then: $\lim_{n\to \infty} \pi^{-1}(x_n)$ is simultaneously the periodic endpoint of  $\underline{I}^{-}$ and the periodic endpoint of $\underline{J}^{-}$, i.e   $\underline{p}_1=\underline{q}_1$.

			\item  	A minimal rectangle if $O$ is not periodic. In this case, there is only one corner of the minimal rectangle where $\underline{I}_{-}$ and $\underline{J}^{-}$ intersect. Such  is the only one accumulated by singletons in $\pi^{-1}(\overset{o}{R})$ that converge to $\underline{O}$.
		\end{itemize}

		The same property follows for the other vertices of $R$. Furthermore, since the basic piece $K(T_{g,s})$ doesn't contain impasses, we deduce that $\underline{I}^-$ is in a different stable separatrice than $\underline{I}^+$, and then $\underline{I}^-\cap \underline{I}^+=\emptyset$. In conclusion, $\underline{I}^- \cup \underline{I}^{+} \cup \underline{J}^- \cup \underline{J}^+$ forms a closed curve. We shall see what happens with the interior of $\tilde{R}$.

		Take any point $x\in \overset{o}{R}$. The set $\pi^{-1}(x)$ is not part of a cycle because the only $s$ and $u$ boundary points of $K(T_{g,f})$ are projected to corner points of the original Markov partition $h(\cR_{f,g})$. Therefore, $\pi^{-1}(x)$ is a rectangle and is trivially bi-foliated. We can deduce that if $\overset{o}{J}\subset \overset{o}{R}$ is an unstable segment, its preimage $\pi^{-1}(\overset{o}{J})$ is a rectangle without its horizontal boundary. Clearly, the horizontal boundary of $\pi^{-1}(\overset{o}{J})$ has one connected component intersecting $\underline{I}^-$ and the other intersecting $\underline{I}^+$. The same holds for any horizontal segment $\overset{o}{I}\subset \overset{o}{R}$.

		In conclusion, $\underline{I}^- \cup \underline{I}^{+} \cup \underline{J}^- \cup \underline{J}^+$ bounds a region that is trivially bi-foliated, and therefore, it is a rectangle denoted as $\underline{R}$. It's evident that its projection is $R$, and its interior projects to the interior of $R$.
		
		The stable boundary of $\cR$ consists of two disjoint intervals contained in the $s$ and $u$ boundary leaves, as we observed in Remark \ref{Rema: boundary points s-u}. By taking slightly larger intervals that contain $\underline{I}^-,\underline{I}^{+},\underline{J}^-$, and $\underline{J}^+$, we can construct a small neighborhood $U$ of $\underline{R}$ such that $K(T_{g,f})\cap U =K(T_{g,f}) \cap \underline{R}$.
	\end{proof}

	As a consequence, for all rectangles $h(R_i)\in h(\cR_{f,g})$, there is a unique rectangle $\underline{h(R_i)} \subset \textbf{DA}(T_{g,f})$ as described in Lemma \ref{Lemm: preimage pi rectangle} that projects to $h(R_i)$. Furthermore, these rectangles have their boundaries isolated of the basic piece from its exterior. In particular, we have the following corollary:
	
	\begin{coro}\label{Coro: Disjoint rectangles}
		If $i\neq j$, then $\underline{h(R_i)} \cap \underline{h(R_j)}=\emptyset$.
	\end{coro}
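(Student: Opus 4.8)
\textbf{Proof strategy for Corollary \ref{Coro: Disjoint rectangles}.}

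The plan is to deduce disjointness of the lifted rectangles $\underline{h(R_i)}$ directly from the disjointness of the interiors of the rectangles $h(R_i)$ in the geometric Markov partition $h(\cR_{f,g})$ of $g$, together with the structural description of each $\underline{h(R_i)}$ provided by Lemma \ref{Lemm: preimage pi rectangle}. The key observation is that, by that lemma, $\pi(\underline{h(R_i)}) = h(R_i)$ and $\pi(\overset{o}{\underline{h(R_i)}}) = \overset{o}{(h(R_i))}$, so the image under $\pi$ almost separates the lifts; the only thing that can fail is that two lifts meet along their boundaries, which by the isolation property must happen inside $K(T_{g,f})$, and this we will rule out.

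First I would suppose, for contradiction, that $\underline{h(R_i)} \cap \underline{h(R_j)} \neq \emptyset$ for some $i \neq j$, and pick a point $\underline{x}$ in the intersection. Applying $\pi$ gives $\pi(\underline{x}) \in h(R_i) \cap h(R_j)$. Since $\overset{o}{(h(R_i))} \cap \overset{o}{(h(R_j))} = \emptyset$ (this is part of the Markov partition property for $h(\cR_{f,g})$, guaranteed by Item $(1)$ of Proposition \ref{Prop: preimage is Markov Tfg type}), the point $\pi(\underline{x})$ must lie on the boundary of at least one of the two rectangles, say on $\partial(h(R_i))$. Because $\pi(\overset{o}{\underline{h(R_i)}}) = \overset{o}{(h(R_i))}$ and $\pi$ maps $\underline{h(R_i)}$ onto $h(R_i)$, the preimage of $\partial(h(R_i))$ inside $\underline{h(R_i)}$ is exactly $\partial \underline{h(R_i)}$; hence $\underline{x} \in \partial \underline{h(R_i)}$, and symmetrically $\underline{x} \in \partial \underline{h(R_j)}$ if $\pi(\underline{x})$ also lies on $\partial(h(R_j))$ (otherwise $\underline{x}$ is in the interior of $\underline{h(R_j)}$, which would force $\pi(\underline{x})$ into the interior of $h(R_j)$, a contradiction). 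By the third bullet of Lemma \ref{Lemm: preimage pi rectangle}, the boundary $\partial \underline{h(R_i)}$ is contained in the union of stable and unstable separatrices of $s$- and $u$-boundary periodic points of $K(T_{g,f})$, and there is an open neighborhood $U_i$ of $\underline{h(R_i)}$ with $K(T_{g,f}) \cap U_i = K(T_{g,f}) \cap \underline{h(R_i)}$; the same holds for $j$ with a neighborhood $U_j$.

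The main obstacle — and the heart of the argument — is to turn the incidence $\underline{x} \in \partial \underline{h(R_i)} \cap \partial \underline{h(R_j)}$ into a genuine contradiction. Here I would argue as follows. The boundary of $\underline{h(R_i)}$ lies on invariant manifolds of $K(T_{g,f})$, so $\underline{x}$ lies on a stable or unstable leaf of $K(T_{g,f})$; in particular $\underline{x}$ is accumulated by points of $K(T_{g,f})$, and these accumulating points can be taken inside $U_i \cap U_j$, hence inside $K(T_{g,f}) \cap \underline{h(R_i)} \cap K(T_{g,f}) \cap \underline{h(R_j)} = (K(T_{g,f}) \cap \underline{h(R_i)}) \cap (K(T_{g,f}) \cap \underline{h(R_j)})$. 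Projecting this common point $\underline{k} \in K(T_{g,f})$ down by $\pi$ gives $\pi(\underline{k}) \in h(R_i) \cap h(R_j)$, and since $\pi$ restricted to $K(T_{g,f})$ is injective away from the finitely many boundary periodic points (Theorem \ref{Theo: Basic piece projects to pseudo-Anosov}), and since — by Item $(2)$ of Proposition \ref{Prop: preimage is Markov Tfg type} — those boundary periodic points project exactly to the corner periodic points of $h(\cR_{f,g})$, one concludes that $\pi(\underline{k})$ is such a corner point. But a corner point $p$ of $h(\cR_{f,g})$, by the corner property, is a corner of every rectangle containing it; one then inspects the finitely many sectors at $p$ and at its lift and uses the local structure from the proof of Lemma \ref{Lemm: preimage pi rectangle} (the analysis of $\pi^{-1}$ of a corner, which produces a single point of $K(T_{g,f})$ per sector) to see that the lift $\underline{h(R_i)}$ occupies a single sector at the lifted corner, and similarly $\underline{h(R_j)}$; since $h(R_i)$ and $h(R_j)$ occupy distinct sectors at $p$ (their interiors being disjoint, they meet at the corner along complementary quadrants), the corresponding lifts occupy distinct sectors at the lifted corner and therefore are disjoint near $\underline{k}$, contradicting $\underline{x} \in \underline{h(R_i)} \cap \underline{h(R_j)}$ near $\underline{k}$. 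Wrapping up, this contradiction shows $\underline{h(R_i)} \cap \underline{h(R_j)} = \emptyset$, completing the proof. I expect the sector-counting at the lifted corner to be the only delicate point, and it is handled exactly by the local picture already established in Lemma \ref{Lemm: preimage pi rectangle}.
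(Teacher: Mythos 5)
There is a genuine gap, in two places, both in the reduction that is supposed to produce a common point of $K(T_{g,f})$ in the two lifts. First, you assert that a point $\underline{x}\in \partial \underline{h(R_i)}\cap\partial\underline{h(R_j)}$ is accumulated by points of $K(T_{g,f})$ because it lies on an invariant leaf. This is not true in general: the boundary edges $\underline{I}^{\pm},\underline{J}^{\pm}$ meet $K(T_{g,f})$ only in a Cantor-like set, and a point lying in the interior of a gap (an $s$- or $u$-arc, or a side of a minimal rectangle) is isolated from $K(T_{g,f})$ on the leaf and, by the very isolation property you invoke, also from the exterior side; so the passage from ``the lifts share a point'' to ``the lifts share a point of $K(T_{g,f})$'' is exactly the kind of statement that needs the structure of Lemma \ref{Lemm: preimage pi rectangle}, not a consequence of it. Second, you misquote Theorem \ref{Theo: Basic piece projects to pseudo-Anosov}: it asserts injectivity of $\pi$ only on the non-boundary \emph{periodic orbits} of $K(T_{g,f})$, not on $K(T_{g,f})$ minus finitely many points. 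In fact $\pi$ identifies the four corners of every minimal rectangle and the endpoints of every collapsed arc, and these are typically non-periodic points of $K(T_{g,f})$; hence the deduction ``$\pi(\underline{k})$ must be a corner periodic point'' is a non sequitur, and the subsequent sector analysis is anchored to a case ($\pi(\underline{k})$ periodic) that you have not shown to be the only one.

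The closing idea is the right one, but it is where the proof should start rather than end: the point of the construction in Lemma \ref{Lemm: preimage pi rectangle} is that for a common edge $I$ of $h(R_i)$ and $h(R_j)$ the preimage $\pi^{-1}(\overset{o}{I})$ has two components lying on two \emph{distinct} stable (or unstable) separatrices, and the lift of each rectangle contains precisely the component accumulated by singletons coming from the interior of that rectangle, i.e.\ from its side of $I$; likewise, at a common corner each lift contains a distinct corner of the corresponding cycle or minimal rectangle, selected by the sector of that rectangle. Disjointness of the closed lifts is read off directly from this side/sector selection (this is why the paper states the corollary as an immediate consequence of the lemma, with no separate argument). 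If you want to keep your contradiction scheme, you must replace the two steps above by this selection argument: identify on which separatrice each edge of each lift lies and observe that adjacency in $S_g$ forces the two lifts onto different separatrices and different corners, so no shared point can exist at all.
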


	Let
	$$
	\underline{h}(\cR_{f,g}):=\{\underline{h(R_i)}\}_{i=1}^n
	$$
	be the family of rectangles constructed in Lemma \ref{Lemm: preimage pi rectangle} for each rectangle in the Markov partition $h(\cR_{f,g})$.

	\begin{lemm}\label{Lemm: Union rectangle is K}
		$K(T_{g,f})\subset \cup_{i=1}^n \underline{h(R_i)}$
	\end{lemm}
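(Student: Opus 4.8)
\textbf{Proof plan for Lemma \ref{Lemm: Union rectangle is K}.} The goal is to show that the family of lifted rectangles $\underline{h}(\cR_{f,g})=\{\underline{h(R_i)}\}_{i=1}^n$ covers the basic piece $K(T_{g,f})$. The natural strategy is to use the projection $\pi:\Delta(T_{g,f})\rightarrow S_g$ together with the fact that $h(\cR_{f,g})$ is already a Markov partition of $g$ (Item $(1)$ of Proposition \ref{Prop: preimage is Markov Tfg type}), hence covers $S_g$, and then pull this covering back through $\pi$. First I would take an arbitrary point $\underline{x}\in K(T_{g,f})$ and set $x:=\pi(\underline{x})\in S_g$. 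Since $S_g=\cup_{i=1}^n h(R_i)$, there is some index $i$ with $x\in h(R_i)$; the issue is that $x$ may lie on the boundary $\partial h(R_i)$, so a priori $\underline{x}$ need not lie in $\underline{h(R_i)}$ itself but possibly in the lift of a neighbouring rectangle. So the clean approach is to split into cases according to whether $x$ lies in the interior or on the boundary of the rectangles of $h(\cR_{f,g})$.

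If $x\in\overset{o}{h(R_i)}$ for some $i$, then by Lemma \ref{Lemm: preimage pi rectangle} we have $\pi(\overset{o}{\underline{h(R_i)}})=\overset{o}{h(R_i)}$, and moreover $\underline{h(R_i)}$ is isolated from $K(T_{g,f})$ from its exterior, i.e. there is an open neighbourhood $U$ with $K(T_{g,f})\cap U=K(T_{g,f})\cap\underline{h(R_i)}$; combining this with the fact that $\pi^{-1}(x)$ is a connected equivalence class of $\sim_R$ that meets $\overset{o}{\underline{h(R_i)}}$, one concludes $\underline{x}\in\underline{h(R_i)}$. (Here I would invoke the description of $\pi^{-1}(x)$ as either a singleton, a minimal rectangle, or an arc of a non-boundary invariant manifold — none of which can straddle the exterior-isolated boundary of $\underline{h(R_i)}$ without contradicting the isolation property.) The remaining case is $x\in\partial h(R_i)$ for every rectangle containing it; then $x$ lies on a stable or unstable leaf of a corner periodic point of $h(\cR_{f,g})$. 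Using Item $(2)$ of Proposition \ref{Prop: preimage is Markov Tfg type}, the boundary leaves of $h(\cR_{f,g})$ correspond under $\pi$ to the stable/unstable separatrices of the $s,u$-boundary periodic points of $K(T_{g,f})$, which are exactly the boundary leaves $\underline{I}^{\pm},\underline{J}^{\pm}$ bounding the lifted rectangles constructed in Lemma \ref{Lemm: preimage pi rectangle}. So $\underline{x}$, being in $K(T_{g,f})$ and projecting to such a boundary point, must lie on one of these boundary curves, hence in $\underline{h(R_j)}$ for the appropriate $j$. In either case $\underline{x}\in\cup_{i=1}^n\underline{h(R_i)}$, which is what we wanted.

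The main obstacle I anticipate is the boundary case: one must be careful that a point $\underline{x}\in K(T_{g,f})$ on a boundary leaf of a lifted rectangle is actually captured by \emph{some} $\underline{h(R_j)}$ and not lost in a ``gap'' between lifted rectangles. This is where the precise content of Lemma \ref{Lemm: preimage pi rectangle} (that $\underline{I}^-$ is the \emph{smallest} interval in $\underline{F}^s(\underline{p}_1)$ projecting onto $I^-$, with $u$-boundary endpoints) and the no-impasse hypothesis are essential: they guarantee that the lifted boundary arcs glue up along the cycles exactly as the original boundary leaves do, leaving no uncovered portion of $K(T_{g,f})$. A secondary point to handle carefully is that $K(T_{g,f})$ has no $s$- or $u$-boundary points other than those projecting to corner points of $h(\cR_{f,g})$ (this is built into the construction of $\textbf{DA}$ and was used in the proof of Lemma \ref{Lemm: preimage pi rectangle}), so that every point of $K(T_{g,f})$ on a boundary leaf really does sit in the closure of the interior of a lifted rectangle. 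Once these points are in place, the covering follows by the case analysis above, and no genuine computation is required.
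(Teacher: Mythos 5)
Your argument is sound in outline, but it takes a genuinely different route from the paper. The paper's proof is a short density argument: the periodic points of $g$ that are not $s,u$-boundary points are dense in each $h(R_i)$ and lie in its interior; their $\pi$-preimages are exactly the non-boundary periodic points of $\Phi_{T_{g,f}}$, which sit in $\overset{o}{\underline{h(R_i)}}$ and are dense in $K(T_{g,f})$, so taking closures gives $K(T_{g,f})\subset \cup_i \overline{\overset{o}{\underline{h(R_i)}}}=\cup_i \underline{h(R_i)}$. This completely sidesteps the fiber-by-fiber case analysis over boundary points, corner cycles and the gluing of the arcs $\underline{I}^{\pm},\underline{J}^{\pm}$ that you (rightly) identify as the delicate part of your pointwise approach; what your route buys in exchange is a more explicit description of which lifted rectangle captures a given point of $K(T_{g,f})$, information the density argument does not provide but which is also not needed for the lemma. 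One local repair to your interior case: the ``isolated from the exterior'' property of $\underline{h(R_i)}$ is not really the operative tool, since the fiber class $\pi^{-1}(x)$ could in principle be large; the clean argument is that $\pi^{-1}(x)$ is connected, meets $\overset{o}{\underline{h(R_i)}}$ (because $\pi(\overset{o}{\underline{h(R_i)}})=\overset{o}{h(R_i)}$), and is disjoint from $\partial\underline{h(R_i)}$ because the four boundary arcs project into $\partial h(R_i)$ while the fiber projects to the interior point $x$; connectedness then forces $\pi^{-1}(x)\subset\underline{h(R_i)}$, and in particular $\underline{x}\in\underline{h(R_i)}$. With that adjustment, and with the boundary/cycle case written out using the minimality of the arcs from Lemma \ref{Lemm: preimage pi rectangle} as you indicate, your proof goes through, but the paper's density argument is considerably shorter and avoids these issues altogether.
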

	
	\begin{proof}
		
		The set of periodic points of $f$ that are not $(s,u)$-boundary is dense inside every rectangle $h(R_i)$ and is contained in its interior. Denote the set of these periodic points by $P(i)$. Then $\pi^{-1}(P(i))$ consists of all the periodic points of $\phi_{T_{g,f}}$ that are contained in the interior of $\pi^{-1}(h(R_i))$ and there fore  that are not $(s,u)$-boundary. Such points  are dense in $K(T_{g,f})\cap R_i$, and then:
		
		$$
		K(T_{g,f})\subset \cup_{i=1}^n \overline{\pi^{-1}(P(i))} \subset \cup_{i=1}^n \overline{\overset{o}{\underline{h(R_i)}}} = \cup_{i=1}^n \underline{h(R_i)}
		$$
		
	\end{proof}

	\begin{lemm}\label{Lemm; h(R -gf) is invariant}
		The stable boundary of the partition $\underline{h}(\cR_{f,g})$ is $\Phi_{g,f}$-invariant.
	\end{lemm}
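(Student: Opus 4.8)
The goal is to show that $\partial^s\bigl(\underline{h}(\cR_{f,g})\bigr)=\cup_{i=1}^n \partial^s\underline{h(R_i)}$ is invariant under $\Phi_{T_{g,f}}$. The strategy is to transport the invariance of $\partial^s\bigl(h(\cR_{f,g})\bigr)$ under $g$ through the semi-conjugacy $\pi\colon \Delta(T_{g,f})\to S_g$ of Proposition~\ref{Prop: type of basic piece is type of pseudo-Anosov}, using the uniqueness of the lift $\underline{h(R_i)}$ established in Lemma~\ref{Lemm: preimage pi rectangle}. The first step is to observe that $h(\cR_{f,g})$ is a genuine geometric Markov partition of $g$ (Item~$(1)$ of the present proposition, already proved via Theorem~\ref{Theo: conjugated iff  markov partition of same type}), so its stable boundary is $g$-invariant: $g\bigl(\partial^s h(\cR_{f,g})\bigr)\subset \partial^s h(\cR_{f,g})$. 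Concretely, for each $i$ and each choice of sign, $g(\partial^s_{\pm} h(R_i))$ is contained in a single stable boundary component $\partial^s_{\pm} h(R_k)$, and the index $k$ together with the sign is dictated by the geometric type $T_{f,g}$ (equivalently, by the $s$-generating function $\Gamma(T_{f,g})$, as in Definition~\ref{Defi: s-boundary generating funtion}).

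Next I would use the semi-conjugacy relation $\pi\circ \phi_{T_{g,f}}=g\circ\pi$. Fix a rectangle $\underline{h(R_i)}$ and one of its horizontal boundary components, say $\underline{I}^-\subset \underline{F}^s(\underline{p}_1)$ in the notation of Lemma~\ref{Lemm: preimage pi rectangle}. Because $\underline{p}_1$ is an $s$-boundary periodic point of $K(T_{g,f})$, its forward orbit stays on $s$-boundary separatrices, and $\Phi_{T_{g,f}}(\underline{I}^-)$ is again a stable segment contained in a stable separatrice of the $s$-boundary periodic point $\Phi_{T_{g,f}}(\underline{p}_1)$. Applying $\pi$ and the semi-conjugacy, $\pi\bigl(\Phi_{T_{g,f}}(\underline{I}^-)\bigr)=g\bigl(\pi(\underline{I}^-)\bigr)$, and by the defining property of the lift, $\pi(\underline{I}^-)$ is the minimal stable segment projecting onto the boundary interval $I^-$ of $h(R_i)$; hence $g(\pi(\underline{I}^-))$ is contained in the stable boundary interval $I'$ of some rectangle $h(R_k)$ of $h(\cR_{f,g})$, with $(k,\mathrm{sign})$ determined by $T_{f,g}$. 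Now $\Phi_{T_{g,f}}(\underline{I}^-)$ is a stable segment on an $s$-boundary separatrice of $K(T_{g,f})$ that projects exactly onto $I'$, and it is the minimal such segment because $\Phi_{T_{g,f}}$ is a homeomorphism sending $\underline{I}^-$ — the minimal segment over $I^-$ — to a segment which must be minimal over its image. By the uniqueness clause in Lemma~\ref{Lemm: preimage pi rectangle} (and Remark~\ref{Rema: boundary points s-u}), this minimal segment over $I'$ is precisely the corresponding boundary component of $\underline{h(R_k)}$. Therefore $\Phi_{T_{g,f}}(\underline{I}^-)\subset \partial^s\underline{h(R_k)}\subset \partial^s\underline{h}(\cR_{f,g})$. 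Running the same argument over all boundary components of all rectangles gives $\Phi_{T_{g,f}}\bigl(\partial^s\underline{h}(\cR_{f,g})\bigr)\subset \partial^s\underline{h}(\cR_{f,g})$, which is the claim.

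The step I expect to be the main obstacle is the minimality/uniqueness bookkeeping: one must be careful that $\Phi_{T_{g,f}}$ really sends the minimal lift of $I^-$ to the minimal lift of its $g$-image, rather than to a larger stable segment on the same separatrix that happens to project onto $I'$. This is where the hypotheses genuinely enter: the corner property of $T_{f,g}$ (Item~$(1)$) guarantees that the boundary intervals $I^{\pm},J^{\pm}$ contain no periodic point in their interior, so each lies on a single separatrix; and the absence of impasse in $K(T_{g,f})$ (used already in Lemma~\ref{Lemm: preimage pi rectangle}) forces the two horizontal lifts to land on distinct separatrices. With those facts in hand, the endpoints of $\Phi_{T_{g,f}}(\underline{I}^-)$ are again $u$-boundary points (they are images under a homeomorphism of $u$-boundary points), so $\Phi_{T_{g,f}}(\underline{I}^-)$ is forced to be minimal over its projection, closing the argument. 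A symmetric statement for $\partial^u$ and $\Phi_{T_{g,f}}^{-1}$ follows by applying the same reasoning to $f^{-1}$, $g^{-1}$ and $T^{-1}$, but that is not needed for the present lemma.
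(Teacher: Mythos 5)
Your plan (push the $g$-invariance of $\partial^s h(\cR_{f,g})$ through the semi-conjugacy and identify the image with a lifted boundary component via minimality and the uniqueness clause of Lemma~\ref{Lemm: preimage pi rectangle}) breaks down at its decisive step. First, a small but real misstatement: $\pi\bigl(\Phi_{g,f}(\underline{I}^-)\bigr)=g(I^-)$ is the stable boundary of the vertical sub-rectangle $V^k_l$, hence in general a \emph{proper} subinterval of $I'=\partial^s_{\pm}h(R_k)$; it does not "project exactly onto $I'$", so the uniqueness clause of Lemma~\ref{Lemm: preimage pi rectangle} and Remark~\ref{Rema: boundary points s-u}, which characterizes $\partial^s_{\pm}\underline{h(R_k)}$ as the minimal lift of the \emph{whole} interval $I'$ on a specified separatrice, does not apply to $\Phi_{g,f}(\underline{I}^-)$ as you use it. Second, and more seriously, "minimal over its projection, with projection inside $I'$" does not force the segment to lie in $\partial^s\underline{h(R_k)}$. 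The boundary components of $h(R_k)$ need not be disjoint as subsets of $S_g$ (spine/fold identifications; compare Lemma~\ref{Lemm: gamma periodic points}), so two distinct boundary pieces of the lifted partition — e.g.\ a piece of the lower and a piece of the upper boundary of $\underline{h(R_k)}$, lying on different separatrices of the same stable leaf — can project onto overlapping stable segments downstairs. Hence $\Phi_{g,f}(\underline{I}^-)$ could a priori be a "folded" segment consisting of part of the lower boundary, an $s$-arc outside the rectangle joining two corners, and part of the upper boundary; such a segment still projects into a single boundary interval of $h(R_k)$, still has $u$-boundary endpoints, and can perfectly well be minimal over its projection, so your criterion does not exclude it. Ruling out precisely this configuration is the bulk of the paper's proof (the contradiction obtained from the two sequences $\underline{a}_n,\underline{b}_n$ of interior periodic points converging to $\underline{a}\neq\underline{b}$ in distinct sectors), and your proposed fix — "the endpoints are $u$-boundary points, hence the segment is minimal" — does not touch it.

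A second omission feeds into the same gap: you never establish on which of the two lifts of $\overset{o}{I'}$ the segment $\Phi_{g,f}(\underline{I}^-)$ (or even its endpoints) lies. The set $\pi^{-1}(\overset{o}{I'})$ has two connected components on two different separatrices, and only the one accumulated by singletons from $\overset{o}{\underline{h(R_k)}}$ carries the boundary component of $\underline{h(R_k)}$; a segment on the other lift would project into $I'$ without being boundary of the lifted partition of the correct rectangle. The paper settles this, and simultaneously places the endpoints $\Phi_{g,f}(\underline{O}_1),\Phi_{g,f}(\underline{O}_2)$ on $\partial^s\underline{h(R_k)}$, by approximating them with interior non-boundary periodic points $\underline{x}_n,\underline{y}_n\subset\overset{o}{\underline{h(R_i)}}$, using that $g(\pi(\underline{x}_n))\in\overset{o}{V^k_l}\subset\overset{o}{h(R_k)}$, so the images are accumulated from $\overset{o}{\underline{h(R_k)}}$. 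Some argument of this kind is needed before any minimality bookkeeping can close the proof; as written, your argument has a genuine gap at exactly the point where the hypotheses (no impasse, corner property, the collapse structure of $\pi$) must do real work.
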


	\begin{figure}[h]
		\centering
		\includegraphics[width=0.7\textwidth]{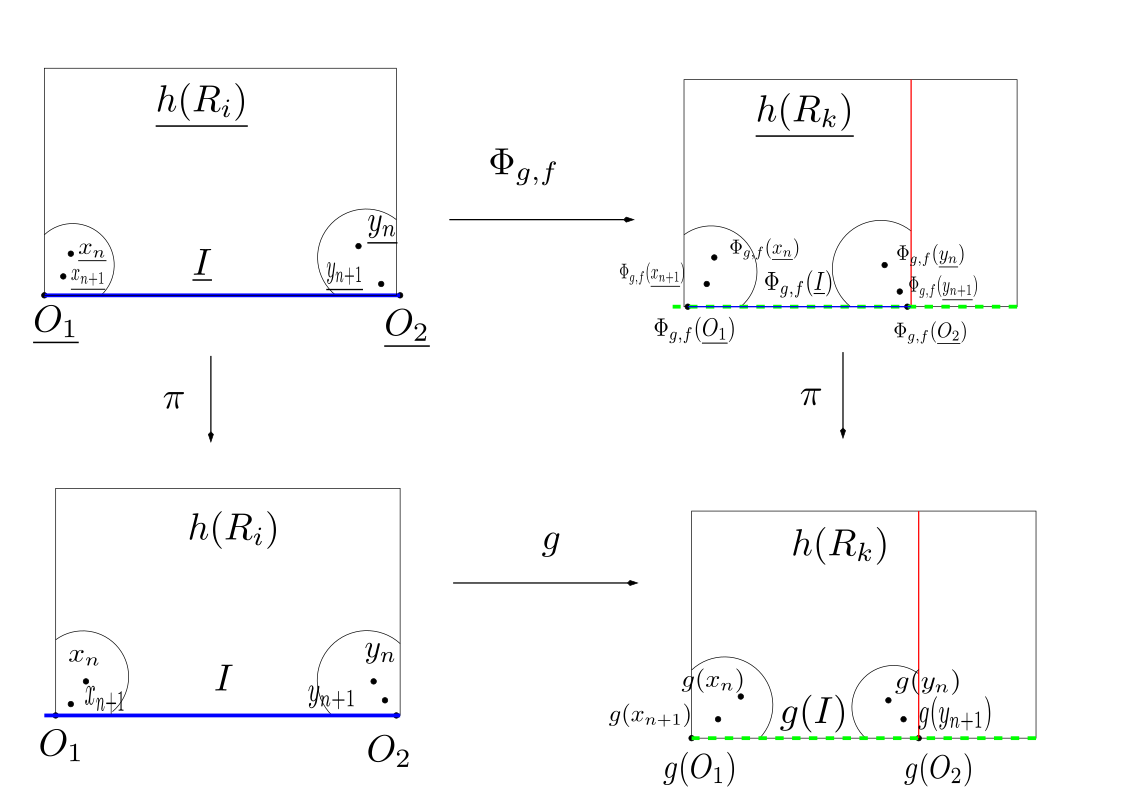}
		\caption{The stable boundary of the partition $\underline{h}(\cR_{f,g})$}
		\label{Fig: dete boundary}
	\end{figure}

	\begin{proof}
		
		Let $\underline{h(R_i)}\in \underline{h}(\cR_{f,g})$ be a rectangle. Take $\underline{I}$ as the lower boundary of such a rectangle, and let $\underline{O_1}$ and $\underline{O_2}$ its end points, the analysis for the upper boundary is the same. 
		
		There is a sequence ${\underline{x}_n}\subset \overset{o}{\underline{h(R_i)}}$ consisting of non $s,u$-boundary periodic points that converges to $\underline{O_1}$ as shown in Figure \ref{Fig: dete boundary}. Similarly, there exists a sequence $\{\underline{y}_n\} \subset \overset{o}{\underline{h(R_i)}}$ of non $s,u$-boundary periodic points that converges to $\underline{O_2}$. In these points, $\pi$ is a one-to-one map and serves as a conjugation restricted to their image. This implies that:
		
		$$
		\Phi_{g,f}(\underline{O_1})=\lim_{n\rightarrow \infty} \Phi_{g,f}(\underline{x}_n)= \lim_{n\rightarrow \infty}  \pi^{-1}\circ g \circ \pi (\underline{x_n}).
		$$
		and 
		
		$$
		\Phi_{g,f}(\underline{O_2})=\lim_{n\rightarrow \infty} \Phi_{g,f}(\underline{y}_n)= \lim_{n\rightarrow \infty}  \pi^{-1}\circ g \circ \pi (\underline{y_n}).
		$$

		By taking sub sequences of $\{\underline{x}_n\}$ and $\{\underline{y}_n\}$, we can assume that $x_n := \pi(\underline{x_n})$ and $y_n := \pi(\underline{y_n})$ are always contained in the interior of the rectangle $h(R_i).$ Furthermore, since $h(\cR_{f,g})$ is a Markov partition of $g$, we can assume that the lower horizontal sub-rectangle $H := H^i_1$ contained in $h(R_i)$ of the Markov partition $(g, \cR_{f,g})$ contains to $\{x_n\}$ and $\{y_n\}$ within its interior. Therefore, if $g(H) = V^k_l$, then $\{g(x_n)\}$ and $\{g(y_n)\}$ are contained in $\overset{o}{V^k_l} \subset \overset{o}{h(R_k)}$ and they converge to $g(O_1)$ and $g(O_2)$, which are in the stable boundary  of $\underline{h(R_k)}$.

		We claim that $\Phi_{g,f}(\underline{I})$ is a stable interval contained in a single stable boundary component of $\underline{h(R_k)}$.	 Suppose this is not the case, then $\Phi_{g,f}(\underline{I})$ contains an $s$-arc, $\underline{\alpha}'=[\underline{a}',\underline{b}']^s$  joining two corners of the rectangle $\underline{h(R_k)}$, where $\underline{a}'$ is on the lower boundary of  $\underline{h(R_i)}$ and $\underline{b}'$ is on the upper boundary of $\underline{h(R_i)}$ , as shown in Figure \ref{Fig: Same s bound}. This implies that $\underline{I}$ contains in its interior the $s$-arc
		 $\Phi_{g,f}^{-1}(\underline{\alpha}'):=\underline{\alpha}=[\underline{a},\underline{b}]^s$, that we recall it is contained in the inferior boundary of $\underline{h(R_i)}$
		 
		 	\begin{figure}[h]
		 	\centering
		 	\includegraphics[width=0.7\textwidth]{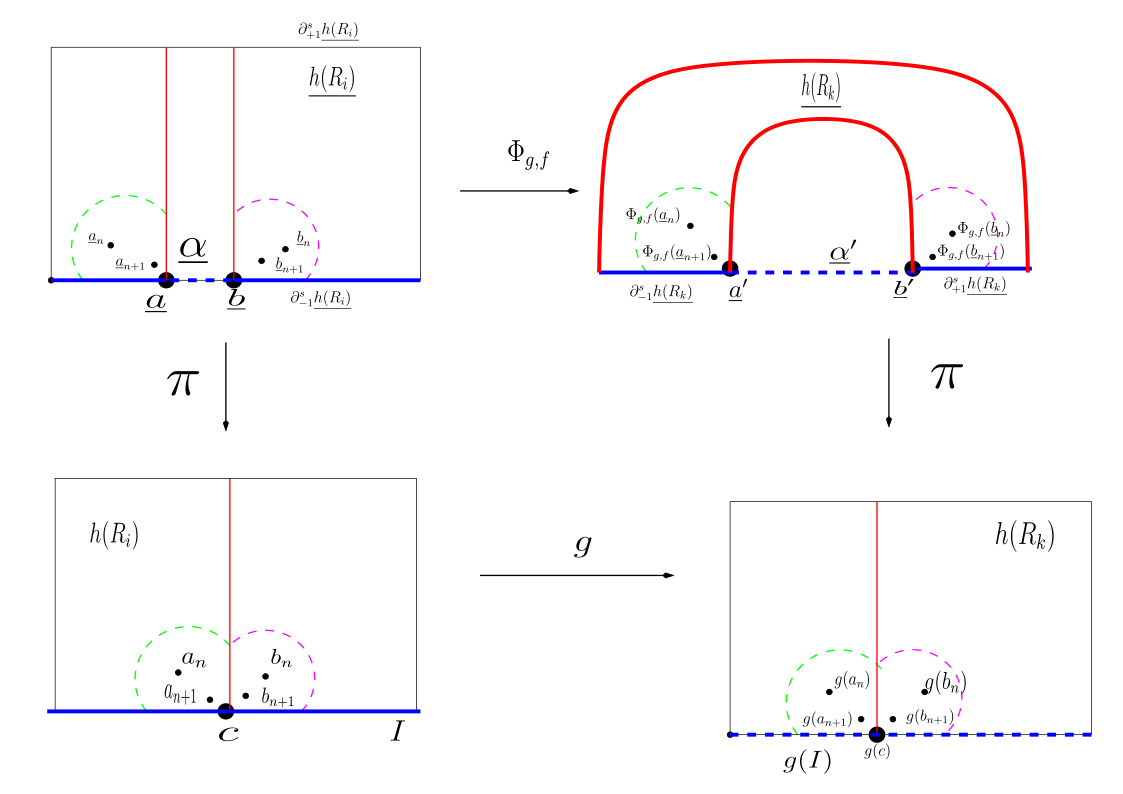}
		 	\caption{$\Phi_{g,f}(\underline{I})$  is in single stable boundary component}
		 	\label{Fig: Same s bound}
		 \end{figure}

		   Just as we did before for the endpoints of  $\underline{I}$ we can take decreasing successions $\{\underline{a}_n\},\{\underline{b}_n\}\subset  \overset{o}{\underline{h(R_i)}}$  of periodic non-corner points converging to $\underline{a}$ and $\underline{b}$ in two distinct sectors (the green and purple semicircles in Figure \ref{Fig: Same s bound}). Let us define,  $a_n:=\pi(\underline{a}_n)$ and $b_n:=\pi(\underline{b}_n)$. They are decreasing sequences (for the vertical order of $h(R_i)$) that converge to the same point  $c=\pi(\underline{a})0\pi(\underline{b})\in I$.  Clearly $\{g(a_n)\}$ is a decreasing (or increasing) sequence for the vertical order of $h(R_k)$ if and only if $\{g(b_n)\}$ is a decreasing (resp. increasing) sequence, and both converge to $g(c)$, without loss of generality suppose they are decreasing. Thus $\pi^{-1}(g(a_n))$ and $\pi^{-1}(g(b_n))$ are decreasing for the vertical order of $\underline{h(R_i)}$. Moreover 
		 $$
		\lim \pi^{-1}(g(a_n))= \lim \Phi_{g,f}(\underline{a_n})=\phi_{g,f}(\underline{a})= \underline{a}'
		$$
and 
			 $$
		\lim \pi^{-1}(g(b_n))= \lim \phi_{g,f}(\underline{b_n})=\phi_{g,f}(\underline{b})=\underline{b}'
		$$

		This implies that $\underline{a}'$ and $\underline{b}'$ are on the lower boundary of $\underline{h(R_i)}$ which is a contradiction.	 Then $\Phi_{g,f}(\underline{I})$ is contained in a single stable boundary component of $\underline{h(R_k)}$ and we conclude that the stable boundary of $\pi^{-1}(h(\cR_{f,g}))$ is $f$-invariant.

	\end{proof}
	
	The $\Phi^{-1}{g,f}$-invariance of $\partial^u \underline{h}(\cR_{f,g})$ is similarly proved. Consequently, $\underline{h}(\cR_{f,g})$ is a Markov partition of the basic piece $K(T_{g,f})$.

	\textbf{Item} $(4)$.  Considering the geometrization of $\underline{h}(\cR_{f,g})$  that was indicated in our last item, let $T'$ be its geometric type. It was proven in Proposition \ref{Prop: type of basic piece is type of pseudo-Anosov} that:
	
	$$		
	\pi(\underline{h}(\cR_{f,g}))=h(\cR_{f,g})
	$$		 
	
	is a geometric Markov partition of $g$ with geometric type $T'$. Since the geometric type of $h(\cR_{f,g})$ is $T_{f,g}$, we can conclude that $T = T_{f,g}$, which confirms the last statement of our proposition.
	
\end{proof}

\subsection{Equivalent geometric types: The Béguin's algorithm.}\label{Chapter: Algorithm}

Let $T_f$ and $T_g$ be two geometric types in the pseudo-Anosov class, and let $(f, \cR)$ and $(g, \cR_g)$ be realizations of such geometric types by generalized pseudo-Anosov homeomorphisms. Their joint refinement as introduced in \ref{Defin: Comun refinament} must be denoted as $T_{f, g}$ and $T_{g,f)}$. Their  formal derived form Anosov, denoted as \textbf{DA}$(T_{f, g})$ and \textbf{DA}$(T_{g,f})$, were introduced in \ref{Defi: Formal DA}. Two geometric types, $T_1$ and $T_2$ are  \emph{strongly equivalents} if \textbf{DA}$(T_1)$ has a Markov partition of geometric type $T_2$ and \textbf{DA}$(T_2)$ has a Markov partition of geometric type $T_1$. Proposition \ref{Prop: preimage is Markov Tfg type} yields to the following corollary.

\begin{coro}\label{Coro: equivalence pA and DA}
	Let $f$ and $g$ be generalized pseudo-Anosov homeomorphisms with geometric Markov partitions $\cR_f$ and $\cR_g$ of geometric types $T_f$ and $T_g$, respectively. Let $\cR_{f,g}$ be the joint refinement of $\cR_f$ with respect to $\cR_g$, and let $\cR_{g,f}$ be the joint refinement of $\cR_g$ with respect to $\cR_f$, whose geometric types are $T_{f,g}$ and $T_{g,f}$, respectively.  Under these hypotheses: $f$ and $g$ are topologically conjugated through an orientation preserving homeomorphism if and only if $T_{f,g}$ and $T_{g,f}$ are strongly equivalent.
\end{coro}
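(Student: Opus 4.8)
The plan is to prove the corollary in two directions, using Proposition \ref{Prop: preimage is Markov Tfg type} for the harder direction and Theorem \ref{Theo: Presentation in a domain} (Bonatti-Langevin) to translate ``same domain'' into ``conjugate''. First I would set up notation: fix geometric Markov partitions $\cR_f$ of $f$ and $\cR_g$ of $g$, with geometric types $T_f$ and $T_g$; form the joint refinements $\cR_{f,g}=(\cR_f)_{\cR_g}$ and $\cR_{g,f}=(\cR_g)_{\cR_f}$ of Definition \ref{Defi: Joint refinament}, with geometric types $T_{f,g}$ and $T_{g,f}$. Recall that by construction (and Lemma \ref{Lemm: Corner prop equivalence}) both $T_{f,g}$ and $T_{g,f}$ have the corner property, their sets of periodic boundary points coincide with the periodic points of $f$ (resp. $g$) of period $\le N(f,g)=N(g,f)$, and — passing to a horizontal refinement if necessary, which does not change the geometric type — we may assume both incidence matrices are binary, so that all the machinery of Section \ref{Sec: Total Invariant} and of Proposition \ref{Prop: preimage is Markov Tfg type} applies. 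Since $T_f, T_g$ are in the pseudo-Anosov class, so are $T_{f,g}$ and $T_{g,f}$ (they are geometric types of geometric Markov partitions of $f$ and $g$), hence by Proposition \ref{Prop: pseudo-Anosov iff basic piece non-impace} they have finite genus and are realizable as basic pieces, so $\textbf{DA}(T_{f,g})$ and $\textbf{DA}(T_{g,f})$ are well defined (Definition \ref{Defi: Formal DA}, Proposition \ref{Defi: Unique DA}).

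For the forward direction, assume $f$ and $g$ are topologically conjugate via an orientation-preserving homeomorphism $h:S_f\to S_g$. Apply Proposition \ref{Prop: preimage is Markov Tfg type} with the roles as stated: taking $\pi:\Delta(T_{g,f})\to S_g$ the projection of Theorem \ref{Theo: Basic piece projects to pseudo-Anosov}/Proposition \ref{Prop: type of basic piece is type of pseudo-Anosov}, Item $(3)$--$(4)$ of that proposition produces a Markov partition $\underline{h}(\cR_{f,g})$ of $K(T_{g,f})\subset\Delta(T_{g,f})$ whose geometric type is exactly $T_{f,g}$. Thus $\textbf{DA}(T_{g,f})$ carries a geometric Markov partition of type $T_{f,g}$. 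Symmetrically, applying the same proposition to the conjugacy $h^{-1}:S_g\to S_f$ (interchanging the roles of $f$ and $g$, and noting $N(g,f)=N(f,g)$ so the joint refinements are the ``mirror'' ones) gives a Markov partition of $K(T_{f,g})\subset\Delta(T_{f,g})$ of geometric type $T_{g,f}$. By definition of strong equivalence, $T_{f,g}$ and $T_{g,f}$ are strongly equivalent.

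For the reverse direction, suppose $T_{f,g}$ and $T_{g,f}$ are strongly equivalent, i.e. $\textbf{DA}(T_{f,g})$ has a Markov partition of geometric type $T_{g,f}$ (and vice versa). Then $K(T_{f,g})$ and $K(T_{g,f})$ are nontrivial saddle-type basic pieces of Smale surface diffeomorphisms each admitting Markov partitions of both geometric types $T_{f,g}$ and $T_{g,f}$; by Theorem \ref{Theo: Presentation in a domain} (equivalently, the uniqueness of the domain, Corollary \ref{Coro: Uniqueness of domain}) there is a homeomorphism $\hat h:\Delta(T_{f,g})\to\Delta(T_{g,f})$ conjugating $\phi_{T_{f,g}}$ to $\phi_{T_{g,f}}$. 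The point is then to push this conjugacy down through the collapsing projections $\pi_f:\Delta(T_{f,g})\to S_f$ and $\pi_g:\Delta(T_{g,f})\to S_g$ of Theorem \ref{Theo: Basic piece projects to pseudo-Anosov}. Concretely, $\hat h$ sends the equivalence relation $\sim_R$ on $\Delta(T_{f,g})$ to $\sim_R$ on $\Delta(T_{g,f})$ (both are defined intrinsically from the dynamics, cycles, minimal rectangles and invariant laminations, which $\hat h$ preserves), so it descends to a homeomorphism $h:S_f=\Delta(T_{f,g})/\!\sim_R\ \to\ \Delta(T_{g,f})/\!\sim_R\ =S_g$ with $h\circ f=g\circ h$. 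Finally, one checks $h$ is orientation-preserving: $\hat h$ can be taken orientation-preserving (the domains are oriented surfaces and $\hat h$ respects the bifoliated structure, cf. the orientation discussion in Lemma \ref{Lemm: S is oriented} and Definition \ref{Defi: Orientation S'}), and the induced orientations on $S_f,S_g$ are exactly the quotient orientations, so $h$ preserves orientation.

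I expect the main obstacle to be the reverse direction — specifically, justifying cleanly that the domain-level conjugacy $\hat h$ genuinely descends to a conjugacy of the pseudo-Anosov quotients, preserving orientation. The forward direction is essentially a direct citation of Proposition \ref{Prop: preimage is Markov Tfg type}; the subtlety in the converse is that the quotient construction $\pi:\Delta(\cdot)\to S$ of Theorem \ref{Theo: Basic piece projects to pseudo-Anosov} must be shown to be natural with respect to conjugacies of the ambient Smale dynamics, i.e. that the equivalence relation $\sim_R$ (whose classes are singletons of $K$, minimal rectangles, arcs of non-boundary invariant manifolds, and the cycle-regions $\cP(\cC)$) is carried to $\sim_R$ by any conjugacy. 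This follows because every ingredient of $\sim_R$ is dynamically intrinsic, but it requires a short argument invoking that $\hat h$ matches periodic $s,u$-boundary points, neighbor points, and cycles; I would isolate this as a lemma. A secondary, more bookkeeping-level point is ensuring the ``vice versa'' half of strong equivalence is actually needed or whether one direction suffices — I would simply use the symmetric hypothesis as stated to get $\hat h$ from Theorem \ref{Theo: Presentation in a domain} without worrying about optimality.
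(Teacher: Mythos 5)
Your forward direction is the paper's own argument: it is a direct application of Proposition \ref{Prop: preimage is Markov Tfg type} (applied to $h$ and, symmetrically, to $h^{-1}$), and it is correct as written.

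Your reverse direction, however, takes a genuinely different route from the paper, and it has a gap precisely where the statement has content. You pass through a domain-level conjugacy $\hat h:\Delta(T_{f,g})\to\Delta(T_{g,f})$ obtained from Theorem \ref{Theo: Presentation in a domain} and then descend through the collapse $\sim_R$. Two problems. First, Theorem \ref{Theo: Presentation in a domain} (and Corollary \ref{Coro: Uniqueness of domain}) gives a conjugating homeomorphism of domains with no orientation control, and your claim that ``$\hat h$ can be taken orientation-preserving'' is not backed by anything cited; since the corollary you are proving asserts an \emph{orientation-preserving} conjugacy between $f$ and $g$, this is exactly the point that cannot be waved through --- if $\hat h$ reversed orientation, so would the descended map. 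Second, even granting the descent (the naturality of $\sim_R$ under conjugacy is believable, since singletons of $K$, minimal rectangles, arcs of non-boundary invariant manifolds and cycle regions are all dynamically intrinsic, but it is an extra lemma you would have to prove), the quotients $\Delta(T_{f,g})/\sim_R$ and $\Delta(T_{g,f})/\sim_R$ are a priori only \emph{some} pseudo-Anosov homeomorphisms carrying Markov partitions of types $T_{f,g}$ and $T_{g,f}$: the formal \textbf{DA} is not constructed from $f$ or $g$, so writing $S_f=\Delta(T_{f,g})/\sim_R$ already uses an identification that is itself an instance of Theorem \ref{Theo: conjugated iff markov partition of same type}. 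In other words, you need the total-invariance theorem anyway, and once you invoke it the paper's shorter path is available and closes the orientation gap for free: from strong equivalence, \textbf{DA}$(T_{f,g})$ carries a geometric Markov partition $\cR$ of type $T_{g,f}$; Proposition \ref{Prop: type of basic piece is type of pseudo-Anosov} projects it by $\pi$ to a geometric Markov partition of $f$ of type $T_{g,f}$; since $g$ has $\cR_{g,f}$ of the same type, Theorem \ref{Theo: conjugated iff markov partition of same type} directly produces the orientation-preserving conjugacy, with no domain-level conjugacy and no $\sim_R$-naturality lemma needed. I recommend replacing your reverse direction by this argument, or else supplying both the orientation upgrade of $\hat h$ and the quotient identifications as explicit lemmas.
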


\begin{proof}

	Let's assume that $f$ and $g$ are topologically conjugated through an orientation-preserving homeomorphism. Proposition \ref{Prop: preimage is Markov Tfg type} establishes that, in this situation, the saddle-type basic piece $K_{f,g}$ in \textbf{DA}$(T_{f,g})$ has a geometric Markov partition with geometric type $T_{g,f}$; therefore, $T_{f,g}$ and $T_{g,f}$ are strongly equivalent.
	
	If $T_{f,g}$ and $T_{g,f}$ are strongly equivalent, then the saddle-type basic piece $K_{f,g}$ in \textbf{DA}$(T_{f,g})$ has a geometric Markov partition $\cR$ with geometric type $T_{g,f}$.  Let 
	
	$$\pi:\textbf{DA}((T_{f,g})\to S_f$$
	
	be the projection introduced in Theorem \ref{Theo: Basic piece projects to pseudo-Anosov}.  Proposition \ref{Prop: type of basic piece is type of pseudo-Anosov} states that $\pi(\cR)$ is a geometric Markov partition of $f$ with type $T_{g,f}$. Theorem \ref{Theo: conjugated iff  markov partition of same type} implies that $f$ and $g$ are topologically conjugated through an orientation-preserving homeomorphism.
	
\end{proof}

Part of the classification  of saddle-type basic pieces for surface Smale diffeomorphisms was carried out in 2004 by François Béguin in \cite{beguin2004smale}. In that work, he developed an algorithm that determines in finite time when two geometric types associated with non-trivial saddle-type basic pieces of surface Smale diffeomorphism are strongly equivalent.

\begin{theo}[ Béguin \cite{beguin2004smale}]\label{Theo: Beguin algorithm}
	Let $T_1$ and $T_2$ be geometric types realizable as basic pieces $K_1$ and $K_2$ of Smale surface diffeomorphism. There exists a finite algorithm that determines if  $T_1$ and $T_2$ are strongly equivalent.
\end{theo}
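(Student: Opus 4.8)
The plan is to recall and invoke the algorithm constructed by Béguin in \cite{beguin2004smale}, whose overall structure was already sketched in the introduction (see the statement labelled ``The Béguin's Algorithm''). The starting observation is that strong equivalence of $T_1$ and $T_2$ --- the existence of a single Smale surface diffeomorphism with a saddle-type basic piece carrying geometric Markov partitions of both types --- is, by Theorem \ref{Theo: Presentation in a domain} together with the uniqueness of the domain (Corollary \ref{Coro: Uniqueness of domain}), equivalent to the conjugacy of the pairs $(\Delta(K_1),\phi_{T_1})$ and $(\Delta(K_2),\phi_{T_2})$ by a homeomorphism that is the identity on the restricted domains. So the task reduces to deciding combinatorially whether two basic pieces realizing $T_1$ and $T_2$ have conjugate invariant neighborhoods.

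First I would attach to every basic piece $K$ realizing a type $T$ the finite family $\cT(f,n)$ of \emph{primitive geometric types of order $n$}: heuristically, the geometric types of the Markov partitions of $K$ obtained by cutting along the stable and unstable manifolds of the periodic points of period at most $n$ and then refining, horizontally and vertically, down to a canonical minimal form. The key structural input, due to Béguin, is that for every $n$ larger than a constant $n(f)\ge 0$ depending only on $K$, the set $\cT(f,n)$ is a complete invariant of the conjugacy class of $(\Delta(K),\phi|_{\Delta(K)})$: two such basic pieces have conjugate domains if and only if $\cT(f,n)=\cT(g,n)$ for all (equivalently, some) $n>\max\{n(f),n(g)\}$.

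The decisive step --- and the one I expect to be the main obstacle --- is the effective control of the threshold: one must exhibit an explicit bound $O(T)$ with $n(f)\le O(T)$ computed purely from the combinatorics of $T$ (its size $n$, its incidence matrix, the permutation $\rho_T$ and the signs $\epsilon_T$), together with an algorithm that, given $T$ and any $n>O(T)$, lists all the elements of $\cT(f,n)$ without ever building the surface $\Delta(K)$. This is precisely the content of Steps 1--3 of Béguin's algorithm in \cite{beguin2004smale}: the bound is obtained by analyzing how the periodic $s$- and $u$-boundary points and the strips of the domain are encoded in the iterates $T^m$, and the listing procedure is a finite search among refinements of $T$ followed by a normalization.

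Once $O(T_1)$ and $O(T_2)$ are available, the algorithm is immediate: pick $n>\max\{O(T_1),O(T_2)\}$, compute the two finite lists $\cT(f,n)$ and $\cT(g,n)$ from $T_1$ and $T_2$ respectively by the procedure of Step 3, and test them for equality. By the completeness statement this decides strong equivalence, and since each step manipulates only finitely many finite objects the procedure terminates. I would therefore present the theorem essentially as a citation of \cite{beguin2004smale}, keeping only the reduction to domains via Theorem \ref{Theo: Presentation in a domain} and Corollary \ref{Coro: Uniqueness of domain} self-contained; reproducing the construction of $\cT(f,n)$ and the bound $O(T)$ in full detail is beyond the scope of the present article.
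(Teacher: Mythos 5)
Your proposal matches the paper's treatment: this theorem is quoted from Béguin \cite{beguin2004smale} without an internal proof, and the four-step structure you recall (primitive types $\cT(f,n)$, the bound $O(T)$, the combinatorial listing, and comparison of the two finite lists) is exactly the sketch the paper itself gives in its introduction. Your added reduction of strong equivalence to conjugacy of domains via Theorem \ref{Theo: Presentation in a domain} and Corollary \ref{Coro: Uniqueness of domain} is consistent with how the paper frames the notion, so nothing essential differs.
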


Now we can state a similar result when is applied to geometric types in the pseudo-Anosov class.

\begin{theo}\label{Theo: algorithm conjugacy class}
	Let $T_f$ and $T_g$ be two geometric types within the pseudo-Anosov class. Assume that $f: S \rightarrow S_f$ and $g: S_g \rightarrow S_g$ are two generalized pseudo-Anosov homeomorphisms with geometric Markov partitions $\cR_f$ and $\cR_g$, having geometric types $T_f$ and $T_g$ respectively.  We can compute the geometric types $T_{f,g}$ and $T_{g,f}$ of their joint refinements through the algorithmic process described in \cite[Chapter 5]{IntiII}, and the homeomorphisms $f$ and $g$ are topologically conjugated by an orientation-preserving homeomorphism if and only if the algorithm developed by Béguin determines that $T_{f,g}$ and $T_{g,f}$ are strongly equivalent.
\end{theo}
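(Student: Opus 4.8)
The plan is to assemble Theorem \ref{Theo: algorithm conjugacy class} from three ingredients that are already essentially in hand: the algorithmic computability of the joint refinements, the structural Corollary \ref{Coro: equivalence pA and DA}, and Béguin's algorithm (Theorem \ref{Theo: Beguin algorithm}). The statement is really a synthesis result, so the proof should be short and should just verify that the hypotheses of each cited result are met when we feed it the output of the previous one. First I would invoke Corollary \ref{Coro. algortim compute joint} (and the detailed construction of \cite[Chapter 5]{IntiII}) to observe that, starting from the concrete data $(f,\cR_f)$ and $(g,\cR_g)$ with geometric types $T_f$ and $T_g$, we can effectively produce the geometric types $T_{f,g} = T_{(T_f,T_g)}$ and $T_{g,f} = T_{(T_g,T_f)}$ of the joint refinements $\cR_{f,g}$ and $\cR_{g,f}$. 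This step is purely a citation; the only thing worth emphasizing is that the joint refinement is intrinsic to the pair of geometric types (it does not depend on the particular realization), which is what makes the subsequent comparison well-posed.

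Next I would reduce the topological-conjugacy question to strong equivalence of $T_{f,g}$ and $T_{g,f}$ via Corollary \ref{Coro: equivalence pA and DA}: $f$ and $g$ are topologically conjugate by an orientation-preserving homeomorphism if and only if $T_{f,g}$ and $T_{g,f}$ are strongly equivalent. To apply this I must check that $T_{f,g}$ and $T_{g,f}$ lie in the pseudo-Anosov class — this is immediate, since they are the geometric types of genuine geometric Markov partitions of the pseudo-Anosov homeomorphisms $f$ and $g$ respectively — and hence, by Proposition \ref{Prop: pseudo-Anosov iff basic piece non-impace}, they have finite genus and are realizable as mixing saddle-type basic pieces of Smale surface diffeomorphisms. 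This last point is exactly the hypothesis needed to run Béguin's algorithm.

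Finally I would apply Theorem \ref{Theo: Beguin algorithm} to the pair $(T_{f,g}, T_{g,f})$: since both are realizable as basic pieces, Béguin's algorithm terminates in finite time and decides whether they are strongly equivalent. Chaining the three steps gives the claimed equivalence: $f$ and $g$ are orientation-preserving topologically conjugate $\iff$ $T_{f,g}$ and $T_{g,f}$ are strongly equivalent $\iff$ Béguin's algorithm, run on the effectively computed $T_{f,g}$ and $T_{g,f}$, outputs ``yes''. I would close by noting that the whole procedure is a finite algorithm: computing the joint refinements is finite by Corollary \ref{Coro. algortim compute joint}, and the strong-equivalence test is finite by Theorem \ref{Theo: Beguin algorithm}.

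The main obstacle is not in this proof itself — which is a clean concatenation — but in making sure the interface matches: one must confirm that ``strongly equivalent'' in the sense used here (via the formal \textbf{DA}, Corollary \ref{Coro: equivalence pA and DA}) coincides with the notion of strong equivalence to which Béguin's algorithm applies, i.e. that $\textbf{DA}(T_{f,g})$ really is (up to conjugacy) the Smale basic piece whose geometric type is being compared. This is guaranteed by Proposition \ref{Defi: Unique DA} (uniqueness of $\textbf{DA}(T)$) together with Proposition \ref{Prop: preimage is Markov Tfg type}, which shows that a conjugacy between $f$ and $g$ lifts to a Markov partition of geometric type $T_{g,f}$ inside $\textbf{DA}(T_{f,g})$, and conversely via the projection $\pi$ of Theorem \ref{Theo: Basic piece projects to pseudo-Anosov} and Proposition \ref{Prop: type of basic piece is type of pseudo-Anosov}. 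So the substantive work has already been done in the preceding sections, and here it only remains to record the algorithmic consequence.
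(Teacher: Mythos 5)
Your proposal is correct and follows essentially the same route as the paper: the paper's proof is just the concatenation of Corollary \ref{Coro: equivalence pA and DA} with Béguin's algorithm (Theorem \ref{Theo: Beguin algorithm}), the joint-refinement types being computable by Corollary \ref{Coro. algortim compute joint}. Your additional checks (realizability of $T_{f,g}$, $T_{g,f}$ as basic pieces and the matching of the two notions of strong equivalence via the formal \textbf{DA}) are sound and merely make explicit what the paper leaves implicit.
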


\begin{proof}
	
	In view of Corollary \ref{Coro: equivalence pA and DA},  the algorithm determines that $T_{f,g}$ and $T_{g,f}$ are strongly equivalent if and only if   $f$ and $g$ are conjugated. 
\end{proof}

\bibliographystyle{alpha}
\bibliography{BibPseudoA}

\end{document}